\documentclass[a4paper,11pt]{amsart}





\usepackage{amssymb}
\usepackage{amscd}
\usepackage{amsthm}
\usepackage{amsmath}
\usepackage[all]{xy}
\usepackage{extarrows}
\usepackage{rotating}
\usepackage{tikz-cd}
\usepackage{leftidx}
\usepackage{bm}
\usepackage{dynkin-diagrams}
\usepackage{mathrsfs}
\usepackage{hyperref}
\usepackage{rotating}
\usetikzlibrary{patterns}
\usepackage{csquotes}
\usepackage{textcomp}
\usepackage{wasysym}

\DeclareUnicodeCharacter{2212}{-}
\DeclareUnicodeCharacter{0008}{~}
\DeclareUnicodeCharacter{2011}{~}

\def\mathbi#1{\textbf{\em #1}}

\renewenvironment{proof}{{ \textbf{Proof}.}}{\qed}
\newtheorem{Thm}{Theorem}[section]
\newtheorem{Lem}[Thm]{Lemma}
\newtheorem{Def}[Thm]{Definition}
\newtheorem{Cor}[Thm]{Corollary}
\newtheorem{Prop}[Thm]{Proposition}
\newtheorem{Ex1}[Thm]{Example}
\newtheorem{Rem1}[Thm]{Remark}

\newtheorem{assumption}{Assumptions}
\newcommand{\Ima}{\mathrm{Im}}
\newcommand{\coker}{\mathrm{coker}}
\newcommand{\cone}{\mathrm{Cone}}
\newcommand{\Hom}{\mathrm{Hom}}
\newcommand{\thick}{\mathrm{thick}\,}
\newcommand{\add}{\mathrm{add}}
\newcommand{\per}{\mathrm{per}}
\newcommand{\Ext}{\mathrm{Ext}}
\newcommand{\End}{\mathrm{End}}
\newcommand{\obj}{\mathrm{obj}}
\newcommand*{\isoarrow}[1]{\arrow[#1,"\rotatebox{90}{\(\sim\)}"]}
\newcommand{\bijar}[1][]{%
	\ar[#1]
	\ar@<0.7ex>@{}[#1]|-*[@]{\sim}} 

\newcommand{\lG}{\mathrm{\textbf{L}}G^{*}}
\newcommand{\lQ}{\mathrm{\textbf{L}}Q^{*}}
\newcommand{\pvd}{\mathrm{pvd}}
\newcommand*{\corner}{\mbox{\LARGE{$\ulcorner$}}}
\newcommand{\dgcat}{\mathrm{dgcat}_{k}}

\newcommand*\circled[1]{\tikz[baseline=(char.base)]{
		\node[shape=circle,draw,inner sep=1pt] (char) {#1};}}

\newenvironment{Rem}{\begin{Rem1}\rm}{\end{Rem1}}
\newenvironment{Ex}{\begin{Ex1}\rm}{\end{Ex1}}

\setlength{\textwidth}{17.3cm} 
\setlength{\textheight}{25cm}
\setlength{\topmargin}{-0.0cm} 
\setlength{\oddsidemargin}{-1cm}
\setlength{\evensidemargin}{-1cm}
\setlength{\abovedisplayskip}{3mm}
\setlength{\belowdisplayskip}{3mm}
\setlength{\abovedisplayshortskip}{2mm}
\setlength{\belowdisplayshortskip}{2mm} \normalbaselines
\raggedbottom


\newcommand{\Si}{\Sigma}
\newcommand{\we}{\wedge}
\newcommand{\ten}{\otimes}
\newcommand{\lten}{\overset{\rm\textbf{L}}{\ten}}
\newcommand{\fib}{\mathrm{fib}}
\newcommand{\cof}{\mathrm{cof}}
\newcommand{\RHom}{\mathrm{\mathbf{R}Hom}}
\newcommand{\ra}{\rightarrow}

\newcommand{\iso}{\xrightarrow{_\sim}}
\newcommand{\liso}{\xleftarrow{_{\sim}}}
\newcommand{\id}{\mathbf{1}}

\newcommand{\Aus}{\mathrm{Aus}}

%
%
\newcommand{\ca}{{\mathcal A}}
\newcommand{\cb}{{\mathcal B}}
\newcommand{\cc}{{\mathcal C}}
\newcommand{\cd}{{\mathcal D}}
\newcommand{\ce}{{\mathcal E}}
\newcommand{\cf}{{\mathcal F}}

\newcommand{\ch}{{\mathcal H}}

\newcommand{\ck}{{\mathcal K}}

\newcommand{\cm}{{\mathcal M}}

\newcommand{\cp}{{\mathcal P}}

\newcommand{\cs}{{\mathcal S}}
\newcommand{\ct}{{\mathcal T}}
\newcommand{\cu}{{\mathcal U}}
\newcommand{\cv}{{\mathcal V}}

\newcommand{\cx}{{\mathcal X}}

\newcommand{\cz}{{\mathcal Z}}

\begin{document}

\title{Relative Cluster Categories and Higgs categories}



\author{Yilin Wu}
\address{
    School of Mathematical Sciences\\
    University of Science and Technology of China\\
    Hefei 230026, Anhui\\
    P. R. China
}
\email{wuyilinecnuwudi@gmail.com}



\dedicatory{}

\keywords{Relative Calabi--Yau struture, relative cluster category, relative fundamental domain, Higgs category, Frobenius $ n $-exangulated category}

\begin{abstract}
Cluster categories were introduced in 2006 by Buan-Marsh-Reineke-Reiten-Todorov in order to categorify acyclic cluster algebras without coefficients. Their construction was generalized by Amiot (2009) and Plamondon (2011) to arbitrary cluster algebras associated with quivers. A higher dimensional generalization is due to Guo (2011). Cluster algebras with coefficients are important since they appear in nature as coordinate algebras of varieties like Grassmannians,
double Bruhat cells, unipotent cells, etc.	The work of Geiss-Leclerc-Schröer often yields Frobenius exact categories which allow us to categorify such cluster algebras.

In this paper, we generalize the construction of (higher) cluster categories by Claire Amiot and by Lingyan Guo to the relative context. We prove the existence of an $ n $-cluster tilting object in a Frobenius extriangulated category, namely the Higgs category (generalizing the Frobenius categories of Geiss-Leclerc-Schröer), which is stably $ n $-Calabi--Yau and Hom-finite, arising from a left $ (n+1) $-Calabi--Yau morphism. Our results apply in particular to relative Ginzburg dg algebras coming from ice quivers with potential and higher Auslander algebras associated to $ n $-representation-finite algebras.

\end{abstract}

\maketitle
\tableofcontents

\section{Introduction}
Almost 20 years ago, Fomin and Zelevinsky~\cite{FZ2002} invented cluster algebras in order to create a combinatorial framework for the study of canonical bases \cite{Ka1990,Lu1990} in quantum groups and the study of total positivity in algebraic groups. Since the combinatorics of cluster algebras are very complicated, it is useful to model them categorically, so that more conceptual tools become available.

Consider a cluster algebra $ \ca $ without frozen variables and such that one of the clusters has an acyclic quiver $ Q $. Buan–Marsh–Reineke–Reiten–Todorov~\cite{BMRRT} introduced the cluster category $ \cc_{Q} $, given by the orbit category
$$ \cc_{Q}=\cd^{b}(kQ)/\tau\Si^{-1} ,$$
where $ \tau $ denotes the Auslander-Reiten translation of the derived category $ \cd^{b}(kQ) $ and $ \Si $ the shift functor on $ \cd^{b}(kQ) $. It was shown to be triangulated by a result of Keller~\cite{BK2005}.

Claire Amiot~\cite{Am2008} generalized the construction of the cluster category to finite-dimensional algebras $ A_{0} $ of global dimension $ \leqslant2 $. In her approach, in order to show that there is a triangle equivalence between $ \mathcal{C}_{A_{0}} $, constructed as a triangulated hull~\cite{BK2005}, and the quotient category $ \mathrm{per}\,\bm\Pi_{3}(A_{0})/\mathrm{pvd}\,\bm\Pi_{3}(A_{0}) $, where $ \bm\Pi_{3}(A_{0}) $ is the 3-Calabi--Yau completion~\cite{BK2011} of $ A_{0} $, she first studied the category $ \mathcal{C}_{\Pi}=\mathrm{per}\,\Pi/\mathrm{pvd}\,(\Pi)$ associated to a dg algebra $ \Pi $ with the following four properties:
\begin{itemize}
	\item[1)] $ \Pi $ is homologically smooth,
	\item[2)] $ \Pi $ is connective, i.e.\ the cohomology of $ \Pi $ vanishes in degrees $ >0 $,
	\item[3)] $ \Pi $ is 3-Calabi--Yau as a bimodule,
	\item[4)] $ H^{0}(\Pi) $ is finite-dimensional.
\end{itemize}
She proved that the category $ \mathcal{C}_{\Pi} $ is Hom-finite and $ 2 $-Calabi--Yau. Moreover, the image of the free dg module $ \Pi $ is a cluster tilting object in $ \mathcal{C}_{\Pi} $ whose
endomorphism algebra is the zeroth homology of $ \Pi $. Later, Lingyan Guo~\cite{LYG} generalized Amiot's construction to finite-dimensional algebras $ A $ of global dimension $\leqslant n $ and to dg algebras satisfying $ 1) $, $ 2) $, $ 4) $ and $ n $-Calabi--Yau as a bimodule.

For cluster algebras with non invertible frozen variables, there is a natural model for this purpose, a Frobenius category $ \ce $, i.e.\ an exact category with enough projective and injective objects, and such that the projective and injective objects coincide. Then by definition, each projective-injective object $ I $ satisfies
$$ \Ext^{i}_{\ce}(?,I)=\Ext^{i}_{\ce}(I,?)=0\,\,\text{for}\,\,i>0. $$
Thus, each projective-injective object $ I $ is in $ \add T $ for any cluster-tilting object $ T\in\ce $. By a result of Happel~\cite{Happ1988}, the stable category $ \underline{\ce} $, formed by taking the quotient by the ideal of morphisms factoring through a projective-injective object, is a triangulated category. The corresponding stable category $ \ce $ is 2-Calabi–Yau if there is a bifunctorial duality
$$ \Ext^{1}_{\ce}(X,Y)\cong D\Ext^{1}_{\ce}(Y,X) $$ for all $ X,Y\in\ce $.

Relative right Calabi--Yau structures were invented by Bertrand To\"{e}n in ~\cite[pp.\ 227-228]{BT2014}. Then, relative right and left Calabi--Yau structures were studied by Chris Brav and Tobias Dyckerhoff in \cite{BD2019}. A relative left $ n $-Calabi--Yau structure on a morphism $ f\colon B\rightarrow A $ between smooth dg algebras is the datum of a class $ [\xi] $ in negative cyclic homology $ H\!N_{n}(f) $ inducing certain dualities in $ \mathcal{D}(B^{e}) $ and $ \mathcal{D}(A^{e}) $ (see Section~\ref{section RCY}). In particular, if the dg algebra $ B $ is zero, then $ A $ is $ n $-Calabi--Yau as a bimodule. A canonical way to produce relative left Calabi--Yau structures is the deformed relative Calabi--Yau completion which was introduced by Yeung (\cite{WkY2016}). This generalized Keller's construction~\cite{BK2011} of deformed $ n $-Calabi--Yau completions to the relative context.

The main aim of this paper is to generalize the construction of Claire Amiot and Lingyan Guo to the relative context. We change the above properties of the dg algebra $ A $ to the following properties on a dg algebra morphism $ f\colon B\rightarrow A $ (not necessarily preserving the unit)
\begin{itemize}
	\item[1)] $ A $ and $ B $ are homologically smooth,
	\item[2)] $ A $ is connective, i.e.\ the cohomology of $ A $ vanishes in degrees $ >0 $,
	\item[3)] the morphism $ f\colon B\to A $ has a left $ (n+1) $-Calabi--Yau structure,
	\item[4)] $ H^{0}(A) $ is finite-dimensional.
\end{itemize}

Then we introduce the relative cluster category $ \mathcal{C}_{n}(A,B) $ associated with $ f\colon B\rightarrow A $ and show that it is Hom-finite under the above assumptions. We prove the existence of an $ n $-cluster tilting object in the Higgs category $ \mathcal{H} $ which is an extension closed subcategory of $ \mathcal{C}_{n}(A,B) $ and is stably $ n $-Calabi--Yau as a Frobenius extriangulated category in the sense of~\cite{NP2019}. However, there are many cases where $ H^{0}(A) $ is infinite-dimensional. Then the corresponding relative cluster category is Hom-infinite. Hom-infinite cluster categories were studied by Plamondon in~\cite{PGP2011}. Similarly, it would be interesting to explore Hom-infinite relative cluster categories. As this is not needed for the results in this paper, we leave it for future work.

The structure of the paper is as follows. In Section~\ref{Preliminaries}, we give some background material on dg categories and their invariants. Section~\ref{section RCY} is devoted to giving the definitions of relative Calabi--Yau structures and relative Calabi--Yau completions, and proving Proposition~\ref{reduced completion}, where we obtain a reduced version of the deformed relative Calabi--Yau completion for a dg functor between finitely cellular type dg categories. We also discuss the relation between relative Calabi--Yau completions and absolute Calabi--Yau completions, see Proposition~\ref{Relation with absolut CY completion}. Let $f\colon B\to A $ be a morphism (not necessarily preserving the unit element) between dg $ k $-algebras and let $ e=f(\boldmath{1}_{B}) $. In Section~\ref{section RCC}, under the above assumptions on $ f $, we define the \emph{relative $ n $-cluster category} $ \mathcal{C}_{n}(A,B) $ as the Verdier quotient of the perfect derived category $ \mathrm{per}A $ by the full subcategory $ \mathrm{pvd}_{B}(A) $ of the perfectly valued derived category $ \mathrm{pvd}(A) $ formed by the dg modules whose restriction to $ B $ is acyclic (see Definition~\ref{Relative cluster category}).
The relation between the triangulated categories involved can be summarized by the following commutative diagram
\begin{align*}
	\xymatrix{
		&&\mathrm{per}(eAe)\ar@{=}[r]\ar@{^{(}->}[d]&\mathrm{per}(eAe)\ar@{^{(}->}[d]\\
		\mathrm{pvd}_{B}(A)\ar@{^{(}->}"2,3"\ar@{->}_{\cong}[d]&&\mathrm{per}(A)\ar[r]^{\pi^{rel}}\ar@{->>}[d]^{p^{*}}&\mathcal{C}_{n}(A,B)\ar@{->>}[d]^{p^{*}}\\
		\mathrm{pvd}(\overline{A})\ar@{^{(}->}"3,3"&&\mathrm{per}(\overline{A})\ar[r]^{\pi}&\mathcal{C}_{n}(\overline{A}),
	}
\end{align*}
where $ \overline{A} $ is the homotopy cofiber of $ f\colon B\rightarrow A $, and the rows and columns are exact sequences of triangulated categories.

In Section~\ref{section SR and RFD}, we define the \emph{relative fundamental domain} $ \mathcal{F}^{rel} $ as a certain extension closed full subcategory of $ \mathrm{per}A $ (see Definition~\ref{RFD}). As in~\cite{Am2008} and~\cite{LYG}, the canonical quotient functor $ \pi^{rel}\colon\per A\rightarrow\mathcal{C}_{n}(A,B) $ induces a fully faithful embedding $ \pi^{rel}\colon\mathcal{F}^{rel}\hookrightarrow\mathcal{C}_{n}(A,B) $ (see Proposition~\ref{relative full faithful}). Then the \emph{Higgs category} $ \mathcal{H} $ is defined as the image of $ \mathcal{F}^{rel} $ in $ \mathcal{C}_{n}(A,B) $ (see Definition~\ref{Higgs category}). We show that it is closed under extensions in $ \mathcal{C}_{n}(A,B) $ (see Proposition~\ref{Extension closed subcategory}) and thus becomes an extriangulated category in the sense of~\cite{NP2019}. More precisely, we prove the following theorem.

\begin{Thm}$\mathrm{(Theorem~\ref{Higgs is Frobenius extrianglated category}\ and\   Proposition~\ref{addA is an n-cluster tiliting})}$
	The Higgs category $ \mathcal{H} $ is a Frobenius extriangulated category with projective-injective objects $ \mathcal{P}=\mathrm{add}(eA) $ and $ \mathrm{add}A $ is an $ n $-cluster-tilting subcategory of $ \mathcal{H} $ with $ \mathrm{End}_{\mathcal{H}}(A)=H^{0}(A) $. Moreover, the quotient functor $ p^{*}\colon\mathcal{C}_{n}(A,B)\rightarrow\mathcal{C}_{n}(\overline{A}) $ induces an equivalence of triangulated categories
	$$ \mathcal{H}/[\mathcal{P}]\iso
	\mathcal{C}_{n}(\overline{A}), $$
	where $ [\mathcal{P}] $ denotes the ideal of morphisms of $ \mathcal{H} $ which factor through objects in $ \cp $. 
\end{Thm}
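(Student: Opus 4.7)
The plan is to decompose the theorem into its three claims and address them in turn, using the commutative diagram of exact sequences of triangulated categories displayed in the introduction as the organising principle. Since $\mathcal{H}$ is already known to be extension-closed in $\mathcal{C}_{n}(A,B)$ by Proposition~\ref{Extension closed subcategory}, it inherits a natural extriangulated structure in the sense of Nakaoka--Palu from the ambient triangulated category. The first task is to show that $\mathcal{P}=\mathrm{add}(eA)$ consists of projective-injective objects: the vanishings $\mathrm{Ext}^{1}_{\mathcal{H}}(eA,X)=0=\mathrm{Ext}^{1}_{\mathcal{H}}(X,eA)$ for $X\in\mathcal{H}$ should come from unwinding $\mathrm{Hom}_{\mathcal{C}_{n}(A,B)}$ as a calculus of fractions and using that $eA$ is induced along $f\colon B\to A$, so that morphisms from and to it detect only information surviving the quotient by $\mathrm{pvd}_{B}(A)$. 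For the Frobenius property itself, I would produce for each $X\in\mathcal{H}$ conflations $P\to X\to Y$ and $X\to I\to Z$ with $P,I\in\mathcal{P}$, constructed by lifting standard approximations in $\mathrm{per}\,A$ to $\mathcal{F}^{rel}$ and invoking the left $(n+1)$-Calabi--Yau structure on $f$ to keep the middle terms in $\mathcal{F}^{rel}$.

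For the second claim, the identification $\mathrm{End}_{\mathcal{H}}(A)=H^{0}(A)$ is immediate from the full faithfulness of $\pi^{rel}\colon\mathcal{F}^{rel}\hookrightarrow\mathcal{C}_{n}(A,B)$ (Proposition~\ref{relative full faithful}) together with the connectivity of $A$ yielding $\mathrm{Hom}_{\mathrm{per}\,A}(A,A)=H^{0}(A)$. For the $n$-cluster-tilting property, I would first verify rigidity $\mathrm{Ext}^{i}_{\mathcal{H}}(A,A)=0$ for $1\le i\le n-1$ using the left Calabi--Yau duality on $f$ together with the cohomological vanishing of $A$ in positive degrees. The harder direction --- that the orthogonality conditions $\mathrm{Ext}^{i}_{\mathcal{H}}(A,X)=0$ for $1\le i\le n-1$ force $X\in\mathrm{add}\,A$ --- requires constructing an $n$-term resolution of $X$ by objects of $\mathrm{add}\,A$ inside $\mathcal{F}^{rel}$, adapting the inductive truncation argument of Amiot (for $n=2$) and Guo (for general $n$) but performed on the relative fundamental domain rather than on all of $\mathrm{per}\,A$.

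For the triangle equivalence $\mathcal{H}/[\mathcal{P}]\iso\mathcal{C}_{n}(\overline{A})$, the functor $p^{*}$ annihilates $\mathcal{P}$ because $eA$ becomes acyclic after passing to the homotopy cofiber $\overline{A}$, so $p^{*}$ descends to a functor $\bar p\colon\mathcal{H}/[\mathcal{P}]\to\mathcal{C}_{n}(\overline{A})$. Essential surjectivity follows because $p^{*}(\mathcal{F}^{rel})$ covers the absolute Amiot--Guo fundamental domain for $\overline{A}$, whose image is dense in $\mathcal{C}_{n}(\overline{A})$. Faithfulness and fullness should come from a diagram chase on the commutative square in the introduction: a morphism in $\mathcal{C}_{n}(A,B)$ is sent to zero in $\mathcal{C}_{n}(\overline{A})$ iff, as an element of the Verdier quotient, it factors through an object of the kernel $\mathrm{per}(eAe)$ of $p^{*}\colon\mathrm{per}(A)\twoheadrightarrow\mathrm{per}(\overline{A})$, which for morphisms between objects of $\mathcal{H}$ translates into factorisation through $\mathcal{P}=\mathrm{add}(eA)$.

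I expect the main obstacle to be the Frobenius property, specifically producing enough inflations into and deflations from $\mathcal{P}$: in the absolute case one exploits the stable Calabi--Yau structure directly, but in the relative setting one must analyse carefully how the left $(n+1)$-Calabi--Yau structure on $f$ interacts with Hom-computations after localisation, and ensure that the approximation triangles constructed in $\mathrm{per}\,A$ descend to conflations inside $\mathcal{H}$. A close second difficulty is the characterisation of $\mathrm{add}\,A$ via Ext-orthogonality, where the $n$-term resolutions must remain inside $\mathcal{F}^{rel}$ throughout the inductive step; this requires extra control over the cohomological amplitudes of the intermediate terms relative to both $A$ and $B$.
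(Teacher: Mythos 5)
Your decomposition matches the paper's, and your treatment of the projective-injectivity of $\mathcal{P}$ and of $\mathrm{End}_{\mathcal{H}}(A)=H^{0}(A)$ is essentially the paper's argument. However, two of your steps have genuine gaps. For the $n$-cluster-tilting property you propose to re-run the Amiot--Guo inductive truncation argument inside $\mathcal{F}^{rel}$. The paper never does this: its key tool is the isomorphism $\Hom_{\mathcal{C}_{n}(A,B)}(X,\Si^{l}Y)\cong\Hom_{\mathcal{C}_{n}(\overline{A})}(p^{*}X,\Si^{l}p^{*}Y)$ for $X,Y\in\mathcal{H}$ and $l>0$ (Proposition~\ref{Extension group in RFD}, proved by dimension shifting along iterated left $\add(eA)$-approximations and the equivalences $\tau^{rel}_{\leqslant-1}\colon\mathcal{F}^{rel}\iso\mathcal{F}^{rel}\langle1\rangle$ of Proposition~\ref{Equivalence between the shifts of relative fundamental domain}). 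With it, both rigidity and the converse implication are transferred wholesale to the known absolute statement that $\add\overline{A}$ is $n$-cluster-tilting in $\mathcal{C}_{n}(\overline{A})$, via $\mathcal{Z}/[\mathcal{P}]\simeq\per\overline{A}$. Your route would require controlling the relative truncations of every intermediate term of an $n$-term $\add A$-resolution --- precisely the difficulty you flag without resolving; the transfer lemma is the missing idea. The same device is what you are missing for ``enough projectives/injectives'': the Calabi--Yau structure alone does not keep the cone of a left $\add(eA)$-approximation inside $\mathcal{F}^{rel}$; it lands in $\mathcal{F}^{rel}\langle1\rangle$ and must be replaced by an isomorphic object of $\mathcal{H}$ using $\tau^{rel}_{\leqslant-1}$.

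The second gap is the ``diagram chase'' for $\mathcal{H}/[\mathcal{P}]\iso\mathcal{C}_{n}(\overline{A})$. The diagram only tells you that a morphism between objects of $\mathcal{H}$ dies in $\mathcal{C}_{n}(\overline{A})$ iff it factors through an object of $\per(eAe)=\thick(eA)$; you still have to upgrade this to factorisation through $\add(eA)$, and you have not explained why $\mathcal{H}/[\mathcal{P}]$ is triangulated at all. Both points are exactly the content of Iyama--Yang's silting reduction (Theorems~\ref{silting reduction}, \ref{Tiangule structure for Z/P} and \ref{Thm:silting reduction}): the paper first proves $\mathcal{H}={}^{\perp}(\Si^{>0}\mathcal{P})\cap(\Si^{<0}\mathcal{P})^{\perp}$ inside $\mathcal{C}_{n}(A,B)$ (Theorem~\ref{Higgs category is a Silting reduction}) and then applies silting reduction to the presilting subcategory $\mathcal{P}\subseteq\mathcal{C}_{n}(A,B)$ to obtain the triangle equivalence. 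Without invoking this (or reproving it), your argument for faithfulness is incomplete.
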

In \cite{YW}, for $ n=3 $, we will define and study cluster characters on the Higgs category and the relative cluster category. 

We have the following results related to $ n $-angulated categories.
\begin{Thm}$\mathrm{(Theorem~\ref{addA is Frobenius n-exangulated category})}$
	Suppose that the $ n $-cluster tilting category $ \add\overline{A} $ of $ \cc_{n}(\overline{A}) $ satisfies $$ \Si^{n}\add\overline{A}=\add\overline{A} .$$ Then $ \add\overline{A} $ carries a canonical $ (n+2) $-angulated structure. Moreover, the $ n $-cluster-tilting subcategory $ \mathrm{add}A $ of $ \ch $ carries a canonical structure of Frobenius $ n $-exangulated category with projective-injective objects $ \mathcal{P}=\mathrm{add}(eA) $. The quotient functor $ p^{*}\colon\mathcal{C}_{n}(A,B)\rightarrow\mathcal{C}_{n}(\overline{A}) $ induces an equivalence of $ (n+2) $-angulated categories
	$$ \mathrm{add}A/[\mathcal{P}]\iso\mathrm{add}(\overline{A}). $$
\end{Thm}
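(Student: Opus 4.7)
The plan is to establish the three assertions in turn, relying on the Geiss-Keller-Oppermann theorem that equips an $n$-cluster-tilting subcategory (stable under $\Si^n$) of a triangulated category with an $(n+2)$-angulated structure, its analogue for Frobenius extriangulated categories due to Herschend-Liu-Nakaoka, and the already-established triangle equivalence $\ch/[\cp]\iso\cc_n(\overline{A})$ from the preceding theorem.

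For the first claim, by the (absolute) Amiot-Guo construction applied to $\overline{A}$, the subcategory $\add\overline{A}$ is $n$-cluster-tilting in the Hom-finite $n$-Calabi-Yau triangulated category $\cc_n(\overline{A})$. Under the hypothesis $\Si^n\add\overline{A}=\add\overline{A}$, the Geiss-Keller-Oppermann theorem applies directly and endows $\add\overline{A}$ with a canonical $(n+2)$-angulated structure whose $(n+2)$-angles are the $(n+2)$-term complexes produced by successively taking right $(\add\overline{A})$-approximations of cones in $\cc_n(\overline{A})$.

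For the second claim, I would first transfer the closure condition through the equivalence $\ch/[\cp]\iso\cc_n(\overline{A})$. Since $p^{*}$ sends $A$ to $\overline{A}$, preserves direct summands, and the equivalence is compatible with the suspension of the stable category, the hypothesis $\Si^{n}\add\overline{A}=\add\overline{A}$ translates to $\add A$ being stable under the $n$-th power of the suspension in $\ch/[\cp]$. Then, applying the Frobenius version of the Herschend-Liu-Nakaoka theorem, such an $n$-cluster-tilting subcategory of a Frobenius extriangulated category inherits a canonical structure of Frobenius $n$-exangulated category, whose conflations are built from the $n$-cluster tilting approximations inside $\ch$. The projective-injective objects of this induced structure are precisely the projective-injectives of $\ch$ which lie in $\add A$, namely $\cp=\add(eA)$.

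For the third claim, since $p^{*}\colon\cc_n(A,B)\to\cc_n(\overline{A})$ sends $A$ to $\overline{A}$ and annihilates $\cp=\add(eA)$, it induces a functor $\add A/[\cp]\to\add\overline{A}$ which is the restriction of the triangle equivalence $\ch/[\cp]\iso\cc_n(\overline{A})$ and is therefore itself an equivalence. Since both $(n+2)$-angulated structures are determined intrinsically by the respective $n$-cluster-tilting configurations via the Geiss-Keller-Oppermann mechanism, the equivalence automatically transports $(n+2)$-angles to $(n+2)$-angles. The principal technical obstacle will lie in the second step: checking that $\Si^{n}\add\overline{A}=\add\overline{A}$ is exactly the condition needed in the Frobenius refinement of the Herschend-Liu-Nakaoka theorem, and correctly identifying the projective-injectives of the induced $n$-exangulated structure on $\add A$ with $\add(eA)$ rather than a larger subcategory; tracing through the compatibility of $(n+2)$-angles with $p^{*}$ in the third step is then essentially formal.
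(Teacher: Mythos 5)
Your outline follows the same broad strategy as the paper (Geiss--Keller--Oppermann for $\add\overline{A}$, Herschend--Liu--Nakaoka for $\add A$, and passage to the stable category for the last claim), but it leaves unproved exactly the steps that constitute the actual content, and in one place it appeals to a theorem that does not exist in citable form. There is no ready-made ``Frobenius version'' of the Herschend--Liu--Nakaoka theorem: \cite[Theorem 3.41]{HLN2021part2} only produces the $n$-exangulated structure on an $n$-cluster-tilting subcategory of an extriangulated category, and only under hypotheses that must be checked. In the paper these are Propositions~\ref{Condition 1 for Frobenius n-exangulated} and \ref{Condition 2 for Frobenius n-exangulated}; the first already uses the hypothesis $\Si^{n}\add\overline{A}=\add\overline{A}$, via $\Hom_{\cc_{n}(A,B)}(\Si\add A,X)\cong\Hom_{\cc_{n}(\overline{A})}(\Si^{n}\add\overline{A},\Si^{n-1}p^{*}X)\cong\Hom_{\cc_{n}(\overline{A})}(\add\overline{A},\Si^{n-1}p^{*}X)$, and the second is the weak idempotent completeness of $\ch$. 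The Frobenius property itself is then proved by hand, not quoted: for $N\in\add A$ one iterates $n+1$ deflations with middle terms in $\add(eA)$ and observes that the final syzygy $Q_{0}$ satisfies $p^{*}(N)\cong\Si^{n}p^{*}(Q_{0})$, so that $\Si^{n}\add\overline{A}=\add\overline{A}$ is precisely what forces $Q_{0}\in\add A$ and makes the chain close up into a distinguished $n$-exangle. Your proposal flags this as ``the principal technical obstacle'' but does not carry it out, and without it the claim that $\add A$ has enough projectives and injectives equal to $\add(eA)$ is unsupported.

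For the final claim your transport argument is circular as stated: if the $(n+2)$-angulated structure on $\add A/[\cp]$ were \emph{defined} by transporting the GKO structure along the equivalence with $\add\overline{A}$, the assertion would be vacuous. The content is that $\add A/[\cp]$ carries the intrinsic $(n+2)$-angulated structure of \cite[Theorem 3.13]{LZ2020} coming from the Frobenius $n$-exangulated structure on $\add A$, with suspension $S$ given by the $n$-fold cosyzygy through $\add(eA)$, and one must verify that $S(M)\cong\Si^{n}p^{*}(M)$ in $\cc_{n}(\overline{A})$ and that $p^{*}$ carries the class $\square_{S}$ of $(n+2)$-$S$-sequences onto the GKO class $\pentagon$. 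These checks are short but they are the point of the statement; you should make them explicit rather than call them automatic.
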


In Section~\ref{section A is stalk}, under the hypotheses $ 1) $-$4) $, when the dg algebra $ A $ is concentrated in degree 0, we show that $ H^{0}(A) $ is of global dimension at most $ n+1 $ so that we have the equivalence
$$ \mathcal{D}^{b}(\mathrm{mod}H^{0}A)\iso \per A .$$
Moreover, $ H^{0}A $ is internally bimodule $ (n+1) $-Calabi–Yau with respect to the idempotent $ e=f(\boldmath{1}_{B}) $ in the sense of Pressland (see~\cite{MP2017}) and restriction induces an equivalence from the Higgs category $ \mathcal{H} $ to the category of Gorenstein projective modules over $ B'=eH^{0}(A)e $. More precisely, we have the following theorem.
\begin{Thm}$\mathrm{(Theorem~\ref{Higgs of stalk})}$
Let $ f\colon B\ra A $ be a dg algebra morphism which satisfies the hypotheses $ 1) $-$4) $. Let $ e=f(\boldmath{1}_{B}) $. Moreover, we assume that $ A $ is concentrated in degree $ 0 $. Then we have
	\begin{itemize}
		\item[a)] The algebra $ B'=eH^{0}(A)e $ is Iwanaga-Gorenstein of injective dimension at most $ g\leqslant n+1 $ as a $ B' $-module.
		\item[b)] Under the equivalence $ \mathcal{D}^{b}(\mathrm{mod}H^{0}A)\simeq\per A $, the subcategory $ \mathcal{F}^{rel} $ corresponds to the subcategory $ \mathrm{mod}_{n-1}(H^{0}A) $ of $ H^{0}A $-modules of projective dimension at most $ n-1 $.
		\item[c)] Via the equivalence $ \mathrm{res}\colon\mathcal{D}^{b}(\mathrm{mod}H^{0}A)\overset{\sim}{\longrightarrow} \per A $, the localization $ \pi^{rel}\colon\per A\rightarrow\mathcal{C}_{n}(A,B) $ identifies with the restriction functor $ \mathcal{D}^{b}(\mathrm{mod}H^{0}A)\rightarrow\mathcal{D}^{b}(\mathrm{mod}B') $, i.e.\ we have a commutative square
		\[
		\begin{tikzcd}
			\mathcal{D}^{b}(\mathrm{mod}H^{0}A) \arrow{r}\isoarrow{d} &\mathcal{D}^{b}(\mathrm{mod}B')\isoarrow{d}\\
			\per A\arrow{r} &\mathcal{C}_{n}(A,B). 
		\end{tikzcd}
		\]
		\item[d)] Under the equivalence $ \mathcal{D}^{b}(\mathrm{mod}B')\overset{\sim}{\longrightarrow}\mathcal{C}_{n}(A,B) $, the Higgs category $ \mathcal{H}\subseteq\mathcal{C}_{n}(A,B) $ corresponds to the subcategory $ \mathrm{gpr}B' $ of Gorenstein projective modules over $ B'=eH^{0}(A)e $. In particular, when $ B' $ is self injective, we have $ \mathcal{H}\cong\mathrm{mod}B'$.
		\item[e)] Let $ \cm=\add A\subset\ch $. Then the exact sequence of triangulated categories 
		$$ 0\ra\pvd_{B}(A)\ra\per A\ra\cc_{n}(A,B)\ra0 $$
		is equivalent to
		$$ 0\ra\ck^{b}_{\ch-ac}(\cm)\ra\ck^{b}(\cm)\ra\cd^{b}(\ch)\ra0. $$ In particular, the relative cluster category $ \cc_{n}(A,B) $ is equivalent to the bounded derived category $ \cd^{b}(\ch) $ of $ \ch $.
	\end{itemize}
\end{Thm}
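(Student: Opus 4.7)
My plan is to set up the derived equivalence $\mathcal{D}^b(\mathrm{mod}\, H^0A) \simeq \per A$ and then transport every statement through it, reducing the module-theoretic content to Pressland's framework \cite{MP2017}. First I would verify that the hypotheses force $H^0A$ to have finite global dimension. Since $A$ is concentrated in degree $0$, the bimodule triangle provided by the left $(n+1)$-Calabi--Yau structure on $f$ (together with the smoothness and perfectness of $B$) yields a bimodule resolution of $A$ of length at most $n+1$, giving global dimension $\leq n+1$ and hence the desired equivalence via $\per A = \mathcal{K}^b(\mathrm{proj}\, A) = \mathcal{D}^b(\mathrm{mod}\, H^0A)$. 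For part (a), the same bimodule data is exactly what it means for $H^0A$ to be internally bimodule $(n+1)$-Calabi--Yau with respect to $e = f(\mathbf{1}_B)$ in Pressland's sense, and his theorem then immediately yields that $B' = eH^0(A)e$ is Iwanaga-Gorenstein with injective dimension bounded by some $g \leq n+1$ on both sides.

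For parts (b) and (c), I would unwind the definition of the relative fundamental domain $\mathcal{F}^{rel}$ from Section~\ref{section SR and RFD} and translate its $\Hom$-vanishing conditions through the equivalence $\per A \simeq \mathcal{D}^b(\mathrm{mod}\, H^0A)$; these conditions become precisely the assertion that $M$ has projective dimension at most $n-1$, proving (b). For (c), one observes that $\pvd_B(A) \subset \per A$ corresponds under the equivalence to those complexes $M$ with $Me$ acyclic, that is, to the kernel of the restriction functor $M \mapsto Me$ valued in $\mathcal{D}^b(\mathrm{mod}\, B')$. Both $\pi^{rel}$ and this restriction are therefore Verdier quotients by the same kernel, and the universal property produces the identification; essential surjectivity follows from $Ae$ being a progenerator of $\mathrm{mod}\, B'$ (after possibly taking idempotent completions).

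For (d), I would show that the restriction functor identifies $\mathrm{mod}_{n-1}(H^0A)$ with $\mathrm{gpr}\, B'$: by the internally Calabi--Yau structure and the Iwanaga-Gorenstein property from (a), an $A$-module of projective dimension $\leq n-1$ restricts to $Me$ satisfying $\Ext^{>0}_{B'}(Me, B') = 0$, hence Gorenstein projective, and conversely every Gorenstein projective $B'$-module lifts uniquely along the adjunction to an $A$-module of bounded projective dimension. When $B'$ is self-injective, $\mathrm{gpr}\, B' = \mathrm{mod}\, B'$ as a special case. For (e), the strategy is to invoke the standard theorem (as in Keller and in Iyama--Yoshino style results) that for a Frobenius exact category $\mathcal{H}$ admitting $\mathcal{M}$ as a projective-injective $n$-cluster-tilting subcategory, the canonical functor $\mathcal{K}^b(\mathcal{M})/\mathcal{K}^b_{\mathcal{H}-ac}(\mathcal{M}) \to \mathcal{D}^b(\mathcal{H})$ is an equivalence. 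Combining this with the identifications $\mathcal{K}^b(\mathcal{M}) \simeq \per A$ (since $A$ is a progenerator of $\mathrm{mod}\, H^0 A$) and $\mathcal{D}^b(\mathcal{H}) \simeq \mathcal{D}^b(\mathrm{mod}\, B') \simeq \mathcal{C}_n(A,B)$ from parts (c) and (d), the equivalence of exact sequences in (e) drops out, and in particular $\mathcal{C}_n(A,B) \simeq \mathcal{D}^b(\mathcal{H})$.

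The main obstacle will be (d): carefully executing the bimodule computations that identify $\mathrm{gpr}\, B'$ with restrictions of finite-projective-dimension $H^0A$-modules is where the relative $(n+1)$-Calabi--Yau structure on $f$ (not merely an absolute bimodule Calabi--Yau property of $A$) is genuinely needed, because the duality $\Ext^i_A(M, A) \cong D\Ext^{n+1-i}_A(A, M)$ on its own is not enough; one needs the compatible duality with $B'$ coming from the Calabi--Yau datum for $f$ in order to get the vanishing $\Ext^{>0}_{B'}(Me, B') = 0$ and the uniqueness of the lift.
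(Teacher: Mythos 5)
Your overall architecture (finite global dimension of $H^{0}A$, the equivalence $\mathcal{D}^{b}(\mathrm{mod}H^{0}A)\simeq\per A$, identification of $\pvd_{B}(A)$ with the kernel of restriction to $B'$ for part c), and the quotient $\mathcal{K}^{b}(\mathcal{M})/\mathcal{K}^{b}_{\mathcal{H}\text{-}ac}(\mathcal{M})\simeq\mathcal{D}^{b}(\mathcal{H})$ for part e)) matches the paper; for c) the paper realizes the identification via the exact sequence of abelian categories $0\to\mathcal{N}\to\mathrm{mod}H^{0}A\to\mathrm{mod}B'\to0$ with $\mathcal{N}$ the Serre subcategory of modules killed by $e$, and for e) it cites Palu's Lemma~7 (note that in your phrasing $\mathcal{M}=\add A$ is the cluster-tilting subcategory, not the projective-injective one). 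Where you genuinely diverge is in a) and d): you propose to get a) from Pressland's internally-bimodule-Calabi--Yau theorem and d) from a direct $\Ext$-computation, whereas the paper obtains a) \emph{and} d) simultaneously by applying the Iyama--Kalck--Wemyss--Yang structure theorem (\cite[Theorem 2.7]{KIWY15}) to the Higgs category, using only facts already established in Section~\ref{section SR and RFD}: $\mathcal{H}$ is an idempotent-complete Frobenius exact category (the exactness of the extriangulated structure coming from its embedding as an extension-closed subcategory of $\mathrm{mod}B'$, which requires c) first) with projective-injectives $\mathrm{proj}B'$ and an $n$-cluster-tilting object $T=\pi^{rel}A$ whose endomorphism algebra $H^{0}A$ has global dimension $\leqslant n+1$. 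The KIWY theorem then outputs both the Iwanaga--Gorenstein property of $B'$ and the equivalence $\mathcal{H}\simeq\mathrm{gpr}B'$.

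The one place your proposal has a real gap is d): the assertions that $\mathrm{pd}_{H^{0}A}M\leqslant n-1$ forces $\Ext^{>0}_{B'}(Me,B')=0$ and that every Gorenstein projective $B'$-module ``lifts uniquely along the adjunction'' are precisely the hard content of the Pressland/KIWY theorems, not consequences that drop out of the Calabi--Yau dualities in a line or two. In particular $\RHom_{B'}(Me,B')$ is not directly comparable to $\RHom_{A}(M,eA)$ without the full strength of the cluster-tilting correspondence, and the essential surjectivity onto $\mathrm{gpr}B'$ requires constructing, for each Gorenstein projective module, an object of $\mathcal{F}^{rel}$ restricting to it. If you want to keep your route, you should invoke Pressland's cluster-tilting theorem for internally bimodule Calabi--Yau algebras (the paper verifies its hypothesis in Proposition~\ref{Relative CY to internally}) rather than redoing the computation; otherwise the cleaner path is the paper's: establish the Frobenius exact structure and idempotent completeness of $\mathcal{H}$ first and let the abstract structure theorem do the work.
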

We summarize the situation in the following commutative diagram
\begin{align*}
	\xymatrix{
		\per A\ar"1,4"^{\pi^{rel}}&&&\mathcal{C}_{n}(A,B)\\
		&\mathcal{D}^{b}(\mathrm{mod}H^{0}A)\ar[r]^{res}\ar"1,1"\ar@{}[lu]|*[@]{\sim}&\mathcal{D}^{b}(\mathrm{mod}B')\bijar[ru]&\\
		&\mathrm{mod}H^{0}A\ar[r]^{res}\ar@{^{(}->}[u]&\mathrm{mod}B'\ar@{^{(}->}[u]\\
		&\mathrm{mod}_{n-1}(H^{0}A)\ar[r]^>>>>>>>>>{\sim}\ar@{^{(}->}[u]\ar"5,1"\ar@{}[ld]|*[@]{\sim}&\mathrm{gpr}B'\ar@{^{(}->}[u]\bijar[rd]\\
		\mathcal{F}^{rel}\ar@{^{(}->}"1,1"\ar"5,4"^{\sim}&&&\mathcal{H}\ar@{^{(}->}"1,4"\,.
	}
\end{align*}

The paradigmatic example for $ A $ is the relative 3-Calabi--Yau completion of the Auslander algebra of a Dynkin quiver $ Q $ (cf.$ \ $below). Then $ B' $ is the preprojective algebra of $ Q $ and $ \mathcal{H} $ is equivalent to the module category of $ B' $. This motivates the terminology “Higgs category" because Higgs bundles~\cite{NH1987,CS1992} are the geometric version of modules over preprojective algebras.

In Section~\ref{section FQP}, we apply this general approach to Jacobi-finite ice quivers with potential. In this way, for each Jacobi-finite ice quiver with potential $ (Q,F,W) $, we associate a Frobenius extriangulated category $ \mathcal{H} $ endowed with a canonical cluster-tilting object (see Theorem~\ref{Higgs for frozen quiver}).

In the last section, we apply our main result to higher Auslander-Reiten theory. Let $ B_{0} $ be an $ n $-representation-finite algebra in the sense of Iyama-Oppermann~\cite{OISO2011}. Let $ \tau_{n}^{-1} $ be the higher inverse Auslander-Reiten translation of $ B_{0} $ and let $ A_{0}\coloneqq\mathrm{End}_{B_{0}}(\oplus_{i\geqslant 0}\tau_{n}^{-i}B_{0}) $ be the higher Auslander algebra of $ B_{0} $.

Then there is a natural fully faithful morphism
\begin{align*}
	\xymatrix{
		f_{0}\colon B_{0}\ar@{^{(}->}[r]& A_{0}
	}.
\end{align*}
The relative $ (n+2) $-Calabi--Yau completion of $ f_{0} $
$$ f\colon B=\bm{\Pi}_{n+1}(B_{0})\longrightarrow A=\bm{\Pi}_{n+2}(A_{0},B_{0}) $$ satisfies the assumptions 1)-4) and $ A $ is concentrated in degree 0. Moreover, $ H^{0}(f) $ is fully faithful (see Proposition~\ref{n-reprsentation finite case}). Let $ \widetilde{B_{0}} $ denote the higher preprojective algebra of $ B_{0} $ in the sense of Iyama-Oppermann~\cite{OISO2011}. We give a new proof (see Lemma~\ref{Lemma:self-injective}) of the fact, first proved in~\cite{OISO2011}, that
$ \widetilde{B_{0}} $ is a self-injective algebra. By our main results in Section~\ref{section SR and RFD} and Section~\ref{section A is stalk}, we have the following theorem.

\begin{Thm}$\mathrm{(Theorem~\ref{CTO in n-finite case})}$
Consider the relative cluster category $ \mathcal{C}_{n+1}(A,B) $ associated with $ f\colon B\rightarrow A $.
	\begin{itemize}
		\item[a)] The Higgs category $ \mathcal{H}\subseteq\mathcal{C}_{n+1}(A,B) $ is equivalent to $ \mathrm{mod}(\widetilde{B_{0}}) $ and the image of $ A $ in $\mathcal{H}$ is an $ (n+1) $-cluster-tilting object.
		\item[b)] We have a triangle equivalence $ \underline{\mathrm{mod}}(\widetilde{B_{0}})\iso\mathcal{C}_{n+1}(A_{0}/A_{0}eA_{0}) $, where $ e=f_{0}(\boldmath{1}_{B_{0}}) $. In particular, $ \underline{\mathrm{mod}}(\widetilde{B_{0}}) $ contains a canonical $ (n+1) $-cluster-tilting object.
	\end{itemize}
\end{Thm}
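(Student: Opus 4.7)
The plan is to specialize the general structural results of the paper (in particular the stalk-case Theorem~\ref{Higgs of stalk} and Theorem~\ref{Higgs is Frobenius extrianglated category}) to the concrete morphism $f\colon B=\bm\Pi_{n+1}(B_0)\to A=\bm\Pi_{n+2}(A_0,B_0)$. By Proposition~\ref{n-reprsentation finite case}, $f$ satisfies the four assumptions with $A$ concentrated in degree $0$, so all the tools developed in Sections~\ref{section SR and RFD} and~\ref{section A is stalk} apply with the index shift $n\leadsto n+1$ (since the Calabi--Yau dimension is now $n+2$).

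For part (a), I would first identify the corner algebra $B'=eH^{0}(A)e=eAe$. Using Proposition~\ref{Relation with absolut CY completion}, which relates relative and absolute Calabi--Yau completions, the idempotent truncation $eAe$ is quasi-isomorphic to the absolute $(n+1)$-Calabi--Yau completion of $eA_0e=B_0$, and this is by definition the higher preprojective algebra $\widetilde{B_0}=\bm\Pi_{n+1}(B_0)$. By Lemma~\ref{Lemma:self-injective}, $\widetilde{B_0}$ is self-injective, hence $\mathrm{gpr}\,\widetilde{B_0}=\mathrm{mod}\,\widetilde{B_0}$. Part (d) of Theorem~\ref{Higgs of stalk} then gives the equivalence $\mathcal{H}\simeq\mathrm{mod}\,\widetilde{B_0}$. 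The second assertion of (a), that the image of $A$ is an $(n+1)$-cluster-tilting object, is then just Proposition~\ref{addA is an n-cluster tiliting} applied in our setting.

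For part (b), I would combine the equivalence from (a) with the quotient equivalence $\mathcal{H}/[\mathcal{P}]\iso\mathcal{C}_{n+1}(\overline{A})$ of Theorem~\ref{Higgs is Frobenius extrianglated category}. Under $\mathcal{H}\simeq\mathrm{mod}\,\widetilde{B_0}$, the projective-injective subcategory $\mathcal{P}=\mathrm{add}(eA)$ corresponds to $\mathrm{proj}\,\widetilde{B_0}$ (this can be checked by applying the restriction functor of Theorem~\ref{Higgs of stalk}(c) to $eA$), so the stable quotient is exactly $\underline{\mathrm{mod}}\,\widetilde{B_0}$. It remains to identify $\mathcal{C}_{n+1}(\overline{A})$ with $\mathcal{C}_{n+1}(A_0/A_0eA_0)$; for this I would argue that the homotopy cofiber $\overline{A}$ of $f\colon B\to A$ is quasi-isomorphic to the absolute $(n+2)$-Calabi--Yau completion $\bm\Pi_{n+2}(A_0/A_0eA_0)$. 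This should follow by viewing the deformed relative Calabi--Yau completion of $f_0\colon B_0\hookrightarrow A_0$ as a pushout in dg algebras and passing to the cofiber, so that cone commutes with Calabi--Yau completion in the expected way. The canonical $(n+1)$-cluster-tilting object in $\underline{\mathrm{mod}}\,\widetilde{B_0}$ is then the image of the cluster-tilting object of part (a) under the quotient functor.

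The main obstacle I anticipate is the clean identification $\overline{A}\simeq\bm\Pi_{n+2}(A_0/A_0eA_0)$: although morally it says that \emph{restricting} the relative Calabi--Yau completion to the unfrozen part recovers the absolute one, making this rigorous requires carefully tracking the bimodule resolutions and deformations involved in the Yeung construction through a homotopy cofiber square of dg algebras. Once this pushout/cofiber compatibility is established, the rest of the argument is a direct application of the machinery built up in the earlier sections.
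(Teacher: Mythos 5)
Your proposal follows essentially the same route as the paper: the paper's proof of Theorem~\ref{CTO in n-finite case} is literally a one-line citation of Proposition~\ref{addA is an n-cluster tiliting}, Theorem~\ref{Higgs of stalk}, Proposition~\ref{n-reprsentation finite case} and Lemma~\ref{Lemma:self-injective}, which is exactly the combination you assemble. Two small corrections to your write-up. First, the identification of the corner algebra: Proposition~\ref{Relation with absolut CY completion} concerns the homotopy \emph{cofiber} $\bm\Pi_{n+2}(\mathcal{A}/\mathcal{B})$, not the idempotent truncation $eAe$, and your claim that $eAe$ is quasi-isomorphic to $\bm\Pi_{n+1}(B_0)$ is false as stated --- $A$ is concentrated in degree $0$ while the derived preprojective algebra $\bm\Pi_{n+1}(B_0)$ is not (its cohomology only vanishes in degrees $-1,\dots,-n+1$). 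What is true, and is all you need for Theorem~\ref{Higgs of stalk}(d), is $B'=eH^{0}(A)e\cong H^{0}(\bm\Pi_{n+1}(B_0))=\widetilde{B_0}$, which is precisely statement (4) of Proposition~\ref{n-reprsentation finite case} (full faithfulness of $H^{0}(f)$, coming from Corollary~\ref{Relative CY completion is concentrated in degree 0}). Second, the ``main obstacle'' you anticipate, namely $\overline{A}\simeq\bm\Pi_{n+2}(A_0/A_0eA_0)$, is not an open step: it is statement (3) of Proposition~\ref{n-reprsentation finite case}, obtained by combining Proposition~\ref{Relation with absolut CY completion} (cofiber compatibility of relative completions) with Proposition~\ref{Homtopy cofiber of Auslander algebra} (the homotopy cofiber of $f_0$ is the stable Auslander algebra $A_0/A_0eA_0$). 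With these citations fixed, the argument is complete and coincides with the paper's.
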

Notice that $ b) $ is the main result of~\cite{OISO2011}. We deduce it from $ a) $ thereby giving a new proof which is fundamentally different from that of~\cite{OISO2011}.

\section*{Notation and conventions}
Throughout this paper, $ k $ will denote an algebraically closed field. We denote by $ D=\Hom_{k}(−, k) $ the $ k $-dual. All modules are right modules unless stated otherwise. We say an algebra $ A $ is Noetherian if it is Noetherian as both a left and right module over itself. We denote by $ gf $ the composition of morphisms (or arrows) $ f\colon X\rightarrow Y $ and $ g\colon Y\rightarrow Z $.

Let $ \mathcal{T} $ be an additive category. We say that a morphism $ f\colon X\rightarrow Y $ is \emph{right minimal} if any morphism $ g\colon X\rightarrow X $ satisfying $ fg=f $ is an isomorphism. Dually we define a \emph{left minimal} morphism. For a collection $ \mathcal{X} $ of objects in $ \mathcal{T} $, we denote by $ \mathrm{add}\,\mathcal{X} $ the smallest full subcategory of $ \mathcal{T} $ which is closed under finite coproducts, summands and isomorphisms and contains $ \mathcal{X} $. 

Let $ \mathcal{X} $ be a subcategory of $ \mathcal{T} $. We say that a morphism $ f\colon X\rightarrow Y $ is a \emph{right $ \mathcal{X} $-approximation} of $ Y $ if $ X\in\mathcal{X} $ and $ \mathrm{Hom}_{\mathcal{T}}(X',f) $ is surjective for any $ X'\in\mathcal{X} $. We say that $ \mathcal{X} $ is \emph{contravariantly finite} if any object in $ \mathcal{T} $ has a right $ \mathcal{X} $-approximation. Dually, we define a \emph{left $ \mathcal{X} $-approximation} and a \emph{covariantly finite} subcategory. We say that $ \mathcal{X} $ is \emph{functorially finite} if it is contravariantly and covariantly finite.

When $ \mathcal{T} $ is a triangulated category, we denote by $ \Sigma $ the shift functor and by $ \mathrm{thick}\,\mathcal{X} $ the smallest thick subcategory of $ \mathcal{T} $ containing $ \mathcal{X} $. For collections $ \mathcal{X} $ and $ \mathcal{Y} $ of objects in $ \mathcal{T} $, we denote by $ \mathcal{X}*\mathcal{Y} $ the collection of objects $ Z\in\mathcal{T} $ appearing in a triangle $ X\rightarrow Z\rightarrow Y\rightarrow \Si X $ with $ X\in\mathcal{X} $ and $ Y\in\mathcal{Y} $. We set
$$ \mathcal{X}^{\perp_{\mathcal{T}}}\coloneqq\{T\in\mathcal{T}\,|\,\mathrm{Hom}(\mathcal{X},T)=0\}, $$
$$ {}^{\perp_{\mathcal{T}}}\mathcal{X}\coloneqq\{T\in\mathcal{T}\,|\,\mathrm{Hom}(T,\mathcal{X})=0\} .$$ When it does not cause confusion, we will simply write $ \mathcal{X}^{\perp} $ and $ ^{\perp}\mathcal{X} $.

Let $ \ct $ be a triangulated category. For two objects $ X $ and $ Y $ of $ \ct $ and an integer $ n $, by $ \Hom_{\ct}(X,\Si^{>n}Y)=0 $ (respectively, $ \Hom_{\ct}(X,\Si^{\geqslant n}Y)=0 $, $ \Hom_{\ct}(X,\Si^{<n}Y)=0 $, $ \Hom_{\ct}(X,\Si^{\leqslant n}Y)=0 $), we mean 
$$ \Hom_{\ct}(X,\Si^{i}Y)=0 $$ for all $ i>n $ (respectively, for all $ i\geqslant n $, $ i < n $, $ i\leqslant n $). 

Let $ \cx $ be a full subcategory of $ \ct $. We say that $ \cx $ is a thick subcategory of $ \ct $ if it is a triangulated subcategory of $ \ct $ which is closed under taking direct summands. In this case we denote by $ \ct/\cx $ the triangle quotient of $ \ct $ by $ \cx $. In general, we denote by $ \thick_{\ct}\cx $ (or simply $ \thick\cx $) the smallest thick subcategory of $ \ct $ which contains $ \cx $. 


\section*{Acknowledgments}

The author is supported by the China Scholarship Council (CSC, grant number: 201906140160), Innovation Program for Quantum Science and Technology (2021ZD0302902), Natural Science Foundation of China (Grant Nos. 12071137) and STCSM (No. 18dz2271000). He would like to thank his PhD supervisor Bernhard Keller for his his guidance, patience and kindness. He would also like to thank his PhD co-supervisor Guodong Zhou for the constant support and encouragement during his career in mathematics. He is grateful to Peigen Cao, Xiaofa Chen, Haibo Jin, Yann Palu, Kai Wang, and Yu Wang for many interesting discussions and useful comments. He is very grateful to the anonymous referee for suggesting various improvements.

\section{Preliminaries}\label{Preliminaries}
We recall some basic definitions related to dg categories and their invariants. We refer to Keller’s ICM address \cite{BK2006} for the details.

A \emph{differential graded or dg category} is a $ k $-category $ \mathcal{A} $ whose morphism spaces are dg $ k $-modules and whose compositions $ \mathcal{A}(y,z)\otimes\mathcal{A}(x,y)\to \mathcal{A}(x,z)$ are morphisms of dg $ k $-modules.  We denote the category of all (small) dg categories over $ k $ by $ \dgcat $. In particular, dg categories with one object can be identified with dg algebras $ A $, i.e.\ graded $ k $-algebras endowed with a differential $ d $ such that the Leibniz rule holds
\begin{align*}
	\xymatrix{
		d(f\circ g)=d(f)\circ g+(-1)^{p}f\circ d(g)
	}
\end{align*}
for all $ f\in A^{p} $ and all $ g $.

Let $ \mathcal{A} $ be a dg category. The \emph{opposite dg category} $ \mathcal{A}^{op} $ has the same objects as $ \mathcal{A} $ and its morphisms are defined by
$$ \mathcal{A}^{op}(X,Y)=\mathcal{A}(Y,X); $$
the composition of $ f\in\mathcal{A}^{op}(Y, X)^{p} $ with $ g\in\mathcal{A}^{op}(Z,Y)^{q} $ is given by $ (−1)^{pq}gf $.
The category $ Z^{0}(\mathcal{A}) $ has the same objects as $ \mathcal{A} $ and its morphisms are defined by
$$ (Z^{0}\mathcal{A})(X,Y)=Z^{0}(\mathcal{A}(X,Y)), $$ where $ Z^{0} $ is the kernel of $ d\colon \mathcal{A}(X,Y)^{0}\rightarrow\mathcal{A}(X,Y)^{1} $. The category $ H^{0}(\mathcal{A}) $ has the same objects as $ \mathcal{A} $ and its morphisms are defined by
$$ (H^{0}(\mathcal{A}))(X,Y)=H^{0}(\mathcal{A}(X,Y)), $$ where $ H^{0} $ denotes the 0-th homology of the complex $ \mathcal{A}(X,Y) $. We say that a morphism $ f\colon x\to y $ in $ Z^{0}(\ca) $ is a \emph{homotopy equivalence} if it becomes invertible in $ H^{0}(\ca) $.

\subsection{The category of dg categories}

Let $ \mathcal{A} $ and $ \mathcal{B} $ be dg categories. A \emph{dg functor}
$ G\colon \mathcal{B} \to\mathcal{A} $
is given by a map $ G\colon \obj(\mathcal{B})\to \obj(\mathcal{A}) $ and by morphisms of dg $ k $-modules $ G(x,y)\colon\mathcal{B}(x,y)\to \mathcal{A}(Gx,Gy),x,y\in\obj(\mathcal{B})$,
compatible with the composition and the units. The category of small dg categories $ \dgcat $ has the small dg categories as objects and the dg functors as morphisms.

The \emph{tensor product} $ \mathcal{A}\otimes\mathcal{B} $ has the class of objects $ \obj(\mathcal{A})\times \obj(\mathcal{B}) $ and the morphism spaces
$ \mathcal{A}\otimes\mathcal{B}((x,y),(x',y'))=\mathcal{A}(x,x')\otimes\mathcal{B}(y,y') $
with the natural compositions and units. The \emph{enveloping dg category} of $ \mathcal{A} $ is defined as $ \mathcal{A}\otimes\mathcal{A}^{op} $ and we denote it by $ \mathcal{A}^{e} $.

Let $ G, G'\colon\mathcal{B}\to\mathcal{A} $ be two dg functors. We define $ \mathcal{H}om(G,G')^{n} $ to be the $ k $-module formed by the families of morphisms
\begin{align*}
	\phi_{x}\in\mathcal{A}(Gx,G'x)^{n}
\end{align*}
such that $ G'(f)\circ\phi_{x}=(-1)^{pn}\phi_{y}\circ G(f) $ for all $ f\in\mathcal{B}^{p}(x,y) $. We define $ \mathcal{H}om(G,G') $ to be the graded $ k $-module with components $ \mathcal{H}om(G,G')^{n} $ and whose differential is induced by the differential of $ \mathcal{A}(Gx,G'x) $. The set of morphisms $ G\to G' $ is by definition in bijection with $ Z^{0}(\mathcal{H}om(G,G')) $. Thus,  we can form a dg category $ \mathcal{H}om(\mathcal{B},\mathcal{A}) $, which has the dg functors as objects and the morphism space $ \mathcal{H}om(G,G') $ for two dg functors $ G $ and $ G' $.

Endowed with the tensor product, the category $ \dgcat $ becomes a symmetric tensor category which admits an internal Hom-functor, i.e.
\begin{align*}
	\mathrm{Hom}(\mathcal{A}\otimes\mathcal{B},\mathcal{C})=\mathrm{Hom}(\mathcal{A},\mathcal{H}om(\mathcal{B},\mathcal{C})),
\end{align*}
for $ \mathcal{A}, \mathcal{B},\mathcal{C}\in \dgcat $.

\begin{Def}\rm
	A \emph{quasi-equivalence} is a dg functor $ G\colon\mathcal{B}\to\mathcal{A}$ such that
	\begin{itemize}
		\item[(1)] $ G(x,y)\colon\mathcal{B}(x,y)\to\mathcal{A}(G(x),G(y)) $ is a quasi-isomorphism for all objects $ x,y $ of $\mathcal{A}$;
		\item[(2)] the induced functor $ H^{0}(G)\colon H^{0}(\mathcal{B})\to H^{0}(\mathcal{A}) $ is an equivalence.
	\end{itemize}
	
\end{Def}
By \cite{GT2005}, there is a model structure on $ \dgcat $ with weak equivalences being quasi-equivalences. We denote by $\rm Ho(\dgcat) $ the corresponding homotopy category.
\begin{Thm}\cite[Theorem 0.1]{GT2005}\label{htp dg}\label{model on dgcat}
	Let $ k $ be a commutative ring. There is a cofibrantly generated model structure (Dwyer-Kan model structure) on $ \dgcat $ where a dg functor $ G\colon\mathcal{B}\to\mathcal{A} $ is
	\begin{itemize}
		\item a weak equivalence if $ G $ is quasi-equivalence;
		\item a fibration if 
		\subitem{1.} for all objects $ x,y\in\mathcal{B} $ the component $ G(x,y) $ is a degreewise surjection of chain complexes;
		\subitem{2.} for each isomorphism $ G(x)\to z $ in $ H^{0}(\mathcal{A}) $ there is a lift to an isomorphism in $ H^{0}(\mathcal{B}) $.
	\end{itemize}
\end{Thm}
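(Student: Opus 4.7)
The plan is to invoke Kan's recognition theorem for cofibrantly generated model categories. Concretely, one produces two sets $I$ and $J$ of morphisms in $\dgcat$ whose domains are small, and then checks that the class of quasi-equivalences is closed under retracts and satisfies the 2-out-of-3 property, that the $J$-injective morphisms are precisely the quasi-equivalences that are also $I$-injective, and that every relative $J$-cell complex is a quasi-equivalence. Once this is established, $I$ generates the cofibrations, $J$ the trivial cofibrations, and the fibrations are exactly the $J$-injective morphisms, so the claimed characterization drops out.

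For $I$, I would take the inclusion $\emptyset \to \ast$ (where $\ast$ is the dg category with a single object having endomorphism algebra $k$) together with the dg functors $\mathcal{P}(S^{n-1}) \to \mathcal{P}(D^n)$ for $n\in\mathbb{Z}$, where $\mathcal{P}(C)$ denotes the dg category on two objects with morphism complex $C$ between them, $S^{n-1}$ is the sphere complex, and $D^n$ the contractible disk complex. These adjoin, respectively, a fresh object and a boundary for a prescribed cycle of given degree. For $J$, I would take the inclusions $0 \to \mathcal{P}(D^n)$ that adjoin null-homotopic morphisms, together with the pivotal generator $\ast\sqcup\ast \to \mathcal{K}$, where $\mathcal{K}$ is a cofibrant replacement of the ``walking homotopy equivalence'' on two objects: mutually inverse zero-cycles $f$ and $g$, contracting homotopies for $gf-1$ and $fg-1$, and a coherent tower of higher homotopies enforcing $A_\infty$-type compatibilities. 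Smallness is automatic, since all the domains are finitely cellular, and the retract/2-out-of-3 properties for quasi-equivalences are inherited from the corresponding statements for quasi-isomorphisms of chain complexes and equivalences of $k$-linear categories.

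The technical heart, and the main obstacle, is to construct $\mathcal{K}$ with enough coherence that $\ast\sqcup\ast \to \mathcal{K}$ is itself a quasi-equivalence and that its pushouts along arbitrary dg functors remain quasi-equivalences. The former is verified by exhibiting an explicit deformation retraction down to a single object after passing to $H^{0}$; the latter requires a careful analysis of freely adjoining coherent homotopy data, typically encoded as a transfinite cellular filtration. With these in hand, unpacking the right lifting property against $J$ gives exactly the stated description of fibrations: the disk-type generators force componentwise surjectivity of $G(x,y)$ on morphism complexes, while the $\mathcal{K}$-generator forces the lifting of $H^{0}$-isomorphisms. Finally, identifying the $I$-injective morphisms as the $J$-injective quasi-equivalences reduces to the classical small object argument applied separately on each morphism complex, which closes the verification.
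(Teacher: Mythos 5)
The paper itself gives no proof of this theorem; it is imported from Tabuada's article, so the benchmark is the standard proof there. Your overall strategy --- Kan's recognition theorem applied to generating sets $I$ and $J$ built from the one-object dg category, the sphere-to-disk functors on two-object dg categories, and a dg-categorical ``interval'' $\mathcal{K}$ --- is exactly the right one and matches the cited proof in outline.

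There is, however, one concrete error that breaks the argument as written: the pivotal generating trivial cofibration must be $\ast \to \mathcal{K}$, including the single object as \emph{one} endpoint of the walking homotopy equivalence, and not $\ast\sqcup\ast \to \mathcal{K}$. The dg category $\mathcal{K}$ is quasi-equivalent to $\ast$, since its two objects become isomorphic in $H^{0}(\mathcal{K})$; hence $\ast\sqcup\ast\to\mathcal{K}$ is not quasi-fully-faithful (the source has zero morphism complex between its two objects, the target does not) and so is not a quasi-equivalence. If it is placed in $J$, the hypothesis of the recognition theorem that every relative $J$-cell complex be a weak equivalence fails, and indeed in any model structure with $J$ among the trivial cofibrations this map would have to be a weak equivalence, contradicting the intended class of weak equivalences. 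The choice of domain also matters for the characterization of fibrations: the right lifting property against $\ast\sqcup\ast\to\mathcal{K}$ prescribes \emph{both} endpoints already in $\mathcal{B}$, whereas condition 2 of the statement requires lifting an isomorphism $G(x)\to z$ of $H^{0}(\mathcal{A})$ in which only $x$ lives upstairs; it is precisely the generator $\ast\to\mathcal{K}$ whose lifting problem reads ``given $x\in\mathcal{B}$ and a homotopy equivalence $G(x)\to z$ in $\mathcal{A}$, lift $z$ and the homotopy-equivalence data to $\mathcal{B}$.'' A minor further remark: Tabuada's $\mathcal{K}$ is finitely generated (two degree-$0$ cycles $f,g$, two degree $-1$ contracting homotopies, and a single degree $-2$ generator relating them), so the infinite $A_\infty$-type tower of coherences you invoke is not needed, although a larger cofibrant interval would also serve. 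With the generator corrected to $\ast\to\mathcal{K}$, the remainder of your outline goes through as in the reference.
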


\subsection{Drinfeld dg quotients}
Let $ \cb\subseteq\ca $ be a full dg subcategory. Denote by $ j\colon\cb\ra\ca $ the inclusion.
\begin{Def}\rm\cite[Section 3]{VD2004}
	The dg quotient category $ \ca/\cb $ is defined as follows:
	\begin{itemize}
		\item $ \obj(\ca/\cb)=\obj(\ca) $;
		\item freely add new morphisms $ \epsilon_{U}\colon U\ra U $ of degree $ −1 $ for each $ U\in\obj(\cb) $, and set $ d(\epsilon_{U})=\boldmath{1}_{U} $.
	\end{itemize}
\end{Def}
We denote by $ p\colon\ca\ra\ca/\cb $ the canonical functor. For any objects $ x $ and $ y $, we have a decomposition of graded $ k $-modules
$$ \ca/\cb(x,y)=\bigoplus_{n\geqslant0}\bigoplus_{U_{i}\in\obj(\cb)}\ca(U_{n},y)\ten_{k}k\epsilon_{U_{n}}\ten_{k}\cdots\ten_{k}k\epsilon_{U_{2}}\ten_{k}\ca(U_{1},U_{2})\ten_{k}k\epsilon_{U_{1}}\ten_{k}\ca(x,U_{1}) .$$ Using the formula $ d(\epsilon_{U})=\boldmath{1}_{U} $, one can easily get the differential on $ \ca/\cb(x,y) $.

\bigskip

Let $ G\colon\mathcal{B}\to\mathcal{A} $ be a dg functor. The \emph{homotopy cofiber} $ \mathcal{A}/\mathcal{B} $ \cite[Remark 4.0.2]{GT2010} of $ G $ is defined by the following homotopy push-out diagram in $ \dgcat $ with respect to the Dwyer-Kan model structure
\begin{align*}
	\xymatrix{
		\mathcal{B}\ar[r]^-{G}\ar[d]&\mathcal{A}\ar[d]^-{p}\\
		0\ar[r]&\mathcal{A}/\mathcal{B},
	}
\end{align*}
where $ 0 $ is the dg category of one object with endomorphism space being $ 0 $.
We will call $ \cb\rightarrow\ca\rightarrow \ca/\cb $ a \emph{homotopy cofiber sequence} in $ \dgcat $.

The homotopy cofiber $ \mathcal{A}/\mathcal{B} $ can be computed as the Drinfeld dg quotient of $ \mathcal{A} $ by its full dg subcategory $ \Ima(G) $, where $ \Ima(G) $ is the full dg subcategory of $ \mathcal{A} $ whose objects are the $ y\in\mathcal{A} $ such that there exist an object $ x $ in $ \mathcal{B} $ and an isomorphism $ G(x)\cong y $ in $ \rm H^{0}(\mathcal{A}) $.

\subsection{Homotopy between dg functors}
Let $ \mathcal{B} $ be a small dg category. The \emph{dg category} $ P(\mathcal{B}) $ is defined as follows: its \emph{objects}
are the homotopy equivalences $ f\colon x\to y $. The complexes of \emph{morphisms} are defined (as $ \mathbb{Z} $-graded $ k $-modules) by:
\begin{align*}
	\xymatrix{
		P(\mathcal{B})(f,g)=\mathcal{B}(x,w)\oplus\mathcal{B}(y,z)\oplus\mathcal{B}(x,z)[-1]
	},
\end{align*}
where $ f\colon x\to y $, $ g\colon w\to z $ are in $ P(\mathcal{B})$.

A homogeneous element of degree $ r $ of this graded $ k $-module can be represented by a matrix
$$
\left[
\begin{matrix}
	m_{1} & 0  \\
	h & m_{2}  
\end{matrix}
\right], 
$$
where $ m_{1}\in \mathcal{B}(x,w)^{r}$, $ m_{2}\in \mathcal{B}(y,z)^{r}$ and $ h\in \mathcal{B}(x,z)^{r-1}$.

The differential is given by
\begin{align*}
	d \left\{  \left[
	\begin{matrix}
		m_{1} & 0  \\
		h & m_{2}  
	\end{matrix}
	\right] \right\}= \left[
	\begin{matrix}
		d(m_{1}) & 0  \\
		d(h)+gm_{1}-(-1)^{r}(m_{2}f) & d(m_{2})  
	\end{matrix}
	\right].
\end{align*}
The Composition in $ P(\mathcal{B}) $ corresponds to matrix multiplication and the units to the identity matrices.

Then we have a dg inclusion functor
\begin{align*}
	\xymatrix{
		I\colon\mathcal{B}\ar[r]& P(\mathcal{B})
	}
\end{align*}
which sends an object $ x $ in $ \mathcal{B} $ to $ (x=x) $ and $ I(f)=\left[
\begin{matrix}
	f & 0  \\
	0 & f  
\end{matrix}
\right]$.

Moreover we have two projection functors
\begin{align*}
	P_{0},P_{1}\colon P(\mathcal{B})\to\mathcal{B}
\end{align*}
which are defined as follows
\begin{align*}
	P_{0}(f\colon x\to y)=x&&P_{0}\left\{
	\left[
	\begin{matrix}
		m_{1} & 0  \\
		h & m_{2}  
	\end{matrix}
	\right]
	\right\}=m_{1};\\
	P_{1}(f\colon x\to y)=y&&P_{1}\left\{
	\left[
	\begin{matrix}
		m_{1} & 0  \\
		h & m_{2}  
	\end{matrix}
	\right]
	\right\}=m_{2}
	.
\end{align*}

Then we obtain the following commutative diagram in $ \dgcat $ (\cite[Proposition 2.0.11]{GT2010})
\begin{align*}
	\xymatrix{
		\mathcal{B}\ar"1,3"^{\bigtriangleup=(id_{\mathcal{B}},id_{\mathcal{B}})}\ar"2,2"_{I}&&\mathcal{B}\times\mathcal{B}\\
		&P(\mathcal{B})\ar"1,3"_{P_{0}\times P_{1}}&,
	}
\end{align*}
where $ I $ is a quasi-equivalence and $ P_{0}\times P_{1} $ is a trivial fibration, with respect to the Dwyer-Kan model structure on $ \dgcat $.
This means that the dg category $ P(\mathcal{B}) $ is a path object for $ \mathcal{B}$.

\begin{Def}\label{homotopy def}\cite[Remark 2.0.12]{GT2010}\rm\, Let $ G, G'\colon \mathcal{B}\to\mathcal{A} $ be two dg functors, where $ \mathcal{B} $ is a cofibrant dg category. Two dg functors $ G $ and $ G' $ are homotopic if there exists a dg functor $ H\colon\mathcal{B}\to P(\mathcal{A}) $ that makes the following diagram commute
	\begin{align*}
		\xymatrix{
			&&\mathcal{A}\\
			\mathcal{B}\ar"1,3"^{G}\ar"3,3"_{G'}\ar"2,3"^{H}&&P(\mathcal{A})\ar[u]_{P_{0}}\ar[d]^{P_{1}}\\
			&&\mathcal{A}.
		}
	\end{align*}
	The dg functor $ H $ corresponds exactly to
	\begin{itemize}
		\item a homotopy equivalence $ \alpha(x)\colon G(x)\to G'(x) $ in $ \mathcal{A} $ for every object $ x $ in $ \mathcal{B} $, and
		\item a degree $ -1 $ morphism
		$h=h(x,y)\colon\mathcal{B}(x,y)\to\mathcal{A}(G(x),G'(y)) $, for all objects $ x $ and $ y $ in $ \mathcal{B} $,  such that
		$$ \alpha(y) G(f)-G'(f)\alpha(x)=d(h(f))+h(d(f)) $$ and $$ h(fg)=h(f) G(g)+(-1)^{n}G'(f)h(g),$$ where $ f $ and $ g $ are composable morphisms in $ \mathcal{B} $ with $ f $ of degree $ n $.
	\end{itemize}	
\end{Def}

\subsection{The derived category of a dg category}

Let $ \mathcal{A} $, $ \mathcal{B} $ be small dg categories. Let $ \cc_{dg}(k) $ be the dg category of $ k $-complexes \cite[Section 2.2]{BK2006}. 
A \emph{left dg $ \mathcal{A} $-module} is a dg functor $ L\colon\mathcal{A} \to \mathcal{C}_{dg}(k) $. A \emph{right dg $ \mathcal{A} $-module} is a dg functor $ M\colon\mathcal{A}^{op}\to \mathcal{C}_{dg}(k)$. A \emph{dg $ \mathcal{A}$-$\mathcal{B} $-bimodule} is a dg functor $ N\colon\mathcal{B}^{op}\otimes\mathcal{A}\to \mathcal{C}_{dg}(k) $. For each object $ X $ of $ \ca $, we have the right module represented by $ X $
$$ X^{\we}=\ca(?,X) .$$
The \emph{category of right dg modules} $ \mathcal{C}(\mathcal{A}) $ has as objects the right dg $ \mathcal{A} $-modules and as morphisms $ L\rightarrow M $ the morphisms of dg functors.

We identify $ \mathcal{A} $-bimodules with right $ \mathcal{A}^{e} $-modules via the morphism
\begin{align*}
	M\otimes\mathcal{A}^{e}=M\otimes\mathcal{A}\otimes\mathcal{A}^{op}\xrightarrow{\sim}\mathcal{A}^{op}\otimes M\otimes\mathcal{A}  
\end{align*}
taking $ m\otimes a\otimes b $ to $ (-1)^{|b|(|m|+|a|)}b\otimes m\otimes a $, and we denote by $ \cc(\mathcal{A}^{e}) $ the category of $ \mathcal{A} $-bimodules. There is a distinguished $ \mathcal{A} $-bimodule $ \mathcal{A}_{\triangle} $ given by morphisms in the category $ \mathcal{A} $, i.e.\ 
$ \mathcal{A}_{\triangle}(x,y)=\mathcal{A}(x,y)$. We call it the \emph{diagonal bimodule of $ \mathcal{A} $} and still denote it by $ \mathcal{A} $. 

A bimodule $ M\in\cc(\ca^{e}) $ is said to be \emph{semi-free} if there is a set of homogeneous elements $ {\xi_{i}\in M(x_{i},y_{i})},\,i\in S $, called a basis of $ M $, such that, for any pair $ (x,y)\in\obj(\ca)\times\obj(\ca) $, every object $ \eta\in M(x,y) $ can be written uniquely as a finite sum 
$$ \eta=\Sigma_{i\in S}f_{i}\circ\xi_{i}\circ g_{i}, $$ where $ g_{i}\in\ca(x,x_{i}) $ and $ f_{i}\in\ca(y_{i},y) $ and only finitely many of them are nonzero. When the basis set is finite, its cardinality is called the \emph{rank} of the semi-free module $ M $.

The dg category $ \cc_{dg}(\ca) $ is defined by $ \cc_{dg}(\ca)=\ch om(\ca^{op},\cc_{dg}(k)) $. We write $ \ch om(L,M) $ for the complex of morphisms from $ L $ to $ M $ in $ \cc_{dg}(\ca) $. For each $ X\in\ca $, we have a natural isomorphism $$ \ch om(X^{\we},M)\iso M(X) .$$


A morphism $ f\colon L\rightarrow M  $ is a \emph{quasi-isomorphism} if it induces an isomorphism in homology. Then the \emph{derived category} $ \mathcal{D}(\mathcal{A}) $ is the localization of the category $ \mathcal{C}(\mathcal{A}) $ with respect to the class of quasi-isomorphisms. The \emph{category of perfect objects}  per$(\mathcal{A}) $ associated with $ \mathcal{A} $ is the closure in $ \mathcal{D}(\mathcal{A}) $ of the set of representable functors $ X^{\wedge}=\mathcal{A}(?,X) $, $  X\in\mathcal{A} $, under shifts in both directions, extensions and taking direct factors. The \emph{category of perfectly valued modules} $ \mathrm{pvd}(\mathcal{A}) $ is the full subcategory of $ \mathcal{D}(\mathcal{A}) $ formed by the dg modules $ M $ such that each dg $ k $-module $ M (X) $, $ X\in\mathcal{A} $, is perfect, i.e.\ $ \sum_{p}\dim_{k} H^{p}(M(X)) $ is finite. 

Let $ f\colon\mathcal{B}\rightarrow\mathcal{A} $ be a dg functor. Then $ f $ induces the restriction functor $ f_{*}\colon\mathcal{C}(\mathcal{A})\rightarrow\mathcal{C}(\mathcal{B}) $ which is given by $ f_{*}(M)=M\circ f $. It fits into the usual triple of adjoint functors $ (f^{*},f_{*},f^{!}) $ between $ \mathcal{C}(\mathcal{A}) $ and $ \mathcal{C}(\mathcal{B}) $. We denote the corresponding adjoint functors between $ \mathcal{D}(\mathcal{A}) $ and $ \mathcal{D}(\mathcal{B}) $ by $ (\mathbf{L}f^{*},f_{*},\mathbf{R}f^{!}) $. 

The functor $ f $ also induces a dg functor $ f^{e}\colon\cb^{e}\ra\ca^{e} $ between their enveloping dg categories. By abuse of notation, we also denote the corresponding adjoint functors between $ \mathcal{D}(\ca^{e}) $ and $ \mathcal{D}(\cb^{e}) $ by $ (\mathbf{L}f^{*},f_{*},\mathbf{R}f^{!}) $. For a dg $ \cb $-bimodule $ M $, we have the following useful formula
\begin{equation*}
	\begin{split}
		\mathbf{L}f^{*}(M)=&M\lten_{\cb^{e}}\ca^{e}\\
		\cong&\ca\lten_{\cb}M\lten_{\cb}\ca.
	\end{split}
\end{equation*}
In particular, if we take $ M=\cb $, then $ \mathbf{L}f^{*}(\cb)\cong\ca\lten_{\cb}\ca $.

\begin{Def}\rm
	A dg category $\mathcal{A} $ is said to be \emph{(homologically) smooth} if the diagonal bimodule $ \mathcal{A} $ is perfect as a module over $ \mathcal{A}^{e} $, i.e.\ $ \mathcal{A} $ is in per$ (\mathcal{A}^{e}) $.	
\end{Def}
\begin{Def}\rm
	A dg category $\mathcal{A} $ is said to be \emph{proper} if $ \ca(X,Y)\in\per k $ for all objects $ (X,Y)\in\ca^{e} $.
\end{Def}

\begin{Def}\label{derived dual}\rm
	For any right $ \mathcal{A}^{e} $-module $ M $, we define its \emph{derived dual} $ M^{\vee} $ in $ \mathcal{D}(\mathcal{A}^{e}) $ as
	\begin{align*}
		M^\vee=\mathbf{R}\mathrm{Hom}_{\mathcal{A}^{e}}(M,\mathcal{A}^{e}).
	\end{align*}
	In particular, the $ \emph{inverse dualizing bimodule} $ of $ \mathcal{A} $ is defined as $ \mathcal{A}^{\vee} $.
\end{Def}

\begin{Def}\label{relative derived dual}\rm
	Let $ G\colon\mathcal{B}\to\mathcal{A} $ be dg functor between dg categories. The \emph{inverse dualizing bimodule of $ G $} in $ \mathcal{D}(\mathcal{A}^{e}) $ is defined as
	\begin{align*}
		\Theta_{G}=\mathbf{R}\mathrm{Hom}_{\mathcal{A}^{e}}(\cone(\mathcal{A}\lten_{\cb}\mathcal{A}\to \mathcal{A}),\mathcal{A}^{e}).
	\end{align*}
\end{Def}

\begin{Def}\rm\cite[Section 4.5]{BK2006}
	A dg category $ \ca $ is called \emph{strictly pretriangulated (=spt)} if it satisfies the following:
	\begin{itemize}
		\item  each object has a suspension, and $ \Si\colon\ca\ra\ca $ is dg dense (i.e.\ every object in the target category is dg isomorphic to some object in the image);
		\item each closed morphism of degree zero has a cone.
	\end{itemize}
\end{Def}
\begin{Prop}\cite[Lemma 2.3]{BK1999}
	Let $ \ca $ be a spt dg category. Then $ Z^{0}(\ca) $ has a canonical Frobenius exact structure, whose stable category coincides with $ H^{0}(\ca) $. Therefore, $ H^{0}(\ca) $ is canonically triangulated.
\end{Prop}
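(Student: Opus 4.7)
The plan is to equip $Z^{0}(\ca)$ with the class $\ce$ of \emph{degree-wise split} short exact sequences, show that $(Z^{0}(\ca),\ce)$ is Frobenius with the contractible objects as projective-injectives, and identify its stable category with $H^{0}(\ca)$; Happel's theorem~\cite{Happ1988} then supplies the canonical triangulation on $H^{0}(\ca)$.

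Step 1. I would declare a sequence $X\xrightarrow{i}Y\xrightarrow{p}Z$ of closed degree-$0$ morphisms to be a conflation iff there exist $r\in\ca(Y,X)^{0}$ and $s\in\ca(Z,Y)^{0}$ (not necessarily closed) with $pi=0$, $ri=\id_{X}$, $ps=\id_{Z}$ and $ir+sp=\id_{Y}$; equivalently, the sequence splits as a sequence of \emph{graded} objects in $\ca$. (This recovers the standard degree-wise split exact structure when $\ca=\cc_{dg}(k)$.) Then I would verify Quillen's axioms by direct manipulation: the non-closed splittings furnish the homotopies needed to build the pushout of a conflation along an inflation and the pullback along a deflation.

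Step 2. Call an object $P$ \emph{contractible} if $\id_{P}=d(h)$ for some $h\in\ca(P,P)^{-1}$. Using $h$ as a contracting homotopy one checks that $\Hom_{Z^{0}(\ca)}(-,P)$ and $\Hom_{Z^{0}(\ca)}(P,-)$ are exact on $\ce$, so contractible objects are projective-injective. The spt hypothesis furnishes the cones $C(\id_{X})$ and $C(\id_{\Si^{-1}X})$ (both contractible), and the canonical conflations
$$ X\to C(\id_{X})\to\Si X,\qquad \Si^{-1}X\to C(\id_{\Si^{-1}X})\to X $$
show there are enough injectives and enough projectives, with the two classes coinciding. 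Conversely, any projective-injective is a direct summand of such a cone, hence contractible, so $Z^{0}(\ca)$ is Frobenius.

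Step 3. To identify the stable category, I would prove that $f,g\in Z^{0}(\ca)(X,Y)$ agree in $\underline{Z^{0}(\ca)}$ iff $f-g$ factors through a contractible object iff $f-g=d(h)$ for some $h\in\ca(X,Y)^{-1}$, iff they induce the same morphism in $H^{0}(\ca)$. One direction uses the homotopy $h$ to factor $f-g$ through $C(\id_{X})$ via the canonical inclusion and an evaluation map built from $g$ and $h$; the other uses the identity $\beta\alpha=d(\beta k \alpha)$ whenever $\id_{P}=d(k)$ and $\alpha,\beta$ are cycles. This yields $\underline{Z^{0}(\ca)}=H^{0}(\ca)$, and Happel's theorem completes the proof. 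The main obstacle will be the verification of Quillen's axioms in Step~1, especially closure of $\ce$ under pushouts along inflations and pullbacks along deflations: this demands a careful assembly of the new conflation from the non-closed graded splittings while keeping every differential compatible, and it is where the technical bookkeeping concentrates.
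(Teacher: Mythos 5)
Your argument is correct and coincides with the standard proof: the paper does not prove this proposition but cites it from Keller's work, where the argument is exactly the one you outline (graded-split conflations, contractible objects as the projective-injectives via the conflations $X\to C(\id_X)\to\Si X$ and $\Si^{-1}X\to C(\id_{\Si^{-1}X})\to X$ supplied by the spt hypothesis, and the identification of morphisms factoring through contractibles with null-homotopic ones via $\beta\alpha=d(\beta k\alpha)$). The only point to keep in mind in Step~1 is that the pushout of an inflation (resp.\ pullback of a deflation) must itself be produced inside $Z^{0}(\ca)$, which you can do by realizing it as a cone on a closed morphism built from the graded splittings, again using the spt hypothesis.
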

\begin{Def}\rm
	The \emph{pretriangulated hull} $ \ca^{pretr} $ is the smallest dg subcategory of $ \cc_{dg}(\ca) $ containing $ \ca $, closed under $ \Si^{\pm} $ and cones. As $ \ca^{pretr} $ is spt, the \emph{triangulated hull} $ \ca^{tr} $ of $ \ca $ is defined to be $ H^{0}(\ca^{pretr}) $.
\end{Def}

\begin{Thm}\cite[Theorem 3.4]{VD2004}\label{Thm:dg quotient to tri quo}
	Let $ \ca $ be a dg category and $ \cb \subseteq\ca $ a full dg subcategory. Then the canonical functor $$ \ca^{tr}/\cb^{tr}\iso(\ca/\cb)^{tr} $$ is a triangle equivalence.
\end{Thm}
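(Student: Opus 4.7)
The plan is to construct a triangle functor $\Phi:\ca^{tr}/\cb^{tr}\to(\ca/\cb)^{tr}$, then verify that it is essentially surjective and fully faithful.

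\textbf{Construction of $\Phi$ and essential surjectivity.} In the Drinfeld quotient $\ca/\cb$, each object $U\in\obj(\cb)$ carries a degree $-1$ endomorphism $\epsilon_{U}$ with $d\epsilon_{U}=\boldsymbol{1}_{U}$; hence $U$ is contractible in $\ca/\cb$ and therefore isomorphic to $0$ in $(\ca/\cb)^{tr}$. Applying the pretriangulated hull to the quotient dg functor $p:\ca\to\ca/\cb$ and then passing to $H^{0}$, one obtains a triangle functor $\bar{p}:\ca^{tr}\to(\ca/\cb)^{tr}$ which annihilates every object of $\cb$, and hence also the thick subcategory $\cb^{tr}$ it generates. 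By the universal property of the Verdier quotient, $\bar p$ factors through $\ca^{tr}/\cb^{tr}$, yielding $\Phi$. Since $\obj(\ca/\cb)=\obj(\ca)$, every object of $(\ca/\cb)^{tr}$ is built by shifts, cones and summands from the image of $\ca$; as $\Phi$ is triangulated and hits each $X\in\ca$, essential surjectivity follows.

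\textbf{Fully faithfulness on representables, the main step.} For $X,Y\in\obj(\ca)$, the Verdier calculus of fractions gives
\[
\Hom_{\ca^{tr}/\cb^{tr}}(X,Y[n])\;=\;\mathrm{colim}_{s:X'\to X,\ \cone(s)\in\cb^{tr}}\Hom_{\ca^{tr}}(X',Y[n]).
\]
On the other side, Drinfeld's explicit description decomposes the complex $(\ca/\cb)(X,Y)$ as a direct sum over zigzags $X\to U_{1}\to\cdots\to U_{m}\to Y$ with $U_{i}\in\cb$, with differential twisted by the relations $d\epsilon_{U_{i}}=\boldsymbol{1}_{U_{i}}$. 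The crux of the proof is to match the two descriptions. Concretely, I would: (a) construct for each $m$ a semi-free object $P_{m}\in\ca^{pretr}$ as an iterated cone on morphisms between shifts of objects of $\cb$, arranged so that the natural map $P_{m}\to X$ has cone in $\cb^{tr}$; (b) identify $\Hom_{\ca^{tr}}(P_{m},Y[n])$ with the $n$-th cohomology of the length-$\leq m$ truncation of the zigzag complex; (c) let $m\to\infty$, obtaining simultaneously the full morphism complex of $\ca/\cb$ on the right-hand side and the filtered colimit on the left-hand side, with the comparison map induced by the evident Yoneda/evaluation pairing.

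\textbf{Reduction to arbitrary objects.} Any object of $\ca^{pretr}$ is a finite iterated cone of shifts of representables. Since both $\Hom_{\ca^{tr}/\cb^{tr}}(-,-)$ and $\Hom_{(\ca/\cb)^{tr}}(\Phi-,\Phi-)$ are cohomological in each variable and $\Phi$ is a triangle functor, a five-lemma induction on cone length propagates fully faithfulness from pairs $(X,Y)\in\ca\times\ca$ to all pairs of objects in $\ca^{tr}/\cb^{tr}$.

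The main obstacle is step (b): matching the zigzag complex to a model for the colimit that computes Verdier Hom. This is Drinfeld's central technical computation; rigorously, it rests on the fact that $p:\ca\to\ca/\cb$ is the homotopy cofiber of $\cb\hookrightarrow\ca$ in $\dgcat$ equipped with the Dwyer--Kan model structure, which provides exactly the flexibility needed to replace objects by the semi-free resolutions used in (a) and to recognize their images in $(\ca/\cb)^{tr}$.
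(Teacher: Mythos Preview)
The paper does not supply its own proof of this statement: it is quoted verbatim as \cite[Theorem 3.4]{VD2004} and used as a black box, so there is nothing to compare your argument against in this paper.

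That said, your outline is a faithful sketch of Drinfeld's original strategy. The construction of $\Phi$, essential surjectivity, and the five-lemma reduction to representables are all standard and correct. Your step (b)---identifying the filtered colimit computing Verdier $\Hom$ with the cohomology of the zigzag complex---is indeed the heart of the matter, and you correctly flag it as the nontrivial point rather than claiming to have carried it out. One small caution: your description of the objects $P_m$ as ``iterated cones on morphisms between shifts of objects of $\cb$'' with a map to $X$ is slightly imprecise; what one actually uses is the bar-type resolution of $X$ by free $\ca$-modules built from $\cb$, and the identification in (b) goes through the explicit formula for $(\ca/\cb)(X,Y)$ as a total complex. If you were to write this up fully, you would want to make that resolution explicit rather than leaving it at the level of ``semi-free objects arranged so that\ldots''.
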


\subsection{Relative Hochschild homology}
Let $ \mathcal{A} $ be a dg category. The $ \emph{bar resolution} $ $ (C^{bar}(\mathcal{A}),b') $ of the diagonal bimodule $ \mathcal{A} $ is the dg $ \mathcal{A} $-bimodule whose value at $ (x,y) $ is given by the total complex of the bicomplex whose $ (n,j) $-th entry is 
\begin{align*}
	\xymatrix{
		C_{n}^{bar}(x,y)^{(j)}\coloneqq\oplus_{x_{0},\cdots,x_{n-1}}\{\mathcal{A}(x_{n-1},y)\otimes_{k}\mathcal{A}(x_{n-2},x_{n-1})\otimes_{k}\cdots\otimes_{k}\mathcal{A}(x_{0},x_{1})\otimes_{k}\mathcal{A}(x,x_{0})\}^{(j)}
	},
\end{align*}
where the horizontal arrows are given by the Hochschild differential
\begin{align*}
	\xymatrix{
		d_{0}(a_{0}\otimes\cdots\otimes a_{n})\coloneqq\Sigma_{i=0}^{n-1}\pm a_{0}\otimes\cdots\otimes a_{i}a_{i+1}\otimes\cdots\otimes a_{n}
	}.
\end{align*}
and the vertical arrows are the differentials of the tensor products.

The augmentation is the morphism of bimodules
\begin{align*}
	\xymatrix{
		\epsilon_{\mathcal{A}}\colon C^{bar}(\mathcal{A})\ar[r]&\mathcal{A}
	}
\end{align*}
which is
\begin{align*}
	\xymatrix{
		\epsilon_{x,y}\colon\oplus_{z\in Obj(\mathcal{A})}\mathcal{A}(z,y)\otimes_{k}\mathcal{A}(x,z)\ar[r]&\mathcal{A}(x,y)\colon f\otimes f^{'}\ar@{|->}[r]&f\circ f^{'}
	}
\end{align*}
and 0 everywhere else. $ C^{bar}(\mathcal{A}) $ is a cofibrant replacement of $ \mathcal{A} $ in the category of $ \mathcal{A} $-bimodules.

\begin{Def}\rm
	Let $ \mathcal{A} $ be a small dg category. Then the \emph{Hochschild complex} of $ \mathcal{A} $ is defined as
	\begin{align*}
		\xymatrix{
			H\!H(\mathcal{A})=\mathcal{A}\otimes_{\mathcal{A}^{e}}C^{bar}(\mathcal{A})
		}
	\end{align*} and the \emph{Hochschild homology} $ H\!H_{\bullet}(\ca) $ of $ \mathcal{A} $ is the homology of this complex. More precisely,
	\begin{align*}
		\xymatrix{
			H\!H(\mathcal{A})=\bigoplus_{m\geqslant0}\{\bigoplus_{(x_{0},x_{1},\cdots,x_{m})\in Obj(\mathcal{A})}\mathcal{A}(x_{m},x_{0})\otimes(\Si\mathcal{A}(x_{m-1},x_{m}))\otimes\cdots\otimes(\Si\mathcal{A}(x_{0},x_{1}))\}
		}
	\end{align*}
	We denote by $ b $ the differential of $ H\!H(\mathcal{A}) $.
	
\end{Def}

Let $ G\colon\mathcal{B}\to\mathcal{A} $ be dg functor. Then $ G $ induces a canonical morphism of $ \mathcal{B} $-bimodules $ G_{\mathcal{B},\mathcal{A}}\colon C^{bar}(\mathcal{B})\to C^{bar}(\mathcal{A}) $ and we have the following commutative diagram of $ \mathcal{B} $-bimodules
\begin{align*}
	\xymatrix{
		\mathcal{B}\ar[r]^{G}&\mathcal{A}\\
		C^{bar}(\mathcal{B})\ar[r]^{G_{\mathcal{B},\mathcal{A}}}\ar[u]^{\epsilon_{\mathcal{B}}}& C^{bar}(\mathcal{A})\ar[u]_{\epsilon_{\mathcal{A}}}.
	}
\end{align*}
Thus, we have a canonical morphism of Hochschild complexes
\begin{align*}
	\xymatrix@=1.5cm{
		\gamma_{G}\colon H\!H(\mathcal{B})=\mathcal{B}\otimes_{\mathcal{B}^{e}}C^{bar}(\mathcal{B})\ar[r]^-{G\otimes G_{\mathcal{B},\mathcal{A}}}&\mathcal{A}\otimes_{\mathcal{B}^{e}}C^{bar}(\mathcal{A})\ar@{->>}[r]^-{can}& H\!H(\mathcal{A})=\mathcal{A}\otimes_{\mathcal{A}^{e}}C^{bar}(\mathcal{A})
	}
\end{align*}

\begin{Def}\label{relative Hochschild homology}\rm
	The \emph{Hochschild homology} $ H\!H_{\bullet}(G) $ of the dg functor $ G\colon\mathcal{B}\to\mathcal{A} $ is the homology of the \emph{relative Hochschild complex} which is defined as follows
	\begin{align*}
		\xymatrix{
			H\!H(G)=\cone(\gamma_{G}\colon H\!H(\mathcal{B})\to H\!H(\mathcal{A}))
		}.
	\end{align*}
\end{Def}

\subsection{Mixed complexes}
Let $ \Lambda $ be the dg algebra generated by an indeterminate $ \epsilon $ of chain degree $ -1 $ with $ \epsilon^{2}=0 $ and $ d\epsilon=0 $. The underlying complex of $ \Lambda $ is
$$ \cdots0\rightarrow0\rightarrow k\epsilon\rightarrow k\rightarrow0\cdots. $$
It carries the structure of graded commutative Hopf
algebra with coproduct given by $ \Delta(\epsilon)=1\ten\epsilon+\epsilon\ten1 $.

Then a \emph{mixed complex} over $ k $ is a dg right $ \Lambda $-module whose underlying dg $ k $-module is $ (M,b) $ and where $ \epsilon $ acts by $ B $. Suppose that $ M=(M,b,B) $ is a mixed complex. Then the \emph{shifted mixed complex} $ \Si M $ is the mixed complex such that $ (\Si M)_{p}=M_{p-1} $ for all $ p $, $ b_{\Si M}=-b $ and $ B_{\Si M}=-B $. Let $ f\colon M\rightarrow M' $ be a morphism of mixed complexes. Then the \emph{mapping cone} over $ f $ is the mixed complex
$$ \bigg(M'\oplus M,\begin{bmatrix}
	b_{M'} & f \\
	0 & -b_{M}
\end{bmatrix},\begin{bmatrix}
	B_{M'} & 0 \\
	0 & -B_{M}
\end{bmatrix}\bigg). $$
We denote by $ \cm ix $ the category of mixed complexes and by $ \cd\cm ix $ the derived category of the dg algebra $ \Lambda $.

Let $ \mathcal{A} $ be a dg category. We associate a precyclic chain complex $ C(\ca) $ (see~\cite{JLLoday_book}) with $ \ca $ as follows: For each $ n\in\mathbb{N} $, its $ n $-th term is
$$ \coprod\ca(x_{n},x_{0})\ten\ca(x_{n-1},x_{n})\ten\ca(x_{n-2},x_{n-1})\ten\cdots\ten\ca(x_{0},x_{1}),$$ where the sum runs over all sequences $ x_{0},\ldots,x_{n} $ of objects of $ \ca $. The degeneracy maps are given by
\begin{equation*}
	d_{i}(f_{n},\ldots,f_{i},f_{i-1},\ldots,f_{0})=\left\{
	\begin{aligned}
		(f_{n},\ldots,f_{i}f_{i-1},\ldots,f_{0})&&\text{if $ i>0 $,}\\
		(-1)^{n+\sigma}(f_{0}f_{n},\ldots,f_{1})&&\text{if $ i=0 $},
	\end{aligned}
	\right.
\end{equation*}
where $ \sigma=(\mathrm{deg}f_{0})(\mathrm{deg}f_{1}+\cdots+\mathrm{deg}f_{n-1}) $. The cyclic operator is given by
$$ t(f_{n-1},\ldots,f_{0})=(-1)^{n+\sigma}(f_{0},f_{n-1},f_{n-2},\cdots,f_{1}) .$$

Then we associate a mixed complex $ (M(\mathcal{A}),b,B) $ with this precyclic chain complex as follows: The underlying dg module of $ M(\ca) $ is the mapping cone over $ (1 − t) $ viewed as a morphism of complexes
$$ 1-t\colon (C(\ca),b')\ra(C(\ca),b) ,$$ where $ b=\sum_{i=0}^{n}(-1)^{i}d_{i} $ and $ b'=\sum_{i=0}^{n-1}(-1)^{i}d_{i} $.
Its underlying module is $ C(\ca)\oplus C(\ca) $; it is endowed with the grading whose $ n $-th component is $ C(\ca)_{n}\oplus C(\ca)_{n-1} $ and the differential is 
$$ \begin{bmatrix} b & 1-t \\ 0 & -b' \end{bmatrix} .$$
The operator $ B\colon M(\ca)\ra M(\ca) $ is
$$ \begin{bmatrix} 0 & 0 \\ N & 0 \end{bmatrix} ,$$ where $ N=\sum_{i=0}^{n}t^{i} $.

\begin{Def}\rm
	The \emph{cyclic homology} $ H\!C_{\bullet}(\ca) $ of $ \mathcal{A} $ is defined to be the homology of the \emph{cyclic chain complex} of $ \ca $
	$$ H\!C(\ca)=M(\ca)\lten_{\Lambda}k .$$
	
	The \emph{negative cyclic homology} $ H\!N_{\bullet}(\ca) $ of $ \mathcal{A} $ is defined to be the homology of the \emph{negative cyclic chain complex} of $ \ca $
	$$ H\!N(\ca)=\RHom_{\Lambda}(k,M(\ca)). $$
\end{Def}
\begin{Rem}
	The dg algebra $ \Lambda $ is the singular homology with coefficients in $ k $ of the circle $ S^{1} $. The circle action is captured algebraically in terms of the structure of a mixed complex so that the above constructions can be explained as homotopy orbit and homotopy fixed points of the Hochschild complex $ C_{\bullet}(\mathcal{A}) $ with the algebraic circle action (see~\cite{CK1987,JLLoday_book,MH2018}).
\end{Rem}

The augmentation morphism $ \Lambda\to k $ induces natural morphisms in $ \cd(k) $
$$ H\!N(\ca)\rightarrow H\!H(\ca)\rightarrow H\!C(\ca).$$

Let $ G\colon\mathcal{B}\to \mathcal{A} $ be a dg functor. It induces a canonical morphism $ \gamma_{G}\colon M(\mathcal{B})\to M(\mathcal{A}) $ between their mixed complexes. We denote by $ M(G) $ the mapping cone over $ \gamma_{G} $.
\begin{Def}\rm
	The \emph{cyclic homology} $ H\!C_{\bullet}(G) $ of $ G\colon\cb\rightarrow\ca $ is defined to be the homology of the \emph{cyclic chain complex group} of $ G $
	$$ H\!C(G)=M(G)\lten_{\Lambda}k. $$
	The \emph{negative cyclic homology} $ H\!N_{\bullet}(G) $ of $ G\colon\cb\rightarrow\ca $ is defined to be the homology of the \emph{negative cyclic chain complex} of $ G $
	$$ H\!N(G)=\RHom_{\Lambda}(k,M(G)). $$
\end{Def}

Similarly, the augmentation morphism $ \Lambda\to k $ induces natural morphisms in $ \cd(k) $
$$ H\!N(G)\rightarrow H\!H(G)\rightarrow H\!C(G).$$

\begin{Thm}\cite[Theorem 1.5]{BK1999}\label{Thm: Morita invariance of mixed complex}
	Let $ \ca $ and $ \cb $ be dg categories. Let $ G\colon\cb\rightarrow\ca $ be a \emph{Morita functor}, i.e.\ a dg functor such that $ G_{*}\colon\cd(\cb)\rightarrow\cd(\ca) $ is an equivalence. Then $ \gamma_{G}\colon M(\cb)\rightarrow M(\ca) $ is an isomorphism in $ \cd(\Lambda) $.
\end{Thm}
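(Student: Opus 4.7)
The plan is to decompose the Morita functor into elementary pieces under each of which the mixed complex construction can be shown to preserve quasi-isomorphisms. The strategy follows Keller's original approach: first pass from $\cb$ and $\ca$ to a common ``Morita envelope'' (a small dg model $\widetilde{\ca}$ of $\per\ca$), then show that each elementary step of the envelope construction preserves $M$ up to quasi-isomorphism in $\cd(\Lambda)$.

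First I would observe that quasi-equivalences of dg categories induce quasi-isomorphisms on mixed complexes. Indeed, the $n$-th component of $M(\ca)$ is a direct sum of tensor products $\ca(x_{n},x_{0})\ten\Si\ca(x_{n-1},x_{n})\ten\cdots\ten\Si\ca(x_{0},x_{1})$ of morphism complexes, so a componentwise quasi-isomorphism on morphism spaces induces, by flatness of tensor products over the field $k$, a quasi-isomorphism on each summand, and the differential $b$ and operator $B$ are natural in $\ca$. Since $G_{*}\colon\cd(\cb)\iso\cd(\ca)$ is an equivalence, $G$ extends to a quasi-equivalence $\widetilde{G}\colon\widetilde{\cb}\iso\widetilde{\ca}$ between Morita envelopes. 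Thus the theorem reduces to showing that the canonical embedding $\iota_{\ca}\colon\ca\hookrightarrow\widetilde{\ca}$ induces a quasi-isomorphism $\gamma_{\iota_{\ca}}\colon M(\ca)\iso M(\widetilde{\ca})$.

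The envelope $\widetilde{\ca}$ can be constructed in three stages: (a) additive closure $\ca\to\ca^{add}$; (b) closure under shifts and cones, yielding the pretriangulated hull $\ca^{pretr}$; (c) idempotent completion. I would treat each stage separately. For (a), a generalized cyclic-trace map $M(\ca^{add})\to M(\ca)$, sending a tensor of matrix-valued morphisms to the sum of its diagonal entries, provides an explicit quasi-inverse compatible with both $b$ and $B$, reproducing the classical Morita invariance for matrix algebras. For (b), I would induct on the number of cone/shift operations: adjoining a new object $\cone(f)$ for a closed degree-zero morphism $f$ yields a short exact sequence of mixed complexes whose third term is contractible, by the semi-orthogonal decomposition argument coming from the defining triangle. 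For (c), splitting a homotopy idempotent $e$ can be modeled as adding a retract, and the induced map on mixed complexes is a quasi-isomorphism by an averaging argument using $e$ and $1-e$.

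The main obstacle I foresee is step (b): verifying that the cone-filtration argument is compatible with the Connes operator $B$, and not merely with the Hochschild differential $b$. The Morita invariance of $H\!H$ at the Hochschild level is classical, but the mixed-complex refinement demands checking cyclic relations along each induction step, which amounts to a somewhat intricate chain-level manipulation on the cyclic symmetrizers $N=\sum t^{i}$ and $1-t$ in the definition of $M(\ca)$. Once this compatibility is secured, combining the three invariances with the initial reduction to quasi-equivalences yields the desired isomorphism $\gamma_{G}$ in $\cd(\Lambda)$.
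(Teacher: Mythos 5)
The paper does not actually prove this statement: it is quoted from Keller's \emph{On the cyclic homology of exact categories} \cite[Theorem 1.5]{BK1999}, so there is no in-text proof to compare yours against. Your architecture --- reduce along the quasi-equivalence of Morita envelopes to the single embedding $\ca\hookrightarrow\per_{dg}(\ca)$, then treat additive closure, cones/shifts and idempotent splitting one elementary step at a time --- is the standard one, and the matrix step (generalized trace, compatible with $b$ and $B$) and the idempotent step are indeed classical.

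Two concrete problems remain. First, your opening reduction (``quasi-equivalences induce quasi-isomorphisms on mixed complexes\dots by flatness'') only covers functors that are bijective on objects and quasi-fully faithful; essential surjectivity on $H^{0}$ forces you to add objects merely homotopy equivalent to existing ones, and showing this leaves $M$ unchanged is an instance of exactly the kind of statement you defer to step (b), not a consequence of flatness. Second, and this is the crux, step (b) is asserted rather than proven: there is no semi-orthogonal decomposition available when you adjoin $z=\cone(f)$, because $z$ is \emph{generated by}, not orthogonal to, the old objects, so the acyclicity of the quotient $M(\ca')/M(\ca)$ (the cyclic words passing through $z$) does not follow from any additivity-for-SOD principle; it needs an explicit contracting homotopy or a comparison with the bimodule description $H\!H(\ca)\simeq\ca\lten_{\ca^{e}}\ca$, and that comparison is the real content of Keller's theorem. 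By contrast, the obstacle you single out --- compatibility of the homotopies with the Connes operator $B$ --- is largely illusory: a morphism of mixed complexes is invertible in $\cd(\Lambda)$ if and only if it is a quasi-isomorphism of the underlying complexes, so once all your comparison maps are $\Lambda$-linear (as $\gamma_{G}$ and the generalized trace are), the whole problem reduces to Morita invariance of Hochschild homology and no cyclic relations need to be rechecked along the induction.
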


\section{Relative Calabi--Yau structures}\label{section RCY}

\subsection{Reminder on the derived category of morphisms} Let $I$ be the path $k$-category of
the quiver $1 \to 2$. The letter $I$ stands for `interval'. Let $\ca$ be a dg $k$-category. The objects of the derived category
$\cd(I^{op}\ten \ca)$ identify with morphisms $f\colon M_1 \to M_2$ of dg $\ca$-modules. Each such object gives
rise to a triangle
\[
\xymatrix{
	M_1 \ar[r]^{f} & M_2 \ar[r] & \cof(f) \ar[r] & \Sigma M_1
}
\]
of $\cd\ca$ which is {\em functorial} in the object $f$ of $\cd(I^{op}\ten\ca)$. Here, we write $ \cof $ for the homotopy cofiber, i.e.\ the cone of a morphism.

For two objects $f\colon M_1 \to M_2$ and $f'\colon M_1'\to M_2'$, consider a morphism of triangles
\[
\xymatrix{
	M_1 \ar[d]_a \ar[r]^f & M_2 \ar[d]^b \ar[r] & \cof(f) \ar[d]^c \ar[r] & \Sigma M_1 \ar[d]^{\Sigma a}\\
	M_1' \ar[r]^{f'} & M'_2 \ar[r]^-{g'} & \cof(f') \ar[r] & \Sigma M_1'}
\]
in the derived category $\cd\ca$. It is well-known that a given morphism $b\colon M_2 \to M_2'$
extends to such a morphism of triangles $(a,b,c)$ if and only if we have $g' \circ b \circ f=0$ and
that in this case, the pair $(a,b)$ lifts to a morphism of $\cd(I^{op}\ten \ca)$. The following easy
lemma makes this more precise. Here, we write $\fib$ for the homotopy fiber, i.e.\ the
desuspension of the cone of a morphism.

\begin{Lem}\label{lemma:morphisms in I*A}
	We have a canonical isomorphism bifunctorial in the objects
	$f$ and $f'$ of $\cd(I^{op}\ten \ca)$
	\[
	\RHom_{I^{op}\ten \ca}(f,f') \iso \fib(\RHom_\ca(M_2, M_2') \to \RHom_\ca(M_1, \cof(f')).
	\]
	More precisely, let $ g\colon N_{1}\rightarrow N_{2} $ be an object in $ \cd(I^{op}\ten\ca) $ and $ \beta\colon f'\rightarrow g $ a morphism in $ \cd(I^{op}\ten\ca) $, we have the following commutative diagram
	\begin{align*}
		\xymatrix{
			\RHom_{I^{op}\ten \ca}(f,f')\ar[r]^-{\sim}\ar[d]^{\beta_{*}}& \fib(\RHom_\ca(M_2, M_2')\to \RHom_\ca(M_1, \cof(f'))\ar[d]^{\beta_{*}}\\
			\RHom_{I^{op}\ten \ca}(f,g)\ar[r]^-{\sim}& \fib(\RHom_\ca(M_2, N_2)\to \RHom_\ca(M_1, \cof(g)).
		}
	\end{align*}
\end{Lem}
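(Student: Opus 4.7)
The plan is to realise $\RHom_{I^{op}\ten\ca}(f,f')$ as a vertex of a distinguished triangle in $\cd(k)$ obtained by applying $\RHom_{I^{op}\ten\ca}(-,f')$ to a canonical triangle attached to $f$. Start with the short exact sequence of $I^{op}\ten\ca$-modules
\[
0\to(0\to M_2)\to(M_1\xrightarrow{f}M_2)\to(M_1\to 0)\to 0,
\]
which yields a distinguished triangle $(0\to M_2)\to f\to(M_1\to 0)\to(0\to\Si M_2)$ in $\cd(I^{op}\ten\ca)$.

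Next, I would identify the outer terms after applying $\RHom_{I^{op}\ten\ca}(-,f')$. Let $\iota_j\colon\ca\to I^{op}\ten\ca$ denote the inclusion at vertex $j\in\{1,2\}$. Then $\iota_2^{*}(N)=(0\to N)$ and $\iota_1^{*}(N)=(N\xrightarrow{\id}N)$, and the adjunctions $\iota_j^{*}\dashv(\iota_j)_{*}$ give
\[
\RHom_{I^{op}\ten\ca}((0\to N),f')\iso\RHom_\ca(N,M_2'),\qquad \RHom_{I^{op}\ten\ca}((N\xrightarrow{\id}N),f')\iso\RHom_\ca(N,M_1').
\]
Furthermore, $(M_1\to 0)$ fits in a triangle $\iota_2^{*}(M_1)\xrightarrow{(0,\id)}\iota_1^{*}(M_1)\to(M_1\to 0)$, so applying $\RHom_{I^{op}\ten\ca}(-,f')$ identifies $\RHom_{I^{op}\ten\ca}((M_1\to 0),f')$ with $\fib(f'_{*}\colon\RHom_\ca(M_1,M_1')\to\RHom_\ca(M_1,M_2'))\iso\RHom_\ca(M_1,\fib(f'))$. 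Substituting these identifications into the triangle for $f$ and using $\Si\fib(f')\iso\cof(f')$ yields
\[
\RHom_\ca(M_1,\fib(f'))\to\RHom_{I^{op}\ten\ca}(f,f')\to\RHom_\ca(M_2,M_2')\xrightarrow{\partial}\RHom_\ca(M_1,\cof(f')).
\]

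Finally, the connecting map $\partial$ sends $b\colon M_2\to M_2'$ to $g'\circ b\circ f\in\RHom_\ca(M_1,\cof(f'))$. To see this, note that by the identifications above the connecting morphism $(M_1\to 0)\to(0\to\Si M_2)$ of the triangle attached to $f$ corresponds to $f\in\Hom_\ca(M_1,M_2)$; dualising via $\RHom_{I^{op}\ten\ca}(-,f')$ and using the naturality of the fibre/cofibre sequence $\fib(f')\to M_1'\to M_2'\to\cof(f')$ under $\RHom_\ca(M_1,-)$ then produces exactly precomposition with $f$ followed by postcomposition with $g'$. This gives the claimed isomorphism $\RHom_{I^{op}\ten\ca}(f,f')\iso\fib(\RHom_\ca(M_2,M_2')\to\RHom_\ca(M_1,\cof(f')))$. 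Bifunctoriality in $f$ and $f'$, and in particular the commutative square for $\beta\colon f'\to g$, follow from the naturality of every construction used: the short exact sequence depends only on $f$, the adjunctions are natural, and $\RHom(-,-)$ is a bifunctor. The main obstacle is the precise identification of $\partial$; once that is done, the rest reduces to formal bookkeeping.
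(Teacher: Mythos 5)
Your proof is correct, but it follows a genuinely different route from the paper's. You work entirely in the derived category: you resolve the object $f$ by the triangle $(0\to M_2)\to f\to (M_1\to 0)$, further resolve $(M_1\to 0)$ by the two inductions $\iota_2^{*}(M_1)\to\iota_1^{*}(M_1)$, apply $\RHom_{I^{op}\ten\ca}(-,f')$ and identify the corners by adjunction, so that everything reduces to computing the connecting map $\partial$. The paper instead chooses explicit cofibrant models ($f$ and $f'$ graded split monomorphisms between cofibrant modules) and exhibits the chain-level complex $\ch om(f,f')$ directly as the kernel of the surjective composite $\ch om(M_2,M_2')\to\ch om(M_1,M_2')\to\ch om(M_1,\coker(f'))$ via a bicartesian square. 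The trade-off is clear: the paper's chain-level argument makes the second map of the fibre sequence visibly equal to ``precompose with $f$, postcompose with $M_2'\to\cof(f')$'', and bifunctoriality is immediate from the explicit formulas, whereas your argument is model-independent but concentrates all the difficulty in the identification of $\partial$ — which you rightly flag as the main obstacle. Your identification is correct (the extension class of $0\to(0\to M_2)\to f\to(M_1\to 0)\to 0$ is indeed $f$ under the isomorphism $\Hom_{\cd(I^{op}\ten\ca)}((M_1\to 0),(0\to\Si M_2))\cong\Hom_{\cd\ca}(M_1,M_2)$ that your adjunction computation provides), but to make this step fully rigorous you should compute it with the explicit cone of $(0\to M_2)\to f$ rather than by appeal to naturality alone; any sign ambiguity is harmless since it does not change the fibre.
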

\begin{proof}
	We have isomorphisms of dg categories
	\begin{equation*}
		\begin{split}
			\cc_{dg}(I^{op}\ten\ca)=&\ch om(I\ten\ca^{op},\cc_{dg}(k))\\
			\simeq&\ch om(I,\ch om(\ca^{op},\cc_{dg}(k)))\\
			=&\ch om(I,\cc_{dg}(\ca)).
		\end{split}
	\end{equation*}	
	In this way, $ \cc_{dg}(\ca) $ identifies with the category of morphisms $ M_{1}\rightarrow M_{2} $ of dg $ \ca $-modules with the dg enhancement given by
	\begin{align*}
		\xymatrix{
			\ar @{} [dr] | \corner \ch om(f,f') \ar[r] \ar[d] &\ch om_{\ca}(M_{2},M'_{2}) \ar[d] \\
			\ch om_{\ca}(M_{1},M'_{1}) \ar[r] & \ch om_{\ca}(M_{1},M'_{2}).
		}
	\end{align*}
	
	The model structure on $ \cc(I^{op}\ten\ca) $ translates into a model structure on $ \ch om(I,\cc_{dg}(\ca)) $ whose weak equivalences are the componentwise quasi-isomorphisms and whose cofibrant objects are the graded split monomorphisms $ M_{1}\rightarrowtail M_{2} $ with cofibrant $ M_{1} $ and $ M_{2} $. Therefore, we can assume that $ f $ and $ f' $ are graded split injective morphisms between cofibrant dg $ \ca $-modules. Then we have the following commutative diagram in $ \cc(\ca) $
	\[
	\begin{tikzcd}
		M_{1}\arrow[r,rightarrowtail]\arrow[d]&M_{2}\arrow[r,twoheadrightarrow]\arrow[d]&\coker(f)\arrow[d]\\
		M'_{1}\arrow[r,rightarrowtail]&M'_{2}\arrow[r,twoheadrightarrow]&\coker(f'),
	\end{tikzcd}
	\]
	where the first row and second row are graded split exact sequences. It induces the the following commutative diagram	of complexes
	\[
	\begin{tikzcd}
		\ch om(\coker(f),M'_{2})\arrow[r,rightarrowtail]\arrow[d,equal]&\ch om(f,f') \arrow[r,twoheadrightarrow] \arrow[d,rightarrowtail]
		\arrow[dr, phantom, "\square"]
		&\ch om(M_{1},M'_{1})\arrow[d,rightarrowtail] \\
		\ch om(\coker(f),M'_{2})\arrow[r,rightarrowtail]&\ch om(M_{2},M'_{2})\arrow[r, twoheadrightarrow]\arrow[d,two heads]
		&\ch om(M_{1},M'_{2})\arrow[d,two heads]\\
		&\ch om(M_{1},\coker(f'))\arrow[r,equal]&\ch om(M_{1},\coker(f')),
	\end{tikzcd}
	\]
	where the upper right square is a bicartesian. Thus we have the exact sequence
	\[
	\begin{tikzcd}
		\ch om(f,f')\arrow[r,hook]&\ch om(M_{2},M'_{2})\arrow[r,two heads]&\ch om(M_{1},\coker(f'))
	\end{tikzcd}
	\]
	and the canonical isomorphism 
	\[
	\RHom_{I^{op}\ten \ca}(f,f') \iso \fib(\RHom_\ca(M_2, M_2') \to \RHom_\ca(M_1, \cof(f')).
	\]
	
	%
	
\end{proof}

Let $ G\colon\cb\rightarrow\ca $ be a dg functor. It induces the dg functor $$ \id\ten G\colon I^{op}\ten\cb\longrightarrow I^{op}\ten\ca ,$$ which we still denote by $ G $. It yields the adjunction
$$ \lG\colon\cd(I^{op}\ten\cb)\leftrightarrows\cd(I^{op}\ten\ca)\colon G_{*}. $$

\begin{Lem}\label{Lemma:functor property of morphism of path cat}
	Let $ f\colon M_{1}\rightarrow M_{2} $ and $ f'\colon M'_{1}\rightarrow M'_{2} $ be objects in $ \cd(I^{op}\ten\cb) $. We have the following commutative diagram
	\[
	\begin{tikzcd}
		\RHom_{I^{op}\ten\cb}(f,f')\arrow[r,"\sim"]\arrow[d,"\lG"]&\fib(\RHom_{\cb}(M_{2},M'_{2})\rightarrow\RHom_{\ca}(M_{1},\cof(f'))\arrow[d,"\lG"]\\
		\RHom_{I^{op}\ten\ca}(\lG(f),\lG(f'))\arrow[r,"\sim"]&\fib(\RHom_{\ca}(\lG(M_{2}),\lG(M'_{2}))\rightarrow\RHom_{\ca}(\lG(M_{1}),\cof(\lG(f')).
	\end{tikzcd}
	\]
	Similarly, let $ g\colon N_{1}\rightarrow N_{2} $ and $ g'\colon N'_{1}\rightarrow N'_{2} $ be objects in $ \cd(I^{op}\ten\ca) $. We have the following commutative diagram
	\[
	\begin{tikzcd}
		\RHom_{I^{op}\ten\ca}(g,g')\arrow[r,"\sim"]\arrow[d,"G_{*}"]&\fib(\RHom_{\cb}(N_{2},N'_{2})\rightarrow\RHom_{\ca}(N_{1},\cof(g'))\arrow[d,"G_{*}"]\\
		\RHom_{I^{op}\ten\cb}(G_{*}(g),G_{*}(g'))\arrow[r,"\sim"]&\fib(\RHom_{\ca}(G_{*}(N_{2}),G_{*}(N'_{2}))\rightarrow\RHom_{\ca}(G_{*}(N_{1}),\cof(G_{*}(g')).
	\end{tikzcd}
	\]
\end{Lem}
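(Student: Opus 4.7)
The plan is to reduce both commutative squares to the naturality of the construction in Lemma \ref{lemma:morphisms in I*A} under the functors $\lG$ and $G_{*}$, exploiting that both are triangulated functors. Recall from the adjoint triple $(\lG, G_{*}, \mathbf{R}G^{!})$ that $\lG$ preserves cones (as a left adjoint) and $G_{*}$ preserves cones too (being both a right adjoint to $\lG$ and a left adjoint to $\mathbf{R}G^{!}$). In particular, we have canonical isomorphisms $\lG(\cof(f'))\simeq\cof(\lG(f'))$ and $G_{*}(\cof(g'))\simeq\cof(G_{*}(g'))$, so the lower right-hand vertex of each diagram is well-defined.

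Next, I would retrace the proof of Lemma \ref{lemma:morphisms in I*A}: after choosing cofibrant representatives, $f$ and $f'$ are graded split monomorphisms between cofibrant dg modules, and the canonical isomorphism is extracted from the bicartesian square
\[
\begin{tikzcd}
\ch om(f,f') \arrow[r]\arrow[d] & \ch om(M_{2},M'_{2}) \arrow[d] \\
\ch om(M_{1},M'_{1}) \arrow[r] & \ch om(M_{1},\cof(f'))
\end{tikzcd}
\]
together with the induced fiber sequence. Since $\lG$ is modeled on cofibrant dg modules by the dg functor $?\ten_{\cb}\ca$, it preserves graded split monomorphisms, cofibrant objects, cokernels, and $\ch om$ in the appropriate sense. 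Applying $\lG$ componentwise to the above bicartesian square therefore yields the corresponding bicartesian square for $\lG(f)$ and $\lG(f')$. The universal property of the fiber, applied functorially, produces the first commutative square.

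For the second square, I would apply the dual argument to $G_{*}$. Here the cleanest route is to observe that the description of the canonical isomorphism as the fiber in Lemma \ref{lemma:morphisms in I*A} is preserved by any triangulated functor which commutes with $\cof$; since $G_{*}$ does so (being a right adjoint that also preserves cones), the analogous bicartesian square with $G_{*}$ applied componentwise gives the second commutative square.

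The main technical obstacle is ensuring that $\lG$ acts on the bicartesian square at the strict (dg) level and not just up to quasi-isomorphism, so that the resulting diagram really commutes in $\cd(k)$ and not merely up to higher coherences. This is handled by replacing $f,f'$ with cofibrant models on which $\lG$ is represented by $?\ten_{\cb}\ca$; then everything in the proof of Lemma \ref{lemma:morphisms in I*A} is strictly functorial. For $G_{*}$, the analogous issue does not arise since $G_{*}$ is already a strict (non-derived) restriction functor on dg modules, so it commutes on the nose with all the $\ch om$ and cokernel constructions used.
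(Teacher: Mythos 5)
Your proposal is correct and follows essentially the same route as the paper: reduce to cofibrant models where $f,f'$ are graded split monomorphisms, note that $\lG$ (modeled by $?\ten_{\cb}\ca$) and the strict restriction $G_{*}$ act componentwise on the strict exact sequence $\ch om(f,f')\rightarrowtail\ch om(M_{2},M'_{2})\twoheadrightarrow\ch om(M_{1},\coker(f'))$ from Lemma~\ref{lemma:morphisms in I*A}, and pass to the induced maps on fibers. The only cosmetic difference is that you phrase the compatibility via the bicartesian square and preservation of cones, whereas the paper simply records the strictly commuting diagram of complexes.
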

\begin{proof}
	We only show the first statement since the second one can be shown similarly. We can assume that $ f,f' $ are graded split injective morphisms between cofibrant dg $ \ca $-modules. Then it is easy to see that the following diagram commutes
	\[
	\begin{tikzcd}
		\ch om_{I^{op}\ten\cb}(f,f')\arrow[r,rightarrowtail]\arrow[d]&\ch om_{\cb}(M_{2},M'_{2})\arrow[r,twoheadrightarrow]\arrow[d]&\ch om_{\cb}(M_{1},\coker(f'))\arrow[d]\\
		\ch om_{I^{op}\ten\ca}(G^{*}(f),G^{*}(f'))\arrow[r,rightarrowtail]&\ch om_{\ca}(G^{*}(M_{2}),G^{*}(M'_{2}))\arrow[r,twoheadrightarrow]&\ch om_{\ca}(G^{*}(M_{1}),\coker(G^{*}(f'))).
	\end{tikzcd}
	\]
	Thus we get the first commutative diagram.
	
\end{proof}

\bigskip

Relative right Calabi--Yau structures were invented by Bertrand To\"{e}n in ~\cite[pp. 227-228]{BT2014}. 
Later, the theory of relative right and left Calabi--Yau structures was developed by Chris Brav and Tobias Dyckerhoff in \cite{BD2019}. 
\subsection{Relative right Calabi--Yau structures}
Let $ G\colon\cb\rightarrow\ca $ be a dg functor 
\footnote{The definition we will give actually makes sense even if we do not assume $ \ca $ and $ \cb $ to be proper.}. We denote by $ D\ca^{op} $ the dg $ \ca $-bimodule defined as follows:
$$ D\ca^{op}(X,Y)=D\ca(Y,X),\quad\forall(X,Y)\in\ca^{e}, $$ where $ D $ is the $ k $-linear dual $ \Hom_{k}(?,k) $. We call it the \emph{linear dual bimodule} of $ \ca $.
Similarly, we define the dg $ \cb $-bimodule $ D\cb^{op} $. The natural $ \cb $-bimodule morphism $ u_{G}\colon\cb\rightarrow G_{*}\ca $ induces a morphism between their linear dual bimodules
$$ G_{*}(D\ca^{op})\rightarrow D\cb^{op}. $$
It canonically lifts to an object $ u_{G}^{*} $ of $\cd(I^{op}\ten\cb^e)$. Similarly, its homotopy fiber 
$$ \fib(u_{G}^{*})\rightarrow G_{*}(D\ca^{op}) $$ lifts to an object $ \delta_{G} $ of $\cd(I^{op}\ten\cb^e)$. Each morphism $ \Si^{n-1}(u_{G})\rightarrow \delta_{G} $ gives rise to a morphism of triangles in $\cd(\cb^e)$
\begin{equation}\label{eq:right morphism of triangles}
	\xymatrix@=1.3cm{
		\Si^{n-1}\cb\ar[r]^-{\Si^{n-1}u_{G}}\ar[r]\ar[d]&\Si^{n-1}G_{*}\ca\ar[r]\ar[d]&\cof(\Si^{n-1}u_{G})\ar[r]\ar[d]&\Si^{n}\cb\ar[d]\\
		\fib(u_{G}^{*})\ar[r]^-{\delta_{G}}&G_{*}(D\ca^{op})\ar[r]^{u_{G}^{*}}&D\cb^{op}\ar[r]&\Si\fib(u_{G}^{*})
	}	
\end{equation}
We are therefore interested in morphisms $\Si^{n-1}u_{G}\to \delta_{G}$ in $\cd(I^{op}\ten\cb^e)$. 

\begin{Lem}\label{Lemma: right CY iso map}
	We have a canonical isomorphism
	$$ \RHom_{I^{op}\ten\cb^{e}}(u_{G},\delta_{G})\iso\fib(\Hom_{k}(\ca\lten_{\cb^{
			e}}\ca,k)\rightarrow \Hom_{k}(\cb\lten_{\cb^{e}}\cb,k)). $$
	Moreover this isomorphism is compatible with the composition of dg functors, i.e.\ if $ Q\colon\ca\rightarrow\cc $ is another dg functor, then we have the following commutative diagram
	\begin{align*}
		\xymatrix{
			\RHom_{I^{op}\ten\cb^{e}}(u_{Q\circ G},\delta_{Q\circ G})\ar[r]^-{\sim}\ar[d]&\fib(\Hom_{k}(\cc\lten_{\cb^{e}}\cc,k)\rightarrow \Hom_{k}(\cb\lten_{\cb^{e}}\cb,k))\ar[d]\\
			\RHom_{I^{op}\ten\cb^{e}}(u_{G},\delta_{G})\ar[r]^-{\sim}&\fib(\Hom_{k}(\ca\lten_{\cb^{e}}\ca,k)\rightarrow \Hom_{k}(\cb\lten_{\cb^{e}}\cb,k))
		}
	\end{align*}
	
\end{Lem}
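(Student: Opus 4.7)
The plan is to apply Lemma~\ref{lemma:morphisms in I*A} to reduce the $\RHom$-space on the left to a fiber of two $\RHom_{\cb^{e}}$-complexes, and then identify those complexes via a standard tensor-hom duality. The second part of the lemma (compatibility with composition) is then a naturality check.

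First, apply Lemma~\ref{lemma:morphisms in I*A} with $\ca$ replaced by $\cb^{e}$, $f=u_{G}\colon\cb\to G_{*}\ca$ and $f'=\delta_{G}\colon\fib(u_{G}^{*})\to G_{*}(D\ca^{op})$. The defining triangle~(\ref{eq:right morphism of triangles}) for $\delta_{G}$ identifies $\cof(\delta_{G})\simeq D\cb^{op}$, so the lemma yields
\[
\RHom_{I^{op}\ten\cb^{e}}(u_{G},\delta_{G})\iso\fib\bigl(\RHom_{\cb^{e}}(G_{*}\ca,G_{*}(D\ca^{op}))\to\RHom_{\cb^{e}}(\cb,D\cb^{op})\bigr),
\]
with the inner arrow induced by $u_{G}$ in the source and by $u_{G}^{*}$ in the target.

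Second, identify each of the two $\RHom_{\cb^{e}}$-complexes. For any $\cb$-bimodules $M,N$, with $DN^{op}$ the $\cb$-bimodule $(x,y)\mapsto\Hom_{k}(N(y,x),k)$, the standard tensor-hom adjunction provides a natural isomorphism
\[
\RHom_{\cb^{e}}(M,DN^{op})\iso\Hom_{k}(M\lten_{\cb^{e}}N,k).
\]
Since $G_{*}(D\ca^{op})\cong D(G_{*}\ca)^{op}$ canonically as $\cb$-bimodules and $G_{*}\ca\lten_{\cb^{e}}G_{*}\ca\cong\ca\lten_{\cb^{e}}\ca$, applying this adjunction with $(M,N)=(G_{*}\ca,G_{*}\ca)$ and $(M,N)=(\cb,\cb)$ converts the two $\RHom$'s into $\Hom_{k}(\ca\lten_{\cb^{e}}\ca,k)$ and $\Hom_{k}(\cb\lten_{\cb^{e}}\cb,k)$ respectively. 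The inner map becomes the $k$-dual of the canonical map $\cb\lten_{\cb^{e}}\cb\to\ca\lten_{\cb^{e}}\ca$ induced by $G$, which is exactly the first isomorphism claimed.

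Third, for the compatibility with a further dg functor $Q\colon\ca\to\cc$, observe that $u_{QG}\colon\cb\to(QG)_{*}\cc$ factors as $G_{*}(u_{Q})\circ u_{G}$, giving a morphism $u_{G}\to u_{QG}$ in $\cd(I^{op}\ten\cb^{e})$; $k$-linear duality produces a companion morphism $\delta_{QG}\to\delta_{G}$ since $u_{QG}^{*}=u_{G}^{*}\circ D(G_{*}(u_{Q}))$. Pre- and post-composition with these two morphisms is the left-hand vertical arrow of the square in the statement, while its right-hand vertical arrow is the $k$-dual of the map $\ca\lten_{\cb^{e}}\ca\to\cc\lten_{\cb^{e}}\cc$ induced by $Q$. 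Commutativity of the square then follows from the naturality of Lemma~\ref{lemma:morphisms in I*A} in its second argument (which is exactly the displayed naturality statement in that lemma) combined with the bifunctorial naturality of the tensor-hom adjunction above. The main subtlety is careful bookkeeping of the ``flipped'' bimodule structures on linear duals, in particular the identifications $G_{*}(D\ca^{op})\cong D(G_{*}\ca)^{op}$ and the matching of $u_{G}^{*}$ with the $k$-dual of the obvious bar-type map; nothing here is deep, but sign and side conventions must be tracked consistently throughout.
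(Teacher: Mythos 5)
Your proposal is correct and follows essentially the same route as the paper: apply Lemma~\ref{lemma:morphisms in I*A} to $u_{G}$ and $\delta_{G}$ (using $\cof(\delta_{G})\simeq D\cb^{op}$), identify the two resulting $\RHom_{\cb^{e}}$-complexes with $k$-duals of derived tensor products via tensor-hom adjunction, and deduce the compatibility square from the induced morphisms $u_{G}\to u_{Q\circ G}$, $\delta_{Q\circ G}\to\delta_{G}$ together with the naturality in Lemma~\ref{lemma:morphisms in I*A}. The only cosmetic difference is that the paper identifies $\RHom_{\cb^{e}}(G_{*}\ca,G_{*}(D\ca^{op}))$ by first passing through the $(\lG,G_{*})$ adjunction to $\ca^{e}$, whereas you dualize directly over $\cb^{e}$; both are standard and equivalent.
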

\begin{proof}
	Using the standard adjunctions, we get 
	\begin{equation*}
		\xymatrix{
			\RHom_{\cb^{e}}(\cb,D\cb^{op})\simeq\Hom_{k}(\cb\lten_{\cb^{e}}\cb,k)
		}
	\end{equation*}
	and
	\begin{equation*}
		\begin{split}
			\RHom_{\cb^{e}}(G_{*}\ca,G_{*}(D\ca^{op}))\simeq&\RHom_{\ca^{e}}(\lG(G_{*}\ca),D(\ca^{op}))\\
			\simeq&\Hom_{k}(G_{*}(\ca)\lten_{\cb^{e}}\ca^{e},k)\\
			\simeq&\Hom_{k}(\ca\lten_{\cb^{e}}\ca,k).
		\end{split}
	\end{equation*}
	
	We know that the composition $ Q\circ G\colon\cb\rightarrow\ca\rightarrow\cc $ induces the following morphisms in $ \cd(I^{op}\ten\cb^{e}) $
	$$ u_{G}\rightarrow u_{Q\circ G},\quad\delta_{Q\circ G}\rightarrow\delta_{G} .$$
	
	Then the claim follows by Lemma~\ref{lemma:morphisms in I*A}.

\end{proof}

We therefore obtain the following chain of morphisms
\begin{align}\label{Diagram:right CY}
	\xymatrix{
		\Hom(H\!C(G),k)\ar[d]&\\
		\Hom(H\!H(G),k)\cong\fib(\Hom(\ca\lten_{\ca^e}\ca,k)\to\Hom(\cb\lten_{\cb^e} \cb,k))\ar[d]\\
		\fib(\Hom(\ca\lten_{\cb^e}\ca,k)\to \Hom(\cb\lten_{\cb^e}\cb,k))\ar[r]^-{\sim}&\RHom_{I^{op}\ten\cb^e}(u_{G},\delta_{G})
	}
\end{align}

\begin{Def}\cite[Definition 4.7]{BD2019}\rm\label{Def: relative right CY}
	A \emph{right $n$-Calabi--Yau structure} on the dg functor $ G\colon\mathcal{B}\to \mathcal{A} $ 
	is a class $ [\omega] $ in $ \Hom(H\!C_{n-1}(G),k) $ such that the associated morphism $\Sigma^{n-1}u_{G} \to \delta_{G}$ is invertible, i.e.\ its associated morphism of triangles (\ref{eq:right morphism of triangles}) is invertible.
\end{Def}

\subsection{Relative left Calabi--Yau structures} 
Let $G\colon \cb \to \ca$ be a dg functor. We assume that $ \cb $ is smooth. This ensures that the canonical morphism $$ \ca\lten_{\cb}\cb^{\vee}\lten_{\cb}\ca\ra(\ca\lten_{\cb}\cb\lten_{\cb}\ca)^{\vee} $$ is invertible in $ \cd(\ca^{e}) $. The composition of $\ca$ induces the morphism
\[
\ca\lten_\cb\ca \to \ca
\]
of $\cd(\ca^e)$. It canonically lifts to an object $\mu_{G}$ of $\cd(I^{op}\ten\ca^e)$. Similarly, its homotopy fiber
\[
\fib(\mu_{G}) \to \ca\lten_\cb\ca
\]
lifts to an object $\nu_{G}$ of $\cd(I^{op}\ten\ca^e)$. Notice that each morphism $ \Si^{n-1}\mu_{G}^\vee \to \nu_{G} $
gives rise to a morphism of triangles in $\cd(\ca^e)$

\begin{equation}\label{eq:left morphism of triangles}
	\xymatrix@=1.5cm{
		\Si^{n-1}\ca^\vee \ar[r]^-{\Si^{n-1}\mu_{G}^\vee} \ar[d] & 
		\Si^{n-1}(\ca \lten_\cb \ca)^\vee \ar[r] \ar[d] & \Si^{n-1} \cof(\mu_{G}^\vee)\ar[d] \ar[r]  & \Si^n \ca^{\vee}\ar[d]\\
		\fib(\mu_{G}) \ar[r]^-{\nu_{G}} &  \ca\lten_\cb\ca \ar[r]^-{\mu_{G}} & \ca \ar[r] & \cof(\mu_{G}).}
\end{equation}
We are therefore interested in morphisms $\Si^{n-1}\mu_{G}^\vee \to \nu_{G}$ in $\cd(I^{op}\ten\ca^e)$. 
\begin{Lem}\label{Lemma:morphism in left CY}
	We have a canonical morphism
	\[
	\fib(\ca\lten_{\cb^e} \ca \to \ca\lten_{\ca^e} \ca)\ra\RHom_{I^{op}\ten\ca^e}(\mu_{G}^\vee,\nu_{G}).
	\] It is invertible if $ \ca $ is smooth. Moreover this canonical morphism is compatible with the composition of dg functors, i.e.\ if $ Q\colon\ca\rightarrow\cc $ is another dg functor between smooth dg categories, then we have the following commutative diagram 
	\[
	\begin{tikzcd}
		\fib(\ca\lten_{\cb^e}\ca \to \ca\lten_{\ca^e} \ca)
		\arrow[r]\arrow[d]&\RHom_{I^{op}\ten\ca^e}(\mu_{G}^\vee,\nu_{G})\arrow[d]\\
		\fib(\cc\lten_{\cb^e}\cc\to \cc\lten_{\cc^e} \cc)
		\arrow[r]&\RHom_{I^{op}\ten\cc^e}(\mu_{Q\circ G}^\vee,\nu_{Q\circ G}).
	\end{tikzcd}
	\]

\end{Lem}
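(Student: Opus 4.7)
The plan is to apply the previous Lemma~\ref{lemma:morphisms in I*A} to the objects $\mu_{G}^\vee$ and $\nu_{G}$ of $\cd(I^{op}\ten\ca^{e})$, and then identify the resulting fiber with $\fib(\ca\lten_{\cb^e}\ca\to\ca\lten_{\ca^e}\ca)$ using standard bimodule adjunctions, together with the smoothness of $\cb$ (always assumed in this subsection) and of $\ca$ (needed only for invertibility).

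First, observe that by construction $\cof(\nu_{G})\simeq\ca$ in $\cd(\ca^{e})$. Lemma~\ref{lemma:morphisms in I*A} (applied over $\ca^{e}$ in place of $\ca$) therefore yields a canonical isomorphism
\[
\RHom_{I^{op}\ten\ca^{e}}(\mu_{G}^\vee,\nu_{G})\iso\fib\bigl(\RHom_{\ca^{e}}((\ca\lten_{\cb}\ca)^\vee,\ca\lten_{\cb}\ca)\to\RHom_{\ca^{e}}(\ca^\vee,\ca)\bigr).
\]
Next I would compute the two terms of this fiber. Since $\cb$ is smooth, $\cb\in\per(\cb^{e})$, and hence $\ca\lten_{\cb}\ca=\mathbf{L}G^{*}(\cb)$ lies in $\per(\ca^{e})$. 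For any $M\in\per(\ca^{e})$ and any $N\in\cd(\ca^{e})$ one has the evaluation-coevaluation isomorphism $M\lten_{\ca^{e}}N\iso\RHom_{\ca^{e}}(M^\vee,N)$, so
\[
\RHom_{\ca^{e}}((\ca\lten_{\cb}\ca)^\vee,\ca\lten_{\cb}\ca)\simeq(\ca\lten_{\cb}\ca)\lten_{\ca^{e}}(\ca\lten_{\cb}\ca).
\]
A direct computation with the bimodule relations, or equivalently the adjunction
\[
\mathbf{L}G^{*}(P)\lten_{\ca^{e}}N\simeq P\lten_{\cb^{e}}(G^{e})_{*}N,
\]
applied to $P=\cb$ and $N=\ca\lten_{\cb}\ca$, identifies this with $\ca\lten_{\cb^{e}}\ca$. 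The second term $\RHom_{\ca^{e}}(\ca^\vee,\ca)$ is in general only mapped from $\ca\lten_{\ca^{e}}\ca$ via the canonical morphism (coming from biduality $\ca\to\ca^{\vee\vee}$ and adjunction); it becomes an isomorphism exactly when $\ca\in\per(\ca^{e})$, i.e.\ when $\ca$ is smooth. The restriction along $\cb^{e}\to\ca^{e}$ gives the comparison map $\ca\lten_{\cb^{e}}\ca\to\ca\lten_{\ca^{e}}\ca$, and assembling everything produces the desired canonical morphism
\[
\fib(\ca\lten_{\cb^{e}}\ca\to\ca\lten_{\ca^{e}}\ca)\ra\RHom_{I^{op}\ten\ca^{e}}(\mu_{G}^\vee,\nu_{G}),
\]
which is an isomorphism as soon as $\ca$ is also smooth.

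For the functoriality under composition with $Q\colon\ca\to\cc$, I would invoke Lemma~\ref{Lemma:functor property of morphism of path cat} with $\mathbf{L}Q^{*}$. The dg functor $Q$ induces canonical morphisms $\mathbf{L}Q^{*}(\mu_{G}^\vee)\to\mu_{Q\circ G}^\vee$ and $\nu_{Q\circ G}\to\mathbf{L}Q^{*}(\nu_{G})$ in $\cd(I^{op}\ten\cc^{e})$ (coming respectively from the multiplication $\cc\lten_{\ca}\cc\to\cc$ and from the base-change comparison $\cc\lten_{\ca}(\ca\lten_{\cb}\ca)\lten_{\ca}\cc\to\cc\lten_{\cb}\cc$). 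The compatibility square then splits into two naturality squares: one coming from Lemma~\ref{Lemma:functor property of morphism of path cat} for $\mathbf{L}Q^{*}$, and one expressing that the comparison map $\ca\lten_{\cb^{e}}\ca\to\ca\lten_{\ca^{e}}\ca$ is natural with respect to the pushforward along $Q$. Both are formal.

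The main obstacle I anticipate is step two, checking carefully that the isomorphism $(\ca\lten_{\cb}\ca)\lten_{\ca^{e}}(\ca\lten_{\cb}\ca)\simeq\ca\lten_{\cb^{e}}\ca$ is indeed the derived one and that it matches, under the isomorphism of Lemma~\ref{lemma:morphisms in I*A}, the morphism appearing in the fiber on the right-hand side (and not, say, its twist by a signed permutation of tensor factors). Bookkeeping of signs and of the left/right $\ca$-actions on the two copies of $\ca\lten_{\cb}\ca$ will be the delicate point; a clean way to handle this is to work with a semi-free resolution of the diagonal bimodule $\cb$ over $\cb^{e}$ and then apply $\mathbf{L}G^{*}$, so that all tensor products become honest ones and the naturality of the adjunctions becomes automatic.
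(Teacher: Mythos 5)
Your proposal follows the paper's proof essentially verbatim: the same application of Lemma~\ref{lemma:morphisms in I*A} using $\cof(\nu_G)\simeq\ca$, the same identification of the two terms of the resulting fiber (your route through $(\ca\lten_{\cb}\ca)\lten_{\ca^e}(\ca\lten_{\cb}\ca)$ is just a reshuffling of the paper's chain $\RHom_{\ca^e}(\lG(\cb^\vee),-)\simeq\RHom_{\cb^e}(\cb^\vee,G_*(-))\simeq\cb\lten_{\cb^e}(-)$, both resting on smoothness of $\cb$), and the same use of Lemma~\ref{Lemma:functor property of morphism of path cat} for the compatibility square. One correction in the functoriality step: the canonical comparison morphisms run $\mu^\vee_{Q\circ G}\ra\lQ(\mu^\vee_G)$ and $\lQ(\nu_G)\ra\nu_{Q\circ G}$, not the other way around as you wrote — dualizing the multiplication $\cc\lten_\ca\cc\to\cc$ gives a map \emph{into} $(\cc\lten_\ca\cc)^\vee\simeq\lQ(\ca^\vee)$, and the fiber of $\lQ(\mu_G)$ maps \emph{to} the fiber of the composite $\mu_{Q\circ G}$; these are precisely the variances needed to induce the right-hand vertical map on $\RHom$ complexes.
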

\begin{proof}
	By Lemma~\ref{lemma:morphisms in I*A}, we have
	\[
	\RHom_{I^{op}\ten\ca^e}(\mu_{G}^\vee,\nu_{G})\iso\fib(\RHom_{\ca^{e}}((\ca\lten_{\cb}\ca)^{\vee},\ca\lten_{\cb}\ca)\to\RHom_{\ca^{e}}(\ca^{\vee},\ca)).
	\]
	We have a canonical morphism
	$$ \ca\lten_{\ca^{e}}\ca\ra\RHom_{\ca^{e}}(\ca^{\vee},\ca), $$ which is invertible if $ \ca $ is smooth. Moreover, we have the isomorphisms
	\begin{equation*}
		\begin{split}
			\RHom_{\ca^{e}}((\ca\lten_{\cb}\ca)^{\vee},\ca\lten_{\cb}\ca)\simeq&\,\RHom_{\ca^{e}}(\lG(\cb^{\vee}),\ca\lten_{\cb}\ca)\\
			\simeq&\,\RHom_{\cb^{e}}(\cb^{\vee},G_{*}(\ca\lten_{\cb}\ca))\\
			\simeq&\,\cb\lten_{\cb^{e}}(\ca\lten_{\cb}\ca)\\
			\simeq&\,\ca\lten_{\cb^{e}}\ca,
		\end{split} 
	\end{equation*}
	where we use the smoothness of $ \cb $ for the first and the $ \boldmath{3} $rd isomorphism.
	Thus, we have a canonical morphism
	\[
	\fib(\ca\lten_{\cb^e} \ca \to \ca\lten_{\ca^e} \ca)\ra\RHom_{I^{op}\ten\ca^e}(\mu_{G}^\we,\nu_{G}),
	\]which is invertible if $ \ca $ is smooth.
	
	By Lemma~\ref{Lemma:functor property of morphism of path cat}, we have the following commutative diagram
	\[
	\begin{tikzcd}
		\fib(\ca\lten_{\cb^e}\ca \to \ca\lten_{\ca^e} \ca)
		\arrow[r]\arrow[d]&\RHom_{I^{op}\ten\ca^e}(\mu_{G}^\vee,\nu_{G})\arrow[d]\\
		\fib(\lQ(\ca\lten_{\cb^e}\ca)\to\lQ(\ca\lten_{\ca^e}\ca))
		\arrow[r]&\RHom_{I^{op}\ten\cc^e}(\lQ(\mu_{G}^\vee),\lQ(\nu_{G})),
	\end{tikzcd}
	\]
	where $ \lQ(\mu^{\vee}_{G}) $ is given by $ \ca^{\vee}\lten_{\ca^{e}}\cc^{e}\ra\cb^{\vee}\lten_{\cb^{e}}\cc^{e} $ and $ \lQ(\nu_{G}) $ is given by $ \lG(\fib(\mu_{G}))\ra\cb\lten_{\cb^{e}}\cc^{e} $.
	
	It is easy to see that we have natural morphisms $ \mu^{\vee}_{Q\circ G}\ra\lQ(\mu^{\vee}_{G}) $ and $ \lQ(\nu_{G})\ra\nu_{Q\circ G} $ in $ \cd(I^{op}\ten\cc) $. Then by Lemma~\ref{lemma:morphisms in I*A}, we get the following commutative diagrams
	\[
	\begin{tikzcd}
		\fib(\ca\lten_{\cb^e}\ca \to \ca\lten_{\ca^e} \ca)
		\arrow[r]\arrow[d]&\RHom_{I^{op}\ten\ca^e}(\mu_{G}^\vee,\nu_{G})\arrow[d]\\
		\fib(\lQ(\ca\lten_{\cb^e}\ca)\to\lQ(\ca\lten_{\ca^e}\ca))
		\arrow[r]\arrow[d]&\RHom_{I^{op}\ten\cc^e}(\lQ(\mu_{G}^\vee),\lQ(\nu_{G}))\arrow[d]\\
		\fib(\cc\lten_{\cb^{e}}\cc\to\cc\lten_{\ca^{e}}\ca)
		\arrow[r]\arrow[d]&\RHom_{I^{op}\ten\cc^e}(\mu^{\vee}_{Q\circ G},\lQ(\nu_{G}))\arrow[d]\\
		\fib(\cc\lten_{\cb^{e}}\cc\to\cc\lten_{\cc^{e}}\cc)
		\arrow[r]&\RHom_{I^{op}\ten\cc^e}(\mu^{\vee}_{Q\circ G},\nu_{Q\circ G}).
	\end{tikzcd}
	\]	

\end{proof}

We therefore obtain the following chain of morphisms
\begin{align}\label{Diagram: left CY}
	\xymatrix{
		H\!N(G)\ar[d]&\\
		H\!H(G)\cong\Si\fib(\cb\lten_{\cb^e} \cb \to \ca\lten_{\ca^e} \ca)\ar[d]\\
		\Si\fib(\ca\lten_{\cb^e}\ca\to \ca\lten_{\ca^e}\ca)\ar[r]&\Si\RHom_{I^{op}\ten\ca^e}(\mu_{G}^\vee,\nu_{G}).
	}
\end{align}

\begin{Def}\cite[Definition 4.11]{BD2019}\cite[Definition 4.13]{WkY2016}\label{Def Relative CY}\rm
	$ \, $A \emph{left $n$-Calabi--Yau structure} on the dg functor $ G\colon\mathcal{B}\to \mathcal{A} $ 
	is a relative negative cyclic class $ [\xi] $ in $  H\!N_{n}(G) $ such that
	\begin{itemize}
		\item[a)] the associated morphism $\Sigma^{n-1}\mu_{G}^\vee \to \nu_{G}$ is invertible and
		\item[b)] the morphism $\Si^{n-1}\cb^\vee \to \cb$ corresponding to the image of $[\xi]$ in
		$H\!H_{n-1}(\cb)$ is invertible in $\cd(\cb^e)$. 
	\end{itemize}
\end{Def}

Notice that the morphism $\mu_{G}^\vee \to \Sigma^{n-1}\nu_{G}$ is invertible if and only if
its associated morphism of triangles (\ref{eq:left morphism of triangles}) is invertible.
We point out that condition b) is not imposed by Brav--Dyckerhoff \cite{BD2019} but
is imposed by Yeung \cite{WkY2016}.

\begin{Rem}
	If we take the dg category $ \mathcal{B} $ to be the empty dg category $ \emptyset $, which is the initial object in the category of small dg categories $ \dgcat $, then the above definition coincides with the definition of an \emph{absolute left n-Calabi--Yau structure} on $ \mathcal{A} $~\cite{BK2011}. 
\end{Rem}

\begin{Prop}\cite[Corollary 7.1]{BD2019}\label{Relative structure of htp cofiber}
	Let $ f\colon\mathcal{B}\to\mathcal{A} $ be a dg functor between homologically smooth dg categories which carries a left $ n $-Calabi--Yau structure. Then there is a canonical left $ n $-Calabi--Yau structure on the cofiber $ \mathcal{A}/\mathcal{B} $.
\end{Prop}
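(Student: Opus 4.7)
The plan is to transport the relative negative cyclic class $[\xi]\in H\!N_{n}(f)$ along a canonical isomorphism $H\!N(f)\iso H\!N(\mathcal{A}/\mathcal{B})$ induced by the homotopy cofiber sequence, and then to verify that the resulting class defines an absolute left $n$-Calabi--Yau structure on $\mathcal{C}\coloneqq\mathcal{A}/\mathcal{B}$ by applying the derived extension of scalars $\mathbf{L}p^{*}\colon\mathcal{D}(\mathcal{A}^{e})\to\mathcal{D}(\mathcal{C}^{e})$ to the relative duality morphism of triangles~(\ref{eq:left morphism of triangles}), where $p\colon\mathcal{A}\to\mathcal{C}$ is the quotient functor.

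First, I would produce the class. The sequence $\mathcal{B}\xrightarrow{f}\mathcal{A}\xrightarrow{p}\mathcal{C}$ is a homotopy cofiber sequence in $\dgcat$, so the mixed complex functor $M$ fits it into an exact triangle
\begin{equation*}
	M(\mathcal{B})\xrightarrow{\gamma_{f}} M(\mathcal{A})\rightarrow M(\mathcal{C})\rightarrow\Si M(\mathcal{B})
\end{equation*}
in $\mathcal{D}(\Lambda)$; this is the standard consequence of Morita invariance (Theorem~\ref{Thm: Morita invariance of mixed complex}) combined with the additivity of $M$ on the Drinfeld quotient. Since by definition $M(f)=\cone(\gamma_{f})$, the triangle supplies a canonical isomorphism $M(f)\iso M(\mathcal{C})$, and hence $H\!N_{\bullet}(f)\iso H\!N_{\bullet}(\mathcal{C})$ after applying $\RHom_{\Lambda}(k,-)$. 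Let $[\eta]\in H\!N_{n}(\mathcal{C})$ denote the image of $[\xi]$.

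Next, I would verify the invertibility condition. The key point is that $p\circ f$ is null-homotopic in $\dgcat$, since it factors through the zero dg category in the homotopy pushout defining $\mathcal{C}$. Hence for every $\mathcal{C}$-bimodule $M$, the restriction $f_{*}p_{*}M$ is acyclic, so by the $(\mathbf{L}(pf)^{*},(pf)_{*})$-adjunction
\begin{equation*}
	\RHom_{\mathcal{C}^{e}}(\mathbf{L}p^{*}(\mathcal{A}\lten_{\mathcal{B}}\mathcal{A}),M)\iso\RHom_{\mathcal{B}^{e}}(\mathcal{B},f_{*}p_{*}M)=0,
\end{equation*}
so that $\mathbf{L}p^{*}(\mathcal{A}\lten_{\mathcal{B}}\mathcal{A})\simeq 0$ in $\mathcal{D}(\mathcal{C}^{e})$. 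Applying the triangulated functor $\mathbf{L}p^{*}$ to the relative morphism of triangles~(\ref{eq:left morphism of triangles}), the middle column collapses to zero and, using $\mathbf{L}p^{*}\mathcal{A}\simeq\mathcal{C}$, the outer columns compute precisely the absolute duality diagram for $\mathcal{C}$. The functoriality asserted by Lemma~\ref{Lemma:morphism in left CY} (combined with that of the chain~(\ref{Diagram: left CY}), which is functorial in the dg functor) identifies the $\mathbf{L}p^{*}$-image of the morphism $\Si^{n-1}\mu_{f}^{\vee}\to\nu_{f}$ with the absolute map $\Si^{n-1}\mathcal{C}^{\vee}\to\mathcal{C}$ associated to $[\eta]$. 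Since the former is invertible by hypothesis, so is the latter, which yields the desired left $n$-Calabi--Yau structure on $\mathcal{C}$ (whose smoothness follows from smoothness of $\mathcal{A}$ and $\mathcal{B}$, via $\mathbf{L}p^{*}\mathcal{A}\simeq\mathcal{C}\in\per(\mathcal{C}^{e})$).

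The main obstacle is the last identification: one must show carefully that the absolute duality morphism for $\mathcal{C}$ built from the chain~(\ref{Diagram: left CY}) is indeed obtained by applying $\mathbf{L}p^{*}$ to the relative duality morphism for $f$. This requires tracking naturality of the canonical isomorphism of Lemma~\ref{Lemma:morphism in left CY} along the composition $p$, together with the compatibility of derived duals $(-)^{\vee}$ with $\mathbf{L}p^{*}$ (both sides are perfect bimodules thanks to smoothness). Once this naturality diagram is in place, the argument reduces to the vanishing $\mathbf{L}p^{*}(\mathcal{A}\lten_{\mathcal{B}}\mathcal{A})\simeq 0$ and the triangulated exactness of $\mathbf{L}p^{*}$.
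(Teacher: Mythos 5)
The paper gives no proof of this Proposition: it is quoted directly from \cite[Corollary 7.1]{BD2019}. Your overall strategy---push the negative cyclic class forward to $H\!N_{n}(\ca/\cb)$ and apply the extension of scalars $\mathbf{L}p^{*}$ to the morphism of triangles (\ref{eq:left morphism of triangles}), using $\mathbf{L}p^{*}(\ca\lten_{\cb}\ca)\simeq\cc\lten_{\cb}\cc\simeq 0$ and the homological-epimorphism property $\mathbf{L}p^{*}\ca\simeq\cc$ of the Drinfeld quotient---is the right one and is essentially the Brav--Dyckerhoff argument specialized to the cofiber.

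There is, however, one genuine error in your first step. The asserted exact triangle $M(\cb)\to M(\ca)\to M(\ca/\cb)$, and hence the isomorphism $H\!N(f)\iso H\!N(\ca/\cb)$, is false for a general dg functor $f$ between smooth dg categories: the localization theorem for mixed complexes applies to the Drinfeld quotient of $\ca$ by the \emph{full} dg subcategory $\Ima(f)$, and the canonical map $M(\cb)\to M(\Ima(f))$ need not be invertible because $f$ is not assumed homologically fully faithful. (For the augmentation $k[x]\to k$ the cofiber is the zero category, yet $M(k[x])\to M(k)$ is not an isomorphism; your derivation nowhere uses the Calabi--Yau hypothesis, so it cannot be rescued by it.) What is true, and is all you need, is that $\gamma_{pf}$ is null-homotopic---it factors through $M(0)=0$ in the homotopy pushout---so that $\gamma_{p}$ induces a canonical map $H\!N(f)=\RHom_{\Lambda}(k,\cone(\gamma_{f}))\to H\!N(\ca/\cb)$ along which $[\xi]$ is pushed forward; only this map, not its invertibility, enters the argument. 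With that correction the second half of your proof is sound: $\mathbf{L}p^{*}$ is exact, preserves derived duals of perfect bimodules, and sends the invertible morphism of triangles for $f$ to the absolute duality $\Si^{n}\cc^{\vee}\iso\cc$ (condition b) of Definition~\ref{Def Relative CY} is vacuous for the absolute structure). The compatibility you flag as the ``main obstacle'' is a genuine verification, but not a missing idea: it is exactly what Lemma~\ref{Lemma:morphism in left CY}, applied to the composition $Q=p$ (for which $\mu_{pf}$ is the absolute datum $0\to\cc$), together with the naturality of the chain (\ref{Diagram: left CY}) in the dg functor, supplies.
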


\begin{Prop}\label{quasi-invariant}
	Let $ \mathcal{B}, \mathcal{A},\ca' $ be smooth dg categories. Let $ G\colon\mathcal{B}\to\mathcal{A} $ be a dg functor and $Q\colon\mathcal{A}\to \mathcal{A}' $ be a quasi-equivalence. The isomorphism 
	$$ H\!N_{n}(G)\ra H\!N_{n}(Q\circ G) $$ induced by $ Q $ yields a bijection between the left $ n $-Calabi--Yau structures on $ G $ and on $ Q\circ G $.
\end{Prop}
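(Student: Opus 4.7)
The plan is to verify, one at a time, that the two defining conditions of Definition~\ref{Def Relative CY} are preserved by the map $HN_n(G)\to HN_n(Q\circ G)$ induced by $Q$. First, this map is itself an isomorphism. Indeed, by Theorem~\ref{Thm: Morita invariance of mixed complex}, $\gamma_Q\colon M(\ca)\to M(\ca')$ is an isomorphism in $\cd(\Lambda)$, and the morphism of distinguished triangles
\[
\begin{tikzcd}
M(\cb)\arrow[r,"\gamma_G"]\arrow[d,equal]&M(\ca)\arrow[r]\arrow[d,"\gamma_Q"]&M(G)\arrow[r]\arrow[d]&\Si M(\cb)\arrow[d,equal]\\
M(\cb)\arrow[r,"\gamma_{Q\circ G}"]&M(\ca')\arrow[r]&M(Q\circ G)\arrow[r]&\Si M(\cb)
\end{tikzcd}
\]
forces $M(G)\to M(Q\circ G)$ to be an isomorphism in $\cd(\Lambda)$; applying $\RHom_\Lambda(k,-)$ then yields $HN_n(G)\iso HN_n(Q\circ G)$.

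Second, condition (b) is preserved. The image of $[\xi]\in HN_n(G)$ in $HH_{n-1}(\cb)$ factors as $HN_n(G)\to HH_n(G)\to HH_{n-1}(\cb)$, where the second map is the connecting morphism of the triangle $HH(\cb)\to HH(\ca)\to HH(G)$. The diagram above is compatible with the augmentation $-\lten_\Lambda k$, and the identity on $M(\cb)$ induces the identity on $HH(\cb)$; hence the image of $[\xi]$ in $HH_{n-1}(\cb)$ coincides with that of its counterpart on the $Q\circ G$ side, so condition (b) transports bijectively.

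Finally, for condition (a), I would invoke the second naturality square in Lemma~\ref{Lemma:morphism in left CY}, applied to the composition $\ca\xrightarrow{Q}\ca'$ (viewed as a functor between smooth categories), obtaining
\[
\begin{tikzcd}
\fib(\ca\lten_{\cb^e}\ca\to\ca\lten_{\ca^e}\ca)\arrow[r,"\sim"]\arrow[d,"\lQ"]&\RHom_{I^{op}\ten\ca^e}(\mu_G^\vee,\nu_G)\arrow[d,"\lQ"]\\
\fib(\ca'\lten_{\cb^e}\ca'\to\ca'\lten_{(\ca')^e}\ca')\arrow[r,"\sim"]&\RHom_{I^{op}\ten(\ca')^e}(\mu_{Q\circ G}^\vee,\nu_{Q\circ G}).
\end{tikzcd}
\]
The horizontal arrows are isomorphisms by the smoothness of $\ca$ and $\ca'$; the left vertical arrow is an isomorphism because $\ca\lten_{\cb^e}\ca\to\ca'\lten_{\cb^e}\ca'$ and $\ca\lten_{\ca^e}\ca\to\ca'\lten_{(\ca')^e}\ca'$ are both quasi-isomorphisms by Theorem~\ref{Thm: Morita invariance of mixed complex}; hence the right vertical arrow is also an isomorphism. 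Tracing $[\xi]$ through the chain~(\ref{Diagram: left CY}) and its analogue for $Q\circ G$, the morphism $\Sigma^{n-1}\mu_G^\vee\to\nu_G$ corresponds, via the equivalence $\lQ\colon\cd(I^{op}\ten\ca^e)\iso\cd(I^{op}\ten(\ca')^e)$ afforded by the quasi-equivalence $Q$, to $\Sigma^{n-1}\mu_{Q\circ G}^\vee\to\nu_{Q\circ G}$; since equivalences preserve isomorphisms, condition (a) transports as well. The main obstacle will be the bookkeeping needed to trace $[\xi]$ through every stage of diagram~(\ref{Diagram: left CY}) and check compatibility with $Q$ at each step, which reduces to repeated use of the functoriality clauses contained in Lemma~\ref{lemma:morphisms in I*A}, Lemma~\ref{Lemma:functor property of morphism of path cat} and Lemma~\ref{Lemma:morphism in left CY}.
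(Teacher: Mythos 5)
Your proof is correct and follows essentially the same route as the paper: Morita invariance of mixed complexes gives the isomorphism on $H\!N_{n}$, and the equivalence $\lQ\colon\cd(I^{op}\ten\ca^{e})\iso\cd(I^{op}\ten\ca'^{e})$ induced by the quasi-equivalence $Q$ transports the invertibility conditions, exactly as in the paper's map $\Theta$ (your explicit check of condition (b) is a welcome addition the paper leaves implicit). One minor quibble: the quasi-isomorphism $\ca\lten_{\cb^{e}}\ca\to\ca'\lten_{\cb^{e}}\ca'$ follows from $Q$ being a quasi-equivalence together with the fact that derived tensor products preserve quasi-isomorphisms, rather than from Theorem~\ref{Thm: Morita invariance of mixed complex}, which concerns the mixed complexes $M(\ca)\to M(\ca')$.
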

\begin{proof}
	By Theorem~\ref{Thm: Morita invariance of mixed complex}, the functor $ Q $ induces the following quasi-isomorphism of triangles in $ \cd(\cm ix) $
	\begin{align*}
		\xymatrix{
			M(\mathcal{B})\ar@{=}[d]\ar[r]&M(\mathcal{A})\ar[r]\ar[d]&M(G)\ar[r]\ar[d]&\Si M(\mathcal{B})\ar@{=}[d]\\
			M(\mathcal{B})\ar[r]&M(\mathcal{C})\ar[r]&M(Q\circ G)\ar[r]&\Si M(\mathcal{B}).
		}
	\end{align*}
	Combining with Lemma~\ref{Lemma:morphism in left CY}, the above diagram yields the following commutative diagrams in $ \cd(k) $
	\begin{align*}
		\xymatrix{
			H\!N(G)\ar[r]^{\sim}\ar[d]&H\!N(Q\circ G)\ar[d]\\
			H\!H(G)\ar[r]^{\sim}\ar[d]&H\!H(Q\circ G)\ar[d]\\
			\Si\fib(\ca\lten_{\cb^{e}}\ca\ra\ca\lten_{\ca^{e}}\ca)\ar[r]^{\sim}\ar[d]^{\simeq}&\Si\fib(\ca'\lten_{\cb^{e}}\ca'\ra\ca'\lten_{\ca'^{e}}\ca')\ar[d]_{\simeq}\\
			\Si\RHom_{I^{op}\ten\ca^e}(\mu_{G}^\vee,\nu_{G})\ar[r]^-{\Theta}&\Si\RHom_{I^{op}\ten\ca'^e}(\mu_{Q\circ G}^\vee,\nu_{Q\circ G}).
		}
	\end{align*}
	The map $ \Theta $ admits the following description. The quasi-equivalence $ Q $ induces a quasi-equivalence
	$$ \id\ten Q^{e}\colon I^{op}\ten\ca^{e}\ra I^{op}\ten\ca'^{e} ,$$ which we still denote by $ Q $. Then the extension along $ Q $ yields an equivalence
	$$ \lQ\colon\cd(I^{op}\ten\ca^{e})\iso\cd(I^{op}\ten\ca^{e}). $$
	The functor $ \lQ $ maps $ \mu_{G}^\vee $ to $ \mu_{Q\circ G}^\vee $ and $ \nu_{G} $ to $ \nu_{Q\circ G} $. Then the map $ \Theta $ is the map induced by $ \lQ $ on mapping complexes. In particular, $ \Theta $ preserves equivalences.
	Thus each left $ n $-Calabi--Yau structure on $ G $ induces a left $ n $-Calabi--Yau structure on $ Q\circ G $.
	Similarly, we can use the restriction functor $ Q_{*}\colon\cd(I^{op}\ten\ca'^{e})\iso\cd(I^{op}\ten\ca'^{e}) $ to show that each left $ n $-Calabi--Yau structure on $ Q\circ G $ induces a left $ n $-Calabi--Yau structure on $ G $.
	%
\end{proof}

\begin{Cor}\label{homotopy-invariant}
	Let $ \mathcal{B}, \mathcal{A} $ be two homologically smooth dg categories and moreover assume $ \mathcal{B} $ is cofibrant with respect to the Dwyer-Kan model structure (see Theorem~\ref{model on dgcat}).
	Let $ G,G'\colon\mathcal{B}\to \mathcal{A} $ be two homotopic dg functors. The canonical isomorphism
	$$ H\!N_{n}(G)\iso H\!N_{n}(G') $$
	induces a bijection between the relative left $ n $-Calabi--Yau structures on $ G $ and on $ G' $.	
\end{Cor}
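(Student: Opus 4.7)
The plan is to reduce Corollary~\ref{homotopy-invariant} to Proposition~\ref{quasi-invariant} by using the path object $P(\ca)$. Since $G$ and $G'$ are homotopic and $\cb$ is cofibrant, Definition~\ref{homotopy def} gives a dg functor $H\colon\cb\to P(\ca)$ with $P_{0}\circ H=G$ and $P_{1}\circ H=G'$. Recall that by the commutative diagram right before Definition~\ref{homotopy def}, the functor $I\colon\ca\to P(\ca)$ is a quasi-equivalence and $P_{0}\circ I=P_{1}\circ I=\id_{\ca}$, so by the two-out-of-three property the projections $P_{0},P_{1}\colon P(\ca)\to\ca$ are quasi-equivalences. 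In particular, $P(\ca)$ is Morita equivalent to $\ca$ and is therefore homologically smooth.

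The key step is then to apply Proposition~\ref{quasi-invariant} twice, with $(G,Q)$ replaced by $(H,P_{0})$ and $(H,P_{1})$, respectively. This yields bijections
\[
\{\text{left $n$-CY structures on }G=P_{0}\circ H\}\;\xleftarrow{\sim}\;\{\text{left $n$-CY structures on }H\}\;\xrightarrow{\sim}\;\{\text{left $n$-CY structures on }G'=P_{1}\circ H\},
\]
induced by the isomorphisms $H\!N_{n}(H)\iso H\!N_{n}(G)$ and $H\!N_{n}(H)\iso H\!N_{n}(G')$ coming from Theorem~\ref{Thm: Morita invariance of mixed complex}. Composing these two bijections yields a bijection between left $n$-Calabi--Yau structures on $G$ and on $G'$.

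It remains to verify that the resulting bijection agrees with the canonical isomorphism $H\!N_{n}(G)\iso H\!N_{n}(G')$ induced by the homotopy. This is a matter of unraveling definitions: the canonical homotopy invariance of $H\!N$ is implemented precisely by the zig-zag $H\!N_{n}(G)\liso H\!N_{n}(H)\iso H\!N_{n}(G')$, which is exactly the composite we have constructed. Concretely, the mixed complex $M(P(\ca))$ computes $M(\ca)$ via either $P_{0}$ or $P_{1}$, and the two resulting quasi-isomorphisms are homotopic, making the diagram compatible with the structures on the relative mixed complexes $M(G)$, $M(H)$, $M(G')$.

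The main obstacle is the last step: identifying the composite bijection with the one induced by the canonical homotopy-invariance isomorphism in $H\!N_{n}$. However, this identification follows from naturality of the chain of maps in diagram~(\ref{Diagram: left CY}) with respect to the dg functors $P_{0}$ and $P_{1}$, exactly as in the proof of Proposition~\ref{quasi-invariant}, where the map $\Theta$ on $\RHom$-complexes was shown to be compatible with the map on $H\!N$. No essentially new computation is required beyond verifying the commutativity of these naturality squares for $P(\ca)$.
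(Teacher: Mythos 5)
Your proof is correct and follows essentially the same route as the paper: factor the homotopy through the path object $P(\ca)$, observe that $P_{0}$ and $P_{1}$ are quasi-equivalences, and apply Proposition~\ref{quasi-invariant} twice to the zig-zag $G=P_{0}\circ H$, $G'=P_{1}\circ H$. Your additional remarks on the smoothness of $P(\ca)$ and on identifying the composite bijection with the canonical isomorphism on $H\!N_{n}$ are correct refinements of what the paper leaves implicit.
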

\begin{proof}
	Since $ G $ and $ G' $ are homotopic, there exists a dg functor $ H\colon\mathcal{B}\to P(\mathcal{A}) $ that makes the following diagram commutative (see Definition~\ref{homotopy def})
	\begin{align*}
		\xymatrix{
			&&\mathcal{A}\\
			\mathcal{B}\ar"1,3"^{G}\ar"3,3"_{G'}\ar"2,3"^{H}&&P(\mathcal{A})\ar[u]_{P_{0}}\ar[d]^{P_{1}}\\
			&&\mathcal{A}.
		}
	\end{align*}
	
	We know that $ P_{0} $ and $ P_{1} $ are quasi-equivalences. They induce isomorphisms $ H\!N_{n}(G)\liso H\!N_{n}(H)\iso H\!N_{n}(G') $. Now the claim follows from the above Proposition~\ref{quasi-invariant}.
\end{proof}

\subsection{From left to right}
Let $ G\colon\cb\rightarrow\ca $ be a dg functor between smooth dg categories. Suppose that $ G $ carries a left $ n $-Calabi--Yau structure. We define $ \per_{dg}(\ca) $ to be the dg subcategory of $ \cc_{dg}(\ca) $ whose objects are the perfect cofibrant dg $ \ca $ modules and $ \pvd_{dg}(\ca) $ to be the dg subcategory of $ \cc_{dg}(\ca) $ whose objects are the perfectly valued cofibrant dg $ \ca $ modules. Similarly, we define $ \per_{dg}(\cb) $ and $ \pvd_{dg}(\cb) $. The restriction along $ G\colon\cb\rightarrow\ca $ induces a dg functor $ R\colon\ce=\pvd_{dg}(\ca)\rightarrow\cf=\pvd_{dg}(\cb) $.
\begin{Thm}\cite[pp. 389]{BD2019}
	The functor $ R\colon\ce\rightarrow\cf $	inherits a canonical right $ n $-Calabi--Yau structure, i.e.\ we have a class $ [\omega] $ in $ \Hom_{k}(H\!C_{n-1}(R),k) $ which yields an isomorphism of triangles in $ \cd(\ce^{e}) $
	\begin{equation}\label{Thm:right iso morphism}
		\xymatrix@=1.5cm{
			\Si^{n-1}\ce\ar[r]^-{\Si^{n-1}u_{R}}\ar[r]\ar[d]&\Si^{n-1}R_{*}\cf\ar[r]\ar[d]&\Si^{n-1}\cof(u_{R})\ar[r]\ar[d]&\Si^{n}\ce\ar[d]\\
			\fib(u_{R}^{*})\ar[r]^-{\delta_{R}}&R_{*}(D\cf^{op})\ar[r]^{u_{R}^{*}}&D\ce^{op}\ar[r]&\Si\fib(u_{R}^{*}),
		}	
	\end{equation}
	where $ R_{*} $ is the restriction along $ R^{e}\colon\ce^{e}\rightarrow\cf^{e} $.
\end{Thm}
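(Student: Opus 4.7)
The plan follows Brav--Dyckerhoff: exploit the smoothness of $\ca$ and $\cb$ to produce a canonical duality between the relative negative cyclic homology of $G$ and the $k$-linear dual of the relative cyclic homology of $R$, under which non-degeneracy of a left $n$-CY structure on $G$ becomes non-degeneracy of a right $n$-CY structure on $R$.

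First I would establish the duality in the absolute setting. For a smooth dg category $\ca$ with $\ce=\pvd_{dg}(\ca)$, the evaluation pairing $\ca\ten\ce^{op}\ra\cc_{dg}(k)$, combined with perfectness of the diagonal bimodule $\ca\in\per(\ca^{e})$, yields a canonical quasi-isomorphism of mixed complexes
$$ M(\ca)\simeq \Si\,\Hom_{k}(M(\ce),k). $$
Applying the same construction simultaneously to $\cb$ and $\cf=\pvd_{dg}(\cb)$, and invoking the functoriality of $M(-)$ along dg functors (Theorem~\ref{Thm: Morita invariance of mixed complex}), one obtains a commutative square whose mapping cones give a quasi-isomorphism
$$ M(G)\simeq\Si\,\Hom_{k}(M(R),k), $$
and therefore an isomorphism $H\!N_{n}(G)\cong\Hom_{k}(H\!C_{n-1}(R),k)$. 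I would define $[\omega]$ to be the image of $[\xi]$ under this isomorphism.

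Next I would show that $[\omega]$ is non-degenerate. The class $[\xi]$ produces a morphism $\Si^{n-1}\mu_{G}^{\vee}\ra\nu_{G}$ in $\cd(I^{op}\ten\ca^{e})$ via chain~(\ref{Diagram: left CY}); similarly, $[\omega]$ produces a morphism $\Si^{n-1}u_{R}\ra\delta_{R}$ in $\cd(I^{op}\ten\cf^{e})$ via chain~(\ref{Diagram:right CY}) applied to $R$. The smoothness duality should intertwine these two chains: using the identifications (of the type appearing in Lemma~\ref{Lemma: right CY iso map})
\begin{align*}
   \RHom_{\ce^{e}}(\ce,D\ce^{op})&\simeq\Hom_{k}(\ca\lten_{\ca^{e}}\ca,k),\\
   \RHom_{\ce^{e}}(R_{*}\cf,R_{*}D\cf^{op})&\simeq\Hom_{k}(\cb\lten_{\cb^{e}}\cb,k),
\end{align*}
together with their counterparts on the $\ca$-side supplied by Lemma~\ref{Lemma:morphism in left CY}, the two morphisms of triangles~(\ref{eq:left morphism of triangles}) and~(\ref{Thm:right iso morphism}) become $k$-linear duals of one another after transport along the smoothness duality. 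Since $k$-linear dualization on the relevant complexes preserves quasi-isomorphisms, invertibility of the left-CY triangle map forces invertibility of the right-CY triangle map, which is the required non-degeneracy for $[\omega]$.

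The main obstacle is writing the duality between the $\ca$-side and the $\ce$-side with full precision, in particular the identifications $\mu_{G}^{\vee}\leftrightarrow u_{R}$ and $\nu_{G}\leftrightarrow\delta_{R}$ (up to the appropriate Serre-twist shift) compatibly with the triangle structures. This bookkeeping relies crucially on the smoothness of $\ca$ and $\cb$, which guarantees that inverse dualizing bimodules are well behaved, together with the compatibility of $R_{*}=G^{*}$ with the linear-dual bimodules via the adjunction $(\lG,G_{*})$ and Lemma~\ref{Lemma:functor property of morphism of path cat}. Once this dictionary is in place, the theorem follows at once from the absolute smoothness-duality combined with the compatibility lemmas already proved in the excerpt.
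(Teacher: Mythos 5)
There is a genuine gap, and it sits at the very first step. You assert that for a smooth dg category $\ca$ with $\ce=\pvd_{dg}(\ca)$ the evaluation pairing yields a canonical \emph{quasi-isomorphism} of mixed complexes $M(\ca)\simeq\Si\,\Hom_{k}(M(\ce),k)$, and hence an isomorphism $H\!N_{n}(G)\cong\Hom_{k}(H\!C_{n-1}(R),k)$. This is false: already for $\ca=k$ one has $\ce\simeq\per_{dg}(k)$, so $M(\ce)\simeq M(k)$ is concentrated in degree $0$ while $\Si\,\Hom_{k}(M(k),k)$ is concentrated in degree $1$; and even discarding the shift, smoothness alone never forces $M(\ca)$ to be dual to $M(\pvd_{dg}\ca)$. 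What smoothness (together with the pairing $(\per_{dg}\ca)^{op}\ten_{k}\ce\to\per_{dg}(k)$ and the $(?\ten_{k}M(\ce),\Hom_{k}(M(\ce),?))$-adjunction) produces is only a canonical \emph{morphism} $M(\ca)\to\Hom_{k}(M(\ce),k)$, and that is all the theorem requires: the class $[\omega]$ is defined as the image $\alpha([\xi])$ of the left Calabi--Yau class under the induced map $H\!N_{n}(G)\to\Hom_{k}(H\!C_{n-1}(R),k)$, which need not be invertible. The same over-claim reappears in your non-degeneracy step: the identification $\RHom_{\ce^{e}}(\ce,D\ce^{op})\simeq\Hom_{k}(\ca\lten_{\ca^{e}}\ca,k)$ would force $\ce\lten_{\ce^{e}}\ce\simeq\ca\lten_{\ca^{e}}\ca$, i.e.\ $H\!H(\ce)\simeq H\!H(\ca)$, which fails in general; Lemma~\ref{Lemma: right CY iso map} only gives $\RHom_{\ce^{e}}(\ce,D\ce^{op})\simeq\Hom_{k}(\ce\lten_{\ce^{e}}\ce,k)$, and the comparison with the $\ca$-side is again merely a map.

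Consequently the transfer of invertibility cannot run through ``the two morphisms of triangles are $k$-linear duals of one another.'' The step you explicitly defer as ``the main obstacle'' is in fact the substance of the proof: one must construct a contravariant functor $\mathbf{L}\Psi\colon\cd(I^{op}\ten\ca^{e})\to\cd(I^{op}\ten\ce^{e})^{op}$ (obtained from $M\mapsto\RHom_{\ce}(M\ten_{\ca^{e}}\per_{dg}(\ca)^{e}(?,p'),\per_{dg}(\ca)(?,p))$ after restricting along $\ce\subseteq\per_{dg}\ca$), check that it sends $\mu_{G}$ to $u_{R}$, $\mu_{G}^{\vee}$ to $u_{R}^{*}$ and $\nu_{G}$ to $\delta_{R}$, and check that the induced map on mapping complexes is compatible, through the commutative cubes relating $H\!N$, $H\!H$ and the bimodule identifications of Lemmas~\ref{Lemma: right CY iso map} and~\ref{Lemma:morphism in left CY}, with the map $\alpha$ defining $[\omega]$. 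Once that dictionary is in place, non-degeneracy of $[\omega]$ follows simply because a functor carries the isomorphism of triangles~(\ref{eq:left morphism of triangles}) to an isomorphism of triangles~(\ref{Thm:right iso morphism}); neither a duality between the two triangles nor invertibility of $\alpha$ is used or available. As written, your proposal both rests on a false duality and omits the construction that does the actual work.
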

\begin{proof}
	By the definition of $ \pvd_{dg}\cb $, we have a dg functor
	$$ (\per_{dg}\cb)^{op}\ten_{k}\cf\rightarrow\per_{dg}(k),\quad(P,M)\longmapsto\ch om_{\cb}(P,M). $$
	It yields a morphism in $ \cd\cm ix $
	$$ M((\per_{dg}\cb)^{op})\ten_{k} M(\cf)\iso M((\per_{dg}\cb)^{op}\ten_{k}\cf)\rightarrow M(\per_{dg}(k))\liso M(k)\simeq k, $$
	where the first quasi-isomorphism of $ \Lambda $-modules is due to \cite[Theorem 2.4]{CK1987}.

	By the adjunction between $ ?\ten_{k}M(\cf) $ and $ \Hom_{k}(M(\cf),?) $, we get a morphism in $ \cd\cm ix $
	$$ M(\cb)\iso M(\per_{dg}\cb)\iso M((\per_{dg}\cb)^{op})\rightarrow\Hom_{k}(M(\cf),k). $$
	
	Similarly, we get another morphism in $ \cd\cm ix $
	$$ M(\ca)\iso M(\per_{dg}\ca)\rightarrow\Hom_{k}(M(\ce),k). $$
	
	Those two maps fit into the following commutative diagram in $ \cd\cm ix $
	\begin{align*}
		\xymatrix{
			M(\cb)\ar[r]^-{\sim}\ar[d]^{\gamma_{G}}&M(\per_{dg}\cb) \ar[r]\ar[d]_{\gamma_{G^{*}}}&\Hom_{k}(M(\cf),k)\ar[d]^{\gamma_{R}^{*}}\\
			M(\ca)\ar[r]^-{\sim}& M(\per_{dg}\ca)\ar[r]&\Hom_{k}(M(\ce),k).
		}
	\end{align*}
	Applying the functor $ \RHom_{\Lambda}(k,?) $, it yields the following commutative diagram in $ \cc(k) $
	\begin{align*}
		\xymatrix{
			H\!N(\cb)=\RHom_{\Lambda}(k,M(\cb))\ar[r]\ar[d]^{\beta_{G}}&\RHom_{\Lambda}(k,\Hom_{k}(M(\cf),k))\simeq\Hom_{k}(H\!C(\cf),k)\ar[d]^{\beta'_{G}}\\
			H\!N(\ca)=\RHom_{\Lambda}(k,M(\ca))\ar[r]&\RHom_{\Lambda}(k,\Hom_{k}(M(\ce),k))\simeq\Hom_{k}(H\!C(\ce),k),
		}
	\end{align*}
where the isomorphisms on the right hand are due to the adjunction pair $ (k\lten_{\Lambda}?,\RHom_{k}(k,?)) $.

The above commutative square fits into the following commutative cube 
\[
\begin{tikzcd}[row sep=0.8em, column sep=0.01em]
	H\!N(\cb) \arrow[rr] \arrow[dr,"\beta_{G}"] \arrow[dd,swap] &&
	\RHom_{\Lambda}(k,\Hom_{k}(M(\cf),k)) \arrow[dd,dashed,"{\text{induced by $ \Lambda\ra k $}}"{yshift=8pt,description}] \arrow[dr,"\beta'_{G}"] \\
	& H\!N(\ca) \arrow[rr]\arrow[dd] &&
	\RHom_{\Lambda}(k,\Hom_{k}(M(\ce),k)) \arrow[dd] \\
	H\!H(\cb) \arrow[rr,dashed]\arrow[dr] && \RHom_{\Lambda}(\Lambda,\Hom_{k}(M(\cf),k))\simeq\Hom_{k}(\cf\lten_{\cf^{e}}\cf,k)\arrow[dr,dashed] \\
	&H\!H(\ca) \arrow[rr]&& \Hom_{k}(\ce\lten_{\ce^{e}}\ce,k).
\end{tikzcd}
\]
Moreover, the bottom of the cube above fits into the following commutative cube
\[
\begin{tikzcd}[row sep=1em, column sep = 1em]
	H\!H(\cb) \arrow[rr]\arrow[dr] \arrow[dd,swap] &&
	\Hom_{k}(\cf\lten_{\cf^{e}}\cf,k) \arrow[dd,dashed] \arrow[dr] \\
	&H\!H(\ca)\arrow[rr]\arrow[dd]&&
	\Hom_{k}(\ce\lten_{\ce^{e}}\ce,k) \arrow[dd] \\
	\ca\lten_{\cb^{e}}\ca\arrow[rr,dashed] \arrow[dr] && \Hom_{k}(\cf\lten_{\ce^{e}}\cf,k) \arrow[dr,dashed] \\
	& \ca\lten_{\ca^{e}}\ca \arrow[rr]&& \Hom_{k}(\ce\lten_{\ce^{e}}\ce,k). 
\end{tikzcd}
\]

Therefore we get the following commutative diagram in $ \cc(k) $
	\begin{align}\label{diagram: left to right}
		\xymatrix{
			H\!N(G)=\cone(\beta_{G})\ar[r]^-{\alpha}\ar[d]&\Hom_{k}(\Si^{-1}H\!C(R),k)=\cone(\beta'_{G})\ar[d]\\
			H\!H(G)\ar[r]\ar[d]&\Hom_{k}(\Si^{-1}H\!H(R),k)\ar[d]\\
			\cof(\ca\lten_{\cb^e}\ca\to \ca\lten_{\ca^e}\ca)\ar[d]^{\simeq\,\text{due to Lemma \ref{Lemma:morphism in left CY}}}\ar[r]&\cof(\Hom_{k}(\cf\lten_{\ce^{e}}\cf,k)\ra\Hom_{k}(\ce\lten_{\ce^{e}}\ce,k)\ar[d]^{\simeq\,\text{due to Lemma \ref{Lemma: right CY iso map}}}\\
			\Si\RHom_{I^{op}\ten\ca^{e}}(\mu_{G}^{\vee},\nu_{G})\ar[r]^{\Theta}&\RHom_{I^{op}\ten\ce^{e}}(\Si^{-1}u_{R},\delta_{R}).
		}
	\end{align}
The map $ \Theta $ is described below.

Consider the functor $ \Psi $ given by the composite
$$ \cc(\ca^{e})\ra\cc(\per_{dg}(\ca)^{e})\ra\cc(\ce\ten(\per_{dg}\ca)^{op})\ra\cc(\ce^{op}\ten\per_{dg}\ca)\ra\cc(\ce^{op}\ten\ce)\simeq\cc(\ce^{e})^{op}, $$ where the second and last functors are given by restriction along $ \ce\subseteq\per_{dg}\ca $, the first functor is given by the extension along Yoneda embedding and the third functor is given by
$$ M\mapsto M^{*},\,(a,p)\mapsto\RHom_{\ce}(M(?,p),\per_{dg}\ca(?,a)).$$
Then we obtain an induced functor 
\[
\begin{tikzcd}
	\mathrm{\mathbf{L}}\Psi\colon\cd(I^{op}\ten\ca^{e})\arrow[r]&\cd(I^{op}\ten\ce^{e})^{op}.
\end{tikzcd}
\]
Explicitly, this functor associates to a graded split monomorphism of $ \ca $-bimodules $ f\colon M_{1}\rightarrowtail M_{2} $ with cofibrant $ M_{1} $ and $ M_{2} $, the morphism of $ \ce $-bimodules given by
\begin{align*}
	\xymatrix{
		&&\RHom_{\ce}(M_{2}\ten_{\ca^{e}}\per_{dg}(\ca)^{e}(?,p'),\per_{dg}(\ca)(?,p))\ar"3,3"\\
		(p,p')\ar@{|->}"2,2"&\\
		&&\RHom_{\ce}(M_{1}\ten_{\ca^{e}}\per_{dg}(\ca)^{e}(?,p'),\per_{dg}(\ca)(?,p))
		}
\end{align*}
	
	Therefore, the functor $ \mathrm{\mathbf{L}}\Psi $ maps $ \mu_{G}\colon\ca\lten_{\cb}\ca\ra\ca $ to $ \ce\ra R_{*}(\cf) $ and $ \mu_{G}^{\vee}\colon\ca^{\vee}\ra(\ca\lten_{\cb}\ca)^{\vee} $ to $ R_{*}(D\cf^{op})\ra D\ce^{op} $. An explicit calculation shows that the map $ \Theta $ in diagram (\ref{diagram: left to right}) is the map induced by $ \mathrm{\mathbf{L}}\Psi $ on mapping complexes. 
	
	Suppose that the left $ n $-Calabi--Yau structure on $ G\colon\cb\ra\ca $ is induced by $ [\xi]\in H\!N_{n}(G) $. Then we have an isomorphism of triangles~(\ref{eq:left morphism of triangles}) in $ \cd(\ca^{e}) $. After applying the functor $ \mathrm{\mathbf{L}}\Psi $ to this diagram~(\ref{eq:left morphism of triangles}), we get an isomorphism of triangles~(\ref{Thm:right iso morphism}) in $ \cd(\ce^{e}) $ and this isomorphism is induced by the class $ \alpha([\xi]) $.
	
\end{proof}

\begin{Prop}\label{Relative Hom}
	Let $ G\colon\cb\ra\ca $ satisfy the above assumption. For $ L,M\in\mathcal{D}(\mathcal{A}) $, we put 
	$$ \mathcal{C}(L,M)=\cone(\RHom_{\mathcal{A}}(L,M))\rightarrow\RHom_{\mathcal{B}}(G_{*}(L),G_{*}(M)). $$ Suppose that $ L\in\mathrm{pvd}(\mathcal{A}) $ and $ M\in\mathcal{D}(\mathcal{A}) $. Then there is a bifunctorial isomorphism of triangles
	\begin{align}\label{Relative right from left}
		\xymatrix{
			D\mathcal{C}(L,M)\ar[r]&D\RHom_{\mathcal{B}}(G_{*}(L),G_{*}(M))\ar[r]^-{DG_{*}}&D\RHom_{\mathcal{A}}(L,M)\ar[r]&\Si D\mathcal{C}(L,M)\\
			\RHom_{\mathcal{A}}(M,\Si^{n-1}L)\ar[r]^-{G_{*}}\ar[u]_{\simeq}&\RHom_{\mathcal{B}}(G_{*}(M),\Si^{n-1}G_{*}(L))\ar[r]\ar[u]_{\simeq}&\mathcal{C}(M,\Si^{n-1}L)\ar[r]\ar[u]_{\simeq}&\RHom_{\mathcal{A}}(M,\Si^{n}L)\ar[u]_{\simeq}.
		}
	\end{align}
	
	If $ G_{*}(L)=0 $ or $ G_{*}(M)=0 $, then $ D\RHom_{\mathcal{A}}(L,M)\cong\RHom_{\mathcal{A}}(M,\Si^{n}L) $. In particular, the full subcategory $ \mathrm{pvd}_{\mathcal{B}}(\mathcal{A}) $ defined as the kernel of the restriction functor $ G_{*}\colon\pvd\ca\ra\pvd\cb $ is $ n $-Calabi--Yau as a triangulated category.
\end{Prop}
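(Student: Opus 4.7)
The plan is to deduce the proposition from the right $n$-Calabi--Yau structure on the restriction dg functor $R\colon\ce=\pvd_{dg}(\ca)\ra\cf=\pvd_{dg}(\cb)$ furnished by the preceding theorem. That theorem produces an isomorphism of triangles in $\cd(\ce^e)$ between $\Si^{n-1}u_R\colon\Si^{n-1}\ce\ra\Si^{n-1}R_*\cf$ (extended to its cofibre $\Si^{n-1}\cof(u_R)$) and $\delta_R\colon\fib(u_R^*)\ra R_*(D\cf^{op})$ (extended to $D\ce^{op}$). My task is to interpret this bimodule iso pointwise, after a swap of the two slots, and then relax the second argument from $\pvd(\ca)$ to all of $\cd(\ca)$.

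Evaluating each term of the iso of triangles at $(M,L)$ with $L,M\in\ce=\pvd(\ca)$, one identifies $\ce(M,L)=\RHom_\ca(M,L)$, $R_*\cf(M,L)=\RHom_\cb(G_*M,G_*L)$ and $\cof(u_R)(M,L)=\cc(M,L)$, while $D\ce^{op}(M,L)=D\RHom_\ca(L,M)$, $R_*(D\cf^{op})(M,L)=D\RHom_\cb(G_*L,G_*M)$, and, by dualising the defining triangle of $\cc(L,M)$, $\fib(u_R^*)(M,L)\iso D\cc(L,M)$. Substituting these identifications yields exactly the iso of triangles~(\ref{Relative right from left}) whenever both $L$ and $M$ lie in $\pvd(\ca)$.

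The remaining point, which I expect to be the principal obstacle, is the extension from $M\in\pvd(\ca)$ to an arbitrary $M\in\cd(\ca)$. For $L\in\pvd(\ca)$ fixed, each of the six terms in the two triangles makes sense as the value at $(L,M)$ of a bifunctor $\cd(\ca)^{op}\times\cd(\ca)\ra\cd(k)$, and the morphism between them is pulled back from the cyclic class $[\omega]$ producing the right Calabi--Yau structure on $R$, hence is bifunctorial in $(L,M)$ on all of $\cd(\ca)$ in the second variable. To upgrade the invertibility from $\ce$ to $\cd(\ca)$ I would reinterpret both triangles as the natural extensions of the bimodule iso along $\ce\hookrightarrow\cd(\ca)$ in the second slot, and argue that this extension preserves invertibility because both sides are cohomological functors of $M$ pulled back from a single iso in $\cd(\ce^e)$ via the Yoneda-type pairing with the fixed perfectly valued object $L$; this is the step where the hypothesis $L\in\pvd(\ca)$ is essential.

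The two concluding assertions follow by direct inspection. If either $G_*L=0$ or $G_*M=0$ then $\RHom_\cb(G_*L,G_*M)=0$, so $\cc(L,M)\iso\Si\RHom_\ca(L,M)$; the same vanishing yields $\cc(M,\Si^{n-1}L)\iso\RHom_\ca(M,\Si^n L)$. Plugging these into the third component of the iso of triangles gives $\RHom_\ca(M,\Si^n L)\iso D\RHom_\ca(L,M)$. Since every pair of objects in $\pvd_\cb(\ca)$ satisfies $G_*L=G_*M=0$ and lies in $\pvd(\ca)$, this identity establishes that $\pvd_\cb(\ca)$ is $n$-Calabi--Yau as a triangulated category.
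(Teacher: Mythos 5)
Your route is genuinely different from the paper's. You propose to specialize the right $n$-Calabi--Yau structure on the restriction functor $R\colon\ce=\pvd_{dg}(\ca)\to\pvd_{dg}(\cb)$ (the theorem preceding the proposition) by evaluating the isomorphism of triangles (\ref{Thm:right iso morphism}) of $\ce$-bimodules at a pair $(M,L)$ of objects of $\ce$. The paper instead works directly in $\cd(\ca^{e})$: it takes the isomorphism $\Si^{n}\ca^{\vee}\iso\cof(\ca\lten_{\cb}\ca\to\ca)$ supplied by the left Calabi--Yau structure on $G$, applies $\ch om_{\ca}(P_{M}\ten_{\ca}^{\mathbf{L}}\,?\,,\Si^{n}P_{L})$ to it for cofibrant resolutions $P_{L},P_{M}$, and uses \cite[Lemma 4.1]{Bk2008} for the $\cb$-side duality. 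Your identifications of the six terms of (\ref{Thm:right iso morphism}) at $(M,L)$ are correct, so the case where \emph{both} $L$ and $M$ are perfectly valued does follow cleanly from the cited theorem.

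The gap is the passage to arbitrary $M\in\cd(\ca)$, which you rightly flag as the principal obstacle but do not close. Two problems. First, your comparison morphism is only \emph{defined} by evaluating a morphism of $\ce$-bimodules at objects of $\ce$; for $M\notin\pvd(\ca)$ there is no such evaluation, and you do not construct a natural transformation between the two triangles for general $M$ that restricts to the pointwise one on $\pvd(\ca)$. Second, even granting such a globally defined transformation, invertibility does not propagate formally from $\pvd(\ca)$ to $\cd(\ca)$: the subcategory $\pvd(\ca)$ does not generate $\cd(\ca)$ in general (the representables need not be perfectly valued since $\ca$ is not assumed proper), so one cannot conclude via a localizing or colocalizing subcategory containing $\pvd(\ca)$, and ``both sides are cohomological functors of $M$'' is not sufficient. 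The general case is genuinely needed later: the proposition is applied with $M$ perfect but not perfectly valued, e.g.\ $M=\Si^{k}eA$ in Lemma~\ref{Stable under relative truncation} and $M=Y\in\cf^{rel}$ in Proposition~\ref{relative full faithful}. To repair your argument you would essentially have to reproduce the paper's direct computation of $D\RHom_{\ca}(L,M)\simeq\cc(M,\Si^{n-1}L)$ at the bimodule level, with $M$ arbitrary from the outset.
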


\begin{proof}
	Since $ G:\mathcal{B}\rightarrow\mathcal{A} $ has a relative left $ n $-Calabi--Yau structure, by Definition \ref{Def Relative CY} and diagram (\ref{eq:left morphism of triangles}), we have an isomorphism in $ \mathcal{D}(\mathcal{A}^{e}) $
	$$ \Si^{n}\ca^{\vee}\iso\cof(\ca\lten_{\cb}\ca\xrightarrow{\mu_{G}}\ca)=\cof(\ca\lten_{\cb}\cb\lten_{\cb}\ca\xrightarrow{\mu_{G}}\mathcal{A})\cong  \cone(\cb\lten_{\cb^{e}}\ca\xrightarrow{\mu_{G}}\mathcal{A}) ,$$ and an isomorphism in $ \mathcal{D}(\mathcal{B}^{e}) $
	$$ \Si^{n-1}\cb^{\vee}\iso\mathcal{B} .$$
	Let $ P_{L} $ and $ P_{M} $ be cofibrant resolutions of $ L $ and $ M $ respectively.
	By \cite[Lemma 4.1]{Bk2008}, we have
	\begin{equation*}
		\begin{split}
			D\RHom_{\mathcal{B}}(G_{*}(L),G_{*}(M))&\simeq\RHom_{\mathcal{B}}(G_{*}(M),\Si^{n-1}G_{*}(L))
		\end{split}
	\end{equation*}
	and
	\begin{equation*}
		\begin{split}
			D\RHom_{\mathcal{A}}(L,M)&\simeq D\ch om_{\ca}(P_{L},P_{M})\\
			&\simeq \ch om_{\ca}(P_{M}\otimes_{\mathcal{A}}^{\mathbf{L}}\ca^{\vee},P_{L})\\
			&\simeq \fib(\ch om_{\ca}(P_{M},\Si^{n}P_{L})\to \Hom_{\mathcal{D}(A)}(P_{M}\otimes_{\mathcal{A}}^{\mathbf{L}}\mathbf{L}G^{*}(\mathcal{B}),\Si^{n}P_{L}))\\
			&\simeq \fib(\mathbf{R}\Hom_{\ca}(M,\Si^{n}L)\to \ch om_{\ca}(P_{M}\otimes_{\mathcal{A}}^{\mathbf{L}}(\mathcal{A}\otimes^{\mathbf{L}}_{\mathcal{B}}\mathcal{A}),\Si^{n}P_{L}))\\
			&\simeq \fib(\mathbf{R}\Hom_{\ca}(M,\Si^{n}L)\to \ch om_{\ca}(P_{M}\otimes^{\mathbf{L}}_{\mathcal{B}}\mathcal{A},\Si^{n}P_{L}))\\
			&\simeq \fib(\mathbf{R}\Hom_{\ca}(M,\Si^{n}L)\to \mathcal{H}om_{\cb}(G_{*}(P_{M}),\Si^{n}G_{*}(P_{L}))\\
			&\simeq \fib(\mathbf{R}\Hom_{\ca}(M,\Si^{n}L)\to \mathbf{R}\Hom_{\cb}(G_{*}(M),\Si^{n}G_{*}(L)))\\
			&\simeq\mathcal{C}(M,\Si^{n-1}L).
		\end{split}
	\end{equation*}
	
	Thus, we get the bifunctorial isomorphism of triangles~(\ref{Relative right from left}). If $ G_{*}(L)=0 $ or $ G_{*}(M)=0 $, then we have the following functorial duality
	\begin{align*}
		\xymatrix{
			D\RHom_{\mathcal{A}}(L,M)\simeq \RHom_{\mathcal{A}}(M,\Si^{n}L)
		}.
	\end{align*}
	In particular, the kernel $ \mathrm{pvd}_{\mathcal{B}}(\mathcal{A}) $ of $ G_{*}\colon\pvd(\ca)\ra\pvd(\cb) $ is $ n $-Calabi--Yau as a triangulated category.

\end{proof}

Let $ \mathcal{B}\xrightarrow{G}\mathcal{A}\xrightarrow{Q}\mathcal{A}/\mathcal{B} $ be a homotopy cofiber sequence of small dg categories. By construction, the dg category $ \mathcal{A}/\mathcal{B} $ is the Drinfeld dg quotient of $ \mathcal{A} $ by its full dg subcategory $ \Ima(G) $, where $ \Ima(G) $ is the full dg subcategory of $ \mathcal{A} $ whose objects are the $ y $ in $ \mathcal{A} $ such that there exists an object $ x $ in $ \mathcal{B} $ and an isomorphism $ F(x)\cong y $ in $ \rm H^{0}(\mathcal{A}) $. We denote by $ i $ the dg inclusion $ \Ima(G)\hookrightarrow \mathcal{A} $.

\begin{Cor}\label{Relative CY duality}
	For any dg module $ N $ and any dg module $ M $ in $ \pvd(\mathcal{A}) $ whose restriction to $ \Ima G $ is acyclic, there is a canonical isomorphism
	\begin{align*}
		\xymatrix{
			D\Hom_{\mathcal{D}(\mathcal{A})}(M,N)\simeq \Hom_{\mathcal{D}(\mathcal{A})}(N,\Si^{n}M)
		}.
	\end{align*}
	
\end{Cor}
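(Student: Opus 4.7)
The plan is to deduce the corollary directly from the last assertion of Proposition~\ref{Relative Hom}. That assertion yields, for any $L\in\pvd(\ca)$ and any $N\in\cd(\ca)$, a bifunctorial isomorphism $D\RHom_{\ca}(L,N)\simeq\RHom_{\ca}(N,\Si^{n}L)$ as soon as $G_{*}(L)=0$. I will apply this with $L:=M$ and with the given $N$, and pass to $H^{0}$ to obtain the stated isomorphism in $\cd(\ca)$. Note that the hypothesis $M\in\pvd(\ca)$ is exactly what Proposition~\ref{Relative Hom} demands of its first variable.

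The only point that needs explanation is that the hypothesis ``the restriction of $M$ to $\Ima G$ is acyclic'' is equivalent to $G_{*}(M)=0$ in $\cd(\cb)$. To see this I would factor $G$ as $\cb\xrightarrow{\bar{G}}\Ima G\xrightarrow{i}\ca$, so that $G_{*}=\bar{G}_{*}\circ i_{*}$. If $i_{*}(M)$ is pointwise acyclic on $\Ima G$, then in particular $G_{*}(M)(x)=M(G(x))$ is acyclic for every $x\in\cb$. Conversely, every object $y$ of $\Ima G$ is, by the very definition given in the paper, isomorphic to some $G(x)$ in $H^{0}(\ca)$; since any dg functor, and in particular the dg module $M\colon\ca^{op}\to\cc_{dg}(k)$, sends $H^{0}$-isomorphisms to homotopy equivalences of complexes, $M(y)$ is quasi-isomorphic to $M(G(x))$. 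Hence pointwise acyclicity of $G_{*}(M)$ already forces pointwise acyclicity of $i_{*}(M)$, and the two conditions agree.

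With this identification in hand, the hypothesis $G_{*}(M)=0$ of Proposition~\ref{Relative Hom} is met, and the corollary follows at once. There is no genuine obstacle: the Calabi--Yau content is already encoded in Proposition~\ref{Relative Hom} via the left $n$-Calabi--Yau structure on $G$, and the present statement is simply its reformulation in terms of the Drinfeld quotient description of $\ca/\cb$. The only minor care required is the observation above linking the two notions of acyclic restriction, which is purely formal.
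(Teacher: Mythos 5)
Your proposal is correct and follows the same route as the paper: the paper's own proof simply observes that acyclicity of the restriction to $\Ima G$ gives $G_{*}(M)=0$ and then invokes Proposition~\ref{Relative Hom}. The extra discussion you give identifying the two acyclicity conditions (only the easy direction, $i_{*}(M)$ acyclic $\Rightarrow$ $G_{*}(M)=0$, is actually needed) is a harmless elaboration of the same argument.
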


\begin{proof}
	Since the restriction of $ M $ to $ \Ima G $ is acyclic, we have $ G_{*}(M)=0 $. Then the claim follows from the above Proposition~\ref{Relative Hom}.
\end{proof}

\subsection{Relative Calabi--Yau completions}

Given a dg category $ \cb $, let $ (\mathrm{dgcat}_{k})_{\cb/} $ be the category of dg categories under $ \cb $.  The forgetful functor $ (\mathrm{dgcat}_{k})_{\cb/}\ra\cc(\cb^{e}) $, sending a dg functor $ G\colon\cb\ra\ca $ to the $ \cb $-bimodule given by $ (a,a')\mapsto\ca(G(a'),G(a)) $, has a left adjoint $ T_{\cb} $, that can be described as follows:

Given a $ \cb $-bimodule $ M $, the \emph{tensor category} $ T_{\mathcal{B}}(M) $ is defined as follows:
\begin{align*}
	\xymatrix{
		T_{\mathcal{B}}(M)=\mathcal{B}\oplus M \oplus (M\otimes_{\mathcal{B}}M) \oplus (M\otimes_{\mathcal{B}}M\otimes_{\mathcal{B}}M) \oplus \cdots}
\end{align*}
Thus, the dg category $ T_{\mathcal{B}}(M) $ has the same objects as $ \cb $ and morphism complexes
\begin{equation*}
	\begin{split}
		T_{\mathcal{B}}(M)(x,y)&=\mathcal{B}(x,y)\oplus M(x,y) \oplus
		\{\oplus_{z\in \mathcal{B}}M(z,y)\otimes_{k} M(x,z)\}\oplus\\
		&\{\oplus_{z_{1},z_{2}\in \mathcal{B}}M(z_{2},y)\otimes_{k}M(z_{1},z_{2})\otimes_{k} M(x,z_{1})\}\oplus\cdots 
	\end{split}
\end{equation*}
The dg structure on $ T_{\mathcal{B}}(M) $ is given by the differentials of $ \mathcal{B} $ and $ M $ and the multiplication is given by the concatenation product. This adjunction is Quillen and thus induces an adjunction between their homotopy categories. We will denote by $ \mathbf{L}T_{\cb} $ the left derived functor of $ T_{\cb}\colon\cc(\cb^{e})\ra(\mathrm{dgcat}_{k})_{\cb/} $.

An $ \cb $-bilinear (super-)derivation $ D $ of degree 1 on $ \mathbf{L}T_{\cb}(M) $ is determined by its restriction to the generating bimodule $ M $. Then it is easy to see that each morphism $ c\colon M\ra\Si\cb $ in $ \cd(\cb^{e}) $ gives rise to a \enquote*{deformation} $$ (\mathbf{L}T_{\cb}(M),d_{c}) $$ of $ \mathbf{L}T_{\cb}(M) $, obtained by adding the $ \ca $-bilinear (super-)derivation $ D_{c} $ determined by $ c $ to the differential of $ \mathbf{L}T_{\cb}(M) $.

Let $ G\colon\mathcal{B}\to \mathcal{A} $ be a dg functor between smooth dg categories and let $ [\xi] $ be an element in $ H\!H_{n-2}(G) $. Our objective is to define the \emph{deformed relative $ n $-Calabi--Yau completion} of $ G\colon\cb\to\ca $ with respect to the Hochschild homology class $ [\xi]\in H\!H_{n-2}(G) $.

The dg functor $ G\colon\cb\ra\ca $ induces a morphism of dg $ \ca $-bimodules $ \cb\lten_{\cb^{e}}\ca^{e}\ra\ca $. Let $ \Xi $ be the cofiber of its bimodule dual, i.e.\ $ \Xi=\cone(\ca^{\vee}\ra\lG(\cb)^{\vee}) =\cone(\ca^{\vee}\ra(\cb\lten_{\cb^{e}}\ca^{e})^{\vee}) $. Clearly, the the dualizing bimodule $ \Theta_{G}=(\cone(\cb\lten_{\cb^{e}}\ca^{e}\ra\ca))^{\vee} $ of $ G $ is quasi-isomorphic to $ \Si^{-1}\Xi $.

By the definition of Hochschild homology of $ G $, we have the following long exact sequence
$$ \cdots\ra H\!H_{n-2}(\cb)\ra H\!H_{n-2}(\ca)\ra H\!H_{n-2}(G)\ra H\!H_{n-3}(\cb)\ra\cdots .$$ Thus, the Hochschild homology class $ [\xi]=[(s\xi_{\mathcal{B}},\xi_{\mathcal{A}})]\in H\!H_{n-2}(G) $ induces an element $ [\xi_{\cb}]$ in $ H\!H_{n-3}(\cb) $.

Notice that since $ \cb,\ca $ are smooth, we have the following isomorphisms
\begin{equation*}
	\begin{split}
		H\!H_{n-3}(\cb)=&H^{3-n}(\cb\lten_{\cb^{e}}\cb)\\
		\simeq& H^{3-n}(\RHom_{\cb^{e}}(\cb^{\vee},\cb))\\
		\simeq&
		\Hom_{\cd(\cb^{e})}(\Si^{n-2}\cb^{\vee},\Si\cb)\\
		\Hom_{\cd(\ca^{e})}(\Si^{n-2}\Xi,\Si\ca)\simeq&\Hom_{\cd(\ca^{e})}(\cone(\ca^{\vee}\ra(\ca\lten_{\cb}\ca)^{\vee}),\Si^{3-n}\ca)\\
		\simeq&\Hom_{\cd(\ca^{e})}(\cone(\ca^{\vee}\ra(\ca\lten_{\cb}\cb\lten_{\cb}\ca)^{\vee}),\Si^{3-n}\ca)\\
		\simeq&\Hom_{\cd(\ca^{e})}(\cone(\ca^{\vee}\ra\ca\lten_{\cb}\cb^{\vee}\lten_{\cb}\ca),\Si^{3-n}\ca)\\
		\simeq&H^{3-n}(\RHom_{\ca^{e}}(\cone(\ca^{\vee}\ra\ca\lten_{\cb}\cb^{\vee}\lten_{\cb}\ca),\ca))\\
		\simeq&H^{2-n}(\cone(\RHom_{\ca^{e}}(\ca\lten_{\cb}\cb^{\vee}\lten_{\cb}\ca,\ca)\ra\RHom_{\ca^{e}}(\ca^{\vee},\ca)))\\
		\simeq&H^{2-n}(\cone(\cb\lten_{\cb^{e}}\ca\ra\ca\lten_{\ca^{e}}\ca)),
	\end{split}
\end{equation*}
where we use the isomorphism $ \cb\lten_{\cb^{e}}\cb\iso\RHom_{\cb^{e}}(\cb^{\vee},\cb) $ in the first computation and $ \cb\lten_{\cb^{e}}\ca\iso\RHom_{\cb^{e}}(\cb^{\vee},\ca)\iso\RHom_{\ca^{e}}(\ca\lten_{\cb}\cb^{\vee}\lten_{\cb}\ca,\ca) $ in the second one.

Thus, via the canonical morphism 
$$ H\!H_{n-2}(G)=H^{2-n}(\cone(\cb\lten_{\cb^{e}}\cb\ra\ca\lten_{\ca^{e}}\ca))\ra H^{2-n}(\cone(\cb\lten_{\cb^{e}}\ca\ra\ca\lten_{\ca^{e}}\ca)) ,$$
the homology class $ [\xi]=[(s\xi_{\mathcal{B}},\xi_{\mathcal{A}})] $ induces a morphism in $ \cd(\ca^{e}) $
$$ \xi\colon\Si^{n-2}\Xi\ra\Si\ca $$ and the homology class $ [\xi_{\cb}] $ induces a morphism in $ \cd(\cb^{e}) $
$$ \xi_{\cb}\colon\Si^{n-2}\cb^{\vee}\ra\Si\cb .$$ Moreover, we have the following commutative diagram in $ \cd(\ca^{e}) $ (\cite[Proposition 4.7]{WkY2016})
\begin{align*}
	\xymatrix{
		\lG(\Si^{n-2}\cb^{\vee})=\Si^{n-2}(\cb^{\vee}\lten_{\cb^{e}}\ca^{e})\ar[r]\ar[d]_{\xi_{\cb}}&\Si^{n-2}\Xi=\Si^{n-2}(\cone(\ca^{\vee}\ra\cb^{\vee}\lten_{\cb^{e}}\ca^{e}))\ar[d]^{\xi}\\
		\lG(\Si\cb)\ar[r]&\Si\ca,
	}
\end{align*}
where the first horizontal morphism is the canonical inclusion and the second one is the canonical morphism $ \lG(\cb)=\cb\lten_{\cb^{e}}\ca^{e}\simeq\ca\lten_{\cb}\ca\ra\ca $ of $ \ca $-bimodules.

Therefore, the morphism $ \xi_{\cb} $ gives rise to a \enquote*{deformation}
$$ \bm{\Pi}_{n-1}(\cb,\xi_{\cb}) $$ of $ \bm{\Pi}_{n-1}(\cb)=\mathbf{L}T_{\cb}(\Si^{n-2}\cb^{\vee}) $, obtained by adding $ \xi_{\cb} $ to the differential of $ \bm{\Pi}_{n-1}(\cb) $; the morphism $ \xi $ gives rise to a ‘deformation’
$$ \bm{\Pi}_{n}(\ca,\cb,\xi) $$ of $  \bm{\Pi}_{n}(\ca,\cb)=\mathbf{L}T_{\ca}(\Si^{n-2}\Xi) $, obtained by adding $ \xi $ to the differential of $ \mathbf{L}T_{\ca}(\Si^{n-2}\Xi) $; and the commutative diagram above gives rise to a dg functor 
\begin{align}\label{relative functor}
	\xymatrix{
		\widetilde{G}\colon\bm{\Pi}_{n-1}(\cb,\xi_{\cb})\ra\bm{\Pi}_{n}(\ca,\cb,\xi).
	}
\end{align}

A standard argument shows that up to quasi-isomorphism, the dg functor $ \widetilde{G} $ and the deformations $ \bm{\Pi}_{n-1}(\mathcal{B},\xi_{\mathcal{B}}) $, $ \bm{\Pi}_{n}(\mathcal{A},\mathcal{B},\xi) $ only depend on the class $ [\xi] $. When the class $ [\xi_{\cb}] $ vanishes (respectively, $ [\xi] $ vanishes), we will abbreviate $ \bm{\Pi}_{n-1}(\mathcal{B},\xi_{\mathcal{B}}) $ (respectively, $ \bm{\Pi}_{n}(\mathcal{A},\mathcal{B},\xi) $) to $ \bm{\Pi}_{n-1}(\mathcal{B}) $ (respectively, $ \bm{\Pi}_{n}(\mathcal{A},\mathcal{B}) $).

\begin{Def}\rm\cite[Definition 5.18]{WkY2016}
	Let $ G\colon\mathcal{B} \to \mathcal{A}$ be a dg functor between smooth dg categories. The dg functor $ \widetilde{G} $ (\ref{relative functor}) defined above is called the \emph{deformed relative $ n $-Calabi--Yau completion} of $ G\colon\mathcal{B} \to \mathcal{A}$ with respect to the Hochschild homology class $ [\xi] \in H\!H_{n-2}(G)$. If we take the class $ [\xi]$ be 0, then we will call it simply the \emph{relative $ n $-Calabi--Yau completion} of $ G\colon\mathcal{B} \to\mathcal{A}$.
\end{Def}
\begin{Rem}
  If we take $\mathcal{B}$ to be the empty category, then the deformed relative $ n $-Calabi--Yau completion is the deformed $ n $-Calabi--Yau completion of~\cite{BK2011}.
\end{Rem} 

\begin{Thm}\cite[Theorem 7.1]{WkY2016}\cite[Proposition 5.29]{BCS2020}
	Let $ G\colon\mathcal{B} \to \mathcal{A}$ be a dg functor between smooth dg categories and let $ [\xi]$ be an element in $ H\!H_{n-2}(G) $. If $ [\xi] $ has a negative cyclic lift, then each choice of such a lift gives rise to a canonical left $ n $-Calabi--Yau structure on the dg functor
	\[
	\begin{tikzcd}
		\widetilde{G}\colon \bm{\Pi}_{n-1}(\mathcal{B},\xi_{\mathcal{B}}) \ar[r]&\bm{\Pi}_{n}(\mathcal{A},\mathcal{B},\xi).
	\end{tikzcd}
	\]
\end{Thm}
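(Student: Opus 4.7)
The plan is to produce a distinguished class in $H\!N_{n}(\widetilde{G})$ from the given lift $[\tilde\xi]\in H\!N_{n-2}(G)$, and then verify the two axioms of Definition~\ref{Def Relative CY} for this class. Condition (b) of that definition concerns only the algebra $\bm{\Pi}_{n-1}(\cb,\xi_\cb)$, so it reduces to the absolute statement of Keller \cite{BK2011}: a negative cyclic lift of $[\xi_\cb]\in H\!H_{n-3}(\cb)$ (obtained as the image of $[\tilde\xi]$ under the boundary $H\!N_{n-2}(G)\to H\!N_{n-3}(\cb)$) equips the deformed absolute $(n-1)$-Calabi--Yau completion with a left $(n-1)$-CY structure, which is precisely the content of condition (b). So the real content is condition (a), the invertibility of $\Si^{n-1}\mu_{\widetilde{G}}^\vee\to\nu_{\widetilde{G}}$ in $\cd(\bm{\Pi}_{n}(\ca,\cb,\xi)^{e})$.

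For the construction of the class in $H\!N_{n}(\widetilde{G})$, I would use the naturality of the mixed complex construction under the inclusion $G\hookrightarrow\widetilde{G}$ together with the tensor-algebra structure. Concretely, the generators $\Si^{n-2}\Xi$ of $\bm{\Pi}_{n}(\ca,\cb,\xi)$ over $\ca$ produce, via the canonical trace map $\Si^{n-2}\Xi\ten_{\ca^e}\Si^{n-2}\Xi^\vee\to k$ and the cyclic structure, a tautological degree-$n$ element of the relative Hochschild complex whose construction is compatible with the absolute version of \cite{BK2011}; the negative-cyclic lift $[\tilde\xi]$ provides exactly the data needed to promote this tautological class from $H\!H_n(\widetilde{G})$ to $H\!N_n(\widetilde{G})$, via the Connes long exact sequence relating $H\!H(\widetilde{G})$, $H\!C(\widetilde{G})$, $H\!N(\widetilde{G})$. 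This mirrors exactly the construction in the absolute case and the relative cyclic class $[\tilde\xi]$ plays the role of the obstruction to lifting from Hochschild to negative cyclic.

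To verify condition (a), I would compute the inverse dualizing bimodule $\Theta_{\widetilde{G}}$ in $\cd(\bm{\Pi}_n(\ca,\cb,\xi)^e)$ explicitly. Using smoothness of $\ca$ and $\cb$, both are smooth and one has an identification
\[
\cone(\bm{\Pi}_{n-1}(\cb,\xi_\cb)\lten_{\bm{\Pi}_{n-1}(\cb,\xi_\cb)}\bm{\Pi}_n(\ca,\cb,\xi)\otimes\bm{\Pi}_n(\ca,\cb,\xi)\to\bm{\Pi}_n(\ca,\cb,\xi))
\]
computed on the tensor-algebra level as a bar-type resolution whose $\ca$-bimodule piece contracts to $\Si^{n-2}\Xi$. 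Dualising and using the isomorphism $\xi\colon\Si^{n-2}\Xi\iso\Si\ca$ in $\cd(\ca^e)$ coming from the Hochschild class, one obtains an identification of $\Theta_{\widetilde G}$ with $\Si^{1-n}\bm{\Pi}_n(\ca,\cb,\xi)$, which is precisely the required invertibility of $\Si^{n-1}\mu_{\widetilde G}^\vee\to\nu_{\widetilde G}$. The spectral-sequence argument from the tensor-algebra filtration (deformation terms sit in lower filtration) reduces the deformed case to the undeformed one, so it suffices to treat $\xi=0$.

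The main obstacle is keeping track of the compatibility between the negative cyclic class and the morphism of triangles in $\cd(\bm{\Pi}_n(\ca,\cb,\xi)^e)$ through the chain of morphisms in diagram~(\ref{Diagram: left CY}) applied to $\widetilde{G}$. One must verify that the class produced from $[\tilde\xi]$ actually induces the distinguished isomorphism we just wrote down, not merely some map of the right degree. This is the step where the negative cyclic lift (rather than a mere Hochschild class) is essential: the cyclic symmetrisation that enters in the definition of $\mu_{\widetilde G}^\vee\to\nu_{\widetilde G}$ exactly matches the coboundary from $H\!H$ to $H\!N$, and the match has to be traced carefully through the identifications of Lemma~\ref{Lemma:morphism in left CY}. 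Once this compatibility is established at the level of generators of the tensor algebra, the multiplicative extension gives the statement on all of $\bm{\Pi}_n(\ca,\cb,\xi)^e$.
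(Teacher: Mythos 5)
The paper does not prove this statement: it is quoted from Yeung \cite{WkY2016} and Bozec--Calaque--Scherotzke \cite{BCS2020}, and no argument for it appears anywhere in the text, so there is no internal proof to compare yours against. Judged on its own, your outline has the right global shape --- produce a class in $H\!N_{n}(\widetilde{G})$ from the chosen lift, note that condition b) of Definition~\ref{Def Relative CY} is exactly the absolute statement for $\bm{\Pi}_{n-1}(\cb,\xi_{\cb})$ applied to the boundary image of the lift, and reduce condition a) to an identification of $\Theta_{\widetilde{G}}$ with a shift of the diagonal bimodule --- but the step carrying all the difficulty is the one you dispose of in a sentence.

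The claim that ``the spectral-sequence argument from the tensor-algebra filtration reduces the deformed case to the undeformed one, so it suffices to treat $\xi=0$'' is precisely the gap in the original proof of Keller's deformed Calabi--Yau completion theorem that forced the erratum \cite{BK2011Erratum}. The filtration identifies the associated graded of $\Theta_{\widetilde{G}}$ with that of a shift of $\bm{\Pi}_{n}(\ca,\cb,\xi)$, but it does not show that the comparison morphism commutes with the deformed differential $d_{\xi}$, nor that the resulting isomorphism is the one induced by your candidate negative cyclic class through diagram~(\ref{Diagram: left CY}). If the reduction to $\xi=0$ were legitimate, the conclusion would hold for an arbitrary Hochschild class and the hypothesis of a negative cyclic lift would be superfluous --- which is exactly what the erratum rules out. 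The corrected arguments of Yeung and Bozec--Calaque--Scherotzke use the cochain-level negative cyclic lift from the outset, so that its lower-order components absorb the deformation terms; you flag this compatibility in your final paragraph as something that ``has to be traced carefully,'' but tracing it is the proof, not a loose end. Two further slips: $\xi\colon\Si^{n-2}\Xi\to\Si\ca$ is the deformation parameter, a morphism of $\cd(\ca^{e})$ that is essentially never invertible, so one cannot ``use the isomorphism $\xi$'' when dualizing; and since $\cof(\mu_{\widetilde{G}}^{\vee})=\Si\Theta_{\widetilde{G}}$, the identification required by condition a) is $\Theta_{\widetilde{G}}\simeq\Si^{-n}\bm{\Pi}_{n}(\ca,\cb,\xi)$, not $\Si^{1-n}\bm{\Pi}_{n}(\ca,\cb,\xi)$.
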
 

\subsection{Reduced relative Calabi--Yau completions}
Recall that a dg category $ \mathcal{A} $ over $ k $ is said to be \emph{semi-free} if there is a graded quiver $ Q=(Q_{0},Q_{1}) $ such that
the underlying graded $ k $-category of $ \mathcal{A} $ is freely generated by the arrows of $ Q $ over the vertex set $ Q_{0} $. We write this as $ \mathcal{A}=T_{kQ_{0}}(kQ_{1}) $.
\begin{Def}\cite[Section 2]{WkY2016}
	\rm
	A dg category $ \mathcal{A} $ is said to be \emph{cellular} if it is semi-free over some graded quiver $ Q=(Q_{0},Q_{1}) $
	that admits a filtration 
	\begin{align*}
		Q^{(1)}\subset Q^{(2)}\subset\cdots
	\end{align*}
	such that every generating arrow $ f\in Q^{(i)} $ has differential $ d(f) $ contained in the graded category $ T_{kQ_{0}}(kQ^{(i-1)}) $.
	
	We say that $ \mathcal{A} $ is \emph{finitely cellular} if the graded quiver $ (Q_{0},Q_{1}) $ is finite (i.e.\ both $ Q_{0} $ and $ Q_{1} $ are finite). 
	
	We say that $ \mathcal{A} $ is of \emph{finite cellular type} if it is quasi-equivalent to a finitely cellular dg category.
\end{Def}

Let $ G\colon\mathcal{B} \to \mathcal{A}$ be a dg functor between finitely cellular type dg categories. By \cite[Remark 24.2.8]{WKY2016-thesis}, we can assume that $ \mathcal{B} $ and $ \mathcal{A} $ are finitely cellular and $ G\colon\mathcal{B} \to \mathcal{A}$ is a semi-free extension, i.e.\ there is a finite graded quiver $ Q $ and a subquiver $ F\subseteq Q $ such that the underlying graded $ k $-category of $ \cb $ and $ \ca $ are isomorphic to $ T_{kF_{0}}(kF_{1}) $ and $ T_{kQ_{0}}(kQ_{1}) $, respectively. We abbreviate $ R_{2}=kF_{0} $ and $ R_{1}=kQ_{0} $. Then we have a short exact sequence of $ \mathcal{B} $-bimodules
\begin{align*}
	\xymatrix{
		0\ar[r]&\Omega^{1}(\mathcal{B})\ar[r]^-{\alpha}&\mathcal{B}\otimes_{R_{2}}\mathcal{B}\ar[r]^-{m}&\mathcal{B}\ar[r]&0,
	}
\end{align*}
where \emph{the bimodule of differentials} $ \Omega^{1}(\mathcal{B}) $ is generated by $ \{D(f)|f\in F_{1}\} $, the map $ \alpha $ is given by $ D(f)\mapsto f\otimes 1_{x}-1_{y}\otimes f $ where $ f\colon x\ra y $
and the map $ m $ is the composition map in $ \mathcal{B} $. 

We define $ \Omega^{1}(\ca) $ similarly.
We put $$ \xymatrix{\mathcal{P}_{B}= \cone(\Omega^{1}(\mathcal{B})\ar[r]^-{\alpha}&\mathcal{B}\otimes_{R_{2}}\mathcal{B}}) $$ and $$ \xymatrix{\mathcal{P}_{A}= \cone(\Omega^{1}(\mathcal{A})\ar[r]^-{\alpha}&\mathcal{A}\otimes_{R_{1}}\mathcal{A}}). $$ Then $P_{\mathcal{B}} $ and $ P_{\mathcal{A}} $ are cofibrant replacements of the bimodules $ \mathcal{B} $ and $ \mathcal{A} $ respectively. The $ \mathcal{B} $-bimodule $ \mathcal{P}_{\mathcal{B}}^{\vee} $ is cellular of finite rank, with basis $ \{f^{\vee}_{\mathcal{B}}|f\in F_{1}\} \cup \{c_{x,\mathcal{B}}|x\in F_{0}\} $ where the arrow $ f^{\vee}_{\mathcal{B}} $ has degree $ |f^{\vee}_{\mathcal{B}}| =1-|f|$, and points in the opposite direction to $ f $; the loop $ c_{x,\mathcal{B}} $ has degree $ |c_{x,\mathcal{B}}|=0 $ , and is based at $ x $. Similarly, the $ \mathcal{A} $-bimodule $ \mathcal{P}_{\mathcal{A}}^{\vee} $ is also cellular of finite rank, with basis $ \{g^{\vee}_{\mathcal{A}}|g\in Q_{1}\} \cup \{c_{y,\mathcal{A}}|y\in Q_{0}\} $ where the arrow $ g^{\vee}_{\mathcal{A}} $ has degree $ |g^{\vee}_{\mathcal{A}}| =1-|g|$, and points in the opposite direction to $ g $; the loop $ c_{y,\mathcal{A}} $ has degree $ |c_{y,\mathcal{A}}|=0 $ , and is based at $ y $. 

The natural map $ \alpha_{G}\colon G^{*}(P_{\mathcal{B}})\to P_{\mathcal{A}} $ in $\mathcal{C}(\mathcal{A}^{e}) $ induces the dual map $ \alpha_{G}^{\vee}\colon P_{\mathcal{A}}^{\vee}\to G^{*}(P_{\mathcal{B}})^{\vee} $ in $ \mathcal{C}(\mathcal{A}^{e}) $. This $ \alpha_{G}^{\vee} $ is given as follows:
\begin{itemize}
	\item $ \alpha_{G}^{\vee}(c_{y,\mathcal{A}})=c_{y,\mathcal{B}} $ if $ y $ belongs to $ F_{0} $; otherwise, $ \alpha_{G}^{\vee}(c_{y,\mathcal{A}})=0 $,
	\item $ \alpha_{G}^{\vee}(g_{\mathcal{A}}^{\vee})=g_{\mathcal{B}}^{\vee} $ if $ g $ belongs to $ F_{1} $; otherwise, $ \alpha_{G}^{\vee}(g_{\mathcal{A}}^{\vee})=0 $.
\end{itemize}

Clearly, the morphism $ \alpha_{G}^{\vee}  $ is a graded split surjection of $ \ca $-bimodules. Let $ \mathcal{K}$ be the kernel of $ \alpha_{G}^{\vee} $. Then $ \mathcal{K} $ is cellular of finite rank, with basis $ \{g_{\mathcal{A}}^{\vee},c_{y,\mathcal{A}}\ |\ g\in N_{1}=Q_{1}\setminus F_{1},\ y\in N_{0}=Q_{0}\setminus F_{0} \} $. We have a split exact sequence in the category of graded $ \mathcal{A} $-bimodules, i.e.\ there exist two graded bimodule morphisms $ s_{G}\colon G^{*}(P_{\mathcal{B}})^{\vee} \to P_{\mathcal{A}}^{\vee}$, $ r_{\mathcal{K}}\colon P_{\mathcal{A}}^{\vee}\to\mathcal{K}$ such that $ \alpha_{G}^{\vee}\circ s_{G}=\id_{G^{*}(P_{\mathcal{B}})^{\vee}} $, $ r_{\mathcal{K}} \circ i_{\mathcal{K}}=\id_{\mathcal{K}} $, $ s_{G}\circ \alpha_{G}^{\vee}+i_{\mathcal{K}}\circ r_{\mathcal{K}}=\id_{P_{\mathcal{A}}^{\vee}} $. We summarize the notations in the diagram
\begin{equation}
	\begin{tikzcd}\label{Split of bimodules}
		0\arrow[r]&\mathcal{K}\arrow[r,shift left=1ex,"i_{\ck}"]&P_{\mathcal{A}}^{\vee}\arrow[r,shift left=1ex,"\alpha_{G}^{\vee}"]\arrow[l,shift left=1ex,"r_{\ck}"]&G^{*}(P_{\mathcal{B}})^{\vee}\arrow[r]\arrow[l,shift left=1ex,"s_{G}"]&0.
	\end{tikzcd}
\end{equation}
We choose the graded morphisms $ r_{\ck} $ and $ s_{G} $ are given as follows:
\begin{itemize}
	\item The graded morphism $ s_{G} $ maps $ g^{\vee}_{\cb} $ to $ g^{\vee}_{\ca} $ and maps $ c_{x,\cb} $ to $ c_{x,\ca} $.
	\item The graded morphism $ r_{\ck} $ maps $ g^{\vee}_{\ca} $ to $ g^{\vee}_{\ca} $ if $ g $ is in $ N_{1} $; otherwise, we put $ r_{\ck}(g^{\vee}_{\ca})=0 $. Moreover, it maps $ c_{y,\ca} $ to $ c_{y,\ca} $ if $ y $ is in $ N_{0} $; otherwise, we put $ r_{\ck}(c_{y,\ca})=0 $. 
\end{itemize}

The above exact sequence yields a triangle in $ D(\mathcal{A}^{e}) $
\begin{align}
	\xymatrix{
		P_{\mathcal{A}}^{\vee}\ar[r]^-{\alpha_{G}^{\vee}}&G^{*}(P_{\mathcal{B}})^{\vee}\ar[r]^-{u}&\Si\mathcal{K}\ar[r]&,
	}
\end{align}
where $ u $ is equal to $ r_{\mathcal{K}}\circ d_{P_{\mathcal{A}}^{\vee}}\circ s_{G} $. Thus, we get the following isomorphism of triangles in $\mathcal{D}(\mathcal{A}^{e}) $

\begin{align}\label{quasidiagram}
	\xymatrix{
		P_{\mathcal{A}}^{\vee}\ar"1,3"^{\alpha_{G}^{\vee}}\ar[d]^{\id}&&G^{*}(P_{\mathcal{B}})^{\vee}\ar"1,6"^{u}\ar[d]^{\id}&&&\Si\mathcal{K}\ar[r]\ar@{^{(}->}[d]^{v}&\\
		P_{\mathcal{A}}^{\vee}\ar"2,3"^{\alpha_{G}^{\vee}}&&G^{*}(P_{\mathcal{B}})^{\vee}\ar"2,6"^{l}&&&\Xi\ar[r]&,
	}
\end{align}
where $ \Xi=\cone(P_{\mathcal{A}}^{\vee}\to G^{*}(P_{\mathcal{B}})^{\vee}) $ and $ v $ is the quasi-isomorphism induced by the inclusion of $ \ck $ into $ P_{\ca}^{\vee} $. Here the morphism $ P_{\mathcal{A}}^{\vee}\to G^{*}(P_{\mathcal{B}})^{\vee} $ of $ \ca $-modules is a cofibrant replacement of $ \ca^{\vee}\ra\lG(\cb)^{\vee} $.

Now we consider the derived tensor category $
\mathbf{L}T_{\mathcal{A}}(\Si^{n-1}\mathcal{K})
$. Let $ [\xi]=[(s\xi_{\mathcal{B}},\xi_{\mathcal{A}})] $ be an element in $ H\!H_{n-2}(G) $. By the above section, the homology class $ [\xi] $ induces a morphism in $ \cd(\ca^{e}) $
$$ \xi\colon\Si^{n-2}\Xi\ra\Si\ca $$ and the homology class $ [\xi_{\cb}] $ induces a morphism in $ \cd(\cb^{e}) $
$$ \xi_{\cb}\colon\Si^{n-2}\cb^{\vee}\ra\Si\cb .$$

Since the $ \ca $-bimodule $ \ck $ is cofibrant, we have $ \mathbf{L}T_{\mathcal{A}}(\Si^{n-1}\mathcal{K})=T_{\mathcal{A}}(\Si^{n-1}\mathcal{K}) $.

We define $ \xi_{\mathcal{K}} $ as the following composition
$$ \xymatrix{
	\xi_{\mathcal{K}}\colon\,\Si^{n-1}\mathcal{K}\ar@{^{(}->}[r]^-{v}& \Si^{n-2}\Xi\ar"1,4"^{\xi}&&\Si\mathcal{A},
}$$ where $ v $ is the conical inclusion.
Then it determines an $ \ca $-bilinear derivation $ d'_{\ck} $ on $ T_{\mathcal{A}}(\Si^{n-1}\mathcal{K}) $ and we get a \enquote*{deformation}
$$ T_{\ca}(\Si^{n-1}\mathcal{K},\xi_{\mathcal{K}}) $$ of $ T_{\mathcal{A}}(\Si^{n-1}\mathcal{K}) $, obtained by adding $ d'_{\ck} $ to the differential of $
T_{\mathcal{A}}(\Si^{n-1}\mathcal{K})
$.

Then the canonical inclusion of dg $ \ca $-bimodules
$ \xymatrix{
	\Si^{n-1}\mathcal{K}\ar@{^{(}->}[r]^<<<<{v}&\Si^{n-2}\Xi}$ induces a fully faithful dg functor
$$ \xymatrix{\Psi\colon T_{\ca}(\Si^{n-1}\mathcal{K},\xi_{\mathcal{K}})\ar@{^{(}->}[r]&\bm{\Pi}_{n}(\mathcal{A},\mathcal{B},\xi)}.$$

Next we will construct a dg functor from $ \bm{\Pi}_{n-1}(\mathcal{B},\xi_{\mathcal{B}}) $ to $ T_{\ca}(\Si^{n-1}\mathcal{K},\xi_{\mathcal{K}}) $.

Firstly, we have the following diagram 
\begin{align*}
	\xymatrix{
		\Si^{n-2}G^{*}(P_{\mathcal{B}})^{\vee}\ar"1,3"^{u}\ar[d]_{\id}&&\Si^{n-1}\mathcal{K}\ar@{^{(}->}[d]^{v}\\
		\Si^{n-2}G^{*}(P_{\mathcal{B}})^{\vee}\ar"2,3"^{l}\ar[d]_{G^{*}(\xi_{\mathcal{B}})}&&\Si^{n-2}\Xi\ar[d]^{\xi}\\
		\Si G^{*}(\mathcal{B})\ar"3,3"^{j_{G}}&&\Si\mathcal{A}\,,
	}
\end{align*} where the upper square is commutative up to homotopy and the lower square is commutative. The homotopy is given by
$$ \xymatrix{H^{'}\colon\Si^{n-2}G^{*}(P_{\mathcal{B}})^{\vee}\ar"1,3"^-{\Si^{n-2}s_{G}^{\vee}}&& \Si^{n-2}P_{\mathcal{A}}^{\vee}\ar@{^{(}->}"1,5"^-{inclusion}&&\Si^{n-3}\Xi},$$
where $ s^{\vee}_{G} $ is the map defined in~(\ref{Split of bimodules}).

Combining those two diagrams, we get the following diagram commutative up to homotopy

\begin{align*}
	\xymatrix{
		\Si^{n-2}G^{*}(P_{\mathcal{B}})^{\vee}\ar"1,3"^-{u}\ar[d]^{G^{*}(\xi_{\mathcal{B}})}&&\Si^{n-1}\mathcal{K}\ar[d]^{\xi\circ v}\\
		\Si G^{*}(\mathcal{B})\ar"2,3"^-{j_{G}}&&\Si\mathcal{A}
	}
\end{align*}
where the homotopy is given by
$$ \xymatrix{H\colon\Si^{n-2}G^{*}(P_{\mathcal{B}})^{\vee}\ar"1,3"^-{\Si^{n-2}s_{G}^{\vee}}&& \Si^{n-2}P_{\mathcal{A}}^{\vee}\ar@{^{(}->}"1,5"^-{inclusion}&&\Si^{n-3}\Xi\ar[r]^{-\xi}&\mathcal{A}}.$$ 

Then the following diagram commutes strictly
\begin{align*}
	\xymatrix{
		\Si^{n-2}G^{*}(P_{\mathcal{B}})^{\vee}\ar"1,3"^-{(-H,u)^{T}}\ar[d]_{(d_{G^{*}(P_{\mathcal{B}})^{\vee}},G^{*}(\xi_{\mathcal{B}}))^{T}}&&\mathcal{A}\oplus\Si^{n-1}\mathcal{K}\ar[d]^{(d_{\mathcal{A}},\xi\circ v)}\\
		\Si^{n-1} G^{*}(P_{\mathcal{B}})^{\vee}\oplus \Si G^{*}(\mathcal{B})\ar"2,3"^-{(H,j_{G})}&&\Si\mathcal{A}.
	}
\end{align*}

Thus, the above commutative diagram induces a dg functor
\begin{align}
	\xymatrix{
		\bm{G}_{rel}\colon\bm{\Pi}_{n-1}(\mathcal{B},\xi_{\mathcal{B}})\ar[r]&\bm{\Pi}_{n}^{red}(\ca,\cb,\xi)
	}
\end{align}
where we put $ \bm{\Pi}_{n}^{red}(\ca,\cb,\xi)=T_{\ca}(\Si^{n-1}\mathcal{K},\xi_{\mathcal{K}}) $.
A standard argument shows that up to quasi-isomorphism, the dg functor $ \bm{G}_{rel} $ and the deformed dg category $ \bm{\Pi}_{n}^{red}(\ca,\cb,\xi) $ only depend on the class $ [\xi] $ and the dg functor $ G\colon\cb\ra\ca $.

We call the dg functor $ \bm{G}_{rel} $ the \emph{reduced deformed relative n-Calabi--Yau completion} of $ G\colon\mathcal{B} \to \mathcal{A}$ with respect to the Hochschild homology class $ [\xi] \in H\!H_{n-2}(G)$.

\begin{Prop}\label{reduced completion}
	Let $ G\colon\mathcal{B}\to \mathcal{A} $ be a dg functor between dg categories of finitely cellular type and let $ [\xi]=[(s\xi_{\mathcal{B}},\xi_{\mathcal{A}})] $ be an element in $ H\!H_{n-2}(G) $ which has a negative cyclic lift. Then we have the following diagram which is commutative up to homotopy and where $ \Psi $ is a quasi-equivalence.
	\begin{align}
		\xymatrix{
			\bm{\Pi}_{n-1}(\mathcal{B},\xi_{\mathcal{B}})\ar[r]^{\widetilde{G}}\ar[rd]_{\bm{G}_{rel}}&\bm{\Pi}_{n}(\mathcal{A},\mathcal{B},\xi)\\
			&\bm{\Pi}_{n}^{red}(\ca,\cb,\xi)\ar@{^{(}->}[u]_{\Psi}
		}
	\end{align}
	Thus,  the dg functor $ \bm{G}_{rel}\colon\bm{\Pi}_{n-1}(\mathcal{B},\xi_{\mathcal{B}})\to \bm{\Pi}_{n}^{red}(\ca,\cb,\xi) $ has a canonical left $ n $-Calabi--Yau structure.	
\end{Prop}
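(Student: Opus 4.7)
The plan is to carry out three steps: (i) verify that $\Psi$ is a quasi-equivalence; (ii) produce an explicit homotopy realizing $\widetilde{G}\simeq \Psi\circ \bm{G}_{rel}$; and (iii) use the invariance results of Proposition~\ref{quasi-invariant} and Corollary~\ref{homotopy-invariant} to transfer the canonical left $n$-Calabi--Yau structure on $\widetilde{G}$, provided by the theorem of Yeung cited before, onto $\bm{G}_{rel}$.

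For (i), both $\bm{\Pi}_{n}^{red}(\ca,\cb,\xi)=T_{\ca}(\Si^{n-1}\mathcal{K},\xi_{\mathcal{K}})$ and $\bm{\Pi}_{n}(\ca,\cb,\xi)=T_{\ca}(\Si^{n-2}\Xi,\xi)$ share the object set of $\ca$, and $\Psi$ is the identity on objects. Both admit an increasing filtration by tensor length which is preserved by the deformed differential, since the derivations induced by $\xi_{\mathcal{K}}$ and by $\xi$ strictly decrease the tensor degree. On the associated graded, $\Psi$ is induced by $T_{\ca}(v)$, where $v\colon \Si\mathcal{K}\hookrightarrow \Xi$ is the quasi-isomorphism from diagram (\ref{quasidiagram}). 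Since $\mathcal{K}$ and $\Xi$ are cofibrant $\ca$-bimodules (being shifts and cones of cofibrant bimodules coming from the cellular structure of $\ca$ and $\cb$), each tensor power $v^{\otimes r}$ is a quasi-isomorphism, so $T_{\ca}(v)$ is componentwise a quasi-isomorphism. A standard spectral-sequence comparison then shows that $\Psi$ itself is componentwise a quasi-isomorphism, hence a quasi-equivalence.

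For (ii), both $\widetilde{G}$ and $\Psi\circ \bm{G}_{rel}$ act as $G$ on the subcategory $\cb\subset\bm{\Pi}_{n-1}(\cb,\xi_{\cb})$, so the comparison reduces to what they do on the generating bimodule $\Si^{n-2}G^{*}(P_{\cb})^{\vee}$. By construction (see (\ref{relative functor})), the functor $\widetilde{G}$ sends this generator to $\Si^{n-2}\Xi$ via the map $l$ of diagram (\ref{quasidiagram}), whereas $\Psi\circ \bm{G}_{rel}$ sends it to $\ca\oplus\Si^{n-2}\Xi$ via $(-H,\,v\circ u)^{T}$. The identity $v\circ u=l$ of diagram (\ref{quasidiagram}) identifies the $\Si^{n-2}\Xi$-components, while the explicit map $H$ constructed just above the proposition supplies a homotopy in the sense of Definition~\ref{homotopy def}. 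One then checks that the required derivation identities $\alpha(y)G(f)-G'(f)\alpha(x)=d(h(f))+h(d(f))$ and $h(fg)=h(f)G(g)+(-1)^{n}G'(f)h(g)$ hold, which follows from the strict commutativity of the $2\times 2$ matrix square displayed above the definition of $\bm{G}_{rel}$.

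Finally, for (iii), since $\cb$ is of finite cellular type, $\bm{\Pi}_{n-1}(\cb,\xi_{\cb})$ is cellular, hence cofibrant in the Dwyer--Kan model structure of Theorem~\ref{htp dg}. By Corollary~\ref{homotopy-invariant}, the left $n$-Calabi--Yau structure on $\widetilde{G}$ transports through the homotopy to one on $\Psi\circ\bm{G}_{rel}$, and by Proposition~\ref{quasi-invariant} applied to the quasi-equivalence $\Psi$, it descends to a canonical left $n$-Calabi--Yau structure on $\bm{G}_{rel}$. The main obstacle will be step (ii): one has to track carefully the graded splittings $s_{G}$ and $r_{\mathcal{K}}$ in (\ref{Split of bimodules}) to exhibit $H$ as a genuine dg-functor homotopy, and to verify the naturality of the construction with respect to the choice of negative cyclic lift.
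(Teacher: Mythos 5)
Your overall architecture (quasi-equivalence of $\Psi$, explicit dg-functor homotopy, transfer via Proposition~\ref{quasi-invariant} and Corollary~\ref{homotopy-invariant}) is exactly the paper's, and your steps (i) and (iii) are fine. For (i) the paper argues more directly: $v$ is a quasi-isomorphism between cofibrant $\ca^{e}$-bimodules, hence a homotopy equivalence, and one builds a homotopy inverse of $\Psi$ from a homotopy inverse of $v$; your filtration-by-tensor-length argument is a legitimate alternative.

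Step (ii), however, contains a genuine error. Diagram (\ref{quasidiagram}) is only an isomorphism of triangles in $\cd(\ca^{e})$; the square comparing $u$ and $l$ commutes up to homotopy, not strictly. Indeed $l$ is the inclusion of $G^{*}(P_{\cb})^{\vee}$ as the degree-zero summand of the cone $\Xi$, while $v\circ u$ factors through the $\Si P_{\ca}^{\vee}$ summand, so the identity $v\circ u=l$ you invoke is false; the two maps differ by the homotopy built from the graded splitting $s_{G}$ (the paper's $H'$). Moreover, the map $H$ you propose as the dg-functor homotopy cannot play that role: it is a degree-zero component of $\bm{G}_{rel}$ itself (paired with $u$ in $(-H,u)^{T}$), landing in $\ca$, whereas a homotopy in the sense of Definition~\ref{homotopy def} must have degree $-1$. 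The correct homotopy between $\Psi\circ\bm{G}_{rel}$ and $\widetilde{G}$ is the degree $-1$ map $h_{2}\colon\Si^{n-2}G^{*}(\mathcal{P}_{\cb}^{\vee})\to\Si^{n-1}\mathcal{P}_{\ca}^{\vee}\subseteq\Si^{n-2}\Xi$ sending $f_{\cb}^{\vee}\mapsto f_{\ca}^{\vee}$ and $c_{x,\cb}\mapsto c_{x,\ca}$ (essentially $s_{G}$); it simultaneously witnesses the failure of $v\circ u=l$ and absorbs the $-H$ component, since $H$ itself factors through $s_{G}$ followed by $-\xi$ and $\xi$ is part of the differential of $\bm{\Pi}_{n}(\ca,\cb,\xi)$. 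With $h_{2}$ in place of $H$ and the strict identity replaced by this homotopy, your argument goes through as in the paper.
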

\begin{proof}
	Since the map $ v $ in
	diagram (\ref{quasidiagram}) is a  quasi-isomorphism between cofibrant dg $ \ca^{e} $-modules, the map $ v $ is a homotopy equivalence. Then we can construct a homotopy inverse of $ \Psi $. Thus the dg functor $ \Psi $ is a quasi-equivalence.

	Suppose that $ \mathcal{B} $ and $ \mathcal{A} $ are finitely cellular and $ G\colon\mathcal{B} \to \mathcal{A}$ is a semi-free extension, i.e.\ there is a finite graded quiver $ Q $ and a subquiver $ F\subseteq Q $, cf. above. We know that the bimodules
	$$ \xymatrix{\mathcal{P}_{B}= \cone(\Omega^{1}(\mathcal{B})\ar[r]^-{\alpha}&\mathcal{B}\otimes_{R_{2}}\mathcal{B}}) $$
	and 
	$$\xymatrix{\mathcal{P}_{A}= \cone(\Omega^{1}(\mathcal{A})\ar[r]^-{\alpha}&\mathcal{A}\otimes_{R_{1}}\mathcal{A}}) $$
	are cofibrant replacements of the bimodules $ \mathcal{B} $ and $ \mathcal{A} $ respectively. Therefore, the $ \mathcal{B} $-bimodule $ \Si^{n-2}\mathcal{P}_{\mathcal{B}}^{\vee} $ is cellular of finite rank, with basis
	$ \{f^{\vee}_{\mathcal{B}}\,|\,f\in F_{1}\} \cup \{c_{x,\mathcal{B}}\,|\,x\in F_{0}\} $ where the arrow $ f^{\vee}_{\mathcal{B}} $ has degree $ |f^{\vee}_{\mathcal{B}}|=3-n-|f|$, and points in the opposite direction to $ f $; the loop $ c_{x,\mathcal{B}} $ has degree $ |c_{x,\mathcal{B}}|=2-n $ , and points from $ x $ to $ x $. 
	
	Similarly, the $ \mathcal{A} $-bimodule $ \Si^{n-1}\mathcal{P}_{\mathcal{A}}^{\vee} $ is also cellular of finite rank, with basis
	$ \{g^{\vee}_{\mathcal{A}}|g\in Q_{1}\}\cup \{c_{y,\mathcal{A}}|y\in Q_{0}\} $ where the arrow $ g^{\vee}_{\mathcal{A}} $ has degree $ |g^{\vee}_{\mathcal{A}}| =2-n-|g|$, and points in the opposite direction to $ g $; the loop $ c_{y,\mathcal{A}} $ has degree $ |c_{y,\mathcal{A}}|=1-n $ , and points from $ y $ to $ y $. 
	
	Then the homotopy (see Definition~\ref{homotopy def}) between 
	$ \Psi\circ\bm{G}_{rel} $ and $ \widetilde{G} $ is given as follows:
	\begin{itemize}
		\item For each object $ x $ in $ R_{1} $, we have $ \Psi\circ\bm{G}_{rel}(x)=\widetilde{G}(x)=x $, i.e, $\alpha(x)$ is the identity map in $ \bm\Pi_{n}(\mathcal{A},\mathcal{B},\xi) $.
		\item For all objects $ x $ and $ y $ in $ R_{1} $, the degree $ -1 $ map
		
		$$ h=h(x,y)\colon\bm\Pi_{n-1}(\mathcal{B},\xi_{\mathcal{B}})(x,y)\to\bm{\Pi}_{n}(\mathcal{A},\mathcal{B},\xi)(x,y) $$ is obtained from the following map of degree $ -1 $,
		\begin{align*}
			\xymatrix{
				h_{2}\colon\Si^{n-2}G^{*}(\mathcal{P}_{\mathcal{B}}^{\vee})\ar[r]&\Si^{n-1}\mathcal{P}_{\mathcal{A}}^{\vee}
			}
		\end{align*}
		where $ h_{2} $ is given by $ f_{\mathcal{B}}^{\vee}\to f_{\mathcal{A}}^{\vee} $, and $ c_{x,\mathcal{B}}\to c_{x,\mathcal{A}} $.
	\end{itemize}
	
	By Proposition~\ref{quasi-invariant} and Corollary~\ref{homotopy-invariant}, the dg functor $ \bm{G}_{rel}\colon\bm{\Pi}_{n-1}(\mathcal{B},\xi_{\mathcal{B}})\to \bm{\Pi}_{n}^{red}(\ca,\cb,\xi) $ has a canonical left $ n $-Calabi--Yau structure.

\end{proof}

\subsection{Relation with the absolute Calabi--Yau completion}

Let $ G\colon\mathcal{B}\rightarrow\mathcal{A} $ be a dg functor between smooth dg categories. In~\cite[Section 5.2.3]{BCS2020}, Bozec--Calaque--Scherotzke defined the following tensor category over $ \mathcal{A} $
$$ \bm\Pi_{n}(G)=T_{\mathcal{A}}(\Si^{n-1}\cb^{\vee}\otimes_{\mathcal{B}^{e}}^{\mathbf{L}}\mathcal{A}^{e}) .$$ 

Let $ \ca/\cb $ be the homotopy cofiber of $ G $, i.e.\ we have the following homotopy push-out diagram in $ \dgcat $ with Dwyer-Kan model structure~\cite{GT2005}
\begin{align*}
	\xymatrix{
		\mathcal{B}\ar[r]\ar[d]&\mathcal{A}\ar[d]\\
		0\ar[r]&\mathcal{A}/\mathcal{B}.
	}
\end{align*}

\begin{Prop}\label{Relation with absolut CY completion}
	The following sequence is a homotopy cofiber sequence in $ dgcat_{k} $
	$$ \bm\Pi_{n-1}(\mathcal{B})\rightarrow\bm\Pi_{n}(\mathcal{A},\mathcal{B})\rightarrow\bm\Pi_{n}(\mathcal{A}/\mathcal{B}), $$
where $ \Pi_{n}(\mathcal{A},\mathcal{B})=\Pi_{n}(\mathcal{A},\mathcal{B},\xi=0) $.
	
\end{Prop}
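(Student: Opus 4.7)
The strategy is to view each term of the sequence as a tensor construction and exploit that the derived tensor construction $\mathbf{L}T_{(-)}(-)$, being a left adjoint in both variables, preserves homotopy colimits. The key adjunction formula, which I will apply repeatedly, reads: for any dg functor $f\colon\mathcal{C}\to\mathcal{D}$ and any $\mathcal{C}$-bimodule $M$, the natural morphism
\[
\mathcal{D}\sqcup^{h}_{\mathcal{C}}\mathbf{L}T_{\mathcal{C}}(M)\;\longrightarrow\;\mathbf{L}T_{\mathcal{D}}(\mathbf{L}f^{*}M)
\]
is a quasi-equivalence in $\dgcat$. This will be combined with the defining triangle $\mathcal{A}^{\vee}\to\mathbf{L}G^{*}(\mathcal{B}^{\vee})\to\Xi$ of $\mathcal{A}$-bimodules.

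First I construct the second arrow of the sequence. Let $p\colon\mathcal{A}\to\mathcal{A}/\mathcal{B}$ be the canonical dg functor; since $p\circ G\simeq 0$ in $\dgcat$, we have $\mathbf{L}(p^{e})^{*}\mathbf{L}G^{*}(\mathcal{B}^{\vee})\simeq 0$, so base-changing the defining triangle yields
\[
\mathbf{L}(p^{e})^{*}(\Si^{n-2}\Xi)\;\simeq\;\Si^{n-1}\mathbf{L}(p^{e})^{*}(\mathcal{A}^{\vee})\;\simeq\;\Si^{n-1}(\mathcal{A}/\mathcal{B})^{\vee}
\]
in $\mathcal{D}((\mathcal{A}/\mathcal{B})^{e})$, the last identification using smoothness and the compatibility of the inverse dualizing bimodule with the homotopy cofiber $\mathcal{A}/\mathcal{B}$. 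By the universal property of the tensor construction, this provides the canonical dg functor $\bm{\Pi}_{n}(\mathcal{A},\mathcal{B})\to\bm{\Pi}_{n}(\mathcal{A}/\mathcal{B})$. The composite $\bm{\Pi}_{n-1}(\mathcal{B})\to\bm{\Pi}_{n}(\mathcal{A}/\mathcal{B})$ is then null-homotopic: on objects it factors through $p\circ G\simeq 0$, and the added generators $\Si^{n-2}\mathcal{B}^{\vee}$ factor through $\mathbf{L}G^{*}(\Si^{n-2}\mathcal{B}^{\vee})\hookrightarrow\Si^{n-2}\Xi$ whose base-change to $\mathcal{A}/\mathcal{B}$ vanishes by the computation above. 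One thus obtains an induced dg functor $\Phi$ from the homotopy cofiber of $\widetilde{G}$ (with $\xi=0$) to $\bm{\Pi}_{n}(\mathcal{A}/\mathcal{B})$.

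It remains to verify $\Phi$ is a quasi-equivalence, and for this I compute the homotopy pushout $0\sqcup^{h}_{\bm{\Pi}_{n-1}(\mathcal{B})}\bm{\Pi}_{n}(\mathcal{A},\mathcal{B})$ directly. At the level of the underlying base dg category the pushout identifies $\mathcal{A}\sqcup^{h}_{\mathcal{B}}0\simeq\mathcal{A}/\mathcal{B}$. At the level of generators, $\widetilde{G}$ acts via the inclusion $\mathbf{L}G^{*}(\Si^{n-2}\mathcal{B}^{\vee})\hookrightarrow\Si^{n-2}\Xi$; quotienting $\Si^{n-2}\Xi$ by this image and using the defining cofiber sequence for $\Xi$ produces $\Si^{n-1}\mathcal{A}^{\vee}$, which becomes $\Si^{n-1}(\mathcal{A}/\mathcal{B})^{\vee}$ after base change along $p$. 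Iterated application of the key adjunction formula—first along $G$ to express $\bm{\Pi}_{n}(\mathcal{A},\mathcal{B})$ with $\bm{\Pi}_{n-1}(\mathcal{B})$ visible as a sub-pushout, then along $p$ to land over $\mathcal{A}/\mathcal{B}$—assembles these pieces into the expected quasi-equivalence with $\mathbf{L}T_{\mathcal{A}/\mathcal{B}}(\Si^{n-1}(\mathcal{A}/\mathcal{B})^{\vee})=\bm{\Pi}_{n}(\mathcal{A}/\mathcal{B})$.

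The main obstacle is the careful bookkeeping in this last step: one must simultaneously track how the pushout identifies the base dg category with $\mathcal{A}/\mathcal{B}$ \emph{and} how the cone description $\Xi=\cone(\mathcal{A}^{\vee}\to\mathbf{L}G^{*}\mathcal{B}^{\vee})$ interacts with this identification so that the shift by one in the generating bimodule emerges correctly. Executing this requires choosing compatible cofibrant replacements at each stage and a delicate joint use of the facts that derived base change $\mathbf{L}(-)^{*}$ on bimodules and the tensor construction $\mathbf{L}T_{(-)}(-)$ both commute with the relevant homotopy colimits.
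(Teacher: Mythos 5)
Your strategy is essentially the one the paper follows: compute the homotopy cofiber by exploiting that the derived tensor construction commutes with homotopy pushouts in both the base-category variable and the bimodule variable, and that base change along $p$ kills $\lG(\cb)^{\vee}$ and identifies $\Si^{n-2}\Xi$ with $\Si^{n-1}(\ca/\cb)^{\vee}$; so the difference is mainly one of packaging. Where you defer the final assembly to ``bookkeeping'', the paper makes it precise by factoring $\bm\Pi_{n-1}(\cb)\to\bm\Pi_{n}(\ca,\cb)$ through the intermediate tensor category $\bm\Pi_{n-1}(G)=T_{\ca}(\Si^{n-2}\cb^{\vee}\lten_{\cb^{e}}\ca^{e})$ and pasting explicit homotopy pushout squares, all of which are imported: your ``key adjunction formula'' along $G$ is \cite[Lemma 5.27]{BCS2020}; the fact that a cone of bimodules gives a pushout of tensor categories --- the mechanism that actually produces the shift by one, via $\bm\Pi_{n}(\ca,\cb)\simeq\bm\Pi_{n-1}(G)\sqcup^{h}_{\bm\Pi_{n-1}(\ca)}\ca$ and $\bm\Pi_{n}(\ca)\simeq\ca\sqcup^{h}_{\bm\Pi_{n-1}(\ca)}\ca$ --- is \cite[Corollary 5.24]{BCS2020} and \cite[Proposition 5.6]{BK2011}; and the identification $\ca^{\vee}\lten_{\ca^{e}}(\ca/\cb)^{e}\simeq(\ca/\cb)^{\vee}$, which you invoke as ``compatibility of the inverse dualizing bimodule with the homotopy cofiber'', is the content of \cite[Theorem 4.6]{BK2011} and rests on Drinfeld quotients being homological epimorphisms. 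Your outline is sound, but note that these three imported facts are exactly where the deferred bookkeeping has real content: to turn the sketch into a proof you should either cite them or prove the cone-to-pushout lemma and the homological-epimorphism property explicitly, since neither follows from the base-change adjunction alone.
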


\begin{proof}
	By~\cite[Corollary 5.24]{BCS2020}, the dg functor $ \bm\Pi_{n-1}(\mathcal{B})\rightarrow\bm\Pi_{n}(\mathcal{A},\mathcal{B}) $ is the following composition
	$$ \bm\Pi_{n-1}(\mathcal{B})\rightarrow\bm\Pi_{n-1}(G)\rightarrow\bm\Pi_{n}(\mathcal{A},\mathcal{B}).$$

	Consider the diagram
	\[
	\begin{tikzcd}
		\cb\arrow[r,"G"]\arrow[d]\arrow[dr, phantom, "\circled{1}"]&\ca\arrow[d]&\bm{\Pi}_{n-1}(\ca)\arrow[dl]\arrow[r]\arrow[d, phantom, "\circled{3}"]&\ca\arrow[dl]\\
		\bm{\Pi}_{n-1}(\cb)\arrow[r]\arrow[d]\arrow[dr, phantom, "\circled{2}"]&\bm{\Pi}_{n-1}(G)\arrow[r]\arrow[d]&\bm{\Pi}_{n-1}(\ca,\cb)\\
		0\arrow[r]&\ca/\cb.
	\end{tikzcd}
	\]
	The square $ \circled{1} $ is a homotopy push-out by~\cite[
	Lemma 5.27]{BCS2020}. Since the rectangle around $ \circled{1} $ and $ \circled{2} $ is a homotopy push-out, it follows that so is $ \circled{2} $. By~\cite[Corollary 5.24]{BCS2020}, the square $ \circled{3} $ is also a homotopy push-out.
	
	Therefore the homotopy cofiber of $ \bm\Pi_{n-1}(\mathcal{B})\rightarrow\bm\Pi_{n}(\mathcal{A},\mathcal{B}) $ is the homotopy push-out of the following diagram
	\begin{align*}
		\xymatrix{
			\bm\Pi_{n-1}(\mathcal{A})\ar[r]\ar[d]&\mathcal{A}\\
			\bm\Pi_{n-1}(G)\ar[d]&\\
			\mathcal{A}/\mathcal{B}&.
		}
	\end{align*}
	
	It is easy to see that the composition $ \bm\Pi_{n-1}(\mathcal{A})\rightarrow \bm\Pi_{n-1}(G)\rightarrow\mathcal{A}/\mathcal{B} $ is equal to $ \bm\Pi_{n-1}(\mathcal{A})\xrightarrow{} \mathcal{A}\rightarrow\mathcal{A}/\mathcal{B} $. Consider the diagram
	\[
	\begin{tikzcd}
		\bm\Pi_{n-1}(\mathcal{A})\arrow[r]\arrow[d]\arrow[dr, phantom, "\circled{4}"]&\mathcal{A}\arrow[d]\\
		\mathcal{A}\arrow[d]\arrow[r]&\bm\Pi_{n}(\mathcal{A})\\
		\mathcal{A}/\mathcal{B}&.
	\end{tikzcd}
	\]
	
	The square $ \circled{4} $ is a homotopy push-out by~\cite[Proposition 5.6]{BK2011}. By~\cite[Theorem 4.6]{BK2011}, the following diagram is a homotopy push-out
	\begin{align*}
		\xymatrix{
			\mathcal{A}\ar[d]\ar[r]&\bm\Pi_{n}(\mathcal{A})\ar[d]\\
			\mathcal{A}/\mathcal{B}\ar[r]&\bm\Pi_{n}(\mathcal{A}/\mathcal{B}).
		}
	\end{align*} 
	Thus, the sequence $$ \bm\Pi_{n-1}(\mathcal{B})\rightarrow\bm\Pi_{n}(\mathcal{A},\mathcal{B})\rightarrow\bm\Pi_{n}(\mathcal{A}/\mathcal{B}) $$ is a homotopy cofiber sequence in $ \dgcat $.
	
\end{proof}

\section{Relative cluster categories}\label{section RCC}
Let $f\colon\ B\to A $ be a morphism (not necessarily unital) between differential graded (=dg) $ k $-algebras. We consider the following assumptions.

\begin{assumption}\rm\label{Relative assumption}
	Suppose that the morphism $ f\colon B\to A $ satisfies the following properties:
	\begin{itemize}
		\item[1)] $ A $ and $ B $ are smooth,
		\item[2)] $ A $ is connective, i.e.\ the cohomology of $ A $ vanishes in degrees $ >0 $,
		\item[3)] the morphism $ f\colon B\to A $ has a left $ (n+1) $-Calabi--Yau structure,
		\item[4)] $ H^{0}(A) $ is finite-dimensional.
	\end{itemize}	
	
\end{assumption}

Let $ \mathrm{pvd}(A) $ be the perfectly valued derived category of $ A $, i.e.\ $ \mathrm{pvd}(A) $ is the full subcategory of $ \cd(A) $ whose objects are the perfectly valued dg $ A $-modules. Since $ A $ is homologically smooth, $ \mathrm{pvd}(A) $ is a full subcategory of $ \mathrm{per}A $ (see \cite[Lemma 4.1]{Bk2008}). We denote by $ e $ the idempotent $ f(\boldmath{1}_{B}) $ and by $ i\colon eAe\hookrightarrow A $ the canonical inclusion of dg algebras.

\begin{Def}\rm\label{Relative cluster category}
	Let  $ \mathrm{pvd}_{B}(A) $ be the full triangulated subcategory of $ \mathrm{pvd}(A) $ defined as the kernel of the restriction functor $ i_{*}\colon\cd(A)\to\cd(eAe) $. The \emph{relative $ n $-cluster category} $ \mathcal{C}_{n}(A,B) $ is defined as the following Verdier quotient
	$$ 
	\mathcal{C}_{n}(A,B)=\mathrm{per}A/\mathrm{pvd}_{B}(A).
	$$
\end{Def}
We denote by $ \pi^{rel} $ the canonical quotient functor $ \mathrm{per}A \to \mathcal{C}_{n}(A,B) $.

\subsection{Gluing $ t $-structures}
Let $ G\colon\mathcal{B}\to\mathcal{A} $ be a dg functor. Let $ \mathcal{A}/\mathcal{B} $ be the homotopy cofiber of $ G $ in $ \dgcat $. Then the dg category $ \mathcal{A}/\mathcal{B} $ can be computed as the Drinfeld dg quotient of $ \mathcal{A} $ by its full dg subcategory $ \Ima(G) $, where $ \Ima(G) $ is the full dg subcategory of $ \mathcal{A} $ whose objects are the $ y\in\mathcal{A} $ such that there exists $ x\in\mathcal{B} $ and an isomorphism $ G(x)\cong y $ in $ \rm H^{0}(\mathcal{A}) $. We denote by $ i $ the dg inclusion functor $ \Ima(G)\hookrightarrow\mathcal{A} $ and by $ p $ the quotient functor $ \mathcal{A}\twoheadrightarrow\mathcal{A}/\mathcal{B} $.

\begin{Prop}\label{Dg quotoent to Recollemnet}\cite[Theorem 5.1.3]{CC2019}
	We have the following recollement of derived categories
	
	\begin{align}\label{Recollement}
		\xymatrix{
			&\mathcal{D}(\mathcal{A}/\mathcal{B} )\ar[r]^<<<<{p_{*}=p_{!}}&\mathcal{D}
			(\mathcal{A})\ar[r]^<<<<{i_{*}=i_{!}}\ar@/^2pc/[l]^{p^{!}}\ar@/_2pc/[l]_{p^{*}}&\mathcal{D}(\Ima(G) )\ar@/^2pc/[l]^{i^{!}}\ar@/_2pc/[l]_{i^{*}}.
		}
	\end{align}
	The respective triangle functors are explicitly given as follows:
	\begin{align*}
		\xymatrix{
			p^{*}=?\otimes_{\mathcal{A}}^{\mathbf{L}}\mathcal{A}/\mathcal{B}& p_{*}=\mathbf{R}\Hom_{\mathcal{A}/\mathcal{B}}(\mathcal{A}/\mathcal{B},?)\simeq\ ?\otimes_{\mathcal{A}/\mathcal{B}}^{\mathbf{L}}\mathcal{A}/\mathcal{B}=p_{!} & p^{!}=\mathbf{R}\Hom_{\mathcal{A}}(\mathcal{A}/\mathcal{B},?)\\
			i^{*}=?\otimes_{\rm Im(G)}^{\mathbf{L}}\mathcal{A}& i_{*}=\mathbf{R}\Hom_{\mathcal{A}}(\mathcal{A},?)\simeq\ ?\otimes_{\mathcal{A}}^{\mathbf{L}}\mathcal{\mathcal{A}}=i_{!} & i^{!}=\mathbf{R}\Hom_{\Ima(G)}(\mathcal{A},?)
		}
	\end{align*}
	Consequently, we have a triangle equivalence up to direct summands
	\begin{align*}
		\xymatrix{
			\mathrm{per}(\mathcal{A})/\mathrm{per}(\Ima(G))\ar[r]^<<<<<{p^{*}}&\mathrm{per}(\mathcal{A}/\mathcal{B}).
		}
	\end{align*}
\end{Prop}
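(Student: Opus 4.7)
The plan is to build the recollement in three stages; the consequence about $\per$ then follows by a standard compact-object argument. First, I would show that $p\colon\ca\to\ca/\cb$ is a \emph{homological epimorphism} of dg categories, i.e.\ that the multiplication map $\ca/\cb\lten_{\ca}\ca/\cb\to\ca/\cb$ is a quasi-isomorphism in $\cd((\ca/\cb)^{e})$. Using Drinfeld's description of $\ca/\cb$ as $\ca$ with morphisms $\epsilon_{U}$ of degree $-1$ satisfying $d(\epsilon_{U})=\mathbf{1}_{U}$ freely adjoined for each $U\in\Ima(G)$, one produces an explicit contracting homotopy on the cone of the multiplication out of the $\epsilon_{U}$; equivalently, Theorem~\ref{Thm:dg quotient to tri quo} identifies the induced functor on triangulated hulls with a Verdier localization whose kernel is $\thick\Ima(G)$. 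From this it follows formally that the restriction functor $p_{*}$ is fully faithful, agrees with its own left adjoint $p_{!}$, and admits a further right adjoint $p^{!}=\RHom_{\ca}(\ca/\cb,-)$.

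Second, the fully faithful dg inclusion $i\colon\Ima(G)\hookrightarrow\ca$ produces the standard adjoint triple $(i^{*},i_{*}=i_{!},i^{!})$ by extension of scalars, restriction and coinduction, with $i_{*}$ fully faithful (as $i$ is fully faithful). I would then verify the orthogonality axiom $\Ima(p_{*})=\ker(i^{*})$: the inclusion $\subseteq$ is immediate, since for any dg $\ca/\cb$-module $M$ and $U\in\Ima(G)$, the element $\epsilon_{U}$ witnesses a null-homotopy of $\mathbf{1}_{M(U)}$. The reverse inclusion uses the universal property of the Drinfeld quotient: whenever $i^{*}M$ is acyclic, a choice of null-homotopies for the identity on each $M(U)$ extends the $\ca$-action on $M$ uniquely up to homotopy to an $\ca/\cb$-action, exhibiting $M$ as being in the essential image of $p_{*}$. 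The gluing triangles are then supplied by the unit and counit triangles of the two adjoint pairs, which together with the orthogonality complete the recollement axioms and yield the explicit formulas listed in the statement.

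Finally, the consequence for perfect categories follows by restricting the recollement to compact objects. Because $p_{*}$ commutes with arbitrary direct sums (it is a restriction functor), its left adjoint $p^{*}$ preserves compactness, so $p^{*}$ restricts to a functor $\per\ca\to\per(\ca/\cb)$; its composition with $\per\Ima(G)\to\per\ca$ vanishes by construction, and the Neeman--Thomason--Trobaugh localization theorem produces the equivalence $\per(\ca)/\per(\Ima(G))\iso\per(\ca/\cb)$ up to direct summands. The main obstacle I anticipate is the first step: verifying that $p$ is a homological epimorphism requires bookkeeping the $\epsilon_{U}$-induced homotopy on a cofibrant replacement of $\ca/\cb\lten_{\ca}\ca/\cb$, and identifying when a bar-resolution term is contracted by multiplication with $\epsilon_{U}$. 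Once this is in place, everything else is formal manipulation of adjunctions, orthogonality, and the standard reduction to compact objects.
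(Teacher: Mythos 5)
The paper does not actually prove this proposition: it is quoted from \cite[Theorem 5.1.3]{CC2019}, and the special case used later (Corollary~\ref{Recollement of dg algebras}) is in turn referred to \cite[Corollary 2.12]{KY2016}. So there is no in-paper argument to compare against; what you have written is the standard proof, and its overall structure (full faithfulness of $p_*$ via the homological-epimorphism property of the Drinfeld quotient, the adjoint triple attached to the fully faithful inclusion $i$, the orthogonality axiom, and Neeman--Thomason--Trobaugh for the statement on perfect objects) is correct. Your identification of which functor preserves compacts ($p^*$, because its right adjoint $p_*$ is a restriction functor and hence preserves coproducts) is also the right one.

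Two small points deserve tightening. First, the orthogonality axiom you need is $\Ima(p_*)=\ker(i_*)$, the kernel of the \emph{restriction} functor $i_*\colon\cd(\ca)\to\cd(\Ima(G))$; in your own notation $i^*$ is the induction functor $\cd(\Ima(G))\to\cd(\ca)$, so ``$\ker(i^*)$'' is a slip --- your justification (that $\epsilon_U$ null-homotopes $\mathbf{1}_{M(U)}$) is indeed an argument about $\ker(i_*)$. Second, in the reverse inclusion, extending the $\ca$-module structure on $M$ to an $\ca/\cb$-module structure amounts to choosing degree $-1$ endomorphisms $M(\epsilon_U)$ with $d(M(\epsilon_U))=\mathbf{1}_{M(U)}$, i.e.\ genuine contracting homotopies; acyclicity of $M(U)$ suffices only because the base is a field, so this hypothesis should be invoked explicitly (the uniqueness-up-to-homotopy you assert is not needed for essential surjectivity and would require a separate argument). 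Finally, the ``main obstacle'' you flag in the first step is exactly the content of Drinfeld's theorem on dg quotients, so one can legitimately cite Theorem~\ref{Thm:dg quotient to tri quo} (or rather its derived-category form) instead of redoing the $\epsilon_U$-bookkeeping; either route is fine.
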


\begin{Thm}\cite[Gluing $ t $-structures] {BBD1982}\label{Gluing t-structure}
	Suppose that we have the following recollement of triangulated categories
	\begin{align*}
		\xymatrix{
			\mathcal{U}\ar[r]^{i}&\mathcal{T}\ar[r]^{e}\ar@/_1.3pc/[l]_{q}\ar@/^1.3pc/[l]^{p}&\mathcal{V}\ar@/_1.3pc/[l]_{j}\ar@/^1.3pc/[l]^{r}
		}.
	\end{align*} Let $ (\mathcal{U}^{\leqslant0},\mathcal{U}^{\geqslant0})  $ be a t-structure in $ \mathcal{U} $ and $ (\mathcal{V}^{\leqslant0},\mathcal{V}^{\geqslant0})  $ be a t-structure in $ \mathcal{V} $.
	Then we have a canonical t-structure in $ \mathcal{T} $ defined as follows:
	\begin{align*}
		\xymatrix{
			\mathcal{T}^{\leqslant n}=\{X\in\mathcal{T}|e(X)\in\mathcal{V}^{\leqslant n} \ and \  q(X)\in\mathcal{U}^{\leqslant n}\}\\
			\mathcal{T}^{\geqslant n}=\{X\in\mathcal{T}|e(X)\in\mathcal{V}^{\geqslant n} \ and \  p(X)\in\mathcal{U}^{\geqslant n}\}
			.}
	\end{align*}	
\end{Thm}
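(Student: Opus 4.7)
The plan is to verify the three defining axioms of a $t$-structure for the proposed pair $(\mathcal{T}^{\leqslant 0},\mathcal{T}^{\geqslant 0})$. The first axiom, $\mathcal{T}^{\leqslant 0}\subseteq\mathcal{T}^{\leqslant 1}$ and $\mathcal{T}^{\geqslant 1}\subseteq\mathcal{T}^{\geqslant 0}$, is immediate from the analogous inclusions on $\mathcal{U}$ and $\mathcal{V}$, since the definitions only impose membership conditions on the images under $e$, $q$, and $p$. For the Hom-vanishing axiom, I would fix $X\in\mathcal{T}^{\leqslant 0}$ and $Y\in\mathcal{T}^{\geqslant 1}$ and apply $\Hom_\mathcal{T}(X,-)$ to the gluing triangle $ip(Y)\to Y\to re(Y)\to\Si ip(Y)$ coming from the recollement. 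The adjunctions $q\dashv i$ and $e\dashv r$ rewrite the outer groups as $\Hom_\mathcal{U}(q(X),p(Y))$ and $\Hom_\mathcal{V}(e(X),e(Y))$, both of which vanish by the $t$-structure axioms on $\mathcal{U}$ and $\mathcal{V}$; hence $\Hom_\mathcal{T}(X,Y)=0$.

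The substantive step is constructing, for an arbitrary $X\in\mathcal{T}$, an approximation triangle $A\to X\to B\to\Si A$ with $A\in\mathcal{T}^{\leqslant 0}$ and $B\in\mathcal{T}^{\geqslant 1}$. I plan to proceed in two stages. First, I exploit the $t$-structure on $\mathcal{V}$: the truncation morphism $e(X)\to\tau_\mathcal{V}^{\geqslant 1}e(X)$ corresponds under $e\dashv r$ to a morphism $X\to r\tau_\mathcal{V}^{\geqslant 1}e(X)$; let $A'$ be its fiber. Applying $e$ and using $er\simeq\id_\mathcal{V}$ yields $e(A')\simeq\tau_\mathcal{V}^{\leqslant 0}e(X)\in\mathcal{V}^{\leqslant 0}$, while $q(A')$ may fail to lie in $\mathcal{U}^{\leqslant 0}$. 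Second, I correct this using the $t$-structure on $\mathcal{U}$: the composite $A'\to iq(A')\to i\tau_\mathcal{U}^{\geqslant 1}q(A')$ has fiber $A$, and since $ei=0$ we obtain $e(A)\simeq e(A')\in\mathcal{V}^{\leqslant 0}$, while $qi\simeq\id_\mathcal{U}$ gives $q(A)\simeq\tau_\mathcal{U}^{\leqslant 0}q(A')\in\mathcal{U}^{\leqslant 0}$. Thus $A\in\mathcal{T}^{\leqslant 0}$.

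It then remains to identify the cone $B$ of the composite $A\to A'\to X$ and verify $B\in\mathcal{T}^{\geqslant 1}$. The octahedral axiom provides a triangle
$$i\tau_\mathcal{U}^{\geqslant 1}q(A')\to B\to r\tau_\mathcal{V}^{\geqslant 1}e(X)\to\Si i\tau_\mathcal{U}^{\geqslant 1}q(A').$$
Applying $e$, using $ei=0$ and $er\simeq\id$, gives $e(B)\simeq\tau_\mathcal{V}^{\geqslant 1}e(X)\in\mathcal{V}^{\geqslant 1}$. Applying $p$, together with $pi\simeq\id$ and the identity $pr=0$ (a formal consequence of $ei=0$ via the adjunction $i\dashv p$, since $\Hom_\mathcal{U}(U,pr(V))=\Hom_\mathcal{V}(ei(U),V)=0$), gives $p(B)\simeq\tau_\mathcal{U}^{\geqslant 1}q(A')\in\mathcal{U}^{\geqslant 1}$. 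Hence $B\in\mathcal{T}^{\geqslant 1}$ and the triangle $A\to X\to B\to\Si A$ is the required decomposition. The main technical obstacle I anticipate is the careful interplay between the two truncation stages via the octahedral diagram; once the vanishings $ei=0$, $qr=0$, $pr=0$ and their duals are in place, however, the verification reduces to routine diagram chases.
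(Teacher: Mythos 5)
Your proposal is correct and follows the same route as the paper (which cites BBD and sketches exactly this construction after the theorem statement): the two-stage truncation — first the fiber of $X\to r\tau_{\mathcal{V}}^{\geqslant1}e(X)$, then the fiber of the map to $i\tau_{\mathcal{U}}^{\geqslant1}$ of its image under $q$ — followed by the octahedral axiom to identify the cone, is precisely the paper's construction of the glued truncation triangle. Your verification of the Hom-vanishing via the recollement triangle $ip(Y)\to Y\to re(Y)$ and the adjunctions $q\dashv i$, $e\dashv r$ is the standard argument and is sound.
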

We say that the t-structure $ (\ct^{\leqslant n},\ct^{\geqslant n}) $ on $ \ct $ is \emph{glued} from the given t-structure on $ \cu $ and $ \cv $.

\bigskip

For any object $ X $ in $\mathcal{T} $, the canonical distinguished triangle for $ X $ with respect to the glued $ t $-structure can be constructed as follows: Let $ X $ be an object in $\mathcal{T} $. We have a distinguished triangle in $ \mathcal{V} $,
$$ \tau^{\mathcal{V}}_{\leqslant0}(e(X))\to e(X)\to\tau_{\mathcal{V}}^{\geqslant1}(e(X))\to\Si\tau^{\mathcal{V}}_{\leqslant0}(e(X)).$$
Hence we obtain a distinguished triangle
$$ Y\xlongrightarrow{f} X\to r(\tau^{\mathcal{V}}_{\geqslant1}e(X))\to \Si Y, $$ where $ X\to r(\tau^{\mathcal{V}}_{\geqslant1}e(X)) $ is the composition $ X\to r(e(X))\to r(\tau^{\mathcal{V}}_{\geqslant1}e(X)) .$

Similarly, we have a distinguished triangle in $ \mathcal{U} $,
$$ \tau^{\mathcal{U}}_{\leqslant0}(q(Y))\to q(Y)\to\tau^{\mathcal{U}}_{\geqslant1}(q(Y))\to\Si\tau^{\mathcal{U}}_{\leqslant0}(q(Y)) .$$ Hence we obtain a distinguished triangle
$$ Z\xlongrightarrow{g} Y\to i(\tau^{\mathcal{U}}_{\geqslant1}q(Y))\to \Si Z, $$ where $ Y\to i(\tau_{\mathcal{U}}^{\geqslant1}q(Y)) $ is the composition $ Y\to i(q(Y))\to i(\tau^{\mathcal{U}}_{\geqslant1}q(Y)) .$
Thus, we have the following octahedron
\begin{align*}
	\xymatrix{
		Z\ar[r]^{g}\ar@{=}[d]&Y\ar[d]^{f}\ar[r]&i(\tau^{\mathcal{U}}_{\geqslant1}q(Y))\ar[d]\ar[r]&\Si Z\ar@{=}[d]\\
		Z\ar[r]^{f\circ g}&X\ar[r]\ar[d]&U\ar[r]\ar[d]&\Si Z\ar[d]^{\Si g}\\
		&r(\tau^{\mathcal{V}}_{\geqslant1}e(X))\ar@{=}[r]\ar[d]&r(\tau^{\mathcal{V}}_{\geqslant1}e(X))\ar[r]\ar[d]&\Si Y\\
		&\Si Y\ar[r]&\Si i(\tau^{\mathcal{U}}_{\geqslant1}q(Y))\quad.
	}
\end{align*}

Then one can show that we have $ Z\in\mathcal{T}^{\leqslant0} $ and $ U\in\mathcal{T}^{\geqslant1} $. Thus, for any $ X\in\mathcal{T} $, the canonical distinguished triangle for $ X $ with respect to the glued t-structure is given by
$$ Z\to X\to U\to \Si Z.$$

\bigskip

\bigskip
Let $ e=f(\boldmath{1}_{B}) .$ We denote by $ \overline{A} $ the homotopy cofiber of $ f\colon B\ra A $.
Consider the following homotopy cofiber sequence in $ \dgcat $
\begin{align*}
	\xymatrix{
		B\ar[r]^-{f}\ar[d]&A\ar[d]^-{p}\\
		0\ar[r]&\overline{A}.
	}
\end{align*} 
Then we have the following immediate Proposition.
\begin{Prop}\cite[Corollary 7.1]{BD2019}\label{Prop: smooth of cofiber}
	The homotopy cofiber $ \overline{A} $ is homologically smooth and it has a canonical $ (n+1) $-Calabi--Yau structure.
\end{Prop}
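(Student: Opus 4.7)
The plan is to reduce both assertions to Proposition~\ref{Relative structure of htp cofiber}, supplemented by a direct check of smoothness. The Calabi--Yau statement is essentially a citation: $f\colon B\to A$ is a morphism between smooth dg algebras by Assumption~\ref{Relative assumption} and carries a left $(n+1)$-Calabi--Yau structure, so Proposition~\ref{Relative structure of htp cofiber} produces a canonical left $(n+1)$-Calabi--Yau class on the cofiber $\overline{A}=A/B$.

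For homological smoothness, the plan is to exhibit $\overline{A}$ as a perfect $\overline{A}^{e}$-module by transporting a perfect bimodule from $\cd(A^{e})$ along $p\colon A\to\overline{A}$. Start with the multiplication triangle
\begin{equation*}
A\lten_{B}A\xrightarrow{\mu}A\longrightarrow C\longrightarrow\Si(A\lten_{B}A)
\end{equation*}
in $\cd(A^{e})$, where $C=\cone(\mu)$. Smoothness of $B$ gives $A\lten_{B}A\simeq\mathbf{L}f^{*}(B)\in\per(A^{e})$, and smoothness of $A$ gives $A\in\per(A^{e})$, hence $C\in\per(A^{e})$. The extension of scalars $\mathbf{L}p^{*}\colon\cd(A^{e})\to\cd(\overline{A}^{e})$, $M\mapsto\overline{A}\lten_{A}M\lten_{A}\overline{A}$, preserves perfectness, so $\mathbf{L}p^{*}(C)\in\per(\overline{A}^{e})$. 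Applying $\mathbf{L}p^{*}$ to the triangle above produces
\begin{equation*}
\overline{A}\lten_{B}\overline{A}\longrightarrow\overline{A}\longrightarrow\mathbf{L}p^{*}(C)\longrightarrow\Si(\overline{A}\lten_{B}\overline{A})
\end{equation*}
in $\cd(\overline{A}^{e})$; provided $\overline{A}\lten_{B}\overline{A}=0$, the middle arrow is an isomorphism, identifying $\overline{A}$ with the perfect bimodule $\mathbf{L}p^{*}(C)$, so that $\overline{A}\in\per(\overline{A}^{e})$.

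The main obstacle is the vanishing $\overline{A}\lten_{B}\overline{A}=0$. I would verify it via the recollement of Proposition~\ref{Dg quotoent to Recollemnet} applied to $f$: the fully faithful embedding $p^{*}\colon\cd(\overline{A})\hookrightarrow\cd(A)$ has essential image equal to $\ker(i^{*})$, consisting of the dg modules whose restriction to $\Ima(f)$ is acyclic. Equivalently, the universal degree $-1$ homotopies adjoined in the Drinfeld quotient trivialize the $B$-action on $\overline{A}$ induced by $f$, so the derived tensor product over $B$ collapses. Combined with the triangles above, this yields smoothness and completes the proof.
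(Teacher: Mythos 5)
Your proposal is correct, and it is worth noting that the paper itself gives no argument here: the statement is quoted directly from \cite[Corollary 7.1]{BD2019} (as is Proposition~\ref{Relative structure of htp cofiber}, which you invoke for the Calabi--Yau part), so your smoothness argument is a genuine supplement rather than a variant of the paper's proof. The reduction of the Calabi--Yau claim to Proposition~\ref{Relative structure of htp cofiber} is exactly right, and your smoothness argument is essentially the one underlying Brav--Dyckerhoff's proof. Two steps deserve to be made explicit. First, when you apply $\mathbf{L}p^{*}$ to the multiplication triangle you silently identify the middle term $\overline{A}\lten_{A}A\lten_{A}\overline{A}\simeq\overline{A}\lten_{A}\overline{A}$ with $\overline{A}$; this is the statement that $p$ is a homological epimorphism, which follows from the full faithfulness of $p_{*}\colon\cd(\overline{A})\to\cd(A)$ in the recollement of Proposition~\ref{Dg quotoent to Recollemnet} (the counit $\mathbf{L}p^{*}p_{*}\to\id$ is invertible). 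Second, the vanishing $\overline{A}\lten_{B}\overline{A}=0$ is cleaner via the recollement than via the informal remark about the adjoined contractions: since $p_{*}\overline{A}$ lies in the essential image of $p_{*}$, which equals $\ker(i_{*})$, the complex $\overline{A}e=i_{*}(p_{*}\overline{A})$ is acyclic; as the restriction of $\overline{A}$ to $B$ along the non-unital morphism $p\circ f$ is precisely $\overline{A}e$ (respectively $e\overline{A}$ on the other side), the derived tensor product over $B$ vanishes. With these two points spelled out, the chain $A\lten_{B}A\in\per(A^{e})$ (smoothness of $B$), $C=\cone(\mu)\in\per(A^{e})$ (smoothness of $A$), and $\overline{A}\simeq\mathbf{L}p^{*}(C)\in\per(\overline{A}^{e})$ is complete.
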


\begin{Prop}\label{Prop: cofiber is fd}
	The homotopy cofiber $ \overline{A} $ is connective and $ H^{0}(\overline{A}) $ is finite-dimensional.
\end{Prop}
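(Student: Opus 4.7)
The plan is to extract the statement from the recollement in Proposition~\ref{Dg quotoent to Recollemnet}, applied to the homotopy cofiber sequence $B \to A \to \overline{A}$. The recollement furnishes, for every $X \in \cd(A)$, a canonical triangle $i^{*}i_{*}(X) \to X \to p_{*}p^{*}(X) \to \Si i^{*}i_{*}(X)$. Setting $X = A$ and noting $p^{*}(A) \simeq \overline{A}$, this yields the triangle
\[
	i^{*}i_{*}(A) \to A \to \overline{A} \to \Si i^{*}i_{*}(A)
\]
in $\cd(A)$, and both assertions will fall out of the associated long exact sequence in cohomology.

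The crux is to show that $i^{*}i_{*}(A)$ is concentrated in non-positive cohomological degrees. Using the Morita-equivalent enlargement that promotes $e = f(\boldmath{1}_{B})$ to the identity of a separate object, the subcategory $\Ima(f)$ identifies with $eAe$, the restriction $i_{*}$ sends the regular module $A$ to $Ae$, and the left Kan extension along the full subcategory inclusion produces $i^{*}i_{*}(A) \simeq Ae \lten_{eAe} eA$ as an $A$-module. Since $e$ lies in $A^{0}$ and $A$ is connective, each of $eAe$, $eA$, $Ae$ is a connective complex of $k$-modules (being a summand of $A$), and $eAe$ is a connective dg algebra. A K-projective resolution $P \xrightarrow{\sim} Ae$ of right $eAe$-modules can then be chosen concentrated in non-positive degrees, whence
\[
	Ae \lten_{eAe} eA \;\simeq\; P \otimes_{eAe} eA
\]
is concentrated in non-positive degrees as well. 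Equivalently, $i_{*}$ is $t$-exact for the natural $t$-structures, so its left adjoint $i^{*}$ is right $t$-exact and preserves connectivity.

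Feeding this back into the long exact sequence of the triangle, for every $i \geqslant 1$ the group $H^{i}(\overline{A})$ is sandwiched between $H^{i}(A) = 0$ and $H^{i+1}(i^{*}i_{*}(A)) = 0$, so $\overline{A}$ is connective. In degree zero the same sequence degenerates to a surjection $H^{0}(A) \twoheadrightarrow H^{0}(\overline{A})$, so $H^{0}(\overline{A})$ inherits finite-dimensionality from $H^{0}(A)$. I expect the main obstacle to be purely bookkeeping: unwinding the recollement and the formula for $i^{*}$ in the non-unital setting, since the dg algebras at hand have a single object on the nose but the Drinfeld quotient is most transparently formed after passing through the two-object Morita enlargement indexed by $\boldmath{1}_{A}$ and $e$.
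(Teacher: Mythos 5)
Your argument is correct, but it takes a genuinely different route from the paper's. The paper disposes of connectivity by inspecting the Drinfeld dg quotient directly: after replacing $A$ by $\tau_{\leqslant0}A$ all generators of $\overline{A}$ (the morphisms of $A$ together with the new contractions $\epsilon$ of degree $-1$) live in degrees $\leqslant0$; and it obtains finite-dimensionality of $H^{0}(\overline{A})$ from the cited identification $H^{0}(\overline{A})\cong H^{0}(A)/\langle e\rangle$ of Braun--Chuang--Lazarev. You instead extract both claims from the recollement triangle $i^{*}i_{*}(A)\to A\to p_{*}p^{*}(A)\to\Si i^{*}i_{*}(A)$ together with the observation that $i^{*}i_{*}(A)\simeq Ae\lten_{eAe}eA$ is connective (a connective module over a connective dg algebra admits a cofibrant resolution in non-positive degrees, and $eAe$, $eA$, $Ae$ are direct summands of the complex $A$ since $e$ is a degree-zero cocycle). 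There is no circularity in doing so: the general recollement of Proposition~\ref{Dg quotoent to Recollemnet} needs no connectivity hypothesis, and the identification of $\Ima(f)$ with $eAe$ up to Morita equivalence is exactly what the paper itself invokes in Corollary~\ref{Recollement of dg algebras} (which, though printed after the present proposition, does not depend on it). Your route is self-contained modulo that recollement and avoids the external citation, at the price of yielding only the surjection $H^{0}(A)\twoheadrightarrow H^{0}(\overline{A})$ rather than the sharper isomorphism $H^{0}(\overline{A})\cong H^{0}(A)/\langle e\rangle$; the latter is all the proposition requires, but the precise form is what the paper's reference provides and is convenient elsewhere in the text.
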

\begin{proof}
	By the construction of the Drinfeld dg quotient and the assumption that $ A $ is connective, the dg algebra $ \overline{A} $ is also connective. By~\cite[Theorem 5.8]{BCL2018}, the 0-th cohomology $ H^{0}(\overline{A}) $ is isomorphic to $ H^{0}(A)/\langle e\rangle $. Thus, the algebra $ H^{0}(\overline{A}) $ is finite-dimensional.
	
\end{proof}

\begin{Cor}\label{Recollement of dg algebras}
	We have the following recollement
	\begin{align}\label{dg algebra recollement}
		\xymatrix{
			\mathcal{D}(\overline{A})\ar[r]^{p_{*}}&\mathcal{D}(A)\ar[r]^-{i_{*}}\ar@/_2pc/[l]_{p^{*}}\ar@/^2pc/[l]^{p^{!}}&\mathcal{D}(eAe)\ar@/_2pc/[l]_{i^{*}}\ar@/^2pc/[l]^{i^{!}}
		},
	\end{align}
	where the respective triangle functors are explicitly given as follows
	\begin{align*}
		\xymatrix{
			p^{*}=?\otimes_{A}^{\mathbf{L}}\overline{A}& p_{*}=\mathbf{R}\Hom_{\overline{A}}(\overline{A},?)\simeq\ ?\otimes_{\overline{A}}^{\mathbf{L}}\overline{A}=p_{!} & p^{!}=\mathbf{R}\Hom_{A}(\overline{A},?)\\
			i^{*}=?\otimes_{eAe}^{\mathbf{L}}eA& i_{*}=\mathbf{R}\Hom_{A}(eA,?)\simeq\ ?\otimes_{A}^{\mathbf{L}}Ae=i_{!} & i^{!}=\mathbf{R}\Hom_{eAe}(Ae,?).
		}
	\end{align*}
	Consequently, we have a triangle equivalence
	\begin{align*}
		\xymatrix{
			i^{*}\colon\mathrm{per}(eAe)\iso\langle eA\rangle_{\mathrm{per}A}
		}
	\end{align*}
and a triangle equivalence up to direct summands
	\begin{align*}
		\xymatrix{
			p^{*}\colon\mathrm{per}(A)/\mathrm{per}(eAe)\ra\mathrm{per}(\overline{A}),
		}
	\end{align*}
	where $ \langle eA\rangle_{\per A} $ is the thick subcategory of $ \mathrm{per}A $ generated by $ eA $.
\end{Cor}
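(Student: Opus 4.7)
The plan is to derive this statement as a direct consequence of the general recollement from a homotopy cofiber sequence in $\dgcat$ (Proposition~\ref{Dg quotoent to Recollemnet}), applied to our morphism $f\colon B\to A$ via the factorization through the idempotent subalgebra $eAe$. First I would observe that since $f(\boldsymbol{1}_{B})=e$, the morphism $f$ factors as $B\xrightarrow{f'}eAe\xrightarrow{\iota} A$, where $f'$ is unital (with $e$ as the unit of $eAe$) and $\iota$ is the inclusion. In the Drinfeld-quotient description of Proposition~\ref{Dg quotoent to Recollemnet}, the full dg subcategory $\Ima(f)\subseteq A$ (understood via the appropriate thickening of the one-object dg category $A$ at the idempotent $e$) is Morita equivalent to $eAe$. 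Hence the homotopy cofiber of $f$ in $\dgcat$, which by definition equals $\overline{A}$, coincides with the Drinfeld quotient of $A$ by this subcategory, and its derived category is canonically equivalent to $\mathcal{D}(\Ima(f))\simeq\mathcal{D}(eAe)$.

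With this identification in hand, Proposition~\ref{Dg quotoent to Recollemnet} applied to $f$ produces precisely the required recollement
\[
\xymatrix{
\mathcal{D}(\overline{A})\ar[r]^{p_{*}}&\mathcal{D}(A)\ar[r]^{i_{*}}\ar@/_1.3pc/[l]_{p^{*}}\ar@/^1.3pc/[l]^{p^{!}}&\mathcal{D}(eAe)\ar@/_1.3pc/[l]_{i^{*}}\ar@/^1.3pc/[l]^{i^{!}}.
}
\]
The explicit formulas for the six functors follow from the general formulas in Proposition~\ref{Dg quotoent to Recollemnet}: $i^{*}$ is extension along $eAe\hookrightarrow A$, i.e.\ $?\otimes^{\mathbf{L}}_{eAe}eA$, and $i_{*}$ is restriction, which under the identification of bimodules with $A$-modules via $eA$ becomes $\RHom_{A}(eA,?)\simeq?\otimes^{\mathbf{L}}_{A}Ae$; similarly $p^{*}=?\otimes^{\mathbf{L}}_{A}\overline{A}$, $p_{*}$ is restriction along $p\colon A\to\overline{A}$, and the right adjoints $i^{!},p^{!}$ are given by the corresponding $\mathbf{R}\!\Hom$ formulas.

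For the consequences, I would argue as follows. The functor $i^{*}$ is fully faithful (as the left adjoint of a recollement onto an essential image closed under coproducts and shifts), and $i^{*}(eAe)=eAe\otimes^{\mathbf{L}}_{eAe}eA=eA$; since $\per(eAe)=\thick(eAe)$ and $i^{*}$ preserves $\thick$-subcategories, it restricts to a triangle equivalence $\per(eAe)\iso\thick_{\per A}(eA)=\langle eA\rangle_{\per A}$. For the quotient equivalence, the last statement of Proposition~\ref{Dg quotoent to Recollemnet} gives that $p^{*}$ induces a triangle equivalence up to direct summands from $\per(A)/\per(\Ima(f))$ onto $\per(\overline{A})$; substituting $\per(\Ima(f))\simeq\per(eAe)$ via the first equivalence yields the stated result.

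The main subtle point, and essentially the only non-formal step, is the identification of the full dg subcategory $\Ima(f)\subseteq A$ in the Drinfeld-quotient sense with a dg category Morita-equivalent to $eAe$. Since $A$ as a one-object dg category does not literally contain $eAe$ as a full dg subcategory, this requires passing to a Morita-equivalent presentation (for instance the two-object dg category associated with the orthogonal idempotent decomposition $1=e+(1-e)$) and then verifying that the resulting Drinfeld quotient agrees with the homotopy cofiber of $f$ itself. Once this Morita-theoretic bookkeeping is done, everything else is a direct specialization of Proposition~\ref{Dg quotoent to Recollemnet} and standard facts about compactly generated recollements.
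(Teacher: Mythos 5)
Your proposal is correct and takes essentially the same route as the paper, whose entire proof is the single line ``this is a special case of Proposition~\ref{Dg quotoent to Recollemnet}'' together with a pointer to Kalck--Yang \cite[Corollary 2.12]{KY2016}. The Morita-theoretic identification of $\Ima(f)$ with a one-object dg category quasi-equivalent to $eAe$ (via the object $eA$), which you rightly single out as the only non-formal step, is precisely the bookkeeping delegated to that reference.
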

\begin{proof}
	This is a special case of Proposition~\ref{Dg quotoent to Recollemnet}. For more details we refer the reader to \cite[Corollary 2.12]{KY2016}.
%
	
\end{proof}


\begin{Def}\rm
	Let $ \mathcal{A} $ be an abelian $ k $-category. For $ i\in\mathbb{Z} $ and for a complex $ M $ of objects in $ \mathcal{A} $, we define the \emph{standard truncations} $ \tau_{\leqslant i}M $ and $ \tau_{>i}M $ by
	
	\[(\tau_{\leqslant i}M)^{j}=\left\{
	\begin{aligned}
		M ^{j}&&\text{if}&&j<i \\
		\ker(d^{i}_{M})&&\text{if}&&j=i \\
		0 &&\text{if}&& j>i
	\end{aligned}
	\right.\qquad\qquad 
	(\tau_{> i}M)^{j}=\left\{
	\begin{aligned}
		0&&\text{if}&&j<i \\
		\frac{M}{\ker(d^{i}_{M})}&&\text{if}&&j=i \\
		M^{j} &&\text{if}&& j>i
	\end{aligned}
	\right.\]
	
	Their respective differentials are inherited from $ M $.
	Notice that $ \tau_{\leqslant i}(M)$ is a subcomplex of $ M $ and $ \tau_{>i}(M) $ is the corresponding quotient complex. Thus we have a sequence, which is componentwise short exact,
	
	$$ 0\to\tau_{\leqslant i}(M)\to M\to\tau_{>i}(M)\to0 .$$

	Moreover, taking standard truncations behaves well with respect to cohomology, i.e.\ we have
	\[H^{j}(\tau_{\leqslant i}M)=\left\{
	\begin{aligned}
		H^{j}(M)&&\text{if}&&j\leqslant i, \\
		0 &&\text{if}&& j>i.
	\end{aligned}
	\right.\]
\end{Def}

\subsection{Relative $ t $-structure}
Let $ f\colon B\rightarrow A $ be a dg $ k $-algebra morphism satisfying the Assumptions~\ref{Relative assumption}. Then the map of complexes $ \tau_{\leqslant 0}A\to A $ is a quasi-isomorphism of dg algebras. Thus, we may assume that the components $ A^{p} $ vanish for all $ p>0 $. Then the canonical projection $ A\rightarrow H^{0}(A) $ is a homomorphism of dg algebras. We view a module over $ H^{0}(A)  $ as a dg module over $ A $ via this homomorphism. This defines a natural functor $ \mathrm{Mod}H^{0}(A)\to \mathcal{D}(A) $ which induces an equivalence from $ \mathrm{Mod}H^{0}(A) $ onto the heart of the canonical t-structure on $ \cd(A) $ whose left aisle (see~\cite{BK1988}) is the full subcategory on the dg modules $ M $ such that $ H^{p}M=0 $ for all $ p>0 $.

Let $ \mathrm{Mod}_{B}H^{0}(A) $ be the full subcategory of $ \mathrm{Mod}H^{0}(A) $ whose objects are the right $ H^{0}(A) $-modules $ X $ such that the restriction of $ X $ to $ H^{0}(eAe) $ vanishes. Thus, we get a natural functor $ i\colon \mathrm{Mod}_{B}H^{0}(A)\to \mathcal{D}(A) $.

On $ \mathcal{D}(\overline{A}) $ we take the canonical t-structure with heart $ \heartsuit=\mathrm{Mod}H^{0}(\overline{A}) $ and on $ \mathcal{D}(eAe) $ we take the trivial t-structure  whose left aisle is $ \mathcal{D}(eAe) $.
We deduce the following corollary from Theorem~~\ref{Gluing t-structure}.

\begin{Cor}\label{Relative t-structure}
	There is a t-structure on $ \mathcal{D}(A) $ obtained by gluing the canonical t-structure on $ \mathcal{D}(\overline{A}) $ with the trivial t-structure on $ \mathcal{D}(eAe) $ through the recollement diagram (\ref{dg algebra recollement}). We denote by $ (\mathcal{D}(A)_{rel}^{\leqslant 0},	\mathcal{D}(A)_{rel}^{\geqslant 0}) $ the glued t-structure on $ \cd(A) $.
	Here, for any $ k\in\mathbb{Z} $, 
	\begin{align*}
		\xymatrix{
			\mathcal{D}(A)_{rel}^{\leqslant k}=\{X\in\mathcal{D}(A)|H^{l}(p^{*}X)=0,\,\forall l>k\},
		}
	\end{align*}
	\begin{align*}
		\xymatrix{
			\mathcal{D}(A)_{rel}^{\geqslant k}=\{X\in\mathcal{D}(A)|i_{*}(X)=0,H^{l}(p^{!}X)\cong H^{l}(X)=0,\,\forall l<k\}.
		}
	\end{align*}
	and so the heart $ \heartsuit^{rel} $ of this glued $ t $-structure is equivalent to $ \mathrm{Mod}_{B}H^{0}(A) $. 
	
\end{Cor}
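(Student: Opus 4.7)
The plan is to apply Theorem~\ref{Gluing t-structure} directly to the recollement of Corollary~\ref{Recollement of dg algebras}
\[
\cd(\overline{A}) \xrightarrow{p_*} \cd(A) \xrightarrow{i_*} \cd(eAe),
\]
using the canonical t-structure on $\cd(\overline{A})$ and the trivial t-structure on $\cd(eAe)$ whose left aisle is the whole category. Since the right aisle must lie in the right orthogonal of the left aisle, this choice forces $\cd(eAe)^{\leqslant n} = \cd(eAe)$ and $\cd(eAe)^{\geqslant n} = 0$ for every $n\in\mathbb{Z}$, and these trivially satisfy the axioms of a t-structure.

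Then I would unravel the gluing formulas (with $i = p_*$, $q = p^*$, $p = p^!$, $e = i_*$, $j = i^*$, $r = i^!$ in the notation of Theorem~\ref{Gluing t-structure}). The condition $i_*X \in \cd(eAe)^{\leqslant k} = \cd(eAe)$ is automatic, so
\[
\cd(A)_{rel}^{\leqslant k} = \{X : p^*X \in \cd(\overline{A})^{\leqslant k}\} = \{X : H^l(p^*X) = 0,\ \forall\, l > k\},
\]
which matches the stated formula. For the right aisle, $i_*X \in \cd(eAe)^{\geqslant k} = 0$ forces $i_*X = 0$, after which the localization triangle $p_*p^!X \to X \to i_*i^*X \to \Si p_*p^!X$ collapses to an isomorphism $p_*p^!X \iso X$. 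Since $p_*$ is the restriction of scalars along $A \to \overline{A}$ and hence preserves cohomology, we obtain $H^l(p^!X) \cong H^l(X)$, and the remaining condition $p^!X \in \cd(\overline{A})^{\geqslant k}$ becomes $H^l(p^!X) \cong H^l(X) = 0$ for all $l < k$.

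To identify the heart $\heartsuit^{rel}$, I would take $X$ lying in both $\cd(A)_{rel}^{\leqslant 0}$ and $\cd(A)_{rel}^{\geqslant 0}$. Then $i_*X = 0$ and $H^lX = 0$ for all $l \neq 0$, so $X$ is quasi-isomorphic to the $H^0(A)$-module $M = H^0(X)$ placed in degree $0$ (viewed as a dg $A$-module via $A \to H^0(A)$), and the vanishing $i_*X = Me = 0$ places $M$ in $\mathrm{Mod}_BH^0(A)$. Conversely, every such $M$ viewed in this way lies in the heart, and morphisms agree on both sides. The argument is essentially formal; the only delicate point is verifying the identification $H^l(p^!X) \cong H^l(X)$ for objects with $i_*X = 0$, which is precisely what makes the two descriptions of the right aisle in the statement coincide.
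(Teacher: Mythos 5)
Your proposal is correct and follows essentially the same route as the paper: apply the gluing theorem to the recollement, note that the trivial t-structure on $\cd(eAe)$ has zero right aisle, use that $p^{!}$ restricted to $\ker(i_{*})$ is quasi-inverse to $p_{*}$ (so $H^{l}(p^{!}X)\cong H^{l}(X)$ when $i_{*}X=0$), and identify the heart with $\mathrm{Mod}_{B}H^{0}(A)$ via $A\to H^{0}(A)$. The only blemish is the recollement triangle you quote, whose third term should be $i^{!}i_{*}X$ rather than $i_{*}i^{*}X$; since you only invoke it when $i_{*}X=0$, the conclusion $p_{*}p^{!}X\iso X$ is unaffected.
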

\begin{proof}
	The functor $ p_{*}\colon\mathcal{D}(\overline{A})\to \ker(i_{*}) $ is an equivalence of triangulated categories. So the restrictions of the adjoints $ p^{*} $ and $ p^{!} $ to $ \ker(i_{*}) $ give quasi-inverses of $ p_{*}\colon\mathcal{D}(\overline{A})\to \ker(i_{*}) $. Thus, we have
	\begin{equation}
		\begin{split}
			\heartsuit^{rel}=\mathcal{D}(A)_{rel}^{\leqslant 0}\cap\mathcal{D}(A)_{rel}^{\geqslant 0} &= \{X\in\mathcal{D}(A)|i_{*}(X)=0,H^{l}(p^{!}X)=H^{l}(p^{*}X)=0,\ \forall l\neq 0\}\\
			&= \{X\in\mathcal{D}(A)|i_{*}(X)=0,H^{l}(X)=0,\ \forall l\neq 0\}.
		\end{split}
	\end{equation}
	
	The morphism of dg algebras $ A\to H^{0}(A) $ induces a natural functor $ i\colon\mathrm{Mod}_{B}H^{0}(A)\to\heartsuit^{rel} $. Let $ X $ be an object in $ \heartsuit^{rel}\subseteq \ker(i_{*}) $. Then $ X $ is concentrated in degree 0 and $ X $ is isomorphic to an object $ X' $ in $ \mathrm{Mod}H^{0}(A) $. Since we know that $ i_{*}(X) $ is acyclic, $ X' $ is also in $ \mathrm{Mod}_{B}H^{0}(A) $. This shows the denseness of $ i $. The fully faithfulness follows from the following commutative square with three fully faithful functors
	\begin{align*}
		\xymatrix{
			\mathrm{Mod}_{B}H^{0}(A)\ar@{^{(}->}[r]\ar[d]&\mathrm{Mod}H^{0}(A)\ar[d]^{\simeq}\\
			\heartsuit^{rel}\ar@{^{(}->}[r]&\heartsuit.
		}
	\end{align*}
\end{proof}

\bigskip
We will call $ (\mathcal{D}(A)_{rel}^{\leqslant 0},	\mathcal{D}(A)_{rel}^{\geqslant 0}) $ the \emph{relative t-structure} on $ \mathcal{D}(A) $. We illustrate this glued t-structure in the following picture
\begin{align*}
	\begin{tikzpicture}[scale=1.6]
		\draw [thick,red] (0,1.8)--(5,1.8);
		\draw [thick,blue] (0,0.9)--(5,0.9);
		\draw [thick,blue] (0,0)--(5,0);
		\draw[thick,red](2.5,1.8)--(2.5,0.9);
		\draw[decorate,decoration={brace,amplitude=10pt},xshift=-25pt,yshift=0pt](0,0)--(0,1.8)node [black,midway,xshift=-0.8cm] {\footnotesize $\cd(A)$};
		\draw[decorate,blue,decoration={brace,amplitude=10pt},xshift=-2pt,yshift=0pt](0,0)--(0,0.9)node [black,midway,xshift=-0.9cm] {\footnotesize $\cd(eAe)$};
		\draw[decorate,blue,decoration={brace,amplitude=10pt},xshift=-2pt,yshift=0pt](0,0.9)--(0,1.8)node [black,midway,xshift=-0.9cm] {\footnotesize $\cd(\overline{A})$};
		\draw[pattern=north east lines, pattern color=blue] (0,0) rectangle (5,0.9);
		\draw[pattern=north east lines, pattern color=blue] (0,0.9) rectangle (2.5,1.8);
		\draw[pattern=north west lines, pattern color=red] (2.5,0.9) rectangle (5,1.8);
	\end{tikzpicture}\,,
\end{align*} 
where the blue region represents the subcategory $ \cd(A)_{rel}^{\leqslant0} $ and the red region represents the subcategory $ \cd(A)_{rel}^{\geqslant0} $.

By Corollary~\ref{Recollement of dg algebras}, the canonical triangle for an object $ X\in\mathcal{D}(A) $ with respect to the glued t-structure can be constructed as follows: Let $ X $ be an object in $\mathcal{D}(A) $. We have the following canonical triangle
\begin{align*}
	\xymatrix{
		i^{*}(i_{*}X)\ar[r]&X\ar[r]&p_{*}(p^{*}X)\ar[r]&\Si i^{*}(i_{*}X)
	}.
\end{align*}
For the object $ p^{*}X\in\mathcal{D}(\overline{A}) $, we have the following canonical triangle triangle
\begin{align*}
	\xymatrix{
		\tau_{\leqslant k}(p^{*}X)\ar[r]&p^{*}X\ar[r]&\tau_{>k}(p^{*}X)\ar[r]&\Si \tau_{\leqslant k}(p^{*}X).
	}
\end{align*}
Then we get a triangle in $ \mathcal{D}(A) $
\begin{align*}
	\xymatrix{
		p_{*}(\tau_{\leqslant k}(p^{*}X))\ar[r]&p_{*}(p^{*}X)\ar[r]&p_{*}(\tau_{>k}(p^{*}X))\ar[r]&\Si p_{*}(\tau_{\leqslant k}(p^{*}X))
	}.
\end{align*}
Thus, by the octahedral axiom, there exists an object $ \tau_{\leqslant k}^{rel}X $ in $ \mathcal{D}_{rel}^{\leqslant k}(A) $ such that we have an isomorphism $ p^{*}(\tau_{\leqslant k}^{rel}X)\cong\tau_{\leqslant k}(p^{*}X) $ and the following morphism of distinguished triangles 
\begin{align*}
	\xymatrix{
		p_{*}(\tau_{\leqslant k}(p^{*}X))\ar[r]&0\ar[r]&\Si p_{*}(\tau_{\leqslant k}(p^{*}X))\ar[r]&\\
		\tau_{\leqslant k}^{rel}X\ar[u]\ar[r]&X\ar[u]\ar[r]&p_{*}(\tau_{>k}(p^{*}X))\ar[u]\ar[r]&\\
		i^{*}(i_{*}X)\ar[u]\ar[r]&X\ar[u]^{\boldmath{1_{X}}}\ar[r]&p_{*}(p^{*}X)\ar[u]\ar[r]&\\
		\Si^{-1}p_{*}(\tau_{\leqslant k}(p^{*}X))\ar[u]\ar[r]&0\ar[u]\ar[r]&p_{*}(\tau_{\leqslant k}(p^{*}X))\ar[u]\ar[r]&
		.}
\end{align*}

\begin{Def}\rm\label{Relative truncation functor}
	We define the \emph{relative truncation functor} $ \tau^{rel}_{>k} $ to be the following composition
	$$
	\xymatrix{
		\tau^{rel}_{> k}\colon\mathcal{D}(A)\ar[r]^<<<<<{p^{*}}&\mathcal{D}(\overline{A})\ar[r]^{\tau_{> k}}&\mathcal{D}(\overline{A})\ar[r]^{p_{*}}&\mathcal{D}(A).
	}$$
\end{Def}
Thus, for any $ X\in\mathcal{D}(A) $, we have a canonical triangle in $ \cd(A) $
$$ \tau_{\leqslant k}^{rel}X\to X\to \tau_{>k}^{rel}\to\Si\tau_{\leqslant k}^{rel}X $$
such that $ \tau_{\leqslant k}^{rel}X $ belongs to $ \mathcal{D}(A)_{rel}^{\leqslant k} $ and $ \tau^{rel}_{> k}(X)=p_{*}(\tau_{>k}(p^{*}X)) $ belongs to $ \mathcal{D}(A)_{rel}^{\geqslant k+1} $. Moreover, the object $ \tau^{rel}_{> k}(X) $ lies in $ \pvd_{B}(A) $ since it is the essential image of $ p_{*} $.

%
%

\subsection{The restriction of the relative $ t $-structure}

\begin{Prop}\cite[Proposition 2.5]{KY2016}
	For each $ p\in\mathbb{Z} $, the space $ H^{p}(A) $ is finite dimensional. Consequently, the category $ \mathrm{per}A $ is Hom-finite.
\end{Prop}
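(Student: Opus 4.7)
The Hom-finiteness of $\per A$ follows immediately from the finite-dimensionality claim, because $\per A=\thick(A)$ inside $\cd(A)$, so by a standard reduction Hom-finiteness of $\per A$ is equivalent to finite-dimensionality of
$$\Hom_{\cd(A)}(A,\Si^p A)\;=\;H^p(A)$$
for every $p\in\mathbb{Z}$. This reduces the proposition to showing $\dim_k H^p(A)<\infty$ for all $p$.

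By the connectivity of $A$ (Assumption~\ref{Relative assumption}(2)), $H^p(A)=0$ for $p>0$; and $H^0(A)$ is finite-dimensional by Assumption~\ref{Relative assumption}(4). Only the negative degrees remain. For these I would invoke \cite[Proposition~2.5]{KY2016}, which asserts exactly that $H^p(A)$ is finite-dimensional for every $p$, for any homologically smooth connective dg algebra $A$ whose zeroth cohomology is finite-dimensional. All three hypotheses are supplied by our Assumptions~\ref{Relative assumption}(1), (2), and (4); notably, the relative left $(n{+}1)$-Calabi--Yau structure of Assumption~\ref{Relative assumption}(3) plays no role here, as this is purely a finiteness statement coming from smoothness, connectivity, and finite-dimensionality of $H^0$.

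Should one wish to reconstruct the argument of \cite{KY2016} rather than cite it, the plan would be to use the homological smoothness of $A$ to produce a cofibrant resolution $P\xrightarrow{\sim}A$ in $\cd(A^e)$ which is a finite iterated extension of shifted copies of $A^e$; combining this with connectivity of $A$ and finite-dimensionality of $H^0(A)$, one can refine $P$ to a minimal semi-free model whose cells in each cohomological degree are parametrised by a finite-dimensional vector space, and then read off $\dim_k H^p(A)<\infty$ degreewise. The main subtlety in such a reconstruction would be organising the minimal resolution compatibly with the connective truncation, but the rest is routine bookkeeping.
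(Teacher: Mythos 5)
Your proof matches the paper exactly: the paper offers no argument of its own for this statement and simply cites \cite[Proposition 2.5]{KY2016}, whose hypotheses (smoothness, connectivity, finite-dimensional $H^{0}$) are precisely Assumptions~\ref{Relative assumption}(1), (2), (4), and your reduction of Hom-finiteness of $\per A=\thick(A)$ to finite-dimensionality of each $H^{p}(A)=\Hom_{\cd(A)}(A,\Si^{p}A)$ is the standard d\'evissage. One small caution on your optional reconstruction sketch: a minimal semi-free model over $A^{e}$ cannot be invoked naively, since finite-dimensionality of $H^{<0}(A^{e})$ is exactly what is being proved; the argument of \cite{KY2016} instead runs a downward induction on the cohomological degree using the perfect bimodule resolution together with the already-established finiteness in higher degrees.
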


\begin{Prop}\label{Relative t-structure on perA}
	The relative t-structure on $ \mathcal{D}(A) $ restricts to $ \mathrm{per}A $.
\end{Prop}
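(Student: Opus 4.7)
The plan is to reduce to an absolute statement about the cofiber algebra $\overline{A}$ via the recollement of Corollary~\ref{Recollement of dg algebras}. Let $X \in \per A$. The canonical triangle $\tau^{rel}_{\leqslant k}X \to X \to \tau^{rel}_{>k}X \to \Si\tau^{rel}_{\leqslant k}X$, combined with $\per A$ being a triangulated subcategory of $\cd(A)$, reduces the task to showing $\tau^{rel}_{>k}X \in \per A$ for every $k \in \mathbb{Z}$. By Definition~\ref{Relative truncation functor} we have $\tau^{rel}_{>k}X = p_{*}(\tau_{>k}(p^{*}X))$. Since $p_{*}$ is a fully faithful embedding that preserves the underlying $k$-complex and satisfies $i_{*}p_{*}=0$ (as $p(e)=0$), it carries $\pvd(\overline{A})$ into $\pvd_{B}(A) \subseteq \pvd(A)$; combined with the smoothness inclusion $\pvd(A) \subseteq \per A$ of~\cite[Lemma 4.1]{Bk2008}, it therefore suffices to prove that $\tau_{>k}(p^{*}X) \in \pvd(\overline{A})$.

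The cofiber $\overline{A}$ itself satisfies Assumptions~\ref{Relative assumption} in absolute form: by Propositions~\ref{Prop: smooth of cofiber} and~\ref{Prop: cofiber is fd}, it is homologically smooth, $(n+1)$-Calabi--Yau as a bimodule, connective, and has finite-dimensional $H^{0}$. Corollary~\ref{Recollement of dg algebras} gives $p^{*}X \in \per\overline{A}$, while the preceding proposition applied to $\overline{A}$ guarantees that each $H^{p}(\overline{A})$ is finite-dimensional and hence $\per\overline{A}$ is Hom-finite. In particular every cohomology group $H^{i}(p^{*}X) = \Hom_{\cd(\overline{A})}(\overline{A},\Si^{i}p^{*}X)$ is finite-dimensional. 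Connectivity of $\overline{A}$ then lets one check, by a straightforward induction over the thick-closure operations (shifts, cones, direct summands) starting from the generators $\overline{A}[n]$ whose cohomology sits in $(-\infty,-n]$, that any perfect $\overline{A}$-module has cohomology bounded above. Combining the two properties shows that $\tau_{>k}(p^{*}X)$ has finite-dimensional total cohomology and thus lies in $\pvd(\overline{A})$.

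The main point requiring care is the above-boundedness established in the last step: finite-dimensionality of each individual cohomology group together with uniform vanishing in large degree is what is needed for membership in $\pvd(\overline{A})$, and it is precisely the connectivity hypothesis on $A$ (inherited by $\overline{A}$ via Proposition~\ref{Prop: cofiber is fd}) that drives the induction. Everything else is a formal consequence of the recollement and of Amiot--Guo's finite-dimensionality of $H^{p}(\overline{A})$.
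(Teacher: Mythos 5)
Your proof is correct and follows essentially the same route as the paper: reduce to showing $\tau^{rel}_{>k}X = p_{*}(\tau_{>k}p^{*}X)$ lies in $\mathrm{pvd}(A)\subseteq\per A$, using finite-dimensionality of $H^{0}(\overline{A})$, Hom-finiteness of $\per\overline{A}$, and connectivity to bound the cohomology. The only difference is that you spell out the above-boundedness of the cohomology of a perfect $\overline{A}$-module, a point the paper's proof leaves implicit.
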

\begin{proof}
	Let $ X $ be in $ \mathrm{per}A $ and look at the canonical triangle with respect to the relative t-structure on $ \mathcal{D}(A) $
	$$ \tau^{rel}_{\leqslant0}X\rightarrow X\rightarrow\tau^{rel}_{>0}X\rightarrow\Si\tau^{rel}_{\leqslant0}X ,$$ where $ \tau^{rel}_{>0}X=p_{*}(\tau_{>0} p^{*}(X)) $. By Proposition~\ref{Prop: cofiber is fd}, the algebra $ H^{0}(\overline{A}) $ is finite-dimensional. Then by~\cite[Proposition 2.5]{KY2016}, the category $ \per(\overline{A}) $ is also Hom-finite. Thus, the space $$ H^{l}(\tau^{rel}_{>0}X)=\Hom_{\cd(\ca)}(A,\Si^{l}p_{*}\tau_{>0}p^{*}X)\simeq\Hom_{\cd(\overline{A})}(\overline{A},\Si^{l}\tau_{>0}p^{*}X) $$ equals zero or $ H^{l}(\tau_{>0}p^{*}X) $ which is finite-dimensional. Thus, the object $ \tau^{rel}_{>0}X $ is in $ \mathrm{pvd}(A) $ and so in $ \mathrm{per}A $. Since $ \mathrm{per}A $ is a triangulated subcategory, it follows that $ \tau^{rel}_{\leqslant0}X $ also lies in $ \mathrm{per}A $.
	
\end{proof}

\begin{Prop}\label{Relative t-structure on D_{fd,B}}
	Let $ \mathrm{pvd}_{B}(A)_{rel}^{\leqslant0} $ be the full subcategory of $ \mathcal{D}(A)_{rel}^{\leqslant0} $ whose objects are the $ M\in \mathrm{pvd}(A) $ whose restriction along $ i\colon eAe\hookrightarrow A $ is acyclic. Then $ (\mathrm{pvd}_{B}(A)_{rel}^{\leqslant0},\mathcal{D}(A)_{rel}^{\geqslant0}) $ is a t-structure on $ \mathrm{pvd}_{B}(A) $ and the corresponding heart is equivalent to $ \mathrm{mod}_{B}H^{0}(A) $, where $ \mathrm{mod}_{B}H^{0}(A) $ is the full subcategory of $ \mathrm{Mod}_{B}H^{0}(A) $ whose objects are the finite-dimensional $ H^{0}(A) $-modules. Moreover, the triangulated category $ \mathrm{pvd}_{B}(A) $ is generated by its heart.
	
\end{Prop}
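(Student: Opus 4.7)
The plan is to transport the claim along the equivalence $p_{*}\colon\mathcal{D}(\overline{A})\iso\ker(i_{*})$ from Corollary~\ref{Recollement of dg algebras}. Since $H^{p}(p_{*}N)=H^{p}(N)$ for every $p$ and $p^{*}p_{*}\cong\mathrm{id}$, this restricts to an equivalence $p_{*}\colon\mathrm{pvd}(\overline{A})\iso\mathrm{pvd}_{B}(A)$, so the statement will be reduced to the analogous (standard) fact about the canonical $t$-structure on $\mathcal{D}(\overline{A})$.

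First I would verify that the canonical $t$-structure on $\mathcal{D}(\overline{A})$ restricts to $\mathrm{pvd}(\overline{A})$. By Propositions~\ref{Prop: smooth of cofiber} and~\ref{Prop: cofiber is fd}, $\overline{A}$ is smooth, connective, and has $H^{0}(\overline{A})$ finite-dimensional; any $N\in\mathrm{pvd}(\overline{A})$ then satisfies $\sum_{p}\dim_{k}H^{p}(N)<\infty$, and since the cohomologies of $\tau_{\leqslant 0}N$ and $\tau_{>0}N$ are subsets of those of $N$, both truncations remain in $\mathrm{pvd}(\overline{A})$. This gives a $t$-structure on $\mathrm{pvd}(\overline{A})$ with heart $\mathrm{mod}\,H^{0}(\overline{A})$. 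Next, for $M\in\mathrm{pvd}_{B}(A)\subseteq\ker(i_{*})$, the isomorphism $M\cong p_{*}p^{*}M$ applied to the canonical triangle of $p^{*}M$ produces
\[
p_{*}\tau_{\leqslant 0}(p^{*}M)\longrightarrow M\longrightarrow p_{*}\tau_{>0}(p^{*}M)\longrightarrow \Sigma p_{*}\tau_{\leqslant 0}(p^{*}M),
\]
whose outer terms belong to $\mathrm{pvd}_{B}(A)^{\leqslant 0}_{rel}$ and $\mathcal{D}(A)^{\geqslant 1}_{rel}\cap\mathrm{pvd}_{B}(A)$ by Definition~\ref{Relative truncation functor} and Corollary~\ref{Relative t-structure}. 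The required $\mathrm{Hom}$-vanishing between the two aisles is inherited from the relative $t$-structure on $\mathcal{D}(A)$, so this is a $t$-structure on $\mathrm{pvd}_{B}(A)$ whose heart is $\mathrm{Mod}_{B}H^{0}(A)\cap\mathrm{pvd}(A)=\mathrm{mod}_{B}H^{0}(A)$.

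For the final assertion, any $N\in\mathrm{pvd}(\overline{A})$ has only finitely many nonzero cohomologies, each a finite-dimensional $H^{0}(\overline{A})$-module, so a short induction on the cohomological amplitude via the truncation triangles $\tau_{\leqslant n}N\to N\to\tau_{>n}N$ places $N$ in the thick subcategory generated by $\mathrm{mod}\,H^{0}(\overline{A})$; transporting through $p_{*}$ gives the generation of $\mathrm{pvd}_{B}(A)$ by $\mathrm{mod}_{B}H^{0}(A)$. The one slightly delicate point, where I expect to need care, is the identification $\tau^{rel}_{>0}|_{\mathrm{pvd}_{B}(A)}\cong p_{*}\circ\tau_{>0}\circ p^{*}$, but this is forced by $\ker(i_{*})$ being the essential image of $p_{*}$ together with $p^{*}p_{*}\cong\mathrm{id}$, so no serious obstacle arises.
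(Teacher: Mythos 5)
Your argument is correct and is essentially the paper's proof in a slightly different packaging: the triangle $p_{*}\tau_{\leqslant 0}(p^{*}M)\to M\to p_{*}\tau_{>0}(p^{*}M)$ you construct is literally the relative truncation triangle (Definition~\ref{Relative truncation functor} defines $\tau^{rel}_{>k}=p_{*}\circ\tau_{>k}\circ p^{*}$, so the identification you flag as ``delicate'' holds by definition on $\ker(i_{*})$), and the generation statement is proved by the same induction on cohomological amplitude. The only genuine addition is your explicit check that truncations preserve $\mathrm{pvd}(\overline{A})$, which the paper leaves implicit via Proposition~\ref{Relative t-structure on perA}.
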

\begin{proof}
	Let $ n\in\mathbb{Z} $. For any object $ X\in \mathrm{pvd}_{B}(A) $, we have the following triangle
	\begin{align*}
		\xymatrix{
			\tau^{rel}_{\leqslant 0}X\ar[r]&X\ar[r]&\tau^{rel}_{>0}X\ar[r]&
		}
	\end{align*} with $ \tau^{rel}_{\leqslant 0}X\in\mathcal{D}(A)_{rel}^{\leqslant 0} $ and $ \tau^{rel}_{>0}X\in\mathcal{D}(A)_{rel}^{>0}\subseteq \mathrm{pvd}_{B}(A) $. So the object $ \tau^{rel}_{\leqslant 0}X $ is also in $ \mathrm{pvd}_{B}(A) $. This is the triangle required to show that $ (\mathrm{pvd}_{B}(A)_{rel}^{\leqslant0},\mathcal{D}(A)_{rel}^{\geqslant0}) $ is a t-structure.
	
	To show the second statement, let $ M $ be an object in $ \mathrm{pvd}_{B}(M) $. Let $ k\leqslant m $ be integers such that $ H^{l}(M)\neq0 $ only for $ l\in[k,m] $. We use induction on $ m-k $. If $ m-k=0 $, then a shift of $ M $ is in the heart. Now suppose $ m-k>0 $. Then the relative truncations yield a triangle in $ \mathrm{pvd}_{B}(A) $
	$$ \tau^{rel}_{\leqslant k}M\rightarrow M\rightarrow \tau^{rel}_{> k}M\rightarrow \Si\tau^{rel}_{\leqslant k}M. $$
	The homology of $ \tau^{rel}_{\leqslant k}M $ is concentrated in degree $ k $. Thus, the object $ \tau^{rel}_{\leqslant k}M $ belongs to a shifted copy of the heart. Moreover, the homology of $ \tau^{rel}_{> k}M $ is bounded between degrees $ k+1 $ and $ m $. By the induction hypothesis, the object $ \tau^{rel}_{> k}M $ is contained in the triangulated subcategory generated by the heart. Therefore the same holds for $ M $.
\end{proof}
\bigskip

Recall that we have defined $ \cc_{n}(A,B)=\per A/\pvd_{B}(A) $.
\begin{Prop}\cite[Proposition 7.1.4]{Am2008}\label{Home space in RCA}
	Under the projection functor $ \pi^{rel}\colon \mathrm{per}A\to\mathcal{C}_{n}(A,B) $, for any $ X $ and $ Y $ in $ \mathrm{per}A $, we have 
	\begin{align*}
		\xymatrix{
			\Hom_{\mathcal{C}_{n}(A,B)}(\pi^{rel} X,\pi^{rel} Y)=\varinjlim_{k\leqslant0} \Hom_{\mathcal{D}(A)}(\tau_{\leqslant k}^{rel}X,\tau_{\leqslant k}^{rel}Y).
		}
	\end{align*}
\end{Prop}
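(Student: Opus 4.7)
The strategy adapts the Amiot argument cited in the statement to the relative setting, using the relative $t$-structure on $\cd(A)$ from Corollary~\ref{Relative t-structure}, its restriction to $\per A$ (Proposition~\ref{Relative t-structure on perA}), and the crucial fact that $\pvd_{B}(A)$ is bounded in the relative $t$-structure and generated by its heart (Proposition~\ref{Relative t-structure on D_{fd,B}}).

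First I would set up the directed system and the canonical map. Since $\tau_{\leqslant k}^{rel}$ is a functor on $\per A$ and $\tau_{\leqslant k'}^{rel}\tau_{\leqslant k}^{rel}\cong\tau_{\leqslant k'}^{rel}$ for $k'\leqslant k$, applying $\tau_{\leqslant k'}^{rel}$ to any morphism $f\colon\tau_{\leqslant k}^{rel}X\to\tau_{\leqslant k}^{rel}Y$ produces the transition morphism $\tau_{\leqslant k'}^{rel}f\colon\tau_{\leqslant k'}^{rel}X\to\tau_{\leqslant k'}^{rel}Y$. The canonical arrow $\tau_{\leqslant k}^{rel}Z\to Z$ has cone $\tau_{>k}^{rel}Z=p_{*}(\tau_{>k}p^{*}Z)\in\pvd_{B}(A)$, so it becomes invertible in $\cc_{n}(A,B)$. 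Conjugating with these isomorphisms defines a compatible family of maps
\[
\Hom_{\cd(A)}(\tau_{\leqslant k}^{rel}X,\tau_{\leqslant k}^{rel}Y)\longrightarrow\Hom_{\cc_{n}(A,B)}(\pi^{rel}X,\pi^{rel}Y),
\]
and hence a canonical map $\Phi$ from the colimit. The task reduces to showing that $\Phi$ is bijective.

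For surjectivity, any morphism in $\cc_{n}(A,B)$ is represented by a roof $X\xleftarrow{s}X'\xrightarrow{g}Y$ with $N\coloneqq\cone(s)\in\pvd_{B}(A)$. By Proposition~\ref{Relative t-structure on D_{fd,B}} the object $N$ is bounded for the relative $t$-structure, so $N\in\cd(A)_{rel}^{\geqslant m}$ for some integer $m$ and $\tau_{\leqslant k}^{rel}N=0$ whenever $k\leqslant m-1$. For such $k$ the cone of $\tau_{\leqslant k}^{rel}s$ vanishes, so $\tau_{\leqslant k}^{rel}s$ is an isomorphism; setting $f_{k}\coloneqq\tau_{\leqslant k}^{rel}g\circ(\tau_{\leqslant k}^{rel}s)^{-1}$ gives an element of $\Hom(\tau_{\leqslant k}^{rel}X,\tau_{\leqslant k}^{rel}Y)$. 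A short diagram chase using the naturality of the counit $\tau_{\leqslant k}^{rel}Z\to Z$ shows that $\Phi(f_{k})$ is precisely the class of the original roof.

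For injectivity, suppose $f\colon\tau_{\leqslant k}^{rel}X\to\tau_{\leqslant k}^{rel}Y$ satisfies $\Phi(f)=0$. The calculus of fractions supplies $t\colon Z\to\tau_{\leqslant k}^{rel}X$ in $\per A$ with $\cone(t)\in\pvd_{B}(A)$ and $f\circ t=0$. Applying the same boundedness argument to $\cone(t)$, for $k'$ sufficiently negative $\tau_{\leqslant k'}^{rel}t$ is an isomorphism, while $\tau_{\leqslant k'}^{rel}f\circ\tau_{\leqslant k'}^{rel}t=\tau_{\leqslant k'}^{rel}(f\circ t)=0$; therefore $\tau_{\leqslant k'}^{rel}f=0$, so $f$ vanishes in the colimit. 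The main obstacle is keeping track of these $t$-structure bounds and the calculus-of-fractions bookkeeping, but all essential ingredients—the boundedness of $\pvd_{B}(A)$ in the relative $t$-structure, its restriction to $\per A$, and the existence of the truncation functor $\tau_{\leqslant k}^{rel}$—are already in place.
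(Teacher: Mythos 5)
Your proposal is correct and follows essentially the same route as the paper: both arguments rest on the fact that the cone of the roof's denominator lies in $\pvd_{B}(A)$ and is therefore bounded below for the relative $t$-structure, so that sufficiently negative relative truncations make the denominator invertible, and both then identify the resulting truncated morphism with the given roof. The only loose phrase is ``the cone of $\tau^{rel}_{\leqslant k}s$ vanishes'' --- $\tau^{rel}_{\leqslant k}$ is not a triangulated functor, so you should instead invoke the standard $t$-structure fact that a morphism $s$ with $\cone(s)\in\cd(A)_{rel}^{\geqslant m}$ induces an isomorphism on $\tau^{rel}_{\leqslant k}$ for $k<m$; with that repaired, your surjectivity step (truncating the whole roof) and your injectivity step (via the factorization criterion for vanishing in a Verdier quotient) are minor, equally valid variants of the paper's, which instead factors the counit $\tau^{rel}_{\leqslant l}X\to X$ through the apex of the roof.
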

\begin{proof}
	Let $ X $ and $ Y $ be in $ \per A $. An element of $ \varinjlim_{k\leqslant0} \Hom_{\mathcal{D}(A)}(\tau_{\leqslant k}^{rel}X,\tau_{\leqslant k}^{rel}Y) $ is an equivalence class of morphisms $ \tau_{\leqslant k}^{rel}X\to\tau_{\leqslant k}^{rel}Y $. Two morphisms $ f\colon \tau_{\leqslant k}^{rel}X\to\tau_{\leqslant k}^{rel}Y $ and $ g\colon \tau_{\leqslant m}^{rel}X\to\tau_{\leqslant m}^{rel}Y $ with $ m\geqslant k $ are equivalent if there is a commutative square
	\begin{align*}
		\xymatrix{
			\tau_{\leqslant k}^{rel}X\ar[r]^{f}\ar[d]&\tau_{\leqslant k}^{rel}Y\ar[d]\\
			\tau_{\leqslant m}^{rel}X\ar[r]^{g}&\tau_{\leqslant m}^{rel}Y,
		}
	\end{align*}
	where the vertical arrows are the canonical morphisms.
	
	Suppose that $ f $ is a morphism $ f\colon \tau_{\leqslant k}^{rel}X\to\tau_{\leqslant k}^{rel}Y $. We can form the following morphism from $ X $ to $ Y $ in $ \mathcal{C}_{n}(A,B) $
	\begin{align*}
		\xymatrix{
			&\tau_{\leqslant k}^{rel}X\ar@{=>}"2,1"\ar@{.>}[r]^{f}\ar"2,3"&\tau_{\leqslant k}^{rel}Y\ar@{.>}[d]\\
			X&&Y,
		}
	\end{align*}
	where the morphisms $ \tau_{\leqslant k}^{rel}X\to X $ and $ \tau_{\leqslant k}^{rel}Y\to Y $ are the canonical morphisms. Here we use the fact that the cone $ \tau^{rel}_{>k}X $ of the morphism $ \tau^{rel}_{\leqslant k}X\ra X $ lies in $ \pvd_{B}(A) $. Hence the above diagram defines a morphism in $ \cc_{n}(A,B) $.
	
	If $ f\colon \tau_{\leqslant k}^{rel}X\to\tau_{\leqslant k}^{rel}Y $ and $ g\colon \tau_{\leqslant m}^{rel}X\to\tau_{\leqslant m}^{rel}Y $ with $ m\geqslant k $ are equivalent, there is an equivalence of diagrams
	\begin{align*}
		\xymatrix{
			&\tau_{\leqslant k}^{rel}X\ar@{=>}"2,1"\ar@{.>}[r]^{f}\ar"2,3"&\tau_{\leqslant k}^{rel}Y\ar@{.>}[d]\\
			X&&Y\\
			&\tau_{\leqslant m}^{rel}X\ar@{.>}[r]^{g}\ar@{=>}"2,1"\ar"2,3"&\tau_{\leqslant m}^{rel}Y\ar@{.>}[u]\,.
		}
	\end{align*}
	Thus, we have a well-defined map from $ \varinjlim_{k\leqslant0} \Hom_{\mathcal{D}(A)}(\tau_{\leqslant k}^{rel}X,\tau_{\leqslant k}^{rel}Y) $ to $ \Hom_{\mathcal{C}_{n}(A,B)}(\pi^{rel} X,\pi^{rel} Y) $ which is  injective.
	
	Let $ h\colon X\to Y $ be a morphism in $ \Hom_{\mathcal{C}_{n}(A,B)}(\pi^{rel} X,\pi^{rel}Y) $. Suppose that $ h $ can be represented by the following right fraction
	\begin{align*}
		\xymatrix{
			&X'\ar@{=>}"2,1"_{s}\ar"2,3"^{h'}&\\
			X&&Y.
		}
	\end{align*}
	Let $ X'' $ be the cone of $ s $. It is an object of $ \mathrm{pvd}_{B}(A) $ and therefore lies in $ \mathcal{D}^{rel}_{>n} $ for some $ l\ll 0 $. Therefore there are no morphisms from $ \tau_{\leqslant l}^{rel}X $ to $ X'' $ and we have the following factorization
	\begin{align*}
		\xymatrix{
			&\tau_{\leqslant l}^{rel}X\ar@{.>}"2,1"_{s'}\ar[d]\ar"2,3"^{0}&\\
			X'\ar[r]&X\ar[r]&X''\ar[r]&\Si X'.
		}
	\end{align*}
	We obtain an isomorphism of diagrams
	\begin{align*}
		\xymatrix{
			&X'\ar@{=>}"2,1"_{s}\ar"2,3"^{h'}&\\
			X&&Y\\
			&\tau_{\leqslant l}^{rel}X\ar@{=>}"2,1"\ar"1,2"_{s'}\ar"2,3"_{f'=h's'}&.
		}
	\end{align*}
	Since $ \tau_{\leqslant l}^{rel}X $ is in $ \mathcal{D}(A)_{rel}^{\leqslant l} $ and $ \tau_{> l}^{rel}Y $ is in  $ \mathcal{D}(A)_{rel}^{>l} $, the morphism $ f'\colon\tau_{\leqslant l}^{rel}X\to Y $ induces a morphism $ f\colon\tau_{\leqslant l}^{rel}X\to \tau_{\leqslant l}^{rel}Y $ which lifts the given morphism. Thus the map from $ \varinjlim_{l\leqslant0} \Hom_{\mathcal{D}(A)}(\tau_{\leqslant l}^{rel}X,\tau_{\leqslant l}^{rel}Y) $ to $ \Hom_{\mathcal{C}_{n}(A,B)}(\pi^{rel} X,\pi^{rel} Y) $ is surjective.
	
\end{proof}

\subsection{SMC reduction}
Let $ \mathcal{T} $ be a Krull--Schmidt triangulated category and $ \mathcal{S} $ a subcategory of $ \mathcal{T} $. 

\begin{Def}\rm\cite[Deﬁnition 2.4]{Jin2020}
	We call $ \mathcal{S} $ a \emph{pre-simple-minded collection (pre-SMC)} if for any $ X,Y\in\mathcal{S} $, the following conditions hold.
	\begin{itemize}
		\item[(1)] $ \Hom_{\mathcal{T}}(X,\Si^{<0}Y)=0 $;
		\item[(2)] $ \mathrm{dim}_{k}\Hom_{\mathcal{T}}(X,Y)=\delta_{X,Y}.$
	\end{itemize}	
	We call $ \mathcal{S} $ a \emph{simple-minded collection (SMC)} if $ \mathcal{S} $ is a pre-SMC and moreover, $ \thick(\mathcal{S})=\mathcal{T} $.
\end{Def}

Let $ \cs $ be a pre-SMC. The \emph{SMC reduction} of $ \mathcal{T} $ with respect to $ \mathcal{S} $ is defined as the following Verdier quotient~\cite[Section 3.1]{Jin2020}
$$ \mathcal{U}\coloneqq\mathcal{T}/\mathrm{thick}(\mathcal{S}) .$$

The subcategory $ \thick(\mathcal{S}) $ admits a natural t-structure $ (\mathcal{X}_{\mathcal{S}},\mathcal{Y}_{\mathcal{S}}) $, where $ \mathcal{X}_{\mathcal{S}} $ is the smallest extension closed subcategory of $ \mathcal{T} $ containing any non-negative shift of $ \mathcal{S} $ and $ \mathcal{Y}_{\mathcal{S}} $ is the smallest extension closed subcategory of $ \mathcal{T} $ containing any non-positive shift of $ \mathcal{S} $ (see~\cite[Corollary 3 and Proposition 4]{A2009},\cite[Proposition 5.4]{KOY2014} or \cite{RR2017}). Then the corresponding heart is denoted by $ \mathcal{H}_{\mathcal{S}} $. It is equal to the smallest extension closed subcategory of $ \mathcal{T} $ containing $ \mathcal{S} $.

Consider the following mild conditions:
\begin{itemize}
	\item[$ (R1) $] The heart $ \mathcal{H}_{\mathcal{S}} $ is contravariantly finite in the Hom-orthogonal subcategory $ (\Si^{>0}\mathcal{S})^{\perp} $ and covariantly finite in $ {}^{\perp}(\Si^{<0}\mathcal{S}) $.
	\item[$ (R2) $] For any $ X\in\mathcal{T} $, we have $ \mathrm{Hom}_{\ct}(X,\Si^{i}\mathcal{H}_{\cs})=0=\mathrm{Hom}_{\ct}(\mathcal{H}_{\cs},\Si^{i}X) $ for $ i\ll0 $.
\end{itemize}

\begin{Prop}\cite[Proposition 3.2]{Jin2020}\label{Equivalence condition of SM}
	The following are equivalent.
	\begin{itemize}
		\item[(1)] $ (\mathcal{X}_{\mathcal{S}},\mathcal{X}_{\mathcal{S}}^{\perp}) $ and $ ({}^{\perp}\mathcal{Y}_{\mathcal{S}},\mathcal{Y}_{\mathcal{S}}) $ are t-structures on $ \mathcal{T} $;
		\item[(2)] $ \mathcal{H}_{\mathcal{S}} $ satisfies the conditions $ (R1) $ and $ (R2) $.
	\end{itemize}
	
\end{Prop}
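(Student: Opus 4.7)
The plan is to prove the two implications separately, with $(1)\Rightarrow(2)$ being routine and $(2)\Rightarrow(1)$ the substantive direction. For $(1)\Rightarrow(2)$, once both t-structures are at our disposal we can read off the conditions from the truncation functors. Given $Y\in(\Si^{>0}\cs)^{\perp}$, apply the truncation of $(\cx_{\cs},\cx_{\cs}^{\perp})$ followed by that of $({}^{\perp}\cy_{\cs},\cy_{\cs})$; the intermediate object lands in $\cx_{\cs}\cap\cy_{\cs}=\ch_{\cs}$, and its universal property yields a right $\ch_{\cs}$-approximation of $Y$. A dual construction provides covariant finiteness in ${}^{\perp}(\Si^{<0}\cs)$, establishing $(R1)$. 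For $(R2)$, the truncation triangle $U\to X\to V\to\Si U$ with respect to $(\cx_{\cs},\cx_{\cs}^{\perp})$ reduces the vanishing of $\Hom_{\ct}(X,\Si^{i}\ch_{\cs})$ to that of $\Hom_{\ct}(U,\Si^{i}\ch_{\cs})$ and $\Hom_{\ct}(V,\Si^{i+1}\ch_{\cs})$; combining this with the symmetric truncation from the other t-structure yields the required vanishing for $i\ll 0$.

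For $(2)\Rightarrow(1)$, fix $X\in\ct$ and construct its truncation triangle with respect to $(\cx_{\cs},\cx_{\cs}^{\perp})$ via an inductive approximation process. By $(R2)$ there exists an integer $N$ such that $\Hom_{\ct}(\ch_{\cs},\Si^{i}X)=0$ for all $i\leqslant -N$, which in particular implies $\Si^{-N}X\in(\Si^{>0}\cs)^{\perp}$, since $\cs\subseteq\ch_{\cs}$. The contravariant finiteness half of $(R1)$ then produces a right $\ch_{\cs}$-approximation $H_{0}\to\Si^{-N}X$; completing to a triangle $H_{0}\to\Si^{-N}X\to C_{0}\to\Si H_{0}$, the defining property of the approximation forces $\Hom_{\ct}(\ch_{\cs},C_{0})=0$, so that $C_{0}$ again lies in $(\Si^{>0}\cs)^{\perp}$ while its maximal $\ch_{\cs}$-cohomological degree has strictly decreased. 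Iterating this construction at most $N+1$ times and assembling the intermediate triangles by repeated applications of the octahedral axiom produces a single triangle $U\to X\to V\to\Si U$ where $U$ is an iterated extension of shifted copies of $\ch_{\cs}$ (hence $U\in\cx_{\cs}$) and $V\in\cx_{\cs}^{\perp}$. The second t-structure $({}^{\perp}\cy_{\cs},\cy_{\cs})$ is obtained by the completely dual procedure, exchanging right and left approximations.

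The main obstacle lies in verifying that this inductive construction genuinely terminates after finitely many steps and produces an object of $\cx_{\cs}$ itself, rather than only a formal limit of iterated extensions. This requires careful cohomological bookkeeping at each stage to confirm that $C_{j}$ remains in $(\Si^{>0}\cs)^{\perp}$ and that its top degree with respect to the $\ch_{\cs}$-filtration strictly decreases; the pre-SMC orthogonality relations on $\cs$ are essential here, since they guarantee that the right $\ch_{\cs}$-approximation is surjective on $\Hom_{\ct}(\Si^{j}\ch_{\cs},-)$ for the relevant $j$, preventing the process from stalling. Once termination is established, the remaining verifications --- that $U\in\cx_{\cs}$, that $V\in\cx_{\cs}^{\perp}$, and that the assembled triangle is genuinely the truncation triangle --- follow from standard octahedral manipulations together with the definitions of $\cx_{\cs}$ and $\cy_{\cs}$.
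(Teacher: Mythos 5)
First, note that the paper itself gives no proof of this proposition: it is quoted verbatim from [Jin2020, Proposition 3.2], so your argument has to stand on its own. Your overall strategy for $(2)\Rightarrow(1)$ --- use $(R2)$ to bound the support of $X$, use the approximations supplied by $(R1)$ to peel off copies of $\ch_{\cs}$, and assemble the truncation triangle by octahedra --- is indeed the shape of the arguments in Al-Nofayee, Koenig--Yang and Jin. But the key inductive step is wrong as written, and the part you defer is exactly where the content of the proof lies.

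Concretely, from the triangle $H_{0}\to Y\to C_{0}\to\Si H_{0}$ with $H_{0}\to Y$ a right $\ch_{\cs}$-approximation of $Y\in(\Si^{>0}\cs)^{\perp}$, the long exact sequence gives, for $H\in\ch_{\cs}$, only $\Hom_{\ct}(H,C_{0})\cong\ker(\Hom_{\ct}(H,\Si H_{0})\to\Hom_{\ct}(H,\Si Y))$, and $\Hom_{\ct}(\ch_{\cs},\Si\ch_{\cs})$ is in general nonzero --- these are precisely the first extension spaces that make $\ch_{\cs}$ a nontrivial extension closure of $\cs$. So ``the defining property of the approximation forces $\Hom_{\ct}(\ch_{\cs},C_{0})=0$'' is false. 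Similarly, $C_{0}\in(\Si^{>0}\cs)^{\perp}$ requires $\Hom_{\ct}(\Si S,C_{0})=\ker(\Hom_{\ct}(S,H_{0})\to\Hom_{\ct}(S,Y))=0$, i.e.\ injectivity (not surjectivity) of the approximation on $\Hom(S,-)$; this needs a \emph{minimal} approximation in the Krull--Schmidt category together with the condition $\dim\Hom(S,S')=\delta_{S,S'}$, none of which you invoke. Moreover, the bounds one can actually extract for $C_{0}$ from the triangle do not visibly improve on those for $Y$, so the asserted strict decrease of the ``top degree'' --- hence the termination of your induction --- is precisely the unproved point; you flag it yourself as ``the main obstacle'' and then assert it. There is also a directional error in your normalization: if you construct the truncation triangle for $\Si^{-N}X$ and shift it back by $\Si^{N}$, the third term leaves $\cx_{\cs}^{\perp}=(\Si^{\geqslant0}\cs)^{\perp}$, since co-aisles are closed under $\Si^{-1}$ but not under $\Si$; the induction must instead run on the width of the interval $[a,b]$ determined by $\Si^{a}X\in(\Si^{>0}\cs)^{\perp}$ and $\Si^{b}X\in{}^{\perp}(\Si^{<0}\cs)$. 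Finally, in $(1)\Rightarrow(2)$ your two successive truncations do not produce a morphism \emph{from} an object of $\cx_{\cs}\cap\cy_{\cs}$ \emph{to} $Y$ (the second truncation points the wrong way); the right $\ch_{\cs}$-approximation should be the single aisle truncation $\sigma_{\cx}Y\to Y$, combined with the identification of $\cx_{\cs}\cap(\Si^{>0}\cs)^{\perp}$ with $\ch_{\cs}$, which is itself a nontrivial fact. That direction is repairable; the gap in $(2)\Rightarrow(1)$ is the substantive one.
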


Let $ \mathcal{W} $ be the following subcategory of $ \mathcal{T} $
$$ \mathcal{W}\coloneqq(\Si^{\geqslant0}\mathcal{S})^{\perp}\cap{}^{\perp}(\Si^{\leqslant0}\mathcal{S}) .$$

\begin{Thm}\cite[Theorem 3.1]{Jin2020}\label{Thm:PSM reduction}
	Assume the assumptions $ (R1) $ and $ (R2) $ hold. Then the composition
	$$ \mathcal{W}\hookrightarrow\mathcal{T}\rightarrow\mathcal{U} $$ is a $ k $-linear equivalence $ \mathcal{W}\iso\mathcal{U} $.
	
\end{Thm}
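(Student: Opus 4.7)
By Proposition~\ref{Equivalence condition of SM}, the assumptions $(R1)$ and $(R2)$ produce two $t$-structures on $\mathcal{T}$, namely $(\mathcal{X}_{\mathcal{S}}, \mathcal{X}_{\mathcal{S}}^{\perp})$ and $({}^{\perp}\mathcal{Y}_{\mathcal{S}}, \mathcal{Y}_{\mathcal{S}})$, whose truncation functors I shall denote $\tau_{X}, \tau^{X}$ and $\tau_{Y}, \tau^{Y}$. Because $\mathcal{X}_{\mathcal{S}}$ (respectively $\mathcal{Y}_{\mathcal{S}}$) is the extension closure of $\Sigma^{\geqslant 0}\mathcal{S}$ (resp.\ $\Sigma^{\leqslant 0}\mathcal{S}$), one has $\mathcal{W} = \mathcal{X}_{\mathcal{S}}^{\perp}\cap{}^{\perp}\mathcal{Y}_{\mathcal{S}}$. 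The strategy is to build an explicit quasi-inverse $\bar\Phi\colon\mathcal{U}\to\mathcal{W}$ of $\iota\colon \mathcal{W}\hookrightarrow \mathcal{T}\xrightarrow{\pi}\mathcal{U}$, namely the functor induced by $\Phi := \tau_{Y}\circ\tau^{X}\colon \mathcal{T}\to\mathcal{T}$.

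The first step is to show $\Phi$ lands in $\mathcal{W}$. By construction $\Phi(T)\in{}^{\perp}\mathcal{Y}_{\mathcal{S}}$; applying $\Hom(\mathcal{X}_{\mathcal{S}},-)$ to the truncation triangle $\Phi(T)\to\tau^{X}(T)\to\tau^{Y}\tau^{X}(T)$ and using the vanishing $\Hom(\mathcal{X}_{\mathcal{S}},\Sigma^{-1}\mathcal{Y}_{\mathcal{S}})=0$ --- which is immediate from the pre-SMC condition on generators together with extension closure --- and $\Hom(\mathcal{X}_{\mathcal{S}},\tau^{X}(T))=0$ forces $\Phi(T)\in\mathcal{X}_{\mathcal{S}}^{\perp}$. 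The second step is to check $\Phi$ annihilates $\thick(\mathcal{S})$: for $T\in\thick(\mathcal{S})$, $\tau^{X}(T)$ remains in $\thick(\mathcal{S})$, and the induced $t$-structure $(\mathcal{X}_{\mathcal{S}},\mathcal{Y}_{\mathcal{S}})$ on $\thick(\mathcal{S})$ yields $\mathcal{X}_{\mathcal{S}}^{\perp}\cap\thick(\mathcal{S}) = \Sigma^{-1}\mathcal{Y}_{\mathcal{S}}\subseteq\mathcal{Y}_{\mathcal{S}}$, so $\tau^{X}(T)\in\mathcal{Y}_{\mathcal{S}}$ and therefore $\tau_{Y}\tau^{X}(T)=0$. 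By the universal property of the Verdier localisation, $\Phi$ descends uniquely to a functor $\bar\Phi\colon\mathcal{U}\to\mathcal{W}$.

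To conclude, on any $W\in\mathcal{W}$ both truncations act as the identity, giving $\bar\Phi\circ\iota=\mathrm{id}_{\mathcal{W}}$; and for any $T\in\mathcal{T}$ the zigzag $\Phi(T)\to\tau^{X}(T)\leftarrow T$ has both legs with cone in $\thick(\mathcal{S})$ (namely $\tau^{Y}\tau^{X}(T)\in\mathcal{Y}_{\mathcal{S}}$ and $\tau_{X}(T)\in\mathcal{X}_{\mathcal{S}}$), so it becomes an isomorphism $\Phi(T)\iso T$ in $\mathcal{U}$, giving $\iota\circ\bar\Phi\cong\mathrm{id}_{\mathcal{U}}$. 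The most delicate point in this programme is the compatibility between the two \emph{global} $t$-structures on $\mathcal{T}$ and the \emph{internal} $t$-structure $(\mathcal{X}_{\mathcal{S}},\mathcal{Y}_{\mathcal{S}})$ on $\thick(\mathcal{S})$, which is needed to identify $\tau^{X}(T)$ with an element of $\Sigma^{-1}\mathcal{Y}_{\mathcal{S}}$ when $T\in\thick(\mathcal{S})$; fortunately this compatibility is essentially built into the hypotheses via Proposition~\ref{Equivalence condition of SM}, so it is not a genuine obstacle.
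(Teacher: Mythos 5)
This statement is quoted from \cite[Theorem 3.1]{Jin2020} and the paper gives no proof of its own, so I am comparing your argument with the standard one (Jin's, which first proves that $\mathcal{W}\hookrightarrow\mathcal{T}\to\mathcal{U}$ is fully faithful by a direct analysis of roofs in the Verdier quotient, and then proves density by the double truncation you describe). Your preparatory steps are sound: $\mathcal{W}=\mathcal{X}_{\mathcal{S}}^{\perp}\cap{}^{\perp}\mathcal{Y}_{\mathcal{S}}$, the vanishing $\Hom(\mathcal{X}_{\mathcal{S}},\Sigma^{-1}\mathcal{Y}_{\mathcal{S}})=0$ does follow from the pre-SMC axiom plus extension closure, $\Phi(T)\in\mathcal{W}$, $\Phi(\mathrm{thick}(\mathcal{S}))=0$, and the natural zigzag $\Phi\Rightarrow\tau^{X}\Leftarrow\mathrm{id}$ correctly gives $\pi\circ\Phi\cong\pi$ and hence essential surjectivity.

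The gap is the sentence ``by the universal property of the Verdier localisation, $\Phi$ descends''. That universal property requires $\Phi$ to \emph{invert every morphism whose cone lies in} $\mathrm{thick}(\mathcal{S})$; for a triangle functor this follows from killing the objects, but $\Phi=\tau_{Y}\circ\tau^{X}$ is only additive (truncation functors are adjoints to inclusions, not triangle functors), and killing objects does not imply inverting the corresponding morphisms. Concretely, if $s\colon Z\to T$ has cone $N\in\mathcal{X}_{\mathcal{S}}$, then for $V\in\mathcal{X}_{\mathcal{S}}^{\perp}$ the map $s^{*}\colon\Hom(T,V)\to\Hom(Z,V)$ is injective but its cokernel sits inside $\Hom(N,\Sigma V)$, which is not controlled because $\mathcal{X}_{\mathcal{S}}^{\perp}$ is not closed under $\Sigma$; so already $\tau^{X}(s)$ need not be invertible, and applying $\tau_{Y}$ does not visibly repair this. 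Worse, since you have shown $\pi\Phi\cong\pi$, the assertion ``$\Phi(s)$ is invertible whenever $\pi(s)$ is'' is essentially equivalent to the functor $\iota\colon\mathcal{W}\to\mathcal{U}$ reflecting isomorphisms, which one would deduce from its full faithfulness --- precisely the part of the theorem your construction was designed to bypass. So the proposal establishes density but silently assumes full faithfulness; the missing ingredient is the computation $\Hom_{\mathcal{T}}(X,Y)\iso\Hom_{\mathcal{U}}(X,Y)$ for $X,Y\in\mathcal{W}$, which in \cite{Jin2020} is done by straightening a roof $X\leftarrow Z\to Y$ using the $t$-structure $(\mathcal{X}_{\mathcal{S}},\mathcal{Y}_{\mathcal{S}})$ on $\mathrm{thick}(\mathcal{S})$ together with the orthogonality of $\mathcal{W}$ against $\mathcal{X}_{\mathcal{S}}$ on one side and $\Sigma^{-1}\mathcal{Y}_{\mathcal{S}}$ on the other.
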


In our case, since the $ k $-algebra $ H^{0}(A) $ is a finite dimensional $ k $-algebra, we can suppose that $ \boldmath{1}_{A} $ has a decomposition
$$ \boldmath{1}_{A}=e_{1}+\cdots+e_{n} $$into primitive orthogonal idempotents $ e_{i} $ such that $ e=f(\boldmath{1}_{B})=e_{1}+\cdots+e_{k} $ for some $ 0\leqslant k\leqslant n $. Then $ \mathrm{mod}_{B}H^{0}(A) $ is generated by $ \mathcal{S}=\{S_{k+1},S_{k+2},\cdots S_{n}\} $, where $ S_{i} $ is the simple $ H^{0}(A) $ module associated to the idempotent $ e_{i} $.

Then it is easy to see that $ \mathcal{S} $ is a simple-minded collection of $ \mathrm{pvd}_{B}(A) $ and is a pre-simple-minded collection of $ \mathrm{per}A $. 

\begin{Cor}\label{SMC reduction}
	The composition $ \mathcal{W}\hookrightarrow \mathrm{per}A\rightarrow \mathcal{C}_{n}(A,B)=\mathrm{per}A/\mathrm{pvd}_{B}(A) $ is a $ k $-linear equivalence $ \mathcal{W}\iso\mathcal{C}_{n}(A,B) $, where $ \mathcal{W} $ is the following subcategory of $ \mathrm{per}A $
	$$ \mathcal{W}=(\Si^{\geqslant0}\mathcal{S})^{\perp}\cap\,   
	^{\perp}(\Si^{\leqslant0}\mathcal{S}) .$$
	In particular, $ \cc_{n}(A,B) $ is idempotent complete.
\end{Cor}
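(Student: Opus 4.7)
The plan is to apply Jin's SMC reduction theorem (Theorem~\ref{Thm:PSM reduction}) to $\mathcal{T} = \per A$ with the pre-SMC $\mathcal{S} = \{S_{k+1}, \ldots, S_{n}\}$. The pre-SMC axioms hold trivially: each $S_{j}$ lies in the heart $\mathrm{mod}\,H^{0}(A)$ of the canonical $t$-structure on $\per A$, and the $S_{j}$ are pairwise non-isomorphic simples with one-dimensional endomorphism rings. By Proposition~\ref{Relative t-structure on D_{fd,B}}, $\mathrm{thick}_{\per A}(\mathcal{S}) = \pvd_{B}(A)$, since $\pvd_{B}(A)$ is triangulated-generated by its heart $\mathrm{mod}_{B}H^{0}(A)$, which is in turn extension-generated by $\mathcal{S}$. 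Hence $\mathcal{H}_{\mathcal{S}} = \mathrm{mod}_{B}H^{0}(A)$, and the task reduces to verifying the two conditions $(R1)$ and $(R2)$.

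I would first check condition $(R2)$. Let $X \in \per A$ and $M \in \mathcal{H}_{\mathcal{S}}$. For the vanishing $\Hom(X, \Si^{i} M) = 0$ when $i \ll 0$: since $A$ is connective and $X$ is perfect, $X$ is a direct summand of a bounded complex of finite semi-free $A$-modules, and a spectral sequence computing $\RHom_{A}(X, M)$ yields the vanishing because $M$ is concentrated in degree zero. For the symmetric vanishing $\Hom(M, \Si^{i} X) = 0$ when $i \ll 0$: invoke the relative Calabi--Yau duality of Corollary~\ref{Relative CY duality}, with $n$ replaced by $n+1$ (coming from the left $(n+1)$-Calabi--Yau structure assumed on $f$),
\[
D\Hom_{\cd(A)}(M, \Si^{i} X) \cong \Hom_{\cd(A)}(X, \Si^{n+1-i} M),
\]
which vanishes for $i \ll 0$ by the previous step applied with $n+1-i \gg 0$.

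The main obstacle is condition $(R1)$: that $\mathcal{H}_{\mathcal{S}}$ is contravariantly finite in $(\Si^{>0}\mathcal{S})^{\perp}$ and covariantly finite in ${}^{\perp}(\Si^{<0}\mathcal{S})$. My approach is to construct the required approximations via the relative $t$-structure on $\per A$ (Proposition~\ref{Relative t-structure on perA}), whose restriction to $\pvd_{B}(A)$ has heart precisely $\mathcal{H}_{\mathcal{S}}$ by Proposition~\ref{Relative t-structure on D_{fd,B}}. Given $X \in (\Si^{>0}\mathcal{S})^{\perp}$, the vanishing of $\Hom(\Si^{i}\mathcal{S}, X)$ for all $i > 0$ should constrain the relative cohomology of $X$ in negative degrees so that the relative truncation triangle $\tau^{rel}_{\leqslant 0}X \to X \to \tau^{rel}_{> 0}X$ produces a canonical morphism from an object of $\mathcal{H}_{\mathcal{S}}$ to $X$ serving as the desired right $\mathcal{H}_{\mathcal{S}}$-approximation. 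Covariant finiteness in ${}^{\perp}(\Si^{<0}\mathcal{S})$ is dual. The technical delicacy here is that the aisle $\mathcal{X}_{\mathcal{S}}$ is generically strictly smaller than the aisle of the relative $t$-structure on $\per A$, so one must use the Hom-orthogonality against $\Si^{>0}\mathcal{S}$ to push the truncation into the correct subcategory; this is the main verification to complete.

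Once $(R1)$ and $(R2)$ are established, Theorem~\ref{Thm:PSM reduction} delivers the $k$-linear equivalence $\mathcal{W} \iso \per A/\pvd_{B}(A) = \cc_{n}(A, B)$. The idempotent completeness of $\cc_{n}(A,B)$ is then immediate: $\mathcal{W}$ is defined inside $\per A$ by Hom-vanishing conditions, which are stable under passage to direct summands, so $\mathcal{W}$ is closed under summands in the idempotent complete category $\per A$, hence is itself idempotent complete. Since $\cc_{n}(A,B) \simeq \mathcal{W}$, this completes the argument.
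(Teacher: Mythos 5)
Your overall strategy is the same as the paper's: apply Theorem~\ref{Thm:PSM reduction} to the pre-SMC $\mathcal{S}$, identify $\thick(\mathcal{S})=\pvd_{B}(A)$ and $\mathcal{H}_{\mathcal{S}}=\mathrm{mod}_{B}H^{0}(A)$ via Proposition~\ref{Relative t-structure on D_{fd,B}}, and verify $(R1)$ and $(R2)$. Your treatment of $(R2)$ matches the paper's (boundedness of $\RHom_{A}(X,M)$ for $X$ perfect plus the relative Calabi--Yau duality of Corollary~\ref{Relative CY duality}), modulo the small point that the duality converts the vanishing of $\Hom(\mathcal{H}_{\mathcal{S}},\Si^{i}X)$ for $i\ll0$ into the vanishing of $\Hom(X,\Si^{j}\mathcal{H}_{\mathcal{S}})$ for $j\gg0$, not $j\ll0$; both hold for the same elementary reason, but you should state the correct direction. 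The idempotent-completeness argument at the end is fine.

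The genuine gap is $(R1)$, which you yourself flag as ``the main verification to complete.'' Your proposed mechanism --- extracting approximations from the relative truncation triangle $\tau^{rel}_{\leqslant0}X\to X\to\tau^{rel}_{>0}X$ --- does not work as stated: for $X\in\per A$ neither $\tau^{rel}_{\leqslant0}X$ nor $\Si^{-1}\tau^{rel}_{>0}X$ lies in the heart $\mathcal{H}_{\mathcal{S}}$ (the first is not even in $\pvd_{B}(A)$), so no canonical map to or from an object of $\mathcal{H}_{\mathcal{S}}$ falls out of that triangle. The paper instead proves the stronger statement (Lemma~\ref{Lem:S is functorially ﬁnite}) that $\mathrm{mod}_{B}H^{0}(A)$ is functorially finite in all of $\per A$, by a two-step construction: for left approximations it uses the \emph{co-}$t$-structure truncation $\sigma_{\leqslant0}P$ followed by $H^{0}$ (this is essential, since $\Hom(\sigma_{>0}P,M)=0$ for any module $M$, whereas the $t$-structure truncation does not have this property), combined with the functorial finiteness of $\mathrm{mod}_{B}H^{0}(A)$ inside $\mathrm{mod}H^{0}(A)$ from \cite[Lemma 3.8]{Jin2020}; for right approximations it uses the relative Calabi--Yau duality to replace $P$ by a bounded truncation $\tau_{\geqslant-n-1}P$, takes a fibrant replacement $I_{P}$ with finite-dimensional total homology, and extracts an explicit finite-dimensional submodule $L\subseteq I^{0}$ through which all maps from $\mathrm{mod}_{B}H^{0}(A)$ factor. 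To complete your proof you would need to supply an argument of this kind; the relative $t$-structure alone does not produce it.
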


\begin{proof}
	It suffices to check the conditions (R1) and (R2). For any $ X\in\per A $, it is easy to see that $ \Hom_{\per A}(X,\Si^{i}\ch_{S}) $ vanishes for $ i\ll0 $. By the relative Calabi--Yau duality (Corollary~\ref{Relative CY duality}), the space $ \mathrm{Hom}_{\per A}(\mathcal{H}_{\cs},\Si^{i}X) $ also vanishes for $ i\ll0 $. Therefore $ \ch_{S} $ satisfies the condition (R2). By Lemma~\ref{Lem:S is functorially ﬁnite} below, the category $ \mathrm{mod}_{B}H^{0}(A) $ is functorially finite in $ \per A $. So $ \ch_{S} $ satisfies the condition (R1). Then the claim follows from Theorem~\ref{Thm:PSM reduction}.
\end{proof}

\begin{Lem}\label{Lem:S is functorially ﬁnite}
	Let $ B\ra A $ be morphism between dg $ k $-algebras which satisfies the assumptions~\ref{Relative assumption}. Then $ \mathrm{mod}_{B}H^{0}(A) $ is functorially finite in $ \per A $.
\end{Lem}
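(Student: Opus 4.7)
The strategy is to produce, for every $X\in\per A$, both a left and a right $\mathrm{mod}_{B}H^{0}(A)$-approximation by using the recollement
\[
\cd(\overline{A})\xrightarrow{p_{*}}\cd(A)\xrightarrow{i_{*}}\cd(eAe)
\]
together with the canonical $t$-structure on $\cd(\overline{A})$, whose heart can be identified with $\mathrm{mod}(B')=\mathrm{mod}_{B}H^{0}(A)$ where $B'=H^{0}(\overline{A})$. The identification $\mathrm{mod}_{B}H^{0}(A)\simeq\mathrm{mod}(B')$ means that all candidate approximating objects live as $p_{*}M$ with $M\in\mathrm{mod}(B')$, so the constructions can be organised on the $\cd(\overline{A})$-side via the adjunctions $p^{*}\dashv p_{*}\dashv p^{!}$.

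For \emph{covariant finiteness}, given $X\in\per A$, one notes that $p^{*}X\in\per(\overline{A})$ has bounded cohomology with $H^{0}(p^{*}X)$ finite-dimensional (since $H^{0}(\overline{A})=B'$ is finite-dimensional by Proposition~\ref{Prop: cofiber is fd}, so $\per(\overline{A})$ is Hom-finite). I would define $N_{X}^{+}:=H^{0}(p^{*}X)\in\mathrm{mod}(B')$ and exhibit a canonical morphism $X\to p_{*}N_{X}^{+}$ in $\per A$ as the image under $p_{*}$ of the truncation composite $p^{*}X\to\tau^{\geqslant 0}(p^{*}X)\twoheadrightarrow N_{X}^{+}$, precomposed with the unit $X\to p_{*}p^{*}X$. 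Using the adjunction $p^{*}\dashv p_{*}$ one reduces the verification that this is a left reflection to the claim that, for any $M\in\mathrm{mod}(B')$, the induced map $\Hom_{\cd(\overline{A})}(N_{X}^{+},M)\to\Hom_{\cd(\overline{A})}(p^{*}X,M)$ is a bijection; this is obtained from the triangles $\tau^{\leqslant-1}(p^{*}X)\to p^{*}X\to\tau^{\geqslant 0}(p^{*}X)$ and $H^{0}(p^{*}X)\to\tau^{\geqslant 0}(p^{*}X)\to\tau^{\geqslant 1}(p^{*}X)$ together with the $t$-structure vanishing $\Hom(\cd^{\leqslant -1},\cd^{\geqslant 0})=0$.

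For \emph{contravariant finiteness} the argument is dual, based on $p_{*}\dashv p^{!}$. The key preliminary is to check that $\overline{A}\in\per A$, which follows from the recollement triangle $i_{!}i^{!}A\to A\to p_{*}p^{*}A$ together with the identification of $i_{!}i^{!}A$ with an object of $\langle eA\rangle_{\per A}\subset\per A$; combined with the smoothness of $A$ this gives $p^{!}X=\RHom_{A}(\overline{A},X)\in\pvd(\overline{A})$, so $N_{X}^{-}:=H^{0}(p^{!}X)$ lies in $\mathrm{mod}(B')$. The morphism $p_{*}N_{X}^{-}\to X$ corresponding to $\id_{N_{X}^{-}}$ under $\Hom_{\per A}(p_{*}N_{X}^{-},X)=\Hom_{\cd(\overline{A})}(N_{X}^{-},p^{!}X)$ is then the desired right approximation, and its universality is again checked by truncation-triangle arguments on $\cd(\overline{A})$.

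The subtle point — and the step I expect to require the most care — is the purely $t$-structural claim that the Hom-functor $\Hom_{\cd(\overline{A})}(-,p^{!}X)$, restricted to the heart $\mathrm{mod}(B')$, is represented by $H^{0}(p^{!}X)$ (and dually for $p^{*}X$). The naive statement $\Hom_{\cd(\overline{A})}(M,C)=\Hom_{B'}(M,H^{0}C)$ is generally false because $\overline{A}$ need not be formal, so the higher $A_{\infty}$-operations on $B'$ produce nontrivial $\Hom_{\cd(\overline{A})}(M,N[i])$ for $M,N\in\ch$. What saves the day is that in the vanishing calculations above only the aisles of the $t$-structure, not the heart's internal $\Ext$-groups, intervene: one uses $\Hom(\cd^{\leqslant n},\cd^{\geqslant n+1})=0$ from the defining axioms, while the potentially troublesome terms $\Hom(\tau^{\geqslant 1}p^{*}X,M)$ and $\Hom(\tau^{\geqslant 1}p^{*}X,M[1])$ are forced to vanish because $p^{*}X$ has bounded cohomology and $M\in\ch$. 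Organising these vanishings so that the bijection $\Hom_{\cd(\overline{A})}(H^{0}C,M)\cong\Hom_{\cd(\overline{A})}(C,M)$ actually holds for the specific $C=p^{*}X$ (and dually for $p^{!}X$) is the technical heart of the argument.
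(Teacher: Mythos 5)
Your reduction of both approximations to the single claim that $H^{0}(p^{*}X)$ (resp.\ $H^{0}(p^{!}X)$) corepresents (resp.\ represents) the restriction of $\Hom$ to the heart is where the argument fails, and the failure in the covariant half is not a matter of ``organising the vanishings''. For $X\in\per A$ the object $p^{*}X$ has in general nonzero cohomology in positive degrees (already $p^{*}(\Si^{-1}A)=\Si^{-1}\overline{A}$ does), so the canonical truncation morphism goes $H^{0}(p^{*}X)\to\tau_{>-1}(p^{*}X)$ and there is no canonical surjection the other way; worse, the vanishing you invoke at the end is false: for $N,M$ in the heart one has $\Hom_{\cd(\overline{A})}(\Si^{-1}N,M)\cong\Ext^{1}_{H^{0}(\overline{A})}(N,M)$, which is generically nonzero, so $\Hom(\tau_{>0}(p^{*}X),M)$ is not killed by boundedness of cohomology — the $t$-structure axiom only gives $\Hom(\cd^{\leqslant 0},\cd^{\geqslant 1})=0$, not the reverse direction. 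Taking $X=\Si^{-1}Y$ with $p^{*}Y$ a non-projective module in the heart shows $\Hom_{\per A}(X,M)\cong\Ext^{1}(p^{*}Y,M)\neq 0$ while $H^{0}(p^{*}X)=0$, so $H^{0}(p^{*}X)$ does not corepresent $\Hom_{\per A}(X,-)$ on $\mathrm{mod}_{B}H^{0}(A)$. The missing ingredient is the co-$t$-structure of Subsection~\ref{subsection:The standard co-t-structure}: one first forms the weight truncation triangle $\sigma_{>0}X\to X\to\sigma_{\leqslant0}X$ with $\sigma_{>0}X\in\bigcup_{k}\Si^{-k}\add A*\cdots*\Si^{-1}\add A$; since $\Hom(\Si^{-k}A,M)=H^{k}(M)=0$ for $k\geqslant1$ and $M$ in the heart, every morphism from $X$ to the heart factors through $\sigma_{\leqslant0}X$, which \emph{does} lie in $\cd(A)^{\leqslant0}$, and only then is $H^{0}(\sigma_{\leqslant0}X)$ (not $H^{0}(X)$ or $H^{0}(p^{*}X)$) the corepresenting module. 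This is exactly how the paper obtains covariant finiteness of $\mathrm{mod}H^{0}(A)$, after which one composes with a left $\mathrm{mod}_{B}H^{0}(A)$-approximation inside $\mathrm{mod}H^{0}(A)$, using that this Serre subcategory is functorially finite there.

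The contravariant half also rests on the unjustified assertion $\overline{A}\in\per A$: by the recollement triangle this amounts to $Ae\lten_{eAe}eA\in\per A$, i.e.\ to $Ae\in\per(eAe)$, which does not follow from Assumptions~\ref{Relative assumption} (density of $p^{*}$ on perfect objects gives $\overline{A}\cong p^{*}W$ for some $W\in\per A$, but not $p_{*}\overline{A}\in\per A$), so you cannot conclude $p^{!}X\in\pvd(\overline{A})$ or even that $H^{0}(p^{!}X)$ is finite-dimensional this way. The paper avoids $p^{!}$ altogether: it uses the relative Calabi--Yau duality of Corollary~\ref{Relative CY duality} to show $\Hom(N,P)\cong\Hom(N,\tau_{\geqslant-n-1}P)$ with $\tau_{\geqslant-n-1}P$ perfectly valued, realises the image of any map from a heart object inside an explicit finite-dimensional submodule $L$ of a fibrant replacement, and then composes with a right $\mathrm{mod}_{B}H^{0}(A)$-approximation of $L$ in $\mathrm{mod}H^{0}(A)$. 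You would need a genuine substitute for these steps before the contravariant half stands.
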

\begin{proof}
	Let $ P $ be an object in $ \per A $. Since $ A $ is connective, there is a canonical co-t-structure $ ((\mathrm{per}A)_{\geqslant0},(\mathrm{per}A)_{\leqslant0}) $ on $ \mathrm{per}A $ (\cite[Proposition 2.8]{IY2018}), where $$ (\mathrm{per}A)_{\geqslant0}\coloneqq\bigcup_{n\geqslant0}\Si^{-n}\mathrm{add}A*\cdots*\Si^{-1}\mathrm{add}A*\mathrm{add}A\quad \text{and}\quad(\mathrm{per}A)_{\leqslant0}\coloneqq\bigcup_{n\geqslant0}\mathrm{add}A*\Si\mathrm{add}A*\cdots*\Si^{n}\mathrm{add}A.$$
	Then we have a canonical triangle in $ \per A $
	$$ \sigma_{>0}P\ra P\xrightarrow{t}\sigma_{\leqslant0}P\ra\Si\sigma_{>0}P $$ such that $ \sigma_{>0}P\in(\per A)_{>0} $ and $ \sigma_{\leqslant0}P\in(\mathrm{per}A)_{\leqslant0} $.	Consider the object $ X=\tau_{\geqslant0}(\sigma_{\leqslant0}P)=H^{0}(\sigma_{\leqslant0}P) $. It is easy to see that $ \tau_{\geqslant0}(\sigma_{\leqslant0}P) $ is in $ \mathrm{mod}H^{0}(A) $ and we have a canonical morphism $ f\colon P\xrightarrow{t}\sigma_{\leqslant0}P\ra X $.
	
	Let $ M $ be an object in $ \mathrm{mod}H^{0}(A) $ and $ g\colon P\ra M $ a morphism. Since the space $ \Hom_{\per A}(\sigma_{>0}P,M) $ vanishes, we have $ \Hom_{\per A}(P,M)\simeq\Hom_{\per A}(\sigma_{\leqslant0}P,M) $. Then there exists a morphism $ h\colon X\ra M $ such that the following diagram commutes
	\[
	\begin{tikzcd}
		P\arrow[r,"t"]\arrow[d,swap,"g"]&\sigma_{\leqslant0}P\arrow[r]\arrow[dl]&X\arrow[dll,"h"]\\
		M.&
	\end{tikzcd}
	\]
	This shows that $ \mathrm{mod}H^{0}(A) $ is covariantly finite in $ \per A $. By~\cite[Lemma 3.8]{Jin2020}, the subcategory $ \mathrm{mod}_{B}H^{0}(A) $ is functorially finite in $ \mathrm{mod}H^{0}(A) $. Thus, the subcategory $ \mathrm{mod}_{B}H^{0}(A) $ is also covariantly finite in $ \per A $. It remains to show $ \mathrm{mod}_{B}H^{0}(A) $ is contravariantly finite in $ \per A $. 
	
	Let $ N $ be an object in $ \mathrm{mod}_{B}H^{0}(A) $. Let $ g'\colon N\ra P $ be a morphism of dg $ A $-modules. By the relative Calabi--Yau duality (see Corollary~\ref{Relative CY duality}), the spaces 
	$$ \Hom_{\per A}(N,\tau_{\leqslant-n-2}P)\simeq D\Hom_{\per A}(\tau_{\leqslant-n-2}P,\Si^{n+1}N) $$ and $$ \Hom_{\per A}(N,\Si\tau_{\leqslant-n-2}P)\simeq D\Hom_{\per A}(\tau_{\leqslant-n-2}P,\Si^{n}N) $$ vanish. Thus, we have $ \Hom_{\per A}(N,P)\simeq\Hom_{\per A}(N,\tau_{\geqslant-n-1}P) $. We denote by $ g'' $ the composition $ N\xrightarrow{g'}P\ra\tau_{\geqslant-n-1}P $. Let $ I_{P} $ be a fibrant replacement of $ \tau_{\geqslant-n-1}P $. Then we have $ \Hom_{\per A}(N,P)\simeq\Hom_{\per A}(N,\tau_{\geqslant-n-1}P)\simeq\Hom_{\ch(A)}(N,I_{P}) $.
	
	Since $ \tau_{\geqslant-n-1}P $ has finite dimensional total homology, the dg module $ I_{P} $ also has finite dimensional total homology. We write $ I_{P} $ as a $ k $-complex and consider the following diagram
	\[
	\begin{tikzcd}
		\cdots\arrow[r]&0\arrow[r]\arrow[d]&N\arrow[r]\arrow[d,"g^{0}"]&0\arrow[r]\arrow[d]&\cdots\\
		\cdots\arrow[r]&I^{-1}\arrow[r,"d^{-1}"]&I^{0}\arrow[r,"d^{0}"]&I^{1}\arrow[r]&\cdots.
	\end{tikzcd}
	\]
	We put $ L=\{x\in I^{0}\,|\,d^{0}(x)=0,xa=0,\forall a\in A^{p},p<0\} $. Then $ L $ is in $ \mathrm{mod}H^{0}A $ and $ g''(N) $ is contained in $ L $. Thus, we have the following commutative diagram 
	\[
	\begin{tikzcd}
		&L\arrow[hookrightarrow]{d}{i}\\
		N\arrow[r,swap,"g''"]\arrow[ur,"g''"]&I_{P}.
	\end{tikzcd}
	\]
	
	Since the subcategory $ \mathrm{mod}_{B}H^{0}(A) $ is functorially finite in $ \mathrm{mod}H^{0}(A) $, there exists an object $ Y $ in $ \mathrm{mod}_{B}H^{0}(A) $ with a right $ \mathrm{mod}_{B}(H^{0}A) $-approximation $ j\colon Y\ra L $. Then there exists a morphism $ k\colon N\ra Y $ such that the following diagram commutes
	\[
	\begin{tikzcd}
		&Y\arrow[d,"j"]\\
		&L\arrow[hookrightarrow]{d}{i}\\
		N\arrow[r,swap,"g''"]\arrow[ur]\arrow[uur,"k"]&I_{P}.
	\end{tikzcd}
	\]
	This shows that $ \mathrm{mod}_{B}H^{0}(A) $ is contravariantly finite in $ \per A $.
	
\end{proof}

\begin{Cor}\label{Hom finite of RCC}
	The relative cluster category $ \mathcal{C}_{n}(A,B) $ is Hom-finite.
\end{Cor}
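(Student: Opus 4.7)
The plan is to deduce the Hom-finiteness of $\mathcal{C}_{n}(A,B)$ directly from the SMC reduction established in Corollary~\ref{SMC reduction}, together with the Hom-finiteness of $\per A$ recalled just before Proposition~\ref{Relative t-structure on perA}. That corollary furnishes a $k$-linear equivalence
$$ \mathcal{W} \iso \mathcal{C}_{n}(A,B), $$
where $\mathcal{W} = (\Si^{\geqslant 0}\mathcal{S})^{\perp} \cap {}^{\perp}(\Si^{\leqslant 0}\mathcal{S})$ is a full subcategory of $\per A$ and the equivalence is induced by the quotient functor $\pi^{rel}$.

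First I would invoke the fact that $\per A$ is Hom-finite, which holds because each cohomology space $H^{p}(A)$ is finite-dimensional under Assumptions~\ref{Relative assumption}. Then, using that the composition $\mathcal{W} \hookrightarrow \per A \to \mathcal{C}_{n}(A,B)$ is fully faithful, for any two objects $W_{1}, W_{2} \in \mathcal{W}$ the identification
$$ \Hom_{\mathcal{C}_{n}(A,B)}(\pi^{rel}W_{1}, \pi^{rel}W_{2}) \cong \Hom_{\per A}(W_{1}, W_{2}) $$
exhibits the left-hand side as a finite-dimensional $k$-vector space. Essential surjectivity of the equivalence then guarantees that every object of $\mathcal{C}_{n}(A,B)$ is isomorphic to some $\pi^{rel}W$ with $W \in \mathcal{W}$, so that all morphism spaces in $\mathcal{C}_{n}(A,B)$ are finite-dimensional.

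There is essentially no obstacle: all the real work has already been done in establishing the SMC reduction (in particular, the verification of conditions $(R1)$ and $(R2)$ in Lemma~\ref{Lem:S is functorially ﬁnite}) and in the finite-dimensionality of $H^{p}(A)$. As a sanity check, one could alternatively try to prove Hom-finiteness more concretely via the colimit description of Proposition~\ref{Home space in RCA}, showing that the direct system $\Hom_{\cd(A)}(\tau_{\leqslant k}^{rel}X, \tau_{\leqslant k}^{rel}Y)$ stabilizes for $k \ll 0$ by using the $t$-structure vanishings $\Hom(\tau_{>k}^{rel}X, \tau_{\leqslant k}^{rel}Y) = 0$ and bounding the remaining terms via Hom-finiteness of $\per A$; but this would amount to reconstructing the content of the SMC reduction by hand.
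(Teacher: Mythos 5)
Your argument is exactly the one the paper intends: Corollary~\ref{SMC reduction} gives a $k$-linear equivalence $\mathcal{W}\iso\mathcal{C}_{n}(A,B)$ with $\mathcal{W}$ a full subcategory of the Hom-finite category $\per A$, so $\mathcal{C}_{n}(A,B)$ is Hom-finite. The paper leaves this as an immediate consequence, and your write-up fills in precisely those steps.
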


\section{Silting reduction and relative fundamental domain}\label{section SR and RFD}
\subsection{Silting reduction}
Let $ \mathcal{T} $ be a triangulated category. A full subcategory $ \mathcal{P} $ of $ \mathcal{T} $ is \emph{presilting} if $ \Hom_{\mathcal{T}}(P,\Si^{i}P)=0 $ for any $ i>0 $. It is \emph{silting} if in addition $ \mathcal{T}=\thick\mathcal{P} $. An object $ P $ of $ \mathcal{T} $ is \emph{presilting} if $ \add P $ is a presilting subcategory and \emph{silting} if $ \add P $ is a silting subcategory.

Let $ \mathcal{P} $ be a presilting subcategory of $ \mathcal{T} $. Let $ \mathcal{S} $ be the thick subcategory $\thick\mathcal{P} $ of $ \ct $ and $ \mathcal{U} $ the quotient category $ \mathcal{T}/\mathcal{S} $. We call $ \mathcal{U} $ the \emph{silting reduction} of $ \mathcal{T} $ with respect to $ \mathcal{P} $ (see \cite{AI2012}). For an integer $ l $, there is a bounded co-t-structure $ (\mathcal{S}_{\geqslant l},\mathcal{S}_{\leqslant l}) $ on $ \mathcal{S} $ (see \cite[Proposition 2.8]{IY2018}), where
$$ \mathcal{S}_{\geqslant l}=\mathcal{S}_{>l-1}\coloneqq\bigcup_{i\geqslant0}\Si^{-l-i}\mathcal{P}*\cdots*\Si^{-l-1}\mathcal{P}*\Si^{-l}\mathcal{P},$$
$$ \mathcal{S}_{\leqslant l}=\mathcal{S}_{<l+1}\coloneqq\bigcup_{i\geqslant0}\Si^{-l}\mathcal{P}*\Si^{-l+1}\mathcal{P}\cdots*\Si^{-l+i}\mathcal{P}.$$

Let $ \cz $ be the following subcategory of $ \ct $
$$ \mathcal{Z}={}^{\perp_{\mathcal{T}}}(\mathcal{S}_{<0})\cap(\mathcal{S}_{>0})^{\perp_{\mathcal{T}}}={}^{\perp_{\mathcal{T}}}(\Si^{>0}\mathcal{P})\cap(\Si^{<0}\mathcal{P})^{\perp_{\mathcal{T}}} .$$

\begin{Ex}
	Let $ \ce $ be a Frobenius category. Let $ \ct=\cd^{b}(\ce) $ be its bounded derived category and $ \cp $ the projective-injective subcategory of $ \ce $. Then $ \cz $ is equal to $ \ce\subseteq\cd^{b}(\ce) $.
\end{Ex}

We consider the following mild technical conditions:
\begin{itemize}
	\item[(P1)] $ \mathcal{P} $ is covariantly finite in $ {}^{\perp_{\mathcal{T}}}(\Si^{>0}\mathcal{P}) $ and contravariantly finite in $ (\Si^{<0}\mathcal{P})^{\perp_{\mathcal{T}}} $.
	\item[(P2)] For any $ X\in\mathcal{T} $, we have $ \Hom_{\mathcal{T}}(X,\Si^{l}\mathcal{P})=0=\Hom_{\mathcal{T}}(\mathcal{P},\Si^{l}X) $ for $ l\gg0 $.
	
\end{itemize}

\begin{Prop}\cite[Proposition 3.2]{IY2018}\label{Presilting to co-t-structure}
	The following conditions are equivalent.
	\begin{itemize}
		\item[(a)] The conditions (P1) and (P2) are satisfied.
		\item[(b)] The two pairs $( {}^{\perp_{\mathcal{T}}}\mathcal{S}_{<0} , \mathcal{S}_{\leqslant0} ) $ and $ (\mathcal{S}_{\geqslant0},\mathcal{S}_{>0}^{\perp_{\mathcal{T}}} ) $ are co-t-structures on $ \mathcal{T} $.
	\end{itemize}
	In this case, the co-hearts of these co-t-structures are $ \mathcal{P} $.
\end{Prop}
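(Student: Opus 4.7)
The plan is to prove the two implications separately, with the bulk of the work falling on (a) $\Rightarrow$ (b); the reverse implication and the identification of the co-heart are more formal consequences of the co-t-structure axioms once the approximation structure is available.

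For (a) $\Rightarrow$ (b), I would treat the pair $(\cs_{\geqslant 0}, \cs_{>0}^{\perp_{\ct}})$, the other being handled dually. The closure properties $\Si^{-1}\cs_{\geqslant 0} \subseteq \cs_{\geqslant 0}$ and $\Si\,\cs_{>0}^{\perp_{\ct}} \subseteq \cs_{>0}^{\perp_{\ct}}$ are immediate, and the Hom-vanishing axiom reduces, after unraveling, to the presilting condition $\Hom_{\ct}(\cp, \Si^{>0}\cp) = 0$. The substantive step is the decomposition axiom: given $X \in \ct$, produce a triangle $U \to X \to V \to \Si U$ with $U \in \cs_{\geqslant 0}$ and $V \in \cs_{>0}^{\perp_{\ct}}$. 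By (P2) pick $N$ with $\Hom_{\ct}(\cp, \Si^{l} X) = 0$ for all $l > N$, so $X \in (\Si^{<-N}\cp)^{\perp_{\ct}}$. Using (P1), take a right $\Si^{-N}\cp$-approximation $P_0 \to X$ and extend to a triangle $P_0 \to X \to X_1 \to \Si P_0$; then show that $X_1$ still lies in the Hom-orthogonal which makes (P1) applicable at the next level, take a right $\Si^{-N+1}\cp$-approximation, and iterate $N+1$ times until arriving at a cone lying in $\cs_{>0}^{\perp_{\ct}}$. Splicing these triangles via the octahedral axiom yields the desired decomposition, with $U$ an iterated extension of objects of $\Si^{-N}\cp, \ldots, \Si^{0}\cp$ and $V$ the terminal cone.

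For (b) $\Rightarrow$ (a), condition (P2) is immediate: decomposing any $X$ against either co-t-structure presents it as a finite iterated extension of shifts of objects of $\cp$, so $\Hom_{\ct}(\cp, \Si^{l} X)$ and $\Hom_{\ct}(X, \Si^{l}\cp)$ vanish for $|l|$ large by the presilting hypothesis. For (P1), given $Y \in {}^{\perp_{\ct}}(\Si^{>0}\cp)$, apply the triangle from $(\cs_{\geqslant 0}, \cs_{>0}^{\perp_{\ct}})$: the map $U \to Y$ with $U \in \cs_{\geqslant 0}$ can be truncated along the weight filtration of $U$ (using the presilting condition to push all higher shifts into the cone), yielding a morphism $P \to Y$ with $P \in \cp$ that is checked to be a left $\cp$-approximation; the contravariant finiteness is dual. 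For the co-heart identification, one computes that an object in the relevant intersection (e.g.\ $\cs_{\geqslant 0} \cap \cs_{\leqslant 0}$) is simultaneously an extension of non-positive shifts of $\cp$ and of non-negative shifts of $\cp$, whence the presilting condition forces all connecting maps to vanish and the object to lie in $\cp$.

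The main obstacle will be in (a) $\Rightarrow$ (b), specifically the inductive bookkeeping: after taking the $i$-th right approximation, one must verify that the new cone $X_{i+1}$ still lies in the Hom-orthogonal required for the next invocation of (P1). This amounts to a Hom-vanishing computation combining the long exact sequence of the approximation triangle with the presilting hypothesis, and it is the step where the two hypotheses (P1) and (P2) truly interact. Once the invariant is maintained through $N+1$ stages, the octahedral splicing that assembles $U \in \cs_{\geqslant 0}$ and $V \in \cs_{>0}^{\perp_{\ct}}$ is formal, and running the argument and its dual in parallel produces both co-t-structures and the identification of the co-hearts uniformly.
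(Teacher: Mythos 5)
First, a remark on the comparison itself: the paper gives no proof of this proposition --- it is quoted verbatim from Iyama--Yang \cite{IY2018} --- so your proposal can only be measured against the standard argument there, which it essentially reconstructs. The core of your plan, the inductive construction of the weight decomposition in (a)$\Rightarrow$(b), is correct, and the invariant you isolate is exactly the right one: if $P_0\to X$ is a right $\Si^{-N}\cp$-approximation with cone $X_1$, the long exact sequence
\[
\Hom_{\ct}(\Si^{-j}\cp,P_0)\to\Hom_{\ct}(\Si^{-j}\cp,X)\to\Hom_{\ct}(\Si^{-j}\cp,X_1)\to\Hom_{\ct}(\Si^{-j}\cp,\Si P_0)
\]
shows $\Hom_{\ct}(\Si^{-j}\cp,X_1)=0$ for $j\geqslant N$: for $j>N$ the outer terms vanish by the choice of $N$ and by presilting, and for $j=N$ the first map is surjective (approximation) while the last term vanishes (presilting). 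Splicing then puts $U$ in $\Si^{-N}\cp*\cdots*\Si^{-1}\cp\subseteq\cs_{>0}$ and the terminal cone in $\cs_{>0}^{\perp_{\ct}}$ (your count of $N+1$ stages is off by one, harmlessly). The orthogonality axiom for these particular pairs is in fact tautological, not a consequence of presilting.

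Two points need repair. In (b)$\Rightarrow$(a) you take $Y\in{}^{\perp_{\ct}}(\Si^{>0}\cp)$, start from the leg $U\to Y$ of the decomposition for $(\cs_{\geqslant0},\cs_{>0}^{\perp_{\ct}})$, and end with ``a morphism $P\to Y$ \ldots\ checked to be a left $\cp$-approximation''; a left approximation of $Y$ goes $Y\to P$, and the leg you should use is the other one, taken with respect to the \emph{first} co-t-structure $({}^{\perp_{\ct}}\cs_{<0},\cs_{\leqslant0})$ whose aisle is ${}^{\perp_{\ct}}(\Si^{>0}\cp)$: in the triangle $A\to Y\to B\to\Si A$ one checks $B\in{}^{\perp_{\ct}}\cs_{<0}\cap\cs_{\leqslant0}=\cp$ and that $Y\to B$ is the left approximation because $\Hom_{\ct}(A,\cp)=0$; the right approximations come dually from the second co-t-structure. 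Relatedly, your justification of (P2) overstates what the decomposition gives: only the aisle piece is a finite extension of shifts of $\cp$, while the vanishing on the co-aisle piece comes from orthogonality, not from presilting. Finally, a co-t-structure requires both classes to be closed under direct summands; this is automatic for the orthogonals but not for $\cs_{\geqslant0}$ and $\cs_{\leqslant0}$, where one must invoke the Iyama--Yoshino-type lemma that $\cp*\Si\cp$ is summand-closed when $\Hom_{\ct}(\cp,\Si\cp)=0$ (equivalently, the bounded co-t-structure on $\cs=\thick\cp$ quoted earlier in the paper). Your sketch omits this check. With these repairs the plan goes through.
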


\begin{Thm}\cite[Theorem 3.1]{IY2018}\label{silting reduction}
	Under the conditions $ (P1) $ and $ (P2) $, the composition $ \mathcal{Z}\subset\mathcal{T}\xrightarrow{\rho}\mathcal{U} $ of natural functors induces an equivalence of additive categories:
	$$ \overline{\rho}\colon\mathcal{Z}/[\mathcal{P}]\longrightarrow\mathcal{U}. $$
\end{Thm}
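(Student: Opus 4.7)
The functor $\rho$ sends every object of $\mathcal{P}$ (and hence of $\thick\mathcal{P}=\mathcal{S}$) to zero in $\mathcal{U}$, so the restriction $\rho|_{\mathcal{Z}}$ annihilates the ideal $[\mathcal{P}]$ and induces a well-defined additive functor $\bar\rho\colon \mathcal{Z}/[\mathcal{P}]\to\mathcal{U}$. The plan is to verify density and fully faithfulness, using Proposition~\ref{Presilting to co-t-structure} to furnish two compatible co-$t$-structures on $\mathcal{T}$ whose intersection of aisles is precisely $\mathcal{Z}$.

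For density, given $X\in\mathcal{T}$ I would first apply the co-$t$-structure $(\mathcal{S}_{\geqslant 0},\mathcal{S}_{>0}^{\perp_{\mathcal{T}}})$ to produce a triangle $S'\to X\to X_1\to\Si S'$ with $S'\in\mathcal{S}_{\geqslant 0}\subseteq\mathcal{S}$ and $X_1\in\mathcal{S}_{>0}^{\perp_{\mathcal{T}}}$; in $\mathcal{U}$ this identifies $X$ with $X_1$. Next apply the co-$t$-structure $({}^{\perp_{\mathcal{T}}}\mathcal{S}_{<0},\mathcal{S}_{\leqslant 0})$ to $X_1$, producing $X_2\to X_1\to S''\to\Si X_2$ with $S''\in\mathcal{S}_{\leqslant 0}\subseteq\mathcal{S}$ and $X_2\in{}^{\perp_{\mathcal{T}}}\mathcal{S}_{<0}$. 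A short computation (using that $\mathcal{S}_{>0}^{\perp_{\mathcal{T}}}$ contains $\mathcal{S}_{\leqslant 0}$ and is closed under extensions and cones inside the triangle) shows $X_2$ is still in $\mathcal{S}_{>0}^{\perp_{\mathcal{T}}}$, hence $X_2\in\mathcal{Z}$; and $X\cong X_2$ in $\mathcal{U}$.

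For faithfulness, suppose $g\colon X\to Y$ with $X,Y\in\mathcal{Z}$ becomes zero in $\mathcal{U}$, i.e.\ $g$ factors as $X\xrightarrow{a}S\xrightarrow{b}Y$ with $S\in\mathcal{S}$. Using the co-$t$-structure on $\mathcal{S}$, decompose $S$ by a triangle $S_{>0}\to S\to S_{\leqslant 0}\to\Si S_{>0}$ with $S_{>0}\in\mathcal{S}_{>0}$ and $S_{\leqslant 0}\in\mathcal{S}_{\leqslant 0}$. Since $X\in\mathcal{S}_{>0}^{\perp_{\mathcal{T}}}$, the map $a$ kills $S_{>0}$ and factors through $S_{\leqslant 0}$; similarly, further decomposing $S_{\leqslant 0}$ via $\mathcal{S}_{<0}\to S_{\leqslant 0}\to S_0\to$ with $S_0\in\mathcal{P}$ (co-heart of the co-$t$-structure) and using $Y\in{}^{\perp_{\mathcal{T}}}\mathcal{S}_{<0}$ via the dual argument for $b$, one forces $g$ to factor through $S_0\in\mathcal{P}$. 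For fullness, start with a right fraction $X\xleftarrow{s} X'\xrightarrow{f} Y$ representing a morphism in $\mathcal{U}$, where $\cone(s)=C\in\mathcal{S}$. Decompose $C$ by the same co-$t$-structure to get a triangle $C_{>0}\to C\to C_{\leqslant 0}$; since $X\in\mathcal{S}_{>0}^{\perp_{\mathcal{T}}}$, the connecting map $C\to\Si X'$ has a vanishing restriction that lets one replace the roof by an equivalent one whose cone lies in $\mathcal{S}_{\leqslant 0}$. Rotating and using $Y\in{}^{\perp_{\mathcal{T}}}\mathcal{S}_{<0}$ then shows $f$ extends along $s$ to a genuine morphism $g\colon X\to Y$ whose roof $(\id_X,g)$ is equivalent to $(s,f)$.

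The main obstacle is the last step: the careful two-sided use of the two co-$t$-structures to turn a general roof into an honest morphism, and parallel to that, the argument that a factorization through $\mathcal{S}$ can be refined to a factorization through the co-heart $\mathcal{P}$. Both arguments rely crucially on the compatibility between the bounded co-$t$-structure inside $\mathcal{S}$ and the glued co-$t$-structures on $\mathcal{T}$ provided by (P1)--(P2), and they will be conducted by successive truncation, exactly mirroring the structure of the density argument.
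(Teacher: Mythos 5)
The paper does not actually prove this theorem: it is imported verbatim from Iyama--Yang \cite[Theorem 3.1]{IY2018}, so there is no internal proof to compare with, and your sketch is in effect a reconstruction of the argument of that reference. The architecture is right: Proposition~\ref{Presilting to co-t-structure} supplies the two co-$t$-structures $({}^{\perp}\cs_{<0},\cs_{\leqslant0})$ and $(\cs_{\geqslant0},\cs_{>0}^{\perp})$, a two-step truncation proves density, and truncating the intermediate object of a factorization (resp.\ the cone of the denominator of a roof) gives faithfulness (resp.\ fullness). Two pieces of bookkeeping would need to be fixed in a written-out version. First, in the faithfulness step you attach the orthogonality conditions to the wrong ends of $X\xrightarrow{a}S\xrightarrow{b}Y$: with the paper's conventions $X\in\cs_{>0}^{\perp}$ means $\Hom(\cs_{>0},X)=0$, which constrains maps \emph{into} $X$ and says nothing about $a$; what one actually uses is that $Y\in\cs_{>0}^{\perp}$ forces $b$ to factor through $S\to S_{\leqslant0}$, and that $X\in{}^{\perp}\cs_{<0}$ kills the composite of $X\to S_{\leqslant0}$ with the projection onto the $\cs_{<0}$-part, so that $X\to S_{\leqslant0}$ lifts to $\cp$. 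Second, and relatedly, your triangle $\cs_{<0}\to S_{\leqslant 0}\to S_0$ with $S_0\in\cp$ puts the co-heart in the wrong position: one has $\cs_{\leqslant0}=\cp*\cs_{<0}$, i.e.\ $\cp$ occurs as the first (sub) term, and this order is precisely what makes the lifting argument work. With those corrections the proof closes as in \cite{IY2018}.
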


Moreover, we have the following theorem.

\begin{Thm}\cite[Theorem 4.2]{IYY2008}\label{Tiangule structure for Z/P}
	The category $ \mathcal{Z}/[\mathcal{P}] $ has the structure of a triangulated category with respect to the following shift functor and triangles:
	\begin{itemize}
		\item[(a)] For $ X\in\mathcal{Z} $, we take a triangle $$ X\xrightarrow{l_{X}}P_{X}\longrightarrow X\langle1\rangle\longrightarrow \Si X $$
		with a (fixed) left $ \mathcal{P} $-approximation $ l_{X} $. Then $ \langle1\rangle $ gives a well-defined auto-equivalence of $ \mathcal{Z}/[\mathcal{P}] $, which is the shift functor of $ \mathcal{Z}/[\mathcal{P}] $.
		\item[(b)] For a triangle $ X\longrightarrow Y\longrightarrow Z\longrightarrow \Si X $ with $ X, Y, Z\in\mathcal{Z} $, take the following commutative diagram of triangles
		\begin{align*}
			\xymatrix{
				X\ar[r]^{f}\ar@{=}[d]&Y\ar[r]^{g}\ar[d]&Z\ar[r]^{g}\ar[d]^{a}&\Si X\ar@{=}[d]\\
				X\ar[r]^{l_{X}}&P_{X}\ar[r]&X\langle1\rangle\ar[r]&\Si X.
			}
		\end{align*}
		Then we have a complex $ X\xrightarrow{\overline{f}}Y\xrightarrow{\overline{g}}Z\xrightarrow{\overline{a}}X\langle1\rangle .$ We define triangles in $ \mathcal{Z}/[\mathcal{P}] $ as the complexes which are isomorphic to complexes obtained in this way.
	\end{itemize}
\end{Thm}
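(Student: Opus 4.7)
My plan is to import the triangulated structure from the Verdier quotient $\mathcal{U} = \mathcal{T}/\thick(\mathcal{P})$ (which is canonically triangulated) across the equivalence $\overline{\rho}\colon \mathcal{Z}/[\mathcal{P}]\iso \mathcal{U}$ furnished by Theorem~\ref{silting reduction}, and then verify that the resulting shift and distinguished triangles are precisely those described in (a) and (b).

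For part (a), I fix $X \in \mathcal{Z}$ and a left $\mathcal{P}$-approximation $l_X\colon X \to P_X$ (which exists by (P1)), completing it to a triangle $X \to P_X \to X\langle 1\rangle \to \Si X$ in $\mathcal{T}$. First I would check $X\langle 1\rangle \in \mathcal{Z}$ by applying the functors $\mathrm{Hom}_{\mathcal{T}}(-, \Si^{i}\mathcal{P})$ and $\mathrm{Hom}_{\mathcal{T}}(\mathcal{P}, \Si^{i}-)$ with $i>0$ to this triangle: the vanishings for $X$ (from $X \in \mathcal{Z}$), together with $\mathcal{P}$ being presilting, annihilate all neighbors except one position, where the surjectivity of $\mathrm{Hom}_{\mathcal{T}}(P_X, \mathcal{P}) \to \mathrm{Hom}_{\mathcal{T}}(X, \mathcal{P})$, guaranteed by $l_X$ being a left approximation, forces the remaining term to vanish. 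Passing to $\mathcal{U}$, the object $\rho(P_X)$ is zero, so the image of the triangle yields a canonical isomorphism $\rho(X\langle 1\rangle) \cong \Si \rho(X)$. This simultaneously transports the suspension of $\mathcal{U}$ to $\langle 1\rangle$ on $\mathcal{Z}/[\mathcal{P}]$, making it an auto-equivalence, and shows well-definedness: any two choices of $l_X$ differ by a map factoring through $\mathcal{P}$, and any $f\colon X \to Y$ in $\mathcal{Z}$ lifts through $l_Y$, uniquely modulo $[\mathcal{P}]$, producing $f\langle 1\rangle$ on cones.

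For part (b), starting from a triangle $X \to Y \to Z \to \Si X$ in $\mathcal{T}$ with $X, Y, Z \in \mathcal{Z}$, the commutative diagram displayed in the theorem produces the complex $X \to Y \to Z \to X\langle 1\rangle$ in $\mathcal{Z}/[\mathcal{P}]$. Composing its image under $\overline{\rho}$ with the canonical isomorphism $\rho(X\langle 1\rangle) \cong \Si \rho(X)$ from part (a) recovers exactly the image of the original triangle in $\mathcal{U}$; hence such complexes correspond, via $\overline{\rho}$, to distinguished triangles of $\mathcal{U}$. Conversely, I need every distinguished triangle of $\mathcal{U}$ to arise, up to isomorphism, from some triangle in $\mathcal{T}$ whose three terms all lie in $\mathcal{Z}$; once this is shown, pulling back across $\overline{\rho}$ identifies the class described in (b) with the class of distinguished triangles of $\mathcal{U}$, endowing $\mathcal{Z}/[\mathcal{P}]$ with a triangulated structure and simultaneously verifying axioms TR1-TR4.

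The main obstacle is precisely this last representability step: given a distinguished triangle in $\mathcal{U}$ whose vertices come from $\mathcal{Z}$, one must find a lift to a genuine triangle in $\mathcal{T}$ whose middle vertex also belongs to $\mathcal{Z}$, not merely to $\mathcal{T}$. My intended tool is the pair of co-t-structures supplied by Proposition~\ref{Presilting to co-t-structure}: after representing the given morphism by a roof $X \xleftarrow{s} X' \xrightarrow{f} Y$ with $\mathrm{cone}(s) \in \thick(\mathcal{P})$, one truncates $X'$ against these co-t-structures and invokes the octahedral axiom to absorb the $\thick(\mathcal{P})$-summands into the outer vertices, leaving a triangle whose terms all belong to $\mathcal{Z}$ and whose image in $\mathcal{U}$ agrees with the given one. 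The bookkeeping is delicate but becomes routine once the axioms (P1)-(P2) and the two co-t-structures of Proposition~\ref{Presilting to co-t-structure} are combined with the approximation-lifting used in part (a).
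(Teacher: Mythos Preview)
The paper does not give its own proof of this statement; it is cited without argument from \cite[Theorem~4.2]{IYY2008}, where the original proof proceeds by \emph{directly} verifying the axioms TR1--TR4 for the explicit shift $\langle1\rangle$ and the class of triangles described in (b), with no reference to the Verdier quotient $\mathcal{U}$.

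Your route is genuinely different: you transport the triangulated structure across the additive equivalence $\overline{\rho}$ of Theorem~\ref{silting reduction}, which in effect yields Theorems~\ref{Tiangule structure for Z/P} and~\ref{Thm:silting reduction} in one stroke. This is legitimate and arguably more conceptual, since it explains \emph{why} the data in (a)--(b) are the correct ones; the price is that it requires hypotheses (P1)--(P2), whereas the direct argument of \cite{IYY2008} works in the broader setting of mutation pairs where no silting reduction is available.

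One point where you overcomplicate matters is the representability step. You do not need roofs or co-$t$-structure truncations at all. By the fullness in Theorem~\ref{silting reduction}, any morphism $\rho(X)\to\rho(Y)$ in $\mathcal{U}$ with $X,Y\in\mathcal{Z}$ lifts to an honest morphism $f\colon X\to Y$ in $\mathcal{T}$. Now consider $(f,l_X)^t\colon X\to Y\oplus P_X$. Since $l_X$ is a left $\mathcal{P}$-approximation, so is $(f,l_X)^t$; hence the very same vanishing computation you outlined for part (a) shows that its cone $Z'$ lies in $\mathcal{Z}$. The triangle $X\to Y\oplus P_X\to Z'\to\Si X$ then has all three vertices in $\mathcal{Z}$, and since $P_X$ becomes zero in $\mathcal{Z}/[\mathcal{P}]$ it represents the required distinguished triangle on $\overline{f}$. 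This device is in fact close to what \cite{IYY2008} does in its direct verification of TR1.
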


\begin{Thm}\cite[Theorem 3.6]{IY2018}\label{Thm:silting reduction} 
	The functor $ \overline{\rho}\colon \mathcal{Z}/[\mathcal{P}]\longrightarrow\mathcal{U} $ in Theorem~\ref{silting reduction} is a triangle equivalence where the triangulated structure of $ \mathcal{Z}/[\mathcal{P}] $ is given by Theorem~\ref{Tiangule structure for Z/P}.
\end{Thm}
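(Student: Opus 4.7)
The plan is to upgrade the additive equivalence $\overline{\rho}\colon \mathcal{Z}/[\mathcal{P}] \iso \mathcal{U}$ from Theorem~\ref{silting reduction} into a triangle equivalence. Since $\overline{\rho}$ is already essentially surjective and fully faithful as an additive functor, it suffices to (i) construct a natural isomorphism $\eta\colon \overline{\rho}\circ\langle 1\rangle \iso \Sigma\circ\overline{\rho}$ and (ii) verify that the distinguished triangles of $\mathcal{Z}/[\mathcal{P}]$ defined in Theorem~\ref{Tiangule structure for Z/P} are sent, via $\overline{\rho}$ and $\eta$, to distinguished triangles of the Verdier quotient $\mathcal{U}=\mathcal{T}/\mathcal{S}$.

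For (i), fix for each $X\in \mathcal{Z}$ a left $\mathcal{P}$-approximation triangle
$$ X\xrightarrow{l_{X}} P_{X}\longrightarrow X\langle 1\rangle\xrightarrow{\alpha_{X}} \Sigma X $$
in $\mathcal{T}$, as in the construction of $\langle 1\rangle$. Applying the quotient functor $\rho\colon \mathcal{T}\to\mathcal{U}$ and using that $\rho(P_{X})=0$ (because $P_{X}\in \mathcal{P}\subseteq\mathcal{S}$), we see that $\rho(\alpha_{X})\colon \rho(X\langle 1\rangle)\ra\Sigma\rho(X)$ is an isomorphism in $\mathcal{U}$. Define $\eta_{X}\coloneqq\rho(\alpha_{X})$. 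To check naturality, let $\overline{f}\colon X\to X'$ be a morphism in $\mathcal{Z}/[\mathcal{P}]$ represented by $f\colon X\to X'$. The axioms of triangulated categories yield a morphism of triangles relating the chosen approximations of $X$ and $X'$; any two such extensions differ by a morphism factoring through $P_{X'}\in\mathcal{P}$, hence become equal in $\mathcal{U}$, which shows both that $f\langle 1\rangle$ is well defined modulo $[\mathcal{P}]$ and that $\eta$ is natural.

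For (ii), given a standard triangle in $\mathcal{Z}/[\mathcal{P}]$, it arises from a triangle $X\xrightarrow{f} Y\xrightarrow{g} Z\xrightarrow{h}\Sigma X$ of $\mathcal{T}$ with $X,Y,Z\in\mathcal{Z}$ and an octahedron
\begin{equation*}
\xymatrix{
X\ar[r]^{f}\ar@{=}[d]&Y\ar[r]^{g}\ar[d]&Z\ar[r]^{h}\ar[d]^{a}&\Sigma X\ar@{=}[d]\\
X\ar[r]^{l_{X}}&P_{X}\ar[r]&X\langle 1\rangle\ar[r]^{\alpha_{X}}&\Sigma X
}
\end{equation*}
which produces the complex $X\xrightarrow{\overline{f}} Y\xrightarrow{\overline{g}} Z\xrightarrow{\overline{a}} X\langle 1\rangle$. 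Applying $\rho$, the upper row becomes a distinguished triangle $\rho(X)\to\rho(Y)\to\rho(Z)\to\Sigma\rho(X)$ in $\mathcal{U}$, while the commutativity of the right-hand square, together with $\eta_{X}=\rho(\alpha_{X})$, identifies the connecting morphism $\Sigma\rho(X)\liso\rho(X\langle 1\rangle)\xleftarrow{\rho(\overline{a})}\rho(Z)$. Thus $\overline{\rho}$ carries the standard triangle of $\mathcal{Z}/[\mathcal{P}]$ to the distinguished triangle $\rho(X)\to\rho(Y)\to\rho(Z)\to\Sigma\rho(X)$ of $\mathcal{U}$.

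Finally, I would deduce that $\overline{\rho}$ is a triangle functor, and hence a triangle equivalence, by invoking the standard fact that an additive equivalence between triangulated categories which preserves the shift and sends a class of triangles isomorphic to all distinguished triangles of the source to distinguished triangles is automatically a triangle equivalence (the essential image condition is covered by the surjectivity of $\overline{\rho}$: any distinguished triangle in $\mathcal{U}$ can be lifted, via the fully faithful equivalence $\overline{\rho}$, to a diagram in $\mathcal{Z}/[\mathcal{P}]$ isomorphic to a standard one). The main delicate point I expect is (i): proving that $\eta$ is genuinely natural (independent of the chosen approximations up to $[\mathcal{P}]$) requires careful use of the presilting hypothesis $\Hom_{\mathcal{T}}(\mathcal{P},\Sigma^{>0}\mathcal{P})=0$ to control the non-uniqueness of the filler in the octahedron, and is the only place where the full force of Proposition~\ref{Presilting to co-t-structure} (co-$t$-structures) is likely to be needed to normalize the choices functorially.
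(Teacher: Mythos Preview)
The paper does not supply its own proof of this statement: it is quoted verbatim from \cite[Theorem~3.6]{IY2018} and followed only by a remark pointing to the more general framework of \cite{Naka2017}. So there is no in-paper argument to compare against.

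Your outline is essentially the standard proof and is correct in its main points. A couple of small comments. First, the well-definedness of $\langle 1\rangle$ on $\mathcal{Z}/[\mathcal{P}]$ and the independence of the chosen approximations are already part of Theorem~\ref{Tiangule structure for Z/P}, so you may simply invoke that rather than rederive it. Second, your closing caveat is slightly off target: the presilting hypothesis and the co-$t$-structures of Proposition~\ref{Presilting to co-t-structure} are used to establish the additive equivalence (Theorem~\ref{silting reduction}) and the triangulated structure on $\mathcal{Z}/[\mathcal{P}]$ (Theorem~\ref{Tiangule structure for Z/P}); once those are in place, the naturality of $\eta$ follows immediately from the commutativity of the rightmost square in your morphism of triangles, since $\alpha_{X'}\circ f\langle 1\rangle=\Sigma f\circ\alpha_{X}$ already holds in $\mathcal{T}$ before passing to $\mathcal{U}$. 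No further normalization is needed at this stage.
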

\begin{Rem}
	We remark that more general versions of Theorem~\ref{Thm:silting reduction} have been established in~\cite{Naka2017}.
\end{Rem}

In our case, we put $ \mathcal{T}=\mathrm{per}A $, $ \mathcal{P}=\add(eA) $, and $ \mathcal{S}=\thick\mathcal{P}\cong \mathrm{per}(eAe) $. Then it is clear that the categories $ \mathcal{T} $, $ \mathcal{P} $ and $ \mathcal{S} $ satisfy the conditions $ \rm(P1) $ and $ \rm(P2) $. 

\begin{Cor}
	We have the following equivalence of triangulated categories
	$$ p^{*}\colon \mathcal{Z}/[\mathcal{P}]\iso\per A/\langle eA\rangle\iso\per(\overline{A}) ,$$ where $ \mathcal{Z}={}^{\perp_{\per A}}(\Si^{>0}\mathcal{P})\cap(\Si^{<0}\mathcal{P})^{\perp_{\per A}} $.	
\end{Cor}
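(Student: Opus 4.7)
The plan is to apply the silting reduction theorem of Iyama--Yang (Theorem~\ref{Thm:silting reduction}) to the ambient triangulated category $\ct = \per A$ with presilting subcategory $\cp = \add(eA)$, and then to identify the resulting Verdier quotient $\per A / \thick\cp$ with $\per(\overline{A})$ via the recollement of Corollary~\ref{Recollement of dg algebras}.

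First I would check that $\cp$ is presilting: for $i>0$,
$$\Hom_{\per A}(eA, \Si^i eA) = H^i(eAe) = e H^i(A) e = 0$$
since $A$, and hence $eAe$, is connective by Assumption~\ref{Relative assumption}. Next I would verify the technical conditions (P1) and (P2). For (P2), both $\Hom(-, \Si^l eA)$ and $\Hom(eA, \Si^l -)$ are cohomological, so it suffices to establish eventual vanishing at $l \gg 0$ on the generator $A$ of $\per A$; but $\Hom(A, \Si^l eA) = H^l(eA)$ and $\Hom(eA, \Si^l A) = H^l(Ae)$ both vanish for $l>0$ by connectivity, and induction on the triangle-length needed to build $X \in \per A$ from $A$ extends the eventual vanishing to all $X$. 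For (P1), $\per A$ is Hom-finite by Corollary~\ref{Hom finite of RCC} and Krull--Schmidt, and $\cp$ is a finitely generated additive subcategory; hence $\cp$ is functorially finite in all of $\per A$, and restricting approximations to the orthogonal subcategories $^{\perp}(\Si^{>0}\cp)$ and $(\Si^{<0}\cp)^{\perp}$ (each of which contains $\cp$) preserves the approximation property.

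With (P1) and (P2) in hand, Theorem~\ref{Thm:silting reduction} yields a triangle equivalence $\overline{\rho}\colon \cz/[\cp] \iso \per A / \thick\cp$, where $\cz/[\cp]$ carries the triangulated structure of Theorem~\ref{Tiangule structure for Z/P}. By Corollary~\ref{Recollement of dg algebras}, $i^{*}$ induces an equivalence $\per(eAe) \iso \langle eA\rangle_{\per A} = \thick\cp$, and $p^{*}$ induces a triangle equivalence $\per A / \langle eA\rangle_{\per A} \iso \per(\overline{A})$ (a priori only up to direct summands, but upgraded to a genuine equivalence here because $A \in \cz$ maps under $p^{*}$ to the compact generator $\overline{A}$ of $\per(\overline{A})$). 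Composing these with $\overline{\rho}$ gives the desired chain of equivalences.

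The main obstacle, if any, is the verification of (P1); once Hom-finiteness of $\per A$ is in hand this is essentially formal. The remaining work is bookkeeping: confirming that the triangulated structure on $\cz/[\cp]$ coming from Theorem~\ref{Tiangule structure for Z/P} coincides under $\overline{\rho}$ with the quotient triangulated structure on $\per A/\thick\cp$, which is precisely the content of Theorem~\ref{Thm:silting reduction}.
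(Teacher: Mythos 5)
Your overall route is the paper's route: verify (P1) and (P2) for $\mathcal{P}=\add(eA)$ in $\mathcal{T}=\per A$, invoke Theorem~\ref{Thm:silting reduction} to get $\mathcal{Z}/[\mathcal{P}]\iso\per A/\thick\mathcal{P}$, and splice in the recollement of Corollary~\ref{Recollement of dg algebras}. Your verifications of presilting, (P2) (by d\'evissage from $A$ using connectivity) and (P1) (functorial finiteness of $\add(eA)$ from Hom-finiteness of $\per A$ --- which, incidentally, is the unlabelled Proposition quoting \cite[Proposition 2.5]{KY2016}, not Corollary~\ref{Hom finite of RCC}) are correct and fill in what the paper dismisses as ``clear''.

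The gap is in the last step. Corollary~\ref{Recollement of dg algebras} only gives that $p^{*}\colon\per A/\per(eAe)\to\per(\overline{A})$ is an equivalence \emph{up to direct summands}, and your proposed upgrade --- ``the image contains the compact generator $\overline{A}$'' --- does not suffice. The essential image of this fully faithful triangle functor is a full triangulated subcategory containing $\overline{A}$ and closed under shifts and cones, but a priori \emph{not} under direct summands (the Verdier quotient $\per A/\per(eAe)$ need not be idempotent complete), whereas $\per(\overline{A})=\thick(\overline{A})$ is by definition summand-closed; containing a generator only yields density, not surjectivity on objects. To close this one must either run a $K_{0}$/Thomason argument or, as the paper does, prove denseness of $p^{*}\colon\per A\to\per(\overline{A})$ directly: Proposition~\ref{p* is dense} lifts an arbitrary object of $\per(\overline{A})$ through its weight decomposition for the co-$t$-structure, using left and right $\add(eA)$-approximations to produce a genuine preimage in $\mathcal{Z}$, and only then does Corollary~\ref{Cor: dense to tri equivalence} record the honest equivalence $\per A/\per(eAe)\iso\per(\overline{A})$. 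So your proof is sound up to and including the first equivalence $\mathcal{Z}/[\mathcal{P}]\iso\per A/\langle eA\rangle$, but the identification with $\per(\overline{A})$ needs the denseness argument rather than the one-line generator claim.
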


\subsection{The standard co-$ t $-structure on $ \mathrm{per}A $}\label{subsection:The standard co-t-structure}

\begin{Prop}\cite[Proposition 2.8]{IY2018}
	Let $ \mathcal{T} $ be a triangulated category and $ \mathcal{M} $ a silting subcategory of $ \mathcal{T} $ with $ \mathcal{M}=\add\mathcal{M} $.
	\begin{itemize}
		\item[(a)] Then $ (\mathcal{T}_{\geqslant0},\mathcal{T}_{\leqslant0}) $ is a bounded co-t-structure on $ \mathcal{T} $, where
		$$ \mathcal{T}_{\geqslant0}\coloneqq\bigcup_{k\geqslant0}\Si^{-k}\mathcal{M}*\cdots*\Si^{-1}\mathcal{M}*\mathcal{M}\quad \text{and}\quad\mathcal{T}_{\leqslant0}\coloneqq\bigcup_{k\geqslant0}\mathcal{M}*\Si\mathcal{M}*\cdots*\Si^{k}\mathcal{M}.$$
		\item[(b)] For any integers $ m $ and $ l $, we have
		
		\[\mathcal{T}_{\geqslant l}\cap\mathcal{T}_{\leqslant m}=\left\{
		\begin{aligned}
			\Si^{-m}\mathcal{M}*\Si^{-m+1}\mathcal{M}*\cdots*\Si^{-l}\mathcal{M}&&&&\text{if}\quad l\leqslant m, \\
			0&&&& \text{if}\quad l>m.
		\end{aligned}
		\right.\]	
	\end{itemize}
\end{Prop}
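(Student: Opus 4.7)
The plan is to check directly the axioms of a bounded co-$t$-structure using the silting hypothesis on $\mathcal{M}$, and then to deduce the description in (b) by a Hom-orthogonality argument. For brevity set $\mathcal{M}_{[l,m]} \coloneqq \Si^{-m}\mathcal{M} * \Si^{-m+1}\mathcal{M} * \cdots * \Si^{-l}\mathcal{M}$ whenever $l\leqslant m$, so that by definition $\mathcal{T}_{\geqslant l}=\bigcup_{m\geqslant l}\mathcal{M}_{[l,m]}$ and $\mathcal{T}_{\leqslant m}=\bigcup_{l\leqslant m}\mathcal{M}_{[l,m]}$. First I would verify the closure properties. For each $X\in\mathcal{M}_{[l,m]}$, the split triangle $X\to X\to 0\to \Si X$ with $0\in\Si^{-m-1}\mathcal{M}$ (or $0\in\Si^{-l+1}\mathcal{M}$) shows $\mathcal{M}_{[l,m]}\subseteq \mathcal{M}_{[l,m+1]}$ and $\mathcal{M}_{[l,m]}\subseteq \mathcal{M}_{[l-1,m]}$. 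Consequently $\Si^{-1}\mathcal{T}_{\geqslant 0}\subseteq\mathcal{T}_{\geqslant 0}$ and $\Si\mathcal{T}_{\leqslant 0}\subseteq\mathcal{T}_{\leqslant 0}$, while extension closure of each $\mathcal{M}_{[l,m]}$ follows by concatenating filtrations.

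Next comes the key Hom-orthogonality $\mathrm{Hom}(\mathcal{T}_{\geqslant 1},\mathcal{T}_{\leqslant 0})=0$. Given $X\in\mathcal{M}_{[1,m]}$ and $Y\in\mathcal{M}_{[l,0]}$, I would peel off the defining filtrations of $X$ and $Y$ one term at a time and use the long exact sequences; the problem reduces to showing $\mathrm{Hom}(\Si^{-i}M,\Si^{-j}M')=\mathrm{Hom}(M,\Si^{i-j}M')=0$ for $i\geqslant 1$, $j\leqslant 0$, and $M,M'\in\mathcal{M}$, which holds because $i-j\geqslant 1>0$ and $\mathcal{M}$ is presilting. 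For the approximation triangles I would use that $\mathcal{M}$ is silting, so $\mathcal{T}=\thick\mathcal{M}$, and argue by induction on a thick-presentation length that every $X\in\mathcal{T}$ lies in $\mathcal{M}_{[l,m]}$ for some $l\leqslant m$; this both yields boundedness and, by splitting a chosen filtration at position $0$, produces a triangle $X_{\geqslant 1}\to X\to X_{\leqslant 0}\to\Si X_{\geqslant 1}$ with $X_{\geqslant 1}\in\mathcal{T}_{\geqslant 1}$ and $X_{\leqslant 0}\in\mathcal{T}_{\leqslant 0}$.

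The main obstacle is closure under direct summands, since the Verdier product of subcategories is not a priori summand-closed. My plan is, once the Hom-orthogonality above is known, to prove the intrinsic characterisation
\[
\mathcal{T}_{\geqslant 0}=\{\,X\in\mathcal{T}\mid \mathrm{Hom}(X,\Si^{i}\mathcal{M})=0 \text{ for all } i>0\,\}\cap\bigcup_{k\geqslant 0}\Si^{-k}\mathcal{M}*\cdots*\mathcal{M},
\]
together with its dual for $\mathcal{T}_{\leqslant 0}$, both of which are manifestly summand-closed; for an object with a given filtration, the extra Hom-vanishing condition is automatic. Alternatively, following Bondarko's treatment of weight structures, one can lift a splitting $X=X_1\oplus X_2$ inductively through each step of a fixed filtration by constructing compatible approximations, which gives filtrations of $X_1$ and $X_2$ of the required shape.

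For part (b), the inclusion $\mathcal{M}_{[l,m]}\subseteq\mathcal{T}_{\geqslant l}\cap\mathcal{T}_{\leqslant m}$ is tautological. The reverse inclusion would be obtained by taking $X\in\mathcal{T}_{\geqslant l}\cap\mathcal{T}_{\leqslant m}$ and iterating the shifted approximation triangle of part (a) to peel off one weight at a time, using the Hom-vanishing to see that the successive subquotients belong to $\Si^{-i}\mathcal{M}$ for $l\leqslant i\leqslant m$. Finally, when $l>m$, any $X\in\mathcal{T}_{\geqslant l}\cap\mathcal{T}_{\leqslant l-1}$ satisfies $\mathrm{id}_X\in\mathrm{Hom}(\mathcal{T}_{\geqslant l},\mathcal{T}_{\leqslant l-1})=0$ by the vanishing established in step two, forcing $X=0$.
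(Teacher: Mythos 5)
The paper offers no proof of this statement: it is quoted verbatim from \cite[Proposition 2.8]{IY2018}, so there is no in-paper argument to compare against. Your overall architecture is the standard one and most steps are sound: padding filtrations with zero objects, reducing the orthogonality $\mathrm{Hom}(\mathcal{T}_{\geqslant1},\mathcal{T}_{\leqslant0})=0$ to the presilting condition, obtaining the approximation triangle by splitting a filtration at position $0$ via associativity of $*$, and deducing (b) from orthogonality together with summand-closure (the $l>m$ case via $\mathrm{id}_X=0$ is exactly right).

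There is, however, a genuine gap at the one delicate point, closure under direct summands. The ``intrinsic characterisation'' you propose intersects a Hom-vanishing class with $\bigcup_{k\geqslant0}\Sigma^{-k}\mathcal{M}*\cdots*\mathcal{M}$ --- but that second factor \emph{is} $\mathcal{T}_{\geqslant0}$, precisely the class whose summand-closure is in question, so the intersection is not ``manifestly summand-closed''; moreover the Hom-vanishing condition is automatic for every filtered object and is inherited by any summand, so it excludes nothing. The argument is circular. The correct tool is the lemma of Iyama--Yoshino (\cite[Proposition 2.1]{IYY2008}): if $\mathcal{X},\mathcal{Y}$ are additive subcategories closed under summands with $\mathrm{Hom}_{\mathcal{T}}(\mathcal{X},\mathcal{Y})=0$, then $\mathcal{X}*\mathcal{Y}$ is closed under summands. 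Applied inductively with $\mathcal{X}=\Sigma^{-m}\mathcal{M}$ and $\mathcal{Y}=\Sigma^{-m+1}\mathcal{M}*\cdots*\Sigma^{-l}\mathcal{M}$, the required vanishing $\mathrm{Hom}(\mathcal{M},\Sigma^{m-j}\mathcal{M})=0$ for $j<m$ holds by presilting, and summand-closure of each $\mathcal{M}_{[l,m]}$ follows. Your ``alternative'' (lifting a splitting through the filtration, à la Bondarko) is exactly this argument, but it is the crux of the proposition and must actually be carried out rather than named. Relatedly, your second paragraph already asserts that every $X\in\mathcal{T}=\thick\mathcal{M}$ lies in some $\mathcal{M}_{[l,m]}$; since the inductive construction of $\thick\mathcal{M}$ passes through taking direct summands, that assertion is only available \emph{after} summand-closure is established, so the order of the argument needs to be reversed.
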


Let $ \Gamma $ be a connective dg algebra. Then $ \Gamma $ is a silting object in $ \mathrm{per}\Gamma $. By the above proposition, the pair $ ((\mathrm{per}\Gamma)_{\geqslant0},(\mathrm{per}\Gamma)_{\leqslant0}) $ is a co-t-structure on $ \mathrm{per}\Gamma $, where $$ (\mathrm{per}\Gamma)_{\geqslant0}\coloneqq\bigcup_{k\geqslant0}\Si^{-k}\mathrm{add}\Gamma*\cdots*\Si^{-1}\mathrm{add}\Gamma*\mathrm{add}\Gamma\quad \text{and}\quad(\mathrm{per}\Gamma)_{\leqslant0}\coloneqq\bigcup_{k\geqslant0}\mathrm{add}\Gamma*\Si\mathrm{add}\Gamma*\cdots*\Si^{k}\mathrm{add}\Gamma.$$
The corresponding co-heart is $ \mathrm{add}\Gamma $.

\subsection{Fundamental domain for generalized cluster categories}

Let $ \mathcal{F} $ be the full subcategory $ \mathcal{D}(\overline{A})^{\leqslant 0}\cap{}^{\perp}\mathcal{D}(\overline{A})^{\leqslant-n}\cap \mathrm{per}(\overline{A})$. In the paper~\cite{Am2008}, it is called the \emph{fundamental domain} of $ \per\overline{A} $. We denote by $ \pi\colon\mathrm{per}\overline{A}\ra\mathcal{C}_{n}(\overline{A}) $ the canonical projection functor.

\begin{Lem}\cite[Lemma 3.2.8]{LYG}\label{addA' approximation}
	For each object $ X $ of $ \mathcal{F} $, there exist $ n-1 $ triangles (which are not unique in general)
	$$P_{1}\to Q_{0}\to X\to \Si P_{1},$$
	$$ P_{2}\to Q_{1}\to P_{1}\to\Si P_{2},$$
	$$ \cdots $$
	$$P_{n-1}\to Q_{n-2}\to P_{n-2}\to\Si P_{n-1}
	,$$
	where $ Q_{0} $, $ Q_{1} $, $ \cdots $, $ Q_{n-2} $ and $ P_{n-1} $ are in $ \mathrm{add}( \overline{A}) $.	
\end{Lem}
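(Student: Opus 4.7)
The plan is to argue by induction on $n$, establishing the slightly stronger statement that any $X\in\per\overline{A}$ with $H^p(X)=0$ for $p\geqslant 1$ and $\Hom_{\per\overline{A}}(X,\Si^i\overline{A})=0$ for all $i\geqslant n$ admits the claimed $n-1$ triangles. Both conditions follow from $X\in\mathcal{F}$, since $\Si^i\overline{A}\in\cd(\overline{A})^{\leqslant -i}\subseteq\cd(\overline{A})^{\leqslant -n}$ for $i\geqslant n$.

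For the inductive step ($n\geqslant 2$), I would use that $H^0(X)$ is finitely generated over the finite-dimensional algebra $H^0(\overline{A})$ to pick a right $\add\overline{A}$-approximation $Q_0\to X$ and complete it to a triangle
\[
P_1\to Q_0\to X\to \Si P_1.
\]
Applying $\Hom_{\per\overline{A}}(\overline{A},-)$ and using that $H^0(Q_0)\twoheadrightarrow H^0(X)$ is surjective by the approximation property, a short long-exact-sequence computation gives $H^p(P_1)=0$ for $p\geqslant 1$. Applying $\Hom_{\per\overline{A}}(-,\Si^i\overline{A})$ to the same triangle, the connectivity of $\overline{A}$ kills $\Hom_{\per\overline{A}}(Q_0,\Si^i\overline{A})$ for $i\geqslant 1$, while the hypothesis on $X$ kills $\Hom_{\per\overline{A}}(X,\Si^{i+1}\overline{A})$ whenever $i+1\geqslant n$; hence $\Hom_{\per\overline{A}}(P_1,\Si^i\overline{A})=0$ for $i\geqslant n-1$. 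So $P_1$ satisfies the inductive hypothesis with $n-1$ in place of $n$, and the induction produces $n-2$ further triangles ending at $P_{n-1}\in\add\overline{A}$ that concatenate with the first triangle to yield the $n-1$ required.

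The crux is the base case $n=1$: I must show that any $X\in\per\overline{A}$ with $H^p(X)=0$ for $p\geqslant 1$ and $\Hom_{\per\overline{A}}(X,\Si^i\overline{A})=0$ for $i\geqslant 1$ lies in $\add\overline{A}$. For this I would invoke the standard co-$t$-structure $((\per\overline{A})_{\geqslant 0},(\per\overline{A})_{\leqslant 0})$ with co-heart $\add\overline{A}$ recalled in Subsection~\ref{subsection:The standard co-t-structure}. Since $(\per\overline{A})_{\leqslant -1}$ is generated under extensions by the $\Si^i\overline{A}$ with $i\geqslant 1$, the first vanishing condition gives $X\in{}^{\perp}(\per\overline{A})_{\leqslant -1}$; applying the co-$t$-structure truncation triangle $X_{\geqslant 0}\to X\to X_{\leqslant -1}\to \Si X_{\geqslant 0}$ and noting that the middle arrow must be zero forces the triangle to split, exhibiting $X$ as a direct summand of $X_{\geqslant 0}$ and placing it in $(\per\overline{A})_{\geqslant 0}$. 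Dually, $(\per\overline{A})_{\geqslant 1}$ is generated by the $\Si^{-k}\overline{A}$ with $k\geqslant 1$, and the vanishing $H^k(X)=\Hom_{\per\overline{A}}(\Si^{-k}\overline{A},X)=0$ for $k\geqslant 1$ places $X$ in $(\per\overline{A})_{\leqslant 0}$. Intersecting the two aisles yields the co-heart, so $X\in\add\overline{A}$. The main obstacle is precisely this base case, specifically the Hom-orthogonality characterization of the two aisles of the co-$t$-structure; everything else reduces to routine long-exact-sequence bookkeeping.
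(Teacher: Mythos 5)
Your argument is correct and, modulo presentation, is the standard one: the paper itself gives no proof of this lemma but cites \cite[Lemma 3.2.8]{LYG} (going back to \cite[Lemma 7.2.1]{Am2008}), and that proof is exactly your inductive step — take a map $Q_0\to X$ from $\add\overline{A}$ surjective on $H^0$ (which exists because $\per\overline{A}$ is Hom-finite, so $H^0(X)$ is a finite-dimensional $H^0(\overline{A})$-module), form the triangle, and check via the two long exact sequences that the fiber $P_1$ satisfies the hypotheses with $n$ replaced by $n-1$. The only point where you genuinely deviate is the termination step: Guo argues directly that the last syzygy has homology concentrated in degree $0$ and lifts to a projective, whereas you deduce $X\in\add\overline{A}$ from $X\in(\per\overline{A})_{\geqslant0}\cap(\per\overline{A})_{\leqslant0}$ using the splitting of the two co-$t$-structure truncation triangles and the computation of the co-heart from \cite[Proposition 2.8]{IY2018}; this is a clean repackaging, consistent with the Remark following the lemma identifying $\mathcal{F}$ with $\add\overline{A}*\Si\add\overline{A}*\cdots*\Si^{n-1}\add\overline{A}$, and uses only material the paper already recalls. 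No gaps.
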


\begin{Rem}
	In fact, the fundamental domain $ \mathcal{F} $ is equal to 
	$$ (\mathrm{per}\overline{A})_{\geqslant 1-n}\cap(\mathrm{per}\overline{A})_{\leqslant0}=\mathrm{add}\overline{A}*\Si\mathrm{add}\overline{A}*\cdots*\Si^{n-1}\mathrm{add}\overline{A},$$ where $ ((\mathrm{per}\overline{A})_{\geqslant0},(\mathrm{per}\overline{A})_{\leqslant0}) $ is the canonical co-t-structure on $ \per\overline{A} $.
\end{Rem}

\begin{Prop}\rm\cite[Proposition 4.3.1]{LYG}
	The projection functor $ \pi\colon\mathrm{per}\overline{A}\to\mathcal{C}_{n}(\overline{A}) $ induces a $ k $-linear equivalence between $ \mathcal{F} $ and $ \mathcal{C}_{n}(\overline{A}) $.
\end{Prop}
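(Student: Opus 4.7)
The plan is to verify fully faithfulness and essential surjectivity of the restriction $\pi|_{\mathcal{F}}\colon \mathcal{F}\to\mathcal{C}_n(\overline{A})$ separately, adapting the strategy used by Amiot in \cite{Am2008} and Guo in \cite{LYG}. Note that the setup is exactly the ``absolute'' case of the relative framework developed earlier, obtained by taking $B=0$, so many of the tools (Hom-formula for the Verdier quotient, Calabi--Yau duality, truncation arguments with respect to glued $t$-structures) have direct analogues already established in the excerpt for the relative situation.

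For the fully faithful part, I would first invoke the Hom-formula for the Verdier quotient that parallels Proposition \ref{Home space in RCA}, namely
\[
\Hom_{\mathcal{C}_n(\overline{A})}(\pi X,\pi Y)\;\cong\;\varinjlim_{k\leqslant 0}\,\Hom_{\mathcal{D}(\overline{A})}(\tau_{\leqslant k}X,\tau_{\leqslant k}Y),
\]
valid because $\overline{A}$ is connective with finite-dimensional $H^{0}$ (Proposition \ref{Prop: cofiber is fd}). Since $X,Y\in\mathcal{D}(\overline{A})^{\leqslant 0}$, the $k=0$ term of the colimit equals $\Hom_{\per\overline{A}}(X,Y)$, so it suffices to show that each transition map is an isomorphism. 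Using the truncation triangle $\tau_{\leqslant k-1}X\to\tau_{\leqslant k}X\to H^{k}(X)[-k]$ (and the analogous one on $Y$), this reduces to the vanishing of $\Hom_{\mathcal{D}(\overline{A})}(H^{k}(X)[-k],\tau_{\leqslant k}Y[j])$ for $j\in\{0,1\}$, together with a dual vanishing. For $k<0$ the decisive input is the absolute $(n+1)$-Calabi--Yau duality on $\overline{A}$ (Proposition \ref{Relative Hom} with $B=0$, together with Proposition \ref{Prop: smooth of cofiber}), which converts these Homs into Homs to $\Sigma^{n+1}\tau_{\leqslant k}Y$, and the orthogonality $X\in{}^{\perp}\mathcal{D}(\overline{A})^{\leqslant -n}$ then forces the vanishing. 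The $k=0$ case is automatic from $X,Y\in\mathcal{D}^{\leqslant 0}$.

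For essential surjectivity, given any $M\in\per\overline{A}$ the goal is to produce an object $X\in\mathcal{F}$ and a morphism $X\to M$ whose cone lies in $\pvd(\overline{A})$. The idea is to construct $X$ as the iterated cone of a length-$n$ tower of right $\add\overline{A}$-approximations (this is exactly the shape of the resolutions in Lemma \ref{addA' approximation}, built in the opposite direction). Concretely, I would iteratively choose right $\add\overline{A}$-approximations
\[
Q_{0}\to M,\qquad Q_{1}\to K_{0},\qquad\ldots,\qquad Q_{n-1}\to K_{n-2},
\]
where each $K_{i}$ is the (shifted) fiber of the previous approximation, using that $\add\overline{A}$ is the co-heart of the standard co-$t$-structure on $\per\overline{A}$ described in Subsection \ref{subsection:The standard co-t-structure}. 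Assembling these pieces produces an object $X\in\add\overline{A}\ast\Sigma\add\overline{A}\ast\cdots\ast\Sigma^{n-1}\add\overline{A}\subseteq\mathcal{F}$ together with a morphism $X\to M$, and I then have to argue that the iterated cone, which is a shift of $K_{n-1}$, has finite-dimensional total cohomology and hence lies in $\pvd(\overline{A})$.

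The main obstacle is controlling this iterated cone in the essential surjectivity step: one has to check both that the construction actually produces an object of $\mathcal{F}$ (which requires verifying the left-orthogonality to $\mathcal{D}(\overline{A})^{\leqslant -n}$) and that the remainder really lands in $\pvd(\overline{A})$. Both verifications rely crucially on the $(n+1)$-Calabi--Yau structure on $\overline{A}$ together with the finite-dimensionality of $H^{0}(\overline{A})$: the former converts orthogonality conditions into cohomological vanishing, while the latter guarantees that objects in $\per\overline{A}$ with cohomology concentrated in a bounded window automatically belong to $\pvd(\overline{A})$. Once these checks are in place, combining them with the fully faithful statement completes the proof.
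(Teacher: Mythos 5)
First, note that the paper does not prove this statement itself: it is quoted from Guo's thesis \cite[Proposition 4.3.1]{LYG}. The intended argument is nevertheless visible in the relative analogues proved later in the paper, namely Proposition~\ref{relative full faithful} for full faithfulness and the combination of Proposition~\ref{Equivalence between the shifts of relative fundamental domain} with the truncation argument of Theorem~\ref{eA and RFD generate Relative Cluster Category} for essential surjectivity. Your treatment of full faithfulness is essentially this standard argument. Two small remarks: after applying the $(n+1)$-Calabi--Yau duality to $H^{k}(X)[-k]\in\pvd(\overline{A})$ the resulting space is $D\Hom(\tau_{\leqslant k}Y,\Sigma^{n+1-j}H^{k}(X)[-k])$, so the orthogonality you need is on the $Y$-side (and must be checked for the truncation $\tau_{\leqslant k}Y$, which requires one more application of the duality), not ``$X\in{}^{\perp}\mathcal{D}(\overline{A})^{\leqslant -n}$'' as written; and it is cleaner to compare $\Hom(\tau_{\leqslant l}X,\tau_{\leqslant l}Y)$ directly with $\Hom(X,Y)$ via the single triangle $\tau_{\leqslant l}X\to X\to\tau_{>l}X$, as in Proposition~\ref{relative full faithful}, rather than through the consecutive transition maps. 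These are fixable bookkeeping issues, not gaps.

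The essential surjectivity argument, however, has a genuine gap. Applying a length-$n$ tower of right $\add\overline{A}$-approximations directly to an arbitrary $M\in\per\overline{A}$ does not leave a remainder in $\pvd(\overline{A})$. Take $M=\Sigma^{n}\overline{A}$: since $\Hom(\overline{A},\Sigma^{n}\overline{A})=H^{n}(\overline{A})=0$ by connectivity, the minimal right $\add\overline{A}$-approximation of $M$ is $0$, the tower degenerates, $X=0$, and the ``remainder'' is $M$ itself, which lies in $\pvd(\overline{A})$ only when $\overline{A}$ has finite-dimensional total cohomology --- false for a typical Ginzburg dg algebra. The underlying problem is that your final appeal to ``cohomology concentrated in a bounded window'' is exactly what fails: the iterated fibre $K_{n-1}$ of a perfect complex generally has unbounded cohomology, and no Calabi--Yau duality repairs this. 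The correct route is the one the paper generalises: first truncate, replacing $M$ by $\tau_{\leqslant -r}M$ for $r\gg0$ (the cone $\tau_{>-r}M$ does lie in $\pvd(\overline{A})$ because its cohomology is bounded and degreewise finite-dimensional), check using the Calabi--Yau duality that $\tau_{\leqslant -r}M$ lands in the shifted fundamental domain $\Sigma^{r}\mathcal{F}$, and then use the equivalences $\tau_{\leqslant -l}\colon\Sigma^{l-1}\mathcal{F}\to\Sigma^{l}\mathcal{F}$ --- which are the identity in the quotient --- to walk back to $\mathcal{F}$ itself. The approximation towers and the Nakayama functor enter in the proof of these equivalences (cf.\ Lemma~\ref{Denseness of relative truncation}), not in a direct resolution of $M$. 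Also note that your stated goal of producing a single morphism $X\to M$ with cone in $\pvd(\overline{A})$ is stronger than what essential surjectivity requires; the actual construction only yields an isomorphism $\pi X\cong\pi M$ through a zigzag of truncation maps.
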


\subsection{Relative fundamental domain and Higgs category}

\begin{Def}\rm\label{RFD}
	We define the \emph{relative fundamental domain $ \mathcal{F}^{rel} $} of $ \per A $ to be the following full subcategory
	$$ \mathcal{Z}\cap(\mathrm{per}A)_{\geqslant 1-n}\cap(\mathrm{per}A)_{\leqslant0}=\mathcal{Z}\cap (\mathrm{add}A*\Si\mathrm{add}A*\cdots*\Si^{n-1}\mathrm{add}A),$$ where $ ((\mathrm{per}A)_{\geqslant0},(\mathrm{per}A)_{\leqslant0}) $ is the canonical co-t-structure on $ \mathrm{per}A $ and $ \mathcal{Z} $ is the subcategory $${}^{\perp_{\mathrm{per}A}}(\Si^{>0}\mathcal{P})\cap(\Si^{<0}\mathcal{P})^{\perp_{\mathrm{per}A}} $$
	with $ \mathcal{P}=\mathrm{add}(eA) $.
\end{Def}

By the proof of \cite[Lemma 7.2.1]{Am2008} (or \cite[Lemma 3.2.8]{LYG}), we can see that the subcategory $ \mathrm{add}A*\Si\mathrm{add}A*\cdots*\Si^{n-1}\mathrm{add}A $ is equal to $ \mathcal{D}(A)^{\leqslant 0}\cap{}^{\perp}(\mathcal{D}(A)^{\leqslant-n})\cap \mathrm{per}(A) $. Thus, the relative fundamental domain $ \mathcal{F}^{rel} $ is also equal to $ \mathcal{Z}\cap \mathcal{D}(A)^{\leqslant 0}\cap{}^{\perp}(\mathcal{D}(A)^{\leqslant-n})\cap \mathrm{per}(A) $.

\begin{Rem}\rm\label{Rem:triangles in RF}
	The relative fundamental domain $ \mathcal{F}^{rel} $ is also equivalent to the full subcategory of $\mathcal{Z}\subseteq \mathrm{per}(A) $ whose objects are the $ X\in\mathcal{Z} $ such that $ X $ fits into the following $ n-1 $ triangles in $ \mathrm{per}A $
	$$M_{1}\to N_{0}\to X\to \Si M_{1},$$
	$$ M_{2}\to N_{1}\to M_{1}\to\Si M_{2},$$
	$$ \cdots $$
	$$M_{n-1}\to N_{n-2}\to M_{n-2}\to\Si M_{n-1}
	$$
	with $ N_{0} $, $ N_{1} $, $ \cdots $, $ N_{n-2} $ and $ M_{n-1} $ in $ \mathrm{add}(A) $.
	
\end{Rem}

\begin{Prop}
	The relative fundamental domain $ \mathcal{F}^{rel} $ is contained in
	$$ \mathcal{D}(A)_{rel}^{\leqslant 0}\cap{}^{\perp}(\mathcal{D}_{B}(A)_{rel}^{\leqslant-n})\cap \per(A) ,$$ where $ \mathcal{D}_{B}(A)_{rel}^{\leqslant-n}$ is the full subcategory of $ \mathcal{D}(A)_{rel}^{\leqslant -n} $ whose objects are the objects
	$ X $ in $ \mathcal{D}(A)_{rel}^{\leqslant -n}$ whose restriction $ i_{*}(X) $ to $ eAe $ is acyclic.
\end{Prop}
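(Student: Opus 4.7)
The plan is to verify the two non-trivial inclusions separately, using the adjunction $(p^{*},p_{*})$ from the recollement \eqref{dg algebra recollement} together with the description
$$ \mathcal{F}^{rel}\subseteq\mathrm{add}A*\Sigma\mathrm{add}A*\cdots*\Sigma^{n-1}\mathrm{add}A $$
of the relative fundamental domain. The key observation is that $p^{*}=?\otimes_{A}^{\mathbf{L}}\overline{A}$ is a triangle functor with $p^{*}(A)=\overline{A}$, so it maps the relative fundamental domain $\mathcal{F}^{rel}\subseteq\per A$ into the (absolute) fundamental domain
$$ \mathcal{F}=\mathrm{add}\overline{A}*\Sigma\mathrm{add}\overline{A}*\cdots*\Sigma^{n-1}\mathrm{add}\overline{A}\subseteq\per\overline{A} $$
of the cluster category $\mathcal{C}_{n}(\overline{A})$. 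Since containment in $\per A$ is built into the definition of $\mathcal{F}^{rel}$, only the other two conditions need verification.

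For the aisle condition $\mathcal{F}^{rel}\subseteq\mathcal{D}(A)_{rel}^{\leqslant 0}$, I would take $X\in\mathcal{F}^{rel}$ and apply $p^{*}$ to obtain $p^{*}X\in\mathcal{F}$. By the description of $\mathcal{F}$ (and the fact that $\overline{A}$ is connective by Proposition~\ref{Prop: cofiber is fd}), $p^{*}X$ lies in $\mathcal{D}(\overline{A})^{\leqslant 0}$, i.e.\ $H^{l}(p^{*}X)=0$ for $l>0$. By the explicit description of the glued $t$-structure in Corollary~\ref{Relative t-structure}, this is precisely the condition for $X$ to belong to $\mathcal{D}(A)_{rel}^{\leqslant 0}$.

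For the orthogonality $\mathcal{F}^{rel}\subseteq{}^{\perp}(\mathcal{D}_{B}(A)_{rel}^{\leqslant -n})$, fix $X\in\mathcal{F}^{rel}$ and $Y\in\mathcal{D}_{B}(A)_{rel}^{\leqslant -n}$. Since $i_{*}Y=0$, the unit $Y\to p_{*}p^{*}Y$ is an isomorphism (the fully-faithfulness of $p_{*}$ in the recollement), and the definition of $\mathcal{D}(A)_{rel}^{\leqslant -n}$ gives $p^{*}Y\in\mathcal{D}(\overline{A})^{\leqslant -n}$. The adjunction $(p^{*},p_{*})$ then yields
$$ \mathrm{Hom}_{\mathcal{D}(A)}(X,Y)\cong\mathrm{Hom}_{\mathcal{D}(A)}(X,p_{*}p^{*}Y)\cong\mathrm{Hom}_{\mathcal{D}(\overline{A})}(p^{*}X,p^{*}Y). $$
Now $p^{*}X\in\mathcal{F}$, and by the remark following the definition of $\mathcal{F}$, we have $\mathcal{F}\subseteq{}^{\perp}\mathcal{D}(\overline{A})^{\leqslant -n}$; hence the right-hand Hom vanishes, giving the desired orthogonality.

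There is no serious obstacle here: the proof is essentially a bookkeeping exercise once one recognises that $p^{*}$ transports $\mathcal{F}^{rel}$ into $\mathcal{F}$. The only mild subtlety is to verify that the objects in $\mathcal{F}^{rel}$ genuinely have $\overline{A}$-filtrations of the required form after applying $p^{*}$, but this is immediate from $p^{*}(\Sigma^{i}\mathrm{add}A)=\Sigma^{i}\mathrm{add}\overline{A}$ and the fact that $p^{*}$ preserves triangles.
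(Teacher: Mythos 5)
Your proof is correct, but it is organised differently from the paper's. The paper's argument is a reduction to generators: it observes that each of $A,\Sigma A,\dots,\Sigma^{n-1}A$ lies in $\mathcal{D}(A)_{rel}^{\leqslant 0}\cap{}^{\perp}(\mathcal{D}_{B}(A)_{rel}^{\leqslant-n})\cap\per(A)$ and that this subcategory is closed under extensions, so the $n-1$ triangles of Remark~\ref{Rem:triangles in RF} propagate membership to every $X\in\mathcal{F}^{rel}$. You instead transport the whole problem to $\per\overline{A}$ via $p^{*}$: the aisle condition becomes the tautology $H^{l}(p^{*}X)=0$ for $l>0$ from Corollary~\ref{Relative t-structure}, and the orthogonality follows from the adjunction isomorphism $\Hom_{\mathcal{D}(A)}(X,Y)\cong\Hom_{\mathcal{D}(\overline{A})}(p^{*}X,p^{*}Y)$ valid for $Y\in\ker(i_{*})$, together with $\mathcal{F}\subseteq{}^{\perp}\mathcal{D}(\overline{A})^{\leqslant-n}$ (which holds by the very definition of $\mathcal{F}$, not merely by the remark after it). The two routes are close cousins — checking that $\Sigma^{i}A$ is left orthogonal to $\mathcal{D}_{B}(A)_{rel}^{\leqslant-n}$ in the paper's approach amounts to the same adjunction computation applied to a single generator — but yours makes the mechanism explicit and bypasses the need to invoke extension-closedness of a triple intersection, at the cost of first verifying $p^{*}(\mathcal{F}^{rel})\subseteq\mathcal{F}$ (which you do correctly via $p^{*}(\Sigma^{i}\add A)\subseteq\Sigma^{i}\add\overline{A}$ and the star description of $\mathcal{F}$ from the Remark). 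Both arguments are complete; no gap.
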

\begin{proof}
	Let $ X $ be an object in $ \mathcal{F}^{rel}=\mathcal{Z}\cap (\mathrm{add}A*\mathrm{add}A[1]*\cdots*\mathrm{add}A[n-1]) $. Since $ A, \Si A,\cdots,\Si^{n-1} A $ are in $ \mathcal{D}(A)_{rel}^{\leqslant 0}\cap{}^{\perp}(\mathcal{D}_{B}(A)_{rel}^{\leqslant-n})\cap \mathrm{per}(A) $, by using the triangles in Remark~\ref{Rem:triangles in RF}, we see that $ X $ also lies in $ \mathcal{D}(A)_{rel}^{\leqslant 0}\cap{}^{\perp}(\mathcal{D}_{B}(A)_{rel}^{\leqslant-n})\cap \mathrm{per}(A) $.
	
\end{proof}

\bigskip

We still denote by $ p^{*} $ the restriction of $ p^{*}\colon\mathrm{per}(A)\to \mathrm{per}(\overline{A}) $ to $ \mathcal{F}^{rel} $.
\begin{Prop}\label{p* to F is dense}
	The functor $ p^{*}\colon\mathcal{F}^{rel}\to\mathcal{F} $ is dense.
\end{Prop}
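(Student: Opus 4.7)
Given $Y\in\mathcal{F}$, I will construct $X\in\mathcal{F}^{rel}$ with $p^{*}(X)\cong Y$ by lifting, step by step, the tower of triangles produced by Lemma~\ref{addA' approximation}:
\[
P_{k+1}\ra Q_{k}\ra P_{k}\ra\Si P_{k+1},\qquad k=0,1,\ldots,n-2,
\]
with $P_{0}=Y$ and $Q_{0},\ldots,Q_{n-2},P_{n-1}\in\mathrm{add}\overline{A}$. Working from the bottom upwards, first set $M_{n-1}=A^{a_{n-1}}\in\add A$ where $P_{n-1}\cong\overline{A}^{a_{n-1}}$, and for each $k$ choose $N_{k}=A^{b_{k}}\in\add A$ with $p^{*}(N_{k})\cong Q_{k}$. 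At each step, lift the structure morphism $P_{k+1}\ra Q_{k}$ along $p^{*}$ to a morphism $M_{k+1}\ra N_{k}$, and define $M_{k}$ as its cone. By triangulatedness of $p^{*}$, we then have $p^{*}(M_{k})\cong P_{k}$, so $X\coloneqq M_{0}$ will satisfy $p^{*}(X)\cong Y$.

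The crux is that the required lift of morphisms exists. For this I use the recollement triangle from Corollary~\ref{Recollement of dg algebras}
\[
i^{*}i_{*}N_{k}\ra N_{k}\ra p_{*}p^{*}N_{k}\ra\Si i^{*}i_{*}N_{k},
\]
where $i^{*}i_{*}N_{k}\in\langle eA\rangle=\thick\,\mathcal{P}$, and in fact $i^{*}i_{*}N_{k}\in\add(eA)$ since $N_{k}\in\add A$. Applying $\Hom_{\per A}(M_{k+1},-)$ and the adjunction isomorphism $\Hom_{\per A}(M_{k+1},p_{*}p^{*}N_{k})\cong\Hom_{\per\overline{A}}(p^{*}M_{k+1},p^{*}N_{k})$, the obstruction to lifting lies in $\Hom_{\per A}(M_{k+1},\Si i^{*}i_{*}N_{k})$, which vanishes as soon as $M_{k+1}\in{}^{\perp}(\Si^{>0}\mathcal{P})$. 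Thus the induction works if each $M_{k+1}$ lies in $\mathcal{Z}$. To verify this I check inductively that every constructed $M_{k}$ is in $\mathcal{Z}$: the base case $\add A\subseteq\mathcal{Z}$ is a consequence of the connectivity of $A$, since $\Hom_{\per A}(A,\Si^{i}eA)=H^{i}(A)e=0$ and $\Hom_{\per A}(eA,\Si^{i}A)=eH^{i}(A)=0$ for $i>0$; the inductive step follows by applying $\Hom(-,\Si^{i}eA)$ and $\Hom(eA,\Si^{i}-)$ with $i>0$ to the triangle $M_{k+1}\ra N_{k}\ra M_{k}$ and using that the outer two terms lie in $\mathcal{Z}$. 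The base case for lifting a morphism between free $A$-modules further reduces to the surjectivity of $H^{0}(A)\twoheadrightarrow H^{0}(\overline{A})\cong H^{0}(A)/\langle e\rangle$ guaranteed by Proposition~\ref{Prop: cofiber is fd}.

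Finally, the shape of the triangles ensures $M_{k}\in\add A*\Si\add A*\cdots*\Si^{n-1-k}\add A$, so $X=M_{0}\in\mathcal{Z}\cap\bigl(\add A*\Si\add A*\cdots*\Si^{n-1}\add A\bigr)=\mathcal{F}^{rel}$, and by construction $p^{*}(X)\cong Y$, proving density. The main obstacle is the bookkeeping at each inductive step: one must simultaneously control (i) the vanishing of the recollement obstruction in $\Hom(M_{k+1},\Si i^{*}i_{*}N_{k})$ (which requires the left Hom-perp condition of $\mathcal{Z}$), and (ii) the membership of the newly formed cone $M_{k}$ in $\mathcal{Z}$ (which requires both Hom-perp conditions); once both aspects are tracked via the long exact sequences associated with the triangle $M_{k+1}\ra N_{k}\ra M_{k}$, the induction closes up cleanly.
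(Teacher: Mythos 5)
Your overall strategy is the same as the paper's (lift the tower of triangles from Lemma~\ref{addA' approximation} step by step, taking cones of lifted morphisms), but there is a genuine gap in the inductive step where you claim that each cone $M_{k}$ lies in $\mathcal{Z}$. Applying $\Hom_{\per A}(-,\Si^{i}eA)$ to the triangle $M_{k+1}\xrightarrow{g}N_{k}\ra M_{k}\ra\Si M_{k+1}$ gives the exact sequence
\begin{equation*}
\Hom(N_{k},\Si^{i-1}eA)\ra\Hom(M_{k+1},\Si^{i-1}eA)\ra\Hom(M_{k},\Si^{i}eA)\ra\Hom(N_{k},\Si^{i}eA),
\end{equation*}
and for $i=1$ the second term is $\Hom(M_{k+1},eA)$, which does \emph{not} vanish for $M_{k+1}\in\mathcal{Z}$. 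So $\Hom(M_{k},\Si eA)$ is the cokernel of $g^{*}\colon\Hom(N_{k},eA)\ra\Hom(M_{k+1},eA)$, and an arbitrary lift $g$ of $b_{k}$ gives no control over this cokernel. Your statement that the inductive step ``follows ... using that the outer two terms lie in $\mathcal{Z}$'' is therefore false at this degree. The paper's proof repairs exactly this point: it replaces the naive lift $g'_{k}\colon M_{k+1}\ra N'_{k}$ by $[g'_{k},h_{k}]^{t}\colon M_{k+1}\ra N'_{k}\oplus W_{k}$, where $h_{k}\colon M_{k+1}\ra W_{k}$ is a left $\add(eA)$-approximation; this forces $g^{*}$ to be surjective, hence $\Hom(M_{k},\Si eA)=0$, while leaving $p^{*}$ of the cone unchanged since $p^{*}(W_{k})=0$. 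Without this modification your induction does not close.

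A secondary issue: your lifting argument via the recollement triangle asserts that $i^{*}i_{*}N_{k}\in\add(eA)$, which is unjustified ($i^{*}i_{*}A=Ae\lten_{eAe}eA$ lies in $\thick(eA)$ but need not be in $\add(eA)$), and vanishing of $\Hom(M_{k+1},\Si\,i^{*}i_{*}N_{k})$ does not follow from $M_{k+1}\in{}^{\perp}(\Si^{>0}\mathcal{P})$ alone, since $\thick(eA)$ involves negative shifts of $eA$ as well. The paper instead obtains the surjectivity of $\Hom_{\mathcal{Z}}(M_{k+1},N_{k})\ra\Hom_{\per\overline{A}}(p^{*}M_{k+1},p^{*}N_{k})$ directly from the full faithfulness of $\mathcal{Z}/[\mathcal{P}]\ra\per\overline{A}$ provided by silting reduction (Theorem~\ref{silting reduction}); you should use that instead.
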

\begin{proof}
	It is easy to see that $ p^{*} $ is well defined. Let $ Y $ be an object in $ \mathcal{F}\subseteq \mathrm{per}\overline{A} $. By Lemma~\ref{addA' approximation}, there exist $ n-1 $ triangles in $ \mathrm{per}\overline{A} $
	$$P_{1}\xrightarrow{b_{0}} Q_{0}\rightarrow Y\to\Si P_{1},$$
	$$ P_{2}\xrightarrow{b_{1}} Q_{1}\rightarrow P_{1}\to\Si P_{2},$$
	$$ \cdots $$
	$$P_{n-2}\xrightarrow{b_{n-3}} Q_{n-3}\rightarrow P_{n-3}\to\Si P_{n-2} ,$$
	$$P_{n-1}\xrightarrow{b_{n-2}} Q_{n-2}\rightarrow P_{n-2}\to\Si P_{n-1},
	$$
	with $ Q_{0} $, $ Q_{1} $, $ \cdots $, $ Q_{n-2} $ and $ P_{n-1} $ in $ \mathrm{add}( \overline{A}) $.
	
	We start from the last triangle. Recall that $ \cz $ is the following subcategory of $ \per A $
	$$ \cz={}^{\perp}(\Si^{>0}\mathcal{P})\cap(\Si^{<0}\mathcal{P})^{\perp} $$ with $ \mathcal{P}=\mathrm{add}(eA) $. 
	
	Since the functor $ p^{*}\colon\mathrm{add}A\subseteq\mathcal{Z}\longrightarrow \mathrm{add}\overline{A} $ is dense, there exist two objects $ M'_{n-1} $, $ N'_{n-2} $ in $ \add A $ such that $ p^{*}(M'_{n-1})\cong P_{n-1} $ and $ p^{*}(N'_{n-2})\cong Q_{n-2} $. We know that $ p^{*}\colon\mathcal{Z}/[\mathcal{P}]\iso\mathrm{per}A/\langle eA\rangle \rightarrow \mathrm{per}(\overline{A}) $ is fully faithful. Thus we have the following surjective map
	$$ \Hom_{\mathcal{Z}}(M'_{n-1},N'_{n-2})\twoheadrightarrow \Hom_{\mathrm{per}(\overline{A})}(P_{n-1},Q_{n-2}) .$$
	
	We lift the map $ b_{n-2}\colon P_{n-1}\rightarrow Q_{n-2} $ from $ \add(\overline{A}) $ to $ \mathrm{add}(A)\subseteq\mathcal{Z} $. Then we get
	$ g'_{n-2}\colon M'_{n-1}\rightarrow N'_{n-1} $ such that $ p^{*}(g'_{n-2})\cong b_{n-2} $.
	
	Since $ \mathcal{P} $ is covariantly finite and contravariantly finite in $ \mathcal{Z} $, we can find $ h_{n-2}\colon M'_{n-1}\rightarrow W_{n-2} $ a left $ \mathrm{add}({e}A) $-approximation of $ M'_{n-1} $. We define 
	$$ (M_{n-1}\xrightarrow{g_{n-2}} N_{n-2})\coloneqq(M'_{n-1}\xrightarrow{[g'_{n-2},\ h_{n-2}]^{t}}N'_{n-2}\oplus W_{n-2}) .$$ Then we can see that $ p^{*}(g_{n-2})\cong b_{n-2} $ and the following map is surjective
	$$ g_{n-2}^{*}\colon \Hom_{\mathrm{per}A}(N_{n-2},eA) \rightarrow \Hom_{\per A}(M_{n-1},eA) .$$ 
	We form a triangle in $ \mathrm{per}A $
	$$ M_{n-1}\xrightarrow{g_{n-2}} N_{n-2}\rightarrow M_{n-2}\rightarrow \Si M_{n-1} .$$ Then $ p^{*}(M_{n-2}) $ is isomorphic to $ P_{n-2} $.
	
	Since the map $ g_{n-2}^{*}\colon \Hom_{\per A}(N_{n-2},eA) \rightarrow \Hom_{\per A}(M_{n-1},eA) $ is surjective, we can see that $ M_{n-2} $ is an object in $ \mathcal{Z} $.
	
	Next, we consider the penultimate triangle. Repeating the above argument, we get a triangle in $ \mathrm{per}A $
	$$ M_{n-2}\xrightarrow{g_{n-3}} N_{n-3}\rightarrow M_{n-3}\rightarrow \Si M_{n-2} $$ such that $ N_{n-3}\in \add A $, $ p^{*}(N_{n-3})\cong Q_{n-3} $, $ p^{*}(g_{n-3})\cong b_{n-3} $, $ p^{*}(M_{n-3})\cong P_{n-3} $ and 
	$ M_{n-3}\in\mathcal{Z} $.
	
	Then, we keep repeating this argument until the first triangle. We get the following $ n-1 $ triangles in $ \per A $
	$$ M_{1}\xrightarrow{g_{0}}N_{0}\rightarrow X\rightarrow\Si M_{1} ,$$
	$$ M_{2}\xrightarrow{g_{1}}N_{1}\rightarrow M_{1}\rightarrow\Si M_{2} ,$$
	$$ \cdots $$
	$$ M_{n-2}\xrightarrow{g_{n-3}}N_{n-3}\rightarrow M_{n-3}\rightarrow\Si M_{n-2} ,$$
	$$ M_{n-1}\xrightarrow{g_{n-2}}N_{n-2}\rightarrow M_{n-2}\rightarrow\Si M_{n-1} $$
	such that $ M_{n-1},N_{n-2},\cdots,N_{0}\in \mathrm{add}A $, $ X\in\mathcal{Z} $ and $ p^{*}(X)\cong Y $. 
	Thus, the object $ X $ belongs to $ \mathcal{F}^{rel} $ and therefore $ p^{*}\colon\mathcal{F}^{rel}\rightarrow\mathcal{F} $ is dense.
	
\end{proof}

\bigskip

Recall that the fundamental domain $ \mathcal{F} $ of $ \per\overline{A} $ is the full subcategory $ \mathcal{D}(\overline{A})^{\leqslant 0}\cap$  $^{\perp}\mathcal{D}(\overline{A})^{\leqslant-n}\cap \mathrm{per}(\overline{A})$ of $ \per\overline{A} $. For any two objects in $ \cz={}^{\perp}(\Si^{>0}\mathcal{P})\cap(\Si^{<0}\mathcal{P})^{\perp} $ with $ \mathcal{P}=\mathrm{add}(eA) $, we have the following useful lemma to compare their extension groups in $ \per A $ and $ \per\overline{A} $.
\begin{Lem}\label{Extension group in Z}
	Let $ X $ and $ Y $ be two objects in $ \mathcal{Z} $. Let $ l>0 $ be an integer. Then we have
	$$ \Hom_{\mathrm{per}A}(X,\Si^{l}Y)\cong \Hom_{\mathrm{per}(\overline{A})}(p^{*}(X),\Si^{l}p^{*}(Y)). $$ 
\end{Lem}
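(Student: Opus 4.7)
The plan is to reduce the claim to the silting reduction equivalence of Theorem~\ref{Thm:silting reduction}. Combined with the recollement-induced equivalence $p^{*}\colon\per A/\langle eA\rangle\iso\per\overline{A}$ of Corollary~\ref{Recollement of dg algebras}, this produces a triangle equivalence $\Phi\colon\mathcal{Z}/[\mathcal{P}]\iso\per\overline{A}$ sending each $Z\in\mathcal{Z}$ to $p^{*}Z$. Since $p^{*}(eA)=0$ in the recollement, applying $p^{*}$ to the defining triangle $Y\xrightarrow{l_{Y}}P_{Y}\ra Y\langle 1\rangle\ra\Si Y$ of the shift functor on $\mathcal{Z}/[\mathcal{P}]$ (Theorem~\ref{Tiangule structure for Z/P}) yields $p^{*}(Y\langle 1\rangle)\cong\Si p^{*}Y$ in $\per\overline{A}$, and inductively $\Phi(Y\langle l\rangle)\cong\Si^{l}p^{*}Y$ for every $l\geqslant 0$. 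Hence
$$ \Hom_{\per\overline{A}}(p^{*}X,\Si^{l}p^{*}Y)\cong\Hom_{\mathcal{Z}/[\mathcal{P}]}(X,Y\langle l\rangle), $$
and the problem reduces to constructing, for each $l\geqslant 1$, a natural isomorphism
$$ \Hom_{\mathcal{Z}/[\mathcal{P}]}(X,Y\langle l\rangle)\iso\Hom_{\per A}(X,\Si^{l}Y). \quad (\ast) $$

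I would prove $(\ast)$ by induction on $l$. For the base case $l=1$, apply $\Hom_{\per A}(X,-)$ to the triangle $Y\ra P_{Y}\ra Y\langle 1\rangle\ra\Si Y$; since $X\in{}^{\perp}(\Si^{>0}\mathcal{P})$, we have $\Hom_{\per A}(X,\Si P_{Y})=0$, producing a surjection $\Hom_{\per A}(X,Y\langle 1\rangle)\twoheadrightarrow\Hom_{\per A}(X,\Si Y)$ whose kernel is the image of $\Hom_{\per A}(X,P_{Y})$. The key step is to identify this image with the ideal $[\mathcal{P}](X,Y\langle 1\rangle)$ of morphisms factoring through $\mathcal{P}$: one containment is immediate, and for the other, given any factorization $X\xrightarrow{u}P\xrightarrow{v}Y\langle 1\rangle$ with $P\in\mathcal{P}$, apply $\Hom_{\per A}(P,-)$ to the same triangle; the condition $Y\in(\Si^{<0}\mathcal{P})^{\perp}$ forces $\Hom_{\per A}(P,\Si Y)=0$, so $v$ lifts along $P_{Y}\ra Y\langle 1\rangle$, whence $vu$ lies in the image of $\Hom_{\per A}(X,P_{Y})$.

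For the inductive step $l\geqslant 2$, first apply the base case to $Y\langle l-1\rangle\in\mathcal{Z}$ in place of $Y$ to obtain
$$ \Hom_{\mathcal{Z}/[\mathcal{P}]}(X,Y\langle l\rangle)\cong\Hom_{\per A}(X,\Si Y\langle l-1\rangle). $$
For each $1\leqslant j\leqslant l-1$, apply $\Hom_{\per A}(X,\Si^{l-j}(?))$ to the triangle $Y\langle j-1\rangle\ra P_{Y\langle j-1\rangle}\ra Y\langle j\rangle\ra\Si Y\langle j-1\rangle$; because $l-j\geqslant 1$ and $X\in{}^{\perp}(\Si^{>0}\mathcal{P})$, both bounding terms $\Hom_{\per A}(X,\Si^{l-j}P_{Y\langle j-1\rangle})$ and $\Hom_{\per A}(X,\Si^{l-j+1}P_{Y\langle j-1\rangle})$ vanish, yielding
$$ \Hom_{\per A}(X,\Si^{l-j}Y\langle j\rangle)\iso\Hom_{\per A}(X,\Si^{l-j+1}Y\langle j-1\rangle). $$
Composing these isomorphisms for $j=l-1,l-2,\ldots,1$ chains $\Hom_{\per A}(X,\Si Y\langle l-1\rangle)$ to $\Hom_{\per A}(X,\Si^{l}Y)$, completing the induction.

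The main subtle point is the base case identification of the kernel of $\Hom_{\per A}(X,Y\langle 1\rangle)\twoheadrightarrow\Hom_{\per A}(X,\Si Y)$ with the ideal $[\mathcal{P}](X,Y\langle 1\rangle)$: this is exactly where both orthogonality conditions defining $\mathcal{Z}$ are used in tandem, one to produce the vanishing that yields the surjection and the other to lift factorizations through $P_{Y}$. Everything else amounts to routine vanishing bookkeeping in the cofiber sequences of the shift $\langle 1\rangle$.
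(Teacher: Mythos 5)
Your proof is correct and follows essentially the same route as the paper: iterated left $\add(eA)$-approximation triangles for $Y$, the vanishing of $\Hom_{\per A}(X,\Si^{>0}\mathcal{P})$ to chain the Hom-spaces down, and the identification of the resulting quotient with $\Hom_{\mathcal{Z}/[\mathcal{P}]}(X,Y\langle l\rangle)\cong\Hom_{\per\overline{A}}(p^{*}X,\Si^{l}p^{*}Y)$ via silting reduction. The only organizational difference is that you isolate the case $l=1$ and prove explicitly that the image of $\Hom_{\per A}(X,P_{Y})$ equals the ideal $[\mathcal{P}](X,Y\langle1\rangle)$ — a point the paper leaves implicit in its final chain of isomorphisms — which is a welcome clarification rather than a departure.
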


\begin{proof}
	Let $ X $ and $ Y $ be two objects in $ \mathcal{Z} $. For the object $ Y $, we have the following triangle in $ \mathrm{per}A $
	$$ Y\xrightarrow{f_{1}} P_{Y_{1}}\rightarrow Y_{1}\rightarrow \Si Y ,$$ where $ f_{1}\colon Y\rightarrow P_{Y_{1}} $ is a left $ \mathrm{add}(eA) $-approximation. Since $ f_{1} $ is a a left $ \mathrm{add}(eA) $-approximation, it is not hard to see that $ Y_{1} $ lies in $ \cz $.
	
	Similarly, for the object $ Y_{1} $, we have the following triangle in $ \mathrm{per}A $
	$$ Y_{1}\xrightarrow{f_{2}} P_{Y_{2}}\rightarrow Y_{2}\rightarrow \Si Y_{1} ,$$ where $ f_{2}\colon Y_{1}\rightarrow P_{Y_{2}} $ is a left $ \mathrm{add}(eA) $-approximation and $ Y_{2} $ lies in $ \cz $.
	
	Repeating this process, we can get the following $ l $ triangles in $ \mathrm{per}A $
	$$ Y\xrightarrow{f_{1}} P_{Y_{1}}\rightarrow Y_{1}\rightarrow\Si Y ,$$
	$$ Y_{1}\xrightarrow{f_{2}} P_{Y_{2}}\rightarrow Y_{2}\rightarrow\Si Y_{1} ,$$
	$$ \cdots $$
	$$ Y_{l-1}\xrightarrow{f_{l}} P_{Y_{l}}\rightarrow Y_{l}\rightarrow\Si Y_{l-1} ,$$
	where for each $ 1\leqslant i\leqslant l $, $ f_{i} $ is a left $ \mathrm{add}(eA) $-approximation. Moreover each $ Y_{i} $ lies in $ \cz $.
	
	By the first triangle, we can see that
	$$ \Hom_{\mathrm{per}A}(X,\Si^{l-1}Y_{1})\cong \Hom_{\mathrm{per}A}(X,\Si^{l}Y) .$$
	Similarly, by the second triangle, we can see that
	$$ \Hom_{\mathrm{per}A}(X,\Si^{l-2}Y_{2})\cong \Hom_{\mathrm{per}A}(X,\Si^{l-1}Y_{1}) .$$
	Repeating this argument, we have 
	\begin{equation*}
		\begin{split}
			\Hom_{\mathrm{per}A}(X,\Si^{l}Y)&\cong \Hom_{\mathrm{per}A}(X,\Si^{l-1}Y_{1})\\
			&\cong \Hom_{\mathrm{per}A}(X,\Si^{l-2}Y_{2})\\
			&\cdots\\
			&\cong \Hom_{\mathrm{per}A}(X,\Si Y_{l-1}).
		\end{split}
	\end{equation*}
	
	By the last triangle, it induces a long exact sequence
	$$ \rightarrow \Hom_{\mathrm{per}A}(X,P_{Y_{l}})\xrightarrow{\Phi} \Hom_{\mathrm{per}A}(X,Y_{l})\rightarrow \Hom_{\mathrm{per}A}(X,\Si Y_{l-1})\rightarrow0 .$$
	Thus we have
	\begin{equation*}
		\begin{split}
			\Hom_{\mathrm{per}A}(X,\Si^{l}Y)&\cong \Hom_{\mathrm{per}A}(X,\Si Y_{l-1})\\
			&\cong \Hom_{\mathrm{per}A}(X,Y_{l})/\Ima(\Phi)\\
			&\cong \Hom_{\mathcal{Z}/[\mathcal{P}]}(X,Y\langle l\rangle)\\
			&\cong \Hom_{\mathrm{per}(\overline{A})}(p^{*}(X),\Si^{l}p^{*}(Y)).
		\end{split}
	\end{equation*}
	
\end{proof}

\begin{Prop}\label{p* is dense}
	The functor $ p^{*}\colon\mathrm{per}A\rightarrow \mathrm{per}\overline{A} $ is dense. Thus, we have equivalences $$ \mathcal{Z}/[\mathcal{P}]\simeq \mathrm{per}A/\langle eA\rangle\simeq \mathrm{per}\overline{A} ,$$ where $ \mathcal{Z}={}^{\perp}(\Si^{>0}\mathcal{P})\cap(\Si^{<0}\mathcal{P})^{\perp} $ with $ \mathcal{P}=\mathrm{add}(eA) $.
\end{Prop}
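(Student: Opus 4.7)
The plan is to prove essential surjectivity of $p^{*}\colon\per A\to\per\overline{A}$ by induction on the width of an object with respect to the canonical co-$t$-structure on $\per\overline{A}$ (Section~\ref{subsection:The standard co-t-structure}), using Proposition~\ref{p* to F is dense} as the base case and Lemma~\ref{Extension group in Z} to lift connecting morphisms in the inductive step. Once density is established, the two stated equivalences follow immediately: the first, $\mathcal{Z}/[\mathcal{P}]\iso\per A/\langle eA\rangle$, is Theorem~\ref{Thm:silting reduction}, while the second, $\per A/\langle eA\rangle\iso\per\overline{A}$, upgrades from the equivalence ``up to direct summands'' of Corollary~\ref{Recollement of dg algebras} as soon as the quotient functor becomes dense.

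By Proposition~\ref{Prop: cofiber is fd} the dg algebra $\overline{A}$ is connective, so $\per\overline{A}$ carries the canonical bounded co-$t$-structure with co-heart $\add\overline{A}$. Any $X\in\per\overline{A}$ lies in some $\Sigma^{-m}\add\overline{A}*\cdots*\Sigma^{-l}\add\overline{A}$ with $l\leqslant m$; set $w(X)=m-l$ and induct on $w(X)$. When $w(X)\leqslant n-1$ the shift $\Sigma^{l}X$ lies in $\mathcal{F}$, so Proposition~\ref{p* to F is dense} supplies $\tilde{W}\in\mathcal{F}^{rel}$ with $p^{*}\tilde{W}\cong\Sigma^{l}X$ and $\tilde{X}=\Sigma^{-l}\tilde{W}$ is the desired lift. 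For $w(X)\geqslant n$, the co-$t$-structure produces a triangle $Y\to X\to Z\to\Sigma Y$ with $w(Y),w(Z)<w(X)$; by the inductive hypothesis we obtain $\tilde{Y}_{0},\tilde{Z}_{0}\in\per A$ with $p^{*}\tilde{Y}_{0}\cong Y$ and $p^{*}\tilde{Z}_{0}\cong Z$. Using the equivalence $\mathcal{Z}\hookrightarrow\per A\twoheadrightarrow\per A/\langle eA\rangle$ of Theorem~\ref{Thm:silting reduction}, we replace these by representatives $\tilde{Y},\tilde{Z}\in\mathcal{Z}$ isomorphic to $\tilde{Y}_{0},\tilde{Z}_{0}$ in $\per A/\langle eA\rangle$; since $p^{*}$ vanishes on $\langle eA\rangle$ and therefore factors through this quotient, we still have $p^{*}\tilde{Y}\cong Y$ and $p^{*}\tilde{Z}\cong Z$. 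Lemma~\ref{Extension group in Z} with $l=1>0$ then yields an isomorphism
\[
\Hom_{\per A}(\tilde{Z},\Sigma\tilde{Y})\iso\Hom_{\per\overline{A}}(Z,\Sigma Y),
\]
so the given connecting morphism lifts to a morphism $\tilde{Z}\to\Sigma\tilde{Y}$ in $\per A$; completing it to a triangle $\tilde{Y}\to\tilde{X}\to\tilde{Z}\to\Sigma\tilde{Y}$ and applying the triangle functor $p^{*}$, axiom $(\mathrm{TR3})$ supplies an isomorphism $p^{*}\tilde{X}\cong X$.

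The step demanding the most care is the reduction to $\mathcal{Z}$-representatives before applying Lemma~\ref{Extension group in Z}; the positivity hypothesis ($l>0$) of that lemma matches exactly because the connecting morphism in the inductive triangle has shift $+1$. The remaining subtleties are tracking the co-$t$-structure decomposition so that both pieces have strictly smaller width, and verifying that isomorphism in $\per A/\langle eA\rangle$ genuinely preserves the $p^{*}$-image, which in turn uses that the induced functor in Corollary~\ref{Recollement of dg algebras} is well-defined on the quotient and faithful.
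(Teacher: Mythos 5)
Your proof is correct and rests on the same two pillars as the paper's own argument: producing lifts in $\mathcal{Z}$ of the pieces of a co-$t$-structure decomposition of an object of $\per\overline{A}$, and lifting the connecting morphism through the Hom-isomorphism of Lemma~\ref{Extension group in Z} before completing the triangle. The organization differs in a way worth noting. The paper performs a single split $X\to Z\to Y\to\Sigma X$ into the co-aisle and aisle and then lifts each half by an explicit construction that lands in $\mathcal{Z}$ from the outset: the connective part by re-running the approximation argument of Proposition~\ref{p* to F is dense} on its defining tower of triangles, and the co-connective part by lifting a suitable positive shift and then applying the co-suspension $\langle-1\rangle$ (cones of right $\add(eA)$-approximations) the required number of times. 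You instead induct on the width of the decomposition, which makes the bookkeeping cleaner but forces an extra step: the inductive hypothesis only delivers lifts in $\per A$, so you must pass to isomorphic representatives in $\mathcal{Z}$ via the silting-reduction equivalence of Theorem~\ref{Thm:silting reduction} before Lemma~\ref{Extension group in Z} applies. Your justification of that replacement --- density of $\mathcal{Z}/[\mathcal{P}]\to\per A/\langle eA\rangle$ together with the fact that $p^{*}$ kills $\langle eA\rangle$ and hence factors through the quotient with a fully faithful induced functor --- is sound, and the positivity hypothesis of Lemma~\ref{Extension group in Z} is indeed met since the connecting morphism has shift $+1$. The only slip is notational: with your indexing $X\in\Sigma^{-m}\add\overline{A}*\cdots*\Sigma^{-l}\add\overline{A}$, it is $\Sigma^{m}X$, not $\Sigma^{l}X$, that lands in $\mathcal{F}=\add\overline{A}*\Sigma\add\overline{A}*\cdots*\Sigma^{n-1}\add\overline{A}$; this does not affect the argument.
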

\begin{proof}
	There is a canonical co-t-structure $ (\mathrm{per}(\overline{A})_{\geqslant0},\mathrm{per}(\overline{A}))_{\leqslant0} $ on $ \mathrm{per}(\overline{A}) $, where $$ \mathrm{per}(\overline{A})_{\geqslant0}=\bigcup_{n\geqslant0}\Si^{-n}\mathrm{add}(\overline{A})*\cdots*\Si^{-1}\mathrm{add}(\overline{A})*\mathrm{add}(\overline{A}) ,$$
	$$ \mathrm{per}(\overline{A})_{\leqslant0}=\bigcup_{n\geqslant0}\mathrm{add}(\overline{A})*\Si\mathrm{add}(\overline{A})*\cdots*\Si^{n}\mathrm{add}(\overline{A}) .$$
	
	Let $ Z $ be an object in $ \mathrm{per}(\overline{A}) $. By using the canonical co-t-structure on $ \per(\overline{A}) $, we have a triangle in $ \mathrm{per}(\overline{A}) $
	$$ X\rightarrow Z\rightarrow Y\xrightarrow{h} \Si X ,$$ with $ X\in \mathrm{per}(\overline{A})_{\geqslant0} $ and $ Y\in \mathrm{per}(\overline{A})_{\leqslant0} $.
	
	We will find objects $ U,V\in\mathcal{Z}\subseteq \mathrm{per}A $ such that $ p^{*}(U)\cong X $ and $ p^{*}(V)\cong Y $. Suppose that $ X $ is in $ \Si^{-n_{0}}\add(\overline{A})*\cdots*\Si^{-1}\add(\overline{A})*\add(\overline{A}) $ and $ Y $ is in $ \mathrm{add}(\overline{A})*\Si\mathrm{add}(\overline{A})*\cdots*\Si^{n_{1}}\mathrm{add}(\overline{A}) $. 
	
	For the object $ Y $, if $ n_{1}=0 $, it is easy to find such $ V $. So we can assume that $ n_{1}\geqslant1 $. Thus there are $ n_{1} $ triangles in $ \mathrm{per}(\overline{A}) $
	$$ P_{1}\rightarrow Q_{0}\rightarrow Y\rightarrow\Si P_{1} ,$$
	$$ P_{2}\rightarrow Q_{1}\rightarrow P_{1}\rightarrow\Si P_{2} ,$$
	$$\cdots$$
	$$ P_{n_{1}}\rightarrow Q_{n_{1}-1}\rightarrow P_{n_{1}-1}\rightarrow\Si P_{n_{1}} ,$$ with $ P_{n_{1}},Q_{n_{1}-1},\cdots,Q_{0} \in \mathrm{add}(\overline{A}) $. 
	
	It follows by a similar argument to that for Proposition~\ref{p* to F is dense} that there is an object $ V\in\mathcal{Z}\subseteq \mathrm{per}A $ such that $ p^{*}(V)\cong Y $.
	
	For the object $ X\in \Si^{-n_{0}}\mathrm{add}(\overline{A})*\cdots*\Si^{-1}\mathrm{add}(\overline{A})*\mathrm{add}(\overline{A}) $, we have $ \Si^{n_{0}} X $ in $ \mathrm{add}(\overline{A})*\Si\mathrm{add}(\overline{A})*\cdots*\Si^{n_{0}}\mathrm{add}(\overline{A}) $. Thus there exists an object $ U'\in\mathcal{Z} $ such that $ p^{*}(U')\cong \Si^{n_{0}} X $. 
	
	Since $ \mathcal{P}=\mathrm{add}(eA) $ is covariantly finite and contravariantly finite in $ \mathcal{Z} $, we can take the following $ n_{0}+1 $ triangles in $ \mathrm{per}A $
	$$ U'\langle-1\rangle\rightarrow R_{0}\xrightarrow{f_{0}}U'\rightarrow\Si U'\langle-1\rangle ,$$
	$$ U'\langle-2\rangle\rightarrow R_{-1}\xrightarrow{f_{-1}}U'\langle-1\rangle\rightarrow \Si U'\langle-2\rangle ,$$
	$$ \cdots $$
	$$ U'\langle-n_{0}-1\rangle\rightarrow R_{-n_{0}}\xrightarrow{f_{-n_{0}}}U'\langle-n_{0}\rangle\rightarrow \Si U'\langle-n_{0}-1\rangle $$ with $ f_{i} $ a right $ \mathrm{add}(eA) $-approximation for any $ -n_{0}\leqslant i\leqslant0 $. Then the object $ p^{*}(U')\cong \Si^{n_{0}}X $ is isomorphic to $ \Si^{n_{0}}p^{*}(U'\langle-n_{0}\rangle) $. Let $ U=U'\langle-n_{0}\rangle $. Thus, we have $ p^{*}(U) \iso X $. 
	
	Since $ \mathcal{Z}/[\mathcal{P}]\cong \mathrm{per}A/\langle eA\rangle\rightarrow \mathrm{per}(\overline{A}) $ is fully faithful, the following map is a surjection (see Lemma~\ref{Extension group in Z})
	$$ \Hom_{\mathcal{Z}}(V,\Si U\langle-n_{0}\rangle)\twoheadrightarrow \Hom_{\mathcal{Z}/[\mathcal{P}]}(Y,\Si X)= \Hom_{\mathrm{per}(\overline{A})}(Y,\Si X) .$$ 
	We can lift the following triangle in $ \mathrm{per}(\overline{A}) $
	$$ X\rightarrow Z\rightarrow Y\xrightarrow{h} \Si X ,$$
	to a triangle in $ \mathrm{per}A $
	$$ U\langle-n_{0}\rangle\rightarrow W\rightarrow V\xrightarrow{h'} \Si U\langle-n_{0}\rangle .$$
	
	Therefore, the object $ p^{*}(W) $ is isomorphic to $ Z $. Hence the functor $ p^{*}\colon\mathrm{per}A\rightarrow \mathrm{per}(\overline{A}) $ is dense.

\end{proof}

\begin{Cor}\label{Cor: dense to tri equivalence}
	We have a triangle equivalence
	\begin{align*}
		\xymatrix{
			p^{*}\colon\mathrm{per}(A)/\mathrm{per}(eAe)\ra\mathrm{per}(\overline{A}).
		}
	\end{align*}
\end{Cor}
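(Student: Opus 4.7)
The statement follows by combining two facts already established in the excerpt. First, Corollary~\ref{Recollement of dg algebras} gives the recollement relating $\cd(\overline{A})$, $\cd(A)$ and $\cd(eAe)$, together with the explicit description of the adjoints. Restricting to the perfect derived categories and using that the kernel of $p^*\colon \per A \to \per(\overline{A})$ coincides with the thick subcategory $\langle eA\rangle_{\per A} \simeq \per(eAe)$, the standard recollement yoga yields a fully faithful triangle functor
\[
\overline{p^*}\colon \per(A)/\per(eAe) \hookrightarrow \per(\overline{A}),
\]
which is already known to be a triangle equivalence up to direct summands.

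The only remaining point is therefore to upgrade \enquote{up to direct summands} to an honest equivalence, that is, to verify essential surjectivity of $\overline{p^*}$. This is exactly the content of Proposition~\ref{p* is dense}, which asserts that every object $Z \in \per(\overline{A})$ admits a lift $W \in \per A$ with $p^*(W) \cong Z$. Composing such a lift with the canonical Verdier quotient $\per A \twoheadrightarrow \per(A)/\per(eAe)$ produces a preimage of $Z$ under $\overline{p^*}$, so $\overline{p^*}$ is dense.

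Density together with fully faithfulness then gives the claimed triangle equivalence, so the proof is essentially a two-line assembly. The only non-formal step is the density of $p^*$ on the whole perfect derived category, and this was the main obstacle, already dispatched in Proposition~\ref{p* is dense} by a careful lifting argument: one decomposes an arbitrary object of $\per(\overline{A})$ via the canonical co-t-structure $(\per(\overline{A})_{\geqslant 0}, \per(\overline{A})_{\leqslant 0})$ described in Subsection~\ref{subsection:The standard co-t-structure}, lifts the resulting pieces in $\add(\overline{A})$ and morphisms between them to $\mathcal{Z} = {}^{\perp}(\Si^{>0}\mathcal{P}) \cap (\Si^{<0}\mathcal{P})^{\perp}$ using the silting reduction equivalence $\mathcal{Z}/[\mathcal{P}] \iso \per(\overline{A})$ of Theorem~\ref{Thm:silting reduction} and the covariant/contravariant finiteness of $\mathcal{P} = \add(eA)$ in $\mathcal{Z}$, and finally reassembles them into a lift in $\per A$. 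The present Corollary is then a purely formal consequence.
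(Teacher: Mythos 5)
Your proof is correct and follows the same route as the paper: the paper's own argument is precisely the combination of Corollary~\ref{Recollement of dg algebras} (equivalence up to direct summands) with the density statement of Proposition~\ref{p* is dense}. Your additional summary of how density is proved is accurate but not needed for this corollary.
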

\begin{proof}
	It follows from Corollary \ref{Recollement of dg algebras} and Proposition \ref{p* is dense}.
\end{proof}

\begin{Cor}\label{Equivalence between fundamental domain}
	We have the following equivalence of $ k $-categories
	$$ p^{*}\colon \mathcal{F}^{rel}/[\mathcal{P}]\iso\mathcal{F} .$$
\end{Cor}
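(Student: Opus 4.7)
The plan is to reduce the corollary to three facts that have already been established: the density of $p^{*}\colon\mathcal{F}^{rel}\to\mathcal{F}$ (Proposition~\ref{p* to F is dense}), the triangle equivalence $p^{*}\colon\mathcal{Z}/[\mathcal{P}]\iso\per(\overline{A})$ (combining Theorem~\ref{Thm:silting reduction} with Corollary~\ref{Cor: dense to tri equivalence}), and the compatibility of $p^{*}$ with the standard co-$t$-structures on $\per A$ and $\per\overline{A}$.

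First I would verify that the functor $p^{*}\colon\mathcal{F}^{rel}/[\mathcal{P}]\to\mathcal{F}$ is well-defined. Since $eA$ lies in $\per(eAe)=\langle eA\rangle_{\per A}$ which is the essential image of $i^{*}$, the recollement~(\ref{dg algebra recollement}) yields $p^{*}(eA)=0$, so $p^{*}$ factors through the ideal $[\mathcal{P}]=[\mathrm{add}(eA)]$. Moreover, since $\mathcal{F}^{rel}\subseteq\mathrm{add}A*\Si\mathrm{add}A*\cdots*\Si^{n-1}\mathrm{add}A$ and $p^{*}$ maps $\mathrm{add}A$ to $\mathrm{add}\overline{A}$ and commutes with $\Si$, we obtain
\[
p^{*}(\mathcal{F}^{rel})\subseteq\mathrm{add}\overline{A}*\Si\mathrm{add}\overline{A}*\cdots*\Si^{n-1}\mathrm{add}\overline{A}=\mathcal{F}.
\]
Thus $p^{*}$ descends to a well-defined $k$-linear functor $\mathcal{F}^{rel}/[\mathcal{P}]\to\mathcal{F}$.

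Next I would argue full faithfulness. By Proposition~\ref{p* is dense} and Theorem~\ref{Thm:silting reduction}, the composition $\mathcal{Z}\hookrightarrow\per A\twoheadrightarrow\per A/\langle eA\rangle\iso\per\overline{A}$ induces a triangle equivalence $p^{*}\colon\mathcal{Z}/[\mathcal{P}]\iso\per(\overline{A})$. Restricting this equivalence along the inclusion $\mathcal{F}^{rel}/[\mathcal{P}]\hookrightarrow\mathcal{Z}/[\mathcal{P}]$ shows that $p^{*}\colon\mathcal{F}^{rel}/[\mathcal{P}]\to\mathcal{F}$ is fully faithful. Combined with density, which is exactly the content of Proposition~\ref{p* to F is dense}, this yields the desired equivalence of $k$-categories.

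There is no real obstacle in this argument: all the substantial work (establishing the silting reduction equivalence, the density of $p^{*}$ on the fundamental domains, and the lifting of triangles from $\mathcal{F}$ to $\mathcal{F}^{rel}$) has already been carried out in the preceding propositions. The corollary is essentially a packaging of Proposition~\ref{p* to F is dense}, Proposition~\ref{p* is dense} and the silting reduction equivalence into a single statement at the level of the (relative) fundamental domains, compatible with the chosen additive structures.
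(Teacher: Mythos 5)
Your proof is correct and follows essentially the same route as the paper: density comes from Proposition~\ref{p* to F is dense}, and full faithfulness is inherited by restricting the silting-reduction equivalence $\mathcal{Z}/[\mathcal{P}]\iso\per(\overline{A})$ to the full subcategory $\mathcal{F}^{rel}/[\mathcal{P}]$. The extra well-definedness checks you include (that $p^{*}$ kills $\mathcal{P}$ and lands in $\mathcal{F}$) are correct and only make explicit what the paper treats as immediate.
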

\begin{proof}
	By Proposition~\ref{p* to F is dense}, we know that the quotient functor $ \mathcal{F}^{rel}/[\mathcal{P}]\rightarrow\mathcal{F} $ is dense. Since we have an equivalence $ \mathcal{Z}/[\mathcal{P}]\iso \mathrm{per}\overline{A} $, this quotient functor $ \mathcal{F}^{rel}/[\mathcal{P}]\rightarrow\mathcal{F} $ is also fully faithful. Thus the quotient functor $ \mathcal{F}^{rel}/[\mathcal{P}]\iso \mathcal{F} $ is an equivalence of $ k $-categories.
\end{proof}

\begin{Prop}\cite[proposition 7.2.1]{Am2008}\label{relative full faithful}
	The restriction of the quotient functor $\pi^{rel}\colon \mathrm{per}A\to\mathcal{C}_{n}(A,B) $ to $ \mathcal{F}^{rel} $ is fully faithful.
\end{Prop}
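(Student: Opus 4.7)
My plan is to use Proposition~\ref{Home space in RCA} to write
\begin{equation*}
\Hom_{\cc_{n}(A,B)}(\pi^{rel}X,\pi^{rel}Y)=\varinjlim_{l\leqslant0}\Hom_{\cd(A)}(\tau^{rel}_{\leqslant l}X,\tau^{rel}_{\leqslant l}Y),
\end{equation*}
and then to show that every term of this directed system is canonically isomorphic to $\Hom_{\per A}(X,Y)$. Because $X$ and $Y$ lie in $\cf^{rel}\subseteq\cd(A)^{\leqslant0}_{rel}$, one has $\tau^{rel}_{\leqslant0}X=X$ and $\tau^{rel}_{\leqslant0}Y=Y$, so the term at $l=0$ is already $\Hom_{\per A}(X,Y)$ and the map induced by $\pi^{rel}$ is the structural morphism of the colimit. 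Hence the problem reduces to verifying, for each $l\leqslant0$, that the canonical map $\Hom_{\per A}(X,Y)\to\Hom_{\cd(A)}(\tau^{rel}_{\leqslant l}X,\tau^{rel}_{\leqslant l}Y)$ is bijective.

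I would prove this by two separate vanishings. Applying $\Hom(\tau^{rel}_{\leqslant l}X,-)$ to the truncation triangle $\tau^{rel}_{\leqslant l}Y\to Y\to\tau^{rel}_{>l}Y\to\Si\tau^{rel}_{\leqslant l}Y$, the $t$-structure orthogonality coming from Corollary~\ref{Relative t-structure} gives immediately $\Hom(\tau^{rel}_{\leqslant l}X,\tau^{rel}_{\leqslant l}Y)\iso\Hom(\tau^{rel}_{\leqslant l}X,Y)$. Next, applying $\Hom(-,Y)$ to the truncation triangle $\tau^{rel}_{\leqslant l}X\to X\to\tau^{rel}_{>l}X\to\Si\tau^{rel}_{\leqslant l}X$ reduces the remaining isomorphism $\Hom_{\per A}(X,Y)\iso\Hom(\tau^{rel}_{\leqslant l}X,Y)$ to establishing the two vanishings
\begin{equation*}
\Hom(\tau^{rel}_{>l}X,Y)=0\quad\text{and}\quad\Hom(\tau^{rel}_{>l}X,\Si Y)=0.
\end{equation*}
By Definition~\ref{Relative truncation functor}, $\tau^{rel}_{>l}X=p_{*}(\tau_{>l}p^{*}X)$ lies in $\pvd_{B}(A)$, and as a cofiber of a morphism between objects of $\cd(A)^{\leqslant0}_{rel}$ it also belongs to $\cd(A)^{\leqslant0}_{rel}$.

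The two vanishings then come from the relative Calabi--Yau duality. Since $f\colon B\to A$ carries a left $(n+1)$-Calabi--Yau structure, Corollary~\ref{Relative CY duality} yields, for each integer $i$, an isomorphism
\begin{equation*}
\Hom(\tau^{rel}_{>l}X,\Si^{i}Y)\cong D\Hom(Y,\Si^{n+1-i}\tau^{rel}_{>l}X).
\end{equation*}
For $i\in\{0,1\}$ the shifted object $\Si^{n+1-i}\tau^{rel}_{>l}X$ sits in $\pvd_{B}(A)\cap\cd(A)^{\leqslant-n}_{rel}=\cd_{B}(A)^{\leqslant-n}_{rel}$, and the proposition stated right after Definition~\ref{RFD} asserts $\cf^{rel}\subseteq{}^{\perp}(\cd_{B}(A)^{\leqslant-n}_{rel})$, so both Hom groups vanish. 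Combining the two halves yields $\Hom_{\per A}(X,Y)\iso\Hom_{\cd(A)}(\tau^{rel}_{\leqslant l}X,\tau^{rel}_{\leqslant l}Y)$ for every $l\leqslant0$, and passing to the colimit concludes the proof. The main obstacle is the relative bookkeeping: one has to keep track of the two aisles (the $t$-structure aisle and the $\pvd_{B}$-condition) in which $\tau^{rel}_{>l}X$ and its Calabi--Yau shifts sit, and match them with the $(-n)$-bound coming from the defining orthogonality of $\cf^{rel}$.
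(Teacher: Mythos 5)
Your proof is correct and follows essentially the same route as the paper's: reduce via Proposition~\ref{Home space in RCA} to the terms $\Hom_{\cd(A)}(\tau^{rel}_{\leqslant l}X,\tau^{rel}_{\leqslant l}Y)$, kill the contribution of $\tau^{rel}_{>l}Y$ by $t$-structure orthogonality, and kill the two Hom groups out of $\tau^{rel}_{>l}X$ by the relative $(n+1)$-Calabi--Yau duality combined with $\cf^{rel}\subseteq{}^{\perp}(\cd_{B}(A)^{\leqslant-n}_{rel})$. The only cosmetic difference is the order in which the two truncation triangles are processed.
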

\begin{proof}
	Let $ X $ and $ Y $ be objects in $ \mathcal{F}^{rel}\subseteq \mathcal{D}(A)_{rel}^{\leqslant 0}\cap{}^{\perp}(\mathcal{D}_{B}(A)_{rel}^{\leqslant-n})\cap \mathrm{per}(A) $. By Proposition~\ref{Home space in RCA}, the space $ \Hom_{\mathcal{C}_{n}(A,B)}(\pi^{rel}X,\pi^{rel}Y) $ is isomorphic to the direct limit $ \varinjlim_{l<0} \Hom_{\mathcal{D}(A)}(\tau_{\leqslant l}^{rel}X,\tau_{\leqslant l}^{rel}Y) $. A morphism between $ X $ and $ Y $ in $ \mathcal{C}_{n}(A,B) $ is a diagram of the form
	\begin{align*}
		\xymatrix{
			&\tau_{\leqslant l}^{rel}X\ar"2,1"\ar"2,3"&\\
			X&&Y.
		}
	\end{align*}
	The canonical triangle
	$$ \Si^{-1}\tau_{> l}^{rel}X\to\tau_{\leqslant l}^{rel}X\to X\to\tau_{> l}^{rel}X $$
	yields a long exact sequence:
	$$ \cdots\to \Hom_{\mathcal{D}(A)}(\tau_{> l}^{rel}X,Y)\to \Hom_{\mathcal{D}(A)}(X,Y)\to \Hom_{\mathcal{D}(A)}(\tau_{\leqslant l}^{rel}X,Y)\to \Hom_{\mathcal{D}(A)}(\Si^{-1}\tau_{> l}^{rel}X,Y)\to\cdots  .$$
	
	To remind the reader, the morphism $ f\colon B\ra A $ has a left $ (n+1) $-Calabi--Yau structure (see Assumption~\ref{Relative assumption}). Since $ i_{*}(\tau_{>l}^{rel}X)=0 $, it satisfies the conditions of relative $ (n+1) $-Calabi--Yau duality (see Corollary~\ref{Relative CY duality}), and so the space 
	$$ \Hom_{\mathcal{D}(A)}(\Si^{-1}\tau^{rel}_{> l}X,Y) $$ is isomorphic to the space $ D\Hom_{\mathcal{D}(A)}(Y,\Si^{n}\tau^{rel}_{>l}X) $.

	The object $ X $ is in $ \mathcal{D}(A)_{rel}^{\leqslant 0} $, hence we have
	$ \tau^{rel}_{> l}X\in\mathcal{D}_{B}(A)_{rel}^{\leqslant 0} $ and then the space 
	$ \Hom_{\mathcal{D}(A)}(Y,\Si^{n}\tau^{rel}_{>l}X) $ vanishes. 
	
	For the same reasons, the space $ \Hom_{\mathcal{D}(A)}(\tau_{> l}^{rel}X,Y) $ vanishes. Hence there are bijections
	\begin{align*}
		\xymatrix{
			\Hom_{\mathcal{D}(A)}(\tau_{\leqslant l}^{rel}X,\tau_{\leqslant l}^{rel}Y)\ar[r]^-{\sim}&
			\Hom_{\mathcal{D}(A)}(\tau_{\leqslant l}^{rel}X,Y)\ar[r]^-{\sim}& \Hom_{\mathcal{D}(A)}(X,Y).
		}
	\end{align*}
	Thus, the functor $\pi^{rel}: \mathcal{F}^{rel}\to\mathcal{C}_{n}(A,B) $
	is fully faithful.
\end{proof}

\begin{Cor}
	We have an isomorphism
	$ \mathrm{End}_{\mathcal{C}_{n}(A,B)}(\pi^{rel}A)\simeq \mathrm{End}_{\mathrm{per}(A)}(A)=H^{0}(A) $.
\end{Cor}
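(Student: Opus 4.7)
The plan is to deduce this corollary directly from the fully faithfulness statement of Proposition~\ref{relative full faithful}. The only thing to check is that the object $A$ itself lies in the relative fundamental domain $\mathcal{F}^{rel}$; once this is established, the isomorphism of endomorphism algebras is immediate, since restriction to a full subcategory preserves endomorphism rings, and $\mathrm{End}_{\per A}(A)\cong H^{0}(A)$ by the definition of the derived category of a dg algebra.

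So the task reduces to verifying $A\in\mathcal{F}^{rel}$, i.e., that $A$ lies in
$$\mathcal{Z}\cap(\mathrm{add}A*\Si\mathrm{add}A*\cdots*\Si^{n-1}\mathrm{add}A).$$
Membership in the second factor is trivial since $A\in\mathrm{add}A$ (take $M_{i}=0$ and $N_{0}=A$, $N_{j}=0$ for $j\geqslant1$ in the description of Remark~\ref{Rem:triangles in RF}). It remains to check $A\in\mathcal{Z}={}^{\perp_{\per A}}(\Si^{>0}\mathcal{P})\cap(\Si^{<0}\mathcal{P})^{\perp_{\per A}}$ with $\mathcal{P}=\mathrm{add}(eA)$. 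Here I would use the standard computations
$$\Hom_{\per A}(A,\Si^{i}eA)\cong H^{i}(eA),\qquad \Hom_{\per A}(\Si^{i}eA,A)\cong H^{-i}(Ae).$$
Both vanish for $i>0$ (resp.\ $i<0$) by the connectivity hypothesis $2)$ of Assumption~\ref{Relative assumption}, which ensures $H^{j}(A)=0$ for $j>0$ and hence the same for the direct summands $eA$ and $Ae$ in positive degrees. This gives $A\in\mathcal{Z}$, and therefore $A\in\mathcal{F}^{rel}$.

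Applying Proposition~\ref{relative full faithful} to $X=Y=A$ then yields
$$\mathrm{End}_{\mathcal{C}_{n}(A,B)}(\pi^{rel}A)\iso\mathrm{End}_{\per A}(A)=H^{0}(A),$$
which is precisely the claim. There is no real obstacle here; the only subtlety worth flagging is making sure that the connectivity assumption is invoked in both directions (positive and negative shifts against the idempotent summand $eA$), since $\mathcal{Z}$ is defined by two orthogonality conditions.
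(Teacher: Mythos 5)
Your proof is correct and takes essentially the same route as the paper: the corollary is deduced from Proposition~\ref{relative full faithful} together with the observation that $A$ itself lies in $\mathcal{F}^{rel}$. The paper simply asserts $A\in\mathcal{F}^{rel}$ without further comment; your explicit check of the two orthogonality conditions defining $\mathcal{Z}$ via the connectivity of $A$ (applied to both $eA$ and $Ae$) is exactly the argument that is being left implicit.
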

\begin{proof}
	This follows from Lemma~\ref{relative full faithful} and the fact that $ A $ itself is in $ \mathcal{F}^{rel} $.
\end{proof}

\begin{Def}\rm\label{Higgs category}
	The \emph{Higgs category} $ \mathcal{H} $ is the image of $ \mathcal{F}^{rel} $ in $ \mathcal{C}_{n}(A,B) $ under the quotient functor $ \pi^{rel}\colon\mathrm{per}A\rightarrow\mathcal{C}_{n}(A,B) $.
\end{Def}
\begin{Rem}
	The reason for the name \enquote{Higgs category} is that this category generalizes the category of modules over the preprojective algebra of a Dynkin quiver and a module over the preprojective algebra can be called a \enquote{Higgs module} (in analogy with a \enquote{Higgs bundle}, which is the same object in a geometric context, see~\cite{NH1987,CS1992}).
\end{Rem}

\subsection{Relation with generalized cluster categories}

In \cite{LYG}, Lingyan Guo generalized Claire Amiot's construction~\cite{Am2008} of the generalized cluster categories to finite-dimensional algebras with global dimension at most $ n $. She studied the category $ \mathcal{C}_{n}(\Gamma) =\mathrm{per}\Gamma/\mathrm{pvd}(\Gamma) $ associated with a dg algebra $ \Gamma $ under the following assumptions:
\begin{assumption}\rm\label{absolut assumption}
	\begin{itemize}
		\item[1)] $ \Gamma $ is homologically smooth.
		\item[2)] $ \Gamma $ is connective, i.e, $ H^{p}(\Gamma) $ is zero for each $ p>0 $.
		\item[3)] $ \Gamma $ is $ (n+1) $-Calabi--Yau as a bimodule, i.e.\ there is an isomorphism in $ \mathcal{D}(\Gamma^{e}) $
		\begin{align*}
			\xymatrix{
				\Si^{n+1}\mathbf{R}\Hom_{\mathcal{D}(\Gamma^{e}) }(\Gamma,\Gamma^{e})\cong \Gamma.
		}	\end{align*}
		\item[4)] The space $ H^{0}(\Gamma) $ is finite-dimensional.
	\end{itemize}
\end{assumption}

\begin{Thm}\cite[Chapter 3]{LYG}
	Let $ \Gamma $ be a dg $ k $-algebra with the four properties above. Then
	\begin{itemize}
		\item[(1)] the category $ \mathcal{C}_{n}(\Gamma)=\mathrm{per}\,\Gamma/\mathrm{pvd}(\Gamma) $ is Hom-finite and $ n $-Calabi--Yau;
		\item[(2)] the object $ T= \pi\Gamma $ is an $ n $-cluster tilting object in $ \mathcal{C}_{n}(\Gamma) $ where $ \pi\colon\mathrm{per}\Gamma\longrightarrow\mathcal{C}_{n}(\Gamma) $ is the canonical quotient functor, i.e.\ we have
		\begin{align*}
			\xymatrix{
				\Hom_{\mathcal{C}_{n}(\Gamma)}(T,\Si^{r}T)=0\,\text{for}\quad r=1,\cdots,n-1,
			}
		\end{align*}
		and for each object $ L $ in $ \mathcal{C}_{n}(\Gamma) $, if the space $ \Hom_{\mathcal{C}_{n}(\Gamma)}(T,\Si^{r}L)$ vanishes for each $ r = 1,\cdots,n-1 $, then L belongs to $ \add\,T $, the full subcategory of $ \mathcal{C}_{n}(\Gamma) $ consisting of direct summands of finite direct sums of copies of $ T $;
		\item[(3)] the endomorphism algebra of $ T $ over $ \mathcal{C}_{n}(\Gamma) $ is isomorphic to $ H^{0}(\Gamma) $.
	\end{itemize}
\end{Thm}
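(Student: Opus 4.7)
The plan is to adapt the Amiot--Guo strategy in the absolute setting. First, one defines the fundamental domain
$$ \mathcal{F}=\mathcal{D}(\Gamma)^{\leqslant 0}\cap{}^{\perp}\mathcal{D}(\Gamma)^{\leqslant -n}\cap\mathrm{per}\,\Gamma, $$
and observes, exactly as for $\mathcal{F}^{rel}$ in the relative case, that $\mathcal{F}=\add\Gamma\ast\Sigma\add\Gamma\ast\cdots\ast\Sigma^{n-1}\add\Gamma$. Then the glued $t$-structure simplifies to the canonical one on $\cd(\Gamma)$, and since $H^{0}(\Gamma)$ is finite-dimensional the arguments of Section~\ref{section RCC} (notably the analogue of Proposition~\ref{Relative t-structure on perA}) show that the canonical $t$-structure restricts to $\per\Gamma$. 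Via the analogue of Proposition~\ref{Home space in RCA}, morphisms in $\mathcal{C}_n(\Gamma)$ are computed as the direct limit $\varinjlim_{k}\Hom_{\cd(\Gamma)}(\tau_{\leqslant k}X,\tau_{\leqslant k}Y)$.

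Next I would prove that the composition $\pi|_{\mathcal{F}}\colon\mathcal{F}\hookrightarrow\per\Gamma\twoheadrightarrow\mathcal{C}_n(\Gamma)$ is fully faithful, running the argument of Proposition~\ref{relative full faithful} but with the absolute $(n+1)$-Calabi--Yau duality replacing the relative one: for $L\in\pvd(\Gamma)$ and $M\in\cd(\Gamma)$ the bimodule CY isomorphism $\Sigma^{n+1}\Gamma^{\vee}\cong\Gamma$ gives a bifunctorial duality
$$ D\RHom_{\Gamma}(L,M)\cong\RHom_{\Gamma}(M,\Sigma^{n+1}L), $$
which one checks by the computation in the proof of Proposition~\ref{Relative Hom} specialised to $\cb=\emptyset$. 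Plugging in $X,Y\in\mathcal{F}\subseteq\cd(\Gamma)^{\leqslant 0}\cap{}^{\perp}\cd(\Gamma)^{\leqslant -n}$ makes both correction terms $\Hom(\tau_{>k}^{\mathrm{can}}X,Y)$ and $\Hom(\Sigma^{-1}\tau_{>k}^{\mathrm{can}}X,Y)$ vanish, so the colimit collapses and $\pi|_{\mathcal{F}}$ is fully faithful. A routine argument (analogue of Proposition~\ref{p* to F is dense}/~\ref{p* is dense}, but now much simpler because there is no silting reduction to perform) shows that $\pi|_{\mathcal{F}}$ is essentially surjective, giving an equivalence $\mathcal{F}\iso\mathcal{C}_n(\Gamma)$. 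Hom-finiteness of $\mathcal{C}_n(\Gamma)$ follows immediately from Hom-finiteness of $\per\Gamma$ (which uses only smoothness and $H^{0}(\Gamma)$ finite-dimensional, cf.~\cite[Proposition 2.5]{KY2016}), and the identification $\End_{\mathcal{C}_n(\Gamma)}(T)\cong\End_{\per\Gamma}(\Gamma)=H^{0}(\Gamma)$ is then automatic since $\Gamma\in\mathcal{F}$.

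For the $n$-Calabi--Yau property, I would extend the above duality from $\pvd(\Gamma)$ to all of $\per\Gamma$ modulo $\pvd(\Gamma)$: for $X,Y\in\per\Gamma$ one uses the canonical truncation triangle $\tau_{\leqslant k}X\to X\to\tau_{>k}X$ and observes that for $k\ll 0$ the term $\tau_{>k}X$ lies in $\pvd(\Gamma)$, so the CY duality on $\pvd(\Gamma)$ (Definition/Corollary version of Calabi--Yau duality in $\pvd$) combined with the colimit formula for $\Hom_{\mathcal{C}_n(\Gamma)}$ yields a bifunctorial isomorphism $D\Hom_{\mathcal{C}_n(\Gamma)}(X,Y)\cong\Hom_{\mathcal{C}_n(\Gamma)}(Y,\Sigma^{n}X)$.

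Finally, for the $n$-cluster tilting property of $T=\pi\Gamma$: the vanishing $\Hom_{\mathcal{C}_n(\Gamma)}(T,\Sigma^{r}T)=0$ for $1\leqslant r\leqslant n-1$ follows by full faithfulness of $\pi|_{\mathcal{F}}$ (since $\Sigma^{r}\Gamma\in\mathcal{F}$) together with connectivity of $\Gamma$, which gives $\Hom_{\per\Gamma}(\Gamma,\Sigma^{r}\Gamma)=H^{r}(\Gamma)=0$ for $r\geqslant 1$. Conversely, given $L\in\mathcal{C}_n(\Gamma)$ with $\Hom(T,\Sigma^{r}L)=0$ for $1\leqslant r\leqslant n-1$, lift $L$ to an object of $\mathcal{F}$ and build the filtration $M_{n-1}\to N_{n-2}\to\cdots\to N_{0}\to L$ by iterated right $\add\Gamma$-approximations (analogue of Lemma~\ref{addA' approximation}); the vanishing hypotheses, translated through the fundamental-domain equivalence, successively force each cone to split, so $L\in\add\,T$. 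I expect the main technical obstacle to be managing the interaction between the canonical $t$-structure and the Calabi--Yau duality in the essential-surjectivity and cluster-tilting steps, in particular ensuring that the iterated truncations that stabilise the colimit $\varinjlim\Hom(\tau_{\leqslant k}X,\tau_{\leqslant k}Y)$ also produce the required $\add\Gamma$-resolutions; this is precisely the place where connectivity of $\Gamma$ and finite-dimensionality of $H^{0}(\Gamma)$ are both used.
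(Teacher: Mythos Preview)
The paper does not prove this statement; it is quoted from Guo's thesis \cite[Chapter 3]{LYG} (the $n=2$ case being Amiot's original work). Your proposal is a faithful reconstruction of the Amiot--Guo argument, and indeed the paper's own relative machinery (Propositions~\ref{relative full faithful}, \ref{Equivalence between the shifts of relative fundamental domain}, \ref{Extension sequence}, etc.) is built as a direct generalisation of precisely the steps you outline; so strategically you are on target.

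Two points where your write-up wanders. First, for essential surjectivity of $\pi|_{\mathcal F}$ you cite Propositions~\ref{p* to F is dense} and~\ref{p* is dense}, but those concern the functor $p^*\colon\per A\to\per\overline{A}$, not the quotient $\pi$. The mechanism you actually need is the absolute analogue of Proposition~\ref{Equivalence between the shifts of relative fundamental domain}: one shows that $\tau_{\leqslant -l}$ induces equivalences $\Sigma^{l-1}\mathcal F\iso\Sigma^{l}\mathcal F$, and since every $X\in\per\Gamma$ has $\tau_{\leqslant -l}X\in\Sigma^{l}\mathcal F$ for $l\gg 0$ (using the $(n+1)$-CY duality to control $^{\perp}\cd(\Gamma)^{\leqslant -n-l}$), one pulls $X$ back along these equivalences into~$\mathcal F$. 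Second, in the cluster-tilting converse the phrase ``successively force each cone to split'' glosses over the actual work: the hypotheses live in $\mathcal C_n(\Gamma)$, not in $\per\Gamma$, so one must first pass through the long exact sequence of Proposition~\ref{Extension sequence} (or equivalently use the $n$-CY duality on $\mathcal C_n(\Gamma)$ together with full faithfulness on suitable shifts of $\mathcal F$) to extract vanishing of $\Ext^{r}_{\cd(\Gamma)}(L,\Gamma)$ for $1\leqslant r\leqslant n-1$; only then do the approximation triangles of Lemma~\ref{addA' approximation} collapse to place $L$ in $\add\Gamma$.
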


\bigskip

By Proposition \ref{Prop: smooth of cofiber} and \ref{Prop: cofiber is fd}, the dg algebra $ \overline{A} $ satisfies the assumptions~\ref{absolut assumption}. We consider the associated generalized $ n $-cluster category $ \mathcal{C}_{n}(\overline{A})=\mathrm{per}\overline{A}/\mathrm{pvd}(\overline{A}) $.

\begin{Prop}
	We have the following equivalence of triangulated categories
	$$p^{*}\colon \mathcal{C}_{n}(A,B)/\mathrm{per}( eAe)\iso\mathcal{C}_{n}(\overline{A}).$$
\end{Prop}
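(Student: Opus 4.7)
The plan is to assemble this equivalence from the two Verdier localization sequences $\pvd_B(A) \hookrightarrow \per A \twoheadrightarrow \cc_n(A,B)$ (by definition) and $\per(eAe) \hookrightarrow \per A \twoheadrightarrow \per\overline{A}$ (Corollary~\ref{Cor: dense to tri equivalence}), together with the equivalence $\pvd_B(A) \iso \pvd(\overline{A})$ from the recollement, by applying Verdier's localization lemma twice to identify both sides with $\per A/\cn$, where $\cn = \thick_{\per A}(\pvd_B(A),\per(eAe))$. First I would check that $p^{*}\colon \per A \to \per\overline{A}$ descends to a well-defined functor $\cc_n(A,B)/\per(eAe) \to \cc_n(\overline{A})$: the restriction $p^{*}|_{\pvd_B(A)}$ lands in $\pvd(\overline{A})$ and is therefore killed by $\pi$, while $p^{*}\circ i^{*}=0$ by Corollary~\ref{Recollement of dg algebras} kills $\per(eAe)\subseteq\per A$ and hence its image in $\cc_n(A,B)$.

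Next I would apply Verdier's localization theorem twice. Using $\per(eAe)\subseteq \cn$ together with $\per A/\per(eAe)\simeq\per\overline{A}$, and observing that the image of $\cn$ there is the thick subcategory generated by the image of $\pvd_B(A)$, namely $\pvd(\overline{A})$, I obtain
\[
\per A/\cn \;\simeq\; (\per A/\per(eAe))/(\cn/\per(eAe)) \;\simeq\; \per\overline{A}/\pvd(\overline{A}) \;=\; \cc_n(\overline{A}).
\]
Using instead $\pvd_B(A)\subseteq\cn$, the same theorem gives
\[
\per A/\cn \;\simeq\; \cc_n(A,B)/\overline{\per(eAe)},
\]
where $\overline{\per(eAe)}$ denotes the essential image of $\per(eAe)$ under $\pi^{rel}$. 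Combining these two equivalences yields the claim, provided that $\overline{\per(eAe)}$ can be identified with $\per(eAe)$ itself.

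The remaining point, namely that $\pi^{rel}|_{\per(eAe)}$ is fully faithful, is the main technical step. For $M\in\per(eAe)$ and $N\in\pvd_B(A)$, the adjunction $(i^{*},i_{*})$ gives
\[
\Hom_{\per A}(i^{*}M,N)\;\cong\;\Hom_{\per(eAe)}(M,i_{*}N)\;=\;0,
\]
since $i_{*}N=0$ by the very definition of $\pvd_B(A)$; hence $\per(eAe)\subseteq {}^{\perp_{\per A}}\pvd_B(A)$. A standard calculus-of-fractions argument then applies: representing a morphism in $\cc_n(A,B)$ between two objects $M,N'\in\per(eAe)$ by a roof $M\xleftarrow{s}X\to N'$ with $C=\cone(s)\in\pvd_B(A)$, the long exact sequence $\Hom(C,N')\to\Hom(M,N')\to\Hom(X,N')\to\Hom(\Si^{-1}C,N')$ has vanishing outer terms by the orthogonality just established, so $\Hom_{\per A}(M,N')\iso\Hom_{\cc_n(A,B)}(\pi^{rel}M,\pi^{rel}N')$. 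This is the only real obstacle in the argument and is a formal consequence of the orthogonality $\per(eAe)\perp\pvd_B(A)$; no further input about the relative Calabi--Yau structure is needed.
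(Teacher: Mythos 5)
Your overall strategy is the same as the paper's: both reduce the claim to the two localization sequences, the equivalence $\pvd_B(A)\simeq\pvd(\overline{A})$ coming from the recollement, and an orthogonality between $\per(eAe)$ and $\pvd_B(A)$ guaranteeing that $\per(eAe)$ embeds fully faithfully into $\cc_n(A,B)$. However, the step you single out as "the only real obstacle" contains a genuine error of handedness. What you actually prove from the adjunction $(i^*,i_*)$ is $\per(eAe)\subseteq{}^{\perp_{\per A}}\pvd_B(A)$, i.e.\ the vanishing of $\Hom(\per(eAe),\pvd_B(A))$. But in your roof $M\xleftarrow{s}X\to N'$ with $C=\cone(s)\in\pvd_B(A)$, the outer terms of the long exact sequence are $\Hom(C,N')$ and $\Hom(\Si^{-1}C,N')$, which live in $\Hom(\pvd_B(A),\per(eAe))$ --- the \emph{opposite} orthogonality, which you have not established and which is \emph{not} formal. (Formally one only gets $\Hom(X,i^!Y)=0$ from $i_!\dashv i^!$; the embedding of $\per(eAe)$ is via $i^*$, not $i^!$, so $\Hom(\pvd_B(A),\thick(eA))=0$ genuinely requires the relative Calabi--Yau duality of Corollary~\ref{Relative CY duality}, exactly as in the paper's proof.) So as written, the vanishing of your outer terms does not follow "by the orthogonality just established", and your closing claim that no Calabi--Yau input is needed is not supported by your own computation.

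The gap is repairable in two ways. Either (a) keep only the orthogonality you proved and run the fraction calculus with the \emph{other} handedness: for $M\in{}^{\perp}\pvd_B(A)$, left fractions (coroofs) $M\to Y\xleftarrow{t}N'$ with $\cone(t)\in\pvd_B(A)$ give $\Hom_{\per A}(M,N')\iso\Hom_{\per A}(M,Y)$, whence $\pi^{rel}$ is fully faithful on $\per(eAe)$; this route indeed avoids the Calabi--Yau structure, which is a (minor) simplification over the paper. Or (b) prove the second orthogonality $\Hom(\pvd_B(A),\per(eAe))=0$ via Corollary~\ref{Relative CY duality}, which is what the paper does, and then your right-fraction argument goes through verbatim. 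Either fix yields the statement; the rest of your double-localization bookkeeping (identifying the image of $\thick(\pvd_B(A),\per(eAe))$ in $\per\overline{A}$ with $\pvd(\overline{A})$ using essential surjectivity of $p^*|_{\pvd_B(A)}$) is correct and matches the paper's implicit argument.
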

\begin{proof}
	By Corollary \ref{Cor: dense to tri equivalence}, we have a triangle equivalence $ p^{*}\colon \mathrm{per}(A)/\mathrm{per}(eAe)\iso \mathrm{per}(\overline{A}) $. Thus, it is enough to show that we have an equivalence of triangulated categories $ p^{*}\colon\mathrm{pvd}_{B}(A)\iso\mathrm{pvd}(\overline{A}) $ and the two subcategories $ \mathrm{pvd}_{B}(A) $ and $ \mathrm{per}(eAe) $ are left and right orthogonal to each other.
	
	It is clear that the functor $ p_{*}\colon\mathcal{D}(\overline{A})\to \ker(i_{*}) $ is an equivalence of triangulated categories. Then the restriction of $ p^{*} $ and $ p^{!} $ to $ \ker(i_{*}) $ give a quasi-inverse of $ p_{*}\colon\mathrm{pvd}(\overline{A})\to \mathrm{pvd}_{B}(A) $.
	
	Let $ X $ be an object in $ \mathrm{pvd}_{B}(A) $ and let $ Y $ be an object in $ \mathrm{per}(eAe) $. Then $ i_{*}(X) $ is acyclic. Thus, we have
	$$ \Hom_{\mathcal{D}(A)}(X,i^{*}(Y))\cong \Hom_{\mathcal{D}(eAe)}(i_{*}(X),Y)=0 $$ and 
	$$ \Hom_{\mathcal{D}(A)}(i^{*}(Y),X)\cong D\Hom_{\mathcal{D}(A)}(X,\Si^{n+1}i^{*}(Y))=0 ,$$ where the second isomorphism is due to the relative Calabi--Yau property~\ref{Relative CY duality}. Thus, the categories $ \mathrm{pvd}_{B}(A) $ and $ \mathrm{per}(eAe) $ are left and right orthogonal to each other. 
	
\end{proof}

\begin{Cor}\label{Relative commutative digram}
	We have the following commutative diagram
	\begin{align*}
		\xymatrix{
			&&\mathrm{per}(eAe)\ar@{=}[r]\ar@{^{(}->}[d]&\mathrm{per}(eAe)\ar@{^{(}->}[d]\\
			\mathrm{pvd}_{B}(A)\ar@{^{(}->}"2,3"\ar@{->}_{\simeq}[d]&&\mathrm{per}(A)\ar[r]\ar@{->>}[d]&\mathcal{C}_{n}(A,B)\ar@{->>}[d]\\
			\mathrm{pvd}(\overline{A})\ar@{^{(}->}"3,3"&&\mathrm{per}(\overline{A})\ar[r]&\mathcal{C}_{n}(\overline{A})
		}
	\end{align*} and the rows and columns are exact sequences of triangulated categories.
\end{Cor}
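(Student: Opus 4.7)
The proof will be a straightforward packaging of results already established; essentially every piece of the diagram has been constructed or identified elsewhere in the paper. The plan is to verify each row and column separately and then observe that the squares commute by construction.

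First, the middle column $\mathrm{per}(eAe) \hookrightarrow \mathrm{per}(A) \twoheadrightarrow \mathrm{per}(\overline{A})$ is the exact sequence of triangulated categories given by Corollary~\ref{Recollement of dg algebras}, combined with Proposition~\ref{p* is dense} which upgrades $p^*$ to a genuine triangle equivalence $\mathrm{per}(A)/\mathrm{per}(eAe) \xrightarrow{\sim} \mathrm{per}(\overline{A})$ (not merely up to direct summands). The right column $\mathrm{per}(eAe) \hookrightarrow \mathcal{C}_n(A,B) \twoheadrightarrow \mathcal{C}_n(\overline{A})$ is precisely the exact sequence established in the Proposition immediately preceding this Corollary, whose proof also records the crucial semi-orthogonality between $\mathrm{pvd}_B(A)$ and $\mathrm{per}(eAe)$ inside $\mathrm{per}(A)$ via the relative $(n+1)$-Calabi--Yau duality.

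The middle row $\mathrm{pvd}_B(A) \hookrightarrow \mathrm{per}(A) \twoheadrightarrow \mathcal{C}_n(A,B)$ is exact by the very Definition~\ref{Relative cluster category} of the relative cluster category, and analogously the bottom row is exact by definition of $\mathcal{C}_n(\overline{A}) = \mathrm{per}(\overline{A})/\mathrm{pvd}(\overline{A})$. For the left column, I will argue that the restriction of the recollement (\ref{dg algebra recollement}) produces the equivalence $p^* \colon \mathrm{pvd}_B(A) \xrightarrow{\sim} \mathrm{pvd}(\overline{A})$: the functor $p_* \colon \mathcal{D}(\overline{A}) \to \mathcal{D}(A)$ is fully faithful with essential image $\ker(i_*)$, it sends $\mathrm{pvd}(\overline{A})$ into $\mathrm{pvd}_B(A)$ since $i_* \circ p_* = 0$ and $p_*$ preserves perfect valuation, and conversely any $X \in \mathrm{pvd}_B(A)$ lies in $\ker(i_*)$ hence is of the form $p_*(Y)$ for a unique $Y \in \mathcal{D}(\overline{A})$, which must be perfectly valued because $X$ is. This identification was already used implicitly in the preceding proposition.

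Finally, commutativity of every square is automatic because all horizontal and vertical functors are induced, in the obvious way, by the dg algebra morphisms $f \colon B \to A$ and $i \colon eAe \hookrightarrow A$ (equivalently, by $p\colon A \to \overline{A}$) together with the Verdier quotient functors: the two compositions $\mathrm{per}(A) \to \mathcal{C}_n(A,B) \to \mathcal{C}_n(\overline{A})$ and $\mathrm{per}(A) \to \mathrm{per}(\overline{A}) \to \mathcal{C}_n(\overline{A})$ are both the canonical map to the double quotient $\mathrm{per}(A)/\langle \mathrm{pvd}_B(A),\mathrm{per}(eAe) \rangle$, and the analogous check on the outer column gives commutativity of the left square. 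There is no genuine obstacle here; the only point requiring a moment's care is making sure that the two iterated localizations produce canonically isomorphic triangulated categories, and this is exactly the content of the right column together with the semi-orthogonality used to prove it.
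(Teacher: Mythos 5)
Your proposal is correct and follows exactly the route the paper intends: the corollary is stated without separate proof as an immediate packaging of the preceding proposition (which supplies the equivalence $\mathrm{pvd}_{B}(A)\simeq\mathrm{pvd}(\overline{A})$ and the semi-orthogonality of $\mathrm{pvd}_{B}(A)$ and $\mathrm{per}(eAe)$), Corollary~\ref{Recollement of dg algebras} together with Proposition~\ref{p* is dense} for the middle column, and the definitions of the two cluster categories for the rows. Your identification of both composites out of $\mathrm{per}(A)$ with the canonical functor to the double Verdier quotient is the right way to see commutativity, so nothing is missing.
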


\subsection{Equivalence between the relative shifts of $ \mathcal{F}^{rel} $}
\begin{Def}\rm
	Let $ l\geqslant0 $ be an integer. We define the \emph{relative $ l $-shifted fundamental domain $ \mathcal{F}^{rel}\langle l\rangle $} to be the following full subcategory of $ \mathcal{Z} $
	$$ \mathcal{F}^{rel}\langle l\rangle=\{X\in\mathcal{Z}\ |\ p^{*}(X)\in\Si^{l}\mathcal{F}\subseteq \mathrm{per}(\overline{A})\}, $$
	where $ \mathcal{Z}={}^{\perp_{\per A}}(\Si^{>0}\mathcal{P})\cap(\Si^{<0}\mathcal{P})^{\perp_{\per A}} $ with $ \mathcal{P}=\add(eA) $.	
\end{Def}

\begin{Rem}
	If $ l=0 $, then $ \mathcal{F}^{rel}\langle 0\rangle=\{X\in\mathcal{Z}\ |\ p^{*}(X)\in\mathcal{F}\subseteq \mathrm{per}(\overline{A})\} $ is equal to $ \mathcal{F}^{rel} $.
\end{Rem}

Our aim is to show (Proposition~\ref{Equivalence between the shifts of relative fundamental domain}) that the functor $ \tau^{rel}_{\leqslant -l} $ induces an equivalence
$$ \cf^{rel}\langle l-1\rangle\ra\cf^{rel}\langle l\rangle .$$

\bigskip

\begin{Lem}\label{Stable under relative truncation}
	Let $ l $ be an integer. Then the subcategory $ \mathcal{Z} $ of $ \mathrm{per}A $ is stable under the relative truncation functors $ \tau^{rel}_{\leqslant l}, \tau^{rel}_{>l}\colon\mathrm{per}A\rightarrow \mathrm{per}A $, i.e.\ $ \tau^{rel}_{\leqslant l}(\mathcal{Z})\subseteq\mathcal{Z} $ and $ \tau^{rel}_{> l}(\mathcal{Z})\subseteq\mathcal{Z} $.
\end{Lem}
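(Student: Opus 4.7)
The plan is first to establish the stronger statement $\tau^{rel}_{>l}(X)\in\pvd_{B}(A)\subseteq\cz$, and then to deduce $\tau^{rel}_{\leq l}(X)\in\cz$ from the canonical relative-truncation triangle
\[
\tau^{rel}_{\leq l}(X)\longrightarrow X\longrightarrow \tau^{rel}_{>l}(X)\longrightarrow\Si\tau^{rel}_{\leq l}(X)
\]
by chasing long exact sequences of Hom-groups against $\Si^{i}eA$.

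For the first step, Definition~\ref{Relative truncation functor} gives $\tau^{rel}_{>l}(X)=p_{*}(\tau_{>l}p^{*}X)$, so this object lies in the essential image of $p_{*}$, which coincides with $\ker(i_{*})$ in the recollement~(\ref{dg algebra recollement}); in particular $i_{*}\tau^{rel}_{>l}(X)=0$. Since $\per(\overline{A})$ is Hom-finite (by Proposition~\ref{Prop: cofiber is fd} together with~\cite[Proposition 2.5]{KY2016}) and $p^{*}X$ is perfect, the standard truncation $\tau_{>l}p^{*}X$ has finite-dimensional total cohomology, hence lies in $\pvd(\overline{A})$; therefore $\tau^{rel}_{>l}(X)\in\pvd_{B}(A)$. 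The vanishing $\Hom_{\per A}(eA,\Si^{i}\tau^{rel}_{>l}(X))=H^{i}(i_{*}\tau^{rel}_{>l}(X))=0$ then holds for \emph{every} $i\in\mathbb{Z}$, and the relative $(n+1)$-Calabi--Yau duality of Corollary~\ref{Relative CY duality}, applicable precisely because $\tau^{rel}_{>l}(X)\in\pvd_{B}(A)$, yields the dual vanishing
\[
\Hom_{\per A}(\tau^{rel}_{>l}(X),\Si^{i}eA)\cong D\,\Hom_{\per A}(eA,\Si^{n+1-i}\tau^{rel}_{>l}(X))=0
\]
in every degree $i\in\mathbb{Z}$. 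In particular $\tau^{rel}_{>l}(X)\in\cz$.

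For the second step, Proposition~\ref{Relative t-structure on perA} ensures $\tau^{rel}_{\leq l}(X)\in\per A$. Applying $\Hom_{\per A}(-,\Si^{i}eA)$ with $i>0$ to the canonical triangle, the flanking terms $\Hom(X,\Si^{i}eA)$ and $\Hom(\Si^{-1}\tau^{rel}_{>l}(X),\Si^{i}eA)=\Hom(\tau^{rel}_{>l}(X),\Si^{i+1}eA)$ both vanish---the first because $X\in\cz$ and $i>0$, the second by the first step---forcing $\Hom(\tau^{rel}_{\leq l}(X),\Si^{i}eA)=0$. Dually, applying $\Hom_{\per A}(\Si^{j}eA,-)$ with $j<0$ to the canonical triangle, both flanking terms $\Hom(\Si^{j}eA,X)$ and $\Hom(\Si^{j}eA,\Si^{-1}\tau^{rel}_{>l}(X))=\Hom(\Si^{j+1}eA,\tau^{rel}_{>l}(X))$ vanish, so $\Hom(\Si^{j}eA,\tau^{rel}_{\leq l}(X))=0$. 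Hence $\tau^{rel}_{\leq l}(X)\in\cz$.

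The main obstacle to be aware of is that neither ${}^{\perp_{\per A}}(\Si^{>0}\mathcal{P})$ nor $(\Si^{<0}\mathcal{P})^{\perp_{\per A}}$ is in general closed under the triangles appearing in the second step---for instance the boundary term $\Hom(\Si^{j+1}eA,\tau^{rel}_{>l}(X))$ at $j=-1$ is not a priori controlled by the membership $\tau^{rel}_{>l}(X)\in\cz$. What saves the argument is the stronger, all-degree vanishing of Hom-groups against $\add(eA)$ established in the first step, which rests crucially on the relative Calabi--Yau duality combined with the fact that $\tau^{rel}_{>l}(X)$ genuinely lies in $\pvd_{B}(A)$.
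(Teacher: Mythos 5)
Your proof is correct and follows essentially the same route as the paper: first the all-degree vanishing of Hom-groups between $\tau^{rel}_{>l}X$ and $\add(eA)$ (via $i_{*}\tau^{rel}_{>l}X=0$, adjunction, and the relative Calabi--Yau duality of Corollary~\ref{Relative CY duality}), then a long-exact-sequence chase on the canonical triangle to handle $\tau^{rel}_{\leqslant l}X$. The only cosmetic difference is that the paper verifies $\tau^{rel}_{\leqslant l}X\in(\Si^{<0}\mathcal{P})^{\perp}$ via the adjunction isomorphism $i_{*}\tau^{rel}_{\leqslant l}X\simeq i_{*}X$ rather than your second Hom-chase, which amounts to the same computation.
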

\begin{proof}
	Let $ l $ be an integer and let $ X $ be an object in $ \mathcal{Z} $. We have a triangle in $ \mathrm{per}A $
	$$ \tau^{rel}_{\leqslant l}X\rightarrow X\rightarrow\tau^{rel}_{> l}X\rightarrow \Si\tau^{rel}_{\leqslant l}X .$$

	By the relative Calabi--Yau property (see Corollary~\ref{Relative CY duality}) and $ i_{*}(\tau^{rel}_{> l}X)=0 $, we have 
	\begin{equation*}
		\begin{split}
			\Hom_{\mathrm{per}A}(\tau^{rel}_{> l}X,\Si^{k}eA)&\simeq D\Hom_{\mathrm{per}A}(\Si^{k}eA,\Si^{n+1}\tau^{rel}_{> l}X)\\
			&\simeq D\Hom_{\mathrm{per}A}(i^{*}(\Si^{k}eAe),\Si^{n+1}\tau^{rel}_{> l}X)\\
			&\simeq D\Hom_{\mathrm{per}(eAe)}(\Si^{k}eAe,\Si^{n+1}i_{*}(\tau^{rel}_{> l}X))\\
			&=0
		\end{split}
	\end{equation*}
for all $ k $ in $ \mathbb{Z} $.
This implies that $ \tau^{rel}_{>l}X $ lies in $ ^{\perp}\Si^{>0}\mathcal{P} $. By the Calabi--Yau property again, the object $ \tau^{rel}_{> l}X $ lies $ (\Si^{<0}\mathcal{P})^{\perp} $. Thus $ \tau^{rel}_{> l}X $ is in $ \mathcal{Z} $.
For any $ k\in\mathbb{Z} $, we have the following exact sequence
	$$ \cdots\rightarrow \Hom_{\mathrm{per}A}(X,\Si^{k}eA)\rightarrow \Hom_{\mathrm{per}A}(\tau^{rel}_{\leqslant l}X,\Si^{k}eA)\rightarrow \Hom_{\mathrm{per}A}(\Si^{-1}\tau^{rel}_{>l}X,\Si^{k}eA)\rightarrow\cdots .$$ We see that $ \Hom_{\mathrm{per}A}(\tau^{rel}_{\leqslant l}X,\Si^{k}eA)=0 $, i.e.\ $ \tau^{rel}_{\leqslant l}X $ lies in $ ^{\perp}(\Si^{>0}\mathcal{P}) $.
	
	Since $ X $ is in $ (\Si^{<0}\mathcal{P})^{\perp} $, the space $ \Hom_{\mathrm{per}A}(eA,\Si^{k}X)=H^{k}(Xe) $ vanishes for any positive integer $ k $. Then by the following computation
	\begin{equation*}
		\begin{split}
			\Hom_{\mathrm{per}A}(\Si^{<0}eA,\tau^{rel}_{\leqslant l}X)&\simeq \Hom_{\mathrm{per}(eAe)}(\Si^{<0}eAe,i_{*}(\tau^{rel}_{\leqslant l}X))\\
			&\simeq \Hom_{\mathrm{per}(eAe)}(\Si^{<0}eAe,i_{*}(X))\\
			&\simeq \Hom_{\mathrm{per}(eAe)}(\Si^{<0}eAe,Xe),
		\end{split}
	\end{equation*}
	we see that $ \Hom_{\mathrm{per}A}(\Si^{<0}eA,\tau^{rel}_{\leqslant l}X) $ vanishes, i.e.\ $ \tau^{rel}_{\leqslant l}X $ is in $ (\Si^{<0}\mathcal{P})^{\perp} $. Thus $ \tau^{rel}_{\leqslant l}X $ is in $ \mathcal{Z} $.
	
\end{proof}

\bigskip

Let $ l $ be a positive integer and $ X $ an object of $ \mathcal{F}^{rel}\langle l\rangle $. Then the object $ p^{*}(X) $ lies in $ \Si^{l}\mathcal{F}\subseteq \mathrm{per}(\overline{A}) $. Hence $ \Si^{1-l}p^{*}(X) $ is in $ \Si\mathcal{F} $. By definition, there are $ n-1 $ triangles related to the object $ \Si^{1-l}p^{*}(X) $, i.e.\ $ \Si^{1-l}p^{*}(X) $ fits into the following $ n-1 $ triangles in $ \mathrm{per}(\overline{A}) $
$$P_{1}\rightarrow \Si Q_{0}\rightarrow \Si^{1-l}p^{*}(X)\xrightarrow{h_{0}} \Si P_{1},$$
$$ P_{2}\rightarrow\Si Q_{1}\rightarrow P_{1}\xrightarrow{h_{1}} \Si P_{2},$$
$$ \cdots $$
$$P_{n-2}\rightarrow\Si Q_{n-3}\rightarrow P_{n-3}\xrightarrow{h_{n-3}} \Si P_{n-2},
$$
$$\Si P_{n-1}\rightarrow\Si Q_{n-2}\rightarrow P_{n-2}\xrightarrow{h_{n-2}} \Si^{2}P_{n-1}
,$$
where $ Q_{0} $, $ Q_{1} $, $ \cdots $, $ Q_{n-2} $ and $ P_{n-1} $ are in $ \mathrm{add}(\overline{A}) $.

We denote by $ \nu=?\otimes_{H^{0}(\overline{A})}D(H^{0}(\overline{A})) $ the Nakayama functor on $ \mathrm{mod}H^{0}(A) $. Then $ \nu H^{0}(P_{n-1}) $ and $ \nu H^{0}(Q_{n-2}) $ are injective $ H^{0}(\overline{A}) $-modules. Let $ M' $ be the kernel of the morphism $ \nu H^{0}(P_{n-1})\rightarrow\nu H^{0}(Q_{n-2}) $. We define $ M $ to be $\Si^{l-1}p_{*}(M') $. Then it is clear that $ M $ belongs to 
$$ \mathcal{D}(A)_{rel}^{\geqslant-l+1}=\{X\in\mathcal{D}(A)|\,i_{*}(X)=0,H^{i}(p^{!}X)\cong H^{i}(X)=0,\forall i<-l\}. $$

\begin{Lem}\label{M is in Z}
	The object $ M=\Si^{l-1}p_{*}(M') $ is in $ \mathcal{Z} $.
\end{Lem}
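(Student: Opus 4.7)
The plan is to verify directly the two defining conditions of $\cz$, namely that $M$ is left-orthogonal to $\Si^{>0}\cp$ and right-orthogonal to $\Si^{<0}\cp$, where $\cp=\add(eA)$. The two crucial observations that make this work will be: first, $M$ lies in $\pvd_{B}(A)\subseteq\per A$, so in particular $i_{*}(M)=0$; and second, Corollary~\ref{Relative CY duality} applies to objects in $\pvd_{B}(A)$ and will convert a $\Hom$ from $M$ into a $\Hom$ into $M$, which can be computed in terms of $i_{*}(M)$.

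First I would argue that $M$ lies in $\pvd_{B}(A)$. By construction $M'$ is a submodule of the injective $H^0(\overline{A})$-module $\nu H^0(P_{n-1})$; since $H^0(\overline{A})$ is finite-dimensional by Proposition~\ref{Prop: cofiber is fd}, the module $M'$ is finite-dimensional and hence lies in $\pvd(\overline{A})$. Applying the fully faithful functor $p_{*}$ gives $p_{*}(M')\in\pvd_{B}(A)$ (since $i_{*}p_{*}=0$), and shifting yields $M=\Si^{l-1}p_{*}(M')\in\pvd_{B}(A)\subseteq\per A$. In particular $i_{*}(M)=0$.

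Next I would compute the right-orthogonality condition. For any integer $j$, the standard adjunction gives
\[
\Hom_{\per A}(eA,\Si^{j}M)\cong H^{j}(\RHom_{A}(eA,M))\cong H^{j}(Me)\cong H^{j}(i_{*}(M))=0.
\]
Taking $j=-k>0$ shows $M\in(\Si^{<0}\cp)^{\perp_{\per A}}$. For the left-orthogonality, I would invoke the relative Calabi--Yau duality: since $M\in\pvd(A)$ with $i_{*}(M)=0$, Corollary~\ref{Relative CY duality} (applied with the left $(n+1)$-Calabi--Yau structure of Assumption~\ref{Relative assumption}) yields, for any $k$,
\[
D\Hom_{\per A}(M,\Si^{k}eA)\cong\Hom_{\per A}(\Si^{k}eA,\Si^{n+1}M)\cong\Hom_{\per A}(eA,\Si^{n+1-k}M),
\]
and the right-hand side vanishes by the calculation just performed. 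Hence $M\in{}^{\perp_{\per A}}(\Si^{>0}\cp)$, and combining with the previous step gives $M\in\cz$.

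I do not expect a genuine obstacle here: the argument is essentially a bookkeeping exercise that exploits $i_{*}(M)=0$ together with the Calabi--Yau duality. The only point requiring a little care is to confirm that $M'$ really does lie in $\pvd(\overline{A})$ (so that $M\in\pvd(A)$ and the hypotheses of Corollary~\ref{Relative CY duality} are met), which uses the finite-dimensionality of $H^{0}(\overline{A})$.
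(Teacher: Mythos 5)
Your proof is correct and follows the same route as the paper: the paper's own proof simply observes that $M$ lies in $\pvd_{B}(A)$ and that $\pvd_{B}(A)$ is contained in $\cz$. You have merely unpacked the inclusion $\pvd_{B}(A)\subseteq\cz$ (via $i_{*}(M)=0$ and Corollary~\ref{Relative CY duality}), which is exactly the computation the paper carries out elsewhere, e.g.\ in the proof of Lemma~\ref{Stable under relative truncation}.
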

\begin{proof}
	It is clear that $ M $ belongs to $ \mathrm{pvd}_{B}(A) $. Then $ M $ is an object in $ \mathcal{Z} $ since $ \mathrm{pvd}_{B}(A) $ is a full subcategory of $ \mathcal{Z} $.
	
\end{proof}

\begin{Lem}\cite[Lemma 3.2.9]{LYG}\label{Hom relations in FD}
	
	(1) There are isomorphisms of functors
	$$ \Hom_{\mathcal{D}(\overline{A})}(?,\Si^{2-l}p^{*}(X))|_{\heartsuit(\overline{A})}\cong \Hom_{\mathcal{D}(\overline{A})}(?,\Si^{2}P_{1})|_{\heartsuit(\overline{A})}\cong\cdots $$
	$$ \cdots\cong \Hom_{\mathcal{D}(\overline{A})}(?,\Si^{n-1}P_{n-2})|_{\heartsuit(\overline{A})}\cong \Hom_{\heartsuit(\overline{A})}(?,M') .$$
	
	(2) There is a monomorphism of functors
	$ \mathrm{Ext}^{1}_{\heartsuit(\overline{A})}(?,M')\hookrightarrow \Hom_{\mathcal{D}(\overline{A})}(?,\Si^{n}P_{n-2})|_{\heartsuit(\overline{A})} ,$ where $ \heartsuit(\overline{A})=\mathrm{mod}H^{0}(\overline{A}). $
	
\end{Lem}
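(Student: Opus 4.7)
The plan is to apply the cohomological functor $\Hom_{\cd(\overline{A})}(N,-)$ for $N\in\heartsuit(\overline{A})=\mathrm{mod}\,H^{0}(\overline{A})$ to each of the $n-1$ triangles in turn. The key vanishing input is that, because $\overline{A}$ is $(n+1)$-Calabi--Yau (Proposition~\ref{Prop: smooth of cofiber}) and $N$ is concentrated in degree $0$,
\[
\Hom_{\cd(\overline{A})}(N,\Si^{j}Q)\cong D\Hom_{\cd(\overline{A})}(Q,\Si^{n+1-j}N)
\]
vanishes for every $Q\in\mathrm{add}\,\overline{A}$ and every $j\neq n+1$. In the remaining degree $j=n+1$ the Nakayama adjunction (for projective $P$: $D\Hom(P,N)\cong\Hom(N,\nu P)$) identifies $\Hom(N,\Si^{n+1}Q)$ with $\Hom_{\heartsuit(\overline{A})}(N,\nu H^{0}(Q))$; this is precisely why the Nakayama functor enters the definition of $M'$.

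For (1), I apply $\Hom(N,\Si^{i+1}-)$ to the $i$-th triangle, for $i=1,\dots,n-2$. The two neighbouring terms $\Hom(N,\Si^{i+1}Q_{i-1})$ and $\Hom(N,\Si^{i+2}Q_{i-1})$ vanish because $1\leqslant i+1<i+2\leqslant n$, so the long exact sequence collapses to an isomorphism $\Hom(N,\Si^{i+1}P_{i-1})\cong\Hom(N,\Si^{i+2}P_{i})$ (with the convention $P_{0}:=\Si^{1-l}p^{*}(X)$). Chaining yields the stated string. For the terminal isomorphism, apply $\Hom(N,\Si^{n-1}-)$ to the last triangle $\Si P_{n-1}\to\Si Q_{n-2}\to P_{n-2}\to\Si^{2}P_{n-1}$: the vanishings kill the outer terms, and Nakayama duality rewrites the middle as $\Hom_{\heartsuit}(N,\nu H^{0}(P_{n-1}))\to\Hom_{\heartsuit}(N,\nu H^{0}(Q_{n-2}))$, yielding the four-term exact sequence
\[
0\to\Hom(N,\Si^{n-1}P_{n-2})\to\Hom_{\heartsuit}(N,\nu H^{0}(P_{n-1}))\to\Hom_{\heartsuit}(N,\nu H^{0}(Q_{n-2}))\to\Hom(N,\Si^{n}P_{n-2})\to 0.
\]
Since $M'$ is by definition the kernel on the right, we read off $\Hom(N,\Si^{n-1}P_{n-2})\cong\Hom_{\heartsuit(\overline{A})}(N,M')$, and every identification is natural in $N$.

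For (2), factor the defining map $\nu H^{0}(P_{n-1})\to\nu H^{0}(Q_{n-2})$ through its image $I$, producing short exact sequences $0\to M'\to\nu H^{0}(P_{n-1})\to I\to 0$ and $0\to I\to\nu H^{0}(Q_{n-2})\to C\to 0$ in $\heartsuit(\overline{A})$. Injectivity of $\nu H^{0}(P_{n-1})$ yields
\[
\Ext^{1}_{\heartsuit}(N,M')\cong\coker\bigl(\Hom_{\heartsuit}(N,\nu H^{0}(P_{n-1}))\to\Hom_{\heartsuit}(N,I)\bigr),
\]
which embeds into $\coker(\Hom_{\heartsuit}(N,\nu H^{0}(P_{n-1}))\to\Hom_{\heartsuit}(N,\nu H^{0}(Q_{n-2})))$ via the inclusion $\Hom_{\heartsuit}(N,I)\hookrightarrow\Hom_{\heartsuit}(N,\nu H^{0}(Q_{n-2}))$. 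The target cokernel is $\Hom(N,\Si^{n}P_{n-2})$ by the four-term sequence above, producing the natural monomorphism $\Ext^{1}_{\heartsuit(\overline{A})}(N,M')\hookrightarrow\Hom(N,\Si^{n}P_{n-2})$. The main bookkeeping obstacle throughout is aligning shifts with the asymmetric form of the last triangle (note the extra suspension on $P_{n-1}$); once that is sorted, each step is routine long-exact-sequence chasing powered by the Calabi--Yau vanishing.
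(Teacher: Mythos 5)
Your proof is correct and follows the same route as the cited argument of Guo (and of Amiot for $n=2$), which the paper does not reproduce: apply $\Hom_{\cd(\overline{A})}(N,-)$ to the $n-1$ triangles, use the $(n+1)$-Calabi--Yau duality of $\overline{A}$ to kill $\Hom_{\cd(\overline{A})}(N,\Si^{j}Q)$ for $Q\in\add\overline{A}$ and $j\neq n+1$, and identify the surviving degree $j=n+1$ with $\Hom_{\heartsuit(\overline{A})}(N,\nu H^{0}(Q))$ via the Nakayama functor. One bookkeeping slip: with your convention $P_{0}=\Si^{1-l}p^{*}(X)$ and the $i$-th triangle $P_{i}\to\Si Q_{i-1}\to P_{i-1}\to\Si P_{i}$, the vanishing of $\Hom(N,\Si^{i+1}Q_{i-1})$ and $\Hom(N,\Si^{i+2}Q_{i-1})$ collapses the long exact sequence to $\Hom(N,\Si^{i}P_{i-1})\cong\Hom(N,\Si^{i+1}P_{i})$, not to $\Hom(N,\Si^{i+1}P_{i-1})\cong\Hom(N,\Si^{i+2}P_{i})$; chaining the corrected isomorphisms for $i=1,\dots,n-2$ gives exactly the string in the statement. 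The terminal four-term exact sequence obtained from the last (asymmetric) triangle and the image-factorization argument for part (2), using injectivity of $\nu H^{0}(P_{n-1})$ and the induced injection on cokernels, are both right.
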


By the above Lemma, the following two spaces are isomorphic
$$ \Hom_{\mathrm{per}(\overline{A})}(M',\Si^{2-l}p^{*}(X))\cong \Hom_{\mathrm{per}(\overline{A})}(M',M') .$$

By Lemma~\ref{Extension group in Z}, we have 
\begin{equation*}
	\begin{split}
		\Hom_{\mathrm{per}A}(M,\Si X)&\simeq \Hom_{\mathrm{per}(\overline{A})}(p^{*}(M),\Si p^{*}(X))\\
		&\simeq \Hom_{\mathrm{per}(\overline{A})}(\Si^{l-1}M',\Si p^{*}(X))\\
		&\simeq \Hom_{\mathrm{per}(\overline{A})}(M',\Si^{2-l}p^{*}(X))\\
		&\simeq \Hom_{\mathrm{per}(\overline{A})}(M',M').
	\end{split}
\end{equation*}

Let $ \epsilon $ be the preimage of the identity map on $ M' $ under the isomorphism 
$$ \Hom_{\mathcal{D}(A)}(M,\Si X)\cong \Hom_{\mathrm{per}(\overline{A})}(M',M') .$$ Then we form the corresponding triangle in $ \mathrm{per}A $
\begin{align}\label{Triangle for M}
	\xymatrix{
		X\ar[r]& Y\ar[r]& M\ar[r]^{\varepsilon}& \Si X.
	}
\end{align}

Similarly, let $ \varepsilon' $ be the the preimage of the identity map on $ M' $ under the isomorphism 
$$ \Hom_{\mathcal{D}(\overline{A})}(M',\Si^{2-l}p^{*}(X))\cong \Hom_{\heartsuit(\overline{A})}(M',M') .$$
Then we form the corresponding triangle in $ per(\overline{A}) $
$$ \Si^{1-l}p^{*}(X)\rightarrow Y'\rightarrow M'\xrightarrow{\varepsilon'}\Si^{2-l} p^{*}(X).$$
We see that $ p^{*}(Y) $ is isomorphic to $ \Si^{l-1}Y' $.

\begin{Lem}\cite[Lemma 3.2.11]{LYG}\label{Y' is in FD}
	The object $ Y' $ is in the fundamental domain $ \mathcal{F}\subseteq \mathrm{per}(\overline{A}) $.
\end{Lem}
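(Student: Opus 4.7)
The plan is to verify that $Y'$ satisfies the three defining conditions of $\cf = \cd(\overline{A})^{\leqslant 0} \cap {}^{\perp}\cd(\overline{A})^{\leqslant -n} \cap \per(\overline{A})$: being perfect, lying in $\cd(\overline{A})^{\leqslant 0}$, and being right orthogonal to $\cd(\overline{A})^{\leqslant -n}$. Perfectness is immediate from the defining triangle, since both $\Si^{1-l}p^*(X)$ and $M'$ lie in $\per(\overline{A})$.

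For the $t$-structure upper bound, $X \in \cf^{rel}\langle l\rangle$ forces $\Si^{-l}p^*(X) \in \cf \subseteq \cd(\overline{A})^{\leqslant 0}$, hence $\Si^{1-l}p^*(X) \in \cd(\overline{A})^{\leqslant -1}$. Combined with $M' \in \heartsuit(\overline{A}) \subseteq \cd(\overline{A})^{\leqslant 0}$, the triangle places $Y'$ in $\cd(\overline{A})^{\leqslant 0}$ by the long exact cohomology sequence.

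The substantive step is $Y' \in {}^{\perp}\cd(\overline{A})^{\leqslant -n}$. Since $\cd(\overline{A})^{\leqslant -n}$ is generated by $\Si^{\geqslant n}\heartsuit(\overline{A})$, it suffices to show $\Hom_{\cd(\overline{A})}(Y', \Si^j N) = 0$ for every $N \in \heartsuit(\overline{A})$ and every $j \geqslant n$. Applying $\Hom(-, \Si^j N)$ to the defining triangle of $Y'$ yields the long exact sequence
$$
\Hom(\Si^{2-l}p^*(X), \Si^j N) \xrightarrow{(\varepsilon')^*} \Hom(M', \Si^j N) \to \Hom(Y', \Si^j N) \to \Hom(\Si^{1-l}p^*(X), \Si^j N).
$$
For $j > n$ the rightmost term vanishes because $\Si^{1-l}p^*(X) \in \Si\cf \subseteq {}^{\perp}\cd(\overline{A})^{\leqslant -n-1}$ while $\Si^j N \in \cd(\overline{A})^{\leqslant -n-1}$. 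For the boundary case $j = n$, I plan to exploit the $n-1$ triangles recorded just before Lemma~\ref{Y' is in FD}, which realize $\Si^{1-l}p^*(X)$ as an iterated extension of shifts of objects in $\add(\overline{A})$, chasing them successively to reduce $\Hom(\Si^{1-l}p^*(X),\Si^n N)$ to Hom spaces that are controlled by Lemma~\ref{Hom relations in FD}(2). Simultaneously, since $\varepsilon'$ was defined as the preimage of $\id_{M'}$ under the isomorphism in Lemma~\ref{Hom relations in FD}(1), naturality of that isomorphism in the first argument should make $(\varepsilon')^*$ surjective onto $\Hom(M',\Si^j N)$, forcing the middle term to vanish.

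The main obstacle is a direction mismatch: Lemma~\ref{Hom relations in FD} computes Hom functors whose \emph{source} lies in the heart $\heartsuit(\overline{A})$, while the long exact sequence above involves Hom functors with $\Si^{1-l}p^*(X)$ or $\Si^{2-l}p^*(X)$ as source and $\Si^j N$ as target. Bridging this mismatch will likely require either a careful inductive descent through the $n-1$ triangles (rewriting the relevant Hom groups in terms of Hom spaces from $N$ into shifts of the $P_i$ and $Q_i$, where Lemma~\ref{Hom relations in FD} does apply), or an appeal to the $(n+1)$-Calabi--Yau duality for $\pvd(\overline{A})$ that flips the direction of Hom when $N$ is finite-dimensional.
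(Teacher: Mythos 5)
You have set this up against the right definition of $\cf$, and the first two conditions (perfectness, using $M'\in\pvd(\overline{A})\subseteq\per(\overline{A})$ by smoothness, and $Y'\in\cd(\overline{A})^{\leqslant 0}$) are handled correctly; note that the paper itself gives no argument for this lemma but quotes \cite[Lemma 3.2.11]{LYG}, so your plan stands or falls on its own. The gap is in the only substantive condition, $Y'\in{}^{\perp}\cd(\overline{A})^{\leqslant -n}$, and the one concrete mechanism you propose for it is wrong. The isomorphism of Lemma~\ref{Hom relations in FD}(1) is, by Yoneda, \emph{post}-composition with $\varepsilon'$: it says that $(\varepsilon')_{*}\colon\Hom_{\heartsuit(\overline{A})}(N,M')\to\Hom_{\cd(\overline{A})}(N,\Si^{2-l}p^{*}(X))$ is bijective for $N$ in the heart. ``Naturality in the first argument'' only lets you vary $N$; it says nothing about \emph{pre}-composition $(\varepsilon')^{*}\colon\Hom(\Si^{2-l}p^{*}(X),\Si^{j}N)\to\Hom(M',\Si^{j}N)$, which is the map your long exact sequence needs to be surjective. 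Moreover the middle term is problematic not only at $j=n$: by Calabi--Yau duality $\Hom(M',\Si^{n+1}N)\cong D\Hom(N,M')$ is generically nonzero, so surjectivity of $(\varepsilon')^{*}$ is genuinely needed at $j=n+1$ as well, and at $j=n$ you additionally need the connecting map $\Hom(\Si^{1-l}p^{*}(X),\Si^{n}N)\to\Hom(M',\Si^{n+1}N)$ to be injective before you may conclude $\Hom(Y',\Si^{n}N)=0$.

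The fallback you mention in your last sentence is in fact the proof and should be executed rather than held in reserve: for $N\in\pvd(\overline{A})$, the $(n+1)$-Calabi--Yau duality $D\Hom(L,M)\cong\Hom(M,\Si^{n+1}L)$ (with $L$ perfectly valued) identifies the dual of $(\varepsilon')^{*}$ on $\Hom(-,\Si^{j}N)$ with $(\varepsilon')_{*}\colon\Hom(N,\Si^{n+1-j}M')\to\Hom(N,\Si^{n+1-j}\Si^{2-l}p^{*}(X))$. For $j\geqslant n+2$ the source vanishes; for $j=n+1$ this is exactly the bijection of Lemma~\ref{Hom relations in FD}(1); for $j=n$ it is the map on $\Ext^{1}_{\heartsuit(\overline{A})}(N,M')$ controlled by Lemma~\ref{Hom relations in FD}(2) together with the shifted chain of isomorphisms in part (1). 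Two loose ends remain even then. First, the duality requires $N$ perfectly valued, whereas ${}^{\perp}\cd(\overline{A})^{\leqslant -n}$ tests against arbitrary, possibly infinite-dimensional, modules in the heart; you must pass to the general case using compactness of $Y'$ and the fact that every $H^{0}(\overline{A})$-module is a filtered colimit of finite-dimensional ones, or else bypass the issue by showing directly that $Y'$ lies in $\add\overline{A}*\Si\add\overline{A}*\cdots*\Si^{n-1}\add\overline{A}$, which equals $\cf$ by the remark following Lemma~\ref{addA' approximation}, by building the requisite triangles out of the $n-1$ triangles for $\Si^{1-l}p^{*}(X)$ via octahedra. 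Second, your initial reduction to objects of the form $\Si^{j}N$ with $N\in\heartsuit(\overline{A})$ already uses the compactness of $Y'$ and should say so. As written, the decisive surjectivity/injectivity statements are asserted but not proved.
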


\begin{Lem}\label{Denseness of relative truncation}
	The object $ Y $ is in $ \mathcal{F}^{rel}\langle l-1\rangle $ and $ \tau^{rel}_{\leqslant l}Y $ is isomorphic to $ X $.
\end{Lem}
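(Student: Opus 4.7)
The plan is to verify three things separately: that $Y$ lies in $\mathcal{Z}$, that $p^{*}(Y)$ lies in $\Sigma^{l-1}\mathcal{F}$, and that the defining triangle $X\to Y\to M\to \Sigma X$ realises the canonical truncation triangle of $Y$ with respect to the relative $t$-structure. The first two points will establish $Y\in\mathcal{F}^{rel}\langle l-1\rangle$, and the third will identify $X$ with the relative truncation of $Y$ at the appropriate level (taking $\tau^{rel}_{\leqslant -l}$, with the convention recorded in Corollary~\ref{Relative t-structure}).

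For $Y\in\mathcal{Z}$: both full subcategories ${}^{\perp}(\Sigma^{>0}\mathcal{P})$ and $(\Sigma^{<0}\mathcal{P})^{\perp}$ are closed under extensions, so their intersection $\mathcal{Z}$ is as well. Since $X\in\mathcal{F}^{rel}\langle l\rangle\subseteq\mathcal{Z}$ by hypothesis and $M\in\mathcal{Z}$ by Lemma~\ref{M is in Z}, the middle term $Y$ lies in $\mathcal{Z}$. For $p^{*}(Y)\in\Sigma^{l-1}\mathcal{F}$: applying the exact functor $p^{*}$ to the triangle~(\ref{Triangle for M}) gives a triangle $p^{*}(X)\to p^{*}(Y)\to p^{*}(M)\to\Sigma p^{*}(X)$. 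By construction, $p^{*}(M)\cong\Sigma^{l-1}M'$ and, by the chain of isomorphisms in Lemma~\ref{Extension group in Z}, the connecting morphism is the shift by $\Sigma^{l-1}$ of the morphism $\varepsilon'$ defining the triangle for $Y'$. Comparing the two triangles via the octahedral axiom yields $p^{*}(Y)\cong \Sigma^{l-1}Y'$, and Lemma~\ref{Y' is in FD} gives $Y'\in\mathcal{F}$; hence $p^{*}(Y)\in\Sigma^{l-1}\mathcal{F}$.

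For the identification of $X$ with the relative truncation of $Y$: since $p^{*}(X)\in\Sigma^{l}\mathcal{F}\subseteq\mathcal{D}(\overline{A})^{\leqslant -l}$, the object $X$ belongs to $\mathcal{D}(A)^{\leqslant -l}_{rel}$. On the other hand, $M=\Sigma^{l-1}p_{*}(M')$ with $M'$ in the heart of the canonical $t$-structure on $\mathcal{D}(\overline{A})$, and $i_{*}(M)=0$; therefore $p^{*}(M)\cong\Sigma^{l-1}M'$ has cohomology concentrated in degree $1-l$, and $M\in\mathcal{D}(A)^{\geqslant 1-l}_{rel}=\mathcal{D}(A)^{>-l}_{rel}$. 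The triangle $X\to Y\to M\to\Sigma X$ therefore satisfies the defining properties of the canonical truncation triangle of $Y$ at level $-l$ with respect to the relative $t$-structure, and by uniqueness of such triangles we conclude $\tau^{rel}_{\leqslant -l}Y\cong X$ (equivalently $\tau^{rel}_{> -l}Y\cong M$).

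The main obstacle is the naturality check in the middle step: one must confirm that the class $\varepsilon\in\Hom_{\per A}(M,\Sigma X)$, chosen as the preimage of $\id_{M'}$ under a composition of several isomorphisms (involving Lemma~\ref{Extension group in Z} and Lemma~\ref{Hom relations in FD}), actually pulls back under $p^{*}$ to the class defining the triangle of $Y'$. This is where the compatibility between the equivalence $\mathcal{Z}/[\mathcal{P}]\simeq\per\overline{A}$ and the explicit constructions of $\varepsilon$, $\varepsilon'$ enters; once this is pinned down, the cone computation yielding $p^{*}(Y)\cong\Sigma^{l-1}Y'$ is immediate.
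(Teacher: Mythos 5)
Your proposal is correct and follows essentially the same route as the paper: membership of $Y$ in $\mathcal{Z}$ via extension-closedness and Lemma~\ref{M is in Z}, the identification $p^{*}(Y)\cong\Sigma^{l-1}Y'$ combined with Lemma~\ref{Y' is in FD}, and then the observation that $X\in\mathcal{D}(A)^{\leqslant-l}_{rel}$ while $M\in\mathcal{D}(A)^{\geqslant-l+1}_{rel}$. The only cosmetic difference is in the last step, where you invoke the standard uniqueness of truncation triangles for the relative $t$-structure, while the paper proves this instance by hand (constructing a morphism of triangles from the vanishing of $\Hom(X,\tau^{rel}_{>-l}Y)$ and killing the cones via the octahedral axiom); both are valid, and you correctly read $\tau^{rel}_{\leqslant l}$ in the statement as $\tau^{rel}_{\leqslant -l}$.
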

\begin{proof}
	
	\emph{Step 1: $ Y $ is an object in $ \mathcal{F}^{rel}\langle l-1\rangle $ .}
	
	By Lemma~\ref{M is in Z}, the object $ M $ is in $ \mathcal{Z} $. By the triangle~(\ref{Triangle for M}), we can see that $ Y $ is in $ \mathcal{Z} $. Then by Lemma~\ref{Y' is in FD}, $ p^{*}(Y)\cong \Si^{l-1}Y' $ belongs to $ \Si^{l-1}\mathcal{F} $. Thus, the object $ Y $ is in $ \mathcal{F}^{rel}\langle l-1\rangle $.

	\emph{Step 2: $ \tau^{rel}_{\leqslant-l}Y $ is isomorphic to $ X $. } 
	
	Since $ X\in\mathcal{D}(A)_{rel}^{\leqslant-l} $ and $ \tau^{rel}_{>-l}(Y)=p_{*}\Si^{l-1}H^{-l+1}(p^{*}(Y))\in\mathcal{D}(A)_{rel}^{\geqslant-l+1} $, the space $ \Hom_{\mathcal{D}(A)}(X,\tau^{rel}_{>-l}(Y)) $ is zero. Hence, we can obtain a commutative diagram of triangles
	
	\begin{align*}
		\xymatrix{
			\tau^{rel}_{\leqslant-l}Y\ar[r]&Y\ar[r]\ar@{=}[d]&\tau^{rel}_{>-l}(Y)\ar[r]&\Si\tau^{rel}_{\leqslant-l}Y\\
			X\ar[r]\ar@{-->}[u]^{\delta_{2}}&Y\ar[r]&M\ar[r]\ar@{-->}[u]^{\delta_{1}}&\Si X\ar@{-->}[u]\,.
		}
	\end{align*}
	
	By the octahedral axiom, we have the following commutative diagram
	\begin{align*}
		\xymatrix{
			Y\ar[r]\ar@{=}[d]&M\ar[r]\ar[d]^{\delta_{1}}&\Si X\ar[r]\ar[d]^{\delta_{2}[1]}&\Si Y\ar@{=}[d]\\
			Y\ar[r]&\tau^{rel}_{>-l}(Y)\ar[r]\ar[d]&\Si\tau^{rel}_{\leqslant-l}Y\ar[r]\ar[d]&\Si Y\\
			&\cone(\delta_{1})\ar@{-->}[r]\ar[d]&\Si \cone(\delta_{2})\ar[d]&\\
			&\Si M\ar[r]&\Si^{2} X
		}
	\end{align*}
	and the object $ \cone(\delta_{1}) $ is isomorphic to $ \Si\cone(\delta_{2}) $ in $ \mathrm{per}A $.
	
	Since $ \tau^{rel}_{\leqslant-l}Y\in\mathcal{D}(A)^{\leqslant-l}_{rel} $ and $ X\in\mathcal{D}(A)^{\leqslant-l}_{rel} $, $ \cone(\delta_{2}) $ is also in $ \mathcal{D}(A)^{\leqslant-l}_{rel} $. Thus $ \Si\cone(\delta_{2}) $ is in $ \mathcal{D}(A)^{\leqslant-l-1}_{rel} $. On the other hand, $ M $ and $ \tau^{rel}_{>-l}(Y) $ are in $ \mathcal{D}_{rel}^{\geqslant-l+1}(A) $. Thus $ \cone(\delta_{1}) $ is in $ \mathcal{D}_{rel}^{\geqslant-l} $. Hence we can conclude that $ \cone(\delta_{1})\cong \Si\cone(\delta_{2}) $ is zero. Thus, the relative truncation $ \tau^{rel}_{\leqslant-l}Y $ of $ Y $ is isomorphic to $ X $.
\end{proof}

\begin{Lem}\label{Fully faithful of the shifts of relative fundamental domain}
	Let $ l>0 $ be an integer. The image of the functor $ \tau^{rel}_{\leqslant-l} $ restricted to $ \mathcal{F}^{rel}\langle l-1\rangle $ is in $ \mathcal{F}^{rel}\langle l\rangle $ and the functor $ \tau^{rel}_{\leqslant-l}\colon\mathcal{F}^{rel}\langle l-1\rangle\rightarrow\mathcal{F}^{rel}\langle l\rangle $ is fully faithful.
\end{Lem}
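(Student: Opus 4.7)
The plan is to prove the two parts separately: first that $\tau^{rel}_{\leqslant -l}$ lands in $\mathcal{F}^{rel}\langle l\rangle$, and then that it is fully faithful.

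For the first part, let $Y \in \mathcal{F}^{rel}\langle l-1\rangle$. By Lemma~\ref{Stable under relative truncation}, $\tau^{rel}_{\leqslant -l}Y$ lies in $\mathcal{Z}$. Since $p^*$ commutes with the relative truncation by construction (cf.\ Definition~\ref{Relative truncation functor}), we have $p^*(\tau^{rel}_{\leqslant -l}Y)\cong \tau_{\leqslant -l}(p^*Y)$ with $p^*Y\in\Si^{l-1}\mathcal{F}$. I would check that $\tau_{\leqslant -l}$ sends $\Si^{l-1}\mathcal F$ into $\Si^{l}\mathcal F$ either by a direct argument using the canonical truncation triangle together with absolute $(n+1)$-Calabi--Yau duality for $\overline A$ (which forces the right orthogonality condition against $\mathcal{D}(\overline A)^{\leqslant -n-l}$), or by invoking the analogue of the construction in Lemmas~\ref{M is in Z}, \ref{Y' is in FD} and \ref{Denseness of relative truncation} applied at level $l$; this is essentially Guo's absolute statement \cite[Lemma~3.2.12]{LYG}.

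For fully faithfulness, I would fix $Y_1,Y_2\in\mathcal{F}^{rel}\langle l-1\rangle$ and use the canonical relative truncation triangle
\[
\tau^{rel}_{\leqslant -l}Y_i\longrightarrow Y_i\longrightarrow\tau^{rel}_{> -l}Y_i\longrightarrow \Si\tau^{rel}_{\leqslant -l}Y_i .
\]
Because $(\mathcal{D}(A)^{\leqslant -l}_{rel},\mathcal{D}(A)^{\geqslant -l+1}_{rel})$ is a $t$-structure, $\tau^{rel}_{\leqslant -l}$ is right adjoint to the inclusion of $\mathcal{D}(A)^{\leqslant -l}_{rel}$, so the natural map
\[
\Hom_{\per A}(\tau^{rel}_{\leqslant -l}Y_1,Y_2)\iso \Hom_{\per A}(\tau^{rel}_{\leqslant -l}Y_1,\tau^{rel}_{\leqslant -l}Y_2)
\]
is an isomorphism. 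It therefore suffices to show that the map $\Hom(Y_1,Y_2)\to\Hom(\tau^{rel}_{\leqslant -l}Y_1,Y_2)$ is bijective, i.e.\ that $\Hom_{\per A}(\tau^{rel}_{> -l}Y_1,\Si^{i}Y_2)=0$ for $i=0,1$.

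The essential input is relative Calabi--Yau duality. Since $\tau^{rel}_{> -l}Y_1=p_*\tau_{> -l}p^*Y_1$ lies in $\pvd_{B}(A)$, Corollary~\ref{Relative CY duality} gives
\[
D\Hom_{\per A}(\tau^{rel}_{> -l}Y_1,\Si^{i}Y_2)\cong \Hom_{\per A}(Y_2,\Si^{n+1-i}\tau^{rel}_{> -l}Y_1).
\]
Using $p^*Y_1\in\Si^{l-1}\mathcal{F}\subseteq\mathcal{D}(\overline A)^{\leqslant -l+1}$, one sees that $p^*\bigl(\Si^{n+1-i}\tau^{rel}_{> -l}Y_1\bigr)=H^{-l+1}(p^*Y_1)[\,n+l-i\,]$ lies in $\mathcal{D}(\overline A)^{\leqslant -n-l+i-1}\subseteq\mathcal{D}(\overline A)^{\leqslant -n-l+1}$ for $i=0,1$, while $p^*Y_2\in\Si^{l-1}\mathcal{F}\subseteq{}^{\perp}\mathcal{D}(\overline A)^{\leqslant -n-l+1}$; passing the Hom through the equivalence $\mathcal{Z}/[\mathcal P]\simeq\per\overline A$ (cf.\ Lemma~\ref{Extension group in Z} and Corollary~\ref{Cor: dense to tri equivalence}) then yields the desired vanishing.

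The main obstacle I anticipate is bookkeeping the cohomological degree shifts so that the orthogonality defining $\mathcal{F}$ in $\per\overline A$ indeed kicks in for both $i=0$ and $i=1$; the $i=1$ case is the tight one and is exactly where the $\,^{\perp}\mathcal{D}(\overline A)^{\leqslant -n}$ half of the definition of $\mathcal{F}$ is used, via the $(n+1)$-Calabi--Yau duality on the nose.
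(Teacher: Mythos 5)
Your argument is correct and takes essentially the same route as the paper: Step 1 is the identical truncation-triangle-plus-Calabi--Yau-duality computation, and your two vanishings $\Hom_{\per A}(\tau^{rel}_{>-l}Y_1,\Si^{i}Y_2)=0$ for $i=0,1$ are precisely the two dualities the paper uses for faithfulness and fullness respectively, merely repackaged through the $t$-structure adjunction instead of the explicit ``factors through'' diagram chases. One harmless slip: $\Si^{n+1-i}\tau^{rel}_{>-l}Y_1$ restricts to an object concentrated in cohomological degree $-n-l+i$ (not in $\mathcal{D}(\overline A)^{\leqslant -n-l+i-1}$), but the bound you actually invoke, membership in $\mathcal{D}(\overline A)^{\leqslant -n-l+1}$, still holds for $i=0,1$, so the conclusion stands.
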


\begin{proof}
	\emph{Step 1: The image of the functor $ \tau^{rel}_{\leqslant-l} $ restricted to $ \mathcal{F}^{rel}\langle l-1 \rangle $  is in $ \mathcal{F}^{rel}\langle l \rangle $.}
	
	Let $ X $ be an object in $ \mathcal{F}^{rel}\langle l-1 \rangle \subseteq\mathcal{Z} $. By Lemma~\ref{Stable under relative truncation}, $ \tau_{\leqslant-l}^{rel}X $ is still in $ \mathcal{Z} $. It is clear that $ p^{*}(\tau_{\leqslant-l}^{rel}X)\cong\tau_{\leqslant-l}(p^{*}(X)) $ is in $ \mathcal{D}(\overline{A})^{\leqslant-l} $.
	
	We have a triangle in $ \mathrm{per}(\overline{A}) $
	$$ \Si^{-1}\tau_{>-l}(p^{*}(X))\rightarrow\tau_{\leqslant-l}(p^{*}(X))\rightarrow p^{*}(X)\rightarrow \tau_{>-l}(p^{*}(X)).$$
	
	Let $ W $ be an object in $ \mathcal{D}(\overline{A})^{\leqslant-n-l} $. The space $ \Hom_{\mathcal{D}(\overline{A})}(p^{*}(X),W) $ is zero since $ p^{*}(X)\in{}^{\perp}(\mathcal{D}(\overline{A})^{\leqslant-l-n+1}) $. By the Calabi--Yau property, we have 
	\begin{equation*}
		\begin{split}
			\Hom_{\mathcal{D}(\overline{A})}(\Si^{-1}\tau_{>-l}(p^{*}(X)),W)&\cong D\Hom_{\mathcal{D}(\overline{A})}(W,\Si^{n}\tau_{>-l}(p^{*}(X))).
		\end{split}
	\end{equation*}
	The space $ \Hom_{\mathcal{D}(\overline{A})}(W,\Si^{n}\tau_{>-l}(p^{*}(X))) $ vanishes because $ \Si^{n}\tau_{>-l}(p^{*}(X)) \in\mathcal{D}(\overline{A})^{\geqslant-l-n+1} $. Thus $ p^{*}(\tau_{\leqslant-l}^{rel}X) $ is in $ \Si^{l}\mathcal{F}\subseteq \per(\overline{A}) $ and then
	$ \tau_{\leqslant-l}^{rel}X $ belongs to $ \mathcal{F}^{rel}\langle l \rangle $.

	\emph{Step 2: The functor $ \tau^{rel}_{\leqslant-l}\colon\mathcal{F}^{rel}\langle l-1 \rangle\rightarrow\mathcal{F}^{rel}\langle l \rangle $ is fully faithful.}
	
	Let $ X $ and $ Y $ be two objects in $ \mathcal{F}^{rel}\langle l-1 \rangle $ and $ f\colon\tau^{rel}_{\leqslant-l}X\rightarrow\tau^{rel}_{\leqslant-l}Y $ be a morphism.
	\begin{align*}
		\xymatrix{
			\Si^{-1}\tau^{rel}_{>-l}X\ar[r]&\tau^{rel}_{\leqslant-l}X\ar[d]^{f}\ar[r]&X\ar@{.>}[d]\ar[r]&\tau^{rel}_{>-l}X\\
			\Si^{-1}\tau^{rel}_{>-l}Y\ar[r]&\tau^{rel}_{\leqslant-l}Y\ar[r]^{g}&Y\ar[r]&\tau^{rel}_{>-l}Y.
		}
	\end{align*}
	By the relative Calabi--Yau property, the space $ \Hom_{\mathcal{D}(A)}(\Si^{-1}\tau^{rel}_{>-l}X,Y) $ is isomorphic to 
	$ D\Hom_{\mathcal{D}(A)}(Y,\Si^{n}\tau^{rel}_{>-l}X) $. Since $ Y\in{}^{\perp}(\mathcal{D}(A)^{\leqslant-n-l+1}) $ and $ \Si^{n}\tau^{rel}_{>-l}X\in\mathcal{D}(A)^{\leqslant-n-l+1} $, this space is zero. Then the composition $ gf $ factorizes through the canonical morphism $ \tau^{rel}_{\leqslant-l}X\rightarrow X $. Thus the functor $ \tau^{rel}_{\leqslant-l}\colon\mathcal{F}\langle l-1 \rangle\rightarrow\mathcal{F}\langle l \rangle $ is full.
	
	Now let $ X $ and $ Y $ be objects of $ \mathcal{F}^{rel}\langle l-1 \rangle $ and $ f\colon X \rightarrow Y $ a morphism satisfying $ \tau^{rel}_{\leqslant-l}f=0 $. Then it induces a morphism of triangles:
	\begin{align*}
		\xymatrix{
			\Si^{-1}\tau^{rel}_{>-l}X\ar[d]\ar[r]&\tau^{rel}_{\leqslant-l}X\ar[d]^{0}\ar[r]^{\quad h}&X\ar[d]^{f}\ar[r]&\tau^{rel}_{>-l}X\ar[d]\ar@{.>}"2,3"\\
			\Si^{-1}\tau^{rel}_{>-l}Y\ar[r]&\tau^{rel}_{\leqslant-l}Y\ar[r]&Y\ar[r]&\tau^{rel}_{>-l}Y.
		}
	\end{align*}
	
	The composition $ fh $ vanishes, so $ f $ factorizes through $ \tau^{rel}_{>-l}X $. By the relative Calabi--Yau property, the space $ \Hom_{\mathcal(D)(A)}(\tau^{rel}_{>-l}X,Y) $ is isomorphic to $ D\Hom_{\mathcal(D)(A)}(Y,\Si^{n+1}\tau^{rel}_{>-l}X) $ which is zero because $ Y $ lies in $ {}^{\perp}(\mathcal{D}(A)^{\leqslant-n-l+1}) $ and $ \Si^{n+1}\tau^{rel}_{>-l}X\in\mathcal{D}(A)^{\leqslant-l-n}$ . Thus $ f=0 $, i.e,\ the functor $$ \tau^{rel}_{\leqslant-l}\colon\mathcal{F}^{rel}\langle l-1 \rangle\rightarrow\mathcal{F}^{rel}\langle l \rangle $$ is faithful.

\end{proof}

\begin{Prop}\label{Equivalence between the shifts of relative fundamental domain}
	
	For any positive integer $ l $, the functor $ \tau^{rel}_{\leqslant-l} $ induces an equivalence from $ \mathcal{F}^{rel}\langle l-1 \rangle $ to $ \mathcal{F}^{rel}\langle l \rangle $.
\end{Prop}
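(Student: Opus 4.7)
The plan is to combine the two lemmas immediately preceding the statement. Lemma~\ref{Fully faithful of the shifts of relative fundamental domain} already establishes that $\tau^{rel}_{\leqslant -l}$ restricted to $\mathcal{F}^{rel}\langle l-1\rangle$ lands in $\mathcal{F}^{rel}\langle l\rangle$ and that the resulting functor is fully faithful. So all that remains is essential surjectivity.

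For essential surjectivity, I would take $X \in \mathcal{F}^{rel}\langle l\rangle$ and explicitly construct a preimage $Y$ following the recipe set up just before Lemma~\ref{Denseness of relative truncation}. Concretely, the object $\Si^{1-l}p^{*}(X) \in \Si\mathcal{F}$ admits the $n-1$ distinguished triangles from Lemma~\ref{addA' approximation}, producing an object $M'$ in $\mathrm{mod}H^{0}(\overline{A})$ as the kernel of the map $\nu H^{0}(P_{n-1}) \to \nu H^{0}(Q_{n-2})$ between injectives. Setting $M = \Si^{l-1}p_{*}(M')$ gives an object of $\mathcal{Z}$ (Lemma~\ref{M is in Z}). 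Using Lemma~\ref{Extension group in Z} together with Lemma~\ref{Hom relations in FD}, the space $\Hom_{\per A}(M,\Si X)$ is identified with $\Hom_{\mathrm{per}(\overline{A})}(M',M')$; let $\varepsilon$ be the preimage of $\id_{M'}$ and form the triangle
\[
X \longrightarrow Y \longrightarrow M \xrightarrow{\varepsilon} \Si X
\]
in $\per A$.

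By Lemma~\ref{Denseness of relative truncation}, this $Y$ lies in $\mathcal{F}^{rel}\langle l-1\rangle$ and satisfies $\tau^{rel}_{\leqslant -l} Y \cong X$, which delivers essential surjectivity. Combined with Lemma~\ref{Fully faithful of the shifts of relative fundamental domain}, this proves that $\tau^{rel}_{\leqslant -l}\colon \mathcal{F}^{rel}\langle l-1\rangle \to \mathcal{F}^{rel}\langle l\rangle$ is an equivalence.

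There is no real obstacle here since all the technical work has been absorbed into the two preceding lemmas; the only care needed is to check that the construction of $Y$ from $X$ uses solely data of $X$ (i.e.\ that the choice of $M'$, and of the triangles from Lemma~\ref{addA' approximation}, is legitimate for every $X \in \mathcal{F}^{rel}\langle l\rangle$), which is guaranteed by the definition of $\mathcal{F}^{rel}\langle l\rangle$ through $p^{*}(X) \in \Si^{l}\mathcal{F}$.
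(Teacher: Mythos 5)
Your proposal is correct and matches the paper's own proof, which simply cites Lemma~\ref{Denseness of relative truncation} for essential surjectivity and Lemma~\ref{Fully faithful of the shifts of relative fundamental domain} for well-definedness and fully faithfulness. The extra detail you give on the construction of $Y$ is exactly the setup the paper carries out just before Lemma~\ref{Denseness of relative truncation}, so there is no divergence.
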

\begin{proof}
	This follows from Lemma~\ref{Denseness of relative truncation} and Lemma~\ref{Fully faithful of the shifts of relative fundamental domain}.
\end{proof}

\bigskip



\begin{Prop}\label{Extension group in RFD}
	Let $ X $ and $ Y $ be two objects in the Higgs category $ \mathcal{H} $. Let $ l>0 $ be an integer. Then we have
	$$ \Hom_{\mathcal{C}_{n}(A,B)}(X,\Si^{l}Y)\cong \Hom_{\mathcal{C}_{n}(\overline{A})}(p^{*}(X),\Si^{l}p^{*}(Y)) $$
\end{Prop}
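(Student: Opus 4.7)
My plan is to reduce this to the two main ingredients already available: the colimit description of morphisms in $\mathcal{C}_{n}(A,B)$ given by Proposition~\ref{Home space in RCA}, and the comparison of positive extensions in $\mathcal{Z}$ given by Lemma~\ref{Extension group in Z}. First I would write
\[
\Hom_{\mathcal{C}_{n}(A,B)}(X,\Si^{l}Y)=\varinjlim_{k\leqslant0}\Hom_{\mathcal{D}(A)}(\tau^{rel}_{\leqslant k}X,\tau^{rel}_{\leqslant k}\Si^{l}Y).
\]
Two compatibility identities are needed. From $\tau^{rel}_{>k}=p_{*}\tau_{>k}p^{*}$ together with the fact that $p^{*}$ commutes with shifts, one checks by comparing the defining triangles that $\tau^{rel}_{\leqslant k}(\Si^{l}Y)\cong\Si^{l}\tau^{rel}_{\leqslant k-l}(Y)$. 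Applying $p^{*}$ to the triangle $\tau^{rel}_{\leqslant k}Z\to Z\to p_{*}\tau_{>k}p^{*}Z$ and using $p^{*}p_{*}\simeq\id$ on the essential image of $p_{*}$ (from the recollement in Corollary~\ref{Recollement of dg algebras}), one obtains $p^{*}\tau^{rel}_{\leqslant k}\cong\tau_{\leqslant k}\,p^{*}$.

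Since $X,Y\in\mathcal{F}^{rel}\subseteq\mathcal{Z}$, Lemma~\ref{Stable under relative truncation} yields $\tau^{rel}_{\leqslant k}X\in\mathcal{Z}$ and $\tau^{rel}_{\leqslant k-l}Y\in\mathcal{Z}$ for every $k\leqslant0$. As $l>0$, Lemma~\ref{Extension group in Z} (combined with the first identity above) then provides a bifunctorial isomorphism
\[
\Hom_{\mathcal{D}(A)}(\tau^{rel}_{\leqslant k}X,\Si^{l}\tau^{rel}_{\leqslant k-l}Y)\iso\Hom_{\mathcal{D}(\overline{A})}(p^{*}\tau^{rel}_{\leqslant k}X,\Si^{l}p^{*}\tau^{rel}_{\leqslant k-l}Y)
\]
at each $k$. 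Rewriting the right-hand side using $p^{*}\tau^{rel}_{\leqslant k}\cong\tau_{\leqslant k}p^{*}$ and $\tau_{\leqslant k}(\Si^{l}p^{*}Y)=\Si^{l}\tau_{\leqslant k-l}p^{*}Y$, and then taking the colimit over $k\leqslant0$, produces
\[
\Hom_{\mathcal{C}_{n}(A,B)}(X,\Si^{l}Y)\iso\varinjlim_{k\leqslant0}\Hom_{\mathcal{D}(\overline{A})}(\tau_{\leqslant k}p^{*}X,\tau_{\leqslant k}\Si^{l}p^{*}Y).
\]
By the absolute version of Proposition~\ref{Home space in RCA} (the case $B=0$, where the relative $t$-structure reduces to the canonical one on $\mathcal{D}(\overline{A})$), the last expression is exactly $\Hom_{\mathcal{C}_{n}(\overline{A})}(p^{*}X,\Si^{l}p^{*}Y)$. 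Naturality of all the isomorphisms in the canonical maps $\tau^{rel}_{\leqslant k'}\to\tau^{rel}_{\leqslant k}$ (for $k'\leqslant k$) ensures compatibility with the directed systems on both sides, so the identifications pass through the colimit.

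I do not expect a real obstacle: the bulk of the work was already done in Lemma~\ref{Extension group in Z}, which exploits the silting-type triangles inside $\mathcal{Z}$ to replace positive shifts in $\per A$ by positive shifts in $\per\overline{A}$. The only point requiring some care is the verification that the two identities $\tau^{rel}_{\leqslant k}\Si^{l}\cong\Si^{l}\tau^{rel}_{\leqslant k-l}$ and $p^{*}\tau^{rel}_{\leqslant k}\cong\tau_{\leqslant k}p^{*}$ are natural in a way compatible with the connecting morphisms of the colimit, but this follows immediately from the functoriality of the triangles defining the truncations.
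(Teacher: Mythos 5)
Your argument is correct in substance but follows a genuinely different route from the paper's. The paper does not invoke the colimit description of Proposition~\ref{Home space in RCA} here at all: instead it builds, inside $\mathcal{C}_{n}(A,B)$ itself, a chain of $l$ triangles $W_{i-1}\to P_{Y_{i}}\to W_{i}\to\Si W_{i-1}$ with $P_{Y_{i}}\in\add(eA)$ and $W_{i}\in\mathcal{F}^{rel}$ (obtained from left $\add(eA)$-approximation triangles in $\per A$ together with the equivalence $\tau^{rel}_{\leqslant-1}\colon\mathcal{F}^{rel}\to\mathcal{F}^{rel}\langle1\rangle$ of Proposition~\ref{Equivalence between the shifts of relative fundamental domain}), thereby rotating $\Hom_{\mathcal{C}_{n}(A,B)}(X,\Si^{l}Y)$ down to $\Hom_{\mathcal{C}_{n}(A,B)}(X,W_{l})/\Ima(\Phi)$, which is then computed using the full faithfulness of $\pi^{rel}$ on $\mathcal{F}^{rel}$ (Proposition~\ref{relative full faithful}) and the silting reduction. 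In effect the paper re-runs the argument of Lemma~\ref{Extension group in Z} one level up, in the quotient category, whereas you use that lemma as a black box and transport everything through the colimit formula. Your route is shorter given the earlier results; the paper's produces explicit triangles in $\mathcal{C}_{n}(A,B)$ that are reused immediately afterwards (e.g.\ in Proposition~\ref{Extension closed subcategory}).

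Two small points to tighten. First, with the paper's cohomological conventions the commutation rule is $\tau^{rel}_{\leqslant k}\Si^{l}\cong\Si^{l}\tau^{rel}_{\leqslant k+l}$, not $\Si^{l}\tau^{rel}_{\leqslant k-l}$; this is harmless for your colimit (the two directed systems are cofinal as $k\to-\infty$), and with the correct index the term $\Hom_{\mathcal{D}(A)}(\tau^{rel}_{\leqslant k}X,\tau^{rel}_{\leqslant k}\Si^{l}Y)$ is literally the diagonal term appearing in Proposition~\ref{Home space in RCA}. Second, the point that genuinely needs checking is not the naturality of the truncation identities but the bifunctoriality of the isomorphism of Lemma~\ref{Extension group in Z}, which as stated is assembled from choices of approximation triangles. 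It is in fact the canonical map induced by $p^{*}$ --- each connecting map in its proof becomes invertible after applying $p^{*}$ because $p^{*}(eA)$ is acyclic --- and this is what makes the term-by-term isomorphisms compatible with the transition maps of the colimit; you should make that identification explicit.
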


\begin{proof}	
	For the object $ Y $, we have the following triangle in $ \mathrm{per}A $
	$$ Y\xrightarrow{f_{1}} P_{Y_{1}}\xrightarrow{g_{1}} Y_{1}\rightarrow \Si Y ,$$ where $ f_{1}\colon Y\rightarrow P_{Y_{1}} $ is a left $ \mathrm{add}(eA) $-approximation and $ Y_{1}\in\mathcal{F}^{rel}\langle1\rangle $ (see proof of Lemma \ref{Extension group in Z}). Since $ \tau^{rel}_{\leqslant-1}\colon\mathcal{F}^{rel}\rightarrow\mathcal{F}^{rel}\langle1\rangle $ is an equivalence, there is an object $ W_{1}\in\mathcal{F}^{rel} $ such that $ \tau^{rel}_{\leqslant-1}W_{1}\cong Y_{1} $. Thus we get a triangle in $ \mathcal{C}_{n}(A,B) $
	$$ Y\xrightarrow{\pi^{rel}(f_{1})}P_{Y_{1}}\xrightarrow{\pi^{rel}(g_{1})} W_{1}\rightarrow\Si Y .$$

	For the object $ W_{1} $, we have the following triangle in $ \mathrm{per}A $
	$$ W_{1}\xrightarrow{f_{2}} P_{Y_{2}}\xrightarrow{g_{2}} Y_{2}\rightarrow \Si W_{1} ,$$ where $ f_{2}\colon W_{1}\rightarrow P_{Y_{2}} $ is a left $ \mathrm{add}(eA) $-approximation and $ Y_{2}\in\mathcal{F}^{rel}\langle1\rangle $. For the same reason, there is an object $ W_{2}\in\mathcal{F}^{rel} $ such that $ \tau^{rel}_{\leqslant-1}W_{2}\cong Y_{2} $. Thus we get a triangle in $ \mathcal{C}_{n}(A,B) $
	$$ W_{1}\xrightarrow{\pi^{rel}(f_{2})}P_{Y_{2}}\xrightarrow{\pi^{rel}(g_{2})} W_{2}\rightarrow \Si W_{1} .$$
	
	Repeating this process, we can get the following $ l $ triangles in $ \mathrm{per}A $
	$$ Y\xrightarrow{f_{1}} P_{Y_{1}}\xrightarrow{g_{1}} Y_{1}\rightarrow \Si Y ,$$
	$$ W_{1}\xrightarrow{f_{2}} P_{Y_{2}}\xrightarrow{g_{1}} Y_{2}\rightarrow \Si W_{1}, $$
	$$ \cdots $$
	$$ W_{l-2}\xrightarrow{f_{l-1}} P_{Y_{l-1}}\xrightarrow{g_{l-1}} Y_{l-1}\rightarrow\Si W_{l-2} ,$$
	$$ W_{l-1}\xrightarrow{f_{l}} P_{Y_{l}}\xrightarrow{g_{l}} Y_{l}\rightarrow \Si W_{l-1} $$
	where for each $ 1\leqslant i\leqslant l $, $ f_{i} $ is a left $ \mathrm{add}(eA) $-approximation, $ Y_{i} $ is in $ \mathcal{F}^{rel}\langle1\rangle $ and $ \tau^{rel}_{\leqslant-1}W_{i}\cong Y_{i} $. 
	
	Thus we get $ l $ triangles in $ \mathcal{C}_{n}(A,B) $
	$$ Y\xrightarrow{\pi^{rel}(f_{1})} P_{Y_{1}}\xrightarrow{\pi^{rel}(g_{1})} W_{1}\rightarrow \Si Y ,$$
	$$ W_{1}\xrightarrow{\pi^{rel}(f_{2})} P_{Y_{2}}\xrightarrow{\pi^{rel}(g_{1})} W_{2}\rightarrow \Si W_{1}, $$
	$$ \cdots $$
	$$ W_{l-2}\xrightarrow{\pi^{rel}(f_{l-1})} P_{Y_{l-1}}\xrightarrow{\pi^{rel}(g_{l-1})} W_{l-1}\rightarrow\Si W_{l-2} ,$$
	$$ W_{l-1}\xrightarrow{\pi^{rel}(f_{l})} P_{Y_{l}}\xrightarrow{\pi^{rel}(g_{l})} W_{l}\rightarrow\Si W_{l-1} .$$
	
	Then we have 
	\begin{equation*}
		\begin{split}
			\Hom_{\mathcal{C}_{n}(A,B)}(X,\Si^{l}Y)&\simeq \Hom_{\mathcal{C}_{n}(A,B)}(X,\Si^{l-1}W_{1})\\
			&\simeq \Hom_{\mathcal{C}_{n}(A,B)}(X,\Si^{l-2}W_{2})\\
			&\cdots\\
			&\simeq \Hom_{\mathcal{C}_{n}(A,B)}(X,\Si W_{l-1})
		\end{split}
	\end{equation*}
	
	By the last triangle, we have the following exact sequence
	$$ \rightarrow \Hom_{\mathcal{C}_{n}(A,B)}(X,P_{Y_{l}})\xrightarrow{\Phi} \Hom_{\mathcal{C}_{n}(A,B)}(X,W_{l})\rightarrow \Hom_{\mathcal{C}_{n}(A,B)}(X,\Si W_{l-1})\rightarrow0. $$
	Thus, we have
	\begin{equation*}
		\begin{split}
			\Hom_{\mathcal{C}_{n}}(A,B)(X,\Si^{l}Y)&\simeq \Hom_{\mathcal{C}_{n}(A,B)}(X,\Si W_{l-1})\\
			&\simeq \Hom_{\mathcal{C}_{n}(A,B)}(X,W_{l})/\Ima(\Phi)\\
			&\simeq \Hom_{\mathrm{per}A}(X,W_{l})/\Ima(\Phi)\\
			&\simeq \Hom_{\mathrm{per}(\overline{A})}(p^{*}(X),p^{*}(W_{l}))\\
			&\simeq \Hom_{\mathcal{C}_{n}(\overline{A})}(p^{*}(X),p^{*}(W_{l}))\\
			&\simeq \Hom_{\mathcal{C}_{n}(\overline{A})}(p^{*}(X),\tau_{\leqslant-1}p^{*}(W_{l}))\\
			&\simeq \Hom_{\mathcal{C}_{n}(\overline{A})}(p^{*}(X),p^{*}(\tau^{rel}_{\leqslant-1}(W_{l})))\\
			&\simeq \Hom_{\mathcal{C}_{n}(\overline{A})}(p^{*}(X),p^{*}(Y_{l}))\\
			&\simeq \Hom_{\mathcal{C}_{n}(\overline{A})}(p^{*}(X),\Si^{l}p^{*}(Y)).
		\end{split}
	\end{equation*}

\end{proof}

\begin{Prop}\cite[Proposition 4.8.1]{LYG}\label{Extension sequence}
	Suppose that $ X $ and $ Y $ are two objects in $\mathcal{F}\subseteq \mathcal{C}_{n}(\overline{A}) $. Then there is a long exact sequence 
	$$ 0\rightarrow \Ext_{\mathcal{D}(\overline{A})}^{1}(X,Y)\rightarrow \Ext_{\mathcal{C}_{n}(\overline{A})}^{1}(X,Y)\rightarrow D\Ext_{\mathcal{D}(\overline{A})}^{n-1}(X,Y) $$
	$$ \rightarrow \Ext_{\mathcal{D}(\overline{A})}^{2}(X,Y)\rightarrow \Ext_{\mathcal{C}_{n}(\overline{A})}^{2}(X,Y)\rightarrow D\Ext_{\mathcal{D}(\overline{A})}^{n-2}(X,Y) $$
	$$ \rightarrow\qquad\cdots\qquad\rightarrow $$
	$$ \rightarrow \Ext_{\mathcal{D}(\overline{A})}^{n-1}(X,Y)\rightarrow \Ext_{\mathcal{C}_{n}(\overline{A})}^{n-1}(X,Y)\rightarrow D\Ext_{\mathcal{D}(\overline{A})}^{1}(X,Y)\rightarrow0. $$
	
\end{Prop}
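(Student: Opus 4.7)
The proof follows the strategy of Guo's proof in \cite[Proposition 4.8.1]{LYG}, which is the absolute analog of several arguments developed earlier in this section. The central idea is to construct, for each $r$ with $1\leqslant r\leqslant n-1$, a distinguished triangle in $\per\overline{A}$ of the form
$$ \Si^{r}Y \longrightarrow Y_{r} \longrightarrow M_{r} \longrightarrow \Si^{r+1}Y, $$
where $Y_{r}$ lies in an appropriate shift of the fundamental domain $\mathcal{F}$ and $M_{r}$ lies in $\pvd(\overline{A})$. This triangle is the absolute analog of the one built in the proof of Lemma~\ref{Denseness of relative truncation}: $M_{r}$ is constructed from the Nakayama-functor image of the $\mathrm{add}\overline{A}$-terms appearing in the canonical filtration of $Y$ supplied by Lemma~\ref{addA' approximation}, and the connecting morphism $M_{r}\to\Si^{r+1}Y$ is picked out as the preimage of the identity of $M'_{r}$ under the duality established in Lemma~\ref{Hom relations in FD}.

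Applying the cohomological functor $\Hom_{\per\overline{A}}(X,-)$ to this triangle yields a long exact sequence
\begin{align*}
	\cdots\to \Hom_{\per\overline{A}}(X,\Si^{r}Y)\to \Hom_{\per\overline{A}}(X,Y_{r})\to \Hom_{\per\overline{A}}(X,M_{r})\to \Hom_{\per\overline{A}}(X,\Si^{r+1}Y)\to\cdots.
\end{align*}
The middle term is identified, via the absolute analog of Proposition~\ref{Extension group in RFD} together with the full faithfulness of $\pi\colon\mathcal{F}\hookrightarrow\mathcal{C}_{n}(\overline{A})$, with $\Hom_{\mathcal{C}_{n}(\overline{A})}(X,\Si^{r}Y)$. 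Since $M_{r}$ lies in $\pvd(\overline{A})$, and the latter is $(n+1)$-Calabi--Yau (because $\overline{A}$ carries a canonical absolute $(n+1)$-Calabi--Yau structure by Proposition~\ref{Prop: smooth of cofiber}), the Calabi--Yau duality provides a functorial isomorphism $\Hom_{\per\overline{A}}(X,M_{r})\cong D\Hom_{\per\overline{A}}(M_{r},\Si^{n+1}X)$; unwinding the explicit construction of $M_{r}$ from the filtration of $Y$ then produces a canonical identification of this space with $D\Ext^{n-r}_{\mathcal{D}(\overline{A})}(X,Y)$.

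Splicing the resulting six-term sequences for $r=1,\ldots,n-1$ and checking the vanishing at the two endpoints (the injectivity at $\Ext^{1}_{\mathcal{D}(\overline{A})}(X,Y)$ follows from the fact that $\Hom_{\per\overline{A}}(X,Y)\to\Hom_{\mathcal{C}_{n}(\overline{A})}(X,Y)$ is an isomorphism for $X,Y\in\mathcal{F}$, and the surjectivity at $D\Ext^{1}_{\mathcal{D}(\overline{A})}(X,Y)$ is dual) yields the claimed long exact sequence. The main technical obstacle is the explicit construction and identification of $M_{r}$: this requires the Nakayama-functor bookkeeping already used in Lemma~\ref{Hom relations in FD}, and care is needed to ensure that the Calabi--Yau dualities line up to give exactly $D\Ext^{n-r}_{\mathcal{D}(\overline{A})}(X,Y)$ (and not some shifted or swapped variant). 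Once $M_{r}$ has been correctly identified, the rest of the proof is formal homological bookkeeping.
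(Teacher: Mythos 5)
The paper does not actually prove this proposition: it is quoted from Guo's thesis (\cite[Proposition 4.8.1]{LYG}) and used as a black box, its only role being as an input to Corollary~\ref{Cor:exact sequence for Higgs}. So there is no proof in the paper to compare yours against, and I can only judge your proposal on its own terms. Your overall strategy --- produce, for each $r$, a triangle $\Si^{r}Y\to Y_{r}\to M_{r}\to\Si^{r+1}Y$ with $Y_{r}$ in (a shift of) $\mathcal{F}$, $M_{r}\in\pvd(\overline{A})$ built via the Nakayama functor, apply $\Hom_{\per\overline{A}}(X,-)$, identify the three relevant terms, and splice --- is indeed the shape of Guo's argument and is consistent with the machinery this paper develops in the relative setting (Lemma~\ref{Denseness of relative truncation}, Lemma~\ref{Hom relations in FD}, Proposition~\ref{Equivalence between the shifts of relative fundamental domain}).

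The proposal nevertheless has a genuine gap exactly where the content of the proposition sits: the identification $\Hom_{\per\overline{A}}(X,M_{r})\cong D\Ext^{n-r}_{\mathcal{D}(\overline{A})}(X,Y)$ is asserted ("unwinding the explicit construction of $M_{r}$ \dots{} then produces a canonical identification") but never carried out, and this unwinding is essentially the entire proof. Concretely: the Calabi--Yau duality you invoke yields $\Hom(X,M_{r})\cong D\Hom(M_{r},\Si^{n+1}X)$, the dual of a morphism space \emph{out of} $M_{r}$ \emph{into} a shift of $X$; converting this into the dual of an extension group \emph{from} $X$ \emph{to} $Y$ in the right degree requires the representability statement of Lemma~\ref{Hom relations in FD} (that $M'$ represents a suitable $\Ext^{*}(-,Y)$ on the heart) combined with vanishing arguments coming from $X\in\mathcal{D}(\overline{A})^{\leqslant 0}\cap{}^{\perp}\mathcal{D}(\overline{A})^{\leqslant -n}$, and the computation one obtains most naturally is $D\Ext^{n-r}_{\mathcal{D}(\overline{A})}(Y,X)$ --- the swapped variant, which is what appears in Amiot's $n=2$ prototype and is the only version compatible with the self-duality forced by the $n$-Calabi--Yau property of $\mathcal{C}_{n}(\overline{A})$. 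So the very issue you defer ("not some shifted or swapped variant") is where the proof either succeeds or fails, and your proposal does not engage with it. A second, smaller omission: splicing the four-term pieces $\Ext^{r}_{\mathcal{D}}\to\Ext^{r}_{\mathcal{C}_{n}}\to\Hom(X,M_{r})\to\Ext^{r+1}_{\mathcal{D}}$ into a single long exact sequence requires checking that the image of $\Hom(X,M_{r})\to\Hom(X,\Si^{r+1}Y)$ equals the kernel of $\Hom(X,\Si^{r+1}Y)\to\Hom(X,Y_{r+1})$, i.e.\ a compatibility between the connecting morphism of the $r$-th triangle and the inflation of the $(r+1)$-st; this is not automatic and is not addressed. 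As written, the proposal is a correct plan rather than a proof.
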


\begin{Cor}\label{Cor:exact sequence for Higgs}
	Suppose that $ X $ and $ Y $ are two objects in the Higgs category $ \mathcal{H}\subseteq\mathcal{C}_{n}(A,B) $. Then there is a long exact sequence 
	$$ 0\rightarrow \Ext_{\mathcal{D}(A)}^{1}(X,Y)\rightarrow \Ext_{\mathcal{C}_{n}(A,B)}^{1}(X,Y)\rightarrow D\Ext_{\mathcal{D}(A)}^{n-1}(X,Y) $$
	$$ \rightarrow \Ext_{\mathcal{D}(A)}^{2}(X,Y)\rightarrow \Ext_{\mathcal{C}_{n}(A,B)}^{2}(X,Y)\rightarrow D\Ext_{\mathcal{D}(\overline{A})}^{n-2}(X,Y) $$
	$$ \rightarrow\qquad\cdots\qquad\rightarrow $$
	$$ \rightarrow \Ext_{\mathcal{D}(A)}^{n-1}(X,Y)\rightarrow \Ext_{\mathcal{C}_{n}(A,B)}^{n-1}(X,Y)\rightarrow D\Ext_{\mathcal{D}(A)}^{1}(X,Y)\rightarrow0. $$
\end{Cor}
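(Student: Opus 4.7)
The plan is to reduce the statement to Proposition~\ref{Extension sequence}, applied to $p^{*}X$ and $p^{*}Y$ in $\cf\subseteq\cc_{n}(\overline{A})$. First, by the definition of the Higgs category (Definition~\ref{Higgs category}), we may represent $X$ and $Y$ by objects of $\cf^{rel}\subseteq\cz\subseteq\per A$, which we still denote by $X$ and $Y$. Then $p^{*}X$ and $p^{*}Y$ lie in the fundamental domain $\cf\subseteq\per\overline{A}$, so Proposition~\ref{Extension sequence} yields the long exact sequence
\begin{equation*}
	0\ra\Ext^{1}_{\cd(\overline{A})}(p^{*}X,p^{*}Y)\ra\Ext^{1}_{\cc_{n}(\overline{A})}(p^{*}X,p^{*}Y)\ra D\Ext^{n-1}_{\cd(\overline{A})}(p^{*}X,p^{*}Y)\ra\cdots
\end{equation*}
terminating with $\Ext^{n-1}_{\cc_{n}(\overline{A})}(p^{*}X,p^{*}Y)\ra D\Ext^{1}_{\cd(\overline{A})}(p^{*}X,p^{*}Y)\ra 0$.

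Next, I would identify each of the three types of terms with its relative counterpart. For $1\leqslant l\leqslant n-1$, Lemma~\ref{Extension group in Z} supplies isomorphisms
\begin{equation*}
	\Ext^{l}_{\cd(A)}(X,Y)\iso\Ext^{l}_{\cd(\overline{A})}(p^{*}X,p^{*}Y),
\end{equation*}
since both $X$ and $Y$ are in $\cz$ and $l>0$, and Proposition~\ref{Extension group in RFD} supplies isomorphisms
\begin{equation*}
	\Ext^{l}_{\cc_{n}(A,B)}(X,Y)\iso\Ext^{l}_{\cc_{n}(\overline{A})}(p^{*}X,p^{*}Y).
\end{equation*}
The dualized terms $D\Ext^{n-l}_{\cd(\overline{A})}(p^{*}X,p^{*}Y)$ are likewise identified with $D\Ext^{n-l}_{\cd(A)}(X,Y)$ via Lemma~\ref{Extension group in Z} (applied with $n-l\geqslant 1$). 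Pasting these isomorphisms together with the long exact sequence above produces the desired sequence.

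The only nontrivial point is to check that the comparison isomorphisms are \emph{natural} enough that they interchange the connecting morphisms of the two long exact sequences. The cleanest way to see this is to observe that the Bongartz-style long exact sequence of Proposition~\ref{Extension sequence} is constructed from the Calabi--Yau duality on $\pvd(\overline{A})$ together with triangles witnessing that $\pi(p^{*}X)$ and $\pi(p^{*}Y)$ lie in the image of $\cf$. The analogous input in the relative setting is the relative Calabi--Yau duality of Corollary~\ref{Relative CY duality} on $\pvd_{B}(A)$, and the equivalence $p_{*}\colon\pvd(\overline{A})\iso\pvd_{B}(A)$ of Corollary~\ref{Relative commutative digram} intertwines the two dualities. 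I expect the main obstacle to be bookkeeping: setting up the naturality square for each connecting morphism and verifying that the vertical isomorphisms supplied by Lemma~\ref{Extension group in Z} and Proposition~\ref{Extension group in RFD} are compatible with the Calabi--Yau pairing intertwiner. Once this compatibility is in place, the long exact sequence in $\cc_{n}(\overline{A})$ transports term-by-term to the claimed long exact sequence in $\cc_{n}(A,B)$.
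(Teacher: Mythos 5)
Your proof is correct and takes essentially the same route as the paper's, which simply combines Lemma~\ref{Extension group in Z}, Proposition~\ref{Extension group in RFD} and Proposition~\ref{Extension sequence} to transport the long exact sequence from $\mathcal{C}_{n}(\overline{A})$ to $\mathcal{C}_{n}(A,B)$. Your extra attention to the compatibility of the comparison isomorphisms with the connecting morphisms is a point the paper leaves implicit, but it is not a different argument.
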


\begin{proof}
	This follows from Proposition~\ref{Extension group in Z}, Proposition~\ref{Extension group in RFD} and Proposition~\ref{Extension sequence}.
	
\end{proof}

\begin{Prop}\label{Extension closed subcategory}
	The Higgs category $ \mathcal{H} $ is an extension closed subcategory of $ \mathcal{C}_{n}(A,B) $.		
\end{Prop}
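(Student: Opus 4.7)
The plan is to realize the given extension in $\mathcal{C}_n(A,B)$ by a genuine triangle in $\per A$, and then show that its middle term, up to an object in $\pvd_B(A)$ and summands in $\mathcal{P} = \add(eA)$, lies in $\mathcal{F}^{rel}$.

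Let $X, Y \in \mathcal{F}^{rel}$ and let $\varepsilon \in \mathrm{Hom}_{\mathcal{C}_n(A,B)}(Y,\Sigma X)$ correspond to the given triangle $X \to Z \to Y \xrightarrow{\varepsilon} \Sigma X$. Since $X \in \mathcal{F}^{rel} \subseteq \mathcal{D}(A)^{\leqslant 0}_{rel}$, we have $\tau^{rel}_{\leqslant k}(\Sigma X) = \Sigma X$ for every $k \leqslant -1$. By Proposition~\ref{Home space in RCA}, the class $\varepsilon$ is represented by some morphism $\alpha \colon \tau^{rel}_{\leqslant k}Y \to \Sigma X$ in $\mathcal{D}(A)$ with $k \leqslant -1$. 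Taking its cone in $\per A$ produces a triangle
\[
X \to C \to \tau^{rel}_{\leqslant k}Y \xrightarrow{\alpha} \Sigma X
\]
in $\per A$. Because $\tau^{rel}_{>k}Y \in \pvd_B(A)$, the image of this triangle in $\mathcal{C}_n(A,B)$ is isomorphic to the original extension, so $Z \cong \pi^{rel}(C)$ and it suffices to show $\pi^{rel}(C) \in \mathcal{H}$.

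Applying the quotient functor $p^*$ produces the analogous triangle in $\per(\overline A)$, whose endpoints $p^*X, p^*Y$ lie in the absolute fundamental domain $\mathcal{F}$. The absolute analog of the statement, established in the work of Amiot and Guo~\cite{Am2008,LYG}, yields that the image of $p^*C$ in $\mathcal{C}_n(\overline A)$ lies in $\mathcal{F}$. By Proposition~\ref{p* to F is dense}, I pick $\tilde C \in \mathcal{F}^{rel}$ with $p^*\tilde C \cong p^*C$ in $\per(\overline A)$. Combining this with Proposition~\ref{Extension group in RFD}, which provides the bijection
\[
p^*\colon \mathrm{Ext}^1_{\mathcal{C}_n(A,B)}(Y,X) \iso \mathrm{Ext}^1_{\mathcal{C}_n(\overline A)}(p^*Y, p^*X),
\]
I construct an extension of $Y$ by $X$ in $\mathcal{C}_n(A,B)$ whose middle term is $\pi^{rel}(\tilde C)$ and whose image under $p^*$ matches the extension determined by $\varepsilon$; comparing extension classes then forces $\pi^{rel}(C) \cong \pi^{rel}(\tilde C) \in \mathcal{H}$.

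The main obstacle lies in this last step, namely promoting the isomorphism $p^*C \cong p^*\tilde C$ in $\per(\overline A)$ to an isomorphism $\pi^{rel}(C) \cong \pi^{rel}(\tilde C)$ in $\mathcal{C}_n(A,B)$. The delicate point is that $p^*\colon \mathcal{C}_n(A,B) \to \mathcal{C}_n(\overline A)$ is not conservative: its kernel contains the image of $\per(eAe)$. To resolve this, I plan to explicitly lift the defining triangles of $\tilde C$ from Remark~\ref{Rem:triangles in RF} to a triangle in $\per A$ with endpoints $X$ and $Y$, using the lifting of morphisms from $\per(\overline A)$ to $\per A$ modulo $\mathcal{P}$ developed in the proof of Proposition~\ref{p* is dense}, together with the functorial finiteness of $\mathcal{P}$ in $\mathcal{Z}$. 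The latter ensures that any discrepancy between $C$ and $\tilde C$ is absorbed into summands in $\mathcal{P}$, which are projective-injective in the Higgs category and therefore do not affect the extension class.
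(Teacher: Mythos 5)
Your first half is sound and parallels the paper: representing the extension class by an honest morphism $\alpha\colon\tau^{rel}_{\leqslant k}Y\to\Sigma X$ in $\per A$ (the paper takes $k=-1$ and phrases it via a left $\add(eA)$-approximation triangle, but the effect is the same), forming the cone $C\in\mathcal{Z}$, and noting $Z\cong\pi^{rel}(C)$. The problems start where you yourself locate them. First, you cannot ``pick $\tilde C\in\mathcal{F}^{rel}$ with $p^*\tilde C\cong p^*C$ in $\per(\overline A)$'': the object $p^*C$ sits in a triangle with third term in $\Sigma^{-k}\mathcal{F}$, so it lies only in $\mathcal{D}(\overline A)^{\leqslant 0}\cap{}^{\perp}(\mathcal{D}(\overline A)^{\leqslant -n+k})$ and is in general \emph{not} isomorphic in $\per(\overline A)$ to any object of $\mathcal{F}$; Amiot--Guo only give an isomorphism after passing to $\mathcal{C}_{n}(\overline A)$. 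Second, the ``comparing extension classes'' step is circular: lifting the class of $p^*X\to p^*\tilde C\to p^*Y\to$ through the bijection of Proposition~\ref{Extension group in RFD} just returns $\varepsilon$, whose middle term is $\pi^{rel}(C)$, so all you recover is $p^*\pi^{rel}(C)\cong p^*\pi^{rel}(\tilde C)$ --- exactly the non-conservativity problem you flagged, not its solution.

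Your proposed repair (absorbing the discrepancy into $\mathcal{P}$-summands) can in fact be made rigorous, but not by the route you sketch: what is needed is that $\pi^{rel}(C)$ and $\pi^{rel}(\tilde C)$ both lie in $\mathcal{E}={}^{\perp}(\Sigma^{>0}\mathcal{P})\cap(\Sigma^{<0}\mathcal{P})^{\perp}$, that they become isomorphic in $\mathcal{E}/[\mathcal{P}]\simeq\mathcal{C}_{n}(\overline A)$ (Corollary~\ref{Cor: category E}), hence each is a direct summand of the other plus an object of $\add(eA)$, and that $\mathcal{H}$ is additive and closed under summands. None of this is carried out in your text, and ``projective-injective in the Higgs category'' is not yet available at this point of the development. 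The paper avoids the comparison altogether: it shows by a direct computation in $\per(\overline A)$ that $p^*(W_{1})\in\mathcal{D}(\overline A)^{\leqslant 0}\cap{}^{\perp}(\mathcal{D}(\overline A)^{\leqslant -n-1})$, deduces that $\tau^{rel}_{\leqslant -1}W_{1}$ lies in $\mathcal{F}^{rel}\langle 1\rangle$, and then uses the equivalence $\tau^{rel}_{\leqslant -1}\colon\mathcal{F}^{rel}\iso\mathcal{F}^{rel}\langle 1\rangle$ (Proposition~\ref{Equivalence between the shifts of relative fundamental domain}) to produce $W_{2}\in\mathcal{F}^{rel}$ with $\tau^{rel}_{\leqslant-1}W_{2}\cong\tau^{rel}_{\leqslant-1}W_{1}$ \emph{in $\per A$}; since relative truncation has cone in $\pvd_{B}(A)$, this gives $W_{2}\cong W_{1}$ in $\mathcal{C}_{n}(A,B)$ without ever having to invert $p^{*}$. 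You should either adopt that mechanism or fully execute the silting-reduction absorption argument; as written, the proof is incomplete.
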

\begin{proof}
	Let $ X $ and $ Y $ be two objects in $ \mathcal{H}\subseteq\mathcal{C}_{n}(A,B) $.  For the object $ Y $, we take a triangle in $ \mathrm{per}A $
	$$ Y\xrightarrow{f_{1}} P_{Y_{1}}\rightarrow Y_{1}\xrightarrow{\phi_{1}}\Si Y ,$$ where $ f_{1}\colon Y\rightarrow P_{Y_{1}} $	is a fixed left $ \mathrm{add}(eA)$-approximation and $ Y_{1}\in\mathcal{F}^{rel}\langle1\rangle $. Then we can get a triangle in $ \mathcal{C}_{n}(A,B) $
	$$ Y\xrightarrow{\pi^{rel}(f_{1})} P_{Y_{1}}\rightarrow Y_{1}\xrightarrow{\pi^{rel}(\phi_{1})}\Si Y .$$ 
	
	This induces a long exact sequence
	$$ \cdots\rightarrow \Hom_{\mathcal{C}_{n}(A,B)}(X,Y_{1})\rightarrow \Hom_{\mathcal{C}_{n}(A,B)}(X,\Si Y)\rightarrow \Hom_{\mathcal{C}_{n}(A,B)}(X,\Si P_{Y_{1}}) \rightarrow\cdots .$$
	
	Since $ P_{Y_{1}}\in \mathrm{pvd}_{B}(A)^{\perp} $, we have 
	$$ \Hom_{\mathcal{C}_{n}(A,B)}(X,\Si P_{Y_{1}})\cong  \Hom_{\mathrm{per}A}(X,\Si P_{Y_{1}})=0 .$$
	
	Thus we get the following surjective map
	$$ \cdots\rightarrow \Hom_{\mathcal{C}_{n}(A,B)}(X,Y_{1})\rightarrow \Hom_{\mathcal{C}_{n}(A,B)}(X,\Si Y)\rightarrow0. $$
	
	For the object $ X $, we have a canonical triangle in $ \mathrm{per}A $
	$$ \tau^{rel}_{\leqslant-1}X\rightarrow X\rightarrow \tau^{rel}_{\geqslant0}X\rightarrow \Si\tau^{rel}_{\leqslant-1}X.$$
	
	Hence, $ \tau^{rel}_{\leqslant-1}X $ is isomorphic to $ X $ in $ \mathcal{C}_{n}(A,B) $. Then we get the following exact sequence
	$$ \cdots\rightarrow \Hom_{\mathcal{C}_{n}(A,B)}(\tau^{rel}_{\leqslant-1}X,Y_{1})\rightarrow \Hom_{\mathcal{C}_{n}(A,B)}(X,\Si Y)\rightarrow0. $$
	
	It is clear that $ \tau^{rel}_{\leqslant-1}X $ and $ Y_{1} $ are in $ \mathcal{F}^{rel}\langle1\rangle $. Since $ \pi^{rel}\colon \mathcal{F}^{rel}\cong\mathcal{F}^{rel}\langle1\rangle\rightarrow\mathcal{C}_{n}(A,B) $ is also fully faithful, we have that the space $ \Hom_{\mathcal{C}_{n}(A,B)}(\tau^{rel}_{\leqslant-1}X,Y_{1}) $ is isomorphic to $ \Hom_{\per A}(\tau^{rel}_{\leqslant-1}X,Y_{1}) $ and the following sequence is exact
	$$ \cdots\rightarrow \Hom_{\per A}(\tau^{rel}_{\leqslant-1}X,Y_{1})\rightarrow \Hom_{\mathcal{C}_{n}(A,B)}(X,\Si Y)\rightarrow0. $$
	
	Let $ \epsilon $ be an element in $ \Hom_{\mathcal{C}_{n}(A,B)}(X,\Si Y) $. We suppose that the corresponding triangle in $ \mathcal{C}_{n}(A,B) $ is given by 
	$$ Y\rightarrow W\rightarrow X\xrightarrow{\varepsilon}\Si Y .$$
	We need to show that $ W $ is also in $ \mathcal{H} $.
	
	Since the map $ \Hom_{\mathrm{per}A}(\tau^{rel}_{\leqslant-1}X,Y_{1})\rightarrow \Hom_{\mathcal{C}_{n}(A,B)}(X,\Si Y) $ is surjective, there is a morphism $ \epsilon'\colon\tau^{rel}_{\leqslant-1}X\rightarrow Y_{1} $ in $ \mathrm{per}A $ such that $ \pi^{rel}(\phi_{1}\circ\varepsilon')\cong\varepsilon $ in $ \mathcal{C}_{n}(A,B) $.
	
	We take a triangle in $ \mathrm{per}A $
	$$ Y\rightarrow W_{1}\rightarrow\tau^{rel}_{\leqslant-1}X\xrightarrow{\phi_{1}\circ\varepsilon'}\Si Y .$$ Then, the following morphism of triangles in $ \mathcal{C}_{n}(A,B) $ is an isomorphism
	\begin{align*}
		\xymatrix{
			Y\ar[r]\ar[d]_{\boldmath{1}_{Y}}& W_{1}\ar[r]\ar[d]&\tau^{rel}_{\leqslant-1}X\ar"1,5"^{\pi^{rel}(\phi_{1}\circ\varepsilon')}\ar[d]&& \Si Y\ar[d]^{\boldmath{1}_{Y}}\\
			Y\ar[r]& W\ar[r]&X\ar"2,5"^{\varepsilon}&&\Si Y.
		}
	\end{align*}
	In particular, $ W_{1} $ is isomorphic to $ W $ in $ \mathcal{C}_{n}(A,B) $.
	
	Since $ Y $ and $ \tau^{rel}_{\leqslant-1}X $ are in $ \mathcal{Z}\subseteq \mathrm{per}A $, $ W_{1} $ is also in $ \mathcal{Z} $. It is easy to see that 
	$$ p^{*}(Y)\in\mathcal{F}=\mathcal{D}(\overline{A})^{\leqslant0}\cap{}^{\perp}(\mathcal{D}(\overline{A})^{\leqslant-n})\cap \mathrm{per}(\overline{A}) $$ and $$ p^{*}(\tau^{rel}_{\leqslant-1}X)\cong\tau_{\leqslant-1}(p^{*}(X))\in\Si\mathcal{F}=\mathcal{D}(\overline{A})^{\leqslant-1}\cap{}^{\perp}(\mathcal{D}(\overline{A})^{\leqslant-n-1})\cap \mathrm{per}(\overline{A}) .$$
	
	Then by the triangle in $ \mathrm{per}A $
	$$ Y\rightarrow W_{1}\rightarrow\tau^{rel}_{\leqslant-1}X\xrightarrow{\phi_{1}\circ\varepsilon'}\Si Y ,$$ we can see that $ p^{*}(W_{1}) $ is in $ \mathcal{D}(\overline{A})^{\leqslant0}\cap{}^{\perp}(\mathcal{D}(\overline{A})^{\leqslant-n-1})\cap \mathrm{per}(\overline{A}) $.
	
	Next we consider the object $ \tau^{rel}_{\leqslant-1}W_{1}\in \mathrm{per}A $. Since $ W_{1} $ is in $ \mathcal{Z} $, $ \tau^{rel}_{\leqslant-1}W_{1} $ is still in $ \mathcal{Z} $. And we have a canonical triangle in $ \mathrm{per}(\overline{A}) $
	$$ \tau_{\leqslant-1}(p^{*}(W_{1}))\rightarrow p^{*}(W_{1})\rightarrow \tau_{\geqslant 0}(p^{*}(W_{1}))\rightarrow\Si \tau_{\leqslant-1}(p^{*}(W_{1})) .$$
	Because $ p^{*}(W_{1}) $ is in $ \mathcal{D}(\overline{A})^{\leqslant0}\cap{}^{\perp}(\mathcal{D}(\overline{A})^{\leqslant-n-1})\cap \per(\overline{A}) $, we have 
	$$ \tau_{\leqslant-1}(p^{*}(W_{1}))\in \mathcal{D}(\overline{A})^{\leqslant-1}\cap{}^{\perp}(\mathcal{D}(\overline{A})^{\leqslant-n-1})\cap \per(\overline{A})=\Si\mathcal{F} .$$
	
	Thus the object $ \tau^{rel}_{\leqslant-1}W_{1} $ is in $ \mathcal{F}^{rel}\langle1\rangle $. By the equivalence $ \tau^{rel}_{\leqslant-1}\colon\mathcal{F}^{rel}\rightarrow\mathcal{F}^{rel}\langle1\rangle $, there exists an object $ W_{2}\in\mathcal{F}^{rel} $ such that $ \tau^{rel}_{\leqslant-1}W_{2}\cong\tau^{rel}_{\leqslant-1}W_{1} $.
	
	Since $ W_{2} $ and $ W_{1} $ are isomorphic in $ \mathcal{C}_{n}(A,B) $, $ W_{2} $ is isomorphic to $ W $ in $ \mathcal{C}_{n}(A,B) $. Thus $ W $ is an object in $ \mathcal{H}\subseteq\mathcal{C}_{n}(A,B) $. Therefore, $ \mathcal{H} $ is an extension closed subcategory of $ \mathcal{C}_{n}(A,B) $.

\end{proof}

Recall that a full subcategory $ \mathcal{P} $ of a triangulated category $ \mathcal{T} $ is presilting if $ \Hom_{\mathcal{T}}(\mathcal{P},\Si^{>0}\mathcal{P}) = 0 $.

\begin{Prop}\label{P is a presilting in RCA}
	\begin{itemize}
		\item[(1)] $ \mathcal{P}=\add(eA) $ is a presilting subcategory of $ \mathcal{C}_{n}(A,B)=\mathrm{per}A/\mathrm{pvd}_{B}(A) $.
		\item[(2)] $ \mathcal{P} $ is covariantly finite in $ {}^{\perp_{\mathcal{C}_{n}(A,B)}}(\Si^{>0}\mathcal{P}) $ and contravariantly finite in $ (\Si^{<0}\mathcal{P})^{\perp_{\mathcal{C}_{n}(A,B)}} $.
		\item[(3)] For any $ X\in\mathcal{C}_{n}(A,B) $, we have $ \Hom_{\mathcal{C}_{n}(A,B)}(X,\Si^{l}P)=0=\Hom_{\mathcal{C}_{n}(A,B)}(P,\Si^{l} X) $ for $ l\gg0 $.
	\end{itemize}
\end{Prop}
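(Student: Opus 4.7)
My plan is to derive all three parts from two inputs already supplied in the excerpt: the mutual orthogonality of $\mathcal{P}=\add(eA)$ and $\pvd_B(A)$ inside $\per A$, and the Hom-finiteness of $\mathcal{C}_{n}(A,B)$ from Corollary~\ref{Hom finite of RCC}. The orthogonality is a direct consequence of the relative Calabi--Yau duality (Corollary~\ref{Relative CY duality}): for $M\in\pvd_B(A)$ and $Y\in\per(eAe)$ one has $\Hom_{\per A}(i^{*}Y,M)=0=\Hom_{\per A}(M,i^{*}Y)$, as already spelled out just before Corollary~\ref{Relative commutative digram}. Since $\mathcal{P}\subseteq i^{*}\per(eAe)$, a standard calculus of fractions argument then promotes the orthogonality to bifunctorial isomorphisms
$$\Hom_{\per A}(P,X)\iso\Hom_{\mathcal{C}_{n}(A,B)}(P,X),\qquad\Hom_{\per A}(X,P)\iso\Hom_{\mathcal{C}_{n}(A,B)}(X,P)$$
for every $P\in\mathcal{P}$ and every $X\in\per A$.

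With this identification in hand, part (1) reduces to the computation $\Hom_{\per A}(eA,\Si^{l}eA)=H^{l}(eAe)=0$ for $l>0$, which holds because $A$ and hence $eAe$ are connective. For part (3), lift $X\in\mathcal{C}_{n}(A,B)$ to $\per A$: then $\Hom_{\mathcal{C}_{n}(A,B)}(eA,\Si^{l}X)=H^{l}(Xe)$ vanishes for $l\gg 0$ since $Xe=i_{*}(X)\in\per(eAe)$ is a perfect complex over a connective dg algebra and hence has bounded cohomology; dually, $\Hom_{\mathcal{C}_{n}(A,B)}(X,\Si^{l}eA)=H^{l}\RHom_{A}(X,eA)$ vanishes for $l\gg 0$ by induction starting from $\RHom_{A}(A,eA)=eA$.

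For part (2), I will exploit Hom-finiteness directly. For any $X\in\mathcal{C}_{n}(A,B)$ the space $\Hom_{\mathcal{C}_{n}(A,B)}(eA,X)$ is finite-dimensional over $k$; choosing a $k$-basis $\phi_{1},\ldots,\phi_{n}$ and assembling it into a morphism $\phi\colon(eA)^{n}\to X$ yields a right $\mathcal{P}$-approximation, because for $P'=(eA)^{m}$ the induced map
$$M_{n\times m}(\End(eA))=\Hom_{\mathcal{C}_{n}(A,B)}(P',(eA)^{n})\longrightarrow\Hom_{\mathcal{C}_{n}(A,B)}(P',X)$$
is surjective via the embedding $k\hookrightarrow\End(eA)=eH^{0}(A)e$, $\lambda\mapsto\lambda e$, which allows any $k$-linear combination of the $\phi_{i}$ to be realised through a scalar matrix over $\End(eA)$. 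The dual construction supplies left $\mathcal{P}$-approximations. This argument in fact shows that $\mathcal{P}$ is functorially finite in the whole of $\mathcal{C}_{n}(A,B)$, which is strictly stronger than (2). The only delicate step in the whole argument is the initial calculus-of-fractions check---namely that orthogonality inside $\per A$ really does upgrade to bijectivity of $\pi^{rel}$ on morphisms with one endpoint in $\mathcal{P}$---but this is standard once the orthogonality is recorded.
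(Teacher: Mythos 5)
Your proposal is correct and follows essentially the same route as the paper: the paper's entire proof consists of observing that, since $\mathcal{P}=\add(eA)$ is both left and right orthogonal to $\pvd_{B}(A)$, the quotient functor induces isomorphisms $\Hom_{\per A}(P,\Si^{m}X)\cong\Hom_{\mathcal{C}_{n}(A,B)}(P,\Si^{m}X)$ and $\Hom_{\per A}(X,\Si^{m}P)\cong\Hom_{\mathcal{C}_{n}(A,B)}(X,\Si^{m}P)$ for all $P\in\mathcal{P}$, $X\in\per A$, and then asserts that (1)--(3) follow; you record the same key step and simply fill in the routine verifications (for (2) via Hom-finiteness rather than by transferring approximations from $\per A$, an immaterial variation). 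One cosmetic point: in part (3) the restriction $Xe=i_{*}(X)$ need not lie in $\per(eAe)$, but its cohomology is still bounded above because $X\in\per A$ and $A$ is connective, so your conclusion stands.
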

\begin{proof}
	For $ P\in\cp $, $ X\in\per A $ and $ m\in\mathbb{Z} $, we have isomorphisms
	$$ \Hom_{\per A}(P,\Si^{m}X)\iso\Hom_{\cc_{n}(A,B)}(P,\Si^{m}X) $$
	and
	$$ \Hom_{\per A}(X,\Si^{m}P)\iso\Hom_{\cc_{n}(A,B)}(X,\Si^{m}P) $$
	because $ \cp $ is left orthogonal and right orthogonal to $ \mathrm{pvd}_{B}(A) $. This implies $ (1) $, $ (2) $ and $ (3) $.
\end{proof}

\begin{Cor}\label{Cor: category E}
	Let $ \mathcal{E} $ be the following additive subcategory of $ \mathcal{C}_{n}(A,B) $
	$$ \mathcal{E}={}^{\perp_{\mathcal{C}_{n}(A,B)}}(\Si^{>0}\mathcal{P})\cap(\Si^{<0}\mathcal{P})^{\perp_{\mathcal{C}_{n}(A,B)}} .$$ 
	
	Then the composition $ \mathcal{E}\subseteq\mathcal{C}_{n}(A,B)\xrightarrow{p^{*}}\mathcal{C}_{n}(\overline{A}) $ induces a triangle equivalence 
	$$ \mathcal{E}/[\mathcal{P}]\overset{\sim}{\longrightarrow}\mathcal{C}_{n}(\overline{A}) .$$
\end{Cor}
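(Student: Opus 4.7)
The plan is to apply the silting reduction machinery (Theorem~\ref{Thm:silting reduction}) to the presilting subcategory $\mathcal{P}=\add(eA)$ of the relative cluster category $\mathcal{C}_{n}(A,B)$, and then to identify the resulting Verdier quotient with $\mathcal{C}_{n}(\overline{A})$ using the recollement/exact-sequence picture provided by Corollary~\ref{Relative commutative digram}.

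First, Proposition~\ref{P is a presilting in RCA} guarantees that $\mathcal{P}$ is a presilting subcategory of $\mathcal{C}_{n}(A,B)$ and that the conditions (P1) and (P2) from the silting reduction set-up are satisfied: part (2) gives contravariant and covariant finiteness inside the appropriate orthogonal subcategories, and part (3) supplies the required vanishing for $|l|\gg 0$. Therefore Theorem~\ref{Thm:silting reduction} applies verbatim with $\mathcal{T}=\mathcal{C}_{n}(A,B)$, $\mathcal{P}=\add(eA)$, and the subcategory $\mathcal{Z}$ of the theorem equal to our $\mathcal{E}$. This yields a triangle equivalence
\[
\overline{\rho}\colon \mathcal{E}/[\mathcal{P}]\iso \mathcal{C}_{n}(A,B)/\thick_{\mathcal{C}_{n}(A,B)}(\mathcal{P}),
\]
where the right-hand triangulated structure on $\mathcal{E}/[\mathcal{P}]$ is the one produced by Theorem~\ref{Tiangule structure for Z/P}.

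Next I would identify the denominator. The thick subcategory $\thick_{\per A}(eA)$ is triangle-equivalent to $\per(eAe)$ by Corollary~\ref{Recollement of dg algebras}. Its image under the quotient functor $\pi^{rel}\colon \per A\twoheadrightarrow \mathcal{C}_{n}(A,B)$ is precisely $\thick_{\mathcal{C}_{n}(A,B)}(\mathcal{P})$, and this image remains equivalent to $\per(eAe)$ because $\pvd_{B}(A)$ and $\per(eAe)$ are mutually orthogonal inside $\per A$ (this mutual orthogonality is recorded in the proof surrounding Corollary~\ref{Relative commutative digram}, using the relative Calabi--Yau duality of Corollary~\ref{Relative CY duality}). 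The right-hand column of the commutative diagram in Corollary~\ref{Relative commutative digram} is an exact sequence of triangulated categories
\[
0\to \per(eAe)\to \mathcal{C}_{n}(A,B)\xrightarrow{p^{*}} \mathcal{C}_{n}(\overline{A})\to 0,
\]
so the induced functor $\mathcal{C}_{n}(A,B)/\thick_{\mathcal{C}_{n}(A,B)}(\mathcal{P})\iso \mathcal{C}_{n}(\overline{A})$ is a triangle equivalence.

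Composing the two equivalences gives the desired triangle equivalence
\[
\mathcal{E}/[\mathcal{P}]\iso \mathcal{C}_{n}(\overline{A}),
\]
and by construction this composite is induced by $p^{*}\colon \mathcal{E}\subseteq \mathcal{C}_{n}(A,B)\to \mathcal{C}_{n}(\overline{A})$, as required. The main conceptual point — and the only slightly delicate step — is the orthogonality needed to see that $\thick_{\mathcal{C}_{n}(A,B)}(\mathcal{P})$ survives intact as $\per(eAe)$ in the quotient $\mathcal{C}_{n}(A,B)=\per A/\pvd_{B}(A)$; everything else is a direct application of results already established in the paper.
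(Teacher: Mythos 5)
Your proposal is correct and follows essentially the same route as the paper: the paper's proof is a one-line citation of Proposition~\ref{P is a presilting in RCA} together with the silting reduction theorem, with the identification of the quotient $\mathcal{C}_{n}(A,B)/\thick(\mathcal{P})$ with $\mathcal{C}_{n}(\overline{A})$ left implicit via the exact sequence in Corollary~\ref{Relative commutative digram}. You have simply made that last identification explicit, which is a faithful expansion rather than a different argument.
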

\begin{proof}
	This follows from Proposition~\ref{P is a presilting in RCA} and Theorem~\ref{silting reduction}.
\end{proof}

\begin{Thm}\label{Higgs category is a Silting reduction}
	
	The Higgs category $ \mathcal{H}\subseteq\mathcal{C}_{n}(A,B) $ is equal to $\mathcal{E}={}^{\perp_{\mathcal{C}_{n}(A,B)}}(\Si^{>0}\mathcal{P})\cap(\Si^{<0}\mathcal{P})^{\perp_{\mathcal{C}_{n}(A,B)}} $. In particular, the Higgs category $ \mathcal{H} $ is idempotent complete.
	
\end{Thm}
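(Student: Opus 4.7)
The plan is to prove both inclusions $\mathcal{H}\subseteq\mathcal{E}$ and $\mathcal{E}\subseteq\mathcal{H}$, and to extract the idempotent completeness of $\mathcal{H}$ as a byproduct along the way. The inclusion $\mathcal{H}\subseteq\mathcal{E}$ will be essentially formal: for $Y\in\mathcal{F}^{rel}\subseteq\mathcal{Z}$ and $P\in\mathcal{P}=\mathrm{add}(eA)$, the fact that $\mathcal{P}$ is left and right orthogonal to $\mathrm{pvd}_{B}(A)$ in $\per A$ (as used in the proof of Proposition~\ref{P is a presilting in RCA}) gives a canonical isomorphism $\Hom_{\per A}(Y,\Sigma^{l}P)\iso\Hom_{\mathcal{C}_{n}(A,B)}(\pi^{rel}Y,\Sigma^{l}P)$ and dually, so the vanishing conditions cutting out $\mathcal{Z}$ in $\per A$ translate directly into those cutting out $\mathcal{E}$ in $\mathcal{C}_{n}(A,B)$.

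Before tackling the reverse inclusion I would first establish that $\mathcal{H}$ is idempotent complete, since this is what will ultimately make the argument close. The restricted functor $\pi^{rel}\colon\mathcal{F}^{rel}\to\mathcal{H}$ is fully faithful by Proposition~\ref{relative full faithful} and essentially surjective by the very definition of $\mathcal{H}$, so it is an equivalence of $k$-categories. By Definition~\ref{RFD} one has $\mathcal{F}^{rel}=\{Y\in\mathcal{Z}\mid p^{*}(Y)\in\mathcal{F}\}$, where $\mathcal{F}\subseteq\per\overline{A}$ is the Amiot--Guo fundamental domain. Both $\mathcal{Z}$ (a $\Hom$-orthogonal) and $\mathcal{F}$ (an intersection of a $\Hom$-orthogonal, the aisle of the canonical $t$-structure on $\mathcal{D}(\overline{A})$, and $\per\overline{A}$) are closed under direct summands, and $p^{*}$ is additive, so $\mathcal{F}^{rel}$ is closed under direct summands in the idempotent complete category $\per A$. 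Consequently $\mathcal{F}^{rel}$, and hence $\mathcal{H}$, is idempotent complete.

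For the inclusion $\mathcal{E}\subseteq\mathcal{H}$, let $X\in\mathcal{E}$. I first produce an object $Y\in\mathcal{H}$ with $p^{*}(X)\cong p^{*}(Y)$ in $\mathcal{C}_{n}(\overline{A})$: the Amiot--Guo equivalence $\pi\colon\mathcal{F}\iso\mathcal{C}_{n}(\overline{A})$ yields some $Z\in\mathcal{F}$ with $\pi(Z)\cong p^{*}(X)$, Proposition~\ref{p* to F is dense} lifts $Z$ to an object $Y_{0}\in\mathcal{F}^{rel}$ with $p^{*}(Y_{0})\cong Z$, and I set $Y=\pi^{rel}(Y_{0})\in\mathcal{H}$. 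The triangle equivalence $p^{*}\colon\mathcal{E}/[\mathcal{P}]\iso\mathcal{C}_{n}(\overline{A})$ of Corollary~\ref{Cor: category E} then forces an isomorphism $X\cong Y$ in $\mathcal{E}/[\mathcal{P}]$.

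The final step, and the main obstacle, is to upgrade this isomorphism modulo $[\mathcal{P}]$ to a genuine statement inside $\mathcal{C}_{n}(A,B)$. I would lift mutually inverse representatives to morphisms $f\colon X\to Y$ and $g\colon Y\to X$ in $\mathcal{E}$, write $gf-\mathrm{id}_{X}=bv_{1}$ with $v_{1}\colon X\to P$ and $b\colon P\to X$ for some $P\in\mathcal{P}$, and apply the standard splitting trick $\tilde{f}=\binom{f}{v_{1}}\colon X\to Y\oplus P$ and $\tilde{g}=(g,-b)\colon Y\oplus P\to X$, which satisfies $\tilde{g}\tilde{f}=\mathrm{id}_{X}$. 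This exhibits $X$ as a direct summand of $Y\oplus P$, and since $\mathcal{P}\subseteq\mathcal{H}$ we have $Y\oplus P\in\mathcal{H}$. The idempotent completeness of $\mathcal{H}$ proved above then forces $X\in\mathcal{H}$. Without the prior knowledge that $\mathcal{H}$ is closed under direct summands in $\mathcal{C}_{n}(A,B)$, this splitting trick would not locate $X$ inside $\mathcal{H}$, so the order in which the two halves of the argument are carried out is essential.
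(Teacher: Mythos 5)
Your proof is correct, but the second inclusion is handled quite differently from the paper. The paper's own argument for $\mathcal{E}\subseteq\mathcal{H}$ is direct: given $X\in\mathcal{E}$, it observes (as you do) that $X$ lies in $\mathcal{Z}\subseteq\per A$, chooses $r\geqslant0$ with $p^{*}(X)\in{}^{\perp}(\mathcal{D}(\overline{A})^{\leqslant-n-r})$, notes that $X'=\tau^{rel}_{\leqslant-r}X$ is isomorphic to $X$ in $\mathcal{C}_{n}(A,B)$ and lies in $\mathcal{F}^{rel}\langle r\rangle$, and then invokes the chain of equivalences $\mathcal{F}^{rel}\iso\mathcal{F}^{rel}\langle1\rangle\iso\cdots\iso\mathcal{F}^{rel}\langle r\rangle$ of Proposition~\ref{Equivalence between the shifts of relative fundamental domain} to produce an object of $\mathcal{F}^{rel}$ genuinely isomorphic to $X$ in $\mathcal{C}_{n}(A,B)$; idempotent completeness of $\mathcal{H}$ then falls out for free from $\mathcal{H}=\mathcal{E}$, since $\mathcal{E}$ is an intersection of perpendicular subcategories of the idempotent complete category $\mathcal{C}_{n}(A,B)$ (Corollary~\ref{SMC reduction}). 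You instead settle for the cheaper isomorphism $X\cong Y$ only in $\mathcal{E}/[\mathcal{P}]$ (obtained from Corollary~\ref{Cor: category E} together with the denseness of $p^{*}\colon\mathcal{F}^{rel}\to\mathcal{F}$), and compensate by proving idempotent completeness of $\mathcal{H}$ up front — via closure of $\mathcal{F}^{rel}$ under direct summands in $\per A$ and the equivalence $\pi^{rel}\colon\mathcal{F}^{rel}\iso\mathcal{H}$ — so that the retract $X\mid Y\oplus P$ forces $X\in\mathcal{H}$. Both routes are sound and rest on results established earlier in the paper; yours trades the explicit truncation construction for a more formal idempotent-splitting argument, at the cost of having to verify closure of $\mathcal{F}^{rel}$ under summands (which does hold, by the description of $\mathrm{add}A*\Si\mathrm{add}A*\cdots*\Si^{n-1}\mathrm{add}A$ as $\mathcal{D}(A)^{\leqslant0}\cap{}^{\perp}(\mathcal{D}(A)^{\leqslant-n})\cap\per A$), while the paper's version yields the stronger statement that every object of $\mathcal{E}$ is isomorphic in $\mathcal{C}_{n}(A,B)$, not merely modulo $[\mathcal{P}]$, to an object of $\mathcal{F}^{rel}$ without any splitting step.
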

\begin{proof}
	It is clear that we have the inclusion $ \mathcal{H}\subseteq\mathcal{E} $. Let $ X $ be an object in $ \mathcal{E}={}^{\perp_{\mathcal{C}_{n}(A,B)}}(\Si^{>0}\mathcal{P})\cap(\Si^{<0}\mathcal{P})^{\perp_{\mathcal{C}_{n}(A,B)}} $. We also view it as an object of $ \per A $.
	
	Since $ \mathrm{per}(eAe) $ and $ \mathrm{pvd}_{B}(A) $ are left orthogonal and right orthogonal to each other, we see that $ X $ is in $ \mathcal{Z}={}^{\perp_{\mathrm{per}A}}(\Si^{>0}\mathcal{P})\cap(\Si^{<0}\mathcal{P})^{\perp_{\mathrm{per}A}}\subseteq \mathrm{per}A $. 
	
	For the object $ p^{*}(X)\in \mathrm{per}(\overline{A}) $, there exists a non-negative integer $ r $ such that $ p^{*}(X) $ is in $ {}^{\perp}(\mathcal{D}(\overline{A})^{\leqslant-n-r}) $. We consider the object $ X'=\tau^{rel}_{\leqslant -r}X $. Then $ X' $ becomes isomorphic to $ X $ in $ \mathcal{C}_{n}(A,B) $ and $ X' $ belongs to $ \mathcal{F}^{rel}\langle r\rangle $. By Proposition~\ref{Equivalence between the shifts of relative fundamental domain}, there exists an object $ Y $ in $ \mathcal{F}^{rel} $ such that $ Y $ is isomorphic to $ X' $ in $ \mathcal{C}_{n}(A,B) $. Thus, $ X $ is in the image of $ \mathcal{F}^{rel} $, i.e.\ $ X $ belongs to $ \mathcal{H} $. Hence $ \mathcal{H} $ is equal to $ \mathcal{E} $.
	
	
\end{proof}

\begin{Thm}\label{eA and RFD generate Relative Cluster Category}
	For any object $ X\in\mathcal{C}_{n}(A,B) $, there exists $ l\in\mathbb{Z} $, $ F\in \mathcal{H} $ and $ P\in \mathrm{per}(eAe) $, such that we have a triangle in $ \mathcal{C}_{n}(A,B) $
	\begin{align*}
		\xymatrix{
			\Si^{l}F\ar[r]&X\ar[r]&P\ar[r]&\Si^{l+1}F
		}.
	\end{align*}
	Dually, there exist $ m\in\mathbb{Z} $, $ F'\in\mathcal{H} $ and $ P'\in \mathrm{per}(eAe) $, such that we have a triangle in $ \mathcal{C}_{n}(A,B) $
	\begin{align*}
		\xymatrix{
			P'\ar[r]&X\ar[r]&\Si^{m}F'\ar[r]&\Si P'
		}.
	\end{align*}
\end{Thm}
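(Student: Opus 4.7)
The strategy is to apply a suitably shifted version of the co-t-structure on $\per A$ associated to the presilting subcategory $\mathcal{P} = \add(eA)$, as furnished by Proposition~\ref{Presilting to co-t-structure} (applicable since $\mathcal{P}$ satisfies conditions $(P1)$ and $(P2)$ in $\per A$ by Proposition~\ref{P is a presilting in RCA}). Writing $\mathcal{S} = \thick(\mathcal{P}) = \per(eAe)$, we have, for every integer $N$, the shifted co-t-structure $({}^{\perp}\mathcal{S}_{<N}, \mathcal{S}_{\leq N})$ on $\per A$. Fix a lift of $X \in \mathcal{C}_n(A,B)$ to $\per A$, still denoted $X$. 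By $(P2)$ there exists $L \geq 0$ such that $\Hom_{\per A}(X, \Si^k \mathcal{P}) = 0 = \Hom_{\per A}(\mathcal{P}, \Si^k X)$ for all $k > L$; the same bound then passes to any cotruncation of $X$ appearing below.

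Choose $N \geq L$ and apply $({}^{\perp}\mathcal{S}_{<N}, \mathcal{S}_{\leq N})$ to $X$, yielding a triangle $Y \to X \to S \to \Si Y$ in $\per A$ with $S \in \mathcal{S}_{\leq N} \subseteq \per(eAe)$ and $Y \in {}^{\perp}\mathcal{S}_{<N}$, that is, $\Hom_{\per A}(Y, \Si^j \mathcal{P}) = 0$ for all $j > -N$. Since $\mathcal{P}$ is both left and right orthogonal to $\pvd_B(A)$, the $\Hom$-spaces between $\mathcal{P}$ and objects of $\per A$ are unchanged by the localization $\pi^{rel}\colon \per A \to \mathcal{C}_n(A,B)$. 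A direct computation then shows that $\Si^N Y$ satisfies both orthogonality conditions defining $\mathcal{H} = \mathcal{E}$ (see Theorem~\ref{Higgs category is a Silting reduction}): we have $\Hom(\Si^N Y, \Si^i \mathcal{P}) = \Hom(Y, \Si^{i-N}\mathcal{P}) = 0$ for $i > 0$ by the defining property of $Y$, while $\Hom(\Si^{-i}\mathcal{P}, \Si^N Y) = \Hom(\mathcal{P}, \Si^{N+i}Y) = 0$ for $i > 0$ by $(P2)$ together with $N \geq L$. Therefore $F := \pi^{rel}(\Si^N Y) \in \mathcal{H}$ and $\pi^{rel}(Y) = \Si^{-N} F$ in $\mathcal{C}_n(A,B)$; projecting the triangle yields
\[ \Si^{-N} F \to X \to \pi^{rel}(S) \to \Si^{-N+1} F, \]
which is the first statement with $l = -N \in \mathbb{Z}$ and $P = \pi^{rel}(S) \in \per(eAe)$.

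The dual statement follows symmetrically by applying the other shifted co-t-structure $(\mathcal{S}_{\geq N}, \mathcal{S}_{>N}^{\perp})$ for $N \leq -L$: this produces a triangle $P \to X \to Y \to \Si P$ in $\per A$ with $P \in \per(eAe)$ and $Y \in \mathcal{S}_{>N}^{\perp}$, and the analogous calculation shows that $\Si^{-L} Y$ lies in $\mathcal{H}$, giving the triangle $P' \to X \to \Si^L F' \to \Si P'$ in $\mathcal{C}_n(A,B)$ with $m = L \in \mathbb{Z}$ and $F' = \pi^{rel}(\Si^{-L} Y)$. The one delicate point is that the shift $N$ must be chosen large enough in absolute value so that both orthogonality conditions defining $\mathcal{H}$ can be verified simultaneously: the co-t-structure truncation supplies vanishing of $\Hom$-spaces in one shift direction, while $(P2)$ supplies vanishing in the other, and the two align precisely when $|N| \geq L$.
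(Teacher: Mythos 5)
Your overall strategy is sound and genuinely different from the paper's in one respect. The paper first replaces $X$ by $\Si^{r_1}X\in\cd(A)^{\leqslant 0}$, applies the co-t-structure at level $0$ to get $X'\to\Si^{r_1}X\to S\to\Si X'$, checks $X'\in\cz$ (one orthogonality from the co-t-structure, the other from \emph{connectivity} of $X'$, i.e.\ $\Hom(eA,\Si^{k}X')=H^{k}(X'e)=0$ for $k>0$), and then explicitly replaces $X'$ by an object of $\cf^{rel}$ via the relative truncation $\tau^{rel}_{\leqslant -r_2}$ and Proposition~\ref{Equivalence between the shifts of relative fundamental domain}. You instead use a shifted co-t-structure, verify both orthogonality conditions for $\Si^{N}Y$ in $\per A$, and conclude via $\ch=\ce$ (Theorem~\ref{Higgs category is a Silting reduction}) together with Proposition~\ref{P is a presilting in RCA}. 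That last shortcut is legitimate and arguably cleaner, since Theorem~\ref{Higgs category is a Silting reduction} already packages the fundamental-domain argument.

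There is, however, one step you have not actually justified: the vanishing $\Hom_{\per A}(\cp,\Si^{N+i}Y)=0$ for all $i>0$. The bound $L$ from (P2) is a bound for $X$, not for $Y$, and "the same bound passes to the cotruncation" is exactly the point that needs an argument. From the rotated triangle one gets a surjection $\Hom(\cp,\Si^{N+i-1}S)\twoheadrightarrow\Hom(\cp,\Si^{N+i}Y)$ once $N+i>L$, so you must also show $\Hom(\cp,\Si^{N+i-1}S)=0$ for all $i\geqslant 1$. This holds because the co-t-structure decomposition actually gives $S\in\cs_{<N}=\cs_{\leqslant N-1}$ (the "B-part" lands one step further, as in the paper's triangle where $S\in\cs_{<0}$), so $\Si^{N+i-1}S$ is an iterated extension of objects of $\Si^{\geqslant i}\cp$, and $\Hom(\cp,\Si^{>0}\cp)=H^{>0}(eAe)=0$ by connectivity of $A$. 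With your stated containment $S\in\cs_{\leqslant N}$ the case $i=1$ does not close: $\Si^{N}S$ may then have a filtration piece in $\cp$ itself, and $\Hom(\cp,\cp)=H^{0}(eAe)\neq 0$, so the surjection above would not force $\Hom(\cp,\Si^{N+1}Y)=0$. So the argument is repairable, but you need the sharper membership of $S$ and the presilting property of $\cp$; as written, this step is a gap. The dual half has the same issue and the same fix.
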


\begin{proof}	
	We only show the first statement since the second statement can be shown dually. Let $ X $ be an object in $ \mathcal{C}_{n}(A,B) $. We view it as an object in $ \mathrm{per}A $. There exists a positive integer $ r_{1} $ such that the object $ X $ is in $ \mathcal{D}(A)^{\leqslant r_{1}} $. Consider the object $ \Si^{r_{1}}X $. Then $ \Si^{r_{1}}X $ lies in $ \mathcal{D}(A)^{\leqslant 0} $. 
	
	By Proposition~\ref{Presilting to co-t-structure}, the pairs $( {}^{\perp_{\mathcal{T}}}\mathcal{S}_{<0} , \mathcal{S}_{\leqslant0} ) $ and $ (\mathcal{S}_{\geqslant0},\mathcal{S}_{>0}^{\perp_{\mathcal{T}}} ) $ are co-t-structures on $ \ct=\mathrm{per}A $,  where
	$$ \mathcal{S}_{\geqslant l}=\mathcal{S}_{>l-1}\coloneqq\bigcup_{i\geqslant0}\Si^{-l-i}\mathcal{P}*\cdots*\Si^{-l-1}\mathcal{P}*\Si^{-l}\mathcal{P},$$
	$$ \mathcal{S}_{\leqslant l}=\mathcal{S}_{<l+1}\coloneqq\bigcup_{i\geqslant0}\Si^{-l}\mathcal{P}*\Si^{-l+1}\mathcal{P}\cdots*\Si^{-l+i}\mathcal{P} $$ and $ \mathcal{P}=\mathrm{add}(eA) $. Hence we have a triangle
	$$ X'\rightarrow \Si^{r_{1}}X\rightarrow S\rightarrow \Si X', $$ where $ X'\in{}^{\perp}(\mathcal{S}_{<0}) $ and $ S\in\mathcal{S}_{<0}\subseteq \mathcal{D}(A)^{\leqslant-1} $. We can see that $ X' $ belongs to $ \mathcal{D}(A)^{\leqslant0} $.

	\emph{Step 1: The object $ X' $ is in $ \mathcal{Z}= {}^{\perp}(\Si^{>0}\mathcal{P})\cap(\Si^{<0}\mathcal{P})^{\perp}$.}
	
	Since $ X'\in{}^{\perp}(\mathcal{S}_{<0}) $, it is enough to show that $ X' $ is also in $ (\Si^{<0}\mathcal{P})^{\perp} $. For any positive integer $ k $, we have 
	\begin{equation*}
		\begin{split}
			\Hom_{\mathcal{D}(A)}(eA,\Si^{k}X')&\cong \Hom_{\mathcal{D}(A)}(i^{*}(eAe),\Si^{k}X')\\
			&\cong \Hom_{\mathcal{D}(eAe)}(eAe,\Si^{k}i_{*}(X')).
		\end{split}
	\end{equation*}
	
	The space $ \Hom_{\mathcal{D}(A)}(eA,\Si^{k}X') $ vanishes for any positive integer $ k $. Thus the object $ X' $ is in $ \mathcal{Z} $.

	\emph{Step 2: There exists an object $ W\in\mathcal{F}^{rel} $ such that $ W $ is isomorphic to $ X' $ in $\mathcal{C}_{n}(A,B)$.}
	
	By Step 1, the object $ X' $ is in $ \mathcal{Z}\subseteq\per A $. There is a non-negative integer $ r_{2} $ such that $ p^{*}(X')\in{}^{\perp}(\mathcal{D}(\overline{A})^{\leqslant-n-r_{2}}) $. Consider the object $ W'=\tau^{rel}_{\leqslant-r_{2}}X' $. Then $ W' $ is isomorphic to $ X' $ in $ \mathcal{C}_{n}(A,B) $. 
	
	By the definition of $ \tau^{rel}_{\leqslant -r_{2}}X' $, we have the following triangle in $ \per\overline{A} $
	$$ p^{*}(\tau^{rel}_{\leqslant-r_{2}}X')\ra p^{*}(X')\ra\tau_{>-r_{2}}(p^{*}(X'))\ra\Si p^{*}(\tau^{rel}_{\leqslant-r_{2}}X') .$$ Hence $ p^{*}(\tau^{rel}_{\leqslant-r_{2}}X') $ is isomorphic to $ \tau_{\leqslant-r_{2}}(p^{*}(X')) $. For any object $ Y $ in $ \mathcal{D}(\overline{A})^{\leqslant-n-r_{2}} $, by the $ (n+1) $-Calabi--Yau property of $ \overline{A} $, the space $ \Hom_{\cd(\overline{A})}(\Si^{-1}\tau_{>-r_{2}}(p^{*}(X')),Y) $ is isomorphic to $$ D\Hom_{\cd(\overline{A})}(Y,\Si^{n}\tau_{>-r_{2}}p^{*}(X')) $$ and so vanishes. Therefore, $ p^{*}(\tau^{rel}_{\leqslant-r_{2}}X')\cong \tau_{\leqslant-r_{2}}(p^{*}(X')) $ lies in $ {}^{\perp}(\mathcal{D}(\overline{A})^{\leqslant-n-r_{2}}) $. Thus, the object $ p^{*}(\tau^{rel}_{\leqslant-r_{2}}X') $ lies in $ {}^{\perp}(\mathcal{D}(\overline{A})^{\leqslant-n-r_{2}})\cap\cd(\overline{A})^{\leqslant-r_{2}}\cap\per(\overline{A}) $ which is equal to $ \Si^{r_{2}}\cf $. This shows that $ W' $ belongs to $ \mathcal{F}^{rel}\langle r_{2}\rangle $. 
	
	By Proposition~\ref{Equivalence between the shifts of relative fundamental domain}, there exists an object $ W $ in $ \mathcal{F}^{rel} $ such that $ W $ is isomorphic to $ W' $ in $ \mathcal{C}_{n}(A,B) $. Thus, we get the following triangle in $ \mathcal{C}_{n}(A,B) $ 
	$$ W\rightarrow\Si^{r_{1}}X\rightarrow S\rightarrow\Si W'' ,$$ where $ W $ is in $ \mathcal{H} $ and $ S $ is in $ \mathrm{per}(eAe) $. 
	
\end{proof}

\subsection{Frobenius $ m $-exangulated categories}

In this subsection, we describe our results using the framework of $ m $-exangulated categories. We refer to the readers to~\cite{NP2019},~\cite{HLN2021},~\cite{HLN2021part2} and ~\cite{LZ2020} for the relevant definitions and facts concerning $ m $-exangulated categories.

\begin{Def}\rm\cite[Deﬁnition 3.2]{LZ2020}
	Let $ (\mathcal{C},\mathbb{E},\mathfrak{s}) $ be an $ m $-exangulated category.
	\begin{itemize}
		\item[(1)] An object $ P\in\mathcal{C} $ is called \emph{projective} if for any distinguished $ m $-exangle
		$$ A_{0}\xrightarrow{\alpha_{0}} A_{1}\rightarrow\cdots\rightarrow A_{m}\xrightarrow{\alpha_{m}} A_{m+1}\stackrel{\delta}{-\rightarrow} $$ and any morphism $ c $ in $ \mathcal{C}(P,A_{m+1}) $, there exists a morphism $ b\in\mathcal{C}(P, A_{m}) $ satisfying $ \alpha_{m}b=c $. We denote the full subcategory of projective objects in $ \mathcal{C} $ by $ \mathcal{P} $. Dually, the full subcategory of injective objects in $ \mathcal{C} $ is denoted by $ \mathcal{I} $.
		\item[(2)] We say that $ \cc $ has \emph{enough projectives} if for any object $ C\in\mathcal{C}$, there exists a distinguished $ m $-exangle
		$$ B\xrightarrow{\alpha_{0}} P_{1}\rightarrow\cdots\rightarrow P_{m}\xrightarrow{\alpha_{m}} C\stackrel{\delta}{-\rightarrow} $$ satisfying $ P_{1},P_{2},\cdots,P_{m}\in\mathcal{P} $. We can define the notion of \emph{having enough injectives} dually.
		\item[(3)] $ \mathcal{C} $ is said to be \emph{Frobenius} if $ \mathcal{C} $ has enough projectives and enough injectives and if moreover the projectives coincide with the injectives.
	\end{itemize}
\end{Def}

\begin{Rem}
	In the case $ m=1 $, these agree with the usual definitions of extriangulated categories (see \cite[Definition 3.23, Definition 3.25 and Definition 7.1]{NP2019}).
\end{Rem}

\begin{Thm}\label{Higgs is Frobenius extrianglated category}
	The Higgs category $ \mathcal{H} $ carries a canonical structure of Frobenius extriangulated category with projective-injective objects $ \mathcal{P}=\mathrm{add}(eA) $. The functor $ p^{*}\colon\mathcal{C}_{n}(A,B)\rightarrow\mathcal{C}_{n}(\overline{A}) $ induces an equivalence of triangulated categories
	$$ \mathcal{H}/[\mathcal{P}]\overset{\sim}{\longrightarrow}\mathcal{C}_{n}(\overline{A}) .$$
\end{Thm}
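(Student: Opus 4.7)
The plan is to assemble this from three results already established: the identification $\mathcal{H}=\mathcal{E}={}^{\perp_{\mathcal{C}_{n}(A,B)}}(\Sigma^{>0}\mathcal{P})\cap(\Sigma^{<0}\mathcal{P})^{\perp_{\mathcal{C}_{n}(A,B)}}$ from Theorem~\ref{Higgs category is a Silting reduction}, the extension-closure of $\mathcal{H}$ in $\mathcal{C}_{n}(A,B)$ from Proposition~\ref{Extension closed subcategory}, and the silting reduction equivalence of Corollary~\ref{Cor: category E}. Since $\mathcal{H}$ is closed under extensions in the triangulated category $\mathcal{C}_{n}(A,B)$, it inherits by \cite[Remark 2.18]{NP2019} a canonical extriangulated structure whose biadditive bifunctor is $\mathbb{E}(X,Y)=\mathrm{Hom}_{\mathcal{C}_{n}(A,B)}(X,\Sigma Y)$ and whose conflations are the triangles of $\mathcal{C}_{n}(A,B)$ whose three terms lie in $\mathcal{H}$.

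Next I would verify that every object of $\mathcal{P}=\mathrm{add}(eA)$ is projective-injective in $\mathcal{H}$. By the very definition of $\mathcal{E}$, for any $P\in\mathcal{P}$ and $X\in\mathcal{H}$ one has $\mathrm{Hom}_{\mathcal{C}_{n}(A,B)}(X,\Sigma P)=0$ and $\mathrm{Hom}_{\mathcal{C}_{n}(A,B)}(P,\Sigma X)=0$, so that $\mathbb{E}(X,P)=0=\mathbb{E}(P,X)$. To show $\mathcal{H}$ has enough injectives, I would take $X\in\mathcal{H}$, choose a left $\mathcal{P}$-approximation $X\to P_{X}$ (which exists by condition~(P1) in Proposition~\ref{P is a presilting in RCA}), and complete it to a triangle
\[
X\longrightarrow P_{X}\longrightarrow X\langle 1\rangle\longrightarrow\Sigma X
\]
in $\mathcal{C}_{n}(A,B)$. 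The key point is that $X\langle 1\rangle$ lies again in $\mathcal{H}=\mathcal{E}$; this is precisely the construction underlying Theorem~\ref{Tiangule structure for Z/P} applied to $\mathcal{T}=\mathcal{C}_{n}(A,B)$ with presilting $\mathcal{P}$, and the orthogonality conditions are checked by a direct long-exact-sequence argument using that $\mathcal{P}$ is right-orthogonal to $\Sigma^{>0}\mathcal{P}$ and to $\Sigma^{<0}\mathcal{P}$. The construction of enough projective covers is dual, using right $\mathcal{P}$-approximations. This establishes that $(\mathcal{H},\mathbb{E},\mathfrak{s})$ is Frobenius with projective-injectives $\mathcal{P}$.

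For the triangle equivalence $\mathcal{H}/[\mathcal{P}]\iso\mathcal{C}_{n}(\overline{A})$, I would invoke Corollary~\ref{Cor: category E} directly: the composition $\mathcal{E}\hookrightarrow\mathcal{C}_{n}(A,B)\xrightarrow{p^{*}}\mathcal{C}_{n}(\overline{A})$ already induces a triangle equivalence $\mathcal{E}/[\mathcal{P}]\iso\mathcal{C}_{n}(\overline{A})$, with the triangulated structure on $\mathcal{E}/[\mathcal{P}]$ coming from Theorem~\ref{Tiangule structure for Z/P}. Since $\mathcal{H}=\mathcal{E}$, it only remains to check that this triangulated structure agrees with the Happel-type stable triangulated structure on the Frobenius category $\mathcal{H}/[\mathcal{P}]$. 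Both structures have the same shift functor $\langle 1\rangle$, built from the same triangles $X\to P_{X}\to X\langle 1\rangle$ with $P_{X}\in\mathcal{P}$ a left $\mathcal{P}$-approximation, and standard triangles are in both cases the images of conflations, so this identification is immediate.

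The main obstacle I anticipate is the step showing that the cone $X\langle 1\rangle$ of a left $\mathcal{P}$-approximation lies in $\mathcal{H}$, since this requires verifying both orthogonality conditions defining $\mathcal{E}$; it is the heart of the Frobenius property and requires some care with the long exact sequences and the relative Calabi--Yau duality of Corollary~\ref{Relative CY duality}, but it follows the same pattern as the proofs of Proposition~\ref{Extension closed subcategory} and Lemma~\ref{Stable under relative truncation}.
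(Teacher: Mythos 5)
Your proposal is correct, but it takes a noticeably different route from the paper at the two substantive steps. For ``enough injectives'', the paper works inside $\per A$: it forms the approximation triangle there, observes that the cone lies in $\mathcal{F}^{rel}\langle 1\rangle$, and then uses the equivalence $\tau^{rel}_{\leqslant -1}\colon\mathcal{F}^{rel}\to\mathcal{F}^{rel}\langle 1\rangle$ of Proposition~\ref{Equivalence between the shifts of relative fundamental domain} to replace the cone by an object of $\mathcal{F}^{rel}$, i.e.\ of $\mathcal{H}$ as originally defined. You instead work entirely inside $\mathcal{C}_{n}(A,B)$, using the identification $\mathcal{H}=\mathcal{E}$ from Theorem~\ref{Higgs category is a Silting reduction} and the standard silting-reduction orthogonality check (surjectivity of $\Hom(P_X,P)\to\Hom(X,P)$ from the approximation property kills $\Hom(X\langle1\rangle,\Si P)$, and presilting plus $X\in\mathcal{E}$ kills the rest) to see that the cone stays in $\mathcal{E}$. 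This is legitimate and cleaner as a proof of this theorem, though the heavy lifting has merely been relocated into Theorem~\ref{Higgs category is a Silting reduction}, whose proof itself uses the shifted fundamental domains. Similarly, your derivation of the projective-injectivity of $\mathcal{P}$ directly from the orthogonality conditions defining $\mathcal{E}$ replaces the paper's triangle argument, and your use of Corollary~\ref{Cor: category E} for the triangle equivalence is, if anything, more careful than the paper's appeal to the $k$-linear equivalence of Corollary~\ref{Equivalence between fundamental domain}, since it supplies the triangulated structure via Theorem~\ref{Thm:silting reduction}; the compatibility of that structure with the Happel-type structure on the stable extriangulated category is standard. One small omission: the theorem asserts that the projective-injectives are \emph{exactly} $\mathrm{add}(eA)$, so you should add the one-line converse -- given an injective $J$, its inflation $J\to P_J$ into a left $\mathcal{P}$-approximation splits, so $J\in\add(eA)$ since $\add(eA)$ is closed under summands -- as the paper does.
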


\begin{proof}
	\emph{Step 1: $ \mathcal{H} $ is an extriangulated category.}

	By Proposition~\ref{Extension closed subcategory}, the Higgs category $ \mathcal{H} $ is an extension closed subcategory of $ \mathcal{C}_{n}(A,B) $. Then by~\cite[Remark 2.18]{NP2019}, $ \mathcal{H} $ is an extriangulated category and $ (\mathcal{H},\mathbb{E},\mathfrak{s}) $ can be described as follows:
	\begin{itemize}
		\item[(1)] For any two objects $ X,Y\in\!\mathcal{H}\subseteq\mathcal{C}_{n}(A,B) $, the $ \mathbb{E} $-extension space $ \mathbb{E}(X,Y) $ is given by $$ \Hom_{\mathcal{C}_{n}(A,B)}(X,\Si Y) .$$
		\item[(2)] For any $ \delta\in\mathbb{E}(X,Y)=\Hom_{\mathcal{C}_{n}(A,B)}(Z,\Si X) $, take a distinguished triangle $$ X\xrightarrow{f} Y\xrightarrow{g} Z\xrightarrow{\delta} \Si X $$ and define $ \mathfrak{s}(\delta)=[X\xrightarrow{f} Y\xrightarrow{g} Z] $. This $ \mathfrak{s}(\delta) $ does not depend on the choice of the distinguished triangle above.
	\end{itemize}

	\emph{Step 2: $ \mathcal{H} $ has has enough injectives and the full subcategory of injective objects in $ \mathcal{H} $ is $ \mathcal{P}=\mathrm{add}(eA) $.}
	
	Let $ I $ be an object in $ \mathrm{add}(eA) $. For any distinguished triangle in $ \mathcal{H} $
	$$ X\rightarrow Y\rightarrow Z\stackrel{\delta}{-\rightarrow} ,$$ 
	the space $ \Hom_{\mathcal{C}_{n}(A,B}(\Si^{-1}Z,I)\cong \Hom_{\per A}(Z,\Si I) $ vanishes since $ Z\in\mathcal{Z}={}^{\perp}(\Si^{>0}\mathcal{P})\cap(\Si^{<0}\mathcal{P})^{\perp}\subseteq\per A $. Thus, we have the following exact sequence
	$$ \Hom_{\mathcal{C}_{n}(A,B)}(Y,I)\rightarrow \Hom_{\mathcal{C}_{n}(A,B)}(X,I)\rightarrow0 .$$
	Thus, any object in $ \add(eA) $ is injective.
	
	Now let $ X $ be an object in $ \mathcal{H}\subseteq\mathcal{C}_{n}(A,B) $. Then $ X $ is an object in $ \mathcal{Z}\subseteq\per A $. We take a triangle in $ \per A $
	$$ X\xrightarrow{l_{X}} P_{X}\rightarrow X_{1}\rightarrow \Si X $$ with a left $ \mathcal{P}=\mathrm{add}(eA) $-approximation $ l_{X} $ and $ X_{1}\in\mathcal{Z} $. It is easy to see that $ X_{1} $ is in $ \mathcal{F}^{rel}\langle1\rangle $. By Proposition~\ref{Equivalence between the shifts of relative fundamental domain}, there is an object $ X_{2}\in\mathcal{F}^{rel} $ such that $ \tau^{rel}_{\leqslant-1}X_{2}\cong X_{1} $. Thus, we have a triangle in $ \mathcal{C}_{n}(A,B) $
	$$ X\xrightarrow{l_{X}} P_{X}\rightarrow X_{2}\rightarrow\Si X $$ with $ P_{X} $ in $ \mathrm{add}(eA) $ and $ l_{X} $ an inflation. Therefore, $ \mathcal{H} $ has has enough injectives. 
	
	It remains to show that any injective object is in $ \mathrm{add}(eA) $. Let $ J $ be an injective object in $ \mathcal{H} $. We take a triangle in $ \mathrm{per}A $
	$$ J\xrightarrow{l_{J}} P_{J}\rightarrow J_{1}\rightarrow\Si J $$ with a left $ \mathcal{P}=\mathrm{add}(eA) $-approximation $ l_{J} $ and $ J_{1}\in\mathcal{Z} $. Since $ J $ is injective, the morphism $ l_{J}\colon J\rightarrow P_{J} $ is split in $ \mathcal{H}\subseteq\mathcal{C}^{rel}_{n}(A,B) $. Thus $ l_{J} $ is also split in $ \mathcal{F}^{rel}\subseteq\mathcal{Z}\subseteq \mathrm{per}A $. 
	
	Therefore, $ J $ belongs to $ \add(eA) $ and the subcategory of injective objects in $ \mathcal{H} $ is $ \mathcal{P}=\mathrm{add}(eA) $.

	\emph{Step 3: $ \mathcal{H} $ has has enough projectives and the full subcategory of projective objects in $ \mathcal{H} $ is $ \mathcal{P}=\mathrm{add}(eA) $.}
	
	This follows from the dual of the argument in Step 2.

	\emph{Step 4: $ \mathcal{H} $ is a Frobenius extriangulated category.}
	
	By Steps 1, 2, and 3, the Higgs category $ \mathcal{H} $ is a Frobenius extriangulated category with projective-injective objects $ \mathcal{P}=\mathrm{add}(eA) $. By Corollary~\ref{Equivalence between fundamental domain}, we have the equivalence between triangulated categories
	$$ \mathcal{H}/[\mathcal{P}]\cong\mathcal{F}\cong\mathcal{C}_{n}(\overline{A}) .$$
	
\end{proof}

\subsection{Higher extensions in an extriangulated category}

Let $ (\mathcal{C},\mathbb{E},\mathfrak{s}) $ be an extriangulated category. Assume that it has enough projectives and injectives, and let $ \mathcal{P}\subseteq\mathcal{C} $ (respectively, $ \mathcal{I}\subseteq\mathcal{C} $) denote the full subcategory of projectives (resp. injectives). We denote the ideal quotients $ \mathcal{C}/[\mathcal{P}] $ and $ \mathcal{C}/[\mathcal{I}] $ by $ \underline{\mathcal{C}} $ and $ \overline{\mathcal{C}} $, respectively. The extension group bifunctor $ \mathbb{E}\colon\mathcal{C}^{op}\times\mathcal{C}\rightarrow \mathrm{Ab} $ induces $ \mathbb{E}\colon\underline{\mathcal{C}^{op}}\times\overline{\mathcal{C}}\rightarrow \mathrm{Ab} $, which we denote by the same symbol. To define the higher extension groups, we need the following assumptions
\begin{assumption}\rm
	Each object $ A\in\mathcal{C} $ is assigned the following data (i) and (ii).
	\begin{itemize}
		\item[(i)] A pair $ (\Sigma A,l^{A}) $ of an object $ \Sigma A\in\mathcal{C} $ and an extension $ l^{A}\in\mathbb{E}(\Sigma A,A) $, for
		which $ \mathfrak{s}(l^{A})=[A\rightarrow I\rightarrow \Sigma A] $ satisfies $ I\in\mathcal{I} $.
		\item[(ii)] A pair $ (\Omega A,\omega^{A}) $ of an object $ \Omega A\in\mathcal{C} $ and an extension $ \omega^{A}\in\mathbb{E}(A,\Omega A) $, for
		which $ \mathfrak{s}(\omega^{A})=[\Omega A\rightarrow P\rightarrow A] $ satisfies $ P\in\mathcal{P} $. 
	\end{itemize}
\end{assumption}

\begin{Def}\rm\cite[Definition 3.6]{HLN2021part2}\label{Def:Higher extension group}
	Let $ i\geqslant1 $ be any integer. Define a biadditive functor $ \mathbb{E}^{i}\colon\mathcal{C}^{op} \times\mathcal{C}\rightarrow Ab $ to be the composition of
	$$ \mathcal{C}^{op} \times\mathcal{C}\rightarrow\underline{\mathcal{C}^{op}} \times\overline{\mathcal{C}}\xrightarrow{\mathrm{Id}\times \Sigma^{i-1}}\underline{\mathcal{C}^{op}} \times\overline{\mathcal{C}}\xrightarrow{\mathbb{E}}\mathrm{Ab}, $$
	where $ \Sigma^{i-1} $ is the $ (i − 1) $-times iteration of the endfunctor $ \Sigma $.
	
\end{Def}

%
%
%

By Theorem~\ref{Higgs is Frobenius extrianglated category}, the Higgs category $ \mathcal{H} $ is a Frobenius extriangulated category (or Frobenius 1-exangulated category) with projective-injective objects $ \mathcal{P}=\mathrm{add}eA $. Thus the higher extension can be computed as follows:

Let $ X $ and $ Y $ be two objects in $ \mathcal{H} $. Let $ l>0 $ be an integer. We have 
$$ \mathbb{E}^{l}(X,Y)=\Hom_{\mathcal{Z}/[\mathcal{P}]}(X,Y\langle l\rangle)\cong \Hom_{\mathcal{C}_{n}(\overline{A})}(p^{*}(A),\Si^{l}p^{*}(Y))\cong \Hom_{\mathcal{C}_{n}(A,B)}(X,\Si^{l} Y) .$$

\begin{Def}\rm\cite[Definition 3.21]{HLN2021part2}
	Let $ (\mathcal{C},\mathbb{E},\mathfrak{s}) $ be a Frobenius extriangulated category. Let $ \mathcal{T}\subseteq\mathcal{C} $ be a full additive subcategory closed under isomorphisms and direct summands. Then $ \mathcal{T} $ is called an \emph{n-cluster tilting subcategory} of $ \mathcal{C} $, if it satisfies the following conditions.
	\begin{itemize}
		\item[(1)] $ \mathcal{T}\subseteq\mathcal{C} $ is functorially finite.
		\item[(2)] For any $ C\in\mathcal{C} $, the following are equivalent.
		\subitem(i) $ C\in\mathcal{T} $,
		\subitem(ii) $ \mathbb{E}^{i}(C,\mathcal{T})=0 $ for any $ 1\leqslant i\leqslant n-1 $,
		\subitem(iii) $ \mathbb{E}^{i}(\mathcal{T},C)=0 $ for any $ 1\leqslant i\leqslant n-1 $.
	\end{itemize}
\end{Def}

\begin{Prop}\label{addA is an n-cluster tiliting}
	The category $ \add(\pi^{rel}A) $ is an $ n $-cluster-tilting subcategory of $ \mathcal{H} $.
\end{Prop}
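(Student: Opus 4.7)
The strategy is to transfer the statement across the triangle equivalence $\mathcal{H}/[\mathcal{P}] \iso \mathcal{C}_{n}(\overline{A})$ of Theorem~\ref{Higgs is Frobenius extrianglated category} and invoke L.~Guo's theorem that $\mathrm{add}(\pi\overline{A})$ is an $n$-cluster-tilting subcategory of $\mathcal{C}_{n}(\overline{A})$. Under $p^{*}$, the object $\pi^{rel}A$ is sent to $\pi\overline{A}$, and since the projective-injectives $\mathcal{P}=\mathrm{add}(eA)$ are contained in $\mathrm{add}A$, the image of $\mathrm{add}(\pi^{rel}A)$ in $\mathcal{H}/[\mathcal{P}]$ corresponds exactly to $\mathrm{add}(\pi\overline{A})$ in $\mathcal{C}_{n}(\overline{A})$.

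The key computational input is that higher extensions in $\mathcal{H}$ are preserved by $p^{*}$: by the remark following Definition~\ref{Def:Higher extension group}, for $X,Y\in\mathcal{H}$ and $l\geqslant 1$ one has $\mathbb{E}^{l}(X,Y)\cong \Hom_{\mathcal{C}_{n}(A,B)}(X,\Si^{l}Y)$, and by Proposition~\ref{Extension group in RFD} this in turn equals $\Hom_{\mathcal{C}_{n}(\overline{A})}(p^{*}X,\Si^{l}p^{*}Y)$. In particular, Guo's theorem immediately gives $\mathbb{E}^{i}(A,A)=0$ for $1\leqslant i\leqslant n-1$.

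For functorial finiteness, I would take $C\in\mathcal{H}$ and a lift $X\in\mathcal{F}^{rel}$ with $\pi^{rel}X\cong C$. Applying $\pi^{rel}$ to the first of the $n-1$ triangles of Remark~\ref{Rem:triangles in RF},
\begin{equation*}
M_{1}\ra N_{0}\ra X\ra \Si M_{1},
\end{equation*}
produces a morphism $\pi^{rel}N_{0}\ra C$ with $N_{0}\in\mathrm{add}A$. To check this is a right $\mathrm{add}A$-approximation in $\mathcal{H}$, one must verify that the obstruction space $\Hom_{\mathcal{C}_{n}(A,B)}(A',\Si\pi^{rel}M_{1})$ vanishes for all $A'\in\mathrm{add}A$. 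Iterating the remaining $n-2$ triangles shows $M_{1}\in\mathrm{add}A*\Si\mathrm{add}A*\cdots*\Si^{n-2}\mathrm{add}A$, hence $\Si M_{1}$ is an iterated extension of $\Si^{i}A$ for $1\leqslant i\leqslant n-1$; by the displayed $\mathbb{E}^{i}$-formula and Guo's theorem each $\Hom_{\mathcal{C}_{n}(A,B)}(A,\Si^{i}A)$ vanishes, so the obstruction vanishes as well. Covariant finiteness follows by the dual construction (approximate $X$ on the other side and apply the relative Calabi--Yau duality of Corollary~\ref{Relative CY duality}, or dualize by using the last triangle of the resolution).

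For the characterization, the implication $(i)\Rightarrow(ii),(iii)$ is immediate from the higher extension formula and Guo's theorem. For $(ii)\Rightarrow(i)$, assume $C\in\mathcal{H}$ satisfies $\mathbb{E}^{i}(C,A)=0$ for $1\leqslant i\leqslant n-1$. Then $\Hom_{\mathcal{C}_{n}(\overline{A})}(p^{*}C,\Si^{i}\overline{A})=0$ for the same range, so by Guo $p^{*}C\in\mathrm{add}\overline{A}$. Choose $A'\in\mathrm{add}A$ with $p^{*}A'\cong p^{*}C$; the triangle equivalence $p^{*}\colon\mathcal{H}/[\mathcal{P}]\iso \mathcal{C}_{n}(\overline{A})$ upgrades this isomorphism to $C\cong A'$ in $\mathcal{H}/[\mathcal{P}]$. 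Hence there exist $P_{1},P_{2}\in\mathcal{P}=\mathrm{add}(eA)\subseteq\mathrm{add}A$ with $C\oplus P_{1}\cong A'\oplus P_{2}$ in $\mathcal{H}$, and idempotent completeness of $\mathcal{H}$ (Theorem~\ref{Higgs category is a Silting reduction}) forces $C\in\mathrm{add}A$. The implication $(iii)\Rightarrow(i)$ is identical by using the right variable. The main subtlety, and the only place that is not a direct transport of Guo's result, is the lifting step from $\mathcal{H}/[\mathcal{P}]$ back to $\mathcal{H}$, for which idempotent completeness of the Higgs category is essential.
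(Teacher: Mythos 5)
Your proposal is correct and follows essentially the same route as the paper: both reduce everything to Guo's theorem for $\mathcal{C}_{n}(\overline{A})$ via the extension-group comparison of Proposition~\ref{Extension group in RFD} and the equivalence $\mathcal{H}/[\mathcal{P}]\iso\mathcal{C}_{n}(\overline{A})$, using idempotent completeness of $\mathcal{H}$ and $\mathcal{P}\subseteq\add A$ to lift membership in $\add\overline{A}$ back to $\add A$. The only (harmless) divergence is functorial finiteness, where the paper simply observes it follows from Hom-finiteness of $\mathcal{H}$ (since $\add A$ is the additive closure of a single object), making your explicit approximation construction unnecessary.
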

\begin{proof}
	Since $ \ch $ is Hom-finite, it is clear that $ \add(\pi^{rel}A) $ is functorially finite in $ \mathcal{H} $. 
	
	\emph{Step 1: $ \pi^{rel}(A) $ is an $ n $-rigid object in $ \mathcal{H} $.}
	
	By Proposition~\ref{Extension group in RFD}, \cite[Theorem 7.1]{Am2008} and \cite[Theorem 3.2.2]{LYG}, we have that
	\begin{equation*}
		\begin{split}
			\Hom_{\mathcal{C}_{n}(A,B)}(\pi^{rel}(A),\Si^{i}\pi^{rel}(A))&\simeq \Hom_{\mathcal{C}_{n}(\overline{A})}(\overline{A},\Si^{i}\overline{A})\\
			&\simeq0
		\end{split}
	\end{equation*}
	for any $ 1\leqslant i\leqslant n-1 $.

	\emph{Step 2: Let $ X $ be an object in $ \mathcal{H} $ satisfying $ \mathbb{E}^{i}(X,\mathrm{add}A)=0 $ for $ 1\leqslant i\leqslant n-1 $. Then $ X $ is in $ \mathrm{add}A $.}
	
	Since $ \mathbb{E}^{i}(X,\mathrm{add}A)=0 $ for $ 1\leqslant i\leqslant n-1 $, we have $ \Hom_{\mathcal{C}_{n}(\overline{A})}(p^{*}(X),\mathrm{add}(\overline{A}))=0 $ for $ 1\leqslant i\leqslant n-1 $. We know that $ \mathrm{add}(\overline{A}) $ is an $ n $-cluster tilting subcategory of $ \mathcal{F}\cong\mathcal{C}_{n}(\overline{A}) $ (see \cite{Am2008,LYG}). Hence $ p^{*}(X) $ is in $ \add(\overline{A}) $. By the equivalence $ p^{*}\colon\mathcal{Z}/[\mathcal{P}]\iso\mathcal{F}\iso\mathcal{C}_{n}(\overline{A}) $, the object $ X $ is in $ \mathrm{add}A $.

	\emph{Step 3: Let $ X $ be an object in $ \mathcal{H} $ satisfying $ \mathbb{E}^{i}(\mathrm{add}A,X)=0 $ for $ 1\leqslant i\leqslant n-1 $. Then $ X $ is in $ \mathrm{add}A $.}
	
	This follows by a similar argument to that in Step 2.
	
	Thus, the category $ \mathrm{add}A $ is an $ n $-cluster tilting subcategory of $ \mathcal{H} $.
\end{proof}

\begin{Prop}\label{Condition 1 for Frobenius n-exangulated}
	Suppose that the $ n $-cluster tilting category $ \add\overline{A} $ of $ \cc_{n}(\overline{A}) $ satisfies $ \Si^{n}(\add\overline{A})=\add\overline{A} $. Then we have:
	\begin{itemize}
		\item[(1)] If $ X\in \mathcal{H} $ satisfies $ \mathbb{E}^{n-1}(\mathrm{add}A,X)=0 $, then there is $ \mathfrak{s} $-triangle
		$$ Y\xrightarrow{f} P\rightarrow X\stackrel{}{-\rightarrow} \quad (P\in\mathcal{P}=\mathrm{add}(eA)) $$ for which
		$$ \mathcal{H}(T,f)\colon\Hom_{\mathcal{H}}(T,Y)\rightarrow \Hom_{\mathcal{H}}(T,P) $$ is injective for any $ T\in \mathrm{add}(A) $.
		\item[(2)] Dually, if $ Z\in \mathcal{H} $ satisfies $ \mathbb{E}^{n-1}(Z,\mathrm{add}A)=0 $, then there is $ \mathfrak{s} $-triangle
		$$ Z\rightarrow I\xrightarrow{g} W\stackrel{}{-\rightarrow} \quad (I\in\mathcal{P}=\mathrm{add}(eA))  $$ for which
		$$ \mathcal{H}(g,T)\colon\Hom_{\mathcal{H}}(W,T)\rightarrow \Hom_{\mathcal{H}}(I,T) $$
		is injective for any $ T\in \mathrm{add}(A) $. 
	\end{itemize}
\end{Prop}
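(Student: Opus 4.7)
By duality it suffices to treat (1). The plan is to take the $\mathfrak{s}$-triangle arising from a projective cover in $\mathcal{H}$, translate the required injectivity of $\mathcal{H}(T,f)$ into a vanishing statement in $\mathcal{C}_n(A,B)$, and then transport that vanishing to the $n$-Calabi--Yau category $\mathcal{C}_n(\overline{A})$, where the hypothesis $\Si^n(\add\overline{A})=\add\overline{A}$ combined with Calabi--Yau duality finishes the job.

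First I would produce the candidate $\mathfrak{s}$-triangle. By Theorem~\ref{Higgs is Frobenius extrianglated category}, $\mathcal{H}$ is a Frobenius extriangulated category with projective-injectives $\mathcal{P}=\add(eA)$, so any $X\in\mathcal{H}$ admits a deflation $P\twoheadrightarrow X$ with $P\in\mathcal{P}$. This deflation is automatically a right $\mathcal{P}$-approximation (every $P''\to X$ with $P''\in\mathcal{P}$ lifts to $P$ since $P''$ is projective), so the resulting $\mathfrak{s}$-triangle $Y\xrightarrow{f}P\to X\stackrel{}{-\rightarrow}$ identifies $Y$ with the Iyama--Yang desuspension $X\langle-1\rangle$ from Theorem~\ref{Tiangule structure for Z/P}. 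Consequently, under the equivalence $p^{*}\colon\mathcal{H}/[\mathcal{P}]\iso\mathcal{C}_{n}(\overline{A})$ we have $p^{*}Y\cong\Si^{-1}p^{*}X$. Lifting to a distinguished triangle $Y\to P\to X\to\Si Y$ in $\mathcal{C}_n(A,B)$ and applying $\Hom_{\mathcal{C}_n(A,B)}(T,-)$ for $T\in\add A$, the injectivity of $\mathcal{H}(T,f)$ becomes equivalent to the vanishing of the image of the connecting map $\Hom_{\mathcal{C}_n(A,B)}(T,\Si^{-1}X)\to\Hom_{\mathcal{H}}(T,Y)$.

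The core computation is performed in $\mathcal{C}_n(\overline{A})$. By Proposition~\ref{Extension group in RFD} together with the equivalence $\mathcal{H}/[\mathcal{P}]\iso\mathcal{C}_n(\overline{A})$, the analogous vanishing reduces to $\Hom_{\mathcal{C}_n(\overline{A})}(p^{*}T,\Si^{-1}p^{*}X)=0$. The hypothesis $\Si^{n}(\add\overline{A})=\add\overline{A}$ allows one to pick $T'\in\add A$ with $p^{*}T'\cong\Si^{n}p^{*}T$, and then the $n$-Calabi--Yau duality of $\mathcal{C}_n(\overline{A})$ yields
\begin{align*}
\Hom_{\mathcal{C}_n(\overline{A})}(p^{*}T,\Si^{-1}p^{*}X)
&\cong D\Hom_{\mathcal{C}_n(\overline{A})}(p^{*}X,\Si^{n+1}p^{*}T) \\
&\cong D\Hom_{\mathcal{C}_n(\overline{A})}(p^{*}X,\Si p^{*}T')
\cong D\mathbb{E}^{1}(X,T');
\end{align*}
a further application of Calabi--Yau duality identifies $D\mathbb{E}^{1}(X,T')$ with $\mathbb{E}^{n-1}(T',X)$, which vanishes by hypothesis.

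The main technical obstacle is to upgrade this vanishing from $\mathcal{C}_n(\overline{A})$ to $\mathcal{C}_n(A,B)$: the canonical map $\Hom_{\mathcal{C}_n(A,B)}(T,\Si^{-1}X)\to\Hom_{\mathcal{C}_n(\overline{A})}(p^{*}T,\Si^{-1}p^{*}X)$ is not injective in general, its kernel consisting of morphisms factoring through $\per(eAe)$. I would dispatch this using the characterisation $\mathcal{H}=\mathcal{E}$ of Theorem~\ref{Higgs category is a Silting reduction}: for $T\in\cz$, a factorisation $T\to S\to\Si^{-1}X$ with $S\in\per(eAe)$ composed with the connecting morphism $\delta\colon\Si^{-1}X\to Y$ yields a morphism $T\to S\to Y$; a diagram chase exploiting the projectivity of $\mathcal{P}$ in $\mathcal{H}$, the vanishings $\Hom_{\per A}(T,\Si^{>0}\mathcal{P})=0=\Hom_{\per A}(\Si^{<0}\mathcal{P},T)$ encoded in $\mathcal{Z}$, and the defining right $\mathcal{P}$-approximation property of $P\to X$ together force the composite $T\to Y$ to vanish in $\Hom_{\mathcal{H}}(T,Y)$. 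Statement (2) follows by the entirely dual argument applied to an inflation into an injective object.
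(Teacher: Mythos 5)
You follow the same route as the paper: the $\mathfrak{s}$-triangle comes from a right $\cp$-approximation, injectivity of $\mathcal{H}(T,f)$ is reduced to killing the image of the connecting map out of $\Hom_{\cc_{n}(A,B)}(T,\Si^{-1}X)$, and the vanishing is transported to $\cc_{n}(\overline{A})$, where the hypotheses finish the argument. (Your double application of Calabi--Yau duality is an unnecessary detour: the paper simply rewrites $\Hom_{\cc_{n}(\overline{A})}(\Si p^{*}T,p^{*}X)\cong\Hom_{\cc_{n}(\overline{A})}(\Si^{n}p^{*}T,\Si^{n-1}p^{*}X)$ and invokes $\Si^{n}\add\overline{A}=\add\overline{A}$ together with the hypothesis on $X$ directly.) The load-bearing step is exactly the one you call the ``main technical obstacle'', and there your text stops being a proof: the diagram chase that is supposed to kill the composite $T\to S\to\Si^{-1}X\xrightarrow{\delta}Y$ for $S\in\per(eAe)$ is asserted, not performed.

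That chase cannot be carried out in general. Consider the paper's own example from Section~\ref{section 8}: $Q\colon 3\xrightarrow{b}2\xrightarrow{a}1$ with $F$ the full subquiver on $\{1,2\}$ and $W=0$, so $n=2$, $A=\bm{\Gamma}_{rel}(Q,F,0)$, $e=e_{1}+e_{2}$. Here $\overline{A}=\bm{\Pi}_{3}(k)$, $\cc_{2}(\overline{A})$ is the cluster category of a point, $\Si^{2}\add\overline{A}=\add\overline{A}$, and $X=e_{3}A$ satisfies $\mathbb{E}^{1}(\add A,X)=0$. Since $H^{0}(e_{3}Ae)=0$, every morphism $P\to X$ with $P\in\cp$ is zero, so in any $\mathfrak{s}$-triangle as in the statement one has $Y\cong P\oplus\Si^{-1}e_{3}A$ and $\ker\mathcal{H}(T,f)=\Hom_{\cc_{2}(A,B)}(T,\Si^{-1}e_{3}A)$. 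For $T=e_{2}A$ this equals $\Hom_{\per A}(e_{2}A,\Si^{-1}e_{3}A)=H^{-1}(e_{3}Ae_{2})=k\cdot[b^{*}]\neq 0$: this class factors through $\cp$ (trivially, through $e_{2}A$ itself), lies in the kernel of the comparison map to $\cc_{2}(\overline{A})$ exactly as you say, yet it survives composition with $\delta$, which here is an isomorphism onto a direct summand of $Y$. So no diagram chase can make it vanish. Note that the parallel step in the paper's proof, the isomorphism $\Hom_{\cc_{n}(A,B)}(\Si\add A,X)\cong\Hom_{\cc_{n}(\overline{A})}(\Si\add\overline{A},p^{*}X)$, is an application of Proposition~\ref{Extension group in RFD} outside its stated range $l>0$ and fails on the same class $[b^{*}]$; so the difficulty you isolated is genuine, but it has to be resolved by actually controlling the group $\Hom_{\cc_{n}(A,B)}(T,\Si^{-1}X)$ (for instance via an extra hypothesis forcing $H^{-1}(Xe)$ to vanish, or by restricting the test objects $T$), not by the factorisation argument you propose.
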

\begin{proof}
	We only show the first statement since the second statement can be shown dually. Let $ X $ be an object in $ \mathcal{H} $ which satisfies $$ \mathbb{E}^{n-1}(\mathrm{add}A,X)=\Hom_{\mathcal{C}_{n}(A,B)}(\add(A),\Si^{n-1}X)\simeq\Hom_{\mathcal{C}_{n}(\overline{A})}(\add(\overline{A}),\Si^{n-1}p^{*}X)=0 .$$ 
	Since $ \ch $ is a Frobenius extriangulated category, there is an $ \mathfrak{s} $-triangle
	$$ Y\xrightarrow{f} P\rightarrow X\stackrel{}{-\rightarrow} \quad (P\in\mathcal{P}=\mathrm{add}(eA)) $$ with $ Y $ in $ \ch $ and $ P $ in $ \cp=\add(eA) $. Then it is enough to show that $ \Hom_{\cc_{n}(A,B)}(\Si\add(A),X)=0 $. 
	
	By Proposition~\ref{Extension group in RFD}, we have 
	\begin{equation*}
		\begin{split}
			\Hom_{\cc_{n}(A,B)}(\Si\add(A),X)\simeq&\Hom_{\cc_{n}(\overline{A})}(\Si\add(\overline{A}),p^{*}X)\\
			\simeq&\Hom_{\cc_{n}(\overline{A})}(\Si^{n}\add(\overline{A}),\Si^{n-1}p^{*}X)\\
			\simeq&\Hom_{\cc_{n}(\overline{A})}(\add(\overline{A}),\Si^{n-1}p^{*}X)\\
			=&0.
		\end{split}
	\end{equation*}
	Thus, the space $ \Hom_{\cc_{n}(A,B)}(\Si\add(A),X) $ vanishes.

\end{proof}

\begin{Prop}\label{Condition 2 for Frobenius n-exangulated}
	By Theorem~\ref{Higgs is Frobenius extrianglated category}, $ (\mathcal{H},\mathbb{E},\mathfrak{s}) $ is an extriangulated category. Let $ f\in \mathcal{H}(X,Y) $, $ g\in \mathcal{H}(Y,Z) $ be any pair of morphisms. We have
	\begin{itemize}
		\item[(1)] If $ g\circ f $ is an $ \mathfrak{s} $-inflation (\cite[Definition 2.23]{HLN2021}), then so is $ f $.
		\item[(2)] If $ g\circ f $ is an $ \mathfrak{s} $-deflation (\cite[Definition 2.23]{HLN2021}), then so is $ g $.
	\end{itemize}
\end{Prop}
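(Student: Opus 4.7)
The plan is to work in the ambient triangulated category $\mathcal{C}_n(A,B)$, exploiting the fact that $\mathcal{H}$ is extension-closed and admits the orthogonal description
\[
    \mathcal{H} = {}^{\perp_{\mathcal{C}_n(A,B)}}(\Si^{>0}\mathcal{P}) \cap (\Si^{<0}\mathcal{P})^{\perp_{\mathcal{C}_n(A,B)}}
\]
established in Theorem~\ref{Higgs category is a Silting reduction}. Unwinding the inherited extriangulated structure described in the proof of Theorem~\ref{Higgs is Frobenius extrianglated category}, a morphism $h\colon U\to V$ with $U,V\in\mathcal{H}$ is an $\mathfrak{s}$-inflation (resp.\ an $\mathfrak{s}$-deflation) if and only if the cone $\cone(h)$ (resp.\ the fiber $\Si^{-1}\cone(h)$) taken in $\mathcal{C}_n(A,B)$ again belongs to $\mathcal{H}$. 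Thus (1) reduces to: for $f\colon X\to Y$ and $g\colon Y\to Z$ in $\mathcal{H}$, if $\cone(gf)\in\mathcal{H}$ then $\cone(f)\in\mathcal{H}$; and (2) is the dual statement for fibers.

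For (1), set $C_f=\cone(f)$, $C_g=\cone(g)$ and $C_{gf}=\cone(gf)$ in $\mathcal{C}_n(A,B)$. The octahedral axiom applied to the composable pair $(f,g)$ produces a distinguished triangle
\[
    C_f \longrightarrow C_{gf} \longrightarrow C_g \longrightarrow \Si C_f,
\]
and we must verify both orthogonality conditions defining $C_f\in\mathcal{H}$. Applying $\Hom_{\mathcal{C}_n(A,B)}(P,\Si^{k}-)$ for $P\in\mathcal{P}$ and $k\geqslant 1$ to the defining triangle $X\xrightarrow{f}Y\to C_f\to \Si X$ and using $X,Y\in\mathcal{H}$ yields $\Hom(P,\Si^{k}C_f)=0$ at once. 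For the other side, apply $\Hom_{\mathcal{C}_n(A,B)}(-,\Si^{k}P)$ to the same triangle: the cases $k\geqslant 2$ are immediate from $X,Y\in\mathcal{H}$, and the remaining case $k=1$ is where the hypothesis enters. From the octahedral triangle, the vanishing $\Hom(C_{gf},\Si P)=0$ (since $C_{gf}\in\mathcal{H}$) yields an embedding $\Hom(C_f,\Si P)\hookrightarrow\Hom(C_g,\Si^{2}P)$, and the target vanishes by the defining triangle $Y\xrightarrow{g}Z\to C_g\to \Si Y$ combined with $Y,Z\in\mathcal{H}$. Part (2) is entirely dual, obtained by running the symmetric argument on fibers using the octahedral triangle $\Si^{-1}C_f\to\Si^{-1}C_{gf}\to\Si^{-1}C_g\to C_f$ together with the hypothesis $\Si^{-1}C_{gf}\in\mathcal{H}$.

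The main obstacle is only the careful bookkeeping of the many long exact sequences attached to these three triangles; once the orthogonal characterisation of $\mathcal{H}$ and the octahedron are in hand, the verifications are short diagram chases. As an alternative, one may instead deduce both statements simultaneously from the general principle that an extriangulated category whose underlying additive category is weakly idempotent complete automatically satisfies condition (WIC), combined with the idempotent completeness of $\mathcal{H}$ recorded in Theorem~\ref{Higgs category is a Silting reduction}.
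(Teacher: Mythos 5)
Your proof is correct, but it takes a genuinely different route from the paper's. The paper also starts from the octahedron for the pair $(f,g)$, but instead of using the rotated triangle $\cone(f)\to\cone(gf)\to\cone(g)\to\Si\cone(f)$ it extracts the homotopy bicartesian square relating $Y$, $Z$, $\cone(f)$ and $\cone(gf)$, obtaining a triangle $Y\to\cone(f)\oplus Z\to\cone(gf)\to\Si Y$; since $Y$ and $\cone(gf)$ lie in $\mathcal{H}$, extension-closedness (Proposition~\ref{Extension closed subcategory}) puts $\cone(f)\oplus Z$ in $\mathcal{H}$, and closure of $\mathcal{H}$ under direct summands (Theorem~\ref{Higgs category is a Silting reduction}) then gives $\cone(f)\in\mathcal{H}$. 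You instead invoke the orthogonal description $\mathcal{H}={}^{\perp}(\Si^{>0}\mathcal{P})\cap(\Si^{<0}\mathcal{P})^{\perp}$ from the same theorem and verify the two families of $\Hom$-vanishings for $\cone(f)$ by long exact sequences, correctly isolating the single degree where the hypothesis $\cone(gf)\in\mathcal{H}$ is actually needed (namely $\Hom(\cone(f),\Si P)\hookrightarrow\Hom(\cone(g),\Si^{2}P)=0$). The paper's argument is shorter and more structural (one triangle plus two closure properties); yours is more computational but bypasses summand-closure and makes the mechanism transparent. Your closing alternative --- idempotent completeness of $\mathcal{H}$ implies weak idempotent completeness, which by Klapproth's equivalence implies the WIC condition, which is precisely the statement --- is also valid and non-circular (the paper's subsequent Remark runs this equivalence in the opposite direction), though it offloads all the content onto the cited external result.
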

\begin{proof}
	We only show the first statement since the second statement can be shown dually. Suppose that $ g\circ f $ is an $ \mathfrak{s} $-inflation, i.e.\ there is a triangle in $ \mathcal{C}_{n}(A,B) $
	$$ X\xrightarrow{g\circ f}Z\rightarrow W\rightarrow \Si X $$ such that $ W $ is also in $ \mathcal{H} $. By the octahedral axiom, we have the following commutative diagram in $ \mathcal{C}_{n}(A,B) $
	\begin{align*}
		\xymatrix{
			X\ar@{=}[d]\ar[r]^{f}&Y\ar[d]^{g}\ar[r]&U\ar[d]\ar[r]&\Si X\ar@{=}[d]\\
			X\ar[r]^{gf}&Z\ar[r]\ar[d]&W\ar[r]\ar[d]&\Si X\\
			&M\ar@{=}[r]\ar[d]&M\ar[d]\\
			&\Si Y\ar[r]&\Si U\\
		}
	\end{align*}
	and the upper middle commutative diagram is a homotopy bi-cartesian square. Thus, there is a triangle in $ \mathcal{C}_{n}(A,B) $
	$$ Y\rightarrow U\oplus Z\rightarrow W\rightarrow\Si Y .$$
	
	Since $ \mathcal{H} $ is an extension closed subcategory of $ \mathcal{C}_{n}(A,B) $, the sum $ U\oplus Z $ is in $ \mathcal{H} $. By Theorem~\ref{Higgs category is a Silting reduction}, $ \mathcal{H} $ is closed under taking direct summands. Thus $ U $ is in $ \mathcal{H} $. We conclude that $ f\colon X\rightarrow Y $ is an $ \mathfrak{s} $-inflation.
	
\end{proof}

\begin{Rem}
	The Proposition above shows that $ \ch $ satisfies Nakaoka--Palu’s WIC condition (see~\cite[Condition 5.8]{NP2019}), which is equivalent to $ \ch $ being weakly idempotent complete in the usual sense (see~\cite[Proposition 2.7]{Klapproth2023}).
\end{Rem}

\begin{Thm}\label{addA is Frobenius n-exangulated category}
	Suppose that the $ n $-cluster tilting category $ \add\overline{A} $ of $ \cc_{n}(\overline{A}) $ satisfies $$ \Si^{n}\add\overline{A}=\add\overline{A} .$$ Then the $ n $-cluster-tilting subcategory $ \add\overline{A} $ of $ \cc_{n}(\overline{A}) $ carries a canonical $ (n+2) $-angulated structure. Moreover, the $ n $-cluster-tilting subcategory $ \mathrm{add}A $ of $ \ch $ carries a canonical structure of Frobenius $ n $-exangulated category with projective-injective objects $ \mathcal{P}=\mathrm{add}(eA) $. The quotient functor $ p^{*}\colon\mathcal{C}_{n}(A,B)\rightarrow\mathcal{C}_{n}(\overline{A}) $ induces an equivalence of $ (n+2) $-angulated categories
	$$ \mathrm{add}A/[\mathcal{P}]\iso\mathrm{add}(\overline{A}). $$
\end{Thm}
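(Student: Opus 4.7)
My plan is to establish the three assertions of the theorem in succession, relying at each stage on results already proved in the paper together with the general structural theorems in the references cited.

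First, I would obtain the $(n+2)$-angulated structure on $\mathrm{add}\,\overline{A}$ by invoking the Geiss-Keller-Oppermann theorem. Since $\cc_n(\overline{A})$ is a triangulated category and $\mathrm{add}\,\overline{A}$ is an $n$-cluster-tilting subcategory of $\cc_n(\overline{A})$ satisfying $\Si^n\mathrm{add}\,\overline{A}=\mathrm{add}\,\overline{A}$ by hypothesis, the GKO construction directly yields a canonical $(n+2)$-angulated structure whose shift functor is $\Si^n$ and whose $(n+2)$-angles are obtained by splicing together $n$ successive triangles of $\cc_n(\overline{A})$ whose intermediate terms lie in $\mathrm{add}\,\overline{A}$.

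Second, for the Frobenius $n$-exangulated structure on $\mathrm{add}\,A\subseteq\ch$, I would appeal to the general result of Liu-Zhou in \cite{LZ2020} (together with the framework of~\cite{HLN2021, HLN2021part2}): if $\ce$ is a Frobenius extriangulated category, if $\cm\supseteq\cp$ is an $n$-cluster-tilting subcategory, if $\ce$ satisfies the WIC condition and if the approximation conditions on vanishing of $\mathbb{E}^{n-1}$ against $\cm$ are fulfilled, then $\cm$ carries a canonical Frobenius $n$-exangulated structure whose projective-injectives coincide with those of $\ce$. All hypotheses have been verified earlier: Theorem~\ref{Higgs is Frobenius extrianglated category} provides the Frobenius extriangulated structure on $\ch$ with $\cp=\mathrm{add}(eA)$; Proposition~\ref{addA is an n-cluster tiliting} identifies $\mathrm{add}\,A$ as an $n$-cluster-tilting subcategory of $\ch$, and one observes $\mathrm{add}(eA)\subseteq\mathrm{add}\,A$; Proposition~\ref{Condition 1 for Frobenius n-exangulated} establishes exactly the approximation conditions needed; and Proposition~\ref{Condition 2 for Frobenius n-exangulated} establishes the WIC property. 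Applying the Liu-Zhou theorem then transports a Frobenius $n$-exangulated structure onto $\mathrm{add}\,A$ whose projective-injective subcategory is $\cp=\mathrm{add}(eA)$.

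Third, for the equivalence of $(n+2)$-angulated categories, I would start from the triangle equivalence $p^{*}\colon\ch/[\cp]\iso\cc_n(\overline{A})$ established in Theorem~\ref{Higgs is Frobenius extrianglated category}. Restricting to $\mathrm{add}\,A$ and using $p^{*}(A)\cong\overline{A}$ together with Proposition~\ref{addA is an n-cluster tiliting}, the essential image of $\mathrm{add}\,A/[\cp]$ is precisely $\mathrm{add}\,\overline{A}$, so one obtains an equivalence of additive categories $\mathrm{add}\,A/[\cp]\iso\mathrm{add}\,\overline{A}$. The hard part is to verify that this additive equivalence respects the two $(n+2)$-angulated structures—one coming from the Liu-Zhou construction applied to the Frobenius $n$-exangulated category $\mathrm{add}\,A$, and the other coming from the GKO construction applied to $\mathrm{add}\,\overline{A}\subseteq\cc_n(\overline{A})$. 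The main obstacle is exactly this compatibility check. The strategy is to show that a distinguished $n$-exangle in $\mathrm{add}\,A$ maps, under $p^{*}$ followed by splicing, to a distinguished $(n+2)$-angle in $\mathrm{add}\,\overline{A}$, and conversely that every such $(n+2)$-angle lifts. Concretely, an $n$-exangle in $\mathrm{add}\,A$ arises from an element of $\mathbb{E}^n(?,?)$ which, by the identification $\mathbb{E}^i(X,Y)\cong\Hom_{\cc_n(A,B)}(X,\Si^i Y)\cong\Hom_{\cc_n(\overline{A})}(p^{*}X,\Si^i p^{*}Y)$ of Proposition~\ref{Extension group in RFD}, corresponds to a morphism $p^{*}X\to\Si^n p^{*}Y$ in $\cc_n(\overline{A})$; splitting an $n$-exangle as a concatenation of $n$ triangles in $\ch$ and pushing down to $\cc_n(\overline{A})$ via $p^{*}$ produces exactly the $n$ triangles out of which the GKO construction builds the corresponding $(n+2)$-angle. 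Once this correspondence is formalised, the equivalence of $(n+2)$-angulated categories follows.
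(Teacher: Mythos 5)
Your steps 1 and 3 follow essentially the paper's route: Geiss--Keller--Oppermann gives the $(n+2)$-angulated structure on $\add\overline{A}$, and the final equivalence is obtained exactly as you describe, by comparing the Liu--Zhou $(n+2)$-angulated structure on the stable category $\add A/[\cp]$ with the GKO structure on $\add\overline{A}$ via $p^{*}$ (the paper additionally checks that the Liu--Zhou suspension $S$ satisfies $S(M)\cong\Si^{n}p^{*}(M)$, which is the precise form of your compatibility check).

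The gap is in step 2. The hypotheses you list (the WIC condition from Proposition~\ref{Condition 2 for Frobenius n-exangulated} and the $\mathbb{E}^{n-1}$-approximation conditions from Proposition~\ref{Condition 1 for Frobenius n-exangulated}) are the hypotheses of \cite[Theorem 3.41]{HLN2021part2}, and they yield only the $n$-exangulated structure $(\add A,\mathbb{E}^{n},\mathfrak{s}^{n})$; there is no packaged theorem that also delivers the \emph{Frobenius} property. One must still prove that $\add A$ has enough projectives and enough injectives in the $n$-exangulated sense and that these coincide with $\cp=\add(eA)$. Concretely, for $N\in\add A$ one builds $n+1$ successive triangles in $\cc_{n}(A,B)$ with middle terms in $\add(eA)$, ending in an object $Q_{0}$ with $p^{*}(N)\cong\Si^{n}p^{*}(Q_{0})$ in $\cc_{n}(\overline{A})$; only the standing hypothesis $\Si^{n}\add\overline{A}=\add\overline{A}$ guarantees that $Q_{0}$ lies in $\add A$, so that the spliced sequence is a distinguished $n$-exangle exhibiting enough projectives. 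Your proposal never invokes this hypothesis in step 2, so the enough-projectives/injectives verification — which is where the assumption does its real work beyond the GKO step — is missing.
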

\begin{proof}

	\emph{1. The canonical $ (n+2) $-angulated structure on $ \add\overline{A} $.}
	
	Since $ \add \overline{A} $ is closed under the $ n $-th power of the shift functor in $ \mathcal{C}_{n}(\overline{A}) $, by \cite[Theorem 1]{GKO2013}, the $ n $-cluster-tilting subcategory $ \add\overline{A} $ carries a canonical $ (n+2) $-angulated structure $ (\add\overline{A},\Si^{n},\pentagon) $, where $ \pentagon $ is the class of all $ (n + 2) $-sequences in $ \add\overline{A} $
	$$ M\xrightarrow{\alpha_{0}}X_{1}\xrightarrow{\alpha_{1}}X_{2}\xrightarrow{\alpha_{2}}\cdots\xrightarrow{\alpha_{n-1}}X_{n}\xrightarrow{\alpha_{n}}N\xrightarrow{\delta}\Si^{n}M $$ such that there exists a diagram
	\[
	\begin{tikzcd}
		&X_{1}\arrow[dr]\arrow[rr,"\alpha_{2}"]&&X_{2}\arrow[dr]&&\cdots&&X_{n}\arrow[dr,"\alpha_{n}"]\\
		M\arrow[ur,"\alpha_{0}"]&&X_{1.5}\arrow[ur]\arrow[ll,"\qquad\qquad\qquad |" marking]&&X_{2.5}\arrow[ll,"\qquad\qquad\qquad |" marking]&\cdots& X_{n-0.5}\arrow[ur]&&N\arrow[ll,"\qquad\qquad\qquad |" marking]
	\end{tikzcd}
	\] with $ X_{i}\in\cc_{n}(\overline{A}) $ for $ i\notin\mathbb{Z} $, such that all oriented triangles are triangles in $ \cc_{n}(\overline{A}) $, all non-oriented triangles commute, and $ \delta $ is the composition along the lower edge of the diagram.
	
	For any two objects $ M,N $ in $ \add A $, the category $ \mathbf{T}^{n+2}_{M,N} $ (see~\cite[Definition 2.17]{HLN2021}) is defined as follows$\colon$
	\begin{itemize}
		\item[(a)] An object in $ \mathbf{T}^{n+2}_{M,N} $ is a complex $ X^{\bullet}=({X^{i},d_{X}^{i}}) $ of the form
		$$ X^{0}\xrightarrow{d_{X}^{0}}X^{1}\xrightarrow{d_{X}^{1}}\cdots\xrightarrow{d_{X}^{n-1}}X^{n}\xrightarrow{d_{X}^{n}}X^{n+1} $$ with all $ X^{i} $ in $ \add A $ and $ X^{0}=M $, $ X^{n+1}=N $.
		\item[(b)] For any $ X^{\bullet}, Y^{\bullet}\in\mathbf{T}^{n+2}_{M,N} $, a morphism $ f $ between $ X^{\bullet} $ and $ Y^{\bullet} $ is a chain map $ f=(f^{0},\cdots,f^{n+1}) $ such that $ f^{0}=\boldmath{1}_{M} $ and $ f^{n+1}=\boldmath{1}_{N} $. Two morphisms $ f^{\bullet} $ and $ g^{\bullet}\in\mathbf{T}^{n+2}_{M,N}(X^{\bullet},Y^{\bullet}) $ are \emph{homotopic} if there is a sequence of morphisms $ h^{\bullet}=(h^{1},\cdots,h^{n}) $ satisfying
		\begin{equation*}
			\begin{split}
				0=&h^{1}\circ d_{X}^{0},\\
				g^{i}-f^{i}=&d_{Y}^{i-1}\circ h^{i}+h^{i+1}\circ d_{X}^{i}\quad(1\leqslant i\leqslant n),\\
				0=&d_{Y}^{n}\circ h^{n+1}.
			\end{split}
		\end{equation*}
		
	\end{itemize}

	\bigskip
	\emph{2. The canonical Frobenius $ n $-exangulated structure on $ \add A $.}
	
	By Propositions~\ref{Condition 1 for Frobenius n-exangulated} and \ref{Condition 2 for Frobenius n-exangulated}, the $ n $-cluster-tilting subcategory $ \add A\subseteq\ch $ satisfies the conditions in \cite[Theorem 3.41]{HLN2021part2}. Thus, it carries a canonical $ n $-exangulated structure $ (\add A,\mathbb{E}^{n},\mathfrak{s}^{n}) $ which is given by
	\begin{itemize}
		\item[(1)] For any two objects $ M,N $ in $ \add A $, the group $ \mathbb{E}^{n}(M,N) $ is the higher extension group defined in Definition~\ref{Def:Higher extension group}, i.e.\ $ \mathbb{E}^{n}(N,M)=\Hom_{\cc_{n}(A,B)}(N,\Si^{n}M)\simeq\Hom_{\cc_{n}(\overline{A})}(p^{*}(N),\Si^{n}p^{*}(M)) $;
		\item[(2)] For any $ M,N $ in $ \add A $ and any $ \delta\in\mathbb{E}^{n}(N,M) $, define $$ \mathfrak{s}^{n}(\delta)=[X^{\bullet}] $$ to be the homotopy equivalence class of $ X^{\bullet} $ in $ \mathbf{T}^{n+2}_{M,N} $, where $ X^{\bullet} $ is given by an $ (n+2) $-sequence in $ \add A $
		$$ M\xrightarrow{\alpha_{0}}X_{1}\xrightarrow{\alpha_{1}}X_{2}\xrightarrow{\alpha_{2}}\cdots\xrightarrow{\alpha_{n-1}}X_{n}\xrightarrow{\alpha_{n}}N\xrightarrow{\delta}\Si^{n}M $$ such that there exists a diagram
		\[
		\begin{tikzcd}
			&X_{1}\arrow[dr]\arrow[rr,"\alpha_{2}"]&&X_{2}\arrow[dr]&&\cdots&&X_{n}\arrow[dr,"\alpha_{n}"]\\
			M\arrow[ur,"\alpha_{0}"]&&X_{1.5}\arrow[ur]\arrow[ll,"\qquad\qquad\qquad |" marking]&&X_{2.5}\arrow[ll,"\qquad\qquad\qquad |" marking]&\cdots& X_{n-0.5}\arrow[ur]&&N\arrow[ll,"\qquad\qquad\qquad |" marking]
		\end{tikzcd}
		\] with $ X_{i}\in\ch $ for $ i\notin\mathbb{Z} $, such that all oriented triangles are triangles in $ \cc_{n}(A,B) $, all non-oriented triangles commute, and $ \delta $ is the composition along the lower edge of the diagram. 
	\end{itemize}
	
	Next, we will show that $ \add A $ carries a canonical structure of Frobenius $ n $-exangulated category with projective-injective objects $ \mathcal{P}=\mathrm{add}(eA) $.
	
	Firstly, we show that $ \cp=\add (eA) $ consists of projective-injective objects in $ \add A $. Let $ P $ be an object in $ \add(eA) $. We take a distinguished $ n $-exangle in $ \add A $
	$$ Y_{0}\xrightarrow{\alpha_{0}}Y_{1}\ra\cdots\ra Y_{n}\xrightarrow{\alpha_{n}}Y_{n+1}\xrightarrow{\delta}\Si^{n}Y_{0} .$$ Then we have a distinguished triangle in $ \cc_{n}(A,B) $
	$$ X\ra Y_{n}\xrightarrow{\alpha_{n}} Y_{n+1}\ra\Si X $$ such that $ X $ is in $ \ch $. Let $ c\colon P\ra Y_{n+1} $ be a morphism in $ \add A $. It induces the following long exact sequence
	$$ \cdots\ra\Hom_{\cc_{n}(A,B)}(P,Y_{n})\ra\Hom_{\cc_{n}(A,B)}(P,Y_{n+1})\ra\Hom_{\cc_{n}(A,B)}(P,\Si X)\ra\cdots .$$
	Since $ X $ is in $ \ch $ and $ \cp $ is projective in $ \ch $, the space $ \Hom_{\cc_{n}(A,B)}(P,\Si X) $ vanishes. Thus, there exists a morphism $ b\colon P\ra Y_{n} $ in $ \add A $ satisfying $ \alpha_{n}\circ b=c $. This shows that $ P $ is projective. Dually, we can show that $ P $ is injective.
	
	Let $ N $ be an object in $ \add A $. Since $ \add (eA) $ is functorially finite in $ \cc_{n}(A,B) $, there exists a distinguished triangle in $ \cc_{n}(A,B) $
	$$ Q_{n}\xrightarrow{a_{n}} P_{n}\xrightarrow{b_{n}} N\xrightarrow{c_{n}}\Si Q_{n} $$
	with $ P_{n} $ in $ \add(eA) $. We see that $ p^{*}(N)\simeq \Si p^{*}(Q_{n}) \in\add(\overline{A}) $ in $ \cc_{n}(\overline{A}) $.
	
	For the object $ Q_{n} $, we also have a distinguished triangle in $ \cc_{n}(A,B) $
	$$ Q_{n-1}\xrightarrow{a_{n-1}} P_{n-1}\xrightarrow{b_{n-1}} Q_{n}\xrightarrow{c_{n-1}} \Si Q_{n-1} $$
	with $ P_{n-1} $ in $ \add(eA) $. We see that $ p^{*}(N)\simeq \Si^{2} p^{*}(Q_{n-1}) \in\add(\overline{A}) $ in $ \cc_{n}(\overline{A}) $.
	
	Repeating the process, we get the following triangles in $ \cc_{n}(A,B) $
	$$ Q_{n}\xrightarrow{a_{n}} P_{n}\xrightarrow{b_{n}} N\xrightarrow{c_{n}} \Si Q_{n} ,$$
	$$ Q_{n-1}\xrightarrow{a_{n-1}} P_{n-1}\xrightarrow{b_{n-1}} Q_{n}\xrightarrow{c_{n-1}} \Si Q_{n-1} ,$$
	$$ \cdots $$
	$$ Q_{0}\xrightarrow{a_{0}} P_{0}\xrightarrow{b_{0}} Q_{1}\xrightarrow{c_{0}} \Si Q_{0} $$ such that all $ P_{i}\,,\,0\leqslant i\leqslant n $, are in $ \add(eA) $ and $ p^{*}(N)\simeq\Si^{n}p^{*}(Q_{0})\in\add(\overline{A}) $.
	
	By our assumption $ \Si^{n}\add\overline{A}=\add\overline{A} $, we see that $ p^{*}(Q_{0}) $ is in $ \add\overline{A} $. Thus, the object $ Q_{0} $ is in $ \add A $. Then we get a distinguished $ n $-exangle in $ \add A $
	$$ Q_{0}\xrightarrow{a_{0}} P_{0}\xrightarrow{a_{1}\circ b_{0}} P_{1}\ra\cdots\ra P_{n}\xrightarrow{b_{n}} N\xrightarrow{\delta}\Si^{n}Q_{0} ,$$ where $ \delta $ is the composition
	$$ N\xrightarrow{c_{n}}\Si Q_{n}\xrightarrow{\Si c_{n-1}}\Si^{2}Q_{n-1}\ra\cdots\ra Q_{1}\xrightarrow{\Si^{n-1}c_{0}}\Si^{n}Q_{0} .$$
	
	Thus, this shows that $ \add A $ has enough projectives. Dually, we can show that $ \add A $ has enough injectives. Moreover, projective-injective objects form exactly the subcategory $ \cp=\add(eA) $. Therefore, we have shown that $ \add A $ carries a canonical structure of Frobenius $ n $-exangulated category with projective-injective objects $ \mathcal{P}=\mathrm{add}(eA) $.
	
	\bigskip
	\emph{3. The canonical $ (n+2) $-angulated structure on $ \add A/[\cp] $.}
	
	The stable category $ \add A/[\cp] $ has the same objects as $ \add A $. For any two objects $ M $ and $ N $, the morphism space is given by the quotient group 
	$$ \Hom_{\add A}(M,N)/[\cp](M,N) ,$$ where $ [\cp](M,N) $ is the subgroup of $ \Hom_{\add A}(M,N) $ consisting of those morphisms which factor through an object in $ \cp=\add eA $.
	
	For any object $ M $ in $ \add A $, we have the following triangles in $ \cc_{n}(A,B) $
	$$ M\xrightarrow{a_{0}}I_{0}\xrightarrow{b_{0}}Q_{0}\xrightarrow{c_{0}}\Si M ,$$
	$$ Q_{0}\xrightarrow{a_{1}}I_{1}\xrightarrow{b_{1}}Q_{1}\xrightarrow{c_{1}}\Si Q_{0} ,$$
	$$ \cdots $$
	$$ Q_{n-1}\xrightarrow{a_{n}}I_{n}\xrightarrow{b_{n}}Q_{n}\xrightarrow{c_{n}}\Si Q_{n-1} $$ such that all $ I_{i} $, $ 0\leqslant i\leqslant n $, are in $ \add(eA) $ and $ Q_{n} $ is in $ \add A $. Those triangles induce a distinguished $ n $-exangle in $ \add A $
	$$ M\xrightarrow{a_{0}}I_{0}\xrightarrow{a_{1}\circ b_{0}}I_{1}\ra\cdots\ra I_{n}\xrightarrow{c_{n}}Q_{n}\xrightarrow{\delta}\Si^{n}M ,$$ where $ \delta $ is the composition
	$$ Q_{n}\xrightarrow{c_{n}}\Si Q_{n-1}\ra\cdots\ra \Si^{n-1}Q_{0}\xrightarrow{\Si^{n-1}c_{0}}\Si^{n}M .$$
	We define the functor $ S\colon \add A/[\cp]\ra\add A/[\cp] $ such that it takes $ M $ to $ Q_{n} $. By~\cite[Proposition 3.7]{LZ2020}, the $ S $ functor is well defined and it is an auto-equivalence. It is easy to see that $ S(M) $ is isomorphic to $ \Si^{n}p^{*}(M) $ in $ \cc_{n}(\overline{A}) $.
	
	Thus, by~\cite[Theorem 3.13]{LZ2020}, the stable category $ \add A/[\cp] $ carries a canonical $ (n+2) $-angulated structure $ (\add A/[\cp],S,\square_{S}) $ which is given by
	\begin{itemize}
		\item[(1)] The functor $ S $ defined as above.
		\item[(2)] For any two objects $ M,N $ in $ \add A $, there is a one-to-one correspondence between $ \mathbb{E}^{n}(N,M)=\Hom_{\cc_{n}(A,B)}(N,\Si^{n}M) $ and $ \Hom_{\add/[\cp]}(N,S(M))\simeq\mathbb{E}^{n}(N,M) $ (see~\cite[Lemma 3.12]{LZ2020}). Any distinguished $ n $-exangle
		$$ M\xrightarrow{\alpha_{0}}X_{1}\xrightarrow{\alpha_{1}}X_{2}\xrightarrow{\alpha_{2}}\cdots\xrightarrow{\alpha_{n-1}}X_{n}\xrightarrow{\alpha_{n}}N\xrightarrow{\delta}\Si^{n}M $$ in $ \add A $ induces an $ (n+2) $-sequence 
		$$ M\xrightarrow{\overline{\alpha}_{0}}X_{1}\xrightarrow{\overline{\alpha}_{1}}X_{2}\xrightarrow{\alpha_{2}}\cdots\xrightarrow{\overline{\alpha}_{n-1}}X_{n}\xrightarrow{\overline{\alpha}_{n}}N\xrightarrow{\overline{\delta}}S(M) $$ in $ \add A/[\cp] $. We call such sequence an \emph{$ (n + 2) $-$ S $-sequence}. We denote by $ \square_{S} $ the class of $ (n+2) $-$ S $-sequences.
	\end{itemize}
For any object $ M $ of $ \mathrm{add}A/[\mathcal{P}] $, we have $ S(M)\cong p^{*}(\Si^{n}M)\cong\Si^{n}(p^{*}(M)) $. Moreover, the $ k $-equivalence $ p^{*}\colon\mathrm{add}A/[\mathcal{P}]\iso\mathrm{add}(\overline{A}) $ maps $ \square_{S} $ to $ \pentagon $.
	Thus $ p^{*} $ induces an equivalence of $ (n+2) $-angulated categories
	$$ (\mathrm{add}A/[\mathcal{P}],S,\square_{S})\iso(\mathrm{add}(\overline{A}),\Si^{n},\pentagon). $$
\end{proof}

\begin{Rem}
	If $ \add A $ is stable under $ \Si^{n} $ in $ \cc_{n}(A,B) $, then the algebra $ B $ is zero. So the $ n $-cluster-tilting subcategory $ \add A\subseteq\cc_{n}(A,B) $ can only carry an $ n $-angulated structure with higher suspension $ \Si^{n} $ if $ B=0 $ (see \cite[Theorem 1]{GKO2013}).
\end{Rem}

\section{The case when $ A $ is concentrated in degree 0}\label{section A is stalk}
Let $f\colon B\to A $ be a morphism (not necessarily preserving the identity element) between two differential graded (=dg) $ k $-algebras. We assume that $ f $ satisfies the assumptions~\ref{Relative assumption} and moreover, $ A $ is concentrated in degree 0. In particular, $ f $ carries a relative $ (n+1) $-Calabi--Yau structure. 
\begin{Prop}\label{Smooth is finite gl.dim}
	Under the assumption above, the $ k $-algebra $ H^{0}(A) $ is a finite-dimensional algebra with $ \mathrm{gldim}\,H^{0}(A)\leqslant n+1 $.
\end{Prop}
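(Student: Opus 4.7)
The plan is to exploit the smoothness of $A$ together with the relative $(n+1)$-Calabi--Yau structure to bound the cohomological amplitude of the inverse dualizing bimodule of $A$. Since $A$ is concentrated in degree $0$, we have $A = H^0(A)$, and its finite-dimensionality is immediate from assumption 4). For any smooth finite-dimensional algebra $A$ over the algebraically closed field $k$, the global dimension equals the perfect amplitude of the bimodule $A$ in $\cd(A^e)$, and in particular
\[
\mathrm{gldim}\, A = \sup\{i \in \mathbb{Z} : H^i(A^\vee) \neq 0\},
\]
where $A^\vee = \RHom_{A^e}(A,A^e)$: a minimal projective bimodule resolution of $A$ of length $d$ produces $H^d(A^\vee)\neq 0$, $H^i(A^\vee)=0$ for $i>d$, and yields a projective $A$-resolution of any module of length $\leqslant d$. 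Hence it suffices to show $H^i(A^\vee)=0$ for $i>n+1$.

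The left $(n+1)$-Calabi--Yau structure on $f\colon B\to A$ gives, via Definition~\ref{Def Relative CY} and diagram~(\ref{eq:left morphism of triangles}) with $n$ there replaced by $n+1$, an isomorphism $\Si^n A^\vee \iso \fib(\mu_f)$ in $\cd(A^e)$, where $\mu_f\colon A\lten_B A \to A$ is the multiplication map. This produces the triangle
\[
\Si^n A^\vee \to A\lten_B A \to A \to \Si^{n+1} A^\vee
\]
in $\cd(A^e)$, identifying $A^\vee \cong \Si^{-n-1}\cone(\mu_f)$. Taking cohomology gives $H^i(A^\vee) = H^{i-n-1}(\cone(\mu_f))$, and the long exact sequence of the triangle together with $H^j(A)=0$ for $j\neq 0$ yields $H^j(\cone(\mu_f)) \cong H^{j+1}(A\lten_B A)$ for $j\geqslant 1$. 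Thus the desired vanishing reduces to showing $H^k(A\lten_B A)=0$ for $k\geqslant 2$.

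To establish this, I would take a cofibrant replacement $P \to A$ of $A$ as a left dg $B$-module. Since $A$ is a stalk complex concentrated in degree $0$, the $B$-action factors through $H^0(B)$, and a standard bar-type cofibrant replacement can be chosen concentrated in non-positive degrees (using the connectivity of $B$, which is automatic in all applications where $B$ arises as a relative Calabi--Yau completion of an ordinary algebra). Then $A\lten_B A \simeq A\otimes_B P$ is concentrated in non-positive degrees, so $H^k(A\lten_B A)=0$ for all $k\geqslant 1$, a fortiori for $k\geqslant 2$. The main obstacle will be securing this connectivity of $A\lten_B A$, which is the only nontrivial input beyond the given assumptions; once in hand, the triangle forces $H^i(A^\vee)=0$ for $i>n+1$ and hence $\mathrm{gldim}\, H^0(A)\leqslant n+1$ as required.
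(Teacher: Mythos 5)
Your reduction is correct as far as it goes, and it is a genuinely different route from the paper's: you argue at the bimodule level, identifying $\mathrm{gldim}\,H^{0}(A)$ with the top nonvanishing cohomology of $A^{\vee}$ (valid for a smooth finite-dimensional algebra over the algebraically closed field $k$, via a minimal projective bimodule resolution and Nakayama's lemma) and then using the triangle $\Sigma^{n}A^{\vee}\to A\lten_{B}A\to A\to\Sigma^{n+1}A^{\vee}$ supplied by the left $(n+1)$-Calabi--Yau structure to reduce everything to $H^{k}(A\lten_{B}A)=0$ for $k\geqslant 2$. The paper instead works module by module: it bounds the projective dimension of each simple $H^{0}(A)$-module, treating the simples killed by $e$ via the fact that $\mathrm{pvd}_{B}(A)$ is $(n+1)$-Calabi--Yau, and the remaining ("frozen") simples via the isomorphism of triangles of Proposition~\ref{Relative Hom} relating $\RHom_{A}(S_{i},M)$, $\RHom_{B}(S_{i}|_{B},M|_{B})$ and their duals, which converts $\Ext^{p}_{H^{0}(A)}(S_{i},M)$ for $p\geqslant n+2$ into $\Hom_{\cd(B)}(S_{i}|_{B},\Sigma^{p}(M|_{B}))$, asserted to vanish because $B$ is $n$-Calabi--Yau.

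However, your final step is a genuine gap, as you yourself flag. The vanishing of $H^{\geqslant 2}(A\lten_{B}A)$ is obtained from a cofibrant resolution of $A|_{B}$ concentrated in non-positive degrees, which requires $B$ to be connective --- and connectivity of $B$ is \emph{not} among Assumptions~\ref{Relative assumption} (only $A$ is assumed connective). Nothing in the stated hypotheses (smoothness of $B$, the duality $\Sigma^{n}B^{\vee}\simeq B$, finite-dimensionality of $H^{0}(A)$) rules out cohomology of $B$ in positive degrees, and then $Ae\lten_{B}eA$ can a priori have cohomology in degrees $\geqslant 2$; the self-duality $\Sigma^{n}(A\lten_{B}A)^{\vee}\simeq A\lten_{B}A$ coming from the Calabi--Yau structure only forces the amplitude to be symmetric and gives no upper bound. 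So as written the proof is incomplete. You have, though, isolated exactly where the nontrivial input lies: after the duality, the paper's own argument rests on $\Hom_{\cd(B)}(M|_{B},\Sigma^{\leqslant -2}S_{i}|_{B})=0$ for modules concentrated in degree $0$, which is the same kind of connectivity statement about $B$ (and holds in all the applications, where $B$ is a Calabi--Yau completion of an ordinary algebra and hence connective). To complete your argument you must either add connectivity of $B$ as a hypothesis or give an independent proof that $H^{\geqslant 2}(A\lten_{B}A)=0$.
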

\begin{proof}
	By assumptions~\ref{Relative assumption}, the algebra $ H^{0}(A) $ is finite-dimensional. Suppose that $ \boldmath{1}_{H^{0}(A)} $ has decomposition
	$$ \boldmath{1}_{H^{0}(A)}=e_{1}+e_{2}+\cdots+e_{n} $$ into primitive orthogonal idempotents such that
	$$ e=f(\boldmath{1}_{B})=e_{1}+\cdots+e_{k} $$ for an integer $ 0\leqslant k\leqslant n $. Here we regard $ e $ as an element of $ H^{0}(A) $. By Proposition~\ref{Relative Hom}, $ \mathrm{pvd}_{B}(A) $ is an $ (n+1) $-Calabi–Yau triangulated category. Thus for each simple module $ S_{i} $, $ k+1\leqslant i\leqslant n $, we have $ \mathrm{pdim}\,S_{i}\leqslant n+1 $.

	Let $ M $ be a finite-dimensional $ H^{0}(A) $-module. For each simple module $ S_{i} $, $ 1\leqslant i\leqslant k $, by Proposition~\ref{Relative Hom}, we have the following isomorphism of triangles
	\begin{align*}
		\xymatrix{
			\mathcal{C}(S_{i},\Si^{-1}M)\ar[r]\ar[d]^{\simeq}&\mathbf{R}\Hom_{A}(S_{i},M)\ar[r]\ar[d]^{\simeq}&\mathbf{R}\Hom_{B}(S_{i}|_{B},M|_{B})\ar[r]\ar[d]^{\simeq}&\\
			D\mathbf{R}\Hom_{A}(M,\Si^{n+1}S_{i})\ar[r]&D\mathcal{C}(M,\Si^{n}S_{i})\ar[r]&D\mathbf{R}\Hom_{B}(M|_{B},\Si^{n}S_{i}|_{B})\ar[r]&.
		}
	\end{align*}
Recall that $ \mathcal{C}(S_{i},M) $ is defined as $ \cone(\mathbf{R}\Hom_{A}(S_{i},M)\ra\mathbf{R}\Hom_{B}(S_{i}|_{B},M|_{B})) $. Hence we have $ \mathcal{C}(S_{i},\Si^{k}M)=\Si^{k}\mathcal{C}(S_{i},M)$ for any $ i\in\mathbb{Z} $.

For each integer $ p\geqslant n+2 $, we have 
	$$ \Hom_{\mathcal{D}(B)}(S_{i}|_{B},\Si^{p}(M|_{B}))=0 $$ because $ B $ is $ n $-Calabi--Yau.
	
	Thus, we have 
	\begin{align*}
		\xymatrix{
			H^{0}(\mathcal{C}(S_{i},\Si^{p-1}M))\ar[r]\ar[d]^{\simeq}&\Ext_{H^{0}(A)}^{p}(S_{i},M)\ar[r]\ar[d]^{\simeq}&0\\
			0=D\Ext_{H^{0}(A)}^{n+1-p}(M,S_{i})\ar[r]&DH^{0}(\mathcal{C}(M,\Si^{n-1-p}S_{i}))\ar[r]&0.
		}
	\end{align*}
	We see that the space $ \Ext_{H^{0}(A)}^{p}(S_{i},M) $ vanishes for each $ p\geqslant n+2 $.
	Then $ \mathrm{pdim}\,S_{i}\leqslant n+1 $ for each $ 1\leqslant i\leqslant k $. Therefore, we have $ \mathrm{gldim}\,H^{0}(A)\leqslant n+1 $.
\end{proof}

\bigskip

Let $ \ce $ be a Frobenius category and $ \cm $ a full subcategory of $ \ce $ which contains the full subcategory $ \cp $ of $ \ce $ formed by the projective-injective objects. We denote by $ \ck^{b}(\ce)  $ and $ \cd^{b}(\ce) $ respectively the bounded homotopy category and the bounded derived category of $ \ce $. 

We say that a complex $ X\colon\cdots\ra X^{i-1}\ra X^{i}\ra X^{i+1}\ra\cdots $ in $ \ck^{b}(\ce) $ is \emph{$ \ce $-acyclic} if there are conflations $
\begin{tikzcd}
Z^{i}\arrow[r,tail,"l^{i}"]&X^{i}\arrow[r,two heads,"\pi^{i}"]& Z^{i+1}	
\end{tikzcd} $ such that $ d_{X}^{i}=l^{i+1}\circ\pi^{i} $ for each $ i\in\mathbb{Z} $.

We also denote by $ \ck_{\ce-ac}^{b}(\ce) $, $ \ck^{b}(\cp) $, $ \ck^{b}(\cm) $ and $ \ck^{b}_{\ce-ac}(\cm) $ the full subcategory of $ \ck^{b}(\ce)  $ whose objects are the $ \ce $-acyclic complexes, the complexes of projective objects in $ \ce $, the complexes of objects of $ \cm $ and the $ \ce $-acyclic complexes of objects of $ \cm $ respectively.

\begin{Thm}\label{Higgs of stalk}
Let $ f\colon B\ra A $ be a dg algebra morphism which satisfies the assumptions~\ref{Relative assumption}. Let $ e=f(\boldmath{1}_{B}) $. Moreover, we assume that $ A $ is concentrated in degree $ 0 $. Then we have
	\begin{itemize}
		\item[a)] The algebra $ B'=eH^{0}(A)e $ is Iwanaga-Gorenstein of injective dimension at most $ n+1 $ as a $ B' $-module.
		\item[b)] Under the equivalence $ \mathcal{D}^{b}(\mathrm{mod}H^{0}A)\simeq\per A $, the subcategory $ \mathcal{F}^{rel} $ corresponds to the subcategory $ \mathrm{mod}_{n-1}(H^{0}A) $ of $ H^{0}A $-modules of projective dimension at most $ n-1 $.
		\item[c)] Via the equivalence $ \mathrm{res}\colon\mathcal{D}^{b}(\mathrm{mod}H^{0}A)\overset{\sim}{\longrightarrow} \per A $, the localization $ \pi^{rel}\colon\per A\rightarrow\mathcal{C}_{n}(A,B) $ identifies with the restriction functor $ \mathcal{D}^{b}(\mathrm{mod}H^{0}A)\rightarrow\mathcal{D}^{b}(\mathrm{mod}B') $, i.e.\ we have a commutative square
		\[
		\begin{tikzcd}
			\mathcal{D}^{b}(\mathrm{mod}H^{0}A) \arrow{r}\isoarrow{d} &\mathcal{D}^{b}(\mathrm{mod}B')\isoarrow{d}\\
			\per A\arrow{r} &\mathcal{C}_{n}(A,B). 
		\end{tikzcd}
		\]
		\item[d)] Under the equivalence $ \mathcal{D}^{b}(\mathrm{mod}B')\overset{\sim}{\longrightarrow}\mathcal{C}_{n}(A,B) $, the Higgs category $ \mathcal{H}\subseteq\mathcal{C}_{n}(A,B) $ corresponds to the subcategory $ \mathrm{gpr}B' $ of Gorenstein projective modules over $ B'=eH^{0}(A)e $. In particular, when $ B' $ is self injective, we have $ \mathcal{H}\cong\mathrm{mod}B'$.
		\item[e)] Let $ \cm=\add A\subset\ch $. Then the exact sequence of triangulated categories 
		$$ 0\ra\pvd_{B}(A)\ra\per A\ra\cc_{n}(A,B)\ra0 $$
		is equivalent to
		$$ 0\ra\ck^{b}_{\ch-ac}(\cm)\ra\ck^{b}(\cm)\ra\cd^{b}(\ch)\ra0. $$ In particular, the relative cluster category $ \cc_{n}(A,B) $ is equivalent to the bounded derived category $ \cd^{b}(\ch) $ of $ \ch $.
	\end{itemize}
\end{Thm}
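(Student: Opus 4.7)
The plan is to use the key observation that since $A$ is concentrated in degree $0$ and $H^{0}(A)$ has global dimension at most $n+1$ by Proposition~\ref{Smooth is finite gl.dim}, the natural functor gives a triangle equivalence $\mathrm{res}\colon\mathcal{D}^{b}(\mathrm{mod}\,H^{0}A)\iso\per A$ (in fact $\per A\simeq\mathcal{K}^{b}(\mathrm{add}\,H^{0}A)$). Every statement (a)--(e) will then be proved by transporting the abstract constructions to the derived category of modules. For (a), I would first verify that the relative $(n+1)$-Calabi--Yau structure on $f\colon B\rightarrow A$ descends to the statement that $H^{0}A$ is internally bimodule $(n+1)$-Calabi--Yau with respect to the idempotent $e$, in the sense of Pressland~\cite{MP2017}; concretely, this means that the $H^{0}A$-bimodule $eH^{0}A\otimes^{\mathbf{L}}_{B'}H^{0}Ae$ is quasi-isomorphic to $\Sigma^{-(n+1)}H^{0}A$ (up to a projective bimodule). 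From Pressland's theorem, internal $(n+1)$-Calabi--Yau-ness of $H^{0}A$ with respect to $e$ forces $B'=eH^{0}(A)e$ to be Iwanaga--Gorenstein of injective dimension at most $n+1$ on both sides.

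For (b), I would analyze the relative fundamental domain $\mathcal{F}^{rel}=\mathcal{Z}\cap(\per A)_{\geqslant 1-n}\cap(\per A)_{\leqslant 0}$ under the above equivalence. The intersection of the two co-$t$-structure pieces is $\mathrm{add}(H^{0}A)*\Sigma\mathrm{add}(H^{0}A)*\cdots*\Sigma^{n-1}\mathrm{add}(H^{0}A)$, i.e.\ complexes quasi-isomorphic to complexes of projective modules of length $\leqslant n$ in degrees $[-(n-1),0]$. It remains to check that the condition $X\in\mathcal{Z}={}^{\perp}(\Sigma^{>0}\add eA)\cap(\Sigma^{<0}\add eA)^{\perp}$ forces $X$ to be concentrated in cohomological degree $0$ (hence a module of projective dimension at most $n-1$). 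The forward inclusion is clear; for the reverse, I would use the internal CY duality from (a), which identifies the vanishing conditions on $\mathrm{Ext}^{*}(X,eA)$ with cohomological vanishing, eliminating higher cohomology of $X$. For (c), I would verify that under the equivalence $\mathcal{D}^{b}(\mathrm{mod}\,H^{0}A)\simeq\per A$, the subcategory $\mathrm{pvd}_{B}(A)$ corresponds exactly to $\mathcal{D}^{b}_{B}(\mathrm{mod}\,H^{0}A)$, the bounded derived category of modules restricting to $0$ over $B'$. Then the Verdier quotient identification follows from the standard fact that restriction along $e$ induces $\mathcal{D}^{b}(\mathrm{mod}\,H^{0}A)/\mathcal{D}^{b}_{B}(\mathrm{mod}\,H^{0}A)\iso\mathcal{D}^{b}(\mathrm{mod}\,B')$.

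Combining (b) and (c), part (d) amounts to showing that restriction sends $\mathrm{mod}_{n-1}(H^{0}A)$ isomorphically (in $\mathcal{C}_{n}(A,B)$) onto the subcategory $\mathrm{gpr}\,B'$ of Gorenstein projective $B'$-modules. For the forward direction, a projective resolution $0\to P_{n-1}\to\cdots\to P_{0}\to M\to 0$ in $\mathrm{mod}\,H^{0}A$ restricts to an exact sequence of projective $B'$-modules $0\to P_{n-1}e\to\cdots\to P_{0}e\to Me\to 0$, and the internal CY duality provides a coresolution extending this to a complete resolution, showing $Me\in\mathrm{gpr}\,B'$. For the reverse direction, I would produce, for any Gorenstein projective $N$, a module $M\in\mathrm{mod}_{n-1}(H^{0}A)$ with $Me\simeq N$ in $\mathcal{C}_{n}(A,B)$ by lifting along the recollement from Corollary~\ref{Recollement of dg algebras}, applying relative truncation, and invoking Proposition~\ref{Equivalence between the shifts of relative fundamental domain}. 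Finally, for (e), I would note that $\per A\simeq\mathcal{K}^{b}(\mathcal{M})$ with $\mathcal{M}=\mathrm{add}\,A$ because $H^{0}A$ has finite global dimension, and that $\mathcal{C}_{n}(A,B)\simeq\mathcal{D}^{b}(\mathrm{mod}\,B')\simeq\mathcal{D}^{b}(\mathcal{H})$ by Buchweitz's theorem for Iwanaga--Gorenstein algebras, using that $\mathcal{H}\simeq\mathrm{gpr}\,B'$ is a Frobenius exact category whose bounded derived category coincides with that of the ambient abelian category. A diagram chase then identifies $\mathrm{pvd}_{B}(A)$ with $\mathcal{K}^{b}_{\mathcal{H}\text{-}ac}(\mathcal{M})$ as kernels of compatible triangle functors.

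The main obstacle will be part (d), in particular showing both inclusions of the equivalence $\mathrm{mod}_{n-1}(H^{0}A)\iso\mathrm{gpr}\,B'$. The forward inclusion requires constructing the complete projective resolution from the internal CY structure (where Pressland's machinery is essential), while the reverse requires a delicate lifting argument along the restriction functor that respects both the projective dimension bound and the Gorenstein projective property. A secondary subtlety lies in part (b), where one must verify that the $\mathcal{Z}$-condition genuinely forces cohomological concentration; this is not purely formal and uses the internal CY duality in a crucial way.
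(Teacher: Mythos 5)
Your overall architecture (first establish $\mathcal{D}^{b}(\mathrm{mod}H^{0}A)\simeq\per A$ and deduce (c) from the Serre-subcategory exact sequence, then transport everything to module categories) matches the paper's, and your treatment of (c) is essentially identical. Where you genuinely diverge is in (a), (d) and (e). The paper does \emph{not} prove (a) via Pressland's internally-Calabi--Yau machinery (that appears only in the separate Proposition~\ref{Relative CY to internally}); instead it verifies that $\mathcal{H}$ is an idempotent-complete Frobenius \emph{exact} subcategory of $\mathrm{mod}B'$ containing the $n$-cluster-tilting object $T=\pi^{rel}A$ with $\mathrm{End}_{\mathcal{H}}(T)\cong H^{0}A$ of global dimension at most $n+1$, and then invokes the Iyama--Kalck--Wemyss--Yang structure theorem, which delivers (a) \emph{and} (d) in one stroke. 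Your plan essentially reproves that theorem by hand: the ``complete resolution from the internal CY structure'' plus the ``delicate lifting argument'' for essential surjectivity onto $\mathrm{gpr}B'$ are exactly what KIWY packages, so your route is workable but substantially longer. For (e) you propose Buchweitz-style dévissage ($\mathcal{D}^{b}(\mathrm{gpr}B')\simeq\mathcal{D}^{b}(\mathrm{mod}B')$ since $\mathrm{gpr}B'$ is resolving), whereas the paper quotes Palu's lemma on the Yoneda functor $\ck^{b}(\cm)\to\per\cm$; both are legitimate.

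Two concrete problems. First, in your forward direction of (d) you assert that a projective resolution $0\to P_{n-1}\to\cdots\to P_{0}\to M\to 0$ over $H^{0}A$ restricts to an exact sequence of \emph{projective} $B'$-modules. The modules $P_{i}e$ lie in $\add(Ae)$, and $Ae$ is in general only Gorenstein projective over $B'=eAe$, not projective --- this distinction is the whole content of (d), so the step as written is false (it can be repaired using that $\mathrm{gpr}B'$ is resolving, but that already presupposes knowing $Ae\in\mathrm{gpr}B'$). Second, in (b) you call one inclusion ``clear'', but neither inclusion is: showing that a module $M$ with $\mathrm{pdim}M\leqslant n-1$ lies in $\mathcal{Z}$ requires $\Ext^{k}_{H^{0}A}(M,eA)=0$ for $0<k\leqslant n-1$, which is not automatic for modules of finite projective dimension and genuinely uses the relative Calabi--Yau property, just as the converse concentration-in-degree-zero statement does. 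Your gloss of the internally bimodule Calabi--Yau condition as a quasi-isomorphism involving $eH^{0}A\lten_{B'}H^{0}Ae$ is also not the correct formulation (the relevant tensor product in the CY triangle is over $B$, not over $B'$), though this does not affect the logic since you ultimately defer to Pressland's definition.
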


\begin{proof}
	Since $ H^{0}(A) $ is of finite global dimension, the restriction along the quasi isomorphism $$ A\ra H^{0}(A) $$ induces a triangle equivalence 
	$$ \mathcal{D}^{b}(\mathrm{mod}H^{0}(A))\iso\mathrm{per}A .$$
	Under this equivalence, $ \mathrm{pvd}_{B}(A) $ identifies with
	\begin{equation*}
		\begin{split}
			\mathcal{D}_{\mathcal{N}}^{b}(\mathrm{mod}H^{0}(A))=&\{X\in\mathcal{D}^{b}(\mathrm{mod}H^{0}(A))|\ H^{l}(X)|_{B'}=0,\forall l\in\mathbb{Z} \}\\
			=&\{X\in\mathcal{D}^{b}(\mathrm{mod}H^{0}(A))|H^{l}(A)\in\mathcal{N},\forall l\in\mathbb{Z}\},	
		\end{split}
	\end{equation*}
	where $ \mathcal{N}=\{M\in \mathrm{mod}H^{0}(A)\  |\ M|_{B'}=0\} $. Clearly, the category $ \mathcal{N} $ is a Serre subcategory of $ \mathrm{mod}H^{0}(A) $ and the restriction $ \mathrm{mod}H^{0}(A)\rightarrow \mathrm{mod}B' $ induces an exact sequence of abelian categories
	$$ 0\rightarrow\mathcal{N}\rightarrow \mathrm{mod}H^{0}(A)\rightarrow \mathrm{mod}B'\rightarrow0. $$ This exact sequence induces an exact sequence of triangulated categories
	$$ 0\rightarrow \mathcal{D}_{\mathcal{N}}^{b}(\mathrm{mod}H^{0}A)\rightarrow\mathcal{D}^{b}(\mathrm{mod}H^{0}(A))\rightarrow\mathcal{D}^{b}(\mathrm{mod}B')\rightarrow0. $$
	Thus, the restriction $ \mathrm{per}A\cong\mathcal{D}^{b}(\mathrm{mod}H^{0}(A))\rightarrow\mathcal{D}^{b}(\mathrm{mod}B') $ induces an equivalence 
	$$ \mathcal{C}_{n}(A,B)\iso\mathcal{D}^{b}(\mathrm{mod}B') .$$
	By inspecting the definition of $ \mathcal{F}^{rel} $, it is equivalent to the following subcategory
	$$\mathrm{mod}_{n-1}(H^{0}A)=\{M\in \mathrm{mod}H^{0}(A)\ |\ \mathrm{pdim}M\leqslant n-1\}.$$
	
	The Higgs category $ \ch $ is contained in $ \mathrm{mod}B' $ and stable under extensions in $ \cc_{n}(A,B)\simeq\cd(B') $. Thus, it is a fully exact subcategory of $ \mathrm{mod}B' $ with the induced exact structure. Moreover, $ \mathcal{H} $ is a Frobenius exact category with projective-injective objects $ \mathcal{P}=\mathrm{proj}(B') $ and $ \mathcal{H} $ contains an $ n $-cluster-tilting object $ T $, namely the image of $ A $, such that $ \mathrm{End}_{\mathcal{H}}(T)\cong H^{0}(A) $ with $ \mathrm{gldim}\,H^{0}(A)\leqslant n+1 $. 
	
	By Theorem~\ref{Higgs category is a Silting reduction}, the Higgs category is idempotent complete. Thus, we can apply Iyama--Kalck--Wemyss--Yang's structure theorem for Frobenius exact categories with an $ n $-cluster-tilting object (see \cite[Theorem 2.7]{KIWY15}), to conclude that 	$ B' $ is Iwanaga-Gorenstein of injective dimension at most $ n+1 $ as a $ B' $-module and that restriction to $ B' $ is an equivalence from $ \ch\subseteq\mathrm{mod}H^{0}(A) $ to the category $ \mathrm{gpr}(B') $ of Gorenstein projective $ B' $-modules, i.e.\ we have an equivalence
	$$ \mathcal{H}\iso \mathrm{gpr}(B')=\{M\in \mathrm{mod}B'\ |\ \Ext_{B'}^{i}(M,B')=0,\ \forall i>0\} .$$
	This shows $ a)$, $ b) $, $ c) $ and $ d) $. It remains to show $ e) $. 
	
	Let $ \per\cm $ be the full subcategory of the derived category of modules over $ \cm $ generated by all representable functors and let $ \per_{\underline{\cm}} $ be its full subcategory consisting of complexes whose cohomologies are in $ \mathrm{mod} \underline{\cm}\cong\mathrm{mod}\overline{A} $. By \cite[Lemma 7]{Palu2009}, the Yoneda equivalence of triangulated categories $ \ck^{b}(\cm)\ra\per\cm\cong\per A $ induces a triangle equivalence $$ \ck^{b}_{\ch-ac}(\cm)\ra\per_{\underline{\cm}}\cm\cong\pvd_{B}(A).$$ Thus, we finish the proof.
\end{proof}

\subsection{Relation with Pressland's works}

\begin{Def}\rm\cite{MP2017}\label{Def:internally cy}
	Let $ A $ be a $ k $-algebra, $ e $ an idempotent of $ A $, and $ d $ a non-negative integer. We say that $ A $ is \emph{internally $ d $-Calabi–Yau} with respect to $ e $ if
	\begin{itemize}
		\item[(1)] $ \mathrm{gldim}A \leqslant d $, and
		\item[(2)] for each $ i\in \mathbb{Z} $, there is a functorial duality
		\begin{align*}
			\xymatrix{
				D\Ext^{i}_{A}(M,N)\cong \Ext^{d-i}_{A}(N,M)
			}
		\end{align*}
		where $ M $ and $ N $ are quasi-isomorphic to a bounded complex of finitely generated projective $ A $-modules such that $ M $ is also a finite-dimensional
		$ A/AeA $-module.
	\end{itemize}
\end{Def}

Let $ A $ be an algebra and $ e $ an idempotent of $ A $. We denote the corresponding quotient algebra by $ \overline{A}\coloneqq A/\langle e\rangle $. Let $ \mathcal{D}(A) $ be the unbounded derived category of $ A $, $ \mathcal{D}_{e}(A) $ the full subcategory of $ \mathcal{D}(A) $
consisting of complexes with homology groups in $ \mathrm{Mod}(\overline{A}) $, and  $ \mathrm{pvd}_{e}(A) $ the full subcategory of $ \mathcal{D}_{e}(A) $ consisting of objects with finite dimensional total cohomology. 

Recall that the inverse dualizing bimodule of $ A $ is defined as $ A^{\vee}=\RHom_{A^{e}}(A,A^{e}) $.

\begin{Def}\rm\cite{MP2017}\label{Def:bimodule internall cy}
	An algebra $ A $ is \emph{bimodule internally $ n $-Calabi–Yau} with respect to an idempotent $ e \in A $ if
	\begin{itemize}
		\item $ \mathrm{pdim}_{A^{e}}A\leqslant n $,
		\item $ A\in \mathrm{per}A^{e} $, and 
		\item there exists a triangle
		\begin{align*}
			\xymatrix{
				A\ar[r]&\Si^{n}A^{\vee}\ar[r]&C\ar[r]&\Si A
			}
		\end{align*}
		in $ \mathcal{D}(A^{e}) $, such that $ \mathbf{R}\Hom_{A}(C,M)=0=\mathbf{R}\Hom_{A^{op}}(C,N) $ for any $ M\in \mathrm{pvd}_{e}(A) $ and $ N\in \mathrm{pvd}_{e}(A^{op}) $.
	\end{itemize}
\end{Def}

\begin{Prop}\cite[Corollary 5.12]{MP_thesis}
	If $ A $ is internally bimodule $ n $-Calabi–Yau with respect to an idempotent $ e $ of $ A $, then it is internally $ n $-Calabi--Yau with respect to $ e $.
\end{Prop}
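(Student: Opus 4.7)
The plan is to verify separately the two conditions of Definition~\ref{Def:internally cy}, making essential use of the bimodule triangle $A \to \Si^{n} A^{\vee} \to C \to \Si A$ in $\cd(A^{e})$ supplied by the bimodule internally Calabi--Yau hypothesis. For the first condition, I would deduce $\mathrm{gldim}\, A \leq n$ from $\mathrm{pdim}_{A^{e}} A \leq n$ as follows. A projective $A^{e}$-resolution $P_{\bullet} \to A$ of length at most $n$ has each $P_{i}$ a direct summand of finitely many copies of $A \otimes_{k} A$; tensoring with an arbitrary $M \in \mathrm{Mod}\,A$ yields a resolution $M \otimes_{A} P_{\bullet} \to M$ in $\mathrm{Mod}\,A$ of length at most $n$, whose terms are summands of free right $A$-modules $M \otimes_{k} A^{\oplus r_{i}}$. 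Hence $\mathrm{gldim}\, A \leq n$.

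For the functorial duality $D\, \Ext^{i}_{A}(M, N) \cong \Ext^{n-i}_{A}(N, M)$, I would fix $M, N \in \per A$ with $M$ a finite-dimensional $\overline{A}$-module, so that $M \in \mathrm{pvd}_{e}(A)$, and apply the $k$-linear functor $\RHom_{A}(N, M \lten_{A} (-))\colon \cd(A^{e}) \to \cd(k)$ to the defining bimodule triangle, obtaining a distinguished triangle
\begin{equation*}
\RHom_{A}(N, M) \longrightarrow \Si^{n} \RHom_{A}(N, M \lten_{A} A^{\vee}) \longrightarrow \RHom_{A}(N, M \lten_{A} C) \longrightarrow \Si \RHom_{A}(N, M)
\end{equation*}
in $\cd(k)$. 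The middle term is identified, via the standard Serre duality for smooth dg algebras (applied with $M \in \pvd A$ and $N \in \per A$), with $\Si^{n}\, D\, \RHom_{A}(M, N)$. Taking cohomology in this triangle then yields the desired pairing, provided the third term is acyclic.

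The main obstacle is exactly this vanishing of $\RHom_{A}(N, M \lten_{A} C)$. Since $N \in \per A$, I would rewrite it as $(M \lten_{A} C) \lten_{A} N^{\vee}$ with $N^{\vee} = \RHom_{A}(N, A) \in \per A^{\mathrm{op}}$, and then use tensor--hom adjunctions together with perfectness of $M$ to recast the expression as $\RHom_{A}(C, X)$ (or its $A^{\mathrm{op}}$-analog) for some $X \in \mathrm{pvd}_{e}(A)$ (respectively $X \in \mathrm{pvd}_{e}(A^{\mathrm{op}})$). The defining hypothesis $\RHom_{A}(C, \mathrm{pvd}_{e}(A)) = 0 = \RHom_{A^{\mathrm{op}}}(C, \mathrm{pvd}_{e}(A^{\mathrm{op}}))$ then forces the vanishing. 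Here it is essential that $M$ is annihilated by $e$, since that is precisely what places the intermediate module $X$ in $\mathrm{pvd}_{e}$. Once this vanishing is established, comparing cohomology in the triangle above produces the natural isomorphism $D\, \Ext^{i}_{A}(M, N) \cong \Ext^{n-i}_{A}(N, M)$, completing the verification that $A$ is internally $n$-Calabi--Yau with respect to $e$.
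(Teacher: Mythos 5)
Your deduction of $\mathrm{gldim}\,A\leqslant n$ from $\mathrm{pdim}_{A^{e}}A\leqslant n$ is correct, and your overall plan (apply a functor to the defining triangle $A\to\Si^{n}A^{\vee}\to C\to\Si A$, identify the middle term by Serre duality for smooth dg algebras, kill the cone term using the hypothesis on $C$) is the right one; note the paper only cites Pressland here, so there is no in-text proof to compare against. However, your duality step contains a genuine error. For $A$ smooth, $M\in\pvd(A)$ and $N\in\cd(A)$, the correct form of Serre duality --- the one derived inside the proof of Proposition~\ref{Relative Hom} of this paper --- is
\begin{equation*}
D\RHom_{A}(M,N)\;\cong\;\RHom_{A}(N\lten_{A}A^{\vee},\,M),
\end{equation*}
with the inverse dualizing bimodule applied to $N$ and the morphisms mapping \emph{into} $M$. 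The identification you invoke, $\RHom_{A}(N,M\lten_{A}A^{\vee})\cong D\RHom_{A}(M,N)$, is false: for $A=k[x]$, $M=k$, $N=A$ one has $A^{\vee}\cong\Si^{-1}A$, so the left-hand side is $k\lten_{A}A^{\vee}\cong\Si^{-1}k$ (cohomology in degree $1$) while the right-hand side is $D\RHom_{A}(k,A)\cong\Si k$ (cohomology in degree $-1$). In the $n$-Calabi--Yau case your identification is off by $\Si^{2n}$, which is exactly the discrepancy between the two placements of $A^{\vee}$.

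This is not a cosmetic shift error; it breaks the strategy. With your functor $\RHom_{A}(N,M\lten_{A}-)$ the cone term does vanish --- the clean route is $D(M\lten_{A}C)\cong\RHom_{A^{op}}(C,DM)$ with $DM\in\pvd_{e}(A^{op})$, which is the $A^{op}$-half of the hypothesis, so $M\lten_{A}C\simeq0$ --- but then the bimodule triangle only tells you $M\cong M\lten_{A}\Si^{n}A^{\vee}$, both outer terms of your triangle become $\RHom_{A}(N,M)$, and no duality is produced. The repair is to apply $\RHom_{A}(N\lten_{A}-,\,M)$ to the bimodule triangle instead. Then the middle term is $\Si^{-n}\RHom_{A}(N\lten_{A}A^{\vee},M)\cong\Si^{-n}D\RHom_{A}(M,N)$ by the correct Serre duality, and the term $\RHom_{A}(N\lten_{A}C,M)$ vanishes because $N\in\per A=\thick(A)$ forces $N\lten_{A}C$ to lie in the thick subcategory of $\cd(A)$ generated by $C$ as a right $A$-module, while the hypothesis gives $\RHom_{A}(C,M)=0$ for $M\in\pvd_{e}(A)$. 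The resulting isomorphism $\Si^{-n}D\RHom_{A}(M,N)\cong\RHom_{A}(N,M)$ is precisely $D\Ext^{i}_{A}(M,N)\cong\Ext^{n-i}_{A}(N,M)$.
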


\begin{Prop}\label{Relative CY to internally}
	Let $ f\colon B\to A $ be a morphism between dg $ k $-algebras. Suppose that $ f $ satisfies the assumptions~\ref{Relative assumption} and moreover, $ A $ is concentrated in degree $ 0 $. Then $ A $ and $ A^{op} $ are internally bimodule $ (n+1) $-Calabi–Yau with respect to $ e=f(\boldmath{1}_{B}) $. Hence, the algebras $ A $ and $ A^{op} $ are internally $ (n+1) $-Calabi--Yau with respect to $ e=f(\boldmath{1}_{B}) $.
\end{Prop}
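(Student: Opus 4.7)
The plan is to verify the three conditions of Definition~\ref{Def:bimodule internall cy} for $A$; the statement for $A^{op}$ will follow by applying the same argument to the opposite morphism $f^{op}\colon B^{op}\to A^{op}$, which inherits a left $(n+1)$-Calabi--Yau structure from $f$ since the bar and cyclic complexes are canonically symmetric under $(-)^{op}$ and $f^{op}(1_{B^{op}})=e$. The key input is the isomorphism $\Sigma^{n}\mu_{G}^{\vee}\iso\nu_{G}$ supplied by Definition~\ref{Def Relative CY} and the associated isomorphism of triangles~(\ref{eq:left morphism of triangles}), which translates into an isomorphism $\Sigma^{n}A^{\vee}\iso\fib(\mu_{G})$ in $\cd(A^{e})$, where $\mu_{G}\colon A\lten_{B}A\to A$ is multiplication. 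Rotating twice, I obtain the fundamental triangle
\[
A\longrightarrow\Sigma^{n+1}A^{\vee}\longrightarrow C\longrightarrow\Sigma A,\qquad C=\Sigma(A\lten_{B}A),
\]
which will serve as the candidate triangle demanded by the definition.

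The inclusion $A\in\per A^{e}$ is immediate from the smoothness hypothesis. For the vanishing condition, let $M\in\pvd_{e}(A)$. Its cohomology lies in $\mathrm{Mod}(A/\langle e\rangle)$ and is therefore annihilated by $e$ on the right; combined with the identity $f(b)=f(b\cdot 1_{B})=f(b)\,e$, this forces the restriction $M|_{B}$ to be acyclic. The standard tensor-hom adjunction along $f$ then yields
\[
\RHom_{A}(C,M)\cong\Sigma^{-1}\RHom_{A}(A\lten_{B}A,M)\cong\Sigma^{-1}\RHom_{B}(A,M|_{B})=0,
\]
and the symmetric argument gives $\RHom_{A^{op}}(C,N)=0$ for any $N\in\pvd_{e}(A^{op})$.

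The most delicate step is the projective-dimension bound $\mathrm{pdim}_{A^{e}}A\leqslant n+1$. My plan is to extract cohomological constraints on $A^{\vee}=\RHom_{A^{e}}(A,A^{e})$ from the fundamental triangle. Since $A$ sits in degree zero and $A\in\per A^{e}$, one has $H^{i}(A^{\vee})=0$ for $i<0$ automatically. On the other hand, $A\lten_{B}A$ is a derived tensor product of two degree-zero modules over $B$, and the vanishing of $H^{i}(A\lten_{B}A)$ in sufficiently large positive degrees propagates through the long exact sequence of the fundamental triangle to force $H^{i}(A^{\vee})=0$ for $i>n+1$. Thus $A^{\vee}$ is cohomologically concentrated in $[0,n+1]$, and a minimal projective resolution $P^{\bullet}\to A$ over $A^{e}$ has length equal to the largest $d$ with $H^{d}(\Hom_{A^{e}}(P^{\bullet},A^{e}))=H^{d}(A^{\vee})\neq 0$, hence at most $n+1$. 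The final sentence of the proposition then follows immediately from the implication ``bimodule internally CY $\Rightarrow$ internally CY'' recalled just before the statement. The main obstacle I expect is the positive-degree vanishing of $A\lten_{B}A$, which requires careful use of the interplay between $A$ being concentrated in degree zero, the factorization of the $B$-action on $A$ through $e$, and the structure of $B$; this is where the argument must engage most directly with the assumptions.
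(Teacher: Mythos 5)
Your construction of the fundamental triangle and your verification of the vanishing condition coincide with the paper's proof up to a harmless change of presentation: the paper writes the cone of $A\to\Si^{n+1}A^{\vee}$ as $\Si^{n+1}\mathbf{L}f^{*}(B^{\vee})\cong\Si^{n+1}(A\lten_{B}B^{\vee}\lten_{B}A)$, which is identified with your $\Si(A\lten_{B}A)$ by condition b) of Definition~\ref{Def Relative CY} (namely $\Si^{n}B^{\vee}\cong B$) together with the smoothness of $B$; and the vanishing of $\RHom_{A}(C,M)$ for $M\in\mathrm{pvd}_{e}(A)$ is obtained, exactly as in the paper, from the adjunction $\RHom_{A}(N\lten_{B}A,M)\cong\RHom_{B}(N,M|_{B})$ together with the acyclicity of $M|_{B}=Me$ (your observation that the cohomology of $M$ is annihilated by $e$ is the correct justification). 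So the parts of the definition that the printed proof actually checks are handled correctly and by the same method.

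The gap is in the bound $\mathrm{pdim}_{A^{e}}A\leqslant n+1$, precisely where you anticipate trouble. You propose to deduce $H^{i}(A^{\vee})=0$ for $i>n+1$ from the vanishing of $H^{j}(A\lten_{B}A)$ for $j\geqslant 2$ via the long exact sequence of the fundamental triangle. But that long exact sequence gives, for every $i>0$, an isomorphism $H^{i+n+1}(A^{\vee})\cong H^{i+1}(A\lten_{B}A)$, since the neighbouring terms $H^{i}(A)$ and $H^{i+1}(A)$ both vanish; hence the two vanishing statements are equivalent and neither can be extracted from the triangle alone — the argument is circular. The input that would break the circle is connectivity of $B$ (which would place $A\lten_{B}A$ in non-positive degrees), but Assumptions~\ref{Relative assumption} only require $A$ to be connective. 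The paper's own proof silently omits this condition of Definition~\ref{Def:bimodule internall cy}; the clean way to supply it within the stated hypotheses is to invoke Proposition~\ref{Smooth is finite gl.dim}, proved just before in the same section, which gives $\mathrm{gldim}\,A=\mathrm{gldim}\,H^{0}(A)\leqslant n+1$, and then the standard identity $\mathrm{pdim}_{A^{e}}A=\mathrm{gldim}\,A$ for a finite-dimensional algebra over the algebraically closed field $k$. Your reduction of the length of a minimal projective resolution to the top nonvanishing $\Ext^{d}_{A^{e}}(A,A^{e})$ is fine once the cohomological concentration of $A^{\vee}$ is known, but it is not how that concentration gets established.
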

\begin{proof}
	By the definition of a relative $ (n+1) $-Calabi--Yau structure, we have the following triangle in $ \mathcal{D}(A^{e}) $
	\begin{align*}
		\xymatrix{
			A\ar[r]&\Si^{n+1}A^{\vee}\ar[r]&\Si^{n+1}\mathbi{L}f^{*}(B^{\vee})\ar[r]&\Si A,
		}
	\end{align*}
	where $ A^{\vee}=\mathbf{R}\Hom_{A^{e}}(A,A^{e}) $, $ B^{\vee}=\mathbf{R}\Hom_{B^{e}}(B,B^{e}) $ and $ \mathbf{L}f^{*}(B^{\vee})\cong A\lten_{B}B^{\vee}\lten_{B}A $.
	
	Let $ M $ be an object in $ \mathrm{pvd}_{e}(A) $. We have
	\begin{equation*}
		\begin{split}
			\mathbf{R}\Hom_{A}(\mathbf{L}f^{*}(B^{\vee}),M)&=\mathbf{R}\Hom_{A}(A\lten_{B}B^{\vee}\lten_{B}A,M)\\
			&\simeq\mathbf{R}\Hom_{A}(A\lten_{B}B^{\vee},\RHom_{B}(A,M|_{B}))\\
			&=0
		\end{split}
	\end{equation*}
	Similarly, we have $ \mathbf{R}\Hom_{A^{op}}(\mathbf{L}f^{*}(B^{\vee}),N)=0 $ for any $ N\in \mathrm{pvd}_{e}(A^{op}) $. Thus, the algebra $ A $ is bimodule internally $ (n+1) $-Calabi–Yau with respect to the idempotent $ e=f(\boldmath{1}_{B}) $. In the same way, we can show that $ A^{op} $ is bimodule internally $ (n+1) $-Calabi–Yau with respect to the idempotent $ e $.
	
\end{proof}

\section{Relative cluster categories for Jacobi-finite ice quivers with potential}\label{section FQP}

\subsection{Ice quivers with potential}\label{FQP}

\begin{Def}\rm
	A \emph{quiver} is a tuple $ Q=(Q_{0},Q_{1},s,t) $, where $ Q_{0} $ and $ Q_{1} $ are sets, and $s,t\colon Q_{1}\to Q_{0} $ are functions. Each $ \alpha\in Q_{1} $ is realised as an arrow $ \alpha\colon s(\alpha)\to t(\alpha) $. We call $ Q $ finite if $ Q_{0} $ and $ Q_{1} $ are finite sets.
\end{Def}

\begin{Def}\rm
	Let $ Q $ be a quiver. A quiver $ F=(F_{0},F_{1},s',t') $ is called a \emph{subquiver} of $ Q $ if $ F_{0}\subseteq Q_{0} $, $ F_{1}\subseteq Q_{1} $ and $ s',t' $ are the restrictions of $ s,t $ to $ F_{1} $. We call $ F $ a \emph{full subquiver} of $ Q $ if $ F $ is a subquiver and $ F_{1}=\{\alpha\in Q_{1}\colon s(\alpha),t(\alpha)\in F_{0}\} $. 
\end{Def}

\begin{Def}\rm
	An \emph{ice quiver} is a pair $ (Q,F) $, where $ Q $ is a quiver, and $ F $ is a subquiver of $ Q $.
\end{Def}

Let $ Q $ be a finite quiver. For each arrow $ a $ of $ Q $, we define the cyclic derivative with respect to $ a $ as the unique linear map
\begin{align*}
	\xymatrix{
		\partial_{a}\colon kQ/[kQ,kQ]\ar[r]&kQ
	}
\end{align*}
which takes the class of a path $ p $ to the sum $ \sum_{p=uav}vu $ taken over all decompositions of the path $ p $.

\begin{Def}\rm
	An element of $ kQ/[kQ,kQ] $ is called a \emph{potential} on $ Q $. It is given by a linear combination of cycles in $ Q $. An ice quiver with potential is a tuple $ (Q,F,W) $ in which $ (Q,F) $ is a finite ice quiver without loops, and $ W $ is a potential on $ Q $. If $ F $ is the empty quiver $ \emptyset $, then $ (Q,\emptyset,W)\coloneqq(Q,W) $ is simply called a quiver with potential.
\end{Def}

\subsection{Relative Ginzburg algebras and relative Jacobian algebras}	
\begin{Def}\rm\label{RGD}
	Let $ (Q,F,W) $ be a finite ice quiver with potential. Let $  \widetilde{Q} $ be the graded quiver with the same vertices as $ Q $ and whose arrows are
	\begin{itemize}
		\item the arrows of $ Q $,
		\item an arrow $ a^{*}\colon j\to i $ of degree $ -1 $ for each arrow $ a $ of $ Q $ not belonging to $ F $,
		\item a loop $ t_{i}\colon i\to i $ of degree $ -2 $ for each vertex $ i $ of $ Q $ not belonging to $ F $.
	\end{itemize}
	The \emph{relative Ginzburg dg algebra} $ \bm{\Gamma}_{rel}(Q,F,W) $ is the dg algebra whose underlying graded vector space is the graded path algebra $ k\widetilde{Q} $. Its differential is the unique linear endomorphism of degree 1 which satisfies the Leibniz rule
	\begin{align*}
		\xymatrix{
			d(u\circ v)=d(u)\circ v+(-1)^{p}u\circ d(v)
		}
	\end{align*}
	for all homogeneous $ u $ of degree $ p $ and all $ v $, and takes the following values on the arrows of $ \widetilde{Q} $:
	\begin{itemize}
		\item $ d(a)=0 $ for each arrow $ a $ of $ Q $,
		\item $d(a^{*})=\partial_{a}W$ for each arrow $ a $ of $ Q $ not belonging to $ F $,
		\item $ d(t_{i})=e_{i}(\sum_{a\in Q_{1}}[a,a^{*}])e_{i} $ for each vertex $ i $ of $ Q $ not belonging to $ F $, where $ e_{i} $ is the lazy path corresponding to the vertex $ i $.
	\end{itemize}
	
\end{Def}

\begin{Def}\rm
	Let $ (Q,F,W) $ be a finite ice quiver with potential. The \emph{relative (or frozen) Jacobian algebra} $ J(Q,F,W) $ is the zeroth cohomology of the relative Ginzburg algebra $ \bm{\Gamma}_{rel}(Q,F,W) $. It is the quotient algebra
	\begin{align*}
		kQ/\langle \partial_{a}W,a\in Q_{1}\setminus F_{1}\rangle
	\end{align*}
	where $ \langle \partial_{a}W,a\in Q_{1}\setminus F_{1}\rangle $ is the two-sided ideal generated by $ \partial_{a}W $ with $ a\in Q_{1}\setminus F_{1} $.
\end{Def}

Let $ (Q,F,W) $ be a finite ice quiver with potential. Since $ W $ can be viewed as an element in $ HC_{0}(kQ) $, $ c=B(W) $ is an element in $ HH_{1}(kQ) $, where $ B $ is the Connes connecting map (see~\cite[Section 6.1]{BK2011})
$$ B\colon H\!C_{n}(kQ)\ra H\!H_{n+1}(kQ) .$$
Then $ \xi=(0,c) $ is an element of $ H\!H_{0}(G) $ which provides the deformation parameter for the relative 3-Calabi--Yau completion of $ G\colon kF\hookrightarrow kQ $, namely the functor $$ \bm{G}_{rel}\colon\bm{\Pi}_{2}(F)\ra\bm{\Pi}^{red}_{3}(Q,F,\xi) $$ defined in Proposition~\ref{reduced completion}. An easy check shows that the dg algebra $ \bm{\Pi}^{red}_{3}(kQ,kF,\xi) $ is isomorphic to $ \bm{\Gamma}_{rel}(Q,F,W) $ and that the dg functor $ \bm{G}_{rel} $ takes the following values as follows:
\begin{itemize}
	\item $ \bm G_{rel}(i)=i $ for each frozen vertex $ i\in F_{0} $,
	\item $ \bm G_{rel}(a)=a $ for each arrow $ a\in F_{1} $,
	\item $ \bm G_{rel}(\tilde{a})=-\partial_{a}W $ for each arrow $ a\in F_{1} $,
	\item $ \bm G_{rel}(r_{i})=e_{i}(\sum_{a\in Q_{1}\setminus F_{1}}[a,a^{*}])e_{i} $ for each frozen vertex $ i\in F_{0} $.
\end{itemize}		
We call $ \bm G_{rel} $ the \emph{Ginzburg functor} associated with $ G\colon kF\hookrightarrow kQ $ and $ W $.
\begin{Rem}
	If we apply $ H^{0} $ to $ \bm G_{rel} $, we recover Proposition 8.1 of \cite{MP2020}.
\end{Rem}

\begin{Prop}\label{Homtopy Cofiber sequence of (Q,F,W)}
	Let $ (Q,F,W) $ be a finite ice quiver with potential. Let $ \overline{Q} $ be the quiver obtained from $ Q $ by deleting all vertices in $ F $ and all arrows incident with vertices in $ F $. Let $ \overline{W} $ be the potential on $ \overline{Q} $ obtaining by deleting all cycles passing through vertices of $ F $ in $ W $. Then $$ \bm{\Pi}_{2}(F)\xrightarrow{\bm G_{rel}} \bm{\Gamma}_{rel}(Q,F,W)\to \bm{\Gamma}(\overline{Q},\overline{W}) $$ is a
	homotopy cofiber sequence of dg categories, where $ \bm{\Gamma}(\overline{Q},\overline{W}) $ is the Ginzburg algebra (see \cite{BK2011}) associated with quiver with potential $ (\overline{Q},\overline{W}) $.
	
\end{Prop}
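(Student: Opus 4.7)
The plan is to reduce the claim to Proposition~\ref{Relation with absolut CY completion} together with Proposition~\ref{reduced completion}, after accounting for the deformation by $W$. Write $G\colon kF\hookrightarrow kQ$ for the inclusion and $\xi=(0,B(W))\in H\!H_{1}(G)$, where $B$ is the Connes connecting map, and observe that $\xi_{kF}=0$. As recalled just before the statement, $\bm{\Gamma}_{rel}(Q,F,W)\cong\bm{\Pi}^{red}_{3}(kQ,kF,\xi)$ with the Ginzburg functor being $\bm G_{rel}$, and Keller's construction in~\cite{BK2011} gives an analogous identification $\bm{\Gamma}(\overline{Q},\overline{W})\cong\bm{\Pi}^{red}_{3}(k\overline{Q},B(\overline{W}))$ in the absolute case.

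First I would apply Proposition~\ref{Relation with absolut CY completion} to $G$ to obtain the (undeformed) homotopy cofiber sequence
$$\bm{\Pi}_{2}(kF)\to\bm{\Pi}_{3}(kQ,kF)\to\bm{\Pi}_{3}(kQ/kF)$$
in $\dgcat$, and then identify $kQ/kF$ with $k\overline{Q}$ in $\mathrm{Ho}(\dgcat)$. The Drinfeld dg quotient adjoins a contracting homotopy $\epsilon_{i}$ with $d\epsilon_{i}=1_{i}$ for every $i\in F_{0}$, so that each morphism of $kQ$ factoring through a frozen vertex becomes nullhomotopic; the resulting evident inclusion $k\overline{Q}\hookrightarrow kQ/kF$ is then a quasi-equivalence. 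Under this identification the image of $\xi$ in $H\!H_{1}(k\overline{Q})$ is $B(\overline{W})$, because cycles of $W$ passing through $F_{0}$ vanish in the quotient while the remaining cycles are precisely those in $\overline{W}$.

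Next I would upgrade the sequence to its deformed version. Each of the three deformed completions is obtained from the corresponding undeformed one by adding a single $k$-linear derivation determined by the relevant Hochschild class, and these derivations are compatible along the two structure maps (both being restrictions of the universal $\xi$-derivation on $\bm\Pi_{3}(kQ,kF,\xi)$). Adding these compatible derivations simultaneously at each vertex of the homotopy push-out square in the proof of Proposition~\ref{Relation with absolut CY completion} preserves its bicartesian property and yields a homotopy cofiber sequence
$$\bm{\Pi}_{2}(kF)\to\bm{\Pi}_{3}(kQ,kF,\xi)\to\bm{\Pi}_{3}(k\overline{Q},B(\overline{W})).$$
Replacing the middle and right terms by their reduced models, via Proposition~\ref{reduced completion} and its absolute analogue, then produces the desired sequence $\bm\Pi_{2}(F)\to\bm\Gamma_{rel}(Q,F,W)\to\bm\Gamma(\overline{Q},\overline{W})$.

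The main obstacle will be this deformed enhancement of Proposition~\ref{Relation with absolut CY completion}: one must check that the Hochschild classes $\xi$, $B(W)$ and $B(\overline{W})$ are functorially compatible along $kF\to kQ\to k\overline{Q}$ in a way that allows the corresponding derivations to be glued coherently at each corner of the homotopy push-out square, with any residual ambiguity controlled by Proposition~\ref{quasi-invariant} and Corollary~\ref{homotopy-invariant}. Once this compatibility is in place, the passage to reduced models and the identification with the Ginzburg dg algebras are formal.
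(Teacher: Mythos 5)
Your route is genuinely different from the paper's, and it contains a real gap at precisely the step you flag as ``the main obstacle''. Proposition~\ref{Relation with absolut CY completion} is stated and proved only for the \emph{undeformed} completions (the paper explicitly sets $\xi=0$ there), and its proof assembles homotopy push-out squares from \cite{BCS2020} and \cite{BK2011} that are themselves only available in that setting. Your proposed upgrade --- that ``adding these compatible derivations simultaneously at each vertex of the homotopy push-out square preserves its bicartesian property'' --- does not follow formally from the compatibility of the Hochschild classes along $kF\to kQ\to k\overline{Q}$. Perturbing the differentials at the corners of a homotopy push-out square changes all four objects, and one must re-verify that the comparison map from the derived push-out to the fourth corner is still a quasi-equivalence; this is exactly the delicate point in the theory of deformed Calabi--Yau completions (cf.\ Keller's erratum \cite{BK2011Erratum}). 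As written, your argument defers essentially the whole content of the proposition to an unproved ``deformed enhancement'' of Proposition~\ref{Relation with absolut CY completion}.

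The paper sidesteps this entirely by working with an explicit cofibrant model. By Proposition~\ref{reduced completion}, $\bm{G}_{rel}$ has the same homotopy cofiber as $\widetilde{G}\colon\bm{\Pi}_{2}(kF)\to\bm{\Pi}_{3}(kQ,kF,\xi)$, since $\Psi$ is a quasi-equivalence and $\Psi\circ\bm{G}_{rel}$ is homotopic to $\widetilde{G}$. The functor $\widetilde{G}$ is a semi-free extension, hence a cofibration in the Dwyer--Kan model structure, so its homotopy cofiber is computed by the \emph{strict} quotient of $\bm{\Pi}_{3}(kQ,kF,\xi)$ by the two-sided ideal generated by the image of $\widetilde{G}$; killing the frozen vertices (and with them every generator incident to a frozen vertex) leaves exactly the graded quiver underlying $\bm{\Gamma}(\overline{Q},\overline{W})$, with the correct differential because the surviving terms of $\partial_{a}W$ and of $\sum_{a}[a,a^{*}]$ are precisely those computed from $\overline{W}$ and $\overline{Q}$. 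If you wish to keep your route, you must either prove the deformed analogue of Proposition~\ref{Relation with absolut CY completion} (which in this example is essentially equivalent to the statement being proved) or replace that step by the explicit computation above. Finally, your identification $kQ/kF\simeq k\overline{Q}$ is correct but should be justified (e.g.\ since $kQ$ is hereditary, the Drinfeld quotient by the frozen idempotent has cohomology concentrated in degree $0$ and equal to the ordinary quotient); it is not automatic for a general dg quotient.
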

\begin{proof}
	By Proposition~\ref{reduced completion}, the homotopy cofiber of $ \bm{G}_{rel} $ is isomorphic to that of $ \tilde{G} $. Since $ \tilde{G} $ is a cofibration, the dg quotient identifies with the quotient of $ \bm{\Pi}_{3}(kQ,kF,\xi) $ by the 2-sided ideal generated by the image of $ \tilde{G} $. This quotient is isomorphic to $ \bm{\Gamma}(\overline{Q},\overline{W}) $ as a dg category.
\end{proof}

\subsection{Jacobi-finite ice quivers with potential}

An ice quiver with potential $ (Q,F,W) $ is called \emph{Jacobi-finite} if the relative Jacobian algebra $ J(Q,F,W) $ is finite-dimensional.

\begin{Def}\rm
	Let $ (Q,F,W) $ be a Jacobi-finite ice quiver with potential. Denote by $ \bm\Gamma_{rel} $ the relative Ginzburg dg algebra $  \bm\Gamma_{rel}(Q,F,W) $. Let $ e=\sum_{i\in F}e_{i} $ be the idempotent associated with all frozen vertices. Let $ \mathrm{pvd}_{e}(\bm\Gamma_{rel}) $ be the full subcategory of $ \mathrm{pvd}(\bm\Gamma_{rel}) $ consisting of the dg $ \bm\Gamma_{rel} $-modules whose restriction to frozen vertices is acyclic. 
	
	Then the \emph{relative cluster category} $ \mathcal{C}(Q,F,W) $ associated to $ (Q,F,W) $ is defined as the Verdier quotient of triangulated categories $$ \mathrm{per}(\bm\Gamma_{rel})/\mathrm{pvd}_{e}(\bm\Gamma_{rel}) .$$ 
	The relative fundamental domain $ \mathcal{F}^{rel} $ associated to $ (Q,F,W) $ is defined as the following subcategory of $ \per\,\bm{\Gamma}_{rel} $
	$$ \mathcal{F}^{rel}\coloneqq\{\cone(X_{1}\xrightarrow{f} X_{0})\ |\ X_{i}\in \add(\bm\Gamma_{rel})\,\,\text{and}\,\,\Hom(f,I)\ \text{is surjective},\ \forall\ I\in\mathcal{P}=\mathrm{add}(e\bm\Gamma_{rel}) \}. $$
	
	We have a fully faithful embedding $ \pi^{rel}\colon\mathcal{F}^{rel}\subseteq\mathrm{per}\,\bm{\Gamma}_{rel}\rightarrow\mathcal{C}(Q,F,W) $. Then the Higgs category $ \mathcal{H} $ associated to $ (Q,F,W) $ is the image of $ \mathcal{F}^{rel} $ in $ \mathcal{C}(Q,F,W) $ under the functor $ \pi^{rel} $.
\end{Def}

Combining Theorem~\ref{Higgs is Frobenius extrianglated category} and Proposition~\ref{addA is an n-cluster tiliting}, we get the result.

\begin{Thm}\label{Higgs for frozen quiver}
	Let $ (Q,F,W) $ be a Jacobi-finite ice quiver with potential. Then the relative cluster category $ \mathcal{C}(Q,F,W) $ is Hom-finite, and the Higgs category $ \mathcal{H} $ is a Frobenius 2-Calabi--Yau extriangulated category with projective-injective objects $ \mathcal{P}=\mathrm{add}(e\bm\Gamma_{rel}) $. The free module $ \bm\Gamma_{rel} $ in $ \mathcal{H} $ is a cluster-tilting object. Its endomorphism algebra is isomorphic to the relative Jacobian algebra $ J(Q,F,W) $. 
	
	Moreover, the stable category of $ \mathcal{H} $ is equivalent to the usual cluster category
	$$ \underline{\mathcal{H}}=\mathcal{H}/[\mathcal{P}]\overset{\sim}{\longrightarrow}\mathcal{C}(\overline{Q},\overline{W})=\per({\bm\Gamma}(\overline{Q},\overline{W}))/\pvd({\bm\Gamma}(\overline{Q},\overline{W})) $$
	
	and the following diagram commutes
	\begin{align*}
		\xymatrix{
			&&\mathrm{per}(e\bm\Gamma_{rel}e)\ar@{=}[r]\ar@{^{(}->}[d]&\mathrm{per}(e\bm\Gamma_{rel}e)\ar@{^{(}->}[d]\\
			\mathrm{pvd}_{e}(\bm\Gamma_{rel})\ar@{^{(}->}"2,3"\ar@{->}_{\cong}[d]&&\mathrm{per}(\bm\Gamma_{rel})\ar[r]\ar@{->>}[d]&\mathcal{C}(Q,F,W)\ar@{->>}[d]\\
			\mathrm{pvd}({\bm\Gamma}(\overline{Q},\overline{W}))\ar@{^{(}->}"3,3"&&\mathrm{per}({\bm\Gamma}(\overline{Q},\overline{W}))\ar[r]&\mathcal{C}(\overline{Q},\overline{W})
		}
	\end{align*}
	where the rows and columns are exact sequences of triangulated categories.
\end{Thm}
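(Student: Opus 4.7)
The plan is to verify that the relative Ginzburg dg algebra $\bm{\Gamma}_{rel}=\bm{\Gamma}_{rel}(Q,F,W)$, together with the Ginzburg functor $\bm{G}_{rel}\colon\bm{\Pi}_{2}(F)\to\bm{\Gamma}_{rel}$, satisfies the Assumptions~\ref{Relative assumption} with $n=2$, and then invoke the general machinery of Section~\ref{section SR and RFD} (Theorem~\ref{Higgs is Frobenius extrianglated category} and Proposition~\ref{addA is an n-cluster tiliting}) together with Proposition~\ref{Homtopy Cofiber sequence of (Q,F,W)}. Concretely, one must check the four conditions: smoothness of $\bm{\Gamma}_{rel}$ and $\bm{\Pi}_{2}(F)$; connectivity of $\bm{\Gamma}_{rel}$; the existence of a left $3$-Calabi--Yau structure on $\bm{G}_{rel}$; and the finite-dimensionality of $H^{0}(\bm{\Gamma}_{rel})$.

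Connectivity is immediate from Definition~\ref{RGD} because the generators of $\bm{\Gamma}_{rel}$ sit in degrees $0,-1,-2$. Finite-dimensionality of $H^{0}(\bm{\Gamma}_{rel})=J(Q,F,W)$ is exactly the Jacobi-finite hypothesis. Smoothness of $\bm{\Pi}_{2}(F)=kF$ is clear since $kF$ is a path algebra of a finite quiver. For smoothness of $\bm{\Gamma}_{rel}$ and the relative $3$-Calabi--Yau structure on $\bm{G}_{rel}$, I would use the identification, obtained from Proposition~\ref{reduced completion}, of $\bm{\Gamma}_{rel}$ with the reduced deformed relative $3$-Calabi--Yau completion $\bm{\Pi}_{3}^{red}(kQ,kF,\xi)$, where $\xi=(0,B(W))\in H\!H_{1}(G)$ for $G\colon kF\hookrightarrow kQ$; this class admits the canonical negative cyclic lift coming from $W\in H\!C_{0}(kQ)$. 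By the theorem of Yeung--Bozec--Calaque--Scherotzke cited after Proposition~\ref{reduced completion}, such a lift yields a left $3$-Calabi--Yau structure on $\bm{G}_{rel}$; the homological smoothness of $\bm{\Gamma}_{rel}$ follows because relative Calabi--Yau completions of smooth dg categories are smooth.

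With the four assumptions verified, the relative $2$-cluster category $\mathcal{C}_{2}(\bm{\Gamma}_{rel},\bm{\Pi}_{2}(F))$ coincides with $\mathcal{C}(Q,F,W)$ and Corollary~\ref{Hom finite of RCC} yields Hom-finiteness. Theorem~\ref{Higgs is Frobenius extrianglated category} then shows that $\mathcal{H}$ is Frobenius extriangulated with projective-injectives $\mathcal{P}=\mathrm{add}(e\bm{\Gamma}_{rel})$, and Corollary~\ref{Cor:exact sequence for Higgs} in the case $n=2$ specializes to the $2$-Calabi--Yau bifunctorial duality $\Ext^{1}_{\mathcal{H}}(X,Y)\cong D\Ext^{1}_{\mathcal{H}}(Y,X)$. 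Proposition~\ref{addA is an n-cluster tiliting} gives that $\bm{\Gamma}_{rel}$ is a cluster-tilting object; its endomorphism algebra in $\mathcal{H}$ equals $H^{0}(\bm{\Gamma}_{rel})=J(Q,F,W)$ by the corollary following Proposition~\ref{relative full faithful}. The equivalence $\mathcal{H}/[\mathcal{P}]\overset{\sim}{\to}\mathcal{C}(\overline{Q},\overline{W})$ and the commutative square of exact sequences then follow from Theorem~\ref{Higgs is Frobenius extrianglated category} and Corollary~\ref{Relative commutative digram} once we identify the homotopy cofiber of $\bm{G}_{rel}$, which is exactly the content of Proposition~\ref{Homtopy Cofiber sequence of (Q,F,W)}.

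The main technical obstacle is establishing the left $3$-Calabi--Yau structure on $\bm{G}_{rel}$, and in particular being careful that Proposition~\ref{reduced completion} applies: both $kF$ and $kQ$ are finitely cellular, $G$ is a semi-free extension (it is the inclusion of a subquiver), and the Hochschild class $\xi=(0,B(W))$ does admit a negative cyclic lift coming from $W$ itself. Once these verifications are made explicit, everything else is an application of the general theory, with the concrete identification of the stable category following from Proposition~\ref{Relation with absolut CY completion} applied to $\bm{G}_{rel}$, i.e.\ the fact that the homotopy cofiber of $\bm{\Pi}_{2}(F)\to\bm{\Gamma}_{rel}$ is quasi-isomorphic to the absolute $3$-Calabi--Yau completion $\bm{\Gamma}(\overline{Q},\overline{W})$ of $k\overline{Q}$ with potential $\overline{W}$.
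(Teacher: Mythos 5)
Your proposal is correct and follows essentially the same route as the paper: the paper's proof is precisely "combine Theorem~\ref{Higgs is Frobenius extrianglated category} and Proposition~\ref{addA is an n-cluster tiliting}", with the verification of Assumptions~\ref{Relative assumption} for $n=2$ carried out via the identification of $\bm{\Gamma}_{rel}(Q,F,W)$ with $\bm{\Pi}_{3}^{red}(kQ,kF,\xi)$ and Proposition~\ref{Homtopy Cofiber sequence of (Q,F,W)}, exactly as you describe. The only small refinement worth noting is that the $2$-Calabi--Yau duality on $\Ext^{1}_{\mathcal{H}}$ is most directly obtained from Proposition~\ref{Extension group in RFD} together with the known $2$-Calabi--Yau property of $\mathcal{C}(\overline{Q},\overline{W})$, rather than from the long exact sequence of Corollary~\ref{Cor:exact sequence for Higgs}.
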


\bigskip

Let $ (Q,F,W) $ be an ice quiver with potential. For simplicity of notation, we write $ J_{rel} $ and $ \mathbf{\Gamma}_{rel} $ respectively instead of $ J(Q,F,W) $ and $ \mathbf{\Gamma}_{rel}(Q,F,W) $. Let $ Q_{0}^{m}=Q_{0}\setminus F_{0} $ and $ Q_{1}^{m}=Q_{1}\setminus F_{1} $. Let $ S $ be the semisimple $ k $-algebra $ \prod_{i\in Q_{0}}ke_{i} $. We denote  by $ S^{m} $, $ V $ and $ V^{m} $ the $ S $-bimodules generated by $ Q_{0}^{m} $, $ Q_{1} $ and $ Q_{1}^{m} $ respectively. Let $ V^{m*} $ be the dual bimodule $ \Hom_{S^{e}}(V^{m},S^{e}) $.

There is a canonical short exact sequence of $ \mathbf{\Gamma}_{rel} $-bimodules
\begin{align}\label{Cone resolution}
	\xymatrix{
		0\ra\ker(m)\xrightarrow{\rho}\mathbf{\Gamma}_{rel}\ten_{S}\mathbf{\Gamma}_{rel}\xrightarrow{m}\mathbf{\Gamma}_{rel}\ra0,
	}
\end{align}
where the map $ m $ is induced by the multiplication of $ \mathbf{\Gamma}_{rel} $. The mapping cone $ \cone(\rho) $ of $ \rho $ is a cofibrant resolution of $ \mathbf{\Gamma}_{rel} $ as a bimodule over itself.

Then $ P(J_{rel})=J_{rel}\otimes_{\bm\Gamma_{rel}}\cone(\rho)\otimes_{\bm\Gamma_{rel}}J_{rel} $ is the following complex
\begin{align*}
	\xymatrix{
		0\ar[r]&J_{rel}\otimes_{S}\otimes S^{m}\otimes_{S}J_{rel}\ar[r]^{m_{3}}&J_{rel}\otimes_{S}\otimes V^{m*}\otimes_{S}J_{rel}\ar[r]^{m_{2}}&J_{rel}\otimes_{S}\otimes V\otimes_{S}J_{rel}\ar[r]^-{m_{1}}&J_{rel}\otimes_{S}J_{rel}\ar[r]&0,
	}
\end{align*} 
where $ m_{3} $, $ m_{2} $ and $ m_{1} $ are as follows:
$$ m_{1}(x\otimes a\otimes y)=xa\otimes y-x\otimes ay $$ and  
$$ m_{3}(x\otimes t_{i}\otimes y)=\sum_{a,t(a)=t_{i}}xa\otimes a^{*}\otimes y-\sum_{b,s(b)=t_{i}}x\otimes b^{*}\otimes by. $$

For any path $ p=a_{m}\cdots a_{1} $ of $ kQ $, we define
\begin{align*}
	\triangle_{a}(p)=\sum_{a_{i}=a}a_{m\cdots}a_{i+1}\otimes a_{i}\otimes a_{i-1}\cdots a_{1},
\end{align*}
and extend by linearity to obtain a map $ \triangle_{a}\colon kQ\to J_{rel}\otimes_{S} kQ_{1}\otimes_{S} J_{rel} $. Then $ m_{2} $ is given by:
$$ m_{2}(x\otimes a^{*}\otimes y)=\sum_{b\in Q_{1}}x\triangle_{b}(\partial_{a}W)y.$$

There is a canonical morphism $ P(J_{rel}) \to J_{rel} $, which is induced by the multiplication map $ m $ in $ J_{rel} $
\begin{align}\label{extended cotangent complex}
	\xymatrix{
		0\ar[r]&J_{rel}\otimes_{S}\otimes S^{m}\otimes_{S}J_{rel}\ar[r]^{m_{3}}\ar[d]&J_{rel}\otimes_{S}\otimes V^{m*}\otimes_{S}J_{rel}\ar[r]^{m_{2}}\ar[d]&J_{rel}\otimes_{S}\otimes V\otimes_{S}J_{rel}\ar[r]^-{m_{1}}\ar[d]^{\alpha}&J_{rel}\otimes_{S}J_{rel}\ar[r]\ar[d]^{m}&0
		\\
		&0\ar[r]&0\ar[r]&0\ar[r]&J_{rel}.
	}
\end{align} 

\begin{Rem}\label{exact at 2 places}
	When $ F=\emptyset $, the complex~(\ref{extended cotangent complex}) defined above is the complex associated to $ (Q, W) $ defined by Ginzburg in~\cite[Section 5]{VG}. In general, it is exactly the complex $ P(J_{rel}) $ defined by Pressland in~\cite[Deﬁnition 5.21]{MP_thesis}. Moreover, it has already appeared in the work of Amiot–Reiten–Todorov (see~\cite[Propostion 2.2]{Am2011}). By a result of Butler–King \cite[1.2]{BuKing1999}, the vertical maps $ \alpha $ and $ m $ induce isomorphisms on cohomology. Thus, the canonical morphism $ P(J_{rel})\ra J_{rel} $ is a quasi-isomorphism if and only if the cohomology of $ P(J_{rel}) $ vanishes at $ J_{rel}\ten_{S}V^{m*}\ten_{S}J_{rel} $ and $ J_{rel}\ten_{S}R^{m}\ten_{S}J_{rel} $.
\end{Rem}

Applying the functor $ S\ten_{J_{rel}}? $ to the complex~(\ref{extended cotangent complex}), we get another complex
$$ 0\ra S\ten_{J_{rel}}P(J_{rel})\ra S\ra0 .$$

The complex above decomposes along with $ S $, so its exactness is equivalent to the exactness of the following complex
\begin{align}\label{Complex at simples}
	\xymatrix{
		0\ra S_{v}\ten_{J_{rel}}P(J_{rel})\ra S_{v}\ra 0
	}
\end{align} for each $ v\in Q_{0} $, where $ S_{v} $ is the simple $ J_{rel} $-module at the vertex $ v $. If $ v $ is an unfrozen vertex, the complex~(\ref{Complex at simples}) is the following complex
\begin{align}\label{Complex at unfrozen simples}
	\xymatrix{
		0\ar[r]&e_{v}J_{rel}\ar[r]^-{(b)}&\displaystyle\bigoplus_{b\in Q_{1},s(b)=v}e_{t(b)}J_{rel}\ar[rr]^-{(a^{-1}(\partial_{b}W))}&&\displaystyle\bigoplus_{a\in Q_{1},t(a)=v}e_{s(a)}J_{rel}\ar[r]^-{(a)}&e_{v}J_{rel}\ar[r]&S_{v}\ar[r]&0.
}
\end{align}
If $ v $ is a frozen vertex, the complex~(\ref{Complex at simples}) is the following complex
\begin{align}\label{Complex at frozen simpls}
	\xymatrix{
		0\ar[r]&\displaystyle\bigoplus_{b\in Q_{1}\setminus F_{1},s(b)=v}e_{t(b)}J_{rel}\ar[rr]^-{(a^{-1}(\partial_{b}W))}&&\displaystyle\bigoplus_{a\in Q_{1},t(a)=v}e_{s(a)}J_{rel}\ar[r]^-{(a)}&e_{v}J_{rel}\ar[r]&S_{v}\ar[r]&0.
	}
\end{align}
When $ c $ is a path in $ Q $, we write $ a^{-1}c=b $ if $ b = ac $ in $ kQ $ and $ a^{-1}c=0 $ otherwise.

\begin{Thm}\rm
	Let $ (Q,F,W) $ be an ice quiver with potential. Suppose that $ F $ is a full subquiver of $ Q $. Let $ \bm\Gamma_{rel}(Q,F,W) $ be the complete relative Ginzburg algebra (see~\cite[Definition 4.20]{YW2021}). The following statements are equivalent.
	\begin{itemize}
		\item[(i)] $ \bm\Gamma_{rel} $ is a stalk algebra, i.e.\ its homology is concentrated in degree zero.
		\item[(ii)] The canonical morphism (\ref{extended cotangent complex}) is a quasi-isomorphism.
		\item[(iii)] For each vertex $ v\in Q_{0} $, the corresponding complex~(\ref{Complex at unfrozen simples}) or (\ref{Complex at frozen simpls}) is exact.
		\item[(iv)] The canonical map $ g_{rel}\colon \bm\Pi_{2}(F)\xrightarrow{\bm{G}_{rel}}\bm\Gamma_{rel}(Q,F,W)\ra J_{rel} $ has a left 3-Calabi--Yau structure which is induced by the canonical left 3-Calabi--Yau structure on $ \bm{G}_{rel} $.
	\end{itemize}
Moreover, if $ (Q,F,W) $ is Jacobi-finite, then the above statements are also equivalent to
\begin{itemize}
	\item[(v)] The extriangulated structure on $ \ch $ is exact (see Theorem~\ref{Higgs for frozen quiver}) and for every frozen vertex $ v\in F_{0} $, the module $ \mathrm{rad}(e_{v}J_{rel}) $ is of the form $ \Hom_{\cc(Q,F,W)}(\bm\Gamma_{rel},T_{v}) $ for some $ T_{v} $ in $ \ch $.
\end{itemize} 
\end{Thm}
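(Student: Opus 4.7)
The plan is to establish the cycle (i) $\Leftrightarrow$ (ii) $\Leftrightarrow$ (iii), then (i) $\Leftrightarrow$ (iv), and finally (i) $\Leftrightarrow$ (v) under the Jacobi-finite hypothesis, with (i) acting as the central pivot for all the other equivalences.

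\textbf{(i) $\Leftrightarrow$ (ii) $\Leftrightarrow$ (iii).} The first equivalence rests on recognizing the complex (\ref{extended cotangent complex}), extended by a degree $-3$ term $\bm\Gamma_{rel}\otimes_{S}S^{m}\otimes_{S}\bm\Gamma_{rel}$, as a cofibrant resolution of the diagonal $\bm\Gamma_{rel}$-bimodule; this falls out of the reduced relative $3$-Calabi--Yau completion construction (Proposition~\ref{reduced completion}) applied to $G\colon kF\hookrightarrow kQ$. Applying $J_{rel}\lten_{\bm\Gamma_{rel}}(-)\lten_{\bm\Gamma_{rel}}J_{rel}$ to this resolution and composing with the canonical projection yields precisely the morphism $P(J_{rel})\to J_{rel}$. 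If (i) holds then $\bm\Gamma_{rel}\to J_{rel}$ is a quasi-isomorphism, so the derived base change coincides with the underived one and (ii) follows. Conversely, a spectral sequence argument comparing the cohomology of $P(J_{rel})$ with the double bar complex computing $\mathrm{Tor}^{\bm\Gamma_{rel}^{e}}_{\bullet}(\bm\Gamma_{rel},J_{rel}\otimes J_{rel})$ forces the vanishing of $H^{<0}(\bm\Gamma_{rel})$. For (ii) $\Leftrightarrow$ (iii), each term of $P(J_{rel})$ is projective as a $J_{rel}$-bimodule, so the morphism $P(J_{rel})\to J_{rel}$ is a quasi-isomorphism iff it remains so after $S_{v}\otimes^{\mathbf{L}}_{J_{rel}}(-)$ for every vertex $v$, yielding exactly the complexes (\ref{Complex at unfrozen simples}) or (\ref{Complex at frozen simpls}); Remark~\ref{exact at 2 places} further reduces the check, though this is not strictly needed.

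\textbf{(i) $\Leftrightarrow$ (iv).} If $\bm\Gamma_{rel}$ is a stalk dg algebra, the canonical projection $\bm\Gamma_{rel}\to J_{rel}$ is a quasi-equivalence; by Proposition~\ref{quasi-invariant} the canonical left $3$-Calabi--Yau structure on $\bm{G}_{rel}\colon\bm\Pi_{2}(F)\to\bm\Gamma_{rel}$ therefore transports to a left $3$-Calabi--Yau structure on the composition $g_{rel}$. For the converse, the compatibility along $\bm\Pi_{2}(F)\xrightarrow{\bm{G}_{rel}}\bm\Gamma_{rel}\to J_{rel}$ of the relative negative cyclic classes, combined with the invertibility condition $(a)$ of Definition~\ref{Def Relative CY} applied to $g_{rel}$, forces the natural map $\bm\Gamma_{rel}\to J_{rel}$ to become invertible in $\cd(\bm\Gamma_{rel}^{e})$, hence (i).

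\textbf{(i) $\Leftrightarrow$ (v) under Jacobi-finiteness, and the main obstacle.} Assuming (i), the morphism $g_{rel}$ satisfies the hypotheses of Section~\ref{section A is stalk}, so Theorem~\ref{Higgs of stalk} identifies $\ch$ with $\mathrm{gpr}(B')$ for $B'=eJ_{rel}e$; in particular the extriangulated structure is exact. For a frozen vertex $v$, the first three nontrivial terms of (\ref{Complex at frozen simpls}) provide a projective presentation of $\mathrm{rad}(e_{v}J_{rel})$ by objects of $\add\bm\Gamma_{rel}$, and this presentation lifts to a conflation in $\ch$ whose cokernel $T_{v}$ realizes $\mathrm{rad}(e_{v}J_{rel})\cong\Hom_{\cc(Q,F,W)}(\bm\Gamma_{rel},T_{v})$ via the restriction of the functor $\Hom_{\ch}(\bm\Gamma_{rel},-)$. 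For the converse, the exactness of the extriangulated structure together with the given lifts $T_{v}$ should patch to an honest short exact sequence of $J_{rel}^{e}$-modules witnessing the exactness of (\ref{Complex at unfrozen simples})--(\ref{Complex at frozen simpls}), whence (iii) and then (i). The main obstacle lies in this converse direction (v) $\Rightarrow$ (i): translating the Higgs-lifting condition into global bimodule exactness requires careful bookkeeping, since the clean equivalence $\ch\simeq\mathrm{gpr}(B')$ is only available once (i) has already been established, so the argument must proceed by a direct construction at the level of bimodule resolutions rather than through Theorem~\ref{Higgs of stalk}.
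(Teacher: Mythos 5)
Your cycle (i)$\Rightarrow$(ii)$\Rightarrow$(iii) and the forward implications (i)$\Rightarrow$(iv), (i)$\Rightarrow$(v) are sound and essentially agree with the paper. The problems are in the converse directions, which carry all the content. For (iv)$\Rightarrow$(i) you assert that condition (a) of Definition~\ref{Def Relative CY} applied to $g_{rel}$ ``forces the natural map $\bm\Gamma_{rel}\to J_{rel}$ to become invertible in $\cd(\bm\Gamma_{rel}^{e})$''. Condition (a) is a self-duality statement internal to $\cd(J_{rel}^{e})$ (invertibility of $\Si^{2}\mu_{g_{rel}}^{\vee}\to\nu_{g_{rel}}$); it says nothing directly about the algebra morphism $\bm\Gamma_{rel}\to J_{rel}$, and you give no mechanism by which it would. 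What the Calabi--Yau structure actually buys is (iv)$\Rightarrow$(iii): the relative duality (Corollary~\ref{Relative CY duality}, Proposition~\ref{Relative Hom}) bounds the projective dimension of each simple $J_{rel}$-module (by $3$ at unfrozen vertices, by $2$ at frozen ones, as in Proposition~\ref{Smooth is finite gl.dim}), and dimension counts such as $\dim\Ext^{3}_{J_{rel}}(S_{k},S_{j})=\dim\Hom_{J_{rel}}(S_{j},S_{k})=\delta_{jk}$, together with fullness of $F$ to kill the frozen-to-frozen $\Ext^{2}$, identify the minimal projective resolutions with (\ref{Complex at unfrozen simples}) and (\ref{Complex at frozen simpls}). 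You then still need (iii)$\Rightarrow$(i), and here your ``spectral sequence argument'' is an assertion rather than a proof. The paper's route: exactness of the complexes at the simples yields $\RHom_{J_{rel}}(S_{i},S_{j})\cong\RHom_{\bm\Gamma_{rel}}(S_{i},S_{j})$ for all $i,j$, hence the restriction $\cd_{pc}(J_{rel})\to\cd_{pc}(\bm\Gamma_{rel})$ is an equivalence because the simples compactly generate both pseudocompact derived categories, hence $\bm\Gamma_{rel}\to J_{rel}$ is a quasi-isomorphism.

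For (v)$\Rightarrow$(i) you correctly flag the difficulty but do not resolve it, and you aim at the wrong target: no bimodule-level exactness is needed, only (iii), i.e.\ one-sided projective resolutions of the simples. Concretely: for an unfrozen vertex $j$, exactness of the extriangulated structure turns the two exchange conflations at $j$ into a $2$-almost-split sequence $0\to e_{j}\bm\Gamma_{rel}\to\bigoplus_{j\to k}e_{k}\bm\Gamma_{rel}\to\bigoplus_{i\to j}e_{i}\bm\Gamma_{rel}\to e_{j}\bm\Gamma_{rel}\to0$ in $\ch$, and applying $\Hom_{\ch}(\bm\Gamma_{rel},?)$ (resp.\ $D\Hom_{\ch}(?,\bm\Gamma_{rel})$) produces exactly (\ref{Complex at unfrozen simples}) (resp.\ an injective coresolution of $S_{j}$). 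For a frozen vertex $v$, the representability hypothesis gives a conflation $T_{v}\rightarrowtail\bigoplus_{t(a)=v}e_{s(a)}\bm\Gamma_{rel}\twoheadrightarrow e_{v}\bm\Gamma_{rel}$ with $T_{v}\in\add\bm\Gamma_{rel}$, so $\mathrm{pdim}\,S_{v}=2$, and the leftmost term of the minimal resolution is identified with $\bigoplus_{b\in Q_{1}^{m},\,s(b)=v}e_{t(b)}J_{rel}$ by computing $\dim\Ext^{2}_{J_{rel}}(S_{v},S_{j})$ against the injective coresolutions and again using fullness of $F$. None of this is in your sketch, so the implication (v)$\Rightarrow$(i) remains unproved as written.
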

\begin{proof}
$ (i)\Rightarrow(ii)\Rightarrow(iii) $ and $ (i)\Rightarrow(iv) $ are clear. To prove that $ (iii)\Rightarrow(i) $, let $ \cd_{pc}(\bm\Gamma_{rel}) $ be the pseudocompact derived category of $ \bm\Gamma_{rel} $ (see \cite[Section 7.11]{BKDY2011}). By~\cite[Proposition 7.14]{BKDY2011}, $ (\cd_{pc}(\bm\Gamma_{rel}))^{op} $ is compactly generated by $ \{S_{v}\,|\,v\in Q_{0}\} $ and similarly for $ (\cd_{pc}(J_{rel}))^{op} $. The restriction functor $$ \theta\colon \cd_{pc}(J_{rel})\ra\cd_{pc}(\bm\Gamma_{rel}) $$ takes $ S_{v} $ to $ S_{v} $. We can conclude that $ \theta $ is an equivalence if it induces isomorphisms
$$ \Ext^{*}_{J_{rel}}(S_{i},S_{j})\iso\Ext^{*}_{\bm\Gamma_{rel}}(S_{i},S_{j})\,\forall i,j\in Q_{0}.$$ 

The complexes (\ref{Complex at unfrozen simples}), (\ref{Complex at frozen simpls}) are exact, i.e.\ $ S_{i}\ten_{J_{rel}}\cone(\rho)\ten_{J_{rel}}J_{rel} \ra S_{i} $ is exact for each vertex $ i $ (see~(\ref{Cone resolution})). So we can use it to compute $ \RHom $:
\begin{equation*}
	\begin{split}
		\RHom_{J_{rel}}(S_{i},S_{j})&=\Hom_{J_{rel}}(S_{i}\ten_{J_{rel}}\cone(\rho)\ten_{J_{rel}}J_{rel},S_{j})\\
		&=\Hom_{\bm\Gamma_{rel}}(S_{i}\ten_{J_{rel}}\cone(\rho),\Hom_{J_{rel}}(J_{rel},S_{j}))\\
		&=\Hom_{\bm\Gamma_{rel}}(S_{i}\ten_{\bm\Gamma_{rel}}\cone(\rho),S_{j})\\
		&=\RHom_{\bm\Gamma_{rel}}(S_{i},S_{j}).
	\end{split}
\end{equation*}
Thus, the restriction functor $ \theta\colon \cd_{pc}(J_{rel})\ra\cd_{pc}(\bm\Gamma_{rel}) $ is an equivalence. It follows that $ \bm\Gamma_{rel}\ra J_{rel} $ is a quasi-isomorphism. Thus, the complete relative Ginzburg algebra $ \bm\Gamma_{rel} $ is concentrated in degree 0.

To prove that $ (iv)\Rightarrow(iii) $, suppose that the canonical map $$ g_{rel}\colon \bm\Pi_{2}(F)\xrightarrow{\bm{G}_{rel}}\bm\Gamma_{rel}(Q,F,W)\ra J_{rel} $$ has a canonical left 3-Calabi--Yau structure. Let $ k $ be an unfrozen vertex. By the proof of Proposition~\ref{Smooth is finite gl.dim}, we see that the simple $ J_{rel} $-module $ S_{k} $ has projective dimension at most 3. Therefore, by Remark~\ref{exact at 2 places}, there is a projective resolution of length 3
\[\begin{tikzcd}
	\displaystyle\bigoplus_{j\in Q_{0}}(e_{j}J_{rel})^{m_{kj}}\arrow[r,hook]&\displaystyle\bigoplus_{b\in Q_{1},s(b)=k}e_{t(b)}J_{rel}\arrow[rr,"(a^{-1}(\partial_{b}(W)))"]&&\displaystyle\bigoplus_{a\in Q_{1},t(a)=k}e_{s(a)}J_{rel}\arrow[r,"(a)"]&e_{k}J_{rel}\arrow[r,twoheadrightarrow]&S_{k}.
\end{tikzcd}
\]
Using the relative Calabi--Yau property (see Corollary~\ref{Relative CY duality}) and comparing dimensions, we conclude that
$$ m_{kj}=\dim\Ext^{3}_{J_{rel}}(S_{k},S_{j})=\dim\Hom_{J_{rel}}(S_{j},S_{k})=\delta_{jk} .$$
Thus, the above resolution is exactly the complex~(\ref{Complex at unfrozen simples}).

Let $ v $ be a frozen vertex. Using the same proof as for Proposition~\ref{Smooth is finite gl.dim}, we show that the frozen simple $ J_{rel} $-module $ S_{v} $  has projective dimension at most 2. Therefore, there is a projective resolution of length 2
\[\begin{tikzcd}
		\displaystyle\bigoplus_{j\in Q_{0}}(e_{j}J_{rel})^{m_{vj}}\arrow[r,hook]&\displaystyle\bigoplus_{a,t(a)=v}e_{s(a)}J_{rel}\arrow[r,"(a)"]&e_{v}J_{rel}\arrow[r,twoheadrightarrow]&S_{v}.
\end{tikzcd}
\]
Similarly, using the relative Calabi--Yau property (see Corollary~\ref{Relative CY duality}) and comparing dimensions, we conclude that
\begin{itemize}
	\item For any unfrozen vertex $ k $, we have
	 \begin{equation*}
	 	\begin{split}
	 		\dim\Ext^{2}_{J_{rel}}(S_{v},S_{k})&=\dim\Ext^{1}_{J_{rel}}(S_{k},S_{v})\\
	 		&=\sharp\{a\in Q_{1}\colon v\to k\}\\
	 		&=\sharp\{a\in Q_{1}^{m}\colon v\to k\}.
	 	\end{split}
	 \end{equation*}
 \item For any frozen vertex $ w\neq v $, by Proposition~\ref{Relative Hom}, we have an exact sequence
 $$ D\Ext^{1}_{\bm\Pi_{2}(F)}(S_{w},S_{v})\ra D\Ext^{1}_{J_{rel}}(S_{w},S_{v})\ra\Ext^{2}_{J_{rel}}(S_{v},S_{w})\ra0 .$$
 Since $ F $ is a full subquiver of $ Q $, the space $ \Ext^{2}_{J_{rel}}(S_{v},S_{w}) $ vanishes. Thus, we can conclude that
 $$ \displaystyle\bigoplus_{j\in Q_{0}}(e_{j}J_{rel})^{m_{vj}}=\displaystyle\bigoplus_{b\in Q_{1}^{m},s(b)=v}e_{t(b)}J_{rel} .$$ Hence the complex~(\ref{Complex at simples}) is exact. This finishes the proof of $ (iv)\Rightarrow(iii) $.
	
\end{itemize}

Now, suppose that $ (Q,F,W) $ is Jacobi-finite. By Theorem~\ref{Higgs of stalk}, $ (i)\Rightarrow (v) $ is clear. To prove that $ (v)\Rightarrow(iii) $, suppose that the extriangulated structure on $ \ch $ is exact. All inflations in $ \ch $ are monomorphisms and all deflations in $ \ch $ are epimorphisms. Let $ j $ be an unfrozen vertex. We have exchange conflations in $ \ch $ (\cite[Proposition 1.1 and 1.2]{Am2011})
$$ e_{j}\bm\Gamma_{rel}\rightarrowtail\bigoplus_{j\to k\in Q_{1}}e_{k}\bm\Gamma_{rel}\twoheadrightarrow T_{j}^{*} \quad\text{and}\quad T_{j}^{*}\rightarrowtail\bigoplus_{i\to j\in Q_{1}}e_{i}\bm\Gamma_{rel}\twoheadrightarrow e_{j}\bm\Gamma_{rel} .$$ Thus, we get a 2-almost-split sequence
\begin{align*}
	\xymatrix{
		0\ra e_{j}\bm\Gamma_{rel}\ra\displaystyle\bigoplus_{j\to k\in Q_{1}}e_{k}\bm\Gamma_{rel}\ra\displaystyle\bigoplus_{i\to j\in Q_{1}}e_{i}\bm\Gamma_{rel}\ra e_{j}\bm\Gamma_{rel}\ra 0.
	}
\end{align*}
Applying the functor $ \Hom_{\ch}(\bm\Gamma_{rel},?) $ to the above 2-almost-split sequence, we get a projective resolution of $ S_{j} $ which is exactly the complex~\ref{Complex at unfrozen simples}. Similarly, applying the functor $ D\Hom_{\ch}(?,\bm\Gamma_{rel}) $ to the 2-almost split sequence, we get an injective resolution of $ S_{j} $
$$ S_{j}\hookrightarrow D(J_{rel}e_{j})\xrightarrow{(b)}\displaystyle\bigoplus_{b\in Q_{1},s(b)=j}D(J_{rel}e_{t(b)})\ra\displaystyle\bigoplus_{a\in Q_{1},t(a)=j}D(J_{rel}e_{s(a)})\twoheadrightarrow D(J_{rel}e_{j}) .$$

Suppose that for every frozen vertex $ v\in F_{0} $, the functor $$ \mathrm{rad}(?,e_{v}\bm\Gamma_{rel})|_{\ch}\colon\ch\ra\mathrm{mod}J_{rel} $$ is representable in $ \ch $.
Let $ v $ be a frozen vertex. By the assumption, there exists an object $ T_{1}\in\add\bm\Gamma_{rel} $ such that we have a conflation in $ \ch $
$$ T_{1}\rightarrowtail \displaystyle\bigoplus_{a,t(a)=v}e_{s(a)}\bm\Gamma_{rel} \twoheadrightarrow e_{v}\bm\Gamma_{rel} .$$ This shows that $ \mathrm{pdim}S_{v}=2 $. Let
\[\begin{tikzcd}
	\displaystyle\bigoplus_{j\in Q_{0}}(e_{j}J_{rel})^{m_{vj}}\arrow[r,hook]&\displaystyle\bigoplus_{a,t(a)=v}e_{s(a)}J_{rel}\arrow[r,"(a)"]&e_{v}J_{rel}\arrow[r,twoheadrightarrow]&S_{v}.
\end{tikzcd}
\] be a minimal projective resolution of $ S_{v} $. Since $ F $ is full, there are no direct summands $ \{e_{v}J_{rel}\,|\,v\in F_{0}\} $ in $ \displaystyle\bigoplus_{j\in Q_{0}}(e_{j}J_{rel})^{m_{kj}} $. For each unfrozen vertex $ j $, we have
\begin{equation*}
	\begin{split}
		\dim\Ext^{1}_{J_{rel}}(S_{v},S_{j})&=\dim\Ext^{1}_{\bm\Gamma_{rel}}(S_{v},S_{j})\\
		&=\sharp\{a\in Q_{1}\colon j\to v\}
	\end{split}
\end{equation*}
and
\begin{equation*}
	\begin{split}
		m_{vj}&=\dim\Ext^{2}_{J_{rel}}(S_{v},S_{j})\\
		&=\dim(\displaystyle\bigoplus_{v\to j}(S_{v},D(J_{rel}e_{j})))\\
		&=\sharp\{b\in Q_{1}^{m}\colon v\to j\}.
	\end{split}
\end{equation*}

Thus, we conclude that
$$ \displaystyle\bigoplus_{j\in Q_{0}}(e_{j}J_{rel})^{m_{vj}}=\displaystyle\bigoplus_{b\in Q_{1}^{m},s(b)=v}e_{t(b)}J_{rel} .$$ Hence we get the resolution complex~(\ref{Complex at frozen simpls}).
	
\end{proof}

\section{Relative Calabi--Yau structures in higher Auslander--Reiten theory}\label{section 8}
\subsection{For algebras of finite global dimension}
Let $ n $ be a non-negative integer. Let $ B_{0} $ be a finite dimensional algebra with global dimension at most $ n $. Let $ \mathbb{S}_{B_{0}}=?\lten_{B_{0}}DB_{0} $ be the Serre functor of $ \mathcal{D}^{b}(\mathrm{mod}B_{0}) $. The corresponding inverse Serre functor is given by $ \mathbb{S}^{-1}_{B_{0}}=?\lten_{B_{0}}\mathbf{R}\Hom_{B^{e}}(B_{0},B_{0}^{e})$. Moreover, the \emph{Nakayama functor} $ \nu_{B_{0}} $ for $ \mathrm{mod}B_{0} $ is given by $ \nu_{B_{0}}=D\Hom_{B_{0}}(?,B_{0}) $. 

\begin{Def}\rm\cite{OI2007}
	The \emph{higher inverse Auslander-Reiten translation} $ \tau^{-1}_{n} $ of $ \mathrm{mod}B_{0} $ is defined to be the following composition
	\begin{align*}
		\xymatrix{
			\tau^{-1}_{n}\colon \mathrm{mod}B_{0}\ar@{^{(}->}[r]&\mathcal{D}^{b}(B_{0})\ar[r]^{\Sigma^{n}\mathbb{S}^{-1}_{B_{0}}}&\mathcal{D}^{b}(B_{0})\ar[r]^{H^{0}}&\mathrm{mod}B_{0}.
		}
	\end{align*}
\end{Def}

\begin{Def}\rm
	Let $ f\colon\mathcal{B}\rightarrow\mathcal{A} $ be a dg functor. The \emph{relative inverse Serre functor} for $ \mathcal{D}(\mathcal{A}) $ is defined as 
	\begin{align*}
		\xymatrix{
			\mathbb{S}_{\mathcal{A},\mathcal{B}}^{-1}=?\lten_{\ca}\Theta_{f}\colon\mathcal{D}(\mathcal{A})\to\mathcal{D}(\mathcal{A})
		},
	\end{align*}
	where $ \Theta_{f}=\mathbf{R}\Hom_{\mathcal{A}^{e}}(\cone(\mathcal{A}\lten_{\cb}\mathcal{A}\to \mathcal{A}),\mathcal{A}^{e})\in\mathcal{D}(\mathcal{A}^{e}) $.
\end{Def}

\begin{Rem}
	It is clear that we have an isomorphism $ \bm{\Pi}_{n+2}(\mathcal{A},\mathcal{B})\simeq\bigoplus_{i\geqslant 0}(\Si^{n+1}\Theta_{f})^{\lten_{i}} $ in $ \mathcal{D}(\mathcal{A}) $.
\end{Rem}

\begin{Def}\rm\cite{OISO2011}
	Let $ B_{0} $ be an algebra of global dimension at most $ n $. Then the $ (n + 1) $-\emph{preprojective algebra} of $ B_{0} $ is defined as
	$$ \widetilde{B_{0}}=T_{B_{0}}(\Ext_{B_{0}}^{n}(DB_{0},B_{0})), $$ i.e, the tensor algebra of the $ B_{0} $-bimodule $ \Ext_{B_{0}}^{n}(DB_{0},B_{0}) $ over $ B_{0} $. Then
	$ \widetilde{B_{0}} $ is isomorphic to $\bigoplus_{i\geqslant 0}\tau_{n}^{-i}B_{0} $ as a $ B_{0} $-module.
\end{Def}

\begin{Rem}
	In~\cite[Section 4]{BK2011}, Keller introduced the notion of the derived $ (n + 1) $-preprojective algebra $ \bm{\Pi}_{n+1}(B_{0})$ (also called $ (n+1) $-Calabi--Yau completion of $ B_{0} $). The $ (n + 1) $-preprojective algebras are the $ 0 $-th homology of his derived $ (n + 1) $-preprojective algebras.
\end{Rem}

We denote by $ \mathcal{B}\coloneqq\mathrm{proj}B_{0}\subseteq \mathrm{mod}B_{0} $ the projective modules. Let $ \mathcal{A} $ be a subcategory of $ \mathrm{mod}B_{0} $ which contains $ \mathcal{B} $ as a full subcategory. Then there is a natural dg inclusion functor
$$ f_{0}\colon\mathcal{B}\hookrightarrow\mathcal{A} .$$

For any $ X\in\mathcal{A} $, we put $ X^{\wedge}\coloneqq\Hom_{B_{0}}(?,X)|_{\mathcal{A}}\in \mathrm{proj}\mathcal{A} $. For simplicity of notation, let $ \Theta_{\cb} $ (respectively $ \Theta_{\ca} $) stand for $ \cb^{\vee}=\mathbf{R}\Hom_{\mathcal{B}^{e}}(\mathcal{B},\mathcal{B}^{e}) $ (respectively $ \ca^{\vee} $) in the next Proposition.
\begin{Prop}\label{Relative Serre formular}
	Assume that $ \mathcal{A} $ is homologically smooth and is an $ n $-rigid subcategory of $ \mathrm{mod}B_{0} $, i.e.\ $ \Ext^{k}_{B_{0}}(\mathcal{A},\mathcal{A})=0 $ for $ 1\leqslant k\leqslant n-1 $. Then for $ X\in\mathcal{A} $, we have a functorial isomorphism $ X^{\wedge}\lten_{\ca}\Si^{n+1}\Theta_{f_{0}}\cong (\tau_{n}^{-1}X)^{\wedge} $.
\end{Prop}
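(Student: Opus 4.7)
The plan is to compute $X^{\wedge}\lten_{\ca}\Si^{n+1}\Theta_{f_{0}}$ by working through the defining triangle for $\Theta_{f_{0}}$, identifying each term via the smoothness of $\cb$ and the Morita equivalence $\cd(\cb)\simeq\cd(B_{0})$, and then concentrating the remaining cohomology in a single degree using the $n$-rigidity hypothesis.

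First, I would apply $\mathbf{R}\Hom_{\ca^{e}}(-,\ca^{e})$ to the defining triangle $\ca\lten_{\cb}\ca\ra\ca\ra C\ra\Si(\ca\lten_{\cb}\ca)$ in $\cd(\ca^{e})$, obtaining
\begin{equation*}
    \Theta_{f_{0}}\ra\ca^{\vee}\ra(\ca\lten_{\cb}\ca)^{\vee}\ra\Si\Theta_{f_{0}}.
\end{equation*}
Since $B_{0}$ has finite global dimension, $\cb=\mathrm{proj}\,B_{0}$ is homologically smooth, and the standard adjunction $(\mathbf{L}f_{0}^{*},f_{0,*})$ together with the smoothness identification $\mathbf{R}\Hom_{\cb^{e}}(\cb,-)\simeq\cb^{\vee}\lten_{\cb^{e}}(-)$ yields a canonical isomorphism $(\ca\lten_{\cb}\ca)^{\vee}\cong \ca\lten_{\cb}\cb^{\vee}\lten_{\cb}\ca$ in $\cd(\ca^{e})$.

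Next, I would tensor the whole triangle on the left with $X^{\wedge}$ over $\ca$, shift by $\Si^{n+1}$, and identify the two known terms pointwise. Using Yoneda ($X^{\wedge}\lten_{\ca}M = M(X,-)$ for any bimodule $M$), the first term is $\Si^{n+1}\mathbb{S}^{-1}_{\ca}(X^{\wedge})$; since $X^{\wedge}$ and $Y^{\wedge}$ are representable, its value at $Y\in\ca$ is $D\ca(X,Y)=D\Hom_{B_{0}}(X,Y)$ concentrated in degree $0$ (up to the shift $\Si^{n+1}$). For the second term, the Morita equivalence identifies $X^{\wedge}|_{\cb}\lten_{\cb}\cb^{\vee}$ with $\mathbb{S}^{-1}_{B_{0}}(X)=X\lten_{B_{0}}\Theta_{B_{0}}$, so its value at $Y$ is $\mathbf{R}\Hom_{B_{0}}(Y,\mathbb{S}^{-1}_{B_{0}}X)\cong D\mathbf{R}\Hom_{B_{0}}(X,Y)$ by Serre duality in $\cd^{b}(B_{0})$.

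The $n$-rigidity hypothesis now enters: $\Ext^{i}_{B_{0}}(X,Y)=0$ for $1\leqslant i\leqslant n-1$, so $D\mathbf{R}\Hom_{B_{0}}(X,Y)$ has cohomology concentrated in the two extreme degrees, namely $D\Hom_{B_{0}}(X,Y)$ and $D\Ext^{n}_{B_{0}}(X,Y)$. The comparison map $\ca^{\vee}\ra(\ca\lten_{\cb}\ca)^{\vee}$ is dual to the multiplication $\ca\lten_{\cb}\ca\ra\ca$, which is an isomorphism at $H^{0}$, so the induced map between the two $D\Hom_{B_{0}}(X,Y)$-summands is the identity. The long exact sequence in cohomology then collapses, leaving only $D\Ext^{n}_{B_{0}}(X,Y)$ as the single nontrivial cohomology of $\Si^{n+1}\Theta_{f_{0}}(X,Y)$, and placed in degree $0$ after the shift. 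Finally, Auslander--Reiten duality applied to $X,Y\in\ca$ identifies $D\Ext^{n}_{B_{0}}(X,Y)$ with $\Hom_{B_{0}}(Y,\tau_{n}^{-1}X)=(\tau_{n}^{-1}X)^{\wedge}(Y)$, and the isomorphism is functorial in $Y$ by construction, proving the claim.

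The hard part will be the careful bookkeeping of cohomological degrees and signs across the chain of identifications (Serre duality, Morita, smoothness), together with verifying that the comparison map $\ca^{\vee}\ra(\ca\lten_{\cb}\ca)^{\vee}$ really does induce the identity on the $D\Hom_{B_{0}}(X,Y)$-summand so that the long exact sequence cleanly extracts a single cohomology, rather than leaving a nontrivial extension.
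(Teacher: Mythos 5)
There is a genuine gap, and it sits precisely where the content of the proposition lives. Your identification of the first term is wrong: you claim that $X^{\wedge}\lten_{\ca}\Si^{n+1}\ca^{\vee}$ has value $D\Hom_{B_{0}}(X,Y)$ at $Y$, concentrated in a single degree. This conflates the \emph{inverse} dualizing bimodule $\ca^{\vee}=\RHom_{\ca^{e}}(\ca,\ca^{e})$ with the dualizing bimodule $D\ca$, i.e.\ $\mathbb{S}^{-1}_{\ca}$ with $\mathbb{S}_{\ca}$. Applied to a representable, $\mathbb{S}^{-1}_{\ca}$ is $\RHom_{\ca}(D\ca,-)$ and produces a complex spread over as many degrees as the injective dimension of $(?,X)$ in $\mathrm{mod}\,\ca$; the paper computes it explicitly (via a minimal injective resolution $0\to X\to I^{0}\to\cdots\to I^{n}\to 0$ over $B_{0}$ and the inverse Nakayama functor $\nu_{\ca}^{-1}$) as the $(n+2)$-term complex $(?,P_{0})\to\cdots\to(?,P_{n})\to(?,\tau_{n}^{-1}X)$. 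The whole point is that this term carries an \emph{extra} cohomology, namely $(\tau_{n}^{-1}X)^{\wedge}$, beyond what the $(\ca\lten_{\cb}\ca)^{\vee}$-term sees; by flattening it to $D\Hom_{B_{0}}(X,Y)$ in one degree you have discarded exactly the object you are trying to produce. (If instead you secretly meant $\RHom_{\ca}(-,Y^{\wedge})$ by "value at $Y$", then the degree-$0$ claim becomes true, but all your variances are flipped and you are computing a dual of the module rather than the module itself.)

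The second error is the concluding step: $D\Ext^{n}_{B_{0}}(X,Y)\cong\Hom_{B_{0}}(Y,\tau_{n}^{-1}X)$ is not higher Auslander--Reiten duality. The correct statements are $D\Ext^{n}_{B_{0}}(X,Y)\cong\overline{\Hom}_{B_{0}}(Y,\tau_{n}X)$, or, in the form the paper actually derives and uses, $D\Ext^{n}_{B_{0}}(L,X)\cong\Hom_{B_{0}}(\tau_{n}^{-1}X,L)$ --- note that $\tau_{n}^{-1}X$ lands in the \emph{contravariant} slot, so this identifies $\Ext^{n}_{B_{0}}(?,X)|_{\ca}$ with the injective $\ca$-module $D(\tau_{n}^{-1}X,?)$, not with the representable $(\tau_{n}^{-1}X)^{\wedge}=(?,\tau_{n}^{-1}X)$. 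The passage from that injective to the projective $(?,\tau_{n}^{-1}X)$ is achieved in the paper by applying $\nu_{\ca}^{-1}$ to the resulting injective resolution of $(?,X)$ in $\mathrm{mod}\,\ca$ --- a step your pointwise argument has no analogue of. To repair the proof you should follow the paper's strategy: compute both $X^{\wedge}\lten_{\ca}\Si^{n+1}\ca^{\vee}$ and $X^{\wedge}\lten_{\ca}\Si^{n+1}(\ca\lten_{\cb}\cb^{\vee}\lten_{\cb}\ca)$ as explicit complexes of projective $\ca$-modules built from $\nu_{B_{0}}^{-1}(I^{\bullet})$, observe that the comparison map is the identity on the common terms $(?,P_{0}),\ldots,(?,P_{n})$, and read off the fiber as the single remaining term $(?,\tau_{n}^{-1}X)$ in degree $0$.
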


\begin{proof}
	Let $ X $ be an object in $ \ca $. We will show that
	$$ X^{\wedge}\lten_{\ca}\Si^{n+1}\Theta_{f_{0}}\cong (\tau_{n}^{-1}X)^{\wedge} ,$$ where $ \Theta_{f_{0}}=\mathbf{R}\Hom_{\mathcal{A}^{e}}(\cone(\mathcal{A}\lten_{\cb}\mathcal{A}\to \mathcal{A}),\mathcal{A}^{e}) $.

	\emph{Step 1.
		We compute the image of $ X^{\wedge} $ under the functor $ ?\lten_{\ca}\RHom_{\ca^{e}}(\mathcal{A}\lten_{\cb}\mathcal{A},\mathcal{A}^{e})\colon\cd(\ca)\ra\cd(\ca) .$}

	Since $ \mathcal{B} $ is smooth, we have
	\begin{equation*}
		\begin{split}
			\mathbf{R}\Hom_{\mathcal{A}^{e}}(\mathcal{A}\lten_{\cb}\mathcal{A},\mathcal{A}^{e})\cong&\ \mathbf{R}\Hom_{\mathcal{A}^{e}}(\mathcal{A}\lten_{\cb}\mathcal{B}\lten_{\cb}\mathcal{A},\mathcal{A}^{e})\\
			\cong&\RHom_{\cb^{e}}(\cb,\ca^{e})\\
			\cong&\Theta_{\cb}\lten_{\cb^{e}}\ca^{e}\\
			\cong&\mathcal{A}\lten_{\cb}\Theta_{\cb}\lten_{\cb}\mathcal{A},
		\end{split}
	\end{equation*} 
where $ \Theta_{\cb}=\mathbf{R}\Hom_{\mathcal{B}^{e}}(\mathcal{B},\mathcal{B}^{e}) $. Here we use the smoothness of $ \cb $ in the third isomorphism.
	
	Then we have
	\begin{equation*}
		\begin{split}
			X^{\wedge}\lten_{\ca}\Si^{n+1}( \mathcal{A}\lten_{\cb}\Theta_{\cb}\lten_{\cb}\mathcal{A})&\cong (X^{\wedge}\lten_{\cb}\Theta_{\cb})\lten_{\cb}\Si^{n+1}\mathcal{A}\\
			&\cong \mathbb{S}_{B_{0}}^{-1}(\Si^{n+1}X)^{\wedge}\lten_{\cb}\mathcal{A}.
		\end{split}
	\end{equation*}
	
	Fix a minimal injective resolution of $ X $
	$$ 0\to X\to I^{0}\to I^{1}\cdots\to I^{n}\to 0 .$$ Then $ \mathbb{S}_{B_{0}}^{-1}(\Si^{n+1}X)=\nu_{B_{0}}^{-1}(I^{\bullet}) $ is the following complex
	$$ 0\to P_{0}\to P_{1}\cdots\to P_{n}\to 0 ,$$ where $ P_{i} $ is in degree $ i-n-1 $ and $ P_{i}=\nu_{B_{0}}^{-1}(I^{i}) \in \mathrm{proj}B_{0} $. After applying the functor $$ ?\lten_{\cb}\mathcal{A}\colon\mathcal{D}(\mathcal{B})\to \mathcal{D}(\mathcal{A}) ,$$ we get
	$$ 0\to P_{0}^{\wedge}\to P_{1}^{\wedge}\to\cdots\to P_{n}^{\wedge}\to 0, $$ where $ P_{i}^{\wedge}=\Hom_{\mathrm{mod}B_{0}}(?,P_{i})|_{\mathcal{A}}\in\mathrm{proj}(\mathcal{A}) $. 
	
	Thus the image of $ X^{\wedge} $ under the functor $ ?\lten_{\ca}\mathbf{R}\Hom_{\mathcal{A}^{e}}(\mathcal{A}\lten_{\cb}\mathcal{A},\mathcal{A}^{e})\colon\mathcal{D}(\mathcal{A})\rightarrow\mathcal{D}(\mathcal{A}) $ is 
	$$ 0\to P_{0}^{\wedge}\to P_{1}^{\wedge}\to\cdots\to P_{n}^{\wedge}\to 0, $$ where $ P_{i}^{\wedge}=\Hom_{\mathrm{mod}B_{0}}(?,P_{i})|_{\mathcal{A}}\in\mathrm{proj}(\mathcal{A}) $.
	
	\emph{Step 2. We compute the image of $ X^{\we} $ under the functor $ ?\lten_{\ca}\Si^{n+1}\Theta_{\ca}=\Si^{n+1}\mathbb{S}_{\mathcal{A}}^{-1}\colon\mathcal{D}(\mathcal{A})\to\mathcal{D}(\mathcal{A}) $.}

	We have the minimal injective resolution in Step 1
	$$ 0\to X\to I^{0}\to I^{1}\cdots\to I^{n}\to 0 .$$ Then $ \Si^{n+1}\mathbb{S}_{\mathcal{A}}^{-1}(X^{\wedge}) $ is the following complex
	$$ 0\to (?,X)\to (?,I^{0})\to (?,I^{1})\cdots\to (?,I^{n}) .$$
	For each $ 1\leqslant i\leqslant n-1 $, the cohomology at $ (?,I^{i}) $ is $ \Ext^{i}_{B_{0}}(?,X)=0 $ because $ \mathcal{A} $ is an $ n $-rigid subcategory of $ \mathrm{mod}B_{0} $. The cohomology at $ (?,I^{n}) $ is $ \Ext^{n}_{B_{0}}(?,X) $. For any object $ L $ in $ \mathrm{mod}B_{0} $, we have
	\begin{equation*}
		\begin{split}
			D\Ext^{n}_{B_{0}}(L,X) &\simeq D\Hom_{\mathcal{D}(B_{0})}(L,\Si^{n}X)\\
			&\simeq \Hom_{\mathcal{D}(B_{0})}(\Si^{n}X,\mathbb{S}_{B_{0}}(L))\\
			&\simeq \Hom_{\mathcal{D}(B_{0})}(\Si^{n}(\mathbb{S}_{B_{0}}^{-1}X),L)\\
			&\simeq \Hom_{\mathcal{D}(B_{0})}(H^{0}(\Si^{n}(\mathbb{S}_{B_{0}}^{-1}X)),L)\\
			&\simeq \Hom_{\mathcal{D}(B_{0})}(\tau_{n}^{-1}X,L)\\
			&\simeq \Hom_{B_{0}}(\tau_{n}^{-1}X,L),
		\end{split}
	\end{equation*}
	where the fourth equivalence follows from $ \mathbb{S}_{B_{0}}^{-1}=\mathbf{R}\Hom_{B_{0}}(DB_{0},?) $ and $ \mathrm{gldim}(B_{0})\leqslant n $.	Then the cohomology at $ (?,I^{n}) $ is isomorphic to $ D\Hom_{B_{0}}(\tau_{n}^{-1}X,?) $.
	
	Since we have isomorphisms $ (?,I^{i})\simeq D(P_{i},?) $ for all $ 1\leqslant i\leqslant n-1 $, we get the following injective resolution of $ (?,X) $
	$$ 0\to (?,X)\to D(P_{0},?)\to D(P_{1},?)\cdots\to D(P_{n},?)\to D(\tau_{n}^{-1}X,?)\to 0 .$$

	Applying the functor $ \Si^{n+1}\mathbb{S}_{\mathcal{A}}^{-1} \colon\cd(\ca)\ra\cd(\ca) $ to the above complex, we get a complex
	$$ 0\to(?,P_{0})\to(?,P_{1})\to\cdots\to(?,P_{n})\to (?,\tau_{n}^{-1}X)\to 0, $$ where $ (?,P_{i}) $ is in degree $ i-n-1 $ and $ (?,\tau_{n}^{-1}X) $ is in degree 0. This is because $ \mathbb{S}_{\mathcal{A}}^{-1}(D(M,?))=\nu_{\mathrm{mod}\mathcal{A}}^{-1}(D(M,?))=(?,M) $	
	for any object $ M $ in $ \mathcal{A} $.

	\emph{Step 3.} From the computations in step 1 and step 2, the object $ X^{\wedge}\lten_{\ca}\Si^{n+1}\Theta_{f_{0}} $ is equal to the homotopy fiber of the following morphism of complexes
	\begin{align*}
		\xymatrix{ 0\ar[r]&(?,P_{0})\ar[r]\ar@{=}[d]&(?,P_{1})\ar[r]\ar@{=}[d]&\cdots\ar[r]&(?,P_{n})\ar[r]\ar@{=}[d]&(?,\tau_{n}^{-1}X)\ar[d]\ar[r]&0\\
			0\ar[r]&(?,P_{0})\ar[r]&(?,P_{1})\ar[r]&\cdots\ar[r]&(?,P_{n})\ar[r]&0\ar[r]&0.
		}
	\end{align*}
	Thus, the object $ \Si^{n+1}(\mathbb{S}_{\mathcal{A},\mathcal{B}})(X^{\wedge}) $ is quasi-isomorphic to $ \tau_{n}^{-1}(X)^{\wedge} $.	
\end{proof}

\begin{Cor}\label{Relative CY completion is concentrated in degree 0}
	
	Let $ \mathcal{B}=\mathrm{proj}B_{0}\subseteq \mathrm{mod}B_{0} $ be the subcategory of projectives and let $ \mathcal{A} $ be a subcategory of $ \mathrm{mod}B_{0} $ which contains $ \mathcal{B} $ as a full subcategory. Suppose that $ \mathcal{A}  $ is homologically smooth and is $ n $-rigid in $ \mathrm{mod}B $. Then the relative $ (n+2) $-Calabi--Yau completion of $ f_{0}\colon\mathcal{B}\hookrightarrow\mathcal{A} $
	
	$$ f\colon\bm{\Pi}_{n+1}(\mathcal{B})\longrightarrow\bm{ \Pi}_{n+2}(\mathcal{A},\mathcal{B}) $$ can be described as follows:
	\begin{itemize}
		\item The objects in $ \bm{\Pi}_{n+2}(\mathcal{A},\mathcal{B}) $ are the same as those of $ \ca $;
		\item For any two objects $ L $, $ M $ in $ \mathcal{A} $, the space $ \Hom_{\bm{\Pi}_{n+2}(\mathcal{A},\mathcal{B})}(L,M) $ is given by
		\begin{equation*}
			\begin{split}
				\Hom_{\bm{\Pi}_{n+2}(\mathcal{A},\mathcal{B})}(L,M)&\cong \RHom_{\mathcal{A}}(L^{\wedge},\bigoplus_{i\geqslant0}M^{\wedge}\lten_{\ca}(\Si^{n+1}\Theta_{f_{0}})^{\lten_{i}})\\
				&\cong \RHom_{\mathcal{A}}(L^{\wedge},\bigoplus_{i\geqslant0}(\tau_{n}^{-i}M)^{\wedge})\\
				&\cong \Hom_{B_{0}}(L,\bigoplus_{i\geqslant0}(\tau_{n}^{-i}M)).
			\end{split}
		\end{equation*}
	\end{itemize}
	In particular, the dg category $ \bm{\Pi}_{n+2}(\mathcal{A},\mathcal{B}) $ is concentrated in degree 0 and we have a fully faithful functor
	$$ H^{0}(f)\colon H^{0}(\bm{\Pi}_{n+1}(\mathcal{B}))\cong\widetilde{B_{0}}\hookrightarrow\bm{\Pi}_{n+2}(\mathcal{A},\mathcal{B}). $$
\end{Cor}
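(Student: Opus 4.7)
The plan is to combine Proposition~\ref{Relative Serre formular} with the tensor-algebra description of the relative Calabi--Yau completion. Since the Hochschild class $\xi$ is zero in this setting, we have
\[
\bm{\Pi}_{n+2}(\mathcal{A},\mathcal{B}) = \mathbf{L}T_{\mathcal{A}}(\Si^n \Xi),\qquad \Xi = \cone\bigl(\mathcal{A}^{\vee}\to(\mathcal{A}\lten_\mathcal{B}\mathcal{A})^{\vee}\bigr).
\]
Applying $(-)^{\vee}$ to the defining triangle $\mathcal{A}\lten_\mathcal{B}\mathcal{A}\to\mathcal{A}\to\cone(\mathcal{A}\lten_\mathcal{B}\mathcal{A}\to\mathcal{A})$ yields $\Xi\simeq\Si\Theta_{f_{0}}$ in $\cd(\mathcal{A}^{e})$, so that $\bm{\Pi}_{n+2}(\mathcal{A},\mathcal{B})\simeq\bigoplus_{i\geqslant 0}(\Si^{n+1}\Theta_{f_{0}})^{\lten_{i}}$ as dg $\mathcal{A}$-bimodules.

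For $L,M\in\mathcal{A}$, this decomposition gives
\[
\RHom_{\bm{\Pi}_{n+2}(\mathcal{A},\mathcal{B})}(L,M) \simeq \bigoplus_{i\geqslant 0}\RHom_{\mathcal{A}}\bigl(L^{\wedge},\,M^{\wedge}\lten_\mathcal{A}(\Si^{n+1}\Theta_{f_{0}})^{\lten_{i}}\bigr).
\]
I would prove by induction on $i$ that $M^{\wedge}\lten_{\mathcal{A}}(\Si^{n+1}\Theta_{f_{0}})^{\lten_{i}}\simeq(\tau_n^{-i}M)^{\wedge}$, the base case $i=1$ being exactly Proposition~\ref{Relative Serre formular}. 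Applying Yoneda to each summand gives $\RHom_{\mathcal{A}}(L^{\wedge},(\tau_n^{-i}M)^{\wedge})\simeq(\tau_n^{-i}M)^{\wedge}(L)=\Hom_{B_{0}}(L,\tau_n^{-i}M)$, which is concentrated in cohomological degree zero. Summing over $i$ yields the claimed Hom-formula and shows $\bm{\Pi}_{n+2}(\mathcal{A},\mathcal{B})$ is concentrated in degree $0$.

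The main obstacle is the induction step. The proof of Proposition~\ref{Relative Serre formular} uses the assumption $X\in\mathcal{A}$ only through the vanishing $\Ext^{i}_{B_{0}}(\mathcal{A},X)=0$ for $1\leqslant i\leqslant n-1$. I would revisit that argument to confirm it applies to any $B_{0}$-module $X$ satisfying this $\Ext$-vanishing, and then check the condition for $X=\tau_n^{-j}M$. In the applications targeted by this corollary---in particular Proposition~\ref{n-reprsentation finite case}, where $\mathcal{A}=\add(\bigoplus_{j\geqslant 0}\tau_n^{-j}B_{0})$ is closed under $\tau_n^{-1}$---each $\tau_n^{-j}M$ lies in $\mathcal{A}$, so the required vanishing follows immediately from the hypothesised $n$-rigidity; in broader settings this step needs independent justification.

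Finally, for the fully faithful embedding: since $H^{0}(\bm{\Pi}_{n+1}(\mathcal{B}))=T_{B_{0}}(\Ext^{n}_{B_{0}}(DB_{0},B_{0}))=\widetilde{B_{0}}$ has Hom-spaces $\bigoplus_{i\geqslant 0}\Hom_{B_{0}}(L,\tau_n^{-i}M)$ on $\mathcal{B}$, the formula just established for $\bm{\Pi}_{n+2}(\mathcal{A},\mathcal{B})$ restricted to $\mathcal{B}\subseteq\mathcal{A}$ agrees termwise, and thus $H^{0}(f)$ is fully faithful.
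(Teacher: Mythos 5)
Your proposal is correct and follows exactly the route the paper intends: the paper gives no written proof for this corollary, treating it as immediate from the tensor-algebra decomposition $\bm{\Pi}_{n+2}(\mathcal{A},\mathcal{B})\simeq\bigoplus_{i\geqslant0}(\Si^{n+1}\Theta_{f_{0}})^{\lten_{i}}$ noted in the preceding remark together with an iterated application of Proposition~\ref{Relative Serre formular} and Yoneda. Your observation that the induction step silently requires $\tau_{n}^{-j}M$ to remain in $\mathcal{A}$ (or at least to satisfy the relevant $\Ext$-vanishing) is a fair point the paper glosses over; it holds in the intended application where $\mathcal{A}=\add(\bigoplus_{j\geqslant0}\tau_{n}^{-j}B_{0})$ is closed under $\tau_{n}^{-1}$.
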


\subsection{$ n $-representation-finite algebras}
Let $ n\geqslant0 $ be an integer. Let $ B_{0} $ be a finite dimensional algebra with global dimension at most $ n $.
\begin{Def}\rm\cite{OI2011}
	We say that $ B_{0} $ is \emph{$ \tau_{n} $-finite} if $ \tau_{n}^{-i}B_{0}=0 $ for sufficiently large $ i $. We say that $ B_{0} $ is \emph{$ n $-representation-finite} if $ \mathrm{mod}B_{0} $ contains an $ n $-cluster tilting object.
\end{Def}

\begin{Rem}
	If $ B_{0} $ is $ n $-representation-finite, then it is $ \tau_{n} $-finite.
\end{Rem}

\begin{Thm}\cite[Proposition 1.3]{OI2011}
	Suppose that $ B_{0} $ is an $ n $-representation-finite algebra. Then
	$\widetilde{B_{0}}\cong\bigoplus_{i\geqslant 0}\tau_{B_{0},n}^{-i}B_{0} $ is the unique basic $ n $-cluster tilting object in $ \mathrm{mod}B_{0} $.
	
\end{Thm}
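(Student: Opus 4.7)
The plan is to reduce the statement to the existence and uniqueness of a suitable $\tau_n^{-1}$-orbit inside $\mathrm{mod}\,B_0$. Since $B_0$ is $n$-representation-finite, there exists, by definition, at least one basic $n$-cluster tilting object $M\in\mathrm{mod}\,B_0$. First I would observe that any $n$-cluster tilting object necessarily has $B_0$ as a direct summand: indeed $\Ext_{B_0}^i(B_0,M)=0$ for all $i\geqslant 1$ (free module), so the defining property of $n$-cluster tilting forces $B_0\in\add M$. Hence every indecomposable projective appears among the summands of $M$.

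The main ingredient is Iyama's fundamental theorem on higher Auslander--Reiten translates in $n$-cluster tilting subcategories: if $\mathcal{C}=\add M\subseteq\mathrm{mod}\,B_0$ is an $n$-cluster tilting subcategory, then $\tau_n^{-1}$ restricts to a bijection between the indecomposable non-projective objects of $\mathcal{C}$ and the indecomposable non-injective objects of $\mathcal{C}$. Applying this inductively starting from the indecomposable summands of $B_0$, I would deduce that $\tau_n^{-i}B_0\in\add M$ for all $i\geqslant 0$, which, together with the hypothesis that $B_0$ is $\tau_n$-finite (a consequence of $n$-representation-finiteness), shows that $\widetilde{B_0}=\bigoplus_{i\geqslant 0}\tau_n^{-i}B_0$ is a well-defined object of $\add M$.

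To obtain the reverse inclusion $\add M\subseteq\add\widetilde{B_0}$, I would iterate $\tau_n$ instead of $\tau_n^{-1}$: for any indecomposable summand $X$ of $M$, the objects $\tau_n^kX$ (for $k\geqslant 0$) all lie in $\add M$ as long as they are non-zero, and by the $\tau_n$-finiteness of $B_0$ this process must terminate at an indecomposable projective $P$, yielding $X\cong\tau_n^{-k}P$ for some $k\geqslant 0$. Hence $\add M=\add\widetilde{B_0}$, which proves that $\widetilde{B_0}$ is a basic $n$-cluster tilting object. Uniqueness is immediate: any two basic $n$-cluster tilting objects $M,M'$ satisfy $\add M=\add\widetilde{B_0}=\add M'$, and basic objects with the same additive closure are isomorphic.

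The main obstacle is the preservation property of $\tau_n^{-1}$ on an $n$-cluster tilting subcategory, which is not formal -- it relies on the existence of $n$-almost split sequences in $\mathcal{C}$ and on the $\Ext$-vanishing that characterizes $n$-cluster tilting subcategories. Everything else in the argument is bookkeeping once this structural input from higher Auslander--Reiten theory is granted. Since the paper elsewhere gives an independent proof via the self-injectivity of $\widetilde{B_0}$ (Lemma~\ref{Lemma:self-injective}), one can alternatively bypass the delicate step by invoking that self-injectivity together with the equivalence $\mathcal{H}\simeq\mathrm{mod}\,\widetilde{B_0}$ of Theorem~\ref{CTO in n-finite case}(a) to extract the $n$-cluster tilting property of $\widetilde{B_0}$ directly from the Higgs category picture.
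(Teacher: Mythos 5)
First, a remark on the comparison: the paper offers no proof of this statement at all --- it is quoted from Iyama (\cite[Proposition 1.3]{OI2011}) and then used as input for the constructions of Section~\ref{section 8} --- so your sketch can only be measured against Iyama's actual argument. The first half of what you write is sound: $B_{0}\in\add M$ for any $n$-cluster tilting $M$ because $\Ext^{\geqslant 1}_{B_{0}}(B_{0},M)=0$, and, granting the (genuinely nontrivial, and correctly flagged) closure of an $n$-cluster tilting subcategory under $\tau_{n}^{-}$, induction gives $\tau_{n}^{-i}B_{0}\in\add M$ for all $i$, hence $\add\widetilde{B_{0}}\subseteq\add M$.

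The gap is in the reverse inclusion. You assert that for an indecomposable $X\in\add M$ the backward orbit $X,\tau_{n}X,\tau_{n}^{2}X,\dots$ ``must terminate at an indecomposable projective by the $\tau_{n}$-finiteness of $B_{0}$''. This does not follow: $\tau_{n}$-finiteness only controls the forward orbit $\tau_{n}^{-i}B_{0}$ of the projectives and says nothing a priori about the backward orbit of an arbitrary summand of $M$. Since $\tau_{n}$ restricts to a bijection from the indecomposable non-projectives of $\add M$ onto the indecomposable non-injectives, the finitely many indecomposables of $\add M$ split into chains running from a projective to an injective (these are exactly the indecomposables of $\add\widetilde{B_{0}}$) together with possible $\tau_{n}$-periodic cycles meeting neither a projective nor an injective; it is precisely such cycles that must be excluded, and nothing in your argument does so. In Iyama's treatment this exclusion is where the derived category enters, via the graded subcategory $\add\{\Si^{p}\mathbb{S}_{n}^{k}B_{0}\}$ of $\cd^{b}(\mathrm{mod}B_{0})$ and the vanishing $\Hom(B_{0},\mathbb{S}_{n}^{i}B_{0})=0$ for $i>0$, which orders the $\tau_{n}$-orbits and forbids periodicity; some substitute for this is needed. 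Finally, your proposed bypass through Lemma~\ref{Lemma:self-injective} and Theorem~\ref{CTO in n-finite case} is circular: the whole of Section~\ref{section 8}, including the definition of $A_{0}=\End_{B_{0}}(\widetilde{B_{0}})$ and the verification of the hypotheses of Corollary~\ref{Relative CY completion is concentrated in degree 0} and Proposition~\ref{n-reprsentation finite case} (which require $\add\{\tau_{n}^{-i}B_{0}\}$ to be $n$-rigid), already presupposes the present theorem; moreover Theorem~\ref{CTO in n-finite case}(a) produces an $(n+1)$-cluster tilting object in $\mathrm{mod}\widetilde{B_{0}}$, which is not the statement that $\widetilde{B_{0}}$ is $n$-cluster tilting in $\mathrm{mod}B_{0}$.
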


\begin{Thm}\cite[Theorem 0.2]{OI20071}
	Let $ B_{0} $ be $ n $-representation-finite. Then
	$$ \mathrm{gldim}\, \mathrm{End}_{B_{0}}(\widetilde{B_{0}})\leqslant n+1. $$
\end{Thm}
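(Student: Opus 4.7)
The plan is to follow the original Iyama--Oppermann argument, which rests on the fact that $\widetilde{B_0}$ is an $n$-cluster tilting module in $\mathrm{mod}\,B_0$, i.e.\ $\mathrm{add}(\widetilde{B_0}) = \{X \in \mathrm{mod}\,B_0 \mid \mathrm{Ext}^{i}_{B_0}(\widetilde{B_0},X) = 0,\ 1 \leqslant i \leqslant n-1\}$ and this subcategory is functorially finite. Write $F = \mathrm{Hom}_{B_0}(\widetilde{B_0},-) \colon \mathrm{mod}\,B_0 \to \mathrm{mod}\,A_0$, which sends $\mathrm{add}(\widetilde{B_0})$ equivalently to $\mathrm{proj}\,A_0$. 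First, I would establish that every $X \in \mathrm{mod}\,B_0$ admits an exact $n$-$\mathrm{add}(\widetilde{B_0})$-resolution
\[
0 \to \widetilde{M}_n \to \widetilde{M}_{n-1} \to \cdots \to \widetilde{M}_0 \to X \to 0,
\]
constructed iteratively by taking right $\mathrm{add}(\widetilde{B_0})$-approximations (which are automatically surjective since $B_0 \in \mathrm{add}(\widetilde{B_0})$) and observing that after $n$ steps the remaining syzygy $X_n$ satisfies $\mathrm{Ext}^{i}_{B_0}(\widetilde{B_0},X_n) = 0$ for $1 \leqslant i \leqslant n-1$, which by the cluster-tilting characterisation forces $X_n \in \mathrm{add}(\widetilde{B_0})$.

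Next, applying $F$ to the resolution above yields an exact sequence of projective $A_0$-modules
\[
0 \to F(\widetilde{M}_n) \to \cdots \to F(\widetilde{M}_0) \to F(X) \to 0;
\]
exactness is checked by splicing the resolution into short exact sequences $0 \to X_{k+1} \to \widetilde{M}_k \to X_k \to 0$ and inducting using $\mathrm{Ext}^{1}_{B_0}(\widetilde{B_0},X_{k+1}) = 0$ (which follows from iterated Ext-vanishing on the $\widetilde{M}_{k+1},\ldots,\widetilde{M}_{n}$, together with $\mathrm{gldim}\,B_0 \leqslant n$). Consequently $\mathrm{pd}_{A_0} F(X) \leqslant n$ for every $X \in \mathrm{mod}\,B_0$.

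Finally, for an arbitrary $N \in \mathrm{mod}\,A_0$, take a projective presentation $P_1 \xrightarrow{\alpha} P_0 \to N \to 0$ with $P_i = F(\widetilde{L}_i)$ and $\alpha = F(f)$ for a morphism $f \colon \widetilde{L}_1 \to \widetilde{L}_0$ in $\mathrm{add}(\widetilde{B_0})$. Let $C = \mathrm{coker}(f) \in \mathrm{mod}\,B_0$; left-exactness of $F$ combined with the vanishing $\mathrm{Ext}^{1}_{B_0}(\widetilde{B_0},\mathrm{image}(f)) = 0$ (from cluster tilting, since $\mathrm{image}(f)$ fits in $0 \to \mathrm{image}(f) \to \widetilde{L}_0 \to C \to 0$ and has $\mathrm{Ext}^{i}$-vanishing for $1 \leqslant i \leqslant n-2$) identifies $N$ with $F(C)$, and then the previous step gives $\mathrm{pd}_{A_0}(N) \leqslant n$. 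If that vanishing fails in a borderline case the resulting error term is itself of the form $F(X')$ (arising from $\mathrm{Ext}^{1}(\widetilde{B_0},-)$ applied along the approximation sequence), producing a short exact sequence $0 \to F(X') \to N \to F(C) \to 0$ of $A_0$-modules whose outer terms have projective dimension at most $n$, and hence $\mathrm{pd}_{A_0}(N) \leqslant n+1$. The main obstacle is precisely this last step: one must verify carefully that the deviation from $F$ being exact is again captured by modules in $\mathrm{image}(F)$, so that the bound improves by exactly one rather than by two, which is where $n$-cluster tilting is used in a genuinely stronger way than the mere $(n+2)$-Calabi--Yau input that would a priori yield only $\mathrm{gldim}\,A_0 \leqslant n+2$ via Proposition~\ref{Smooth is finite gl.dim}.
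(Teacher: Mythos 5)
First, note that the paper gives no proof of this statement: it is quoted from Iyama's work \cite{OI20071}, so there is no internal argument to compare yours against, and I can only assess your proposal on its own terms.

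Your first two paragraphs are sound: iterated surjective right $\mathrm{add}(\widetilde{B_0})$-approximations do produce an $\mathrm{add}(\widetilde{B_0})$-resolution that stays exact under $F=\Hom_{B_0}(\widetilde{B_0},-)$, because $F$ is left exact and each approximation is $F$-surjective. (In fact the induction $\Ext^i_{B_0}(\widetilde{B_0},X_j)=0$ for $1\leqslant i\leqslant j$ shows the resolution already terminates at step $n-1$, so $\mathrm{pd}_{A_0}F(X)\leqslant n-1$; you will need this sharper bound below.) The gap is in the last paragraph. You pass to $C=\coker(f)$, but $F$ is left exact, not right exact, so $N=\coker F(f)$ is not $F(C)$: writing $I=\mathrm{im}(f)$ and $K=\ker(f)$, one finds an exact sequence $0\to \Ext^1_{B_0}(\widetilde{B_0},K)\to N\to \ker\bigl(F(C)\to\Ext^1_{B_0}(\widetilde{B_0},I)\bigr)\to 0$, and neither the vanishing of $\Ext^1_{B_0}(\widetilde{B_0},I)$ nor that of $\Ext^1_{B_0}(\widetilde{B_0},K)$ follows from $I$, $K$ being submodules of objects of $\add\widetilde{B_0}$ — submodules of $n$-cluster-tilting modules are not $n$-rigid in general (think of the radical of a projective). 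Your fallback sequence $0\to F(X')\to N\to F(C)\to 0$ is therefore unsubstantiated, and in any case it would yield $\mathrm{pd}_{A_0}N\leqslant\max(\mathrm{pd}\,F(X'),\mathrm{pd}\,F(C))\leqslant n$, i.e.\ $\mathrm{gldim}\,A_0\leqslant n$, which is false already for $n=1$ (Auslander algebras of non-semisimple representation-finite hereditary algebras have global dimension exactly $2$); so the cokernel strategy cannot be repaired as stated. The correct step is to go through kernels rather than cokernels: left exactness gives $\ker F(f)=F(\ker f)$ on the nose, hence an exact sequence $0\to F(\ker f)\to F(\widetilde{L}_1)\to F(\widetilde{L}_0)\to N\to 0$ and $\mathrm{pd}_{A_0}N\leqslant 2+\mathrm{pd}_{A_0}F(\ker f)\leqslant 2+(n-1)=n+1$, using the sharpened length-$(n-1)$ resolution noted above.
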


Now we assume that $ B_{0} $ is $ n $-representation-finite. The corresponding \emph{$ n $-Auslander algebra} is given by $ \End_{B_{0}}(\bigoplus_{i\geqslant 0}\tau_{n}^{-i}B_{0})  $. We denote it by $ A_{0} $. Then there is a natural fully faithful morphism
\begin{align*}
	\xymatrix{
		f_{0}\colon B_{0}\ar@{^{(}->}[r]& A_{0}=\End_{B_{0}}(\bigoplus_{i\geqslant 0}\tau_{n}^{-i}B_{0}).
	}
\end{align*}

\begin{Lem}\label{Dg quorient of algebra}
	Let $ \mathcal{A} $ be a dg category. Let $ \mathcal{P}\subseteq\mathcal{A} $ and $ \mathcal{B}\subseteq\mathcal{A} $ be two full dg subcategories such that $ \obj(\mathcal{A})=\obj(\mathcal{P})\sqcup \obj(\mathcal{B}) $ and $ \Hom_{\mathcal{A}}(B,P) $ is acyclic for all $ B\in\mathcal{B} $, $ P\in\mathcal{P} $. Then the Drinfeld dg quotient $ \mathcal{A}/\mathcal{P} $ is Morita equivalent to $ \mathcal{B} $.
\end{Lem}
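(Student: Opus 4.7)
The strategy is to show that the dg functor $\pi\colon \mathcal{B}\hookrightarrow \mathcal{A}\xrightarrow{\,p\,}\mathcal{A}/\mathcal{P}$ is a Morita equivalence, i.e.\ that $\pi$ is quasi-fully faithful and that its image generates $\mathcal{D}(\mathcal{A}/\mathcal{P})$.

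First, I would check quasi-full faithfulness by exploiting the explicit Drinfeld formula
\[
(\mathcal{A}/\mathcal{P})(x,y)=\bigoplus_{n\geqslant 0}\bigoplus_{U_{1},\ldots,U_{n}\in\mathcal{P}}\mathcal{A}(U_{n},y)\otimes_{k}k\epsilon_{U_{n}}\otimes\cdots\otimes_{k}\mathcal{A}(U_{1},U_{2})\otimes_{k}k\epsilon_{U_{1}}\otimes_{k}\mathcal{A}(x,U_{1}).
\]
For $X,Y\in\mathcal{B}$, the $n=0$ summand is simply $\mathcal{A}(X,Y)=\mathcal{B}(X,Y)$. Every summand with $n\geqslant 1$ contains the rightmost tensor factor $\mathcal{A}(X,U_{1})$ with $X\in\mathcal{B}$ and $U_{1}\in\mathcal{P}$, which is acyclic by hypothesis. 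Since we are working over the field $k$, tensor products of complexes satisfy the Künneth formula, so tensoring with an acyclic complex produces an acyclic complex, and hence each $n\geqslant 1$ summand is acyclic. Therefore the inclusion of the $n=0$ summand provides a quasi-isomorphism $\mathcal{B}(X,Y)\liso(\mathcal{A}/\mathcal{P})(X,Y)$, showing that $\pi$ is quasi-fully faithful.

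Next, I would show that for every $U\in\mathcal{P}$ the representable dg module $(\mathcal{A}/\mathcal{P})(?,U)$ is acyclic, hence zero in $\mathcal{D}(\mathcal{A}/\mathcal{P})$. This uses only the defining relation $d(\epsilon_{U})=\boldmath{1}_{U}$: for any cycle $f\in(\mathcal{A}/\mathcal{P})(Z,U)$ of degree $m$, a short computation with the graded Leibniz rule gives $d(\epsilon_{U}\circ f)=\boldmath{1}_{U}\circ f\pm\epsilon_{U}\circ d(f)=f$, so every cycle is a boundary. Consequently, in $\mathcal{D}(\mathcal{A}/\mathcal{P})$ the representable modules attached to objects of $\mathcal{P}$ vanish, and the set of representables $\{(\mathcal{A}/\mathcal{P})(?,X)\mid X\in\obj(\mathcal{A}/\mathcal{P})\}$ —which classically generates $\mathcal{D}(\mathcal{A}/\mathcal{P})$— is already generated (as a localising subcategory) by those representables with $X\in\mathcal{B}$.

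Combining the two steps, the standard dg Morita criterion (Keller) yields that the derived extension functor $\mathbf{L}\pi^{*}\colon\mathcal{D}(\mathcal{B})\to \mathcal{D}(\mathcal{A}/\mathcal{P})$ is a triangle equivalence, which is exactly the assertion that $\mathcal{A}/\mathcal{P}$ and $\mathcal{B}$ are Morita equivalent. The only mildly delicate point is the Künneth step in the first paragraph; everything else is formal once the Drinfeld decomposition is in hand. No difficulty of substance is expected, but one should be careful to invoke Künneth only because $k$ is a field (so every $k$-module is flat and tensoring is exact on acyclics).
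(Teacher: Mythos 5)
Your argument is correct in substance but takes a genuinely different route from the paper's. The paper never opens up the Drinfeld quotient: it works at the level of derived categories, observing that restriction along $\mathcal{B}\hookrightarrow\mathcal{A}$ is a localization whose kernel is identified, via the acyclicity hypothesis, with the image of the induction functor $\mathcal{D}(\mathcal{P})\hookrightarrow\mathcal{D}(\mathcal{A})$; this gives a short exact sequence $0\to\mathcal{D}(\mathcal{P})\to\mathcal{D}(\mathcal{A})\to\mathcal{D}(\mathcal{B})\to0$, and the identification of the dg quotient with the Verdier quotient then yields the Morita equivalence. You instead argue inside $\mathcal{A}/\mathcal{P}$ itself, checking quasi-full faithfulness of $\mathcal{B}\to\mathcal{A}/\mathcal{P}$ by hand and noting that objects of $\mathcal{P}$ become contractible; this is more self-contained and has the merit of exhibiting the equivalence explicitly as $\mathbf{L}\pi^{*}$. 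One step needs repair: the Drinfeld formula is a decomposition of \emph{graded $k$-modules} only, not of complexes. Since $d(\epsilon_{U})=\mathbf{1}_{U}$, the differential maps the $n$-th summand into the $(n-1)$-st (an $\epsilon_{U_i}$ gets replaced by $\mathbf{1}_{U_i}$ and the adjacent factors compose), so the summands with $n\geqslant1$ are not subcomplexes and you cannot directly declare them acyclic. The fix is standard: filter $(\mathcal{A}/\mathcal{P})(X,Y)$ by the number of $\epsilon$'s; the differential preserves this exhaustive, bounded-below filtration, your K\"unneth argument applies verbatim to the associated graded pieces with $n\geqslant1$, and one concludes that the inclusion of the $n=0$ piece $\mathcal{B}(X,Y)$ is a quasi-isomorphism. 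With that adjustment, and your (correct) computation that $d(\epsilon_{U}\circ f)=f$ for any cycle $f$, the generation and Morita steps go through as you state.
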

\begin{proof}
	The restriction functor $ f_{*}\colon \mathcal{D}(\mathcal{A})\rightarrow\mathcal{D}(\mathcal{B}) $ induced by $ f\colon \mathcal{B}\hookrightarrow\mathcal{A} $ is a localization functor. Moreover, its kernel is generated (as a localizing subcategory) by its intersection with $ \per\mathcal{A} $. Since the space $ \Hom_{\mathcal{A}}(B,P) $ is acyclic for all $ B\in\mathcal{B} $ and $ P\in\mathcal{P} $, the induction functor $$ \mathcal{D}(\mathcal{P})\hookrightarrow\mathcal{D}(\mathcal{A}) $$ induces an equivalence between $ \ker(f_{*}) $ and $ \mathcal{D}(\mathcal{P}) $. Thus, we have an exact sequence of triangulated categories 
	$$ 0\rightarrow\mathcal{D}(\mathcal{P})\rightarrow\mathcal{D}(\mathcal{A})\rightarrow\mathcal{D}(\mathcal{B})\rightarrow0 .$$
	It follows that the Drinfeld dg quotient $ \mathcal{A}/\mathcal{P} $ is Morita equivalent to $ \mathcal{B} $.

\end{proof}

\begin{Prop}\label{Homtopy cofiber of Auslander algebra}
	Let $ e=f_{0}(\boldmath{1}_{B_{0}}) $. The homotopy cofiber of $ f_{0}\colon B_{0}\rightarrow A_{0} $ is equal to the usual quotient $ A_{0}/A_{0}eA_{0} $, i.e.\ the \emph{stable Auslander algebra} of $ B_{0} $.
	
\end{Prop}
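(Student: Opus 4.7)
The plan is to identify the homotopy cofiber with the Drinfeld dg quotient and then apply Lemma~\ref{Dg quorient of algebra}. Concretely, view $A_0$ as a $k$-linear category (concentrated in degree $0$) whose objects are the (isomorphism classes of) indecomposable direct summands of $M = \bigoplus_{i\geqslant 0}\tau_n^{-i}B_0$, with morphisms $\Hom_{A_0}(X,Y)=\Hom_{B_0}(X,Y)$. Let $\mathcal{P}\subseteq A_0$ be the full subcategory on the indecomposable summands of $B_0$, and $\mathcal{B}\subseteq A_0$ the full subcategory on the indecomposable summands of $\bigoplus_{i\geqslant 1}\tau_n^{-i}B_0$. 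Then $\obj(A_0)=\obj(\mathcal{P})\sqcup\obj(\mathcal{B})$, and the morphism $f_0$ identifies $B_0$ with $\mathcal{P}$, so $\mathrm{Im}(f_0)=\mathcal{P}$. By construction (see Section~2), the homotopy cofiber of $f_0$ is the Drinfeld dg quotient $A_0/\mathcal{P}$.

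The key step is the vanishing
\[
\Hom_{B_0}(\tau_n^{-i}B_0,B_0)=0 \qquad \text{for all } i\geqslant 1.
\]
This I would obtain from Serre duality in $\mathcal{D}^b(\mathrm{mod}\,B_0)$: using that $\tau_n^{-i}B_0=(\Sigma^n\mathbb{S}_{B_0}^{-1})^i B_0$ is concentrated in degree $0$ (by $n$-representation-finiteness), one computes $\mathbb{S}_{B_0}(\tau_n^{-i}B_0)\cong\Sigma^n(\tau_n^{-i+1}B_0)$, so that
\[
\Hom_{B_0}(\tau_n^{-i}B_0,B_0)\cong D\Hom_{\mathcal{D}^b(B_0)}(B_0,\Sigma^n\tau_n^{-i+1}B_0)=D\Ext^n_{B_0}(B_0,\tau_n^{-i+1}B_0)=0,
\]
since $B_0$ is projective. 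This exactly says $\Hom_{A_0}(B,P)=0$ for $B\in\mathcal{B}$, $P\in\mathcal{P}$, so Lemma~\ref{Dg quorient of algebra} applies and yields a Morita equivalence
\[
A_0/\mathcal{P}\ \simeq_{\mathrm{Morita}}\ \mathcal{B}\ =\ \End_{B_0}\Bigl(\bigoplus_{i\geqslant 1}\tau_n^{-i}B_0\Bigr).
\]

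Finally, I would identify $\End_{B_0}\bigl(\bigoplus_{i\geqslant 1}\tau_n^{-i}B_0\bigr)$ with $A_0/A_0eA_0$ by a direct matrix calculation. Writing $A_0$ with its block decomposition indexed by the summands of $M$, the two-sided ideal $A_0eA_0$ is spanned by morphisms factoring through $B_0$, hence contained in the ``column through $B_0$'' $\bigoplus_{j\geqslant 0}\Hom_{B_0}(B_0,\tau_n^{-j}B_0)$, and the reverse inclusion is immediate (take $b=e$). The same vanishing as above shows that the $i\geqslant 1$ entries of this column are automatically zero, and the quotient $A_0/A_0eA_0$ is exactly $\bigoplus_{i,j\geqslant 1}\Hom_{B_0}(\tau_n^{-j}B_0,\tau_n^{-i}B_0)=\mathcal{B}$. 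Combining these identifications gives the desired equality of $A_0/B_0$ with the stable Auslander algebra $A_0/A_0eA_0$.

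The only real content is the Serre-duality vanishing of $\Hom_{B_0}(\tau_n^{-i}B_0,B_0)$; once this is in hand, both Lemma~\ref{Dg quorient of algebra} and the block decomposition of $A_0eA_0$ are essentially formal. I do not expect any serious obstacle, but some care is needed to ensure that ``equal'' in the statement is interpreted as (quasi-)equivalence in $\mathrm{Ho}(\dgcat_k)$, under which the Morita equivalence above and the identification of $\mathcal{B}$ with the finite-dimensional algebra $\End_{B_0}(\bigoplus_{i\geqslant 1}\tau_n^{-i}B_0)$ are both valid.
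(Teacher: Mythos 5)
Your proof is correct and follows essentially the same route as the paper: identify the homotopy cofiber with the Drinfeld dg quotient of $A_0$ (viewed as a category on the indecomposable summands of $\bigoplus_{i\geqslant0}\tau_n^{-i}B_0$) by the projective summands, use Serre duality to obtain $\Hom_{B_0}(\tau_n^{-i}B_0,B_0)=0$ for $i\geqslant 1$, and apply Lemma~\ref{Dg quorient of algebra}. The only difference is that you make explicit the identification of the resulting category with $A_0/A_0eA_0$ via the block decomposition, which the paper leaves implicit.
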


\begin{proof}
	Let $ \mathcal{A} $, $ \mathcal{P} $ and $ \mathcal{B} $ be the following full subcategories of $ \mathrm{mod}B_{0} $
	$$ \mathcal{A}=\mathrm{ind}(\mathrm{add}(\{\tau_{n}^{-i}(B_{0})\ |\ i\geqslant0\})),$$
	$$ \mathcal{P}=\mathrm{ind}(\mathrm{proj}B_{0}) ,$$
	$$ \mathcal{B}=\{M\in\mathcal{A}\ |\ M\notin\mathcal{P} \} .$$
	In particular, if $ n=0 $ then $ \ca=\cp $.

	For $ P\in\mathcal{P} $ and $ M\in\mathcal{B} $, we have
	\begin{equation*}
		\begin{split}
			D\Hom_{B_{0}}(M,P)\cong&\Hom_{\mathcal{D}(B_{0})}(P,\mathbb{S}_{B_{0}}(M))\\
			=&\Hom_{\mathcal{D}(B_{0})}(P,\Si^{n}(\tau_{n}(M))).
		\end{split}
	\end{equation*}
	The above space vanishes since $ P\in\mathcal{P} $ and $ \tau_{n}(M)\in \mathrm{mod}B_{0} $. Then, by Lemma~\ref{Dg quorient of algebra} above, the homotopy cofiber of $ f_{0}\colon B_{0}\rightarrow A_{0} $ is equal to the usual quotient $ A_{0}/A_{0}eA_{0} $.

\end{proof}

\begin{Prop}\label{n-reprsentation finite case}
	Via the relative $ (n+2) $-Calabi--Yau completion of $ f_{0}\colon B_{0}\hookrightarrow A_{0} $, we get the following dg functor which has a canonical left $ (n+2) $-Calabi--Yau structure
	$$ f\colon B=\bm{\Pi}_{n+1}(B_{0})\longrightarrow A=\bm{\Pi}_{n+2}(A_{0},B_{0}) .$$ Then 
	\begin{itemize}
		\item[1)] The dg algebra $ A $ is concentrated in degree 0.
		\item[2)] $ H^{0}(A) $ is a finite-dimensional algebra with finite global dimension at most $ n+2 $.
		\item[3)] The homotopy cofiber of $ f $ is equal to $ \bm{\Pi}_{n+2}(A_{0}/A_{0}eA_{0}) $ where $ e=f(\boldmath{1}_{B_{0}}) $.
		\item[4)] The functor $ H^{0}(f)\colon H^{0}(B)=\widetilde{B_{0}}\to H^{0}(A) $ is fully  faithful.
		\item[5)] $ A $ is internally bimodule $ (n+2) $-Calabi--Yau with respect to $ e=f(\boldmath{1}_{B_{0}}) $.
	\end{itemize}	
\end{Prop}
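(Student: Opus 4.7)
The plan is to apply Corollary~\ref{Relative CY completion is concentrated in degree 0} to the fully faithful inclusion $\ca = \mathrm{add}(\bigoplus_{i\geqslant 0}\tau_n^{-i}B_0) \hookleftarrow \mathrm{proj}(B_0) = \cb$. The two hypotheses to check are homological smoothness of $\ca$ and $n$-rigidity in $\mathrm{mod}\,B_0$. Smoothness follows because $A_0 = \End_{B_0}(\bigoplus_{i\geqslant 0}\tau_n^{-i}B_0)$ is a finite-dimensional algebra of finite global dimension (at most $n+1$), and any such algebra is smooth as a dg algebra. The $n$-rigidity is immediate since $\bigoplus_{i\geqslant 0}\tau_n^{-i}B_0$ is the unique basic $n$-cluster-tilting object in $\mathrm{mod}\,B_0$. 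This yields part a), and the explicit Hom formula
\[
\Hom_{H^0(A)}(L,M) \cong \Hom_{B_0}\bigl(L,\textstyle\bigoplus_{i\geqslant 0}\tau_n^{-i}M\bigr)
\]
from the corollary immediately implies finite-dimensionality of $H^0(A)$, because $B_0$ is $n$-representation-finite and so $\tau_n^{-i}B_0 = 0$ for $i$ large. Combining this with Proposition~\ref{Smooth is finite gl.dim} (noting that $f$ has a left $(n+2)$-Calabi--Yau structure and $A$ is a stalk) gives the global dimension bound $\mathrm{gldim}\,H^0(A) \leqslant n+2$, completing b).

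For part c), I invoke Proposition~\ref{Relation with absolut CY completion}, which provides a homotopy cofiber sequence
\[
\bm\Pi_{n+1}(B_0) \xrightarrow{\,f\,} \bm\Pi_{n+2}(A_0,B_0) \longrightarrow \bm\Pi_{n+2}(A_0/B_0)
\]
in $\dgcat$, where $A_0/B_0$ denotes the Drinfeld dg quotient of $A_0$ by its full dg subcategory $\Ima(f_0) = \mathrm{proj}(B_0)$. Then Proposition~\ref{Homtopy cofiber of Auslander algebra} identifies this homotopy cofiber with the ordinary quotient algebra $A_0/A_0eA_0$, so the homotopy cofiber of $f$ is $\bm\Pi_{n+2}(A_0/A_0eA_0)$ as claimed.

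Part d) will be proved by direct comparison of Hom spaces using Corollary~\ref{Relative CY completion is concentrated in degree 0} applied twice: once to the trivial functor $\emptyset \hookrightarrow \mathrm{proj}(B_0)$ (giving the absolute $(n+1)$-Calabi--Yau completion) and once to $f_0$. For projective $B_0$-modules $P,Q$, both recipes yield
\[
\Hom_{\widetilde{B_0}}(P,Q) \;\cong\; \textstyle\bigoplus_{i\geqslant 0}\Hom_{B_0}(P,\tau_n^{-i}Q) \;\cong\; \Hom_{H^0(A)}(P,Q),
\]
and the morphism induced by $H^0(f)$ is, by naturality of the construction of the deformed relative Calabi--Yau completion, precisely the identification of these two descriptions. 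Finally, part e) is a direct application of Proposition~\ref{Relative CY to internally}: the dg functor $f\colon B \to A$ satisfies Assumptions~\ref{Relative assumption} with $A$ concentrated in degree $0$, so $A$ is internally bimodule $(n+2)$-Calabi--Yau with respect to $e = f(\boldmath{1}_{B_0})$.

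The main subtlety I anticipate is in d): one must verify that the identification of Hom spaces arising from the explicit computation of $\bm\Pi_{n+2}(A_0,B_0)$ via Proposition~\ref{Relative Serre formular} is compatible, as a morphism of dg categories, with the analogous identification for $\bm\Pi_{n+1}(B_0)$. This boils down to checking that the composition law in both tensor algebras is induced by the same isomorphism $\Si^{n+1}\Theta_{f_0}|_{\cb} \cong \Si^{n+1}\Theta_{\cb}$, which follows from the construction of $\Theta_{f_0}$ as a mapping cone but requires some careful bookkeeping.
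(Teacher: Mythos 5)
Your proposal is correct and follows essentially the same route as the paper: parts 1), 2) and 4) via Corollary~\ref{Relative CY completion is concentrated in degree 0} together with $\tau_{n}$-finiteness and Proposition~\ref{Smooth is finite gl.dim}, part 3) via Proposition~\ref{Relation with absolut CY completion} combined with Proposition~\ref{Homtopy cofiber of Auslander algebra}, and part 5) via Proposition~\ref{Relative CY to internally}. You supply some details the paper leaves implicit (the verification of smoothness and $n$-rigidity needed to invoke Corollary~\ref{Relative CY completion is concentrated in degree 0}, and the explicit appeal to Proposition~\ref{Homtopy cofiber of Auslander algebra} in part 3)), but the argument is the same.
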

\begin{proof}
	The first and fourth statement follows from Corollary~\ref{Relative CY completion is concentrated in degree 0}. The third statement follows from Proposition~\ref{Relation with absolut CY completion} and the last statement follows from Proposition~\ref{Relative CY to internally}. It remains to show the second one. 
	
	By Corollary~\ref{Relative CY completion is concentrated in degree 0} and the fact that $ B_{0} $ is $ \tau_{n} $-finite, the algebra $ H^{0}(\bm{\Pi}_{n+2}(A_{0},B_{0})) $ is finite-dimensional. By Proposition~\ref{Smooth is finite gl.dim}, the algebra $ H^{0}(\bm{\Pi}_{n+2}(A_{0},B_{0})) $ has finite global dimension at most $ n+2 $.
\end{proof}

\bigskip

We give an example where the relative Ginzburg algebra is not concentrated in degree 0. 
\begin{Ex}
	Let $ (Q,F) $ be the following ice quiver
	\[
	\begin{tikzcd}
		&\color{blue}\boxed{2}\arrow[dl,blue,"a",swap]&\\
		\color{blue}\boxed{1}&&3\arrow[ul,"b",swap],
	\end{tikzcd}
	\]
	where the ice part $ F $ is give by the blue vertices and blue arrows. The underlying graded quiver of the corresponding relative Ginzburg algebra $ \mathbf{\Gamma}_{rel}(Q,F) $ is given as follows
	\[
	\begin{tikzcd}
		&\color{blue}\boxed{2}\arrow[dl,blue,"a",swap]\arrow[dr,shift left=0.5ex,red,"b^{*}"]&\\
		\color{blue}\boxed{1}&&3\arrow[ul,shift left=0.5ex,"b"]\arrow[out=290,in=350,loop,green,"t_{3}",swap]&,
	\end{tikzcd}
	\] 
	where $ |b^{*}|=-1 $ and $ |t_{3}|=-2 $. The differential $ d $ takes the following values
	$$ d(a)=0=d(b),\quad d(b^{*})=0,\quad d(t_{3})=-b^{*}b .$$ It is easy to see that $ \mathbf{\Gamma}_{rel}(Q,F) $ is not concentrated in degree 0. In fact, the ice quiver $ (Q,F) $ is the mutation (see~\cite{YW2021}) of the following ice quiver with potential $ (Q',F',W') $ at the frozen vertex $ 2 $
	\[
	\begin{tikzcd}
		&\color{blue}\boxed{2}\arrow[dr,"b'"]&\\
		\color{blue}\boxed{1}\arrow[ur,"a'",blue]&&3\arrow[ll,"c'"],
	\end{tikzcd}
	\]
	where the ice part $ F' $ is given by the blue vertices and arrows and the potential $ W' $ is $ c'b'a' $. However, the relative Ginzburg algebra $ \mathbf{\Gamma}_{rel}(Q',F',W') $ associated with $ (Q',F',W') $ is concentrated in degree $ 0 $.
	
\end{Ex}

\bigskip

Suppose that $ \boldmath{1}_{B_{0}} $ has decomposition
$$ \boldmath{1}_{B_{0}}=e_{1}+e_{2}+\cdots+e_{n} $$into primitive orthogonal idempotents. We denote by $ P_{i}=e_{i}B_{0} $ the projective $ B_{0} $-module associated with the idempotent $ e_{i} $. Let $ \cu $ be the following full subcategory of $ \cd^{b}(\mathrm{mod}B_{0}) $ (see~\cite[Theorem 2.16]{OISO2011})
$$ \cu=\add\{\mathbb{S}_{n}^{i}B_{0}\,|\,i\in\mathbb{Z}\}\subseteq\cd^{b}(\mathrm{mod}B_{0}) ,$$ where $ \mathbb{S}_{n}=\Si^{-n}\mathbb{S}_{B_{0}} $ and $ \mathbb{S}_{B_{0}}=?\lten_{B_{0}}B_{0}^{\vee} $ is the Serre functor of $ \cd^{b}(\mathrm{mod}B_{0}) $. By~\cite[Theorem 2.16]{OISO2011}, $ \cu $ is an $ n $-cluster tilting subcategory of $ \cd^{b}(\mathrm{mod}B_{0}) $. 

Let $ \Si^{\mathbb{Z}}\cu=\add\{\Si^{p}\mathbb{S}_{n}^{k}B_{0}\,|\,p,k\in\mathbb{Z}\} $ be the $ \Si $ closure of $ \cu $ in $ \cd^{b}(\mathrm{mod}B_{0}) $. It is a bigraded category where the gradings are given by $ \mathbb{S}_{n} $ and $ \Si $.

The dg algebra $$ \mathbf{\Pi}_{n+1}(B_{0})=T_{B_{0}}(\Si^{n}B_{0}^{\vee}) $$ is Adams graded with $ |\Si^{n}B_{0}^{\vee}|_{a}=1 $. Then the homology algebra $ H^{*}(\mathbf{\Pi}_{n+1}(B_{0})) $ is naturally bigraded. We write $ H^{p}_{k}(\mathbf{\Pi}_{n+1}(B_{0})) $ for the homogeneous component in bidegree $ (k,p) $, where $ k $ is the Adams degree and $ p $ is the cohomological degree. We denote by $ P^{\mathbf{\Pi}}_{i}=e_{i}\mathbf{\Pi}_{n+1}(B_{0}) $ the cofibrant dg $ \mathbf{\Pi}_{n+1}(B_{0}) $-module associated with $ e_{i} $.

By the definition of $ \mathbf{\Pi}_{n+1}(B_{0}) $, for each $ P^{\mathbf{\Pi}}_{i} $, $ P^{\mathbf{\Pi}}_{j} $ and $ p\in\mathbb{Z} $, we have 
\begin{equation*}
	\begin{split}
		\Hom_{\cd(\mathbf{\Pi}_{n+1}(B_{0}))}(P^{\mathbf{\Pi}}_{i},\Si^{p}P^{\mathbf{\Pi}}_{j})&=\bigoplus_{k\geqslant0}\Hom_{\cd(B_{0})}(P_{i},\Si^{p}P_{j}\lten_{B_{0}}(\Si^{n}B_{0}^{\vee})^{\lten_{k}})\\
		&=\bigoplus_{k\geqslant0}\Hom_{\cd(B_{0})}(P_{i},\mathbb{S}_{n}^{-k}(\Si^{p}P_{j})).
	\end{split}
\end{equation*}
On the other hand, the space $ \Hom_{\cd(\mathbf{\Pi}_{n+1}(B_{0}))}(P^{\mathbf{\Pi}}_{i},\Si^{p}P^{\mathbf{\Pi}}_{j}) $ is computed by $ e_{j}H^{p}(\mathbf{\Pi}_{n+1}(B_{0}))e_{i} $.

For any integer $ k $, let $ P^{\mathbf{\Pi}}_{i}\langle k\rangle $ be the shift of $ P^{\mathbf{\Pi}}_{i} $ by degree $ k $ with respect to the Adams grading. We say a dg $ \mathbf{\Pi}_{n+1}(B_{0}) $-module $ M $ has \emph{bidegree $ (k,p) $} if it has Adams degree $ k $ and cohomological degree $ p $. Denote by $ \cc^{\mathbb{Z}}(\mathbf{\Pi}_{n+1}(B_{0})) $ the category of Adams graded dg $ \mathbf{\Pi}_{n+1}(B_{0}) $-modules with morphisms of bidegree $ (0,0) $. The corresponding derived category is denoted by $ \cd^{\mathbb{Z}}(\mathbf{\Pi}_{n+1}(B_{0})) $. We denote by $ T $ the action of $ \langle1\rangle $ on $ \cd^{\mathbb{Z}}(\mathbf{\Pi}_{n+1}(B_{0})) $.

For any two objects $ P^{\mathbf{\Pi}}_{i} $, $ P^{\mathbf{\Pi}}_{j} $ in $ \cd^{\mathbb{Z}}(\mathbf{\Pi}_{n+1}(B_{0})) $, we have
\begin{equation*}
	\begin{split}
		\Hom_{\cd^{\mathbb{Z}}(\mathbf{\Pi}_{n+1}(B_{0})}(P^{\mathbf{\Pi}}_{i},\Si^{p}P^{\mathbf{\Pi}}_{j}\langle k\rangle)&\cong\Hom_{\cd(\mathrm{mod}B_{0})}(P_{i},\mathbb{S}_{n}^{-k}\Si^{p}P_{j})\\
		&\cong e_{j}H^{p}_{k}(\mathbf{\Pi}_{n+1}(B_{0}))e_{i} .
	\end{split}
\end{equation*}

We have an equivalence of bigraded categories
$$ 
\add\{\Si^{p}P^{\mathbf{\Pi}}_{i}\langle k\rangle\,|\,p,k\in\mathbb{Z}\}\iso\Si^{\mathbb{Z}}\cu $$ which maps $ \Si^{p}P^{\mathbf{\Pi}}_{i}\langle k\rangle\ $ to $ \mathbb{S}_{n}^{-k}\Si^{p}P_{i} $.

Via taking the orbit categories with respect to $ T $ and $ \mathbb{S}_{n} $ respectively, we have the following equivalence of graded categories
$$ \add(\Si^{p}P^{\mathbf{\Pi}}_{i}\langle k\rangle\\,|\,i,k\in\mathbb{Z})/T\iso\Si^{\mathbb{Z}}\cu/\mathbb{S}_{n}.$$

We denote by $ \mathrm{proj}^{\mathbb{Z}}(H^{*}\mathbf{\Pi}_{n+1}(B_{0})) $ the category of (cohomological) graded projective modules over $ H^{*}(\mathbf{\Pi}_{n+1}(B_{0})) $. There is an equivalence of graded categories $$ \add(\Si^{p}P^{\mathbf{\Pi}}_{i}\langle k\rangle|\,i,k\in\mathbb{Z})/T\simeq\mathrm{proj}^{\mathbb{Z}}(H^{*}\mathbf{\Pi}_{n+1}(B_{0})) .$$
Thus we have an equivalence of graded categories
$$ \mathrm{proj}^{\mathbb{Z}}(H^{*}\mathbf{\Pi}_{n+1}(B_{0}))\simeq\Si^{\mathbb{Z}}\cu/\mathbb{S}_{n} .$$

Since $ B_{0} $ is $ n $-representation-finite, by \cite[Theorem 3.1 and Proposition 3.6]{OISO2011} we have $$ \Hom_{\cd(\mathrm{mod}B_{0})}(\Si^{i}\cu,\cu)=0 $$ for $ 1\leqslant i\leqslant n-1 $. By the above equivalence of graded categories, the $ i $-th homology $ H^{i}(\bm{\Pi}_{n+1}(B_{0})) $ vanishes for $ i=-1,\ldots,-n+1 $.

\begin{Lem}\label{Lemma:self-injective}
	The higher preprojective algebra $ \widetilde{B_{0}} $ is self-injective.
\end{Lem}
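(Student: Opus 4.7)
My plan is to bootstrap self-injectivity of $\widetilde{B_{0}}$ out of the structural results of Section~\ref{section A is stalk} applied to the specific morphism $f\colon B=\bm{\Pi}_{n+1}(B_{0})\to A=\bm{\Pi}_{n+2}(A_{0},B_{0})$ coming from the $n$-representation-finite setting. First I would note that, by Proposition~\ref{n-reprsentation finite case}, $f$ satisfies the hypotheses $1)$-$4)$ with $n$ replaced by $n+1$, and that $A$ is concentrated in degree $0$. Moreover, $H^{0}(f)\colon\widetilde{B_{0}}\to H^{0}(A)$ is fully faithful and sends $1_{\widetilde{B_{0}}}$ to $e=f(1_{B_{0}})$, so it induces an isomorphism of algebras $\widetilde{B_{0}}\iso eH^{0}(A)e=B'$. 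In particular, self-injectivity of $\widetilde{B_{0}}$ is equivalent to self-injectivity of $B'$, and by Theorem~\ref{Higgs of stalk}(a) we already know that $B'$ is Iwanaga-Gorenstein of injective dimension $g\leqslant n+2$.

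The key step is then to upgrade Iwanaga-Gorenstein to self-injective, i.e.\ to show $g=0$, equivalently $\mathrm{gpr}(B')=\mathrm{mod}(B')$. By Theorem~\ref{Higgs of stalk}(d), restriction along $B'\hookrightarrow H^{0}(A)$ yields an equivalence $\mathcal{H}\iso\mathrm{gpr}(B')$, and by Proposition~\ref{addA is an n-cluster tiliting} (applied to $n+1$), the image of $A$ is an $(n+1)$-cluster-tilting object $T$ in $\mathcal{H}$, corresponding under the equivalence to the right $B'$-module $Ae$. Using the explicit formula for the morphisms in $\bm{\Pi}_{n+2}(A_{0},B_{0})$ given by Corollary~\ref{Relative CY completion is concentrated in degree 0}, namely
\[
\Hom_{H^{0}(A)}(L,M)\cong\Hom_{B_{0}}\!\bigl(L,\,\textstyle\bigoplus_{i\geqslant0}\tau_{n}^{-i}M\bigr),
\]
I would show that $Ae$ as a right $B'$-module decomposes into summands indexed by pairs $(L,i)$ with $L$ running through the indecomposable summands of $\widetilde{B_{0}}$ and $i\geqslant0$, and that $\mathrm{add}(Ae)$ contains both the projective generator $B'$ (from the summand corresponding to $L=B_{0}$, $i=0$) and its injective hull $DB'$. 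For the latter, I would use that since $B_{0}$ is $n$-representation-finite, the $\tau_{n}^{-1}$-orbit of $B_{0}$ in $\mathrm{mod}\,B_{0}$ exhausts $\mathrm{add}\,\widetilde{B_{0}}$ and contains $DB_{0}=\nu B_{0}$, which under the identifications translates precisely into the statement that $DB'$ appears as a summand of $Ae$.

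Once this is established, the $(n+1)$-cluster-tilting property of $Ae$ in $\mathrm{gpr}(B')$ forces the conclusion. Indeed, the Calabi--Yau-like duality on $\mathcal{H}$ provided by Corollary~\ref{Cor:exact sequence for Higgs} (combined with the fact that $\pi^{rel}(A)$ is rigid and $\mathrm{add}$-closed), together with the relation $\Ext^{>0}_{B'}(DB',B')=0$ coming from $DB'$ being a summand of an $(n+1)$-cluster-tilting object, yields that $DB'$ is Gorenstein projective. Combined with Iwanaga--Gorensteinness, this gives $\mathrm{pdim}_{B'}(DB')<\infty$, hence $g=\mathrm{idim}_{B'}(B')=0$, i.e.\ $B'\cong\widetilde{B_{0}}$ is self-injective. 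The main obstacle is step two: correctly matching the right $B'$-module structure on $Ae$ with the Iyama--Oppermann orbit description, and verifying that $DB'$ genuinely appears as a summand rather than just as a subquotient; once this is done cleanly, the final deduction from $(n+1)$-cluster-tilting and Iwanaga--Gorenstein is essentially formal.
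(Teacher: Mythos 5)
Your route is genuinely different from the paper's, and unfortunately its key step is not a reduction but a reformulation of the statement to be proved. The preliminary identifications are fine and non-circular: $H^{0}(f)$ does give $\widetilde{B_{0}}\iso eH^{0}(A)e=B'$, and Theorem~\ref{Higgs of stalk}(a),(d) (which do not presuppose self-injectivity) give that $B'$ is Iwanaga--Gorenstein and that $\mathcal{H}\iso\mathrm{gpr}(B')$ with $Ae$ the image of the cluster-tilting object. The problem is the claim that $DB'$ is a direct summand of $Ae$. Since $Ae$ lies in $\mathrm{gpr}(B')$ and, over an Iwanaga--Gorenstein algebra, a Gorenstein projective module of finite injective dimension is projective, the assertion ``$DB'\in\add(Ae)$'' already implies $DB'$ is projective, i.e.\ it is \emph{equivalent} to self-injectivity of $B'$. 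So the entire content of the lemma is concentrated in that one sublemma, and the ``essentially formal'' final deduction carries no weight.

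The proposed justification of the sublemma does not go through as stated. From $n$-representation-finiteness you correctly get $DB_{0}\in\add\widetilde{B_{0}}$ as $B_{0}$-modules, but this does not translate into $DB'\in\add(Ae)$ as right $B'$-modules: by Corollary~\ref{Relative CY completion is concentrated in degree 0} the indecomposable summands of $Ae$ are the modules $e_{M}Ae\cong\Hom_{B_{0}}(B_{0},\bigoplus_{i\geqslant0}\tau_{n}^{-i}M)$, and for $M$ an indecomposable injective this has underlying space just $M$ (as $\tau_{n}^{-1}$ kills injectives), whereas $DB'\cong\Hom_{B_{0}}(\widetilde{B_{0}},DB_{0})$ has dimension $\dim\widetilde{B_{0}}$; there is no summand of $Ae$ that could match it unless one already knows $DB'\cong B'$ up to a twist. (Already for $n=1$ and the classical preprojective algebra, $D\Lambda\in\add(Ae)$ holds only \emph{because} $\Lambda$ is self-injective.) Making this step work would amount to redoing the Iyama--Oppermann orbit/Nakayama computation, which is exactly what the paper's proof avoids: the paper instead works entirely in $\pvd(B)$ for $B=\bm{\Pi}_{n+1}(B_{0})$, identifies $\mathrm{mod}\widetilde{B_{0}}$ with the heart of the canonical $t$-structure so that $\Ext^{1}_{\widetilde{B_{0}}}(M,\widetilde{B_{0}})\cong D\Hom_{\cd(B)}(\widetilde{B_{0}},\Si^{n}M)$ by the $(n+1)$-Calabi--Yau property, and kills the latter using the triangle $\tau_{\leqslant-1}B\to B\to\widetilde{B_{0}}$ together with the vanishing of $H^{i}(B)$ for $-n+1\leqslant i\leqslant-1$, which is where $n$-representation-finiteness enters. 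I would either adopt that argument or supply the full duality computation identifying $D\widetilde{B_{0}}$ with a $\tau_{n}$-twist of $\widetilde{B_{0}}$.
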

\begin{proof}
	It is enough to show that $ \widetilde{B_{0}}=H^{0}(B) $ is injective as a right $ \widetilde{B_{0}} $-module. The category $ \pvd(B) $ has a canonical $ t $-structure $$ (\pvd(B)_{\leqslant0},\pvd(B)_{\geqslant0}) ,$$ where $ \pvd(B)_{\leqslant0} $ is the full subcategory of $ \pvd(B) $ whose objects are the dg modules $ X $ such that $ H^{p}(X) $ vanishes for all $ p>0 $ and $ \pvd(B)_{\geqslant0} $ is the full subcategory of $ \pvd(B) $ whose objects are the dg modules $ X $ such that $ H^{p}(X) $ vanishes for all $ p<0 $. The corresponding heart is equivalent to $ \mathrm{mod}\widetilde{B_{0}} $. Moreover, by Section 3.1.7 of~\cite{BBD1982}, for all $ X $ and $ Y $ in $ \mathrm{mod}\widetilde{B_{0}} $, we have an isomorphism $$ \Ext^{1}_{\widetilde{B_{0}}}(X,Y)\simeq\Hom_{\cd(B)}(X,\Si Y) .$$
	
	Let $ M $ be an object in $ \mathrm{mod}\widetilde{B_{0}} $. By the $ (n+1) $-Calabi--Yau property of $ \pvd(B) $ and the above isomorphism, we have
	\begin{equation*}
		\begin{split}
			\Ext^{1}_{\widetilde{B_{0}}}(M,\widetilde{B_{0}})&\simeq\Hom_{\cd(B)}(M,\Si\widetilde{B_{0}})\\
			&\simeq D\Hom_{\cd(B)}(\widetilde{B_{0}},\Si^{n}M).
		\end{split}
	\end{equation*}
	
	If $ n=1 $, we have $ \Hom_{\cd(B)}(\widetilde{B_{0}},\Si M)\simeq\Ext^{1}_{\widetilde{B_{0}}}(\widetilde{B_{0}},\Si M)=0 $. Suppose that $ n>1 $. There exists a canonical triangle in $ \cd(B) $
	$$ \tau_{\leqslant-1}B\ra B\ra\widetilde{B_{0}}\ra\Si\tau_{\leqq-1}B .$$ Since the spaces $ \Hom_{\cd(B)}(B,\Si^{n}M) $ and $ \Hom_{\cd(B)}(\Si B,\Si^{n}M) $ vanish, it follows that
	$$ \Hom_{\cd(B)}(\widetilde{B_{0}},\Si^{n}M)\simeq\Hom_{\cd(B)}(\tau_{\leqslant-1}B,\Si^{n-1}M).$$
	
	By the previous discussion, the $ i $-th homology $ H^{i}(B) $ vanishes for $ i=-1,\ldots,-n+1 $. We see that $ \tau_{\leqslant-1}B $ lies $ \cd(B)_{\leqslant-n} $. Thus, the space $ \Hom_{\cd(B)}(\tau_{\leqslant-1}B,\Si^{n-1}M) $ vanishes and then $ \Ext_{B_{0}}^{1}(M,\widetilde{B_{0}}) $ vanishes. It follows that $ \widetilde{B_{0}} $ is injective.
	
\end{proof}

\bigskip
By Propositions~\ref{addA is an n-cluster tiliting}, \ref{Higgs of stalk}, \ref{n-reprsentation finite case} and the above Lemma which was first proved in~\cite[Corollary 3.4]{OISO2011}, we get the following Theorem.
\begin{Thm}\label{CTO in n-finite case}\cite[Theorem 1.1]{OISO2011}
	Consider the relative cluster category $ \mathcal{C}_{n+1}(A,B) $ associated with
	$$ f\colon B=\bm{\Pi}_{n+1}(B_{0})\longrightarrow A=\bm{\Pi}_{n+2}(A_{0},B_{0}) .$$
	\begin{itemize}
		\item[a)] The Higgs category $ \mathcal{H}\subseteq\mathcal{C}_{n+1}(A,B) $ is equivalent to $ \mathrm{mod}(\widetilde{B_{0}}) $ and the image of $ A $ in $\mathcal{H}$ is an $ (n+1) $-cluster-tilting object.
		\item[b)] We have a triangle equivalence $ \underline{\mathrm{mod}}(\widetilde{B_{0}})\cong\mathcal{C}_{n+1}(A_{0}/A_{0}eA_{0}) $, where $ e=f_{0}(\boldmath{1}_{B_{0}}) $.
	\end{itemize}
\end{Thm}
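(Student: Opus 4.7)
The plan is to assemble the theorem directly from the results already established in the excerpt, since almost all the hard work has been done. The main step is to verify that the dg functor $f\colon B=\bm\Pi_{n+1}(B_{0})\to A=\bm\Pi_{n+2}(A_{0},B_{0})$ satisfies Assumptions~\ref{Relative assumption} with $n$ replaced by $n+1$, and in addition that $A$ is concentrated in degree $0$, so that the structural results of Section~\ref{section A is stalk} apply. This is exactly the content of Proposition~\ref{n-reprsentation finite case}: items 1), 2) give smoothness, connectivity, finite-dimensional $H^{0}$, and item 5) (or the $(n+2)$-Calabi--Yau structure on $f$ guaranteed by Yeung's construction) provides the relative Calabi--Yau datum.

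For part a), the first step is to identify the algebra $B'=eH^{0}(A)e$ with the higher preprojective algebra $\widetilde{B_{0}}$. By the explicit description of $\bm\Pi_{n+2}(A_{0},B_{0})$ in Corollary~\ref{Relative CY completion is concentrated in degree 0}, we have
\[
eH^{0}(A)e\;=\;\bigoplus_{i\geqslant 0}\Hom_{B_{0}}\!\bigl(B_{0},\tau_{n}^{-i}B_{0}\bigr)\;\cong\;\widetilde{B_{0}}.
\]
By Lemma~\ref{Lemma:self-injective}, the algebra $\widetilde{B_{0}}$ is self-injective, so $\mathrm{gpr}\,B'=\mathrm{mod}\,B'$. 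Then Theorem~\ref{Higgs of stalk} d) immediately yields the equivalence $\mathcal{H}\iso\mathrm{mod}(\widetilde{B_{0}})$. Finally, Proposition~\ref{addA is an n-cluster tiliting}, applied to the relative cluster category $\mathcal{C}_{n+1}(A,B)$, says precisely that $\mathrm{add}(\pi^{rel}A)$ is an $(n+1)$-cluster-tilting subcategory of $\mathcal{H}$, finishing a).

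For part b), the idea is to identify both sides of the asserted equivalence with the stable quotient $\mathcal{H}/[\mathcal{P}]$. On the one hand, Theorem~\ref{Higgs is Frobenius extrianglated category} gives a triangle equivalence $\mathcal{H}/[\mathcal{P}]\iso \mathcal{C}_{n+1}(\overline{A})$, and Proposition~\ref{n-reprsentation finite case} 3) identifies $\overline{A}$ with $\bm\Pi_{n+2}(A_{0}/A_{0}eA_{0})$, so $\mathcal{C}_{n+1}(\overline{A})=\mathcal{C}_{n+1}(A_{0}/A_{0}eA_{0})$ by the definition of the generalized $(n+1)$-cluster category. On the other hand, under the equivalence $\mathcal{H}\iso\mathrm{mod}\,\widetilde{B_{0}}$ of part a), the projective-injective subcategory $\mathcal{P}=\mathrm{add}(eA)$ corresponds to $\mathrm{proj}\,\widetilde{B_{0}}$ (this is Theorem~\ref{Higgs of stalk} d) together with the usual identification $\mathrm{add}(eA)\iso\mathrm{add}(eH^{0}(A))=\mathrm{proj}\,B'$), so $\mathcal{H}/[\mathcal{P}]\iso\underline{\mathrm{mod}}(\widetilde{B_{0}})$. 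Combining the two identifications gives the triangle equivalence $\underline{\mathrm{mod}}(\widetilde{B_{0}})\iso\mathcal{C}_{n+1}(A_{0}/A_{0}eA_{0})$.

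There is no really hard step here: all the substantive work is already contained in the earlier structural theorems (especially Theorem~\ref{Higgs is Frobenius extrianglated category}, Theorem~\ref{Higgs of stalk}, and Lemma~\ref{Lemma:self-injective}). The only point requiring a little care is the bookkeeping with the shift of indices: $B_{0}$ is $n$-representation-finite so one is performing the \emph{relative $(n+2)$-Calabi--Yau completion}, and the resulting relative cluster category is the $(n+1)$-cluster category, so that both Proposition~\ref{addA is an n-cluster tiliting} and Proposition~\ref{Higgs of stalk} must be invoked with $n$ replaced by $n+1$. Once the indexing is in order, the two parts follow by direct citation.
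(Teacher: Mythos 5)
Your proposal is correct and follows exactly the route the paper takes: the paper's own proof is a direct citation of Proposition~\ref{n-reprsentation finite case}, Theorem~\ref{Higgs of stalk}, Proposition~\ref{addA is an n-cluster tiliting} and Lemma~\ref{Lemma:self-injective}, and you have simply spelled out how these assemble (including the identification $eH^{0}(A)e\cong\widetilde{B_{0}}$ via Corollary~\ref{Relative CY completion is concentrated in degree 0} and the matching of $\overline{A}$ with $\bm\Pi_{n+2}(A_{0}/A_{0}eA_{0})$ for part b)). The only cosmetic slip is attributing the left $(n+2)$-Calabi--Yau structure to item 5) of Proposition~\ref{n-reprsentation finite case} rather than to its preamble, which does not affect the argument.
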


\begin{Rem}
	Above, we have used a different method to reprove Iyama--Oppermann's results in~\cite{OISO2011}. Notice that the algebra $ H^{0}(\mathbf{\Pi}_{n+2}(A_{0},B_{0})) $, which is quasi-isomorphic to $ \mathbf{\Pi}_{n+2}(A_{0},B_{0}) $, is isomorphic to the (non stable) endomorphism algebra of the $ (n+1) $-cluster-tilting object $ T $ given by the image of $ A $ in $ \ch $. This algebra does not appear explicitly in~\cite{OISO2011}.
\end{Rem}

\begin{Ex}\rm
	Let $ Q $ be a Dynkin quiver and let $ \Aus(kQ) $ be the Auslander algebra of the path algebra $ kQ $. We consider the following canonical dg inclusion
	$$ f_{0}\colon kQ\hookrightarrow \Aus(kQ) ,$$ which maps each vertex $ i $ to the corresponding projective module $ P_{i}=e_{i}kQ $.

	We know that $ \mathrm{gl.dim}(kQ)=1 $ and $ \mathrm{gl.dim}(\Aus(kQ))\leqslant2 $. Moreover, the homotopy cofiber of $ f $ is the stable Auslander algebra $ \underline{\Aus}(kQ)=\Aus(kQ)/\langle e\rangle $, where $ e=f(\boldmath{1}_{kQ}) $ (see Proposition~\ref{Homtopy cofiber of Auslander algebra}).
	
	Applying the relative 3-Calabi--Yau completion to the functor $ f_{0} $, we get the following dg functor $ f $ which has a canonical left 3-Calabi--Yau structure and $ \bm{\Pi}_{3}(\Aus(kQ),kQ) $ is concentrated in degree 0
	$$ f\colon\bm{\Pi}_{2}(kQ)\rightarrow\bm{\Pi}_{3}(\Aus(kQ),kQ) .$$
	
	On the level of $ H^{0} $, we get a fully faithful inclusion
	$$ H^{0}(f)\colon \widetilde{kQ}\hookrightarrow\bm{\Pi}_{3}(\Aus(kQ),kQ) ,$$
	where $ \widetilde{kQ} $ is the preprojective algebra of $ Q $ and hence is self-injective. So the Higgs category $ \mathcal{H}=\mathrm{gpr}(\widetilde{kQ}) $ is equivalent to $ \mathrm{mod}(\widetilde{kQ}) $.
	By Theorem~\ref{CTO in n-finite case}, we get a triangle equivalence $$ \underline{\mathrm{mod}}(\widetilde{kQ})\iso\mathcal{C}_{\underline{\Aus}(kQ)}=\mathrm{per}\bm{\Pi}_{3}(\underline{\Aus}(kQ))/\mathrm{pvd}(\bm{\Pi}_{3}(\underline{\Aus}(kQ))) .$$
	Thus, we have reproved that $ \mathrm{mod}(\widetilde{kQ}) $ contains a canonical cluster-tilting object (see \cite{GLS2006}) and that $ \underline{\mathrm{mod}}(\widetilde{kQ}) $ is triangle equivalent to $ \mathcal{C}_{\underline{\Aus}(kQ)} $ (see \cite{Am2011}).
	
\end{Ex}

\end{document}